\DeclareMathOperator{\diver}{\mathrm{div}}
\newtheorem{mydef}{Definition}
\newtheorem{thm}{Theorem}[section]
\newtheorem{lem}{Lemma}[section]
\newtheorem{rk}{Remark}
\newtheorem{cor}{Corollary}[section]
\numberwithin{rk}{section}
\numberwithin{prop}{section}
\numberwithin{mydef}{section}
\numberwithin{lem}{section}
\numberwithin{equation}{section}
\numberwithin{thm}{section}
\title[Degenerate Compressible Navier-Stokes Equations]{Global Regular Solutions of the  Degenerate Compressible Navier-Stokes Equations with Large Initial Data of Spherical Symmetry}
\date{\today}
\author{Gui-Qiang G. Chen}
\address[Gui-Qiang G. Chen]{Mathematical Institute, University of Oxford, Oxford, OX2 6GG, UK.} \email{\tt gui-qiang.chen@maths.ox.ac.uk}
\author{Jiawen Zhang}
\address[Jiawen Zhang]{School of Mathematical Sciences, Shanghai Jiao Tong University, Shanghai 200240, P. R. China.} \email{\tt zhangjiawen317@sjtu.edu.cn}
\author{Shengguo Zhu }
\address[Shengguo Zhu]{School of Mathematical Sciences,  CMA-Shanghai,   and MOE-LSC,   Shanghai Jiao Tong University, Shanghai 200240, P. R. China.}
 \email{\tt  zhushengguo@sjtu.edu.cn}
\begin{document}

\begin{abstract}
A fundamental open problem in the theory of the compressible Navier-Stokes equations is whether regular spherically symmetric flows can develop singularities\,---\,such as cavitation or implosion\,---\,in finite time. 
A formidable challenge lies in how the well-known coordinate singularity at the origin can be overcome to control the lower or upper bound of the density. In this paper, when the viscosity coefficients are degenerately density-dependent (as in the shallow water equations), we prove that, for general large spherically symmetric initial data with bounded positive density, solutions remain globally regular and cannot undergo cavitation or implosion in two and three spatial dimensions. Moreover,  the far-field  vacuum is allowed for the data under consideration here. Our results hold for all adiabatic exponents $\gamma\in(1,\infty)$ in two dimensions, and for physical adiabatic exponents $\gamma\in (1, 3)$ in three dimensions, without any restriction on the size of the initial data.
\end{abstract}

\subjclass[2020]{35A01, 35Q30, 76N10, 35B65, 35A09.}

\keywords{
Degenerate compressible Navier-Stokes equations,  Shallow water equations, Regular solutions, Large initial data, Global-in-time well-posedness, Spherical symmetry.}

\maketitle

\tableofcontents

\section{Introduction}\label{section-intro}
The global regularity of solutions of the multidimensional (M-D) compressible Navier--Stokes equations (\textbf{CNS}) with large initial data is a longstanding open problem, even when the initial data exhibit some form of symmetry. The main obstacle lies in obtaining uniform {\it a priori} bounds on the density, both above and below; see Lions \cite{lions}, Hoff--Smoller \cite{hoffsmoller}, Feireisl \cite{fu3}, and the references therein. In this paper, we establish the well-posedness of global regular solutions for general smooth, spherically symmetric initial data with positive and bounded density for the following \textbf{CNS} with degenerately density-dependent viscosity coefficients (as in the shallow water equations)
in $\mathbb{R}^n$ ($n=2$ or $3$):
\begin{equation}\label{eq:1.1benwen}
\begin{cases}
\rho_t+\diver(\rho \boldsymbol{u})=0,\\[3pt]
(\rho \boldsymbol{u})_t+\diver(\rho \boldsymbol{u}\otimes \boldsymbol{u})
+ \nabla P =2\alpha\diver(\rho D(\boldsymbol{u})),
\end{cases}
\end{equation}
 with the initial  data:
\begin{equation}\label{eqs:CauchyInit}
(\rho,\boldsymbol{u})(0,\boldsymbol{x})=(\rho_0,\boldsymbol{u}_0)(\boldsymbol{x})=(\rho_0(|\boldsymbol{x}|),u_0(|\boldsymbol{x}|)\frac{\boldsymbol{x}}{|\boldsymbol{x}|}) 
\qquad \text{for $\,\boldsymbol{x} \in \mathbb{R}^n$},
\end{equation}
and the far-field asymptotic condition:
\begin{equation}\label{e1.3}
\displaystyle
(\rho,\boldsymbol{u})(t,\boldsymbol{x})
\to (\bar\rho,\boldsymbol{0})
\qquad \text{as $|\boldsymbol{x}|\to \infty\,$  for $\,t\ge 0$},
\end{equation}
where  $t\geq 0$ and   $\boldsymbol{x}=(x_1,\cdots\!,x_n)^\top\in \mathbb{R}^n$   denote the time  and  Eulerian spatial coordinates, respectively,  $\rho\geq 0$  is  the density of the fluid and the initial density 
$\rho_0$
here is positive and bounded,   
$\boldsymbol{u}=(u_1,\cdots\!,u_n)^\top\in \mathbb{R}^n$ stands for the  velocity of fluid,
$P$ is  the pressure, 
$D(\boldsymbol{u})=\frac{1}{2}(\nabla \boldsymbol{u}+(\nabla \boldsymbol{u})^\top)$ is    the strain tensor, and  $\alpha>0$ and   $\bar{\rho}\geq 0$ are both constants. For the polytropic gas,  the constitutive relation is given by $P=A\rho^{\gamma}$, where $A>0$ is  the entropy  constant and  $\gamma>1$ is the adiabatic exponent.

When $\gamma=n=2$, \eqref{eq:1.1benwen} corresponds to the viscous shallow water equations, 
\begin{equation}\label{shallow}
\begin{cases}
h_t+\diver(h \boldsymbol{u})=0,\\[3pt]
(h\boldsymbol{u})_t+\diver(h \boldsymbol{u}\otimes \boldsymbol{u})+A\nabla (h^2)={V}(h, \boldsymbol{u}),
\end{cases}
\end{equation}
where $h$ is  the height of fluid surface,  $\boldsymbol{u}\in \mathbb{R}^2$ is the  horizontal
velocity, and ${V}(h, \boldsymbol{u})$ is the viscosity term. For the spherically symmetric flow,  since $D(\boldsymbol{u})=\nabla \boldsymbol{u}$, 
then  in \eqref{shallow},
\begin{equation}\label{shallownian}
{V}(h, \boldsymbol{u})=2\alpha\diver(h D(\boldsymbol{u}))
= 2\alpha\diver(h\nabla \boldsymbol{u}).
\end{equation}
We refer to \cite{BP,Gent,gpm,lions,Mar,oran} and the references therein for more details on the viscous shallow water system.

Recently, the shallow water equations have attracted considerable attention. Nevertheless, as Lions already emphasized in his 1998 monograph \cite{lions}, 
the global existence of M-D solutions of the Cauchy 
problem of system \eqref{shallow}--\eqref{shallownian} with general initial data remains completely open, especially in the context of regular solutions. Vasseur-Yu \cite{vayu}  proved the global existence of M-D weak solutions with vacuum. Their result provided the first rigorous global existence result for weak solutions with finite energy, yet left unanswered a further fundamental question of whether regular solutions with finite energy 
persist globally in $\mathbb{R}^n$. In this paper, we give a positive answer to this problem under the spherically symmetric setting.

There is a vast literature on the well-posedness of solutions to \textbf{CNS} 
when the viscosity coefficients are constants and the initial density is strictly positive. The one-dimensional (1-D) problem  has been studied extensively; 
see \cite{CHT, hoffsmoller,Kanel, KN,KS, Denis,ZA1} and the references therein. 
In the M-D case, the local well-posedness of classical solutions of the Cauchy problem 
follows from the standard symmetric hyperbolic-parabolic structure satisfying 
the well-known Kawashima's 
condition; see \cite{itaya1,  KA, nash, serrin,tani} and the references therein. 
Matsumura--Nishida \cite{MN} first established the global existence of three-dimensional (3-D) classical solutions for initial data close to a non-vacuum equilibrium in the Hilbert space $H^s(\mathbb{R}^n)$. 
We also refer the reader to  Danchin \cite{danchin1} for  
global strong solutions with small initial data in some Besov spaces of $\mathbb{R}^n$ 
for $n\geq 2$, and Hoff \cite{H3} for global weak solutions with small discontinuous data in some Sobolev spaces of $\mathbb{R}^n$ for $n=2$, $3$. 
For  spherically symmetric flow, Jiang \cite{J}  established the global existence of smooth solutions with 
large data  in the domain exterior to a ball in $\mathbb{R}^n$ for $n=2$ or $3$, and  
Hoff \cite{H}  proved the global existence of   weak solutions of the Cauchy problem of isothermal flow in $\mathbb{R}^n$ for $n\geq 2$ 
when the initial data are large and discontinuous. 
 It is worth pointing out that the analysis in \cite{H} allows the possibility that vacuum  emerges at the origin.
 Some other related progress can also be found 
in  \cite{chz,HJ} and the reference therein.

In general, a vacuum is required for the far-field 
under some physical requirements
such as finite total mass and total energy in the whole space $\mathbb{R}^n$. However,  the approaches used in the references mentioned above do not work directly 
for the case when cavitation appears, owing to the degeneracy of the time evolution 
in the momentum equations, which makes it difficult to study the dynamics of the fluid velocity near the vacuum. 
In fact, in the view of physics, it is not clear how the fluid velocity can be defined when there is no fluid. 
On the other hand, in terms of mathematical structures, the time evolution equations of the fluid velocity in  \textbf{CNS} with constant viscosity coefficients, {\it i.e.} the momentum equations, is a parabolic system in the fluid region, but degenerates to an elliptic one near the vacuum region.
By introducing some initial compatibility conditions, the local well-posedness of  3-D regular solutions with vacuum was established successfully 
in Salvi--Stra\v skraba \cite{Salvi} and Cho--Choe--Kim  \cite{CK3}. 
Later, Huang--Li--Xin \cite{HX1} proved the global well-posedness of classical solutions 
with small data for the barotropic \textbf{CNS} in $\mathbb{R}^3$ 
(see also Choe--Kim \cite{ck} for the global spherically symmetric, smooth solutions with large data in annular domains).
The main breakthrough for the well-posedness of M-D solutions with generic data 
is due to Lions \cite{lions}, where the global existence of weak solutions with finite energy for the barotropic \textbf{CNS} was established 
when the pressure $P$ satisfies $P=A\rho^\gamma$ with   $\gamma \geq \frac{9}{5}$ in $\mathbb{R}^3$ 
and $\gamma\geq \frac{3}{2}$ in $\mathbb{R}^2$. 
For the global existence of weak solutions with finite energy in M-D barotropic flow, 
we also refer the reader to Feireisl--Novotn\'{y}--Petzeltov\'{a} \cite{fu1} 
for  $\gamma>\frac{3}{2}$ in $\mathbb{R}^3$ and $\gamma>1$ in $\mathbb{R}^2$, 
Bresch--Jabin \cite{BJ} for the thermodynamically unstable pressure and anisotropic viscous stress tensor, and 
Jiang--Zhang \cite{JZ} for the spherically symmetric, weak solutions 
in $\mathbb{R}^n$ for $n = 2, 3$ when $\gamma>1$. However, the uniqueness problem of these M-D weak solutions to \textbf{CNS} obtained in \cite{BJ,fu1,JZ,lions} is widely open due to their fairly low regularity.

It is worth pointing out that, when the viscosity coefficients   
are  constants, some singular behaviors of solutions with vacuum to \textbf{CNS}  
have been observed. Recently,  studies  in  \cite{zz2}
show  that the classical solutions with vacuum of the Cauchy problem 
of the M-D  \textbf{CNS}  cannot preserve the conservation of momentum.
On the other hand,  Hoff--Serre \cite{hoffserre} showed that, when vacuum appears, 
the weak solutions of the 1-D barotropic \textbf{CNS} need not continuously depend 
on their initial data. 
In particular, these counterintuitive behaviors can be attributed 
to the unphysical assumption  
that the viscosity coefficients  are  constants  
when one utilizes such a kind of \textbf{CNS} to deal with the vacuum problems 
in viscous fluids,  which makes the vacuum exert a force on the fluid at the interface that 
separates the vacuum and the fluid, 
according to the classical impulse-momentum theorem ({\it cf}. \cite{goldstein}). 
Thus, viscous compressible fluids near the vacuum should be better modeled by the degenerate \textbf{CNS}, which can be derived from the Boltzmann equations through the Chapman--Enskog expansion; see Chapman--Cowling \cite{chap}.

In fact, the degenerate \textbf{CNS} for the barotropic flow has received extensive 
attention in recent years. Many key progresses have been made 
on the global well-posedness of smooth 
solutions when the initial density is strictly positive; 
see \cite{cons,HB,kv,vassu2} for 1-D flow with large data, and \cite{Sundbye2,weike} for two-dimensional (2-D) flow with initial data closing to a non-vacuum equilibrium.  
However,  when  $\inf_{\boldsymbol{x}} \rho_0(\boldsymbol{x}) =0$, 
the momentum equations  are degenerate both in the 
time evolution and spatial dissipation,  which makes it formidable to establish the propagation and mollification mechanisms of the regularity of solutions.
In the M-D case, a remarkable framework was initiated with a series of papers 
by Bresch--Desjardins \cite{bd6,bd8} (started in 2003 with Lin \cite{bd2} 
in the context of Navier--Stokes--Korteweg with a linear shear viscosity coefficient case), 
which provides additional information related to the gradient of a function of 
$\rho$ 
when the viscosity coefficients satisfy what is called the Bresch-Desjardin constraint. 
This information is now called the BD entropy, which 
plays an important role in the development of the global existence of 
M-D weak solutions with finite energy  of  the degenerate \textbf{CNS};
see   Bresch--Vasseur--Yu \cite{bvy},  Li--Xin \cite{lz}, Mellet--Vasseur \cite{mellet},
Vasseur--Yu \cite{vayu}, and the references therein. 
Recently, by some elaborate analysis of the intrinsic degenerate-singular  
structures of the degenerate \textbf{CNS}, 
the local well-posedness of regular solutions with far-field vacuum  has been established 
in  \cite{sz3,sz333,zz2}. Moreover, in the domain exterior to a ball,  Cao--Li--Zhu \cite{clz1} proved the global existence of \text{3-D} spherically symmetric regular solutions with large data and far-field vacuum with the help of the BD entropy estimate. Some other related progress can also be found in  \cite{Germain, zz,  tyc2, zhuthesis} and the reference therein. 

Despite these significant progresses mentioned above for \textbf{CNS}, the global regularity of large solutions in $\mathbb R^n$ $(n\ge 2)$, no matter with or without any symmetry assumption, remains an open problem,  which is extremely difficult due to the possible cavitation and implosion inside the fluids. As far as we know, there is no solid progress for compressible viscous flow along this direction in a positive way until now. On the other hand, several negative results in this direction have been obtained. First, when the viscosity coefficients are constants, it is shown in Xin \cite{zx} that when the initial density is compactly supported, any smooth solutions in $H^s$ (for suitably large $s$) of the Cauchy problem of the M-D non-isentropic   \textbf{CNS} without heat conduction will blow up in finite time, which has been extended to the case that the initial density vanishes only at far-fields with a fast decay rate by Rozanova \cite{olga}. However, it remains unclear whether the solutions considered in \cite{olga,zx} exist locally in time in the M-D case. Recently, for the 3-D spherically symmetric flow, Merle--Rapha\"el--Rodnianski--Szeftel \cite{MPI}  prove that there exists a set of finite-energy smooth initial data with far-field vacuum for which the corresponding solutions to the barotropic   \textbf{CNS} implode (with infinite density) in finite time, which has been extended to the case that the initial density is strictly positive by  Buckmaster--Cao-Labora--Gómez-Serrano  \cite{buckmaster} for $\gamma=\frac{7}{5}$. Later,  for the 3-D barotropic \textbf{CNS} without any symmetry assumption, Cao-Labora--G\'omez-Serrano--Shi--Staffilani \cite{shijia}  constructed some smooth solutions that are strictly away from the vacuum and develop an imploding finite time singularity in  $\mathbb{T}^3$ (torus) or $\mathbb{R}^3$. We also refer to Shao--Wang--Wei--Zhang \cite{shao} for some related  progress  when  $\gamma=\frac{5}{3}$.
On the other hand, for the barotropic \textbf{CNS} with degenerate viscosity coefficients, it is shown in \cite{sz333,zz,zhuthesis} that,  for certain classes of 3-D initial data with vacuum in some open set, one can construct the corresponding local classical solutions in inhomogeneous Sobolev space, which will break down in finite time, regardless of the size of the  initial data.

In this paper, we establish the global well-posedness of regular solutions of the Cauchy problem \eqref{eq:1.1benwen}--\eqref{e1.3} for general smooth initial data of spherical symmetry 
in $\mathbb{R}^n$ for $n=2,3$.  In fact, for the spherically symmetric flow with the radial coordinate 
variable $r=|\boldsymbol{x}|$, system  \eqref{eq:1.1benwen} can be reformulated into 
\begin{equation}\label{cosingu}
\begin{cases}
\displaystyle 
\rho_t+(\rho u)_r+\underbrace{\frac{m\rho u}{r}}_{\star}=0,\\[6pt]
\displaystyle
\underbrace{(\rho u)_t+(\rho u^2)_r}_{\circledast}+P_r-\underbrace{2\alpha(\rho u_r)_r}_{\Diamond}-\underbrace{2m \Big(\alpha \Big(\frac{\rho u}{r} \Big)_r-\frac{\alpha \rho_r u}{r}-\frac{\rho u^2}{2r}\Big)}_{\star} =0,
\end{cases}
\end{equation}
where $\star$ denotes the  coordinates singularity,  $\circledast$ denotes the degenerate 
time evolution, and $\Diamond$ denotes the degenerate spatial dissipation. 
Due to the compressibility of fluids, 
our analysis encounters two major obstacles:
\begin{itemize}
\item possible cavitation, {\it i.e.} $\rho(t,r)\to 0$ for some $(t,r)\in (0,T]\times [0,\infty)$;
\smallskip
\item possible implosion, {\it i.e.} $\rho(t,r)\to \infty$ for some $(t,r)\in (0,T]\times [0,\infty)$.
\end{itemize}
Overcoming these difficulties is particularly challenging because of several inherent issues:
\begin{itemize}
\item the coordinate singularity at the origin, manifested by the singular factor $\tfrac{1}{r}$ in system \eqref{cosingu};
\smallskip
\item the degeneracies in both the time evolution ($\circledast$) and the spatial dissipation ($\Diamond$), arising  from the far-field vacuum in the case $\bar\rho=0$ in \eqref{e1.3}.
\end{itemize}

In fact, almost all known results on the global spherically symmetric strong solutions with large 
data (see \cite{clz1,ck,J,WZ2} and the references therein) are established 
on the domains that exclude the origin. Moreover, for the barotropic \textbf{CNS} with constant viscosity coefficients, when the domain does include the origin, some crucial observations have been made regarding 
the occurrence of implosion and cavitation at the origin; see \cite{buckmaster, shijia, jessenshuzhi,MPI,shao}.
These observations suggest that it is unlikely one could establish the global existence of regular spherically symmetric solutions with large data for the M-D barotropic \textbf{CNS} on domains containing the origin, 
regardless of whether the initial density is strictly positive or not.

Fortunately, by exploiting the intrinsic degenerate-singular structure of \eqref{eq:1.1benwen} 
in radial coordinates, {\it e.g.} \eqref{cosingu}, 
together with a careful analysis, we establish the global well-posedness 
of regular spherically symmetric solutions of the Cauchy problem \eqref{eq:1.1benwen}--\eqref{e1.3} 
in $\mathbb{R}^n$ for $n=2,3$, provided the initial density is positive and bounded 
in the whole space. Moreover,  the far-field  vacuum is allowed for the data under consideration here. Our conclusion  holds for general smooth spherically symmetric initial data, without any restriction on their size. 
In particular, our results indicate that 
these spherically symmetric solutions of \textbf{CNS} never develop cavitation or implosion in finite time, as long as the initial density is positive and bounded. To achieve this, the central difficulty lies in establishing a uniform upper bound for $\rho$ in $\mathbb{R}^n$, which is highly intricate due to the obstacles discussed above.  Our main contribution is to develop some new radial weighted estimates for density $\rho$ by using the effective velocity $\boldsymbol{v}=\boldsymbol{u}+2\alpha\nabla\log\rho$.
This framework allows us, for the first time, to simultaneously control $(\rho, v)$ across the entire domain, including the delicate singular region near the origin.

The rest of this paper is organized as follows:
In \S \ref{maintheorem}, we present the main theorems of the paper.
In \S \ref{Section2}, we outline the main strategies underlying the proof of global well-posedness as stated in \S \ref{maintheorem}.
In \S \ref{section-upper-density}--\S \ref{se46}, we provide a detailed proof of the global well-posedness for regular solutions with far-field vacuum of the Cauchy problem \eqref{eq:1.1benwen}--\eqref{e1.3}
with general smooth spherically symmetric initial data. 
In particular, in \S \ref{section-upper-density}--\S \ref{section-global3}, 
we establish the global uniform estimates for the regular solutions in carefully designed function spaces. This is achieved in the following four steps:
\begin{enumerate}
\item[(i)] Derive the global-in-time {\it a priori} upper bound for $\rho$ (\S \ref{section-upper-density});
\vspace{3pt}
\item[(ii)] Establish the global uniform $L^\infty(\mathbb{R}^n)$-estimate for the effective velocity (\S \ref{section-effective});
\vspace{3pt}
\item[(iii)] Prove that cavitation at the origin cannot occur in finite time (\S \ref{section-nonformation});
\vspace{3pt}
\item[(iv)] Establish global uniform estimates for $2$- and $3$-order regular solutions (\S \ref{section-global2}--\S \ref{section-global3}).
\end{enumerate}
Based on these steps, in \S \ref{se46}, we obtain the global well-posedness 
of  the Cauchy problem \eqref{eq:1.1benwen}--\eqref{e1.3} for regular solutions with far-field vacuum,
by using the method of continuity.
Furthermore, in view of the double-degenerate structure of \eqref{cosingu} 
induced by the far-field vacuum, the local well-posedness of regular solutions with far-field vacuum  
(which is crucial for the arguments in \S \ref{section-upper-density}--\S \ref{se46}) is highly nontrivial. This is established in \S \ref{section-local-regular}.

Moreover, in \S \ref{nonvacuumfarfield}, we address the case when the initial density is strictly positive and establish the global well-posedness of regular solutions for general smooth, spherically symmetric data. Finally, we list some auxiliary lemmas and new Sobolev embedding theorems for spherically symmetric functions that are used frequently throughout this paper in  Appendices \ref{appA}--\ref{improve-sobolev}.

\section{Main Theorems}\label{maintheorem}
This section is devoted to stating our main theorems on the global well-posedness 
of regular solutions of the Cauchy problem \eqref{eq:1.1benwen}--\eqref{e1.3} 
with large initial data of spherical symmetry in $\mathbb{R}^n$ for $n=2,3$.
For simplicity, throughout this paper,
for any function space defined on $\mathbb{R}^n$, 
the following conventions are used for any $k\in \mathbb{N}$,
unless otherwise specified: 
\begin{equation} \label{eulerspace}
\begin{split}
&\|f\|_{L^p}=\|f\|_{L^p(\mathbb{R}^n)},\quad \|f\|_{H^k}=\|f\|_{H^k(\mathbb{R}^n)},\quad  \|f\|_{W^{k,p}}=\|f\|_{W^{k,p}(\mathbb{R}^n)},\\[4pt]
&D^{k,p}(\mathbb{R}^n)
=\big\{f\in L^1_{\mathrm{loc}}(\mathbb{R}^n):\,\|f\|_{D^{k,p}(\mathbb{R}^n)}=\|\nabla^k f\|_{L^p(\mathbb{R}^n)}<\infty\big\},\\[4pt]
&D^k(\mathbb{R}^n)=D^{k,2}(\mathbb{R}^n),   \quad  \|f\|_{D^{k,p}}=\|f\|_{D^{k,p}(\mathbb{R}^n)},\quad  \|f\|_{D^{k}}=\|f\|_{D^{k}(\mathbb{R}^n)},\\[4pt]
&H^{0}(\mathbb{R}^n)=L^2(\mathbb{R}^n),\ 
W^{0,p}(\mathbb{R}^n)= 
D^{0,p}(\mathbb{R}^n)=L^p(\mathbb{R}^n), \  H^{-k}(\mathbb{R}^n)=\big(H^k(\mathbb{R}^n)\big)^*.
\end{split}
\end{equation}

\subsection{Global spherically symmetric solutions of the degenerate \textbf{CNS} with far-field vacuum}
We first address the case that  $\bar\rho=0$ in \eqref{e1.3}. 

We consider the following physical range of the adiabatic exponent $\gamma$ in  system \eqref{eq:1.1benwen}:
\begin{equation}\label{cd1}
\gamma\in (1,\infty) \ \ \text{if} \ \  n=2, \qquad\quad
\gamma\in (1,3) \ \ \text{if} \ \  n=3.
\end{equation}

Notice that,  if  $\rho>0$, the momentum equations $\eqref{eq:1.1benwen}_2$ can be formally rewritten as
\begin{equation}\label{qiyi}
\begin{split}
\boldsymbol{u}_t+\boldsymbol{u}\cdot\nabla \boldsymbol{u} +\frac{A\gamma}{\gamma-1}\nabla\rho^{\gamma-1}+ L\boldsymbol{u}=\nabla \log\rho \cdot  Q(\boldsymbol{u}),
\end{split}
\end{equation}
where $L \boldsymbol{u}$ and $Q(\boldsymbol{u})$ are  given by
\begin{equation}\label{operatordefinition}
L\boldsymbol{u}=-\alpha\Delta \boldsymbol{u}-\alpha\nabla \diver\boldsymbol{u},\qquad Q(\boldsymbol{u})=2\alpha D(\boldsymbol{u}).
\end{equation}
If \eqref{qiyi} is used to study the time evolution of $\boldsymbol{u}$, 
then it can transfer the degeneracies both in the time evolution and spatial dissipation 
to the possible singularity of $\nabla \log \rho$. Therefore, the two quantities
\begin{equation*}
(\rho^{\gamma-1}, \nabla \log\rho)
\end{equation*}
will play significant roles in our analysis on the high-order regularity of the fluid velocity 
$\boldsymbol{u}$. 
Due to this observation, for the case that $\bar \rho=0$ in \eqref{e1.3}, 
we first introduce a proper class of solutions, called regular solutions, of 
the Cauchy problem  \eqref{eq:1.1benwen}--\eqref{e1.3}.

\begin{mydef}\label{cjk}
Assume that 
$\bar\rho=0$ in \eqref{e1.3}, $s=2$ or $3$,  and  $T>0$. 
The vector function $(\rho, \boldsymbol{u})$ is called an $s$-order regular solution of 
the Cauchy problem  \eqref{eq:1.1benwen}{\rm--}\eqref{e1.3} in $[0,T]\times \mathbb{R}^n$ $(n=2$ or $3)$, 
if
\begin{equation*}\begin{split}
\mathrm{(i)}& \ (\rho, \boldsymbol{u}) \ \text{satisfies this problem in the sense of distributions};\\
\mathrm{(ii)}& \ 0<\rho\in C([0,T];L^1(\mathbb{R}^n)), 
\quad \nabla\log\rho\in L^\infty([0,T]\times\mathbb{R}^n),\\[2pt]
&\ \nabla\rho^{\gamma-1}\in C([0,T];H^{s-1}(\mathbb{R}^n)), \quad (\rho^{\gamma-1})_t \in C([0,T]; H^{s-1}(\mathbb{R}^n)),\\[2pt]
& \ \nabla^2 \log \rho \in C([0,T]; H^{s-2}(\mathbb{R}^n)),\quad (\nabla\log\rho)_t\in C([0,T]; H^{s-2}(\mathbb{R}^n));\\[2pt]
\mathrm{(iii)}& \ \boldsymbol{u}\in C([0,T]; H^s(\mathbb{R}^n))\cap L^2([0,T]; D^{s+1}(\mathbb{R}^n)),\\[2pt]
& \ \boldsymbol{u}_t\in C([0,T]; H^{s-2}(\mathbb{R}^n))\cap L^2([0,T]; D^{s-1}(\mathbb{R}^n)).
\end{split}
\end{equation*}
\end{mydef}

\begin{rk}\label{regularsolution}
We first introduce some physical quantities to be used in this paper{\rm :}
\begin{align*}
\mathcal{M}(t)&=\int_{\mathbb{R}^n} \rho(t,\boldsymbol{x})\,\mathrm{d}\boldsymbol{x}\qquad (\textrm{total mass}),\\[2pt]
\mathcal{P}(t)&=\int_{\mathbb{R}^n} \rho(t,\boldsymbol{x})\boldsymbol{u}(t,\boldsymbol{x})\,\mathrm{d}\boldsymbol{x} \qquad (\textrm{momentum}). 
\end{align*}
It will be shown later that the $s$-order regular solutions defined here satisfy the conservation of $\mathcal{M}(t)$ and $\mathcal{P}(t)$. Next, the regularity of $\rho$ shown in {\rm Definition \ref{cjk}} 
implies that $\rho>0$ in $\mathbb{R}^n$ but decays to zero in the far-field, 
which is consistent with the facts that $\mathcal{M}(t)$ and $\mathcal{P}(t)$ are both conserved, 
and {\rm\textbf{CNS}} is a model of non-dilute fluids. Thus, the definition of the s-order regular solutions are consistent with the physical background of {\rm\textbf{CNS}}.
\end{rk}

Our first theorem is on the global well-posedness of the $2$-order regular solutions 
of the Cauchy problem  \eqref{eq:1.1benwen}--\eqref{e1.3} with large initial data of spherical symmetry. 
The regularity of this solution provides the uniqueness 
in both the 2-D and the 3-D cases. However, the $2$-order regular solution is a classical one in the 2-D case, but not in the 3-D case.

\begin{thm}\label{th1}
Let $n=2$ or $3$, $\bar\rho=0$ in \eqref{e1.3}, and  \eqref{cd1} hold.
Assume that the initial data  $(\rho_0, \boldsymbol{u}_0)(\boldsymbol{x})$ are spherically symmetric 
and satisfy 
\begin{equation}\label{id1}
0<\rho_0(\boldsymbol{x})\in L^1(\mathbb{R}^n), 
\ \  \nabla\rho_0^{\gamma-1}(\boldsymbol{x})\in H^1(\mathbb{R}^n),
\ \  \nabla\log\rho_0(\boldsymbol{x})\in  D^1(\mathbb{R}^n),
\ \ \boldsymbol{u}_0(\boldsymbol{x})\in H^2(\mathbb{R}^n),\\
\end{equation}
and, in addition,
\begin{equation}\label{shangjie3}
\nabla\log\rho_0\in L^\infty(\mathbb{R}^3) \quad \,\text{ when } \ n=3.
\end{equation}
Then, for any  $T>0$, the  Cauchy problem  \eqref{eq:1.1benwen}{\rm--}\eqref{e1.3} admits a unique global $2$-order regular solution $(\rho,\boldsymbol{u})(t,\boldsymbol{x})$
in $[0,T]\times\mathbb{R}^n$ that satisfies
\begin{equation}\label{er2}
\begin{split}
&\sqrt{t}\boldsymbol{u}\in L^\infty([0,T];D^3(\mathbb{R}^n)),\quad \sqrt{t}\boldsymbol{u}_t\in L^\infty([0,T];D^1(\mathbb{R}^n))\cap L^2([0,T];D^2(\mathbb{R}^n)),\\
&\sqrt{t}\boldsymbol{u}_{tt}\in L^2([0,T];L^2(\mathbb{R}^n)).
\end{split}
\end{equation}
Moreover,  $(\rho, \boldsymbol{u})$ is spherically symmetric with the form{\rm :}
\begin{equation}\label{duichenxingshi}
(\rho,\boldsymbol{u})(t,\boldsymbol{x})=(\rho(t, r),\, u(t,r) \frac{\boldsymbol{x}}{r} ) 
\qquad\,\, \mbox{for $r=|\boldsymbol{x}|$},
\end{equation}
and satisfies the following properties{\rm :}
\begin{itemize}
\item[$\mathrm{(i)}$] When $n=2$, 
the solution we obtain here is classical{\rm :}
\begin{equation}\label{2dclassical1}
\begin{aligned}
(\rho,\nabla \rho, \boldsymbol{u})\in C([0,T]\times \mathbb{R}^2),\quad (\rho_t,\nabla \boldsymbol{u},\nabla^2 \boldsymbol{u},\boldsymbol{u}_t)\in C((0,T]\times \mathbb{R}^2);
\end{aligned}
\end{equation}

\item[$\mathrm{(ii)}$] When $n=3$, the solution we obtain here is classical
in $\mathbb{R}^3_*:=\mathbb{R}^3\backslash\{\boldsymbol{x}\in\mathbb{R}^3:\,|\boldsymbol{x}|=0\}$
except the origin{\rm :}
\begin{equation}\label{3dclassical1}
\begin{aligned}
&(\rho, \boldsymbol{u})\in C([0,T]\times \mathbb{R}^3),\quad ( \nabla \rho,\rho_t)\in C([0,T]\times \mathbb{R}^3_*),\\
& \nabla \boldsymbol{u}\in C([0,T]\times \mathbb{R}^3_*)\cap C((0,T]\times \mathbb{R}^3),\quad ( \nabla^2 \boldsymbol{u},\boldsymbol{u}_t)\in C((0,T]\times \mathbb{R}^3_*);
\end{aligned}
\end{equation}
\item[$\mathrm{(iii)}$]  $(\rho,\boldsymbol{u})$ satisfies 
the conservations of total mass and total momentum {\rm(}that remains zero{\rm)}{\rm :}
\begin{equation}\label{massconservation}
\mathcal{M}(t)=\mathcal{M}(0), \quad\, \mathcal{P}(t)\equiv \boldsymbol{0} \qquad \ 
\text{for $t\in [0,T]$};
\end{equation}
\item[$\mathrm{(iv)}$] For any $T>0$ and $(t,r)\in (0,T]\times [0,\infty)$,
\begin{equation}\label{decay-est}
\begin{gathered}
\min\big\{C(T)^{-1}, \, (e^{-1}\underline{\rho}(r))^{C(T)(\sqrt{r}+1)}\big\}
\leq \rho(t,r) \leq \min\big\{C(T),Cr^{-n+1}\big\},
\end{gathered}
\end{equation}
where $\underline{\rho}(r):=\inf_{z\in [0,r]} \rho_0(z)$, $C\geq 1$ is a constant depending only on $(\rho_0,\boldsymbol{u}_0)$ and $(n,\alpha,\gamma,A)$, and $C(T)\geq 1$ is a constant depending only on $(C,T)$. 
\end{itemize}
\end{thm}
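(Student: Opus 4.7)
The proof will combine the local existence of 2-order regular solutions (to be established later in the paper) with a continuation argument fueled by global \emph{a priori} estimates. The plan is to show that the relevant norms of $(\rho,\boldsymbol{u})$ cannot blow up in finite time, so the local solution can be extended to arbitrary $T>0$. Throughout, I would work with the reduced radial system \eqref{cosingu}, and consider the maximal interval $[0,T_*)$ of existence; the goal is to derive uniform estimates on every $[0,T]$ with $T<T_*$ that ultimately force $T_*=\infty$.

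The core of the argument is the uniform upper bound on $\rho$. First I would run the basic energy identity together with the BD entropy identity for the effective variable $\boldsymbol{v}=\boldsymbol{u}+2\alpha\nabla\log\rho$; the BD entropy produces $L^2$ control of $\nabla\sqrt{\rho}$ and $\nabla\rho^{\gamma/2}$ which, combined with the spherically symmetric Sobolev embeddings stated in the appendices, already yields the far-field bound $\rho\le C r^{-n+1}$ in the exterior region $\{r\ge 1\}$. The hard part, and what I expect to be the principal obstacle, is the bound inside the unit ball, where the coordinate singularity $1/r$ in \eqref{cosingu} together with the double degeneracy $(\circledast,\Diamond)$ prevents any direct parabolic comparison. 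Here I would implement the region segmentation program outlined in the introduction: introduce radially weighted functionals tuned to the singular structure of \eqref{cosingu} that are compatible with the BD entropy on $\{r=1\}$, and couple them with the damped transport equation satisfied by the radial component $v$ of $\boldsymbol{v}$. Integrating this transport equation along its characteristics, and using the already-obtained exterior bound as a boundary datum, should close an $L^\infty$ estimate on $\rho$ in $\{r\le 1\}$.

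With $\rho\in L^\infty$ in hand, the damped transport structure of $v$ immediately delivers an $L^\infty(\mathbb{R}^n)$ bound on $\boldsymbol{v}$, hence on $\nabla\log\rho$. This, via characteristics of the continuity equation, rules out cavitation at the origin and produces the lower bound of the form $(e^{-1}\underline{\rho}(r))^{C(T)(\sqrt{r}+1)}$ stated in \eqref{decay-est}; the doubly exponential $\sqrt{r}+1$ weight reflects exactly the $1/r$ singular sink term in the transport of $\log\rho$ along radial trajectories. Once both the pointwise upper and lower bounds for $\rho$ are secured, the quasilinear system \eqref{qiyi} becomes uniformly parabolic on any compact time interval, and the higher-order bounds $\boldsymbol{u}\in L^\infty([0,T];H^2)\cap L^2([0,T];D^3)$ follow from the standard energy hierarchy applied to $\nabla\boldsymbol{u}$ and $\nabla^2\boldsymbol{u}$, with the singular factor $\nabla\log\rho\cdot Q(\boldsymbol{u})$ absorbed by the already-controlled $L^\infty$ norm of $\boldsymbol{v}$. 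The $\sqrt{t}$-weighted bounds in \eqref{er2} are obtained by testing the differentiated momentum equation against $\sqrt{t}$-weighted multipliers (or equivalently by standard parabolic smoothing), which compensates for the absence of compatibility conditions beyond the $H^2$ level of the initial data.

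The remaining conclusions are then routine. Conservation of mass and momentum \eqref{massconservation} follows by integrating $\eqref{eq:1.1benwen}_{1,2}$ against $1$ and $\boldsymbol{x}/|\boldsymbol{x}|$, respectively, using the decay of $\rho$ and $\boldsymbol{u}$ at infinity guaranteed by Definition \ref{cjk}, while the spherically symmetric form \eqref{duichenxingshi} is inherited from the symmetry of the data by uniqueness. The continuity statements \eqref{2dclassical1}--\eqref{3dclassical1} are obtained from Sobolev embeddings applied to the function spaces in Definition \ref{cjk} combined with the $\sqrt{t}$-improved regularity \eqref{er2}; the gap at $(t,\boldsymbol{x})=(t,\boldsymbol{0})$ in 3-D for $\nabla\boldsymbol{u}$ and $\nabla^2\boldsymbol{u}$ is exactly the failure of $H^2(\mathbb{R}^3)\hookrightarrow C^1$. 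Finally, the continuity method patches the local existence with the global \emph{a priori} estimates to yield the global 2-order regular solution on $[0,T]$ for all $T>0$. The dominant technical hurdle is the first step, namely the uniform upper bound for $\rho$ near the origin via weighted radial estimates; everything else is tied to that single piece of control.
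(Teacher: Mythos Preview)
Your overall architecture matches the paper's: local existence plus a priori estimates plus continuation, with the upper bound on $\rho$ as the central difficulty. However, your proposed mechanism for obtaining that upper bound near the origin contains a circularity that the paper takes considerable care to avoid. You suggest integrating the damped transport equation for $v$ along its characteristics to close the $L^\infty$ estimate on $\rho$ in the unit ball. But that transport equation reads $v_t + uv_r + \tfrac{A\gamma}{2\alpha}\rho^{\gamma-1}(v-u)=0$, so the characteristics formula gives $|v|_\infty \lesssim |v_0|_\infty + \int_0^t |\rho^{\gamma-1}u|_\infty\,ds$, which requires $L^\infty$ control of $\rho^{\gamma-1}$---precisely what you are trying to establish. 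The paper breaks this loop by a different device: it first obtains weighted $L^p$ estimates on $(r^m\rho)^{1/p}u$ and $(r^m\rho)^{1/p}v$ for some fixed $p>n$ via energy methods (multiplying the momentum equation by $r^m|u|^{p-2}u$, the $v$-equation by $r^m\rho|v|^{p-2}v$, and using Hardy-type weighted bounds on $\rho$ derived from the BD entropy), and then closes via the Sobolev embedding $W^{1,p}(\mathbb{R}^n)\hookrightarrow L^\infty$ applied to $\rho^{1/p}$, using $\nabla\rho^{1/p}=\tfrac{1}{2\alpha p}\rho^{1/p}(v-u)$. The characteristics argument for $v$ is invoked only \emph{after} $\rho\in L^\infty$ is in hand.

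Two smaller points. First, the lower bound on $\rho$ in the paper is obtained not by characteristics of the continuity equation (which would require $\mathrm{div}\,\boldsymbol{u}\in L^1_tL^\infty_x$, unavailable at that stage) but by an $L^2$ energy estimate on $\log\rho$ over $[0,R]$ combined with $H^1(0,R)\hookrightarrow L^\infty(0,R)$; the $\sqrt{r}$ exponent in \eqref{decay-est} comes from $\|(\log\rho)_r\|_{L^2(0,R)}\lesssim \sqrt{R}\,|v|_\infty + |u|_2$, not from a sink term along trajectories. Second, the higher-order estimates do not use the lower bound on $\rho$ at all---they are carried out in the enlarged $(\phi,u,\psi)$ variables with the far-field degeneracy handled by the $L^\infty$ control of $\psi$ alone, and they require the div-curl equivalences of Lemma~\ref{im-1} to convert between $\|\nabla^k\boldsymbol{u}\|_{L^2}$ and radial norms of $u_r+\tfrac{m}{r}u$, which is more than a standard energy hierarchy when the coordinate singularity at $r=0$ is present.
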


Our second theorem is on the global well-posedness of the $3$-order regular solutions of 
the Cauchy problem \eqref{eq:1.1benwen}--\eqref{e1.3} with large initial data of spherical 
symmetry. 
The regularity of this solution not only provides the uniqueness in both the 2-D and 3-D cases, 
but also shows that it is a classical solution of this problem considered.

\begin{thm}\label{th1-high}
Let $n=2$ or $3$, $\bar\rho=0$ in \eqref{e1.3}, and  \eqref{cd1} hold. 
Assume that the initial data  $(\rho_0, \boldsymbol{u}_0)(\boldsymbol{x})$ are spherically symmetric 
and satisfy 
\begin{equation}\label{id1-high}
\begin{split}
&0<\rho_0(\boldsymbol{x})\in L^1(\mathbb{R}^n), \quad \nabla\rho_0^{\gamma-1}(\boldsymbol{x})\in H^2(\mathbb{R}^n), \\ 
&\nabla\log\rho_0(\boldsymbol{x})\in D^1(\mathbb{R}^n)\cap D^2(\mathbb{R}^n),\quad  
\boldsymbol{u}_0(\boldsymbol{x})\in H^3(\mathbb{R}^n).
\end{split}
\end{equation}
Then, for any $T>0$, the Cauchy problem   \eqref{eq:1.1benwen}{\rm--}\eqref{e1.3} admits a unique global $3$-order regular solution $(\rho,\boldsymbol{u})(t,\boldsymbol{x})$
in $[0,T]\times\mathbb{R}^n$ that satisfies
\begin{equation}\label{er2-high}
\begin{split}
&\sqrt{t}\boldsymbol{u}\in L^\infty([0,T];D^4(\mathbb{R}^n)),\quad \sqrt{t}\boldsymbol{u}_t\in L^\infty([0,T];D^2(\mathbb{R}^n))\cap L^2([0,T];D^3(\mathbb{R}^n)),\\
&\sqrt{t}\boldsymbol{u}_{tt}\in L^\infty([0,T];L^2(\mathbb{R}^n))\cap L^2([0,T];D^1(\mathbb{R}^n)).\end{split}
\end{equation}
Moreover,  $(\rho, \boldsymbol{u})$ is spherically symmetric and satisfies  \eqref{duichenxingshi}, \eqref{massconservation}{\rm--}\eqref{decay-est}, and 
\begin{itemize}
\item[$\mathrm{(i)}$] When $n=2$, the solution we obtain here 
is a classical solution{\rm :}
\begin{equation}\label{2dclassical2}
\begin{aligned}
(\rho,\nabla \rho, \rho_t, \boldsymbol{u},\nabla \boldsymbol{u}, \nabla^2 \boldsymbol{u},\boldsymbol{u}_t)\in C([0,T]\times \mathbb{R}^2);
\end{aligned}
\end{equation}
\item[$\mathrm{(ii)}$] When $n=3$, the solution we obtain here 
is a classical solution{\rm :} 
\begin{equation}\label{3dclassical2}
\begin{aligned}
(\rho,\nabla \rho, \rho_t, \boldsymbol{u},\nabla \boldsymbol{u})\in C([0,T]\times \mathbb{R}^3),\quad ( \nabla^2 \boldsymbol{u},\boldsymbol{u}_t)\in C((0,T]\times \mathbb{R}^3).\end{aligned}
\end{equation}
\end{itemize}
\end{thm}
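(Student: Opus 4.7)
The plan is to bootstrap from Theorem \ref{th1}. Since the $3$-order initial data \eqref{id1-high} is strictly stronger than \eqref{id1}--\eqref{shangjie3} (the extra derivative of $\log\rho_0$ in $L^2$ gives, by the embedding theorems in Appendix \ref{improve-sobolev}, that $\nabla\log\rho_0\in L^\infty$ also in dimension $n=3$), Theorem \ref{th1} already furnishes a unique global $2$-order regular solution $(\rho,\boldsymbol{u})$ on $[0,T]\times\mathbb{R}^n$ together with the two-sided density bounds \eqref{decay-est}, the conservation laws \eqref{massconservation}, the $L^\infty$ bound of the effective velocity $\boldsymbol{v}=\boldsymbol{u}+2\alpha\nabla\log\rho$, and all the estimates obtained through Steps (i)--(iii) of \S\ref{section-upper-density}--\S\ref{section-nonformation}. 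The task is therefore to upgrade the uniform bounds to the $3$-order class: namely, to propagate $\nabla\rho^{\gamma-1}\in L^\infty_tH^2$, $\nabla^2\log\rho\in L^\infty_tH^1$, and $\boldsymbol{u}\in L^\infty_tH^3\cap L^2_tD^4$, together with the time-weighted bounds in \eqref{er2-high} for $\boldsymbol{u}_t$ and $\boldsymbol{u}_{tt}$.

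First, having fixed the uniform upper and lower bounds on $\rho$ and the bound on $\boldsymbol{v}$, the momentum equations \eqref{qiyi} become a non-degenerate parabolic system for $\boldsymbol{u}$ with coefficients controlled by $\rho$ and $\nabla\log\rho$, and the source term $\nabla\log\rho\cdot Q(\boldsymbol{u})$ is controlled in $L^2$. I would therefore apply $\nabla^k$ ($k=2,3$) to \eqref{qiyi} and run the usual $L^2$ energy identities in the spherically symmetric form of \eqref{cosingu}, using the weighted radial framework already in place in \S\ref{section-global2}. The commutators arising from $\nabla^k$ against the transport and $\nabla\log\rho$-terms are estimated by Moser-type inequalities together with the spherically-symmetric Sobolev embeddings stated in Appendix \ref{improve-sobolev}; in particular, the singular factor $\tfrac{1}{r}$ in the radial form of $\nabla\diver\boldsymbol{u}$ is absorbed exactly as in Step (iv) of the outline, by pairing a derivative of $u$ with the weight $r^{m}$ suggested by the region segmentation. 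The BD-entropy type estimate, differentiated once more in space, yields the control of $\nabla^2\rho^{\gamma-1}$ and $\nabla^2\log\rho$; crucially, the lower bound of $\rho$ from \eqref{decay-est} converts these into $L^2$ bounds after a standard energy identity on the continuity equation.

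The main obstacle is the control of $\boldsymbol{u}_{tt}$ in $L^\infty_tL^2$ and $L^2_tD^1$ (with the $\sqrt{t}$ weight) required in \eqref{er2-high}. Time-differentiating \eqref{qiyi} twice produces cubic nonlinearities involving $\rho_{tt}$, $(\nabla\log\rho)_t$, and $\boldsymbol{u}_t\cdot\nabla\boldsymbol{u}_t$; in radial coordinates, the coordinate singularity makes the $\tfrac{1}{r}$-terms in the transport/stress couple dangerously with $\boldsymbol{u}_{tt}$ at the origin. I plan to handle this by testing the twice-differentiated equation against $\sqrt{t}\,\boldsymbol{u}_{tt}$, using the effective velocity reformulation to turn the degenerate dissipation $2\alpha\diver(\rho D(\boldsymbol{u}))$ into a genuinely coercive operator once the upper and lower bounds of $\rho$ are known, and then closing the estimate by a Gronwall argument that exploits the already-established bounds on $\nabla\boldsymbol{u}$ in $L^\infty_tL^\infty\cap L^2_tW^{1,\infty}$ from the $2$-order theory and the Sobolev embeddings for spherically symmetric functions. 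The $\sqrt{t}$-weight absorbs the loss of one derivative at $t=0$, exactly as in the $2$-order case.

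Once these uniform estimates are in place, global existence follows by the continuity method: the $3$-order local well-posedness developed in \S\ref{section-local-regular}  produces a maximal existence time $T^*$, and the global bounds above rule out blow-up of the $3$-order norm, so $T^*=\infty$. Uniqueness is inherited from Theorem \ref{th1}, and the conservation laws \eqref{massconservation} together with the decay \eqref{decay-est} are already proved at the $2$-order level. The classical regularity statements \eqref{2dclassical2}--\eqref{3dclassical2} then follow from the Sobolev embeddings $H^3(\mathbb{R}^n)\hookrightarrow C^1$ for $n=2$, and from the spherically symmetric embeddings of Appendix \ref{improve-sobolev} applied to $\nabla\boldsymbol{u}$ to gain continuity up to the origin at the level of first derivatives for $n=3$, with continuity of $\nabla^2\boldsymbol{u}$ and $\boldsymbol{u}_t$ only away from $r=0$ due to the persistent coordinate singularity in the radial momentum equation.
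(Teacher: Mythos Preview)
Your overall architecture---local $3$-order existence from \S\ref{section-local-regular}, global a priori estimates, continuity method---matches the paper, and invoking the $2$-order theory for the density upper bound, the $L^\infty$ control of $\boldsymbol{v}$, and \eqref{decay-est} is correct. But two points are off.

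First, your repeated appeals to a ``lower bound on $\rho$'' are misplaced. In the far-field vacuum case $\bar\rho=0$, the bound \eqref{decay-est} decays to zero as $r\to\infty$; there is \emph{no} uniform positive lower bound, so you cannot use it to make $2\alpha\diver(\rho D(\boldsymbol{u}))$ coercive or to ``convert'' estimates via division by $\rho$. The reason \eqref{qiyi} is a non-degenerate parabolic system is structural: the operator $L=-\alpha\Delta-\alpha\nabla\diver$ has \emph{constant} coefficients, the degeneracy having been transferred to the source $\boldsymbol{\psi}\cdot Q(\boldsymbol{u})$. The paper's higher-order bounds on $\boldsymbol{\psi}=\nabla\log\rho$ (Lemma~\ref{lemma-psi-high}) come from differentiating the transport equation \eqref{nabla-v} for $\diver\boldsymbol{v}$, using only $\phi\in L^\infty$ and $\boldsymbol{v}\in L^\infty$---not from a continuity-equation identity combined with a density lower bound. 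If your argument genuinely relies on $\inf\rho>0$, it fails at infinity.

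Second, your final conclusion about the $3$-D classical regularity is wrong. You claim continuity of $\nabla^2\boldsymbol{u}$ and $\boldsymbol{u}_t$ ``only away from $r=0$'', but \eqref{3dclassical2} asserts $(\nabla^2\boldsymbol{u},\boldsymbol{u}_t)\in C((0,T]\times\mathbb{R}^3)$---continuous on \emph{all} of $\mathbb{R}^3$, including the origin, for $t>0$. This is precisely the gain over the $2$-order statement \eqref{3dclassical1}, where the restriction to $\mathbb{R}^3_*$ appears. The paper obtains this from the time-weighted $D^4$ estimate \eqref{er2-high}: since $t\nabla^2\boldsymbol{u}\in L^\infty([0,T];H^2(\mathbb{R}^3))$ and $(t\nabla^2\boldsymbol{u})_t\in L^2([0,T];L^2(\mathbb{R}^3))$, Lemma~\ref{triple} gives $t\nabla^2\boldsymbol{u}\in C([0,T];W^{1,q}(\mathbb{R}^3))$ for $q<6$, and then the standard embedding $W^{1,q}(\mathbb{R}^3)\hookrightarrow C(\overline{\mathbb{R}^3})$ for $q>3$ yields continuity everywhere. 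The coordinate singularity is resolved, not persistent, at this regularity level.
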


In addition, when  $\gamma\geq \frac{3}{2}$, the  regularity assumptions 
imposed on $\rho^{\gamma-1}_0$ in \eqref{id1} and \eqref{id1-high} can be removed.

\begin{cor}\label{coro1.1}
For $\gamma\in \big[\frac{3}{2},\infty\big)$ when $n=2$ and $\gamma\in \big[\frac{3}{2},3\big)$ 
when $n=3$, the initial conditions  \eqref{id1}{\rm--}\eqref{shangjie3} in {\rm Theorem \ref{th1}}  can be reduced to 
\begin{equation}\label{id1'}
0<\rho_0(\boldsymbol{x})\in L^1(\mathbb{R}^n), \quad  \nabla\log\rho_0(\boldsymbol{x})\in  D^1(\mathbb{R}^n), \quad \boldsymbol{u}_0(\boldsymbol{x}) \in H^2(\mathbb{R}^n),\\
\end{equation}
and, in addition,
\begin{equation}
\nabla\log\rho_0\in L^\infty(\mathbb{R}^3) \qquad \text{ when $n=3$};
\end{equation}
and the initial condition \eqref{id1-high} in {\rm Theorem \ref{th1-high}} can be reduced to 
\begin{equation}\label{id1'-high}
0<\rho_0(\boldsymbol{x})\in L^1(\mathbb{R}^n), \quad   \nabla\log\rho_0(\boldsymbol{x})\in  D^1(\mathbb{R}^n)\cap D^2(\mathbb{R}^n), \quad   \boldsymbol{u}_0(\boldsymbol{x}) \in H^3(\mathbb{R}^n).
\end{equation}
\end{cor}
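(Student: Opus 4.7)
The plan is to verify that, for $\gamma\ge \frac{3}{2}$, the conditions $\nabla\rho_0^{\gamma-1}\in H^1(\mathbb{R}^n)$ (respectively, $H^2(\mathbb{R}^n)$) appearing in Theorem~\ref{th1} (respectively, Theorem~\ref{th1-high}) are already implied by the reduced hypotheses \eqref{id1'} (respectively, \eqref{id1'-high}), together with the standing assumption that the initial density is positive and bounded, i.e.\ $\rho_0\in L^1(\mathbb{R}^n)\cap L^\infty(\mathbb{R}^n)$. Once this implication is proved, Theorems~\ref{th1} and~\ref{th1-high} apply verbatim, and the corollary reduces to a purely initial-data computation, with no further PDE analysis needed.

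The starting point is the chain-rule identity $\nabla\rho_0^{\gamma-1}=(\gamma-1)\rho_0^{\gamma-1}\nabla\log\rho_0$, which I would iterate so that each $\nabla^k\rho_0^{\gamma-1}$ (for $k=1,2,3$) is written as a finite sum of terms of the form $\rho_0^{\gamma-1}$ times products of derivatives of $\log\rho_0$. The quantitative input of the hypothesis $\gamma\ge\frac{3}{2}$ is that $\rho_0\in L^1\cap L^\infty$ forces $\rho_0^{\gamma-1}\in L^p(\mathbb{R}^n)$ for every $p\in[\frac{1}{\gamma-1},\infty]$, and hence $\rho_0^{\gamma-1}\in L^2$ exactly when $\gamma\ge\frac{3}{2}$. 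In three dimensions, the bound $\nabla\log\rho_0\in L^\infty(\mathbb{R}^3)$---explicit in \eqref{id1'}, and, for the higher-regularity case \eqref{id1'-high}, deducible from $\nabla\log\rho_0\in D^1\cap D^2$ combined with spherical symmetry via the radial Sobolev embeddings of Appendix~\ref{improve-sobolev}---closes the first-order estimate through
\[
\|\rho_0^{\gamma-1}\nabla\log\rho_0\|_{L^2}^{2}\le \|\nabla\log\rho_0\|_{L^\infty}^{2}\,\|\rho_0\|_{L^\infty}^{2\gamma-3}\,\|\rho_0\|_{L^1},
\]
and the remaining pieces $\rho_0^{\gamma-1}|\nabla\log\rho_0|^{j}$ and $\rho_0^{\gamma-1}\nabla^{i}\log\rho_0$ follow analogously by pairing $\rho_0^{\gamma-1}\in L^\infty$ with the $L^2$-bounds on $\nabla^{i}\log\rho_0$ already built into $D^1\cap D^2$.

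The main obstacle is the two-dimensional case, where $D^1(\mathbb{R}^2)$ does not embed into $L^\infty(\mathbb{R}^2)$ (nor even into $L^p$ for $p<\infty$ without additional decay), so the naive Hölder bound above is unavailable. I expect to resolve this by taking the spherical symmetry of $(\rho_0,\nabla\log\rho_0)$ seriously and applying the new radial Sobolev embeddings of Appendix~\ref{improve-sobolev}: these should convert $\nabla^2\log\rho_0\in L^2(\mathbb{R}^2)$, together with the far-field decay coming from $\rho_0\in L^1(\mathbb{R}^2)$, into a weighted $L^p$-bound on $\nabla\log\rho_0$ with the $\rho_0$-dependent weight $\rho_0^{2(\gamma-1)}$ that is exactly what is needed. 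Once such a weighted estimate is in hand, every term in the chain-rule expansion of $\nabla^k\rho_0^{\gamma-1}$ can be absorbed into $L^2(\mathbb{R}^2)$, completing the reduction.
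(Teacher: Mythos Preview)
Your overall plan---reduce to showing $\nabla\rho_0^{\gamma-1}\in H^k(\mathbb{R}^n)$ from the remaining hypotheses via the chain rule $\nabla(\rho_0^{\gamma-1})=(\gamma-1)\rho_0^{\gamma-1}\nabla\log\rho_0$---is exactly what the paper does (packaged as Lemma~\ref{initial3} with $(f,\nu)=(\rho_0,\gamma-1)$, so that $\nu\ge\tfrac12$ is precisely $\gamma\ge\tfrac32$). However, two points in your write-up need correction.

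First, the two-dimensional case is not an obstacle at all. You write that ``$D^1(\mathbb{R}^2)$ does not embed into $L^\infty(\mathbb{R}^2)$,'' but for spherically symmetric \emph{vector} functions this embedding does hold, and is precisely the content of Lemma~\ref{Hk-Ck-vector}: if $\boldsymbol{g}(\boldsymbol{x})=g(r)\tfrac{\boldsymbol{x}}{r}\in D^1(\mathbb{R}^2)$ then $\|\boldsymbol{g}\|_{L^\infty(\mathbb{R}^2)}\le C\|\nabla\boldsymbol{g}\|_{L^2(\mathbb{R}^2)}$. Since $\nabla\log\rho_0$ is such a vector field, one immediately obtains $\nabla\log\rho_0\in L^\infty(\mathbb{R}^2)$ from $\nabla\log\rho_0\in D^1(\mathbb{R}^2)$, and the 2-D case then proceeds identically to the 3-D case. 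Your proposed workaround via ``weighted $L^p$-bounds with weight $\rho_0^{2(\gamma-1)}$'' is unnecessary and its details are left vague.

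Second, you invoke $\rho_0\in L^\infty(\mathbb{R}^n)$ as a ``standing assumption,'' but it does not appear in the reduced hypotheses \eqref{id1'} or \eqref{id1'-high}; it must be \emph{derived}. The paper does this (Lemma~\ref{initial3}(i)) by writing $\nabla^2\rho_0=\rho_0\big((\nabla\log\rho_0)\otimes(\nabla\log\rho_0)+\nabla^2\log\rho_0\big)$ and applying Gagliardo--Nirenberg in the form $\|\rho_0\|_{L^\infty}\le C\|\rho_0\|_{L^1}^{(4-n)/(n+4)}\|\nabla^2\rho_0\|_{L^2}^{2n/(n+4)}$, which closes using $\nabla\log\rho_0\in L^\infty$ (from the previous paragraph) and $\nabla^2\log\rho_0\in L^2$. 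Only after $\rho_0\in L^1\cap L^\infty$ is in hand does your H\"older estimate $\|\rho_0^{\gamma-1}\nabla\log\rho_0\|_{L^2}\le\|\nabla\log\rho_0\|_{L^\infty}\|\rho_0\|_{L^{2(\gamma-1)}}^{\gamma-1}$ become available.
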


\medskip
\subsection{Global spherically symmetric solutions of the degenerate \textbf{CNS} with strictly positive initial density} 
Next we consider the  case that $\bar\rho>0$ in \eqref{e1.3}. For simplicity, when the initial density is strictly positive,  we define the corresponding regular solutions of the Cauchy problem \eqref{eq:1.1benwen}--\eqref{e1.3}.

\begin{mydef}\label{cjk-po}
Assume that  $\bar\rho>0$ in \eqref{e1.3}, $s=2$ or $3$,  and $T>0$. 
The vector function $(\rho, \boldsymbol{u})$ is called an $s$-order regular solution of the Cauchy problem  \eqref{eq:1.1benwen}{\rm--}\eqref{e1.3} in $[0,T]\times \mathbb{R}^n$ $(n=2$ or $3)$, if
\begin{equation*}\begin{split}
\mathrm{(i)}& \ (\rho, \boldsymbol{u}) \ \text{satisfies this problem in the sense of distributions};\\
\mathrm{(ii)}& \ 
\inf_{(t,\boldsymbol{x})\in [0,T]\times\mathbb{R}^n} \rho(t,\boldsymbol{x})>0,\quad  
\rho-\bar\rho\in C([0,T];H^s(\mathbb{R}^n)),\\[2pt]
&\,\,  \nabla\rho  \in L^\infty([0,T]\times\mathbb{R}^n), \quad\rho_t \in C([0,T]; H^{s-1}(\mathbb{R}^n));\\[2pt]
\mathrm{(iii)}& \ \boldsymbol{u}\in C([0,T]; H^s(\mathbb{R}^n))\cap L^2([0,T]; D^{s+1}(\mathbb{R}^n)),\\[2pt]
& \ \boldsymbol{u}_t\in C([0,T]; H^{s-2}(\mathbb{R}^n))\cap L^2([0,T]; D^{s-1}(\mathbb{R}^n)).
\end{split}
\end{equation*}
\end{mydef}

When the initial density is strictly positive, our main results on the global well-posedness 
of the $s$-order  regular solutions of the  Cauchy problem  \eqref{eq:1.1benwen}--\eqref{e1.3} with  
large initial data of spherical symmetry $(s=2,3)$ are stated in the following two theorems:

\begin{thm}\label{th1-po}
Let $n=2$ or $3$, $\bar\rho>0$ in \eqref{e1.3}, and   \eqref{cd1} hold.  
Assume that the initial data  $(\rho_0, \boldsymbol{u}_0)(\boldsymbol{x})$ are spherically symmetric 
and satisfy 
\begin{equation}\label{in-start}
\inf_{\boldsymbol{x}\in \mathbb{R}^n} \rho_0(\boldsymbol{x})>0, \quad (\rho_0-\bar\rho,\boldsymbol{u}_0)\in H^2(\mathbb{R}^n), 
\end{equation}
and, in addition,
\begin{equation}\label{in-start1}
\qquad\nabla\rho_0\in L^\infty(\mathbb{R}^3) \ \text{ when $n=3$}.
\end{equation}
Then, for any  $T>0$, the  Cauchy problem  \eqref{eq:1.1benwen}{\rm--}\eqref{e1.3} 
admits a unique global $2$-order regular solution $(\rho,\boldsymbol{u})(t,\boldsymbol{x})$
in $[0,T]\times\mathbb{R}^n$. Moreover, $(\rho, \boldsymbol{u})$ is spherically symmetric with form \eqref{duichenxingshi}
and satisfies \eqref{er2}{\rm--}\eqref{massconservation} and, for any $T>0$, 
\begin{equation}\label{decay-est-po}
C(T)^{-1}\leq \rho(t,\boldsymbol{x}) \leq C(T) \qquad\,\,
\text{for $\, (t,\boldsymbol{x})\in [0,T]\times \mathbb{R}^n$},
\end{equation}
where $C(T)\geq 1$ is a constant  depending only on $(T,\rho_0,\boldsymbol{u}_0,\bar\rho,n,\alpha,\gamma,A)$.
\end{thm}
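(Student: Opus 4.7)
The plan is to mirror the four-step framework established in \S\ref{section-upper-density}--\S\ref{se46} for the far-field vacuum case, with modifications appropriate to the strictly positive initial density and non-vanishing far-field state $\bar\rho>0$. First I would establish local-in-time existence of a unique $2$-order regular solution in the sense of Definition \ref{cjk-po}. This is less delicate than its vacuum counterpart because, with $\inf_{\boldsymbol{x}}\rho_0>0$ and $\bar\rho>0$, the momentum equation $\eqref{eq:1.1benwen}_2$ is uniformly parabolic on a short time interval and the classical symmetric hyperbolic-parabolic theory applies; uniqueness at the regularity level of Definition \ref{cjk-po} follows from a standard energy argument on the difference of two solutions. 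Global existence then reduces, via the method of continuity, to producing uniform-in-time a priori bounds on the local solution.

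The heart of the matter is to obtain global upper and lower bounds for $\rho$ together with $H^2$ regularity for $\boldsymbol{u}$. I would carry this out in four steps parallel to the vacuum case, adapted as follows. First, the basic energy functional must be recentered around $\bar\rho$ via the nonnegative pressure potential $\Pi(\rho)=A(\gamma-1)^{-1}\bigl(\rho^\gamma-\bar\rho^\gamma-\gamma\bar\rho^{\gamma-1}(\rho-\bar\rho)\bigr)$, which is integrable over $\mathbb{R}^n$ once $(\rho_0-\bar\rho)\in H^2$. The BD-entropy identity is likewise applied to the perturbation $\rho-\bar\rho$, delivering the natural control of $\nabla\rho$. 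Second, a region-segmentation argument yields the upper bound for $\rho$: in the exterior $\{r\ge 1\}$, the BD entropy combined with the spherically symmetric Sobolev embeddings of Appendix \ref{improve-sobolev} already forces $\rho\in L^\infty$, while in the unit ball $\{r<1\}$, one runs the weighted radial estimates of \S\ref{section-upper-density} to control both $\rho$ and the effective velocity $\boldsymbol{v}=\boldsymbol{u}+2\alpha\nabla\log\rho$ simultaneously. Third, as observed in \S\ref{section-effective}, the radial projection of $\boldsymbol{v}$ solves a damped transport equation; combining this structure with the just-established upper bound of $\rho$ and the method of characteristics produces the $L^\infty$ bound of $\boldsymbol{v}$.

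With $\rho\le C(T)$ and $\boldsymbol{v}\in L^\infty$ in hand, the lower bound in \eqref{decay-est-po} is obtained by propagating $\inf\rho_0>0$ along characteristics of $\boldsymbol{v}$; this is considerably simpler than the vacuum analogue of \S\ref{section-nonformation}, because one only needs to preserve strict positivity rather than quantify a fine rate of decay toward vacuum. Fourth, once $\rho$ is pinned between two positive constants, the momentum system becomes uniformly parabolic, so the $H^2$-regularity of $(\rho-\bar\rho,\boldsymbol{u})$ and the time-weighted estimates encoded in \eqref{er2} follow from the same higher-order energy arguments as in \S\ref{section-global2}, after straightforward rewriting in terms of $\rho-\bar\rho$ instead of $\nabla\log\rho$. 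Conservation of mass and momentum as in \eqref{massconservation}, together with the radial form \eqref{duichenxingshi}, is then inherited from the local solution by standard limiting arguments.

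The main obstacle is identical in spirit to the vacuum case: the coordinate singularity $\tfrac{1}{r}$ at the origin in \eqref{cosingu}, which blocks a direct one-dimensional reduction and forces the use of the region-segmentation method together with the weighted radial estimates in $\{r<1\}$. A secondary, theorem-specific, difficulty is that the nontrivial background state $\bar\rho>0$ distorts the natural energy and BD-entropy functionals, so each step of the vacuum framework must be re-derived for $(\rho-\bar\rho)\in L^2(\mathbb{R}^n)$ rather than for $\rho$ itself, and the pointwise condition $\nabla\rho_0\in L^\infty(\mathbb{R}^3)$ in \eqref{in-start1} replaces the analogous $\nabla\log\rho_0\in L^\infty(\mathbb{R}^3)$ assumption used when $n=3$ in Theorem \ref{th1}. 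Fortunately, the strict positivity of $\rho$ removes the double degeneracy of \eqref{cosingu}, so once the upper bound is secured the remaining estimates become less delicate than in the vacuum theorems, and the continuity argument closes without recourse to the more elaborate function-space framework used there.
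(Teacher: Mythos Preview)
Your overall framework matches the paper's approach in \S\ref{nonvacuumfarfield}: local well-posedness via uniform parabolicity, the recentered energy and BD-entropy built from $j_\gamma(\rho)$ (your $\Pi(\rho)$), region segmentation for the upper bound of $\rho$, and the damped transport structure plus characteristics for the $L^\infty$ bound of $\boldsymbol{v}$. The treatment of the exterior region and the higher-order estimates is also essentially as in the paper.

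There is, however, a genuine gap in your mechanism for the lower bound. The density is transported by $\boldsymbol{u}$, not by $\boldsymbol{v}$, so ``propagating $\inf\rho_0>0$ along characteristics of $\boldsymbol{v}$'' has no direct meaning; and along the correct characteristics one has $\tfrac{d}{dt}\log\rho=-\diver\boldsymbol{u}$, which would require $\diver\boldsymbol{u}\in L^1([0,T];L^\infty)$. At the stage where only $\rho\le C(T)$ and $\boldsymbol{v}\in L^\infty$ are known, no pointwise control of $\nabla\boldsymbol{u}$ is available---that control comes only from the higher-order estimates, which in turn \emph{require} the lower bound of $\rho$ (via the equivalences \eqref{equiv-eu}). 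The paper breaks this circularity differently (Lemma \ref{lemma-lowerbound}, outlined in \S\ref{positive-application}): one first derives the $L^2$ estimate of $u$ exactly as in Lemma \ref{ele}, then multiplies the continuity equation by $\rho^{-1}\log(\rho/\bar\rho)$ and integrates to obtain $|\log(\rho/\bar\rho)|_2\le C(T)$, and finally combines this with $2\alpha(\log\rho)_r=v-u$ and a one-dimensional Sobolev argument to reach $|\log(\rho/\bar\rho)|_\infty\le C(T)$. The argument is indeed simpler than the vacuum analogue---because $\log(\rho_0/\bar\rho)\in L^2(I)$ globally, the resulting bound is uniform in $r$---but it is energy-based, not characteristic-based.
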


Similar to the flow with far-field vacuum, 
the $2$-order regular solution with strictly positive initial density is a classical solution when $n=2$, but not when $n=3$. In order to obtain the corresponding classical solutions for $n=3$, we need to establish the following global well-posedness of the $3$-order regular solutions. 

\begin{thm}\label{th1-high-po}
Assume that $n=2$ or $3$, $\bar\rho>0$ in \eqref{e1.3}, and    \eqref{cd1} holds.
If the initial data  $(\rho_0, \boldsymbol{u}_0)(\boldsymbol{x})$ are spherically symmetric 
and satisfy 
\begin{equation}\label{in-start-high}
\inf_{\boldsymbol{x}\in \mathbb{R}^n} \rho_0(\boldsymbol{x})>0,
\quad (\rho_0-\bar\rho,\boldsymbol{u}_0)\in H^3(\mathbb{R}^n),
\end{equation}
then, for any  $T>0$, the Cauchy problem   \eqref{eq:1.1benwen}{\rm--}\eqref{e1.3} 
admits a unique global $3$-order regular solution $(\rho,\boldsymbol{u})(t,\boldsymbol{x})$
in $[0,T]\times\mathbb{R}^n$. Moreover,  $(\rho, \boldsymbol{u})$ is spherically symmetric with form \eqref{duichenxingshi}
and satisfies  \eqref{massconservation}, \eqref{er2-high}{\rm--}\eqref{3dclassical2}, 
and \eqref{decay-est-po}.
\end{thm}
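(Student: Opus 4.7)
\textbf{Proof plan for Theorem \ref{th1-high-po}.}
The plan is to mirror the strategy announced in Section \ref{section-intro} for the far-field vacuum case (Theorem \ref{th1-high}), while exploiting the simplifications afforded by a strictly positive initial density. I would first establish local well-posedness of a $3$-order regular solution in the sense of Definition \ref{cjk-po} on some maximal interval $[0,T^*)$: because $\inf \rho_0 > 0$, the system \eqref{eq:1.1benwen} is classically parabolic on short times (the degeneracies in \eqref{cosingu} disappear once $\rho$ is bounded below), and standard energy arguments for symmetric hyperbolic--parabolic systems deliver existence, uniqueness, and spherical symmetry via the uniqueness. The rest of the proof is an \emph{a priori} argument that no $T^*<\infty$ can be a blow-up point, so that the continuity method used in Section \ref{se46} extends the local solution to $[0,T]$ for every $T>0$.

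The centerpiece is the derivation of time-uniform bounds on $(\rho, \boldsymbol{v})$ with $\boldsymbol{v} = \boldsymbol{u} + 2\alpha \nabla \log \rho$, carried out in radial coordinates via \eqref{cosingu}. Outside the unit ball, where the coordinate singularity $\tfrac{1}{r}$ is harmless, the BD entropy estimate combined with the effectively one-dimensional structure yields the upper bound $\rho(t,r) \le C(T)$ for $r \ge 1$. Inside the unit ball, I would run the region-segmentation weighted estimate developed in Section \ref{section-upper-density}, with weights vanishing at the correct rate at $r=0$ and matched to the exterior estimate at $r=1$, to produce the interior upper bound. The restriction $\gamma \in (1,3)$ in three dimensions enters at this stage, governing the admissible powers in the weighted Hardy/Poincar\'e inequalities used to absorb the $\tfrac{1}{r}$ factors. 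Once $\rho \in L^\infty$, I would use the crucial observation that the radial projection of $\boldsymbol{v}$ solves a damped transport equation; the method of characteristics then yields $\boldsymbol{v} \in L^\infty([0,T]\times \mathbb{R}^n)$. Combining these two bounds with the basic energy identity gives the positive lower bound for $\rho$ appearing in \eqref{decay-est-po}. Note that, in contrast to the vacuum case, the cavitation analysis of Section \ref{section-nonformation} simplifies markedly: the initial lower bound $\inf \rho_0>0$ is transported by the continuity equation once $\diver \boldsymbol{u}$ is controlled, so the delicate point is only to prevent implosion, not cavitation.

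With $\rho$ pinched between two positive constants and $\boldsymbol{v}$ bounded, the operator $L$ in \eqref{operatordefinition} acts as a uniformly elliptic operator modulo lower-order commutators involving $\nabla \log \rho$, and the $H^3$ theory can be closed as in Sections \ref{section-global2}--\ref{section-global3}. Concretely, I would differentiate \eqref{eq:1.1benwen} once, twice, and three times in space, and once in time, test against the corresponding derivatives of $\boldsymbol{u}$, and use the Sobolev embeddings for spherically symmetric functions collected in Appendix \ref{improve-sobolev} to control the resulting weighted radial integrals without loss of powers in $r$. Standard time-weighted bootstrapping then produces the $\sqrt{t}$-weighted estimates \eqref{er2-high} for $\boldsymbol{u}_{tt}$, $\nabla^2 \boldsymbol{u}_t$, and $\nabla^4 \boldsymbol{u}$. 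Continuity of the highest derivatives listed in \eqref{2dclassical2}--\eqref{3dclassical2} follows by Sobolev embedding once the $H^3$-bound on $(\rho-\bar\rho, \boldsymbol{u})$ is in place, and conservations \eqref{massconservation} follow by integration against cutoffs and passage to the limit using the decay provided by $(\rho-\bar\rho) \in H^3$.

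The main obstacle I anticipate is the time-uniform upper bound for $\rho$ inside the unit ball: the BD entropy degenerates at $r=0$, and the strict positivity of $\rho_0$ provides no a priori protection against convective amplification near the origin. The region-segmentation weighted estimate must therefore be calibrated very carefully so that (i) the weights absorb the singular factor $\tfrac{1}{r}$ appearing through $\star$ in \eqref{cosingu}, (ii) the weighted dissipation controls the weighted convection, and (iii) the interior and exterior bounds splice together without losing decay. Once this step is secured, the remaining modifications relative to Theorem \ref{th1-high} are bookkeeping: regularity norms for $\rho-\bar\rho$ replace the logarithmic norms for $\rho$, and the far-field condition $\rho \to \bar\rho > 0$ replaces the decay $\rho \to 0$, with the new far-field value entering harmlessly through constants depending on $\bar\rho$.
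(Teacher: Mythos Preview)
Your plan is essentially the paper's own route through \S\ref{nonvacuumfarfield}: local well-posedness from the nondegenerate parabolic structure, the region-segmentation upper bound for $\rho$, the effective-velocity bound, the lower bound on $\rho$, then the $H^3$ hierarchy and continuation. The main obstacle you identify (the interior upper bound for $\rho$) is the right one.

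One step is underspecified in a way that matters. The damped transport equation for $v$ is $v_t + uv_r + \tfrac{A\gamma}{2\alpha}\rho^{\gamma-1}v = \tfrac{A\gamma}{2\alpha}\rho^{\gamma-1}u$, so the characteristic formula gives only $|v(t)|_\infty \le |v_0|_\infty + C\int_0^t |\rho^{\gamma-1}u|_\infty\,ds$. Having $\rho\in L^\infty$ does not control the forcing, because at this stage there is no $L^\infty$ bound on $u$. The paper closes this loop in \S\ref{subsub-1133} (Lemmas~\ref{rho u-L2-po}--\ref{l4.4-po}): weighted $L^p$ estimates for $\rho^{1/p}u$ are shown to grow at most quadratically in $\sup_s|v|_\infty$, these are bootstrapped into $\int_0^t|\rho^{\gamma-1}u|_\infty\,ds \le C(\epsilon,T)\bigl(1+\int_0^t|v|_\infty\,ds\bigr)+\epsilon\sup_s|v|_\infty$, and Gr\"onwall closes. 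This is the ``unconventional technique'' of \S\ref{subsub-2.2.3} and is not a formality---your sentence ``the method of characteristics then yields $\boldsymbol{v}\in L^\infty$'' hides the nontrivial ingredient. Separately, your aside that the lower bound follows by transporting $\inf\rho_0>0$ once $\diver\boldsymbol{u}$ is controlled would be circular if used as the mechanism, since the $H^3$ estimates that deliver $\diver\boldsymbol{u}\in L^1_tL^\infty_x$ already rely on both bounds for $\rho$ (see \eqref{equiv-eu}); your earlier sentence---that the lower bound comes from $\rho,v\in L^\infty$ plus the basic energy identity---is the correct order of operations, implemented in the paper as Lemma~\ref{lemma-lowerbound} via an $H^1$ estimate on $\log(\rho/\bar\rho)$.
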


\subsection{Applications to the 2-D shallow water equations}\label{subsec-shallow}

Another purpose  of this paper is to establish the well-posedness of regular solutions 
with large data applicable to  physically relevant models in the shallow water theory. In fact, for the spherically symmetric flow, when ${V}=2\alpha\diver(h D(\boldsymbol{u}))$ or   $2\alpha\diver(h \nabla \boldsymbol{u})$ in \eqref{shallow},
due to  
$ D(\boldsymbol{u})=\nabla \boldsymbol{u}$,
\eqref{shallow} is a special case of
system \eqref{eq:1.1benwen} with  $\gamma=n=2$.
Therefore, we can simply replaces $(\rho,\boldsymbol{u},\bar\rho)$ by $(h,\boldsymbol{u},\bar h)$
in Theorems \ref{th1}--\ref{th1-high-po} to obtain the same conclusion for these 2-D viscous 
shallow water models.    
More precisely, we consider the Cauchy problem of \eqref{shallow} with the initial data:
\begin{equation}\label{shallowchu}
(h,\boldsymbol{u})(0,\boldsymbol{x})=(h_0,\boldsymbol{u}_0) (\boldsymbol{x})= ( h_0(|\boldsymbol{x}|),u_0(|\boldsymbol{x}|)\dfrac{\boldsymbol{x}}{|\boldsymbol{x}|}) \qquad
\text{for $\boldsymbol{x} \in \mathbb{R}^2$},
\end{equation}
and the far-field asymptotic condition:
\begin{equation}\label{shallowbian}
\left(h,\boldsymbol{u}\right)(t,\boldsymbol{x})\to (\bar h,\boldsymbol{0}) \qquad\,\,
\text{as $\,\left|\boldsymbol{x}\right|\to \infty\,$ for $t\ge 0$},
\end{equation}
where $\bar h\geq 0$ is a fixed constant.
We establish the global spherically symmetric (classical) solutions taking the form:
\begin{equation}\label{qiu-2}
(h,\boldsymbol{u})(t,\boldsymbol{x})=(h(t,|\boldsymbol{x}|),u(t,|\boldsymbol{x}|)\frac{\boldsymbol{x}}{|\boldsymbol{x}|}).
\end{equation}

First,  when  $\bar h=0$, the  regular solutions of the Cauchy problem \eqref{shallow} with \eqref{shallowchu}--\eqref{shallowbian} can be defined analogously to Definition \ref{cjk} 
with  $\gamma=n=2$, and $(\rho,\boldsymbol{u})$ replaced by $(h,\boldsymbol{u})$.  
Then, from  Theorems \ref{th1}--\ref{th1-high} and Corollary \ref{coro1.1},  
the following conclusions hold:

\begin{cor}\label{thshallow}
Let ${V}=2\alpha\diver(h D(\boldsymbol{u}))$ or  $2\alpha\diver(h \nabla \boldsymbol{u})$,  
and $\bar h=0$. If the initial data $(h_0, \boldsymbol{u}_0)(\boldsymbol{x})$ are 
spherically symmetric and satisfy 
\begin{equation*}
0<h_0(\boldsymbol{x})\in L^1(\mathbb{R}^2), \quad\nabla\log h_0(\boldsymbol{x})\in  D^1(\mathbb{R}^2),\quad  \boldsymbol{u}_0(\boldsymbol{x})\in H^2(\mathbb{R}^2),
\end{equation*}
then, for any $T>0$,  the Cauchy problem \eqref{shallow} with  \eqref{shallowchu}{\rm--}\eqref{shallowbian} admits a unique $2$-regular solution $(h, \boldsymbol{u})(t,\boldsymbol{x})$
in $[0,T]\times\mathbb{R}^2$ that satisfies \eqref{er2}. Moreover, $(h, \boldsymbol{u})$ is spherically symmetric with form \eqref{qiu-2}
and satisfies \eqref{2dclassical1} and \eqref{massconservation}{\rm--}\eqref{decay-est} 
with $(\rho,\boldsymbol{u})$ replaced by $(h,\boldsymbol{u})$.
\end{cor}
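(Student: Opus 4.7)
The plan is to deduce Corollary \ref{thshallow} as a direct specialization of Corollary \ref{coro1.1} (and hence of Theorem \ref{th1}) to the parameter choice $\gamma=n=2$. First I would observe that, for any spherically symmetric velocity of the form $\boldsymbol{u}(t,\boldsymbol{x})=u(t,|\boldsymbol{x}|)\boldsymbol{x}/|\boldsymbol{x}|$, the strain-rate tensor and the full gradient coincide, $D(\boldsymbol{u})=\nabla\boldsymbol{u}$, as already recorded in \eqref{shallownian}. Consequently, the two admissible viscosities $V=2\alpha\diver(h D(\boldsymbol{u}))$ and $V=2\alpha\diver(h\nabla\boldsymbol{u})$ produce \emph{the same} momentum equation once we restrict to the symmetry ansatz \eqref{qiu-2}, so there is no loss in treating them simultaneously.

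Next I would verify the termwise identification of the two Cauchy problems. Setting $(\rho,\bar\rho)=(h,\bar h)=(h,0)$ and taking the polytropic constitutive law $P=Ah^2$ (i.e.\ $\gamma=2$), the continuity equation $\eqref{shallow}_1$ reads exactly as $\eqref{eq:1.1benwen}_1$, while $\eqref{shallow}_2$ with either of the two permissible viscosities reproduces $\eqref{eq:1.1benwen}_2$ in $\mathbb{R}^2$; similarly, \eqref{shallowchu}--\eqref{shallowbian} coincide with \eqref{eqs:CauchyInit}--\eqref{e1.3}. Because $\gamma=2\in[\tfrac{3}{2},\infty)$ and $n=2$, Corollary \ref{coro1.1} is applicable with the reduced assumption set \eqref{id1'}; these reduced conditions are literally the hypotheses imposed on $(h_0,\boldsymbol{u}_0)$ in Corollary \ref{thshallow}.

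It then remains only to invoke Corollary \ref{coro1.1}. The outputs transfer verbatim once $\rho$ is renamed $h$: existence and uniqueness of a global $2$-order regular solution on $[0,T]\times\mathbb{R}^2$; the higher-regularity bound \eqref{er2}; the spherical-symmetry structure \eqref{duichenxingshi}, which here specializes to \eqref{qiu-2}; the conservation laws \eqref{massconservation}; the classical regularity \eqref{2dclassical1}; and the pointwise density bounds \eqref{decay-est}. Uniqueness at the level of the shallow water system also transfers, since any $2$-order regular solution of \eqref{shallow} under the symmetry ansatz is a $2$-order regular solution of the degenerate \textbf{CNS} with $\gamma=n=2$, to which the uniqueness part of Corollary \ref{coro1.1} applies.

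There is no genuine analytic obstacle in this corollary; the entire burden is carried by the proofs of Theorem \ref{th1} and Corollary \ref{coro1.1}, and the only work here is bookkeeping to confirm that the two viscosity formulations agree under spherical symmetry and that the parameter matching $\gamma=n=2$ is permissible. The mildly delicate point to keep in mind is simply that the equivalence $D(\boldsymbol{u})=\nabla\boldsymbol{u}$ is specific to radial vector fields, so the reduction is valid precisely because we are working in the class \eqref{qiu-2}.
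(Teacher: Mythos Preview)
Your proposal is correct and matches the paper's approach exactly: the paper states in \S\ref{subsec-shallow} that, since $D(\boldsymbol{u})=\nabla\boldsymbol{u}$ for spherically symmetric flow, system \eqref{shallow} with either viscosity is the special case $\gamma=n=2$ of \eqref{eq:1.1benwen}, and Corollary \ref{thshallow} then follows directly from Theorem \ref{th1} and Corollary \ref{coro1.1} by renaming $(\rho,\bar\rho)$ as $(h,\bar h)$. Your identification of the reduced initial condition \eqref{id1'} via $\gamma=2\ge\tfrac{3}{2}$ is precisely what the paper intends.
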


\begin{cor}\label{thshallow33}
Let ${V}=2\alpha\diver(h D(\boldsymbol{u}))$ or  $2\alpha\diver(h \nabla \boldsymbol{u})$, and $\bar h=0$. If the initial data $(h_0, \boldsymbol{u}_0) (\boldsymbol{x})$ are spherically symmetric, and satisfy 
\begin{equation*}
0<h_0(\boldsymbol{x})\in L^1(\mathbb{R}^2), \quad\nabla\log h_0(\boldsymbol{x})\in  D^1(\mathbb{R}^2)\cap D^2(\mathbb{R}^2),\quad  \boldsymbol{u}_0(\boldsymbol{x})\in H^3(\mathbb{R}^2),
\end{equation*}
then, for any $T>0$,  the Cauchy problem \eqref{shallow} 
with \eqref{shallowchu}{\rm--}\eqref{shallowbian} admits a unique $3$-regular 
solution $(h, \boldsymbol{u})(t,\boldsymbol{x})$
in $[0,T]\times\mathbb{R}^2$ that satisfies \eqref{er2-high}. Moreover, $(h, \boldsymbol{u})$ is spherically symmetric with form \eqref{qiu-2} and 
satisfies \eqref{massconservation}{\rm--}\eqref{decay-est} and \eqref{2dclassical2} 
with $(\rho,\boldsymbol{u})$ replaced by $(h,\boldsymbol{u})$.
\end{cor}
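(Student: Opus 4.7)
The plan is to observe that Corollary \ref{thshallow33} is a direct reduction of Theorem \ref{th1-high} (combined with Corollary \ref{coro1.1}) to the case $\gamma=n=2$. First I would note that for a spherically symmetric velocity field $\boldsymbol{u}(t,\boldsymbol{x})=u(t,|\boldsymbol{x}|)\tfrac{\boldsymbol{x}}{|\boldsymbol{x}|}$, the strain tensor satisfies $D(\boldsymbol{u})=\nabla\boldsymbol{u}$; hence the two admissible viscosity terms $2\alpha\,\mathrm{div}(h\, D(\boldsymbol{u}))$ and $2\alpha\,\mathrm{div}(h\nabla\boldsymbol{u})$ coincide on the spherically symmetric class, and system \eqref{shallow}--\eqref{shallownian} is then literally \eqref{eq:1.1benwen} with $n=2$, $\gamma=2$, $A$ rescaled, and $(\rho,\boldsymbol{u})$ renamed $(h,\boldsymbol{u})$.

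Next I would check that the hypotheses line up. Since $n=2$, the constraint \eqref{cd1} is $\gamma\in(1,\infty)$, which is satisfied by $\gamma=2$. Moreover $\gamma=2\geq \tfrac{3}{2}$, so Corollary \ref{coro1.1} applies, and the assumptions
\begin{equation*}
0<h_0\in L^1(\mathbb{R}^2),\qquad \nabla\log h_0\in D^1(\mathbb{R}^2)\cap D^2(\mathbb{R}^2),\qquad \boldsymbol{u}_0\in H^3(\mathbb{R}^2)
\end{equation*}
in the statement of the corollary coincide exactly with the reduced initial-data hypotheses \eqref{id1'-high} of Corollary \ref{coro1.1}. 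The spherical symmetry \eqref{eqs:CauchyInit} is also inherited from \eqref{shallowchu}, and the far-field asymptotic \eqref{shallowbian} with $\bar h=0$ is precisely \eqref{e1.3} with $\bar\rho=0$.

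Therefore I would invoke Theorem \ref{th1-high} (through Corollary \ref{coro1.1}) to produce, for any $T>0$, a unique global $3$-order regular solution on $[0,T]\times\mathbb{R}^2$, satisfying the regularity bound \eqref{er2-high}, the $2$-D classical regularity \eqref{2dclassical2}, the spherically symmetric structure \eqref{duichenxingshi}, the conservation laws \eqref{massconservation}, and the pointwise bounds \eqref{decay-est}. Translating back by the substitution $(\rho,\boldsymbol{u})\mapsto (h,\boldsymbol{u})$ yields precisely the conclusions claimed in Corollary \ref{thshallow33}.

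Since the proof is a matter of matching notations and verifying that the hypotheses fall within the scope of the already-established Theorem \ref{th1-high} and its corollary, there is no genuine analytical obstacle at this stage; the sole (trivial) point requiring care is the identification $D(\boldsymbol{u})=\nabla\boldsymbol{u}$ for spherically symmetric fields, which ensures that both formulations of the viscosity $V$ in \eqref{shallownian} reduce to the viscous operator treated throughout the paper. All the real difficulty, namely the uniform upper and lower bounds on $h$ and the control of the effective velocity in the presence of the coordinate singularity at the origin, has already been absorbed into Theorem \ref{th1-high}.
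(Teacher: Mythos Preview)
Your proposal is correct and mirrors the paper's own treatment: the paper states in \S\ref{subsec-shallow} that the shallow water corollaries follow directly from Theorems \ref{th1}--\ref{th1-high} and Corollary \ref{coro1.1} via the identification $D(\boldsymbol{u})=\nabla\boldsymbol{u}$ for spherically symmetric flows and the substitution $(\rho,\boldsymbol{u},\bar\rho)\mapsto(h,\boldsymbol{u},\bar h)$ with $\gamma=n=2$. Your verification that $\gamma=2\geq\tfrac32$ places the hypotheses within the reduced condition \eqref{id1'-high} is exactly the point, and there is nothing further to add.
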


Second, when $\bar h>0$, the regular solutions of the Cauchy problem \eqref{shallow} with \eqref{shallowchu}--\eqref{shallowbian} can be defined analogously to Definition \ref{cjk-po}, with  $\gamma=n=2$ and $(\rho,\boldsymbol{u},\bar \rho)$ replaced by $(h,\boldsymbol{u},\bar h)$. 
Then, from  Theorems \ref{th1-po}--\ref{th1-high-po},  the following conclusions  hold:

\begin{cor}\label{thshallow-po}
Let ${V}=2\alpha\diver(h D(\boldsymbol{u}))$ or  $2\alpha\diver(h \nabla \boldsymbol{u})$, and $\bar h>0$. If the initial data $(h_0, \boldsymbol{u}_0)(\boldsymbol{x})$ are spherically symmetric and satisfy 
\begin{equation*}
\inf_{x\in \mathbb{R}^2} h_0(x)>0,\qquad (h_0-\bar h,\boldsymbol{u}_0)\in H^2(\mathbb{R}^2),
\end{equation*}
then, for any $T>0$, the Cauchy problem \eqref{shallow} 
with  \eqref{shallowchu}{\rm--}\eqref{shallowbian} 
admits a unique $2$-regular solution $(h, \boldsymbol{u})(t,\boldsymbol{x})$
in $[0,T]\times\mathbb{R}^2$ that satisfies \eqref{er2}.
Moreover, $(h, \boldsymbol{u})$ is spherically symmetric with form \eqref{qiu-2} 
and satisfies \eqref{2dclassical1}, \eqref{massconservation}, 
and \eqref{decay-est-po} with $(\rho,\boldsymbol{u},\bar\rho)$ 
replaced by $(h,\boldsymbol{u},\bar h)$.
\end{cor}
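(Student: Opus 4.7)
\medskip

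\noindent\textbf{Proof Plan for Corollary \ref{thshallow-po}.}
The plan is to reduce the statement to a direct application of Theorem \ref{th1-po} by identifying the shallow water Cauchy problem as the $\gamma=n=2$ instance of the degenerate \textbf{CNS} problem \eqref{eq:1.1benwen}--\eqref{e1.3}. First, I would record the two pointwise identities that underpin the entire reduction: for a spherically symmetric velocity field $\boldsymbol{u}(\boldsymbol{x})=u(|\boldsymbol{x}|)\boldsymbol{x}/|\boldsymbol{x}|$, a direct computation shows $D(\boldsymbol{u})=\tfrac{1}{2}(\nabla\boldsymbol{u}+(\nabla\boldsymbol{u})^\top)=\nabla\boldsymbol{u}$, so that the two admissible viscosity choices ${V}(h,\boldsymbol{u})=2\alpha\diver(hD(\boldsymbol{u}))$ and ${V}(h,\boldsymbol{u})=2\alpha\diver(h\nabla\boldsymbol{u})$ coincide on the class of symmetric solutions considered here. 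Simultaneously, $A\nabla(h^2)=\nabla P$ with $P=Ah^\gamma$ for $\gamma=2$, so that the shallow water momentum equation is literally $\eqref{eq:1.1benwen}_2$ with $(\rho,\bar\rho)$ renamed to $(h,\bar h)$.

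Next, I would verify that the hypotheses of Theorem \ref{th1-po} are met in the 2-D, $\gamma=2$ case. Namely, $\bar h>0$ plays the role of $\bar\rho>0$; the initial regularity $(h_0-\bar h,\boldsymbol{u}_0)\in H^2(\mathbb{R}^2)$ together with $\inf_{\boldsymbol{x}\in\mathbb{R}^2} h_0>0$ is exactly hypothesis \eqref{in-start}; and the auxiliary condition \eqref{in-start1} is not needed since $n=2$. The adiabatic range condition \eqref{cd1} in two dimensions imposes only $\gamma\in(1,\infty)$, which trivially admits $\gamma=2$.

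With these identifications in hand, Theorem \ref{th1-po} yields a unique global $2$-order regular solution $(h,\boldsymbol{u})(t,\boldsymbol{x})$ on $[0,T]\times\mathbb{R}^2$ in the sense of Definition \ref{cjk-po} (with the obvious substitution of symbols), satisfying the spherically symmetric form \eqref{duichenxingshi}, the additional regularity \eqref{er2}, the total mass and momentum conservation \eqref{massconservation}, and the two-sided bound \eqref{decay-est-po}. Since $n=2$, the conclusion that the solution is classical in the sense of \eqref{2dclassical1} is inherited automatically from the 2-D statement in Theorem \ref{th1-po}. Rewriting \eqref{duichenxingshi} as \eqref{qiu-2} and rewriting \eqref{decay-est-po} with $(\rho,\boldsymbol{u},\bar\rho)$ replaced by $(h,\boldsymbol{u},\bar h)$ then gives precisely the claimed corollary.

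I do not expect a genuine obstacle in this argument: the whole content of Corollary \ref{thshallow-po} is that the shallow water system is, in the spherically symmetric setting, a particular case of the degenerate \textbf{CNS} already handled by Theorem \ref{th1-po}. The only thing to be careful about is checking that the two candidate viscosity terms indeed agree on radial fields, which is a one-line computation from $D(\boldsymbol{u})=\nabla\boldsymbol{u}$; once this is observed, the corollary follows by pure relabeling, without any additional estimate or limit argument.
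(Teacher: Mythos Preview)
Your proposal is correct and matches the paper's approach exactly: the paper derives Corollary~\ref{thshallow-po} by observing that, for spherically symmetric flow, $D(\boldsymbol{u})=\nabla\boldsymbol{u}$ so that the shallow water system is the $\gamma=n=2$ instance of \eqref{eq:1.1benwen}, and then simply replacing $(\rho,\boldsymbol{u},\bar\rho)$ by $(h,\boldsymbol{u},\bar h)$ in Theorem~\ref{th1-po}. Your careful verification of the hypotheses (in particular that \eqref{in-start1} is vacuous when $n=2$) is a welcome elaboration of what the paper leaves implicit.
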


\begin{cor}\label{thshallow33-po}
Let ${V}=2\alpha \diver(h D(\boldsymbol{u}))$ or  $2\alpha\diver(h \nabla \boldsymbol{u})$, and $\bar h>0$. If the initial data $(h_0, \boldsymbol{u}_0)(\boldsymbol{x})$ are spherically symmetric 
and satisfy 
\begin{equation*}
\inf_{x\in \mathbb{R}^2} h_0(x)>0,\qquad (h_0-\bar h,\boldsymbol{u}_0)\in H^3(\mathbb{R}^2),
\end{equation*}
then, for any $T>0$,  the Cauchy problem \eqref{shallow} 
with  \eqref{shallowchu}{\rm--}\eqref{shallowbian} admits 
a unique $3$-regular solution $(h, \boldsymbol{u})(t,\boldsymbol{x})$
in $[0,T]\times\mathbb{R}^2$ that satisfies \eqref{er2-high}.
Moreover, $(h, \boldsymbol{u})$ is spherically symmetric with form \eqref{qiu-2} 
and satisfies \eqref{massconservation}, \eqref{2dclassical2}, 
and \eqref{decay-est-po}
with $(\rho,\boldsymbol{u},\bar\rho)$ replaced by $(h,\boldsymbol{u},\bar h)$.
\end{cor}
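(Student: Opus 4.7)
The plan is to deduce Corollary \ref{thshallow33-po} as an immediate specialization of Theorem \ref{th1-high-po}, via the observation that the 2-D viscous shallow water system \eqref{shallow} is precisely the case $\gamma=n=2$ of the general degenerate \textbf{CNS} \eqref{eq:1.1benwen}. First I would confirm the algebraic correspondence: with $P=Ah^2=Ah^\gamma|_{\gamma=2}$, the continuity and momentum equations match under the relabeling $(\rho,\bar\rho)\leftrightarrow(h,\bar h)$, and the identity \eqref{shallownian} already noted in the introduction shows that, for spherically symmetric flows in $\mathbb{R}^2$, the two prescriptions ${V}=2\alpha\diver(hD(\boldsymbol{u}))$ and ${V}=2\alpha\diver(h\nabla\boldsymbol{u})$ coincide, and both agree with $2\alpha\diver(\rho D(\boldsymbol{u}))$ in $\eqref{eq:1.1benwen}_2$ after the substitution.

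Next I would verify that the hypotheses of Theorem \ref{th1-high-po} are met. The exponent $\gamma=2$ lies in the admissible range \eqref{cd1} for $n=2$, and $\bar h>0$ plays the role of $\bar\rho>0$. The strict positivity $\inf_{\boldsymbol{x}\in\mathbb{R}^2}h_0(\boldsymbol{x})>0$ together with $(h_0-\bar h,\boldsymbol{u}_0)\in H^3(\mathbb{R}^2)$ is exactly \eqref{in-start-high} at $n=2$; crucially, the auxiliary condition \eqref{in-start1} is required only when $n=3$, so no further assumption on $\nabla h_0$ is needed. Spherical symmetry and the far-field condition \eqref{shallowbian} match \eqref{eqs:CauchyInit}--\eqref{e1.3} under the substitution, and the notion of 3-regular solution used in the corollary is, by construction, Definition \ref{cjk-po} with $(\rho,\boldsymbol{u},\bar\rho)$ replaced by $(h,\boldsymbol{u},\bar h)$, so the two solution concepts are identified tautologically.

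Applying Theorem \ref{th1-high-po} then yields at once existence, uniqueness and global regularity of $(h,\boldsymbol{u})$, the spherical form \eqref{qiu-2} (from \eqref{duichenxingshi}), the higher-order time-weighted bound \eqref{er2-high}, the mass and momentum conservation \eqref{massconservation}, the 2-D classical regularity \eqref{2dclassical2} coming from item $\mathrm{(i)}$ of that theorem, and the uniform positive upper and lower pointwise bound \eqref{decay-est-po} after the relabeling $(\rho,\boldsymbol{u},\bar\rho)\leftrightarrow(h,\boldsymbol{u},\bar h)$. There is no substantive obstacle here: the proof is purely one of identification, and the full content of the corollary is already embedded in Theorem \ref{th1-high-po}; the role of Corollary \ref{thshallow33-po} is to make explicit the applicability of the general theory to the physically relevant viscous shallow water model.
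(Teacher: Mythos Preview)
Your proposal is correct and follows exactly the approach the paper takes: Corollary \ref{thshallow33-po} is obtained by specializing Theorem \ref{th1-high-po} to $\gamma=n=2$ under the relabeling $(\rho,\boldsymbol{u},\bar\rho)\leftrightarrow(h,\boldsymbol{u},\bar h)$, using the identity $D(\boldsymbol{u})=\nabla\boldsymbol{u}$ for spherically symmetric flows to reconcile the two viscosity forms. One minor imprecision: the auxiliary condition \eqref{in-start1} is not a hypothesis of Theorem \ref{th1-high-po} at all (it appears only in the $2$-order Theorem \ref{th1-po}), so there is nothing to check on that front rather than a dimensional restriction to invoke.
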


We now make some remarks on the results of this paper.

\begin{rk}
In the global-in-time well-posedness results
obtained in {\rm Theorems \ref{th1}--\ref{th1-high-po}}, 
there are no restrictions on the size of the initial data.
On the other hand,  when the initial data allow a far-field vacuum, 
the initial condition \eqref{id1}{\rm--}\eqref{shangjie3} or \eqref{id1-high} identifies 
a class of spherically symmetric initial data that provides the unique solvability 
of the  Cauchy problem  \eqref{eq:1.1benwen}{\rm--}\eqref{e1.3}. 
For example, one can  choose $(\rho_0,\boldsymbol{u}_0)$ satisfying the following constraints{\rm :} 
$\boldsymbol{u}_0(\boldsymbol{x})=\frac{\boldsymbol{x}}{r} u_0(r)$ 
with $u_0(r)\in C_{\rm c}^\infty((0,\infty))$, 
and $0<\rho_0(\boldsymbol{x})\in C^\infty(\mathbb{R}^n)$ 
with $\rho_0(\boldsymbol{x})=\rho_0(r)$ such that
\begin{enumerate}
\item[$\mathrm{(i)}$] $\rho_0(r) \rightarrow 0$  algebraically as $r\rightarrow \infty${\rm :}
\begin{equation*}
0<\lim_{r\to \infty}r^{\iota}\rho_0(r)< \infty \qquad    
\text{for $\,\iota>\max\big\{n,\frac{n-2}{2\gamma-2}\big\}$};
\end{equation*}
\item[$\mathrm{(ii)}$] 
$\rho_0(r) \rightarrow 0$ exponentially as $r\rightarrow \infty${\rm :}
\begin{equation*}
0<\lim_{r\to\infty} e^{r^\iota}\rho_0(r)<\infty \qquad \text{for $\,0<\iota< 2-\frac{n}{2}$};
\end{equation*}
\item[$\mathrm{(iii)}$] $\rho_0(r) \rightarrow 0$ as $r\rightarrow \infty$, 
and the decay rate is faster than that of the exponential rate{\rm :}
\begin{equation*}
0<\lim_{r\to\infty} r^{r^\iota}\rho_0(r)<\infty \qquad \text{for $\,0<\iota< 2-\frac{n}{2}$}.
\end{equation*}
\end{enumerate}
\end{rk}

\begin{rk}\label{rk-nabla}
We give some comments on the  constraints for $\rho_0$ given 
in {\rm Theorems \ref{th1}--\ref{th1-high-po}}.
The boundedness of the effective velocity 
$\boldsymbol{v}=\boldsymbol{u}+2\alpha\nabla\log\rho$ plays a key role in our analysis, 
which requires that $\boldsymbol{v}(0,\boldsymbol{x})\in L^\infty(\mathbb{R}^n)$. 
Since $\boldsymbol{u}_0\in L^\infty(\mathbb{R}^n)$ holds by classical Sobolev embedding theorems, 
$\nabla\log\rho_0 \in L^\infty(\mathbb{R}^n)$ is required. 

\smallskip
In $\mathbb{R}^2$, by the following critical Sobolev embedding for spherically symmetric 
vector functions {\rm(}see {\rm Lemma \ref{Hk-Ck-vector}} 
in {\rm Appendix \ref{improve-sobolev}}{\rm)}{\rm :} 
\begin{equation*}
D^1(\mathbb{R}^2) \hookrightarrow L^{\infty}(\mathbb{R}^2)
\end{equation*}
$(\nabla\log\rho_0,\, \nabla\rho_0)\in L^\infty(\mathbb{R}^2)$ 
are  implied by $\nabla\log\rho_0\in D^1(\mathbb{R}^2)$ in \eqref{id1} and \eqref{id1-high}, 
and $\nabla \rho_0\in D^1(\mathbb{R}^2)$ in \eqref{in-start} and \eqref{in-start-high}, 
respectively. 

\smallskip
In $\mathbb{R}^3$, by the following Sobolev embedding for spherically symmetric vector functions {\rm(}see {\rm Lemma \ref{lemma-L6}}{\rm)}{\rm :}
\begin{equation*}
D^1(\mathbb{R}^3) \hookrightarrow L^{6}(\mathbb{R}^3)
\end{equation*}
and the embedding $L^6(\mathbb{R}^3)\,\cap D^2(\mathbb{R}^3)\hookrightarrow \,L^{\infty}(\mathbb{R}^3)$ due to the classical  Gagliardo-Nirenberg inequality, $(\nabla\log\rho_0,\nabla\rho_0)\in L^\infty(\mathbb{R}^3)$ are implied by $(\nabla\log\rho_0,\nabla\rho_0)\in D^1(\mathbb{R}^3)\cap D^2(\mathbb{R}^3)$ in \eqref{id1-high} and \eqref{in-start-high}, respectively. 
 However, when $n=3$ in {\rm Theorems \ref{th1}} and {\rm \ref{th1-po}}, the assumptions that $\nabla\log\rho_0\in D^1(\mathbb{R}^3)$ and $\rho_0-\bar\rho\in H^2(\mathbb{R}^3)$ do not yield $(\nabla\log\rho_0,\nabla\rho_0) \in L^\infty(\mathbb{R}^3)$, 
 which is the reason why this condition is imposed additionally 
 when $n=3$ in \eqref{shangjie3} and \eqref{in-start1}.
\end{rk}

\begin{rk}
For the $2$-{\rm D} case, {\rm Theorems \ref{th1}--\ref{th1-high-po}} hold for any $\gamma \in (1, \infty)$, while  these theorems are valid for $\gamma \in (1,3)$ for the $3$-{\rm D} case. 

This distinction arises primarily from the factor $r^\frac{m}{2}$ $(m=n-1)$ in the BD entropy estimates under the spherical coordinates $($see {\rm Lemma \ref{energy-BD}} in {\rm \S\ref{section-upper-density}} and {\rm Lemma \ref{energy-bd-po}} in {\rm \S\ref{nonvacuumfarfield}}$)${\rm :} 
\begin{equation}\label{bd135}
\sup_{t\in[0,T]}\big\|r^{\frac{m}{2}}(\sqrt{\rho})_r\big\|_{L^2(0,\infty)} \leq C_0, 
\end{equation}
where $C_0>0$ is a constant depending only on $(\rho_0,u_0,\alpha,\gamma,A,n)$ and $\bar\rho$ if $\bar\rho>0$. Note that \eqref{bd135} can be treated as some weighted $L^2(0,\infty)$-estimates of $(\sqrt{\rho})_r$ with the different weight function $r^\frac{m}{2}$ in $(0,\infty)$.  
In fact, in {\rm \S\ref{section-upper-density}} and {\rm \S \ref{nonvacuumfarfield}}, for establishing the global uniform upper bound for $\rho$ on $[0,T]\times [0,1)$, the main challenging issue 
is the well-known  coordinate singularity at the origin. A key idea to overcome the difficulty is to employ the Hardy 
inequality $($see {\rm Lemma \ref{hardy}} in {\rm Appendix \ref{appA}}$)$ and the BD entropy estimate \eqref{bd135} to derive some  $r$-weighted estimates for $\rho$ on $[0,T]\times [0,1)${\rm :}
\begin{equation}\label{314}
\sup_{t\in[0,T]}\|r^K\rho\|_{L^p(0,1)} \leq C \qquad\, \text{for $p\in [1,\infty]$ and  some proper $K=K(p)$}.
\end{equation}
However, due to the dimension-dependent weight function $r^\frac{m}{2}$ in \eqref{bd135},
the exponent $K$ in \eqref{314} differs between the $2$-{\rm D} and the $3$-{\rm D} cases{\rm :} 
for the $2$-{\rm D} case, $K > -\frac{1}{p}$, while $K\geq 1-\frac{1}{p}$ for the $3$-{\rm D} case. 
Since the range of $K$ is smaller in the $3$-{\rm D} case, 
it narrows the admissible range of $\gamma$ in subsequent calculations. 
Through an elaborate analysis based on \eqref{314} in {\rm \S \ref{section-upper-density}} 
and {\rm \S \ref{nonvacuumfarfield}}, we are able to establish the global upper bound 
of $\rho$ on $[0,T]\times [0,1)$ for all $\gamma\in (1,\infty)$ in the $2$-{\rm D} case, 
and for all $\gamma\in (1,3)$ in the $3$-{\rm D} case as shown in {\rm Theorems \ref{th1}--\ref{th1-high-po}}.  
More detailed calculations can be found in {\rm \S \ref{subsub-2.2.2}}.
\end{rk}

\begin{rk}\label{generalcase}
Under proper modifications, when the initial density is positive and bounded, 
the methodology developed in this paper can be applied to proving the global well-posedness  
of regular solutions with general smooth, spherically symmetric initial data  
of the corresponding Cauchy problem of the barotropic {\rm\textbf{CNS}} 
with  nonlinear density-dependent viscosity coefficients 
in $\mathbb{R}^n$  for {\rm $n=2$, $3$}, which will be addressed 
in our forthcoming paper \cite{CZZ}.
\end{rk}

\section{Notations, Reformulations, and Main Strategies}\label{Section2}
In this section, we first present some notations and conventions in \S\ref{section-notaions}, which will be frequently used throughout this paper. Next, in \S\ref{see1}, we introduce an enlarged reformulation for the Cauchy problem \eqref{eq:1.1benwen}--\eqref{e1.3} to deal with the degeneracy caused by the far-field vacuum, which makes the corresponding problem trackable. In \S\ref{subsection-strategy}, based on such a reformulation,  we show the main strategy and new ideas in our analysis for the flow with far-field vacuum.  Finally,   in \S \ref{positive-application}, we show   that, under proper modifications, the methodology developed here  for dealing with  the large-data problems
with far-field vacuum can also be applied to solving the corresponding large-data problems with strictly 
positive initial density. 

\subsection{Notations}\label{section-notaions}
The notations and conventions in this paper are given as follows:
\subsubsection{Notations in M-D Eulerian coordinates} Throughout the rest of this paper, 
unless otherwise specified, we adopt the notations in \eqref{eulerspace} and the following ones:
\begin{itemize}
\smallskip
\item We always let $n=2$ or $3$ be the dimension of the Euclidean  space $\mathbb{R}^n$, and denote  $m:=n-1$.
\smallskip
\item For a variable $\boldsymbol{y} \in \mathbb{R}^l$ ($l\geq 2$), its $i$-th component is denoted by $y_i$ ($1\leq i\leq l$), and $\boldsymbol{y} = (y_1, \cdots\!, y_l)^\top$.  We always let $\boldsymbol{x}=(x_1,\cdots\!,x_n)^\top$ be the spatial variable of $\mathbb{R}^n$.
\smallskip
\item For any vector function $\boldsymbol{f}: E\subset \mathbb{R}^l \to \mathbb{R}^q$ ($l,q\geq 2$, $E$ is a measurable set), its $i$-th component is denoted by $f_i$ ($1\leq i\leq q$), and $\boldsymbol{f} = (f_1, \cdots\!, f_q)^\top$.
\smallskip
\item For any function $f$ defined on a measurable subset of $\mathbb{R}^l$ ($l\geq 1$), if the independent variable of $f$ is $\boldsymbol{y}=(y_1,\cdots\!,y_l)^\top$, then 
\begin{align*}
&\qquad \partial_{y}^\varsigma f=\partial_{y_1}^{\varsigma_1}\cdots\partial_{y_l}^{\varsigma_l} f=f_{\underbrace{\text{\tiny$y_1\cdots y_1$}}_{\text{$\varsigma_1$-times}}\cdots\underbrace{\text{\tiny$y_l\cdots y_l$}}_{\text{$\varsigma_l$-times}}}= \frac{\partial^{\varsigma_1+\cdots+ \varsigma_l}}{\partial y_1^{\varsigma_1}\cdots \partial y_l^{\varsigma_l}}f \quad\ \text{for }\varsigma=(\varsigma_1,\cdots\!,\varsigma_l)\in \mathbb{N}^l,\\
&\qquad \nabla_y f=(\partial_{y_1} f,\cdots\!,\partial_{y_l} f)^\top,\qquad \Delta_y f=\sum_{i=1}^l \partial_{y_i}^2 f,\\[-7pt]
&\qquad \nabla_y^k f \text{ denotes one generic } \partial_y^\varsigma f \text{ with }|\varsigma|=\sum_{i=1}^l \varsigma_i=k \text{ for integer }k\geq 2,\\[-7pt]
&\qquad |\nabla_y^k f|=\Big(\sum_{|\varsigma|=k}|\partial_{y_1}^{\varsigma_1} \cdots\partial^{\varsigma_l}_{y_l}f|^2\Big)^\frac{1}{2} \quad\text{ for } k\in \mathbb{N}^*.
\end{align*}
In particular, for the derivatives with respect to the
variable $\boldsymbol{x}=(x_1,\cdots\!,x_n)^\top\in \mathbb{R}^n$, 
we use the notations 
$(\partial_i^{\varsigma_i},\partial^\varsigma,\nabla,\Delta,\nabla^k)=(\partial_{x_i}^{\varsigma_i}, \partial_x^\varsigma,\nabla_x,\Delta_x,\nabla_x^k)$.

\smallskip
\item If $\boldsymbol{f}: E\subset \mathbb{R}^l \to \mathbb{R}^q$ ($l,q\geq 2$, $E$ is a measurable set) is a vector function with the independent variable $\boldsymbol{y}=(y_1,\cdots\!,y_l)^\top$ and $X \in \{\partial_{y_i}^{\varsigma_i},\partial_y^\varsigma,\Delta_y,\nabla_y^k\}$, then 
\begin{align*}
&X\boldsymbol{f}=\big(Xf_1,\cdots\!,Xf_q\big)^\top, \ \quad \nabla_y\boldsymbol{f}=\begin{pmatrix} 
\partial_{y_1} f_1 & \partial_{y_2} f_1 & \cdots & \partial_{y_l} f_1\\[2mm]
\partial_{y_1} f_2 & \partial_{y_2} f_2 & \cdots & \partial_{y_l} f_2 \\[2mm]
\vdots & \vdots & \ddots & \vdots \\[2mm]
\partial_{y_1} f_q & \partial_{y_2} f_q & \cdots & \partial_{y_l} f_q
\end{pmatrix}_{q\times l},\\
&|\nabla_y^k \boldsymbol{f}|=\Big(\sum_{i=1}^q\sum_{|\varsigma|=k}\big|\partial_{y_1}^{\varsigma_1} \cdots\partial^{\varsigma_l}_{y_l}f_{i}\big|^2\Big)^\frac{1}{2} \quad \text{ for } k\in \mathbb{N}^*.
\end{align*}
Moreover, if $l=j+i$ with $j\geq 0$ and the independent variable $\boldsymbol{y}$ takes the form $\boldsymbol{y}=(\boldsymbol{s},\boldsymbol{\tilde{y}})^\top$ with $\boldsymbol{s}=(s_1,\cdots\!,s_j)^\top$ and $\boldsymbol{\tilde{y}}=(\tilde{y}_1,\cdots\!,\tilde{y}_i)^\top$, then
\begin{equation*}
\diver_{\tilde{y}} \boldsymbol{f}=\sum_{k=1}^i \partial_{\tilde{y}_k}f_k.  
\end{equation*}
In particular, if $j=0,1$, $\,i=n$, and $\boldsymbol{\tilde{y}}=\boldsymbol{x}=(x_1,\cdots\!,x_n)^\top\in \mathbb{R}^n$, then $\diver=\diver_x$.
\end{itemize}

\subsubsection{Notations in M-D spherical  coordinates}
In the rest of this paper, unless otherwise specified, we always let $r=|\boldsymbol{x}|$ be the radial distance in spherical coordinates, and let $r\in I:=[0,\infty)$. Moreover,  the following conventions are adapted:
\begin{align*}
|f|_p=\|f\|_{L^p(I)},\quad   \|f\|_k=\|f\|_{H^k(I)},\quad  \|f\|_{k,p}=\|f\|_{W^{k,p}(I)}
\qquad \,\,\mbox{for $k\in \mathbb{N}^*$}.
\end{align*}
In particular, for the Sobolev spaces defined on the open interval $(a,b)\subset I$, we use the abbreviation $X(a,b)=X((a,b))$ for $X=L^p$, $W^{k,p}$, and $H^k$.

\subsubsection{Other notations and conventions}
\begin{itemize}\smallskip
\item $C^\ell(\overline\Omega)$ $(\ell\in \mathbb{N},\,C(\overline\Omega)=C^0(\overline\Omega))$ 
denotes the space of all functions $f$ for which $\nabla^j f$ $(0\leq j\leq \ell)$ is bounded and uniformly continuous in $\Omega\subset \mathbb{R}^n$, which is equipped with the norm:
\begin{equation*}
\|f\|_{C^\ell(\overline{\Omega})}:= \max_{0\leq j\leq \ell}\|\nabla^j f\|_{L^\infty(\Omega)}.
\end{equation*}
In particular, if $\Omega=\mathbb{R}^n$, $C^\ell(\overline{\mathbb{R}^n})$ denotes the space of all functions $f$ for which $\nabla^j f$ $(0\leq j\leq \ell)$ is bounded and uniformly continuous in $\mathbb{R}^n$.

\smallskip
\item $C^\infty_{\rm c}(\Omega)$ denotes the space of all functions $f$ for which $\nabla^j f$ $(j\in \mathbb{N})$ is continuous and compactly supported in $\Omega\subset \mathbb{R}^n$.

\smallskip
\item For any function spaces $(X,X_1,\cdots\!,X_k)$ and functions $(h,g,g_1,\cdots\!,g_k)$,
\begin{equation*}
\|g\|_{X_1\cap\cdots\cap X_k}:=\sum_{i=1}^k\|g\|_{X_i},\qquad \|h(g_1,\cdots\!,g_k)\|_{X}:=\sum_{i=1}^k\|hg_i\|_X.
\end{equation*}

\smallskip
\item For any $n\times n$ real matrix $\mathcal{A}$, $\mathcal{A}_{ij}$ denotes its $(i,j)$-th entry. $\mathcal{A}:\mathcal{B}:=\sum_{i,j=1}^n \mathcal{A}_{ij}\mathcal{B}_{ij}$ for any $n\times n$ matrices $(\mathcal{A},\mathcal{B})$.  Moreover, $\mathrm{SO}(n)$  denotes the set of all $n\times n$ real orthogonal matrices $\mathcal{O}$ such that $\det \mathcal{O}=1$, where $\det \mathcal{O}$ is the determinant of $\mathcal{O}$.

\smallskip
\item $\delta_{ij}$  is the Kronecker symbol satisfying $\delta_{ij} =1$ when $i = j$, 
and $\delta_{ij} =0$ otherwise.

\smallskip
\item $\langle\cdot,\cdot\rangle_{X^*\times X}$ denotes the pairing between the space $X$ and its dual space $X^*$. 
\end{itemize}

\subsection{An enlarged reformulation}\label{see1}
 In order to deal with the degeneracy caused by the far-field vacuum, we now introduce an enlarged reformulation 
 for the Cauchy problem \eqref{eq:1.1benwen}--\eqref{e1.3}.
Specifically, by introducing the following variables: 
\begin{equation}\label{bianhuan}
\phi=\frac{A\gamma}{\gamma-1}\rho^{\gamma-1},\quad \boldsymbol{\psi} =\frac{1}{\gamma-1}\nabla\log\phi=\nabla\log\rho=(\psi_{1},\cdots\!,\psi_{n})^\top,
\end{equation}
then \textbf{CNS} \eqref{eq:1.1benwen}  can be rewritten as the following enlarged system: 
\begin{equation}\label{eqn1}
\begin{cases}
\phi_t+\boldsymbol{u}\cdot \nabla\phi+(\gamma-1)\phi \diver\boldsymbol{u}=0,\\[2pt]
\boldsymbol{u}_t+\boldsymbol{u}\cdot \nabla \boldsymbol{u}+\nabla \phi+L\boldsymbol{u}=\boldsymbol{\psi}\cdot Q(\boldsymbol{u}),\\[2pt]
\displaystyle\boldsymbol{\psi}_t+\nabla(\boldsymbol{u}\cdot\boldsymbol{\psi})+\nabla\diver\boldsymbol{u}=\boldsymbol{0},
\end{cases}
\end{equation}
where the operators $L$ and $Q$ are defined in \eqref{operatordefinition}. 

We aim to establish global spherically symmetric solutions of \eqref{eqn1} with the form:
\begin{equation}\label{1.9'}
(\phi,\boldsymbol{u},\boldsymbol{\psi})(t,\boldsymbol{x})=(\phi(t,|\boldsymbol{x}|),\,u(t,|\boldsymbol{x}|) \frac{\boldsymbol{x}}{|\boldsymbol{x}|},\,\psi(t,|\boldsymbol{x}|) \frac{\boldsymbol{x}}{|\boldsymbol{x}|} ),
\end{equation}
with the initial data:
\begin{equation}\label{eqs:CauchyInit'}
\begin{aligned}
(\phi,\boldsymbol{u},\boldsymbol{\psi})(0,\boldsymbol{x})
&=(\phi_0,\boldsymbol{u}_0,\boldsymbol{\psi}_0)(\boldsymbol{x})\\
&:=(\phi_0(|\boldsymbol{x}|),\,u_0(|\boldsymbol{x}|)\frac{\boldsymbol{x}}{|\boldsymbol{x}|},
\,\frac{1}{\gamma-1}(\log \phi_0)_r(|\boldsymbol{x}|)\frac{\boldsymbol{x}}{|\boldsymbol{x}|})  
\qquad \text{for $\boldsymbol{x} \in \mathbb{R}^n$},
\end{aligned}
\end{equation}
and the far-field asymptotic condition:
\begin{equation}\label{e1.3'}
\begin{split}
\displaystyle
\left(\phi,\boldsymbol{u}\right)(t,\boldsymbol{x})\to \left(0,\boldsymbol{0}\right) 
\qquad \text{as $\left|\boldsymbol{x}\right|\to \infty\,$ for $\,t\ge 0$}.
\end{split}
\end{equation}

Then we define the corresponding regular solutions of 
problem \eqref{eqn1}--\eqref{e1.3'}.

\begin{mydef}\label{def-enlarge}
Let $s=2$ or $3$, and $T>0$. The vector function $(\phi,\boldsymbol{u},\boldsymbol{\psi})$ 
is called a $s$-order regular solution of problem \eqref{eqn1}{\rm--}\eqref{e1.3'} 
in $[0,T]\times \mathbb{R}^n$ $(n=2$ or $3)$ if 
\begin{equation*}\begin{split}
\mathrm{(i)}& \ (\phi, \boldsymbol{u},\boldsymbol{\psi}) \ \text{satisfies this problem in the sense of distributions};\\
\mathrm{(ii)}& \ 0<\phi^\frac{1}{\gamma-1} \in C([0,T];L^{1}(\mathbb{R}^n)),\ \  \boldsymbol{\psi}\in L^\infty([0,T]\times\mathbb{R}^n),\ \ \nabla\phi\in C([0,T];H^{s-1}(\mathbb{R}^n)), \\
&\,\, \nabla \boldsymbol{\psi}\in C([0,T]; H^{s-2}(\mathbb{R}^n)), \ \ \phi_t \in C([0,T]; H^{s-1}(\mathbb{R}^n)),\ \ \boldsymbol{\psi}_t \in C([0,T]; H^{s-2}(\mathbb{R}^n));\\
\mathrm{(iii)}& \ \boldsymbol{u}\in C([0,T]; H^s(\mathbb{R}^n))\cap L^2([0,T]; D^{s+1}(\mathbb{R}^n)),\\
& \ \boldsymbol{u}_t\in C([0,T]; H^{s-2}(\mathbb{R}^n))\cap L^2([0,T]; D^{s-1}(\mathbb{R}^n)).
\end{split}
\end{equation*}
\end{mydef}

Since $\partial_i\psi_j=\partial_j\psi_i$ ($i,j=1,\cdots\!,n$), 
then equations $\eqref{eqn1}_3$ can be rewritten as
\begin{equation}\label{eqn1psi}
\boldsymbol{\psi}_t+\sum\limits_{l=1}^nA_l(\boldsymbol{u})\partial_l\boldsymbol{\psi}+B(\boldsymbol{u}) \boldsymbol{\psi}+\nabla\diver\boldsymbol{u}=\boldsymbol{0},
\end{equation}
where $A_l(\boldsymbol{u})=(a_{ij}^{(l)})_{n\times n}\,(i,j,l=1,\cdots\!,n)$ are symmetric with $a_{ij}^{(l)}=u_l$ when $i=j$ and $a_{ij}^{(l)}=0$ otherwise, 
and $B(\boldsymbol{u})=(\nabla \boldsymbol{u})^{\top}$.
Then the enlarged system \eqref{eqn1}   consists of
\begin{itemize}
 \item  a {\it scalar transport} equation $\eqref{eqn1}_1$ for $\phi$;
 \smallskip
 \item a {\it  parabolic}  system  $\eqref{eqn1}_2$  with the  weak singular source term $\boldsymbol{\psi}\cdot Q(\boldsymbol{u})$ for $\boldsymbol{u}$;
 \smallskip
 \item  a {\it symmetric hyperbolic} system  $\eqref{eqn1}_3$ with the   source term $\nabla\diver\boldsymbol{u}$  for $\boldsymbol{\psi}$.
\end{itemize}

The enlarged reformulation in \eqref{eqn1}  transfers the degeneracies both in the time evolution and spatial dissipation to the singularity of $\boldsymbol{\psi}$, which enables us to establish the following local well-posedness of regular solutions of problem   \eqref{eqn1}--\eqref{e1.3'} in Theorems \ref{thh1}--\ref{thh133}.

\begin{thm}\label{thh1}
Let $n=2$ or $3$, and  $\gamma$ satisfy  
\begin{equation}\label{cd1local}
\gamma>1.
\end{equation}
Assume that the initial data $(\phi_0,\boldsymbol{u}_0,\boldsymbol{\psi}_0)(\boldsymbol{x})$ 
are spherically symmetric and satisfy 
 \begin{equation}\label{eq;th2.1-2}
\begin{gathered}
0<\phi_0^{\frac{1}{\gamma-1}}(\boldsymbol{x})\in L^1(\mathbb{R}^n), \ \ \nabla \phi_0(\boldsymbol{x})\in H^1(\mathbb{R}^n), \ \ 
\boldsymbol{\psi}_0(\boldsymbol{x})\in  D^1(\mathbb{R}^n),\ \ 
\boldsymbol{u}_0(\boldsymbol{x})\in H^2(\mathbb{R}^n),
\end{gathered}
\end{equation} 
and, in addition, 
 \begin{equation}\label{eq;th2.1-21}
\begin{gathered}
 \boldsymbol{\psi}_0\in L^\infty(\mathbb{R}^3) \qquad \text{when $n=3$}.
\end{gathered}
\end{equation}
Then there exists $T_*>0$ such that problem \eqref{eqn1}{\rm--}\eqref{e1.3'} admits a unique $2$-order regular solution $(\phi,\boldsymbol{u},\boldsymbol{\psi})(t,\boldsymbol{x})$ in $[0,T_*]\times\mathbb{R}^n$ satisfying \eqref{er2} with $T$ replaced by $T_*$, and 
\begin{equation}\label{psi=log-phi-0}
\boldsymbol{\psi}(t,\boldsymbol{x})=\frac{1}{\gamma-1}\nabla \log\phi(t,\boldsymbol{x}) \qquad \text{for {\it a.e.} $(t,\boldsymbol{x})\in (0,T_*)\times \mathbb{R}^n$}.
\end{equation} 
Moreover, $(\phi,\boldsymbol{u},\boldsymbol{\psi})$ is spherically symmetric with form  \eqref{1.9'}.
\end{thm}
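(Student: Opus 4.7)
My plan is to construct local solutions of problem \eqref{eqn1}--\eqref{e1.3'} via the standard linearization-iteration tailored to the decomposition in \eqref{eqn1}: the equation for $\phi$ is a scalar transport, the equation for $\boldsymbol{u}$ is a uniformly parabolic system (with the weakly singular source $\boldsymbol{\psi}\cdot Q(\boldsymbol{u})$), and \eqref{eqn1psi} is a linear symmetric hyperbolic system for $\boldsymbol{\psi}$. Given an iterate $(\boldsymbol{u}^{k-1},\boldsymbol{\psi}^{k-1})$, I define $\phi^k$ by solving $\phi^k_t+\boldsymbol{u}^{k-1}\cdot\nabla\phi^k+(\gamma-1)\phi^k\diver\boldsymbol{u}^{k-1}=0$ via characteristics (which keeps $\phi^k>0$), then $\boldsymbol{u}^k$ by solving the linear parabolic equation $\boldsymbol{u}^k_t+\boldsymbol{u}^{k-1}\cdot\nabla\boldsymbol{u}^k+\nabla\phi^k+L\boldsymbol{u}^k=\boldsymbol{\psi}^{k-1}\cdot Q(\boldsymbol{u}^{k-1})$ by classical parabolic theory, and finally $\boldsymbol{\psi}^k$ by solving \eqref{eqn1psi} with $\boldsymbol{u}$ replaced by $\boldsymbol{u}^{k-1}$ in the coefficients and by $\boldsymbol{u}^k$ in the source, using Friedrichs' theory for symmetric hyperbolic systems. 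Each iterate inherits the regularity class of Definition~\ref{def-enlarge}.

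\textbf{Uniform estimates and passage to the limit.} The next step is to derive uniform-in-$k$ bounds on a small interval $[0,T_*]$. The transport equation preserves positivity and yields $H^2$-control of $\nabla\phi^k$ by composition/commutator estimates, together with $L^1$-control of $(\phi^k)^{1/(\gamma-1)}$ along the flow. Parabolic energy estimates give $\boldsymbol{u}^k\in L^\infty_t H^2\cap L^2_t D^3$, provided the source $\boldsymbol{\psi}^{k-1}\cdot Q(\boldsymbol{u}^{k-1})$ is absorbable: in $\mathbb{R}^2$ this is ensured by the critical embedding $D^1(\mathbb{R}^2)\hookrightarrow L^\infty(\mathbb{R}^2)$ (Lemma~\ref{Hk-Ck-vector}), while in $\mathbb{R}^3$ this is exactly why \eqref{eq;th2.1-21} is imposed, since $D^1(\mathbb{R}^3)$ does not embed into $L^\infty(\mathbb{R}^3)$. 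For $\boldsymbol{\psi}^k$, differentiating \eqref{eqn1psi} and performing $L^2$ energy estimates yield control in $L^\infty_t D^1$, and an $L^\infty$-bound is propagated along the characteristics of $\boldsymbol{u}^{k-1}$ using the time-integrable control of $\nabla\boldsymbol{u}^{k-1}$ extracted from the parabolic regularity. With these uniform bounds, a contraction argument in a lower-order norm (e.g. $L^2$ for $\phi$, $\boldsymbol{u}$, $\boldsymbol{\psi}$) delivers convergence to a limit $(\phi,\boldsymbol{u},\boldsymbol{\psi})$ that solves \eqref{eqn1}--\eqref{e1.3'} in the sense of distributions and lies in the regularity class of Definition~\ref{def-enlarge} on some $[0,T_*]$.

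\textbf{The identity $\boldsymbol{\psi}=\tfrac{1}{\gamma-1}\nabla\log\phi$ and spherical symmetry.} To establish \eqref{psi=log-phi-0}, set $\boldsymbol{w}:=\boldsymbol{\psi}-\tfrac{1}{\gamma-1}\nabla\log\phi$. Then $\boldsymbol{w}(0,\cdot)=\boldsymbol{0}$ by \eqref{eqs:CauchyInit'}, and taking $\tfrac{1}{\gamma-1}\nabla\log$ of $\eqref{eqn1}_1$ and subtracting $\eqref{eqn1}_3$ yields a homogeneous linear first-order system for $\boldsymbol{w}$ with coefficients depending only on $\boldsymbol{u}$ and $\nabla\boldsymbol{u}$; a standard $L^2$-energy estimate then forces $\boldsymbol{w}\equiv\boldsymbol{0}$. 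Spherical symmetry of the limit follows from uniqueness: for any $\mathcal{O}\in\mathrm{SO}(n)$, the rotated triple $(\phi(t,\mathcal{O}\boldsymbol{x}),\,\mathcal{O}^\top\boldsymbol{u}(t,\mathcal{O}\boldsymbol{x}),\,\mathcal{O}^\top\boldsymbol{\psi}(t,\mathcal{O}\boldsymbol{x}))$ solves the same Cauchy problem by rotational covariance of \eqref{eqn1} combined with \eqref{eqs:CauchyInit'}, hence coincides with $(\phi,\boldsymbol{u},\boldsymbol{\psi})$; this enforces the ansatz \eqref{1.9'}.

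\textbf{Main obstacle.} The hardest point is closing the $\boldsymbol{\psi}$-estimate jointly with the parabolic regularity for $\boldsymbol{u}$. The equation \eqref{eqn1psi} is non-dissipative, and its source $\nabla\diver\boldsymbol{u}$ sits at the top of the available $L^2_tH^1_x$-regularity; when differentiated to estimate $\nabla\boldsymbol{\psi}$, the commutator with $B(\boldsymbol{u})$ produces a term involving $\nabla^2\boldsymbol{u}$ which must be paired against $|\nabla\boldsymbol{\psi}|^2$ in a sharp Gronwall loop tied to $\boldsymbol{u}\in L^\infty_tH^2\cap L^2_tD^3$. Propagating the $L^\infty$-bound on $\boldsymbol{\psi}$ along the characteristics of $\boldsymbol{u}$ is equally delicate, and matches exactly the regularity supplied by \eqref{eq;th2.1-2}--\eqref{eq;th2.1-21}; no slack is available, which is what restricts the argument to the stated data class.
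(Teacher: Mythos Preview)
Your iteration--contraction framework and your arguments for \eqref{psi=log-phi-0} and spherical symmetry match the paper's, but there is a real gap in the three-dimensional case: your proposed $L^\infty$-propagation of $\boldsymbol{\psi}$ along characteristics cannot close. The equation \eqref{eqn1psi} reads, along the flow of $\boldsymbol{u}$,
\[
\tfrac{d}{dt}\boldsymbol{\psi}(t,\eta)=-(\nabla\boldsymbol{u})^\top\boldsymbol{\psi}-\nabla\diver\boldsymbol{u},
\]
and while $\|\nabla\boldsymbol{u}\|_{L^\infty}$ is time-integrable from $\boldsymbol{u}\in L^2_tD^3$, the source $\nabla\diver\boldsymbol{u}$ is a second-order derivative. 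At the $2$-order regularity level you only have $\nabla^2\boldsymbol{u}\in L^\infty_tL^2\cap L^2_tH^1$ in $\mathbb{R}^3$, and neither space embeds into $L^\infty_x$, even under spherical symmetry (cf.\ Remark~\ref{remarkc4}). So $\|\nabla\diver\boldsymbol{u}\|_{L^\infty_x}$ is simply not available, and your $L^\infty$ loop for $\boldsymbol{\psi}$ does not close. Relatedly, your reading of \eqref{eq;th2.1-21} as the hypothesis that makes the parabolic source $\boldsymbol{\psi}\cdot Q(\boldsymbol{u})$ absorbable is off: that term can be handled with only $\boldsymbol{\psi}\in D^1$ via the spherically-symmetric embedding $D^1(\mathbb{R}^3)\hookrightarrow L^6(\mathbb{R}^3)$ of Lemma~\ref{lemma-L6}, e.g.\ $\|\boldsymbol{\psi}\cdot Q(\boldsymbol{w})\|_{L^2}\le C\|\boldsymbol{\psi}\|_{L^6}\|\nabla\boldsymbol{w}\|_{L^3}$.

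The paper resolves this in two moves that you are missing. First, it closes the entire iteration and convergence using only $\boldsymbol{\psi}_0\in D^1$, controlling $\boldsymbol{\psi}$ in $L^\infty_tD^1$ (and in $L^\infty_tL^6$ in $3$D via Lemma~\ref{lemma-L6}) but never in $L^\infty_{t,x}$. Second, \emph{after} the solution is constructed, it recovers $\boldsymbol{\psi}\in L^\infty_{t,x}$ by passing to the effective velocity $\boldsymbol{v}=\boldsymbol{u}+2\alpha\boldsymbol{\psi}$, which satisfies the damped transport equation
\[
\boldsymbol{v}_t+\boldsymbol{u}\cdot\nabla\boldsymbol{v}+\tfrac{\gamma-1}{2\alpha}\phi\,\boldsymbol{v}=\tfrac{\gamma-1}{2\alpha}\phi\,\boldsymbol{u}.
\]
The point is that the dangerous source $\nabla\diver\boldsymbol{u}$ in \eqref{eqn1psi} cancels against $L\boldsymbol{u}$ from $\eqref{eqn1}_2$ when forming $\boldsymbol{v}$, leaving the bounded source $\phi\boldsymbol{u}\in L^\infty_{t,x}$; characteristics then give $\boldsymbol{v}\in L^\infty$ from $\boldsymbol{v}_0=\boldsymbol{u}_0+2\alpha\boldsymbol{\psi}_0\in L^\infty$, and this is precisely where \eqref{eq;th2.1-21} enters. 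Without this effective-velocity step (or an equivalent cancellation), your argument for $n=3$ does not go through.
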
 

\begin{thm}\label{thh133}
Let $n=2$ or $3$, and  \eqref{cd1local} hold. Assume that
the initial data $(\phi_0, \boldsymbol{u}_0,\boldsymbol{\psi}_0)(\boldsymbol{x})$ are spherically symmetric 
and  satisfy
\begin{equation}\label{th78qq33}
\begin{aligned}
&0<\phi_0^{\frac{1}{\gamma-1}}(\boldsymbol{x})\in L^1(\mathbb{R}^n), \quad  \nabla \phi_0(\boldsymbol{x})\in H^2(\mathbb{R}^n), \\ 
&\boldsymbol{u}_0(\boldsymbol{x})\in H^3(\mathbb{R}^n),\quad
\boldsymbol{\psi}_0(\boldsymbol{x})\in  D^1(\mathbb{R}^n)\cap D^2(\mathbb{R}^n).
\end{aligned}
\end{equation}
Then there exists  $T_*>0$ such that problem \eqref{eqn1}{\rm--}\eqref{e1.3'} admits a unique $3$-order regular solution $(\phi,\boldsymbol{u},\boldsymbol{\psi})(t,\boldsymbol{x})$ in $[0,T_*]\times\mathbb{R}^n$ satisfying \eqref{er2-high} with $T$ replaced by $T_*$, \eqref{psi=log-phi-0}, and 
\begin{equation}\label{phittx-psitt}
\phi_{tt}\in C([0,T_*];L^2(\mathbb{R}^n))\cap L^2([0,T_*];D^1(\mathbb{R}^n)), \quad \boldsymbol{\psi}_{tt}\in L^2([0,T_*];L^2(\mathbb{R}^n)).
\end{equation}
Moreover, $(\phi,\boldsymbol{u},\boldsymbol{\psi})$ is spherically symmetric with  form \eqref{1.9'}.
\end{thm}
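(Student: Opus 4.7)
The plan is to upgrade the $2$-order local solution of Theorem \ref{thh1} to the $3$-order class by carrying out the same linearized iteration scheme under the stronger initial regularity \eqref{th78qq33}, with additional a priori estimates at the $3$-order level. Since the enlarged system \eqref{eqn1} decouples into a transport equation for $\phi$, a parabolic equation for $\boldsymbol{u}$, and a symmetric hyperbolic system \eqref{eqn1psi} for $\boldsymbol{\psi}$, the strategy is to derive uniform $3$-order bounds on these three components separately, closing them via Gronwall on a common short time interval. Passage to the limit in the iteration proceeds in parallel with Theorem \ref{thh1}, and uniqueness in the $3$-order class is inherited directly from the $2$-order uniqueness.

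The higher-order spatial estimates split into three pieces. Applying $\nabla^3$ to $\eqref{eqn1}_1$ and pairing against $\nabla^3\phi$ controls $\|\nabla^3\phi\|_{L^\infty_tL^2}$ modulo standard Moser-type commutators depending on $\|\nabla\boldsymbol{u}\|_{L^\infty}$. Applying $\nabla^3$ to $\eqref{eqn1}_2$ and pairing against $\nabla^3\boldsymbol{u}$ yields the dissipation $\int_0^t\|\nabla^4\boldsymbol{u}\|_{L^2}^2\,\mathrm{d}s$ balanced against the source $\nabla^3(\boldsymbol{\psi}\cdot Q(\boldsymbol{u}))$ and convective remainders. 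Applying $\nabla^2$ to \eqref{eqn1psi} and using the symmetric-hyperbolic energy method controls $\|\nabla^2\boldsymbol{\psi}\|_{L^\infty_tL^2}$, in which the coupling term arising from $\nabla^2\nabla\diver\boldsymbol{u}$ is absorbed by the parabolic dissipation just obtained.

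To extract the time-derivative regularity \eqref{phittx-psitt}, I would differentiate $\eqref{eqn1}_1$ and \eqref{eqn1psi} twice in time; the resulting equations for $\phi_{tt}$ and $\boldsymbol{\psi}_{tt}$ are linear transport/hyperbolic equations whose right-hand sides depend on $\boldsymbol{u}_{tt}$, $\nabla\boldsymbol{u}_{tt}$ and previously controlled quantities, so standard energy estimates yield the stated bounds once the $3$-order parabolic estimate for $\boldsymbol{u}$ provides $\boldsymbol{u}_{tt}\in L^\infty_tL^2\cap L^2_tD^1$ with appropriate $\sqrt{t}$-weights. The identification \eqref{psi=log-phi-0} is obtained by setting $\boldsymbol{w}:=(\gamma-1)\boldsymbol{\psi}-\nabla\log\phi$: a direct computation from $\eqref{eqn1}_1$ and \eqref{eqn1psi} shows that $\boldsymbol{w}$ satisfies a homogeneous linear first-order system whose initial value vanishes by \eqref{eqs:CauchyInit'}, and hence $\boldsymbol{w}\equiv\boldsymbol{0}$.

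The main obstacle will be closing the top-order parabolic estimate for $\boldsymbol{u}$: when $\nabla^3$ hits the semi-singular source $\boldsymbol{\psi}\cdot Q(\boldsymbol{u})$, the worst terms $\nabla^2\boldsymbol{\psi}\cdot\nabla Q(\boldsymbol{u})$ and $\nabla\boldsymbol{\psi}\cdot\nabla^2 Q(\boldsymbol{u})$ appear to require $L^\infty$-control of $\nabla^2\boldsymbol{\psi}$ or of $\nabla^3\boldsymbol{u}$, neither of which lies in the available function space. The resolution is to integrate by parts inside the pairing $\langle\nabla^3(\boldsymbol{\psi}\cdot Q(\boldsymbol{u})),\nabla^3\boldsymbol{u}\rangle$, redistribute derivatives, and combine Gagliardo--Nirenberg interpolation with the spherically-symmetric Sobolev embedding $D^1\cap D^2\hookrightarrow L^\infty$ (cf. Remark \ref{rk-nabla}) to obtain a uniform $L^\infty$ bound on $\boldsymbol{\psi}$. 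The resulting borderline contributions are then absorbed into the dissipation $\|\nabla^4\boldsymbol{u}\|_{L^2}^2$ coming from $L\boldsymbol{u}$, permitting a Gronwall closure on a short time interval $[0,T_*]$.
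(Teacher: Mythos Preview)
Your strategy is sound and would close, but it diverges from the paper at precisely the step you flag as the main obstacle. The paper never performs a top-order spatial energy estimate on $\boldsymbol{u}$. Instead it rewrites $\eqref{eqn1}_2$ as $L\boldsymbol{u}=-\boldsymbol{u}_t+Z(\boldsymbol{w})$ with $Z(\boldsymbol{w})=\boldsymbol{\psi}\cdot Q(\boldsymbol{w})-\boldsymbol{w}\cdot\nabla\boldsymbol{w}-\nabla\phi$ and uses elliptic regularity (Lemma~\ref{df3}) to transfer spatial regularity to time derivatives: an energy estimate on $\partial_t\eqref{eqn1}_2$ paired against $\boldsymbol{u}_{tt}$ controls $\|\nabla\boldsymbol{u}_t\|_{L^\infty_tL^2}$ and $\|\boldsymbol{u}_{tt}\|_{L^2_tL^2}$, after which $\|\nabla^3\boldsymbol{u}\|_{L^\infty_tL^2}$ and $\|\nabla^4\boldsymbol{u}\|_{L^2_tL^2}$ follow from bounds on $\|\nabla Z(\boldsymbol{w})\|_{L^2}$, $\|\nabla^2 Z(\boldsymbol{w})\|_{L^2}$. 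These require only $\|\boldsymbol{\psi}\|_{L^\infty}$ and $\|\nabla\boldsymbol{\psi}\|_{H^1}$, so the derivative-distribution problem you describe never arises. Similarly, $\phi_{tt}$ and $\boldsymbol{\psi}_{tt}$ are read off algebraically from the twice-differentiated equations rather than obtained by separate energy estimates.

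Your integration-by-parts resolution would also work---after one IBP the worst surviving term is $\nabla^2\boldsymbol{\psi}\cdot Q(\boldsymbol{u})\cdot\nabla^4\boldsymbol{u}$, handled via $\nabla\boldsymbol{u}\in L^\infty$ from $H^3\hookrightarrow W^{1,\infty}$---but note that your list of pre-IBP ``worst terms'' omits $\nabla^3\boldsymbol{\psi}\cdot Q(\boldsymbol{u})$, which is not even defined since $\boldsymbol{\psi}$ lives only in $D^1\cap D^2$; this is in fact why the IBP is forced. The paper's time-then-elliptic route sidesteps the issue entirely and keeps the bookkeeping uniform across the iteration; your direct spatial route is more hands-on and requires care in the linearized setting to track which factor in $\boldsymbol{\psi}^{k+1}\cdot Q(\boldsymbol{u}^k)$ carries the top derivative. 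The remaining pieces---transport estimates for $\phi$, symmetric-hyperbolic estimates for $\boldsymbol{\psi}$, and the identification \eqref{psi=log-phi-0}---proceed as you outline and match the paper.
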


The proofs for Theorems \ref{thh1}--\ref{thh133}  will be given in \S \ref{section-local-regular}.
Moreover, at the end of \S \ref{section-local-regular}, 
we will show that Theorems \ref{thh1}--\ref{thh133} indeed imply the local well-posedness theories 
of regular solutions of problem  \eqref{eq:1.1benwen}--\eqref{e1.3} 
with general smooth, spherically symmetric data 
and far-field vacuum ({\it i.e.},  $\bar{\rho}=0$), 
which are stated in Theorems \ref{zth1}--\ref{zth2} below.

\begin{thm}\label{zth1}
Let $n=2$ or $3$, and  \eqref{cd1local} hold.
If the initial data $(\rho_0,\boldsymbol{u}_0)(\boldsymbol{x})$ are spherically symmetric and satisfy \eqref{id1}{\rm--}\eqref{shangjie3}, then there exists $T_*>0$ such that problem  \eqref{eq:1.1benwen}{\rm--}\eqref{e1.3} admits a unique $2$-order regular solution $(\rho,\boldsymbol{u})(t,\boldsymbol{x})$ in $[0,T_*]\times\mathbb{R}^n$  satisfying \eqref{er2}{\rm--}\eqref{3dclassical1} with $T$ replaced by $T_*$. Moreover, $(\rho,\boldsymbol{u})$ is spherically symmetric  with  form \eqref{duichenxingshi}.
\end{thm}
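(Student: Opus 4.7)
The plan is to derive Theorem \ref{zth1} as a direct corollary of Theorem \ref{thh1} by means of the change of variables \eqref{bianhuan}. Given initial data $(\rho_0,\boldsymbol{u}_0)$ satisfying \eqref{id1}--\eqref{shangjie3}, I would first define
\[
\phi_0:=\frac{A\gamma}{\gamma-1}\rho_0^{\gamma-1}, \qquad \boldsymbol{\psi}_0:=\nabla\log\rho_0=\frac{1}{\gamma-1}\nabla\log\phi_0,
\]
and verify that the triple $(\phi_0,\boldsymbol{u}_0,\boldsymbol{\psi}_0)$ satisfies the hypotheses \eqref{eq;th2.1-2}--\eqref{eq;th2.1-21} of Theorem \ref{thh1}. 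This step is essentially algebraic: $\phi_0^{1/(\gamma-1)}$ is a positive constant multiple of $\rho_0 \in L^1(\mathbb{R}^n)$; the identity $\nabla\phi_0 = A\gamma\,\rho_0^{(\gamma-2)/(\gamma-1)}\nabla\rho_0^{\gamma-1}/(\gamma-1)$ together with the $H^1$ assumption on $\nabla\rho_0^{\gamma-1}$ in \eqref{id1} and the positivity of $\rho_0$ yields $\nabla\phi_0 \in H^1(\mathbb{R}^n)$; the condition $\boldsymbol{\psi}_0 \in D^1(\mathbb{R}^n)$ is exactly the assumption on $\nabla\log\rho_0$ in \eqref{id1}; and \eqref{shangjie3} produces \eqref{eq;th2.1-21} in the $n=3$ case.

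Next, I would invoke Theorem \ref{thh1} to obtain $T_*>0$ and a unique spherically symmetric $2$-order regular solution $(\phi,\boldsymbol{u},\boldsymbol{\psi})$ of the enlarged problem \eqref{eqn1}--\eqref{e1.3'} on $[0,T_*]\times\mathbb{R}^n$ satisfying the crucial consistency identity \eqref{psi=log-phi-0}. Setting
\[
\rho:=\Big(\tfrac{\gamma-1}{A\gamma}\phi\Big)^{1/(\gamma-1)},
\]
the positivity of $\phi$ gives $\rho>0$, and \eqref{psi=log-phi-0} guarantees $\boldsymbol{\psi}=\nabla\log\rho$ almost everywhere. I would then check item by item that $(\rho,\boldsymbol{u})$ meets all the regularity requirements of Definition \ref{cjk}: the bound on $\nabla\log\rho$ in $L^\infty$ and the $C([0,T_*];H^{s-1})$ regularity for $(\rho^{\gamma-1})_t,\ (\nabla\log\rho)_t$, and $\nabla^2\log\rho$ all transfer from the corresponding statements for $(\phi,\boldsymbol{\psi})$ in Definition \ref{def-enlarge}, since $\phi$ and $\rho^{\gamma-1}$ differ only by a positive constant multiple. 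Equation $\eqref{eqn1}_1$ for $\phi$ is then equivalent, via this change of variables, to the continuity equation in \eqref{eq:1.1benwen}, while $\eqref{eqn1}_2$ combined with $\boldsymbol{\psi}=\nabla\log\rho$ reproduces the momentum equation rewritten in the form \eqref{qiyi}, and hence $(\rho,\boldsymbol{u})$ solves the original system.

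To obtain the pointwise classical regularity claims \eqref{2dclassical1}--\eqref{3dclassical1}, I would combine the regularity \eqref{er2} for $\boldsymbol{u}$ with the Sobolev embeddings for spherically symmetric functions recalled in Remark \ref{rk-nabla}. When $n=2$, the embedding $D^1(\mathbb{R}^2)\hookrightarrow L^\infty(\mathbb{R}^2)$ together with $H^2\hookrightarrow C(\overline{\mathbb{R}^2})$ and the time regularity $\sqrt{t}\boldsymbol{u}_t\in L^\infty([0,T_*];D^1)$ give the continuity of $(\nabla\boldsymbol{u},\nabla^2\boldsymbol{u},\boldsymbol{u}_t)$ on $(0,T_*]\times\mathbb{R}^2$, while continuity of $(\rho,\nabla\rho)$ on $[0,T_*]\times\mathbb{R}^2$ is inherited from that of $(\phi,\nabla\phi)$ through the smooth relation between $\rho$ and $\phi$ and the positivity of both. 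When $n=3$, the same scheme applies except that the critical embedding fails pointwise at $r=0$; consequently $(\nabla\rho,\rho_t)$ is only continuous on $\mathbb{R}^3_*$ and $(\nabla^2\boldsymbol{u},\boldsymbol{u}_t)$ only on $(0,T_*]\times\mathbb{R}^3_*$, matching \eqref{3dclassical1}. Uniqueness transfers in the reverse direction: given two $2$-order regular solutions $(\rho^{(i)},\boldsymbol{u}^{(i)})$ of \eqref{eq:1.1benwen}--\eqref{e1.3}, the associated triples $(\phi^{(i)},\boldsymbol{u}^{(i)},\nabla\log\rho^{(i)})$ both solve \eqref{eqn1}--\eqref{e1.3'} with the same initial data and hence agree by the uniqueness part of Theorem \ref{thh1}. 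The main subtle point in executing this plan is to verify \eqref{psi=log-phi-0} rigorously, i.e., that the extra hyperbolic component $\boldsymbol{\psi}$ constructed by the enlargement procedure really stays equal to $\nabla\log\rho$ throughout $[0,T_*]$; this is a constraint propagation argument using $\eqref{eqn1}_1$, $\eqref{eqn1}_3$, and the identity $\nabla\log\rho_0=\boldsymbol{\psi}_0$ at $t=0$, and it is precisely the content of the closing discussion of \S\ref{section-local-regular}.
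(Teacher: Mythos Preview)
Your proposal is correct and follows essentially the same route as the paper: apply Theorem~\ref{thh1} to the transformed data $(\phi_0,\boldsymbol{u}_0,\boldsymbol{\psi}_0)$, then recover $\rho$ from $\phi$ via $\rho=\big(\tfrac{\gamma-1}{A\gamma}\phi\big)^{1/(\gamma-1)}$ and check that the equations and the regularity in Definition~\ref{cjk} transfer. One small simplification: since $\phi_0=\tfrac{A\gamma}{\gamma-1}\rho_0^{\gamma-1}$ is a \emph{constant} multiple of $\rho_0^{\gamma-1}$, the identity $\nabla\phi_0=\tfrac{A\gamma}{\gamma-1}\nabla\rho_0^{\gamma-1}$ already gives $\nabla\phi_0\in H^1$ directly from \eqref{id1}, so the extra factor $\rho_0^{(\gamma-2)/(\gamma-1)}$ in your displayed formula is spurious (and would otherwise require a separate argument); also, the constraint \eqref{psi=log-phi-0} is part of the \emph{conclusion} of Theorem~\ref{thh1}, so you may simply invoke it rather than treat its propagation as a remaining subtle point.
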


\begin{thm}\label{zth2}
Let $n=2$ or $3$, and  \eqref{cd1local} hold. If the initial data $(\rho_0,\boldsymbol{u}_0)(\boldsymbol{x})$ are spherically symmetric, and satisfy  \eqref{id1-high}, then there exists $T_*>0$ such that 
problem \eqref{eq:1.1benwen}{\rm--}\eqref{e1.3} admits a unique  $3$-order regular solution $(\rho,\boldsymbol{u})(t,\boldsymbol{x})$ in $[0,T_*]\times\mathbb{R}^n$  satisfying \eqref{er2-high}{\rm--}\eqref{3dclassical2} with $T$ replaced by $T_*$, and 
\begin{equation*}
(\rho^{\gamma-1})_{tt}\in C([0,T_*];L^2(\mathbb{R}^n))\cap L^2([0,T_*];D^1(\mathbb{R}^n)), \quad (\nabla\log\rho)_{tt}\in L^2([0,T_*];L^2(\mathbb{R}^n)).
\end{equation*}
Moreover, $(\rho,\boldsymbol{u})$ is spherically symmetric with  form \eqref{duichenxingshi}.
\end{thm}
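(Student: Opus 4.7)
The plan is to deduce Theorem \ref{zth2} as a direct corollary of Theorem \ref{thh133} via the enlarged reformulation \eqref{eqn1}, using the identification $\phi=\frac{A\gamma}{\gamma-1}\rho^{\gamma-1}$ and $\boldsymbol{\psi}=\nabla\log\rho$. The key conclusion \eqref{psi=log-phi-0} of Theorem \ref{thh133} is precisely what makes this reduction clean: it guarantees that the independent variable $\boldsymbol{\psi}$ in the enlarged system is in fact the gradient of $\log\phi/(\gamma-1)=\log\rho$, so unwinding the reformulation recovers a genuine solution of the original \textbf{CNS}.

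\smallskip
First I would set $\phi_0:=\frac{A\gamma}{\gamma-1}\rho_0^{\gamma-1}$ and $\boldsymbol{\psi}_0:=\nabla\log\rho_0$ and verify that the hypotheses \eqref{id1-high} imply the hypotheses \eqref{th78qq33} of Theorem \ref{thh133}. Positivity and $\phi_0^{1/(\gamma-1)}=cst\cdot \rho_0\in L^1$ are immediate; $\nabla\phi_0\in H^2$ is just a rescaling of $\nabla\rho_0^{\gamma-1}\in H^2$; and $\boldsymbol{\psi}_0\in D^1\cap D^2$ is the assumed regularity of $\nabla\log\rho_0$ (Remark \ref{rk-nabla} then also yields $\boldsymbol{\psi}_0\in L^\infty$ automatically via the spherical Sobolev embeddings, which accounts for the absence of a counterpart to \eqref{shangjie3} here). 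Applying Theorem \ref{thh133} produces a spherically symmetric $3$-order regular solution $(\phi,\boldsymbol{u},\boldsymbol{\psi})$ of \eqref{eqn1}--\eqref{e1.3'} on some $[0,T_*]$ satisfying \eqref{er2-high}, \eqref{phittx-psitt}, and \eqref{psi=log-phi-0}. Defining $\rho:=\bigl(\tfrac{\gamma-1}{A\gamma}\phi\bigr)^{1/(\gamma-1)}>0$, the pair $(\rho,\boldsymbol{u})$ inherits spherical symmetry of the form \eqref{duichenxingshi}; the continuity equation $\eqref{eq:1.1benwen}_1$ follows from $\eqref{eqn1}_1$ after dividing by $(\gamma-1)\phi$, while the momentum equation $\eqref{eq:1.1benwen}_2$ follows from $\eqref{eqn1}_2$ upon using \eqref{psi=log-phi-0} to rewrite $\boldsymbol{\psi}\cdot Q(\boldsymbol{u})$ as the Navier-Stokes dissipation in divergence form.

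\smallskip
Next I would translate the regularity of $(\phi,\boldsymbol{u},\boldsymbol{\psi})$ back to $(\rho,\boldsymbol{u})$. Since $\phi$ and $\rho^{\gamma-1}$ differ by a constant, the conditions $\nabla\phi\in C([0,T_*];H^2)$ and $\phi_t\in C([0,T_*];H^2)$ give the corresponding statements for $\nabla\rho^{\gamma-1}$ and $(\rho^{\gamma-1})_t$ in Definition \ref{cjk}, and the extra regularity \eqref{phittx-psitt} yields exactly the additional $(\rho^{\gamma-1})_{tt}$ and $(\nabla\log\rho)_{tt}$ statements. The bounds on $\nabla^2\log\rho$ and $(\nabla\log\rho)_t$ come directly from those for $\nabla\boldsymbol{\psi}$ and $\boldsymbol{\psi}_t$, again via \eqref{psi=log-phi-0}. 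The regularity \eqref{er2-high} for $\boldsymbol{u}$ is preserved verbatim, and the classical-smoothness statements \eqref{2dclassical2}--\eqref{3dclassical2} follow from standard Sobolev and Morrey embeddings applied to the time-weighted estimates (together with the spherical embeddings in Appendix \ref{improve-sobolev} to handle continuity at the origin for $n=2$). For uniqueness, given any two $3$-order regular solutions $(\rho^{(i)},\boldsymbol{u}^{(i)})$ in the sense of Definition \ref{cjk}, I would set $\phi^{(i)}:=\tfrac{A\gamma}{\gamma-1}(\rho^{(i)})^{\gamma-1}$ and $\boldsymbol{\psi}^{(i)}:=\nabla\log\rho^{(i)}$, check (using the regularity in Definition \ref{cjk}, and noting that $\boldsymbol{\psi}^{(i)}$ automatically satisfies the transport equation $\eqref{eqn1}_3$ since it is the gradient of the logarithm of the density) that $(\phi^{(i)},\boldsymbol{u}^{(i)},\boldsymbol{\psi}^{(i)})$ is a $3$-order regular solution of \eqref{eqn1}--\eqref{e1.3'} in the sense of Definition \ref{def-enlarge}, and then apply the uniqueness part of Theorem \ref{thh133}. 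The principal technical point --- and the only delicate one --- is verifying this lift: one must check that $\boldsymbol{\psi}^{(i)}=\nabla\log\rho^{(i)}$ actually solves $\eqref{eqn1}_3$, which follows by differentiating $\eqref{eq:1.1benwen}_1/\rho^{(i)}$ in space and using that second-order mixed partial derivatives commute (justified by the $H^{s-2}$ regularity of $\nabla\log\rho^{(i)}$), and that $\boldsymbol{\psi}_0^{(i)}(\boldsymbol{x})$ is indeed a gradient so no irrotational defect is introduced.
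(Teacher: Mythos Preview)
Your proposal is correct and takes essentially the same approach as the paper: the paper's proof of Theorem \ref{zth2} (\S\ref{bani44}) simply refers back to the argument of \S\ref{bani2}, which is precisely the reduction you describe---apply Theorem \ref{thh133} to $(\phi_0,\boldsymbol{u}_0,\boldsymbol{\psi}_0)$, define $\rho$ from $\phi$, and use \eqref{psi=log-phi-0} to recover the original system and translate the regularity. Your write-up is in fact more explicit than the paper's own treatment, which omits all details.
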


Furthermore, we point out that this enlarged reformulation in \eqref{eqn1} plays an important role 
in the global-in-time energy estimates for our solutions (see \S \ref{section-global2}--\S \ref{section-global3}).

\subsection{Main strategies for the case that $\bar \rho=0$}\label{subsection-strategy}
Now we briefly outline our main strategies to establish the global well-posedness theory in Theorems \ref{th1}--\ref{th1-high} when $\bar\rho=0$. To overcome the difficulties caused by the degeneracy in the far-field 
and the coordinate singularity at the origin, the main ingredients of our analysis consist of
\begin{enumerate}
\item[\S\ref{subsub-2.2.2}:] establishing some $r$-weighted $L^p(0,1)$-estimates of $\rho$
for $p\in [1,\infty]$, and $L^p(I)$-estimates of the effective velocity for some $p\in (n,\infty)$ 
in deriving  the global uniform upper bound of $\rho$ 
in $[0,T]\times I$ (see \S \ref{subsection-upper density});

\item[\S\ref{subsub-2.2.3}:] establishing the $L^p(I)$-estimates (without radial weights) of $\rho^\frac{1}{p}u$ for $p\in [2,\infty)$ and the global $L^\infty(I)$-estimate 
of the effective velocity (see \S \ref{section-effective}); 

\item[\S\ref{subsub-2.2.4}:] proving that cavitation does not form in $[0,T]\times \mathbb{R}^n$ for any finite time $T$ inside the fluids and establishing some  
pointwise lower bound estimate of $\rho$ (see \S \ref{section-nonformation});

\item[\S\ref{subsub-2.2.5}:] establishing the global uniform estimates for 
the regular solutions (see \S \ref{section-global2}--\S \ref{section-global3}). 
\end{enumerate}

Throughout the rest of \S \ref{subsection-strategy},                                                                                                                                                        $ C_0\in [1,\infty)$ denotes a generic constant depending only on $(\rho_0,u_0,n,\alpha,A,\gamma)$,
and $C(\nu_1,\cdots\!,\nu_k)\in [1,\infty)$ denotes a generic constant depending on $C_0$ and parameters $(\nu_1,\cdots\!,\nu_k)$, which may be different at each occurrence.

\subsubsection{Global-in-time uniform upper bound of the density}\label{subsub-2.2.2}
In order to extend the local solutions obtained in \S\ref{section-local-regular} to global ones, one needs to establish the corresponding global uniform estimates. 
To this end,  we consider system \eqref{cosingu} in spherical coordinates. The first major task for establishing the desired global estimates for the large-data problem is to obtain the global uniform upper bound of $\rho$. We divide this proof  into two steps: 
\begin{itemize}
 \item  the upper bound of $\rho$ in $B_1=\{\boldsymbol{x}:\, |\boldsymbol{x}| <1\}$ containing  the symmetric center;
 \smallskip
 \item the upper bound of $\rho$ in the domain  exterior to $B_1$, {\it i.e.},  $\mathbb{R}^n\backslash B_1$. 
\end{itemize}

In the exterior domain $\mathbb{R}^n\backslash B_1$, due to positive lower bound of the radial distance $r$, we can obtain from  the Sobolev embedding $H^1(1,\infty)\hookrightarrow L^\infty(1,\infty)$, 
the conservation of total mass, and  the BD entropy estimates 
(see Lemma \ref{energy-BD} in \S \ref{section-upper-density}) that
\begin{equation}\label{01}
\sup_{t\in[0,T]}\|\rho\|_{L^p(1,\infty)}\leq C_0  \qquad\,\, \text{for $p\in [1,\infty]$}.
\end{equation}

In $B_1$, some new ideas are required. First,  by taking the full advantage of the effective velocity $\boldsymbol{v}=\boldsymbol{u}+2\alpha\nabla\log\rho$ (its  radial projection is $v=u+2\alpha(\log\rho)_r$), 
the conservation of total mass and the Sobolev embeddings 
$W^{1,p}(\mathbb{R}^n)\hookrightarrow L^\infty(\mathbb{R}^n)\hookrightarrow L^\infty(B_1)$ for $p>n$, 
we observe that,  for some $p>n$,
\begin{equation}\label{000}
\begin{aligned}
&\|\rho^\frac{1}{p}\|_{L^\infty(B_1)}\leq C(p)\big\|\big(\rho^\frac{1}{p},\nabla(\rho^\frac{1}{p})\big)\big\|_{L^p}\leq C(p)\|\rho\|_{L^1}^\frac{1}{p}+ C(p)\big\|\rho^\frac{1}{p}(\boldsymbol{u},\boldsymbol{v})\big\|_{L^p},\\[2mm]
&\implies\,\,\sup_{t\in[0,T]} \|\rho\|_{L^\infty(0,1)} \leq  C(p) + C(p)\sup_{t\in[0,T]}\big|(r^m\rho)^\frac{1}{p}(u,v)\big|_{p}^p. 
\end{aligned}  
\end{equation}

Next, to establish the $L^p(I)$-estimates of $(r^m\rho)^\frac{1}{p}(u,v)$ for some $p>n$, 
we employ an idea involving the Hardy inequality 
(see Lemma \ref{hardy} in Appendix \ref{appA}). 
By treating the radial coordinate $r$ as a weight function defined on $[0,1)$, 
we are able to derive some $r$-weighted $L^p(0,1)$-estimates of $\rho$ 
based on the BD entropy estimates and the Hardy inequality:
\begin{equation}\label{02}
\sup_{t\in[0,T]}\|r^K\rho\|_{L^p(0,1)}\leq C(p,K)  \qquad \text{for $p\in [1,\infty]$},
\end{equation}
where $K>-\frac{1}{p}$ ($K>0$ for $p=\infty$) if $n=2$ and $K\geq 1-\frac{1}{p}$ if $n=3$. 
These weighted estimates make full use of the spherical symmetry, and are different from those obtained solely through the BD entropy estimate 
$\nabla\sqrt{\rho}\in L^\infty([0,T];L^2(\mathbb{R}^n))$ 
and the classical Sobolev embeddings of the type $H^1(\mathbb{R}^n)\hookrightarrow L^q(\mathbb{R}^n)$ in the M-D coordinates.

\smallskip
Now we briefly outline how to obtain the $L^p(I)$-estimates of $(r^m\rho)^\frac{1}{p} (u, v)$ for some $p>n$ 
via the weighted estimates \eqref{02}. We take the 3-D case as an example, since the 2-D case  can be treated similarly. 

First, multiply $\eqref{cosingu}_2$ by $r^2|u|^{p-2}u$ and integrate the resulting equality over $I$. 
Then, using \eqref{01} to handle the estimates in the exterior domain, we arrive at 
\begin{equation}\label{04} 
\begin{split}
&\frac{\mathrm{d}}{\mathrm{d}t}\big|(r^2\rho)^{\frac{1}{p}} u\big|_{p}^{p} +\big|(r^2\rho)^{\frac12} |u|^{\frac{p-2}{2}} u_r\big|_{2}^2 +\big|\rho^{\frac{1}{p}}u\big|_{p}^{p}\\
&\leq C(p)\big(1+\big|(r^2\rho)^{\frac{1}{p}} u\big|_{p}^{p}\big)+ C(p)\underline{\big\|r^\frac{p}{p\gamma-p+1}\rho\big\|_{L^{p\gamma-p+1}(0,1)}^{p\gamma-p+1}}_{:=\mathcal{J}_1}.
\end{split}
\end{equation}
Note that, if $\gamma\in (1,2]$, it follows directly from \eqref{02} 
that $\mathcal{J}_1\leq C(p)$. 
If $\gamma\in (2,3)$, we can first obtain from the Hardy inequality 
and $2\alpha\rho_r=\rho(v-u)$ that, for $\epsilon\in (0,1)$ and $j\in \mathbb{N}^*$,
\begin{equation}\label{055}
\begin{aligned}
\mathcal{J}_1&\leq C(p,j)+C(p,j,\epsilon)\underline{\int_0^1 r^{a_{j+1}} \rho^{b_{j+1}}\,\mathrm{d}r}_{:=\mathcal{J}_2}\\
&\quad +C(p,j)\epsilon\underline{\|r^\frac{2}{\gamma-1}\rho\|_{L^\infty(0,1)}^{\gamma-1}}_{:=\mathcal{J}_3} \big|\rho^{\frac{1}{p}}u\big|_{p}^{p} +C(p,j)\epsilon\underline{\big|(r^2\rho^\gamma)^{\frac{1}{p}}v\big|_{p}^{p}}_{:=\mathcal{J}_4},
\end{aligned}    
\end{equation}
where $(a_j,b_j)$ satisfy $(a_1,b_1)=(p,p\gamma-p+1)$ and, for $j\in \mathbb{N}^*$,
\begin{equation}
a_{j+1}=2(p-1)\Big(\frac{p}{p-1}\Big)^j-p+2,\quad b_{j+1}=(p-1)(\gamma-1)\Big(\frac{p}{p-1}\Big)^j+\gamma.
\end{equation}

For $\mathcal{J}_4$, we multiply  \eqref{eq:effective2} (the equation of $v$)
by $r^2|v|^{p-2}v$ and integrate the resulting equality over $I$. 
Employing \eqref{01} to handle the estimates in the exterior domain, we have
\begin{equation}\label{066}
\frac{\mathrm{d}}{\mathrm{d}t}\big|(r^2\rho)^{\frac{1}{p}}v\big|_{p}^{p}+\mathcal{J}_4
\leq  C(p)\Big(\mathcal{J}_3\,\big|\rho^{\frac{1}{p}}u\big|_{p}^{p}+\big|(r^2\rho)^{\frac{1}{p}}u\big|_{p}^{p}\Big).
\end{equation}
For $\mathcal{J}_2$--$\mathcal{J}_3$, since $\gamma\in (2,3)$, we can check that \eqref{02} is applicable to $\mathcal{J}_3$ and $\mathcal{J}_2$ for all $p\in [2,\infty)$ by fixing $j=j_0$ sufficiently large, so that
$\mathcal{J}_2+\mathcal{J}_3\leq C(p)$.
Therefore, collecting \eqref{04}--\eqref{066}, then choosing  $\epsilon$ sufficiently small, 
and using the Gr\"onwall inequality, we can obtain the desired $L^p(I)$-estimates 
of $(r^2\rho)^\frac{1}{p}(u,v)$ for some $p>3$ (see \S \ref{subsection-upper density}).

\subsubsection{Global-in-time $L^\infty(I)$-estimate of the effective velocity}\label{subsub-2.2.3}
Another major task for the global estimates is to obtain the global $L^\infty(I)$-estimate of the effective velocity $v=u+2\alpha(\log\rho)_r$. 
Once this is obtained, we can gain control over the derivative of density $\rho_r$ through $\rho(v-u)$, 
which then allows us to establish the required global uniform estimates.

First, applying the method of characteristics to $v$'s time evolution equation \eqref{eq:effective2} gives
\begin{align*}
v(t,\eta(t,r))&=v_0(r)\exp\Big(-\int_0^t \frac{A\gamma}{2\alpha} \rho^{\gamma-1}(\tau,\eta(\tau,r))\,\mathrm{d}\tau\Big)\\
&\quad+\frac{A\gamma}{2\alpha}\int_0^t (\rho^{\gamma-1}u)(s,\eta(s,r))\!\cdot\! 
\exp\Big(\!-\!\int_s^t\frac{A\gamma}{2\alpha}\rho^{\gamma-1}(\tau,\eta(\tau,r))\mathrm{d}\tau\Big)\mathrm{d}s,
\end{align*}
where $v_0=u_0+2\alpha(\log\rho_0)_r$, and $\eta:[0,T]\times I\to I$ denotes the flow mapping satisfying $\eta_t(t,r)=u(t,\eta(t,r))$ and $\eta(0,r)=r$. Taking the $L^\infty(I)$-norm above then yields
\begin{equation}\label{05}
\begin{aligned}
\sup_{t\in[0,T]}|v|_{\infty} 
\leq |v_0|_{\infty} +\frac{A\gamma}{2\alpha}\int_0^T |\rho^{\gamma-1}u|_{\infty} \,\mathrm{d}s.
\end{aligned}
\end{equation}
Thus, it suffices to control the $L^1([0,T];L^\infty(I))$-norm of $\rho^{\gamma-1}u$.
 
To achieve this, we establish some unweighed $L^p(I)$-estimates 
of $\rho^\frac{1}{p}u$. 
Take $n=3$ as an example. Unlike the standard $L^p(I)$-energy estimates of $u$ for $p\geq 2$, we  multiply $\eqref{cosingu}_2$ by $|u|^{p-2}u$ without radial weight $r^2$. However, this operation inevitably leads to the appearance of some undesired integral terms, 
such as $\mathcal{G}_3=-\frac{2}{p}\int_0^\infty \frac{\rho v |u|^p}{r}\,\mathrm{d}r$ 
in \eqref{R2-dim3}. Fortunately, $\mathcal{G}_3$ can be treated by the $L^\infty(I)$-norm of $v$ 
and the Young inequality as
\begin{align*}
\mathcal{G}_3&\leq C(\epsilon,p)|v|_{\infty}^2\big|\rho^\frac{1}{p} u\big|_{p}^p+\epsilon\big|(r^{-2}\rho)^\frac{1}{p}u\big|_{p}^p \qquad \text{for $\epsilon\in (0,1)$}.
\end{align*}
Based on this treatment, we can obtain the following type of estimates for $u$:
\begin{equation}\label{type0}
\sup_{t\in[0,T]}\big|\rho^\frac{1}{p}u\big|_{p}^p+\int_0^T\Big(\big|\rho^\frac{1}{2}|u|^\frac{p-2}{p}u_r\big|_{2}^2+\big|(r^{-2}\rho)^\frac{1}{p}u\big|_{p}^p\Big)\mathrm{d}t\leq  C(p,T)\Big(\big(\sup_{t\in[0,T]}|v|_{\infty}\big)^2+1\Big).
\end{equation}
Subsequently, by the Sobolev embedding $W^{1,1}(I)\hookrightarrow L^\infty(I)$, 
we can obtain from the above inequality with suitable fixed $p$ and  
Lemma \ref{lemma-uv-p} that, for any $\epsilon \in (0,1)$,
\begin{equation}\label{rhogamma1}
\begin{aligned}
\int_0^T|\rho^{\gamma-1}u|_{\infty}\mathrm{d}t&\leq C(T)\int_0^T\Big(1+|v|_{\infty} + \big|\rho^\frac{1}{2}|u|^\frac{p-2}{2}u_r\big|_{2}^\frac{2}{2p-1}\Big)\mathrm{d}t+C(T)\\
&\leq C(\epsilon,T)\Big(1+\int_0^T |v|_{\infty}\mathrm{d}t\Big)
+\epsilon\sup_{t\in[0,T]}|v|_{\infty}.
\end{aligned}
\end{equation}
Finally, substituting \eqref{rhogamma1} into \eqref{05} and choosing $\epsilon$ sufficiently small, 
we obtain the global $L^\infty(I)$-estimate of $v$ via the Gr\"onwall inequality. 
More details are provided in \S \ref{section-effective}.

\subsubsection{Non-formation of vacuum states inside the fluids  in finite time and the pointwise estimate of the density}\label{subsub-2.2.4}
Based on the global upper bound of $\rho$ and the $L^\infty(I)$-estimate of  $v$, 
we can show that the cavitation does not form inside the fluids in finite time, 
provided that no vacuum states are presented initially inside the fluids. 
Moreover, based on the specific decay rate of $\rho_0(r)$ in  $I$, 
we can derive the corresponding pointwise estimates for $\rho(t,r)$ in  $[0,T]\times I$.

Specifically, let $T>0$ be any finite time. 
We first multiply $\eqref{cosingu}_2$ by $\rho^{-1}u$ and integrate the resulting equality over $I$. 
Then it follows from the $L^\infty(I)$-estimates of $(\rho,v)$ that
\begin{equation}\label{07}
\sup_{t\in[0,T]}|u|_{2}^2+\int_0^T \Big|\big(u_r,\frac{u}{r}\big)\Big|_{2}^2\,\mathrm{d}t\leq C(T),
\end{equation}
which, together with  the  $L^\infty(I)$-estimate of $v=u+2\alpha(\log\rho)_r$, yields
\begin{equation}\label{08}
\sup_{t\in[0,T]}\|(\log\rho)_r\|_{L^2(0,1)}\leq C_0\sup_{t\in[0,T]}\big(|u|_{2}+|v|_{\infty}\big)\leq C(T).
\end{equation}
Next, multiplying $\eqref{cosingu}_1$ by $\rho^{-1}$ and integrating the resulting equality over $r\in[0,1]$, 
we then obtain from \eqref{07} and the $L^\infty(I)$-estimate of $v$ that 
\begin{equation}\label{09}
\sup_{t\in[0,T]}\|\log\rho\|_{L^2(0,1)}\leq C(T)\big(\|\log\rho_0\|_{L^2(0,1)}+1\big)\leq C(T), 
\end{equation}
which, along with \eqref{07}--\eqref{08} and the fact that $H^1(0,1) \hookrightarrow L^\infty(0,1)$,  
implies that 
\begin{equation}\label{10}
\sup_{t\in[0,T]}\|\log\rho\|_{L^\infty(0,1)}\leq C_0\sup_{t\in[0,T]}\big\|\big(\log\rho,(\log\rho)_r\big)\big\|_{L^2(0,1)}\leq C(T).
\end{equation}
This indicates  that $\rho$ admits a strictly positive lower bound in  $[0,T] \times [0,1]$.
The pointwise estimate of $\rho(t,r)$ on $[0,T]\times [0,R]$ $(R>0)$ can be derived similarly, 
which requires precisely analyzing the dependence of the constants in \eqref{08}--\eqref{10} on $R$ (see \S\ref{section-nonformation}).

\subsubsection{Global-in-time uniform energy estimates}\label{subsub-2.2.5}

Based on the $L^\infty(I)$-estimates of $(\rho, v)$, we consider the enlarged system \eqref{eqn1} in spherical coordinates, {\it i.e.}, $\eqref{e2.2}_1$--$\eqref{e2.2}_3$ in \S\ref{section-global2}, for establishing the desired global estimates
in \S\ref{section-global2}--\S\ref{section-global3}.

We take the $2$-order regular solutions in the 3-D case as an example. First, since any spherically symmetric vector field is curl-free, by employing the classical div-curl estimates and Lemma \ref{lemma-initial} in Appendix \ref{appb}, we establish the equivalences of the $W^{k,p}(\mathbb{R}^3)$-norms ($0\leq k\leq 3$ and $p\in (1,\infty)$) between the gradient $\nabla \boldsymbol{f}$ 
and the divergence $\diver \boldsymbol{f} = f_r + \frac{2}{r} f$ of the vector function $\boldsymbol{f}=\frac{\boldsymbol{x}}{r}f$, namely, Lemma \ref{im-1} in \S \ref{section-global2}. For example, according to Lemma \ref{im-1}, deriving the uniform $L^\infty([0,T];L^2(\mathbb{R}^3))$-estimate for $\nabla^3 \boldsymbol{u}$ is equivalent to proving
\begin{equation*}
\sup_{t\in[0,T]}\Big|\Big(r\big(u_r+\frac{2}{r}u\big)_{rr}, \big(u_r+\frac{2}{r}u\big)_{r}\Big)\Big|_{2}\leq C(T).
\end{equation*}
Thus, Lemma \ref{im-1}, combined with the structure of system $\eqref{e2.2}_1$--$\eqref{e2.2}_3$, 
subsequently assists us in establishing higher-order spatial derivative estimates for $(\phi, u, \psi)$.

The second point is to obtain the $L^2([0,T];L^\infty(I))$-estimate for $(u_r,\frac{u}{r})$, which is crucial in establishing the higher-order estimates of $u$. We first build a bridge between the $L^\infty(I)$-norm of $ru_r$ 
and the $L^2(I)$-norm of $ru_t$ via the Sobolev 
embedding $H^1(1,\infty)\hookrightarrow L^\infty(1,\infty)$, the  Hardy inequality,  Lemmas \ref{im-1} and \ref{lemma-initial},  $\eqref{e2.2}_2$, and some lower order estimates of $u$:
\begin{align*}
|ru_r|_{\infty}& \leq \|ru_r\|_{L^\infty(0,1)}+\|ru_r\|_{L^\infty(1,\infty)} \leq C_0\big(\big\|r^\frac{3}{2}(u_{r},u_{rr})\big\|_{L^2(0,1)}+\big\|r(u_{r},u_{rr})\big\|_{L^2(1,\infty)}\big)\\
& \leq C_0|(ru_{r},ru_{rr})|_{2} \leq C(T)\big(|ru_{t}|_{2}+1\big).
\end{align*}
Subsequently, the above inequality and some detailed analysis enable us to close the energy 
estimates for $u_t$, that is,
\begin{equation*}
\sup_{t\in[0,T]}|r u_t|_{2}^2+\int_0^T \Big|r\big(u_{tr}+\frac{2}{r}u_t\big)\Big|_{2}^2\,\mathrm{d}t\leq C(T).
\end{equation*}
This, along with Lemmas \ref{im-1} and \ref{lemma-initial}, $\eqref{e2.2}_2$, the Sobolev embedding $H^2(\mathbb{R}^3)\hookrightarrow L^{\infty}(\mathbb{R}^3)$, and the lower-order estimates of $u$, 
yields the $L^2([0,T];L^\infty(I))$-estimate for $(u_r,\frac{u}{r})$:
\begin{align*}
\int_0^T \Big|\big(u_r,\frac{u}{r}\big)\Big|_{\infty}^2 \mathrm{d}t&\leq C\int_0^T \|\nabla\boldsymbol{u}\|_{L^\infty(\mathbb{R}^3)}^2 \mathrm{d}t 
\leq C\int_0^T\Big|r\big(u_{tr}+\frac{2}{r}u_t\big)\Big|_{2}^2 \mathrm{d}t+C(T)\leq C(T).
\end{align*}

\subsection{Some comments for  the case that $\bar\rho>0$}\label{positive-application} 

We consider the case $n=3$ as an example and provide some brief comments on establishing the global boundedness of $(\rho, v)$.

\subsubsection{Global uniform upper bound of the density} 
We  divide the proof of the upper bound of $\rho$ into two parts: 
the estimates in $B_1=\{\boldsymbol{x}:\,|\boldsymbol{x}| <1\}$ 
and those in $\mathbb{R}^n\backslash B_1$. 
In fact, one can directly derive the uniform upper bound of $\rho$ 
in $\mathbb{R}^n\backslash B_1$ based on the fundamental energy estimate 
and the BD entropy estimate  (see Lemmas \ref{energy-bd-po}--\ref{Lemma-12.4} in \S\ref{subsub-1132}).

To establish the uniform upper bound of $\rho$ in $B_1$, 
by \eqref{000} in \S \ref{subsub-2.2.2}, it suffices to derive the $L^p(I)$-estimates of 
$(r^m\rho)^\frac{1}{p}(u,v)$ for some $p>3$. 
First, repeating the derivation of \eqref{04} and using the uniform upper bound of $\rho$ 
in  $\mathbb{R}^n\backslash B_1$ to handle the estimates in the exterior domain 
imply that
\begin{equation*} 
\frac{\mathrm{d}}{\mathrm{d}t}\big|(r^2\rho)^{\frac{1}{p}} u\big|_{p}^{p}\!+\! \big|(r^2\rho)^{\frac12} |u|^{\frac{p-2}{2}} u_r\big|_{2}^2 + \!\big|\rho^{\frac{1}{p}}u\big|_{p}^{p}\leq C(p)\big|(r^2\rho)^{\frac{1}{p-2}}u\big|_{p-2}^{p-2}+ C(p)\mathcal{J}_1,
\end{equation*}
 where $\mathcal{J}_1$ is defined in \eqref{04}.
Then, using the same arguments as in \eqref{055}--\eqref{066} and the Gr\"onwall inequality, we obtain 
\begin{equation}
\sup_{t\in [0,T]}\big|(r^2\rho)^{\frac{1}{p}}(u,v)\big|_{p}^{p}
\leq  C(p,T)\sup_{t\in [0,T]}\big|(r^2\rho)^{\frac{1}{p-2}}u\big|_{p-2}^{p-2}+C(p,T).
\end{equation}
Finally, taking $p=2N+2$ with $N\in \mathbb{N}^*$ in the above inequality and using Lemma \ref{energy-bd-po},
we recursively obtain the $L^{2N}(I)$-estimates for $(r^2\rho)^{\frac{1}{2N}}(u,v)$ and hence, via the interpolation, 
the $L^{p}(I)$-estimates for $(r^2\rho)^{\frac{1}{p}}(u,v)$ for $p>3$.

\subsubsection{Global $L^\infty(I)$-estimate of the effective velocity} 
The global $L^\infty(I)$-estimate of $v$ follows from the same argument as in \S \ref{subsub-2.2.3}, 
with $p\in [2,\infty)$ replaced by $p\in[4,\infty)$. 
This adjustment is due to the term: 
$\mathcal{I}_3=A(p-1)\int_0^\infty \rho^\gamma |u|^{p-2} u_r\,\mathrm{d}r$ in \eqref{503-po}, 
which arises in establishing estimate \eqref{type0}. 
In fact, $\mathcal{I}_3$ can be handled via the $L^\infty(I)$-estimate of $\rho$, 
and the H\"older and Young inequalities: for all $\epsilon\in(0,1)$,
\begin{equation}\label{230}
\begin{aligned}
\mathcal{I}_3&\leq  C(p)|\rho|_{\infty}^{\gamma-1} \big|(r^{-2}\rho)^\frac{1}{p}u\big|_{p}^\frac{p}{2} \big|(r^2\rho)^\frac{1}{2}|u|^\frac{p-4}{2}u_r\big|_2\\
&\leq C(\epsilon,p,T)\underline{\big|(r^2\rho)^\frac{1}{2}|u|^\frac{p-4}{2}u_r\big|_2^2}_{:=\mathcal{J}_5}+\epsilon\big|(r^{-2}\rho)^\frac{1}{p}u\big|_{p}^p.
\end{aligned}    
\end{equation}
Note that $\mathcal{J}_5$ can be treated via the $L^p(I)$-energy estimates 
of $u$ (Lemma \ref{lemma-uv-p-po} in \S\ref{subsub-1133}) only if $p\in [4,\infty)$. 
By \eqref{230}, we obtain  \eqref{type0} from the same argument as in \S \ref{subsub-2.2.3}. 

\subsubsection{Global uniform lower bound of the density}
Based on the $L^\infty(I)$-estimates of $(\rho,v)$, we can show
that $\rho$ admits a global uniform lower bound in $[0,T]\times I$. 
To achieve this, it suffices to establish the global $L^\infty(I)$-estimate for $\log(\rho/\bar\rho)$. 

First, we can obtain \eqref{07} in a similar way as in \S\ref{subsub-2.2.4}. 
Then we obtain from \eqref{07}, the $L^\infty(I)$-estimate of $v$, Lemma \ref{calculus}, 
and $2\alpha(\log\rho)_r=v-u$ that
\begin{equation}\label{17}
\big|\log(\rho/\bar\rho)\big|_\infty\leq C(T)\big(\big|\log(\rho/\bar\rho)\big|_2+1\big).
\end{equation}
Next, multiplying $\eqref{cosingu}_1$ by $\rho^{-1}\log(\rho/\bar\rho)$ 
and integrating the resulting equality over $I$, we obtain
from \eqref{07} and the $L^\infty(I)$-estimate of $v$ that
\begin{equation}\label{18} 
\big|\log(\rho/\bar\rho)\big|_2 \leq C(T)\big(\big|\log(\rho_0/\bar\rho)\big|_2+1\big)\leq C(T).
\end{equation}
Note that the $L^2(I)$-boundedness of $\log(\rho_0/\bar\rho)$ can be obtained 
by following the calculations \eqref{comment1}--\eqref{comment2} in the proof of 
Lemma \ref{lemma-lowerbound}. 
Finally, combining \eqref{17}--\eqref{18}, we derive 
the desired global $L^\infty(I)$-estimate of $\log(\rho/\bar\rho)$.

\section{Global Uniform Upper Bound of the Density}\label{section-upper-density}

The purpose of this section is to establish the global-in-time upper bound of the density
for the case that $\bar\rho=0$. 
In \S \ref{section-upper-density}--\S\ref{se46}, we denote  $C_0\in [1,\infty)$ a generic constant depending only 
on $(\rho_0,u_0,n,\alpha,A,\gamma)$, and $C(\nu_1,\cdots\!,\nu_k)\in [1,\infty)$ a generic  constant depending on $C_0$
and parameters $(\nu_1,\cdots\!,\nu_k)$, which may be different at each occurrence. Moreover, we emphasize that, throughout this paper, 
for any function space $X$ and functions $(h,g_1,\cdots\!,g_k)$,
\begin{equation*}
\|h(g_1,\cdots\!,g_k)\|_{X}:=\sum_{i=1}^k\|hg_i\|_X.
\end{equation*}

\subsection{Reformulation in the spherical coordinates}
Let $T>0$ be any fixed time, and let $(\rho,\boldsymbol{u})(t,\boldsymbol{x})$ be 
the unique $s$-order $(s=2,3)$ regular solution 
of the Cauchy problem \eqref{eq:1.1benwen}--\eqref{e1.3} in $[0,T]\times\mathbb{R}^n$ ($n=2,3$), 
which is spherically symmetric with the form:
\begin{equation}\label{e1.4}
\rho(t,\boldsymbol{x})=\rho(t,r), \quad \boldsymbol{u}(t,\boldsymbol{x})=u(t,r)\frac{\boldsymbol{x}}{r} \qquad\,\, \mbox{for $r=|\boldsymbol{x}|$}.
\end{equation}
Such local well-posedness results have been established in Theorems \ref{zth1}--\ref{zth2}.
Then the Cauchy problem \eqref{eq:1.1benwen}--\eqref{e1.3} can be transformed to the initial-boundary 
value problem of $(\rho,u)(t,r)$ in $(t,r)\in [0,T]\times I$:
\begin{equation}\label{e1.5}
\begin{cases}
\displaystyle 
\rho_t+(\rho u)_r+\frac{m\rho u}{r}=0,\\[3pt]
\displaystyle
\rho u_t+\rho uu_r+A(\rho^\gamma)_r=2\alpha\big(\rho u_r+\frac{m\rho u}{r} \big)_r-\frac{2\alpha m\rho_r u}{r},\\[4pt]
\displaystyle
(\rho,u)|_{t=0}=(\rho_0,u_0) \qquad\,\, \text{for $r\in I$},\\[4pt]
\displaystyle
u|_{r=0}=0 \qquad\qquad\qquad\,\,\,\, \text{for $t\in [0,T]$},\\[4pt]
\displaystyle
(\rho,u)\to \left(0,0\right)  \qquad\qquad\,\,\,\text{as $r\to \infty$ \ for $t\in [0,T]$}.
\end{cases}
\end{equation}
Here, based on relation  \eqref{e1.4}, we say that $(\rho, u)(t,r)$ is the $s$-order $(s=2,3)$ regular 
solution of problem \eqref{e1.5} in $[0,T]\times I$ if the vector function 
$(\rho,\boldsymbol{u})(t,\boldsymbol{x})$ is the  $s$-order $(s=2,3)$ regular solution (as defined in Definition \ref{cjk}) of the Cauchy problem \eqref{eq:1.1benwen}--\eqref{e1.3} 
with $\bar\rho=0$ in $[0,T]\times\mathbb{R}^n$ {\rm ($n=2,3$)}.

Next, by Lemma  \ref{lemma-initial} in Appendix B, one can transform the statements of Theorems \ref{zth1} and \ref{zth2} 
into the following conclusions in spherical coordinates.

\begin{thm}\label{rth1} Let \eqref{cd1local} hold. Assume the initial data $(\rho_0, u_0)(r)$ satisfy 
\begin{align*}
&r^m\rho_0\in L^1(I),\qquad r^{\frac{m}{2}}\big(\frac{(\log\rho_0)_r}{r},(\log\rho_0)_{rr}\big)\in L^2(I),\\
&r^{\frac{m}{2}}\big((\rho_0^{\gamma-1})_r, \frac{(\rho_0^{\gamma-1})_r}{r},(\rho_0^{\gamma-1})_{rr},u_0,\frac{u_0}{r},(u_0)_r,(\frac{u_0}{r})_r,(u_0)_{rr}\big)\in L^2(I),
\end{align*}
and, in addition, $(\log\rho_0)_r\in L^\infty(I)$ when $n=3$. Then there exists $T_*>0$ such that problem \eqref{e1.5} admits a unique $2$-order regular solution $(\rho, u)(t,r)$ in $[0,T_*]\times I$ that satisfies
\begin{equation}\label{spd}
\begin{aligned}
&r^m\rho \in C([0,T_*];L^1(I)), \quad\, (\log\rho)_r\in L^\infty([0,T_*]\times I),\\
&r^{\frac{m}{2}}\big((\rho^{\gamma-1})_r, \frac{(\rho^{\gamma-1})_r}{r},(\rho^{\gamma-1})_{rr},(\rho^{\gamma-1})_t,(\rho^{\gamma-1})_{tr}\big)\in C([0,T_*];L^2(I)),\\
& r^{\frac{m}{2}}\big(\frac{(\log \rho)_r}{r}, (\log \rho)_{rr},(\log \rho)_{tr},u,\frac{u}{r},u_r,(\frac{u}{r})_r,u_{rr},u_t\big)\in C([0,T_*];L^2(I)),\\
&r^{\frac{m}{2}}\big(\frac{1}{r}(\frac{u}{r})_r, (\frac{u}{r})_{rr},\frac{u_{rr}}{r},u_{rrr},\frac{u_t}{r},u_{tr},t^\frac{1}{2}u_{tt},t^\frac{1}{2}(\frac{u_t}{r})_r,t^\frac{1}{2}u_{trr}\big)\in L^2([0,T_*];L^2(I)),\\
&t^{\frac{1}{2}}r^{\frac{m}{2}}\big(\frac{1}{r}(\frac{u}{r})_r,(\frac{u}{r})_{rr},\frac{u_{rr}}{r},u_{rrr},\frac{u_t}{r},u_{tr}\big)\in L^\infty([0,T_*];L^2(I)),
\end{aligned}    
\end{equation}
and 
\begin{equation}\label{spd2}
\begin{aligned}
\big(\rho,\rho_r,u,\frac{u}{r},u_r\big)\in C((0,T_*];C(\bar I)).
\end{aligned}
\end{equation}
\end{thm}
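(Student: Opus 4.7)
The plan is to deduce Theorem~\ref{rth1} from Theorem~\ref{zth1} together with the norm-equivalence encoded in Lemma~\ref{lemma-initial} of Appendix~\ref{appb}. The guiding principle is that, under the spherically symmetric ansatz $\rho(t,\boldsymbol{x})=\rho(t,r)$ and $\boldsymbol{u}(t,\boldsymbol{x})=u(t,r)\boldsymbol{x}/r$, the Eulerian Sobolev norms appearing in \eqref{id1}--\eqref{shangjie3} and in Definition~\ref{cjk} can be expressed exactly as weighted $L^2$-norms on $I=[0,\infty)$ with the surface-measure weight $r^{m/2}$ (together with lower-order mixed weights coming from $\nabla(f \boldsymbol{x}/r)$, which produces both $u_r$ and $u/r$, and similarly at the higher-order level).

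First I would verify the initial-data correspondence. Setting $\rho_0(\boldsymbol{x})=\rho_0(|\boldsymbol{x}|)$ and $\boldsymbol{u}_0(\boldsymbol{x})=u_0(|\boldsymbol{x}|)\boldsymbol{x}/|\boldsymbol{x}|$, Lemma~\ref{lemma-initial} gives
\begin{equation*}
\|\rho_0\|_{L^1(\mathbb{R}^n)}\sim |r^m\rho_0|_1,\quad \|\nabla\log\rho_0\|_{D^1(\mathbb{R}^n)}\sim \bigl|r^{m/2}\bigl(\tfrac{(\log\rho_0)_r}{r},(\log\rho_0)_{rr}\bigr)\bigr|_2,
\end{equation*}
and analogous equivalences for $\nabla\rho_0^{\gamma-1}\in H^1(\mathbb{R}^n)$ and $\boldsymbol{u}_0\in H^2(\mathbb{R}^n)$; the condition $(\log\rho_0)_r\in L^\infty(I)$ encodes $\nabla\log\rho_0\in L^\infty(\mathbb{R}^3)$ in the 3-D case. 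Hence the hypotheses of Theorem~\ref{rth1} exactly match those of Theorem~\ref{zth1} restricted to the spherically symmetric class.

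Next, I would apply Theorem~\ref{zth1} to obtain a unique local $2$-order regular solution $(\rho,\boldsymbol{u})$ on $[0,T_*]\times \mathbb{R}^n$. Because the system \eqref{eq:1.1benwen} is invariant under the action of $\mathrm{SO}(n)$ and the initial data are $\mathrm{SO}(n)$-invariant, the uniqueness statement in Theorem~\ref{zth1} forces $(\rho,\boldsymbol{u})$ to share this symmetry, so it takes the form \eqref{e1.4} for some scalar pair $(\rho,u)(t,r)$. Substituting into \eqref{eq:1.1benwen} and performing the standard computation $\diver(f\boldsymbol{x}/r)=f_r+mf/r$ yields \eqref{e1.5}, while the boundary condition $u|_{r=0}=0$ follows from the continuity and single-valuedness of $\boldsymbol{u}$ at the origin.

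Finally, I would translate the abstract regularity in Definition~\ref{cjk} into the concrete weighted statements \eqref{spd} by a second application of Lemma~\ref{lemma-initial}, now at each order of derivative and at each time $t\in[0,T_*]$; the $t^{1/2}$-weighted parts of \eqref{spd} come from the bounds \eqref{er2} in Theorem~\ref{zth1}, while \eqref{spd2} follows from \eqref{2dclassical1}--\eqref{3dclassical1} in the 2-D case and, in the 3-D case, from the Sobolev embedding $H^1(I,r^m\mathrm{d}r)\hookrightarrow C(\overline{I\setminus\{0\}})$ applied on any compact subinterval away from the origin combined with the pointwise continuity statements for $(\rho,\boldsymbol{u},\nabla \boldsymbol{u})$ in $\mathbb{R}^3_*$. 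The main technical point is simply bookkeeping: one must match each component of $\nabla^k \boldsymbol{u}$ with the correct linear combination of $\partial_r^j u$ and $\partial_r^j(u/r)$ weighted by $r^{m/2}$, so that no information is lost and so that the boundary behavior at $r=0$ (which is controlled by the $u/r$-type quantities) is properly captured. Apart from this translation, no new PDE estimate is needed because the hard analysis has already been absorbed into Theorem~\ref{zth1}.
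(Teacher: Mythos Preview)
Your proposal is correct and follows essentially the same approach as the paper, which explicitly states that Theorem~\ref{rth1} is obtained by using Lemma~\ref{lemma-initial} to transform the statement of Theorem~\ref{zth1} into spherical coordinates. One small redundancy: your appeal to $\mathrm{SO}(n)$-invariance plus uniqueness to recover the spherical symmetry of the solution is unnecessary, since Theorem~\ref{zth1} already asserts that the solution takes the form~\eqref{duichenxingshi}; also, for \eqref{spd2} in the $3$-D case note that $\nabla\boldsymbol{u}\in C((0,T_*]\times\mathbb{R}^3)$ from \eqref{3dclassical1} (not merely on $\mathbb{R}^3_*$) already yields $(u_r,\tfrac{u}{r})\in C((0,T_*];C(\bar I))$ including $r=0$.
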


\begin{thm}\label{rth133} Let \eqref{cd1local} hold. Assume the initial data $(\rho_0, u_0)(r)$ satisfy 
\begin{align*} 
&r^m\rho_0\in L^1(I),\qquad r^{\frac{m}{2}}\big(\frac{(\log\rho_0)_r}{r},(\log\rho_0)_{rr},(\frac{(\log\rho_0)_r}{r})_r,(\log\rho_0)_{rrr}\big)\in L^2(I),\notag\\
&r^{\frac{m}{2}}\big((\rho_0^{\gamma-1})_r, \frac{(\rho_0^{\gamma-1})_r}{r},(\rho_0^{\gamma-1})_{rr},(\frac{(\rho_0^{\gamma-1})_r}{r})_r,(\rho_0^{\gamma-1})_{rrr}\big)\in L^2(I),\\
&r^{\frac{m}{2}}\big(u_0,\frac{u_0}{r},(u_0)_r,(\frac{u_0}{r})_r,(u_0)_{rr},\frac{1}{r}(\frac{u_0}{r})_r,
(\frac{u_0}{r})_{rr},\frac{(u_0)_{rr}}{r},(u_0)_{rrr}\big)\in L^2(I).
\end{align*}
Then there exists $T_*>0$ such that problem \eqref{e1.5} admits a unique $3$-order regular solution $(\rho, u)(t,r)$ in $[0,T_*]\times I$ that satisfies 
\begin{align}
&r^m\rho \in C([0,T_*];L^1(I)),\qquad (\log\rho)_r\in L^\infty([0,T_*]\times I),\notag\\
&r^{\frac{m}{2}}\big((\rho^{\gamma-1})_r, \frac{(\rho^{\gamma-1})_r}{r},(\rho^{\gamma-1})_{rr},(\frac{(\rho^{\gamma-1})_r}{r})_r,(\rho^{\gamma-1})_{rrr}\big)\in C([0,T_*];L^2(I)),\notag\\
&r^{\frac{m}{2}}\big((\rho^{\gamma-1})_t,(\rho^{\gamma-1})_{tr},\frac{(\rho^{\gamma-1})_{tr}}{r},(\rho^{\gamma-1})_{trr}\big)\in C([0,T_*];L^2(I)),\notag\\
& r^{\frac{m}{2}}\big(\frac{(\log \rho)_r}{r}, (\log \rho)_{rr},(\frac{(\log\rho)_r}{r})_r,(\log\rho)_{rrr}\big)\in C([0,T_*];L^2(I)),\notag\\
& r^\frac{m}{2}\big((\log \rho)_{tr},\frac{(\log \rho)_{tr}}{r}, (\log \rho)_{trr}\big)\in C([0,T_*];L^2(I)),\notag\\
&r^\frac{m}{2}(\rho^{\gamma-1})_{tt}\in C([0,T_*];L^2(I)),\qquad r^\frac{m}{2}\big((\rho^{\gamma-1})_{ttr},(\log \rho)_{ttr} \big)\in L^2([0,T_*];L^2(I)),\label{spd33}\\
&r^{\frac{m}{2}}\big(u,\frac{u}{r},u_r,(\frac{u}{r})_r,u_{rr},\frac{1}{r}(\frac{u}{r})_r,(\frac{u}{r})_{rr},\frac{u_{rr}}{r},u_{rrr},u_t,\frac{u_t}{r},u_{tr}\big)\in C([0,T_*];L^2(I)),\notag\\
&r^{\frac{m}{2}}\big((\frac{1}{r}(\frac{u}{r})_r)_r,(\frac{u}{r})_{rrr},(\frac{u_{rr}}{r})_r,u_{rrrr},(\frac{u_t}{r})_r,u_{trr}\big)\in L^2([0,T_*];L^2(I)),\notag\\
&t^\frac{1}{2}r^{\frac{m}{2}}\big((\frac{1}{r}(\frac{u}{r})_r)_r,(\frac{u}{r})_{rrr},(\frac{u_{rr}}{r})_r,u_{rrrr},(\frac{u_t}{r})_r,u_{trr},u_{tt}\big)\in L^\infty([0,T_*];L^2(I)),\notag\\
&t^\frac{1}{2}r^{\frac{m}{2}}\big(\frac{u_{tt}}{r},u_{ttr},u_{trrr},\frac{u_{trr}}{r},(\frac{u_t}{r})_{rr},\frac{1}{r}\big(\frac{u_{tr}}{r}\big)_r\big)\in L^2([0,T_*];L^2(I)),\notag
\end{align}
and
\begin{equation}\label{spd233}
\big(\rho,\rho_r,\rho_t,u,\frac{u}{r},u_r\big)\in C([0,T_*];C(\bar I)) \ \ \text{and} \ \ \big(u_t,\big(\frac{u}{r}\big)_r,u_{rr}\big)\in C((0,T_*];C(\bar I)).
\end{equation}
\end{thm}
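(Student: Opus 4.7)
The plan is to derive Theorem \ref{rth133} as a direct corollary of Theorem \ref{zth2} by the coordinate transform $\boldsymbol{x}\mapsto r=|\boldsymbol{x}|$, using the correspondence between the Eulerian and spherical function spaces furnished by Lemma \ref{lemma-initial}. More precisely, I first verify that the scalar initial data $(\rho_0, u_0)(r)$ satisfying the hypothesis of Theorem \ref{rth133} give rise, via the spherical ansatz \eqref{eqs:CauchyInit}, to vector initial data $(\rho_0, \boldsymbol{u}_0)(\boldsymbol{x})$ satisfying the hypothesis \eqref{id1-high} of Theorem \ref{zth2}. This step is purely a dictionary between weighted $L^2(I)$-norms carrying the measure factor $r^{m}\,\mathrm{d}r$ and the $H^k(\mathbb{R}^n)$/$D^{k,p}(\mathbb{R}^n)$ norms of spherically symmetric vector fields; Lemma \ref{lemma-initial} gives the precise equivalence at each order $k=0,1,2,3$.

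Next, Theorem \ref{zth2} produces a unique $3$-order regular solution $(\rho,\boldsymbol{u})(t,\boldsymbol{x})$ on $[0,T_*]\times\mathbb{R}^3$ satisfying \eqref{er2-high}--\eqref{3dclassical2} together with the extra second-time-derivative regularity listed at the end of Theorem \ref{zth2}. By uniqueness in Theorem \ref{zth2} and the rotational invariance of system \eqref{eq:1.1benwen}, the solution inherits spherical symmetry \eqref{duichenxingshi}, so it can be written as $(\rho(t,r), u(t,r)\boldsymbol{x}/r)$, and this defines the candidate pair $(\rho, u)(t,r)$ of Theorem \ref{rth133}. Then I apply Lemma \ref{lemma-initial} in the reverse direction, orderwise and at each time slice, to translate every Eulerian regularity statement for $(\rho, \boldsymbol{u})$ into the corresponding weighted regularity statement in $r$. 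For instance, $\nabla\rho_0^{\gamma-1}\in H^2(\mathbb{R}^3)$ converts into $r^{m/2}((\rho_0^{\gamma-1})_r,(\rho_0^{\gamma-1})_r/r,(\rho_0^{\gamma-1})_{rr},((\rho_0^{\gamma-1})_r/r)_r,(\rho_0^{\gamma-1})_{rrr})\in L^2(I)$, because of the identities $\partial_i f = \psi_r x_i/r$ and $\partial_i\partial_j f = \psi_{rr}\, x_ix_j/r^2 + (\psi_r/r)(\delta_{ij}-x_ix_j/r^2)$ for a radial scalar $f(|\boldsymbol{x}|)=\psi(r)$; an analogous (but vector-valued) identity gives the translation for $\boldsymbol{u}_0\in H^3(\mathbb{R}^3)$ into the weighted norms of $u_0, u_0/r, (u_0)_r, (u_0/r)_r, \ldots$ listed in the hypothesis. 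The time-dependent regularity classes in \eqref{spd33} are obtained by performing this dictionary on $(\rho,\boldsymbol{u})(t,\cdot)$ for each $t$, and using the continuity-in-time statements of Theorem \ref{zth2} together with Fubini/Bochner measurability. The continuity statements \eqref{spd233} likewise follow from \eqref{3dclassical2} by inspecting what the Cartesian continuity of $\rho, \nabla\rho, \rho_t, \boldsymbol{u}, \nabla\boldsymbol{u}$ and the $t>0$ continuity of $\nabla^2\boldsymbol{u}, \boldsymbol{u}_t$ mean when restricted to radial profiles (here $\nabla\boldsymbol{u}|_{r=0}$ being continuous forces both $u_r$ and $u/r$ to be continuous up to the origin).

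Uniqueness of $(\rho,u)$ as a solution of the scalar problem \eqref{e1.5} is inherited from the uniqueness of $(\rho,\boldsymbol{u})$ in Theorem \ref{zth2}: any two $3$-order regular spherical solutions of \eqref{e1.5} lift to two solutions of \eqref{eq:1.1benwen}--\eqref{e1.3} of the form \eqref{duichenxingshi} with the required $H^3$-regularity, hence must coincide in $\mathbb{R}^3$, hence pointwise in $r$. The existence time $T_*>0$ is the one produced by Theorem \ref{zth2} for the lifted initial data.

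The only genuinely delicate step, and the one I would be most careful with, is the passage between the Eulerian and spherical norms at the highest order and involving weights of the form $1/r$. In particular, converting $\nabla^3\boldsymbol{u}\in L^2(\mathbb{R}^3)$ to the list of weighted $L^2(I)$-bounds on $(1/r)(u/r)_r)_r$, $(u/r)_{rrr}$, $(u_{rr}/r)_r$, and $u_{rrrr}$, and the analogous high-order time-derivative entries such as $(\nabla\log\rho)_{ttr}$ and $u_{ttr}$, requires matching the Cartesian Hessian/div-curl identities for spherical vector fields precisely term by term; this is exactly the content of Lemma \ref{lemma-initial} (and will be used again in Lemma \ref{im-1} later in the paper), but it is where one must check that no weight is lost and that the apparent singularities at $r=0$ cancel out, so I would do this orderwise rather than by a single global estimate.
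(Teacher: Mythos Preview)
Your proposal is correct and matches the paper's approach exactly: the paper states just before Theorems \ref{rth1}--\ref{rth133} that ``by Lemma \ref{lemma-initial} in Appendix B, one can transform the statements of Theorems \ref{zth1} and \ref{zth2} into the following conclusions in spherical coordinates,'' and gives no further proof. Your only slip is writing $\mathbb{R}^3$ where it should be $\mathbb{R}^n$ for $n=2,3$, but the dictionary argument via Lemma \ref{lemma-initial} is identical in both dimensions.
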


\smallskip
\subsection{Upper bound of the density}\label{subsection-upper density}

Let $T>0$ be any fixed time, and let $(\rho, u)(t,r)$ be the $s$-order $(s=2,3)$ regular solution of 
problem \eqref{e1.5} in $[0,T]\times I$ obtained in Theorems \ref{rth1}--\ref{rth133}. 
We define the characteristic functions $(\chi^\flat_\sigma,\chi^\sharp_\sigma)$ ($\sigma>0$) as
\begin{equation}\label{chi-sigma}
\chi^\flat_\sigma(r):=\begin{cases}
1&r\in[0,\sigma),\\
0&r\in[\sigma,\infty),
\end{cases}\qquad \chi^\sharp_\sigma:=1-\chi^\flat_\sigma.
\end{equation}

First, we give the definition of the so-called effective velocity.

\begin{mydef}\label{def-effective}
Let $(\rho,u,\alpha)$ be defined as in {\rm \S \ref{section-intro}}. 
Then 
\begin{equation}\label{V-expression}
v:=u+2\alpha (\log\rho)_r
\end{equation}
is said to be the effective velocity in spherical coordinates.
In addition, define $v_0:=v|_{t=0}=u_0+2\alpha (\log\rho_0)_r$.
\end{mydef} 

Then we have the following standard energy estimates and the BD entropy estimates:

\begin{lem}\label{energy-BD}
There exists a constant $C_0>0$ such that, for any $t\in [0,T]$,
\begin{align*} 
\int_0^\infty r^m\left(\rho u^2+\rho^\gamma\right)(t,\cdot)\mathrm{d}r+\int_0^t\int_0^\infty r^m\Big(\rho |u_r|^2+\rho\frac{u^2}{r^2}\Big)\,\mathrm{d}r\mathrm{d}s&\leq  C_0,\\
\int_0^\infty r^m\left(\rho v^2+|(\sqrt{\rho})_r|^2+\rho^\gamma\right)(t,\cdot)\mathrm{d}r+ \int_0^t\int_0^\infty r^m\rho^{\gamma-2}|\rho_r|^2\,\mathrm{d}r\mathrm{d}s &\leq  C_0.
\end{align*}
\end{lem}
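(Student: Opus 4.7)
The plan is to derive both bounds as consequences of the standard energy identity and the Bresch--Desjardins (BD) entropy identity for the degenerate Navier--Stokes system, then translate the resulting multidimensional identities to the radial variable $r \in I$ using the volume element $\omega_n r^m\,\mathrm dr$.

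\textbf{Basic energy estimate.} First, I would take the standard $L^2(\mathbb{R}^n)$ pairing of the momentum equation $\eqref{eq:1.1benwen}_2$ with $\boldsymbol{u}$, integrate over $\mathbb{R}^n$, and use the continuity equation $\eqref{eq:1.1benwen}_1$ to combine the convective and pressure terms into exact time derivatives of $\tfrac12\rho|\boldsymbol{u}|^2$ and $\tfrac{A}{\gamma-1}\rho^\gamma$. Integration by parts on the viscous term produces the dissipation $2\alpha\int_{\mathbb{R}^n}\rho|D(\boldsymbol{u})|^2\,\mathrm dx$. For the spherically symmetric ansatz $\boldsymbol{u}(t,\boldsymbol{x}) = u(t,r)\boldsymbol{x}/r$, a direct calculation gives $|D(\boldsymbol{u})|^2 = u_r^2 + m u^2/r^2$, so after the change to spherical coordinates and integration in time, the first claimed inequality follows, with the right-hand side controlled by $\int_0^\infty r^m\big(\tfrac12\rho_0 u_0^2 + \tfrac{A}{\gamma-1}\rho_0^\gamma\big)\,\mathrm dr$, which is $\le C_0$ under the hypotheses of Theorems~4.1--4.2.

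\textbf{BD entropy estimate.} Next, using $\rho_t = -\operatorname{div}(\rho\boldsymbol{u})$, the momentum equation can be rewritten in terms of the effective velocity $\boldsymbol{v} = \boldsymbol{u} + 2\alpha\nabla\log\rho$ as
\begin{equation*}
(\rho\boldsymbol{v})_t + \operatorname{div}(\rho\boldsymbol{u}\otimes\boldsymbol{v}) + \nabla P = 2\alpha\operatorname{div}\bigl(\rho A(\boldsymbol{u})\bigr),
\qquad A(\boldsymbol{u}) := \tfrac12\bigl(\nabla\boldsymbol{u} - (\nabla\boldsymbol{u})^\top\bigr),
\end{equation*}
which is the Bresch--Desjardins identity associated with the viscosity $\mu(\rho)=\rho$. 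Since $\boldsymbol{u} = u(t,r)\boldsymbol{x}/r$ is (after a potential) curl-free, the antisymmetric part $A(\boldsymbol{u})$ vanishes identically and the right-hand side is zero. Testing the above equation against $\boldsymbol{v}$ and using the continuity equation again, I would recombine the kinetic part with the component $\int\nabla P\cdot\boldsymbol{u}\,\mathrm dx$ to form $\tfrac{\mathrm d}{\mathrm dt}\int\bigl(\tfrac12\rho|\boldsymbol{v}|^2 + \tfrac{A}{\gamma-1}\rho^\gamma\bigr)\,\mathrm dx$, while the remaining pressure work
\begin{equation*}
2\alpha\!\int_{\mathbb{R}^n}\nabla P\cdot\nabla\log\rho\,\mathrm dx \;=\; 2\alpha A\gamma\!\int_{\mathbb{R}^n}\rho^{\gamma-2}|\nabla\rho|^2\,\mathrm dx
\end{equation*}
supplies the positive dissipation. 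Passing to spherical coordinates and integrating in time yields uniform control on $\int_0^\infty r^m\bigl(\rho v^2 + \rho^\gamma\bigr)\,\mathrm dr$ and on $\int_0^t\!\!\int_0^\infty r^m\rho^{\gamma-2}|\rho_r|^2\,\mathrm dr\,\mathrm ds$.

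\textbf{Extraction of $|(\sqrt{\rho})_r|^2$ and closing the constant.} The missing piece follows algebraically: from $v - u = 2\alpha(\log\rho)_r$ and the pointwise identity $\rho|(\log\rho)_r|^2 = 4|(\sqrt{\rho})_r|^2$,
\begin{equation*}
\int_0^\infty r^m|(\sqrt{\rho})_r|^2\,\mathrm dr \;=\; \frac{1}{16\alpha^2}\int_0^\infty r^m\rho(v-u)^2\,\mathrm dr
\;\le\; \frac{1}{8\alpha^2}\int_0^\infty r^m\bigl(\rho v^2 + \rho u^2\bigr)\,\mathrm dr,
\end{equation*}
which is bounded by the sum of the basic energy bound and the $\rho v^2$ part just obtained. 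The corresponding initial quantity $\int_0^\infty r^m\rho_0 v_0^2\,\mathrm dr$ is finite since the assumptions of Theorems~4.1--4.2 guarantee $r^{m/2}(\log\rho_0)_r \in L^2(I)$ (via Lemma~\ref{lemma-initial} in Appendix~\ref{appb}, combined with Hardy's inequality when needed). The main technical point -- and the place one must be careful -- is to justify the integrations by parts within the regularity class of the local solutions in Theorems~\ref{rth1}--\ref{rth133}: the boundary contribution at $r=0$ is killed by the condition $u(t,0)=0$, while the decay at $r=\infty$ is controlled by the integrability listed in \eqref{spd}, \eqref{spd33}. A standard cutoff in $r$ followed by a limit argument makes all manipulations rigorous, producing both estimates uniformly for $t\in[0,T]$ with $C_0$ depending only on $(\rho_0,u_0,n,\alpha,\gamma,A)$.
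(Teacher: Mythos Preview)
Your proof is correct and follows essentially the same route as the paper: the basic energy identity plus the BD entropy identity for the effective velocity, followed by the algebraic extraction of $|(\sqrt\rho)_r|^2$ from $\rho(v-u)^2$. The only execution difference is that the paper works entirely in the radial variable---deriving $\rho(v_t+uv_r)+A(\rho^\gamma)_r=0$ directly and testing with $r^m v$---and is more explicit about justifying the boundary terms at $r=0,\infty$ within the regularity class \eqref{spd}--\eqref{spd233}, while bounding the initial BD energy via $\|\rho_0\|_{L^1}^{1/2}\|(\boldsymbol{u}_0,\nabla\log\rho_0)\|_{L^\infty}$ rather than the $L^2$ argument you sketch.
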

\begin{proof} We divide the proof into two steps. 

\textbf{1.} First, multiplying $\eqref{e1.5}_2$ by $r^m u$, along with $\eqref{e1.5}_1$, yields
\begin{equation}\label{eap1}
\begin{split}
&\Big(\frac{r^m}{2} \rho u^2+\frac{A}{\gamma-1}r^m\rho^\gamma\Big)_t+ 2\alpha r^m\rho u_r^2+ 2\alpha mr^{m-2}\rho u^2\\
&=\Big(-\frac{A\gamma}{\gamma-1}r^m\rho^\gamma u+2\alpha r^m\rho u u_r-\frac{r^m}{2}\rho u^3\Big)_r.
\end{split}
\end{equation}
Integrating the above over $I$, we have 
\begin{equation}\label{eap2}
\begin{aligned}
&\frac{\mathrm{d}}{\mathrm{d}t}\int_0^\infty \Big(\frac{1}{2}r^m\rho u^2+\frac{A}{\gamma-1}r^m\rho^\gamma\Big) \,\mathrm{d}r+ 2\alpha \int_0^\infty r^m\Big(\rho |u_r|^2+ m\rho \frac{u^2}{r^2}\Big)\,\mathrm{d}r\\
&=\int_0^\infty \Big(\underline{-\frac{A\gamma}{\gamma-1}r^m\rho^{\gamma}u+2\alpha r^m\rho u u_r-\frac{r^m}{2}\rho u^3}_{:=\mathcal{B}_1}\Big)_r\,\mathrm{d}r=-\mathcal{B}_1|_{r=0}=0.
\end{aligned} 
\end{equation}
To verify this, we need to show  that $\mathcal{B}_1\in W^{1,1}(I)$ and $\mathcal{B}_1|_{r=0}=0$ 
for {\it a.e.} $t\in (0,T)$, which allows us to apply Lemma \ref{calculus} to obtain
\begin{equation}\label{in-B1}
\int_0^\infty (\mathcal{B}_1)_r\,\mathrm{d}r=-\mathcal{B}_1|_{r=0}=0.
\end{equation}
On one hand,  it follows from \eqref{spd}--\eqref{spd2} (or \eqref{spd33}--\eqref{spd233}) that 
\begin{equation*}
r^m\rho\in L^1(I),\quad  \big(\rho,\rho_r,u,\frac{u}{r},u_r\big)\in C(\bar I), \quad 
r^\frac{m}{2}\big((\rho^{\gamma-1})_r,u,\frac{u}{r},u_r,u_{rr}\big)\in L^2(I)
\end{equation*}
for {\it a.e.} $t\in (0,T)$, so that we can conclude that $\mathcal{B}_1|_{r=0}=0$.
On the other hand, it follows from the H\"older inequality 
that $\mathcal{B}_1\in W^{1,1}(I)$ for {\it a.e.} $t\in (0,T)$:
\begin{align*}
|\mathcal{B}_1|_1 &\leq  C_0\big|r^m(\rho^\gamma u,\rho u u_r,\rho u^3)\big|_1\\
&\leq C_0|r^m\rho|_1|\rho|_\infty^{\gamma-1}|u|_\infty+C_0|\rho|_\infty\big(|r^\frac{m}{2}u|_2|r^\frac{m}{2}u_r|_2+|u|_\infty|r^\frac{m}{2}u|_2^2\big)<\infty,\\
|(\mathcal{B}_1)_r|_1&\leq C_0\big|r^{m-1}(\rho^\gamma u,\rho u u_r,\rho u^3)\big|_1\\
&\quad\,+C_0\big|r^m\big(\rho^{\gamma-1}\rho_r u,\rho^\gamma u_r,\rho_r u u_r,\rho (u_r)^2,\rho u u_{rr},\rho_r u^3,\rho u^2u_r\big)\big|_1\\
&\leq C_0|r^m\rho|_1|\rho|_\infty^{\gamma-1}\Big|\frac{u}{r}\Big|_\infty+C_0|\rho|_\infty\Big(|r^\frac{m-2}{2}u|_2|r^\frac{m}{2}u_r|_2+\Big|\frac{u}{r}\Big|_\infty|r^\frac{m}{2}u|_2^2\Big)\\
&\quad\,+ C_0|\rho|_\infty \big|r^\frac{m}{2}(\rho^{\gamma-1})_r\big|_2|r^\frac{m}{2}u|_2+C_0|r^m\rho|_1|\rho|_\infty^{\gamma-1}|u_r|_\infty\\
&\quad\,+ C_0|\rho_r|_\infty|r^\frac{m}{2}u|_2|r^\frac{m}{2}u_r|_2+ C_0|\rho|_\infty\big(|r^\frac{m}{2}u_r|_2^2+|r^\frac{m}{2}u_r|_2|r^\frac{m}{2}u_{rr}|_2\big)\\
&\quad\,+ C_0|\rho_r|_\infty|u|_\infty|r^\frac{m}{2}u|_2^2+ C_0|\rho|_\infty|u_r|_\infty|r^\frac{m}{2}u|_2^2<\infty.
\end{align*}

Integrating \eqref{eap2} over $[0,t]$, we obtain the energy estimates, provided that 
$(r\rho_0)^{\frac{m}{2}}u_0\in L^2(I)$ and $r^\frac{m}{\gamma}\rho_0 \in L^\gamma(I)$. 
Indeed, it follows from Lemmas \ref{ale1}, \ref{initial3}, and \ref{lemma-initial} that
\begin{equation*}
\begin{aligned}
\big|(r^m\rho_0)^\frac{1}{2}u_0\big|_2&\leq  |r^m\rho_0|_1^\frac{1}{2}|u_0|_\infty\leq C_0\|\rho_0\|_{L^1}^\frac{1}{2}\|\boldsymbol{u}_0\|_{L^\infty}\leq C_0,\\
|r^\frac{m}{\gamma}\rho_0|_\gamma&\leq |r^m\rho_0|_1 |\rho_0|_\infty^{\gamma-1}\leq C_0\|\rho_0\|_{L^1} \|\rho_0\|_{L^\infty}^{\gamma-1}\leq C_0.
\end{aligned}
\end{equation*}

\smallskip
\textbf{2.} To obtain the BD entropy estimates, we can first apply $\partial_r$ to $\eqref{e1.5}_1$ to obtain
\begin{equation}\label{e5.2}
\rho((\log\rho)_{r})_t+\rho u(\log\rho)_{rr}+\Big(\rho \big(u_r+ \frac{m}{r} u\big)\Big)_r-\frac{m\rho_r u}{r}=0.
\end{equation}
Then combining with $\eqref{e1.5}_2$ and $\eqref{e5.2}$ leads to
\begin{equation}\label{ess}
\rho(v_t+u v_r)+A(\rho^\gamma)_r=0.
\end{equation}

Thus, multiplying \eqref{ess} by $r^m v$ and 
integrating the resulting equality over $[0,t]\times I$, we obtain from $\eqref{e1.5}_1$ that
\begin{equation}\label{es-v}
\begin{aligned}
&\int_0^\infty r^m\Big(\frac12\rho v^2+\frac{A}{\gamma-1}\rho^\gamma\Big)(t,\cdot)\,\mathrm{d}r+ 2A\alpha \gamma\int_0^t\int_0^\infty r^m\rho^{\gamma-2}|\rho_r|^2\,\mathrm{d}r\mathrm{d}s\\
&\leq \frac{1}{2}\big|(r\rho_0)^\frac{1}{2}v_0\big|_2^2+\frac{A}{\gamma-1}|r^\frac{m}{\gamma}\rho_0|_\gamma^\gamma\leq \frac{1}{2}\big|(r\rho_0)^\frac{1}{2}v_0\big|_2^2+C_0.
\end{aligned}
\end{equation}
For the $L^2(I)$-boundedness of $(r^m\rho_0)^{\frac{1}{2}}v_0$, it follows from Lemmas \ref{ale1}, \ref{initial3}, 
and \ref{lemma-initial} that
\begin{equation*}
\big|(r^m\rho_0)^\frac{1}{2}v_0\big|_2\leq |r^m\rho_0|_1^\frac{1}{2}|(u_0,(\log\rho_0)_r)|_\infty\leq C_0\|\rho_0\|_{L^1}^\frac{1}{2}\|(\boldsymbol{u}_0,\nabla\log\rho_0)\|_{L^\infty}\leq C_0.
\end{equation*}

Finally, \eqref{V-expression}, together with the energy estimates and \eqref{es-v}, yields
\begin{equation*}
|r^\frac{m}{2}(\sqrt{\rho})_r|_2=\frac{1}{2}\big|(r^m\rho)^\frac{1}{2}(\log\rho)_r\big|_2\leq C_0\big|(r^m\rho)^\frac{1}{2}(v,u)\big|_2\leq C_0.
\end{equation*}
The proof of Lemma \ref{energy-BD} is completed. 
\end{proof}

Clearly, by \eqref{V-expression} and \eqref{ess}, we have 
\begin{cor}\label{cor-v}
The effective velocity $v$ satisfies the following equation{\rm :}
\begin{equation}\label{eq:effective2}
v_t+uv_r+ \frac{A\gamma}{2\alpha} \rho^{\gamma-1} (v-u)=0.
\end{equation}
\end{cor}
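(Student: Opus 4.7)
The plan is very short: Corollary \ref{cor-v} is a one-line algebraic consequence of an identity already established inside the proof of Lemma \ref{energy-BD}. Recall that in Step 2 of that proof, by differentiating the continuity equation $\eqref{e1.5}_1$ in $r$ and combining the resulting relation \eqref{e5.2} with the momentum equation $\eqref{e1.5}_2$, the identity
\[
\rho(v_t+uv_r)+A(\rho^\gamma)_r=0 \tag{\ref{ess}}
\]
was derived. So the only remaining task is to rewrite the pressure gradient $A(\rho^\gamma)_r$ in terms of $(v-u)$.

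To do this, I would apply the chain rule $A(\rho^\gamma)_r=A\gamma \rho^{\gamma-1}\rho_r=A\gamma\rho^\gamma(\log\rho)_r$ and then invoke the Definition \ref{def-effective} of the effective velocity, which gives $2\alpha(\log\rho)_r=v-u$. This yields
\[
A(\rho^\gamma)_r=\frac{A\gamma}{2\alpha}\rho^\gamma(v-u).
\]
Plugging this into \eqref{ess} produces $\rho(v_t+uv_r)+\frac{A\gamma}{2\alpha}\rho^\gamma(v-u)=0$. Since the $s$-order regular solutions in Definition \ref{cjk} are required to satisfy $\rho>0$ pointwise on $[0,T]\times\mathbb{R}^n$ (equivalently, on $[0,T]\times I$ after the spherical reformulation), I may freely divide by $\rho$, obtaining the desired equation \eqref{eq:effective2}.

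There is essentially no obstacle: the hard work (the cancellation between the transport derivative of $(\log\rho)_r$ and the divergence term coming from $\eqref{e1.5}_2$) has already been done in the derivation of \eqref{ess}. Strictly speaking, the only point worth noting is the legitimacy of the pointwise division by $\rho$, which is guaranteed by the strict positivity built into Definition \ref{cjk}; aside from this, the statement is purely formal.
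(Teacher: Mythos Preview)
Your proposal is correct and follows essentially the same approach as the paper: the paper simply says ``Clearly, by \eqref{V-expression} and \eqref{ess}, we have'' and states the corollary, and you have correctly spelled out the one-line algebra (rewriting $A(\rho^\gamma)_r$ via $(\log\rho)_r=\frac{1}{2\alpha}(v-u)$ and dividing by $\rho>0$) that this sentence encodes.
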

 
Next, we show the $L^1(I)$ and $L^p(\omega,\infty)$-estimates ($p\in (1, \infty]$)  of $r^m\rho$ for $t\in [0,T]$ and $\omega>0$.

\begin{lem}\label{far-p-infty}
There exists a constant  $C_0>0$  such that
\begin{equation*}
|r^m \rho(t)|_1\leq C_0 \qquad\text{for any $t\in [0,T]$}.
\end{equation*}
Moreover, for any $\omega>0$, there exists a constant   $C(\omega)>0$ such that
\begin{equation*}
|\chi^\sharp_\omega r^m \rho(t)|_p\leq C(\omega) \qquad\text{for any $p\in (1, \infty]$ and $t\in [0,T]$}.
\end{equation*}

\end{lem}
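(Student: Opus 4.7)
The plan is to prove the two bounds separately: the $L^1$-bound via mass conservation, and the exterior $L^p$-bound ($p\in(1,\infty]$) by combining the $L^1$-bound with a uniform pointwise estimate of $r^m\rho$ obtained from the BD entropy estimate of Lemma \ref{energy-BD}.

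First I would establish the $L^1$-bound. Multiplying $\eqref{e1.5}_1$ by $r^m$ and using the product rule yields the conservative form
\begin{equation*}
(r^m\rho)_t+(r^m\rho u)_r=0.
\end{equation*}
Integrating in $r\in I$ and verifying that the boundary term $[r^m\rho u]_0^\infty$ vanishes (using $u|_{r=0}=0$ together with the far-field decay of $(\rho,u)$ guaranteed by Theorems \ref{rth1}--\ref{rth133}, in the same spirit as the verification leading to \eqref{in-B1} in the proof of Lemma \ref{energy-BD}), one obtains $\frac{\mathrm{d}}{\mathrm{d}t}|r^m\rho(t)|_1=0$. The desired bound $|r^m\rho(t)|_1\leq C_0$ then follows from the hypothesis $\rho_0\in L^1(\mathbb{R}^n)$ and Lemma \ref{lemma-initial}.

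For the exterior estimate, I would fix $\omega>0$ and work with the auxiliary function $f(r):=r^{m/2}\sqrt{\rho(r)}$ on $(\omega,\infty)$. Its square equals $r^m\rho$, so by the first claim,
\begin{equation*}
\|f\|_{L^2(\omega,\infty)}^2=\int_\omega^\infty r^m\rho\,\mathrm{d}r\leq C_0.
\end{equation*}
Differentiating gives $f_r=\tfrac{m}{2}r^{m/2-1}\sqrt{\rho}+r^{m/2}(\sqrt{\rho})_r$, so using $r^{m-2}=r^{-2}\cdot r^m\leq \omega^{-2}r^m$ on $(\omega,\infty)$ (valid since $m\in\{1,2\}$),
\begin{equation*}
\|f_r\|_{L^2(\omega,\infty)}^2\leq C\omega^{-2}\int_\omega^\infty r^m\rho\,\mathrm{d}r+C\int_\omega^\infty r^m|(\sqrt{\rho})_r|^2\,\mathrm{d}r\leq C(\omega)
\end{equation*}
by the mass bound and the BD entropy estimate of Lemma \ref{energy-BD}. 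Hence $f\in H^1(\omega,\infty)$ uniformly in $t\in[0,T]$, and the one-dimensional Sobolev embedding $H^1(\omega,\infty)\hookrightarrow L^\infty(\omega,\infty)$ yields $\|r^m\rho\|_{L^\infty(\omega,\infty)}=\|f\|_{L^\infty(\omega,\infty)}^2\leq C(\omega)$, which gives the $p=\infty$ case.

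Finally, for $p\in(1,\infty)$, I would interpolate between $L^1$ and $L^\infty$:
\begin{equation*}
|\chi^\sharp_\omega r^m\rho|_p^p=\int_\omega^\infty(r^m\rho)^{p-1}\cdot r^m\rho\,\mathrm{d}r\leq\|r^m\rho\|_{L^\infty(\omega,\infty)}^{p-1}\,|r^m\rho|_1\leq C(\omega).
\end{equation*}
The main conceptual step is choosing the right object to place in $H^1(\omega,\infty)$: it is the weighted quantity $r^{m/2}\sqrt{\rho}$, not $\sqrt{\rho}$ itself, that naturally absorbs the $r^m$ weight of the BD entropy and the mass, so that the resulting $L^\infty$ bound is directly a pointwise bound on $r^m\rho$ valid throughout the entire exterior, rather than merely in a bounded neighbourhood of the chosen radius $r=\omega$. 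The only technical point that requires care is the routine verification of vanishing boundary terms in the mass conservation step, which can be handled by the same Hölder and regularity estimates already carried out in the proof of Lemma \ref{energy-BD}.
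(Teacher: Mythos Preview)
Your proposal is correct and follows essentially the same strategy as the paper: mass conservation for the $L^1$ bound, then combining the mass bound with the BD entropy estimate $r^{m/2}(\sqrt{\rho})_r\in L^2(I)$ to get a pointwise bound on $r^m\rho$ away from the origin, and finally interpolating for intermediate $p$. The only cosmetic difference is that the paper bounds $(r^m\rho)_r$ in $L^1(\omega,\infty)$ and invokes the $W^{1,1}\hookrightarrow L^\infty$ embedding (via Lemma~\ref{calculus}), whereas you place $r^{m/2}\sqrt{\rho}$ in $H^1(\omega,\infty)$ and use $H^1\hookrightarrow L^\infty$; both routes use exactly the same two ingredients and are equivalent.
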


\begin{proof}
First, integrating $\eqref{e1.5}_1$ over $I$, we obtain 
from $\eqref{e1.5}_4$, $r^m \rho u \in W^{1,1}(I)$, 
$r^m\rho u|_{r=0}=0$ due to $(\rho,u)\in C(\bar I)$ for $t\in (0,T]$, 
and Lemma \ref{calculus} that 
\begin{equation*} 
\frac{\mathrm{d}}{\mathrm{d}t}\int_0^\infty r^m\rho \,\mathrm{d}r=-\int_0^\infty (r^m \rho u)_r \,\mathrm{d}r=r^m \rho u|_{r=0}=0.
\end{equation*}
Integrating above over $[0,t]$, along with Lemma \ref{lemma-initial}, yields 
\begin{equation}\label{L1-rho}
|r^m\rho(t)|_1=|r^m\rho_0|_1\leq C_0 \qquad \text{for any $t\in [0,T]$}.
\end{equation}

Next, let $\omega>0$. It follows from \eqref{L1-rho}, Lemmas \ref{energy-BD} and \ref{calculus}, and the H\"older and Young inequalities that, for any $t\in [0,T]$,
\begin{equation}\label{yinli4dian2}
\begin{split}
|\chi^\sharp_\omega r^m\rho|_\infty&\leq |\chi^\sharp_\omega (r^m\rho)_r|_1\leq C_0\big(|\chi_\omega^\sharp r^m\sqrt{\rho} (\sqrt{\rho})_r|_1 + |\chi_\omega^\sharp r^{m-1}\rho|_1\big)\\
& \leq C_0\big(|r^m\rho|_1^\frac{1}{2}|r^\frac{m}{2}(\sqrt{\rho})_r|_2 +  |\chi_\omega^\sharp r^{-1}|_\infty |r^{m}\rho|_1\big) \leq C(\omega).
\end{split}
\end{equation}

Consequently, it follows from \eqref{L1-rho}--\eqref{yinli4dian2} that, 
for any $p\in(1,\infty)$ and $t\in [0,T]$,
\begin{equation*}
|\chi^\sharp_\omega r^m\rho|_p\leq |\chi^\sharp_\omega r^m\rho|_\infty^{1-\frac{1}{p}}|r^m\rho|_1^\frac{1}{p} \leq C(\omega),
\end{equation*}
where $C(\omega)\in [1,\infty)$ is  a generic  constant depending on $C_0$
and $\omega$, which may be different at each occurrence. This completes the proof.
\end{proof}

The next lemma concerns the weighted $L^p(0,1)$-estimates of $\rho$ for $p\in [1,\infty]$.

\begin{lem}\label{l4.3}
For any $t\in [0,T]$,
\begin{enumerate}
\item[$\mathrm{(i)}$] When $n=2$, there exist two positive constants $C(p,\nu)$ and $C(\nu) $ such that  
\begin{equation}\label{wprho2}
\begin{aligned}
\big|\chi_1^\flat r^{\nu}\rho(t)\big|_p\leq C(p,\nu) \qquad &\text{for any }\nu>-\frac{1}{p}\text{ and }p\in[1,\infty),\\
\big|\chi_1^\flat r^{\nu}\rho(t)\big|_\infty\leq C(\nu) \qquad &\text{for any }\nu>0;
\end{aligned}
\end{equation}
\item[$\mathrm{(ii)}$] When $n=3$, there exist two positive constants $C_0$ and $C(p) $ such that 
\begin{equation}\label{wprho3}
 \big|\chi_1^\flat r \rho(t)\big|_\infty\leq C_0, \qquad\,\, \big|\chi_1^\flat r^{1-\frac{1}{p}}\rho(t)\big|_p\leq C(p)  \quad\text{for any $p\in[1,\infty)$}.
\end{equation}
\end{enumerate}
\end{lem}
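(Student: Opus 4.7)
The key tools are Lemma \ref{energy-BD} (in particular the BD-entropy bound $|r^{m/2}(\sqrt{\rho})_r|_2\leq C_0$), Lemma \ref{far-p-infty} (providing the boundary value $\rho(t,1)\leq C_0$ by taking $\omega=1$ and $p=\infty$), and a family of weighted radial Hardy inequalities from Appendix \ref{appA}. The strategy is three-step: first derive weighted $L^1$-type bounds for $\rho$ on the origin ball by applying Hardy to $f=\sqrt{\rho}$ combined with BD; next upgrade these to pointwise bounds on $(0,1)$ by integrating the identity $\partial_s(s^\alpha\rho)=\alpha s^{\alpha-1}\rho+2s^\alpha\sqrt{\rho}(\sqrt{\rho})_s$ inward from $s=1$ (where $\rho$ is already controlled) and using Cauchy--Schwarz; finally interpolate to obtain the full $L^p$ family.

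\emph{Three-dimensional case.} Here $m=2$, so BD reads $\int_0^\infty r^2|(\sqrt{\rho})_r|^2\,\mathrm{d}r\leq C_0$. The classical radial Hardy inequality $\int_0^\infty f^2\,\mathrm{d}r\leq C\int_0^\infty r^2|f_r|^2\,\mathrm{d}r$, valid for radial functions on $\mathbb{R}^3$ vanishing at infinity, applied to $f=\sqrt{\rho}$, yields $\int_0^\infty \rho\,\mathrm{d}r\leq C_0$. Integrating $\partial_s(s\rho)$ from $s=r\in(0,1)$ to $s=1$ then gives
\begin{equation*}
r\rho(t,r)=\rho(t,1)-\int_r^1\rho\,\mathrm{d}s-2\int_r^1 s\sqrt{\rho}(\sqrt{\rho})_s\,\mathrm{d}s,
\end{equation*}
whose three right-hand terms are bounded, respectively, by Lemma \ref{far-p-infty}, by the $L^1$-bound of $\rho$ just obtained, and by Cauchy--Schwarz combining BD with that same $L^1$-bound. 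This proves $|\chi_1^\flat r\rho|_\infty\leq C_0$; the weighted $L^p$-estimate then follows by writing $r^{p-1}\rho^p=(r\rho)^{p-1}\rho$ and taking the $L^\infty$-norm of the first factor together with the $L^1$-norm of the second.

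\emph{Two-dimensional case.} Here $m=1$, so the BD weight toward the origin is weaker; this is compensated by using the family of weighted Hardy inequalities $\int_0^1 r^\kappa f^2\,\mathrm{d}r\leq C(\kappa)\int_0^1 r^{\kappa+2}|f_r|^2\,\mathrm{d}r+C(\kappa)f(1)^2$ for $\kappa>-1$. Since $r^{\kappa+2}\leq r$ on $(0,1)$ for $\kappa\geq-1$, the right-hand side is controlled by BD plus $\rho(t,1)\leq C_0$, yielding $\int_0^1 r^\kappa\rho\,\mathrm{d}r\leq C(\kappa)$ for every $\kappa>-1$. The same inward-integration of $\partial_s(s^\nu\rho)$ with Cauchy--Schwarz then delivers $|\chi_1^\flat r^\nu\rho|_\infty\leq C(\nu)$ for every $\nu>0$. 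Finally, the general $L^p$-bound for $\nu>-\tfrac{1}{p}$ follows from the interpolation identity $r^{\nu p}\rho^p=(r^a\rho)^{p-1}\cdot r^{\nu p-a(p-1)}\rho$ with $a>0$ chosen small enough that $\nu p-a(p-1)>-1$, combining the just-proved $L^\infty$-bound at exponent $a$ with the weighted $L^1$-bound at exponent $\nu p-a(p-1)$.

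\textbf{Main obstacle.} The delicate aspect is the strict inequality $\nu>-\tfrac{1}{p}$ in two dimensions: the Hardy constant $C(\kappa)$ degenerates as $\kappa\to -1^+$, reflecting the failure of an endpoint Hardy inequality on $\mathbb{R}^2$, and the interpolation parameter $a$ must be sent to zero as $\nu\to -\tfrac{1}{p}$. A further technical point is ensuring that the boundary contributions generated by the inward-integration from $s=1$ and by the cutoff form of the weighted Hardy inequality are genuinely controlled by Lemma \ref{far-p-infty}, rather than by far-field vacuum decay---since, at this stage of the argument, precisely such decay (which would follow from an upper bound of $\rho$ on $(0,1)$) is what we are in the process of establishing.
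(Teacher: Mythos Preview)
Your proof is correct and complete, but it takes a different organizational route from the paper's argument. The paper applies its general weighted Hardy inequality (Lemma~\ref{hardy}(i)) directly to $f=\sqrt{\rho}$ in a single step for each target norm: writing $|\chi_1^\flat r^\nu\rho|_\infty=|\chi_1^\flat r^{\nu/2}\sqrt{\rho}|_\infty^2$ and $|\chi_1^\flat r^{1-1/p}\rho|_p=|\chi_1^\flat r^{1/2-1/(2p)}\sqrt{\rho}|_{2p}^2$, then invoking Hardy with exponent pair $(p',q)=(\infty,2)$ or $(2p,2)$ to land on $\|r(\sqrt{\rho},(\sqrt{\rho})_r)\|_{L^2(0,1)}$, which is controlled by the conserved mass $|r^m\rho|_1$ and the BD entropy. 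No boundary value $\rho(t,1)$ or inward integration is used.

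Your three-stage scheme (weighted $L^1$ via the $p=q=2$ Hardy, then $L^\infty$ via the fundamental theorem of calculus from $s=1$ plus Cauchy--Schwarz, then $L^p$ by interpolation) is more hands-on and effectively \emph{unpacks} the proof of the $p=\infty$ case of Lemma~\ref{hardy}(i) in context. This buys you a more elementary presentation that avoids citing the general $L^p$--$L^q$ Hardy inequality, and it makes the role of the exterior bound $\rho(t,1)\le C_0$ (from Lemma~\ref{far-p-infty}) transparent. The paper's route is shorter and bypasses the boundary value entirely by using the conserved mass $|r^m\rho|_1$ in place of your exterior estimate. Both approaches degenerate at the same endpoint $\nu=-1/p$ in two dimensions, for the same Hardy-theoretic reason you identify.
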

\begin{proof} We divide the proof into two steps.

\smallskip
\textbf{1.} {\bf Proof for the $2$-D case.}
First, we can obtain from Lemmas \ref{energy-BD}--\ref{far-p-infty}, 
and \ref{hardy} that, for any $\nu>0$ and $t\in [0,T]$,
\begin{equation}\label{gn1}
\begin{aligned}
|\chi_1^\flat r^\nu \rho|_\infty&=|\chi_1^\flat r^\frac{\nu}{2} \sqrt{\rho}|_\infty^2\leq C(\nu)\big(\big|\chi_1^\flat r^{\frac{\nu+1}{2}} \sqrt{\rho}\big|_2^2+\big|\chi_1^\flat r^{\frac{\nu+1}{2}} (\sqrt{\rho})_r\big|_2^2\big)\\
&\leq C(\nu)|\chi_1^\flat r^\nu|_\infty\big(|r \rho|_1+ \big| r^{\frac{1}{2}} (\sqrt{\rho})_r\big|_2^2\big) \leq C(\nu).
\end{aligned}
\end{equation}

Then, for all $p\in [1,\infty)$ and $\nu>-\frac{1}{p}$, 
letting $\varepsilon\in \big(0,\,\min\{p\nu+1,1\}\big)$ be any fixed constant, 
we obtain from \eqref{gn1} that,  for all $t\in [0,T]$,
\begin{equation}
\begin{aligned}
|\chi_1^\flat r^\nu \rho|_p^p&=\int_0^1 r^{p\nu-\varepsilon} r^{\varepsilon}\rho^p\,\mathrm{d}r \leq \Big(\int_0^1 r^{p\nu-\varepsilon}\,\mathrm{d}r\Big)\big|\chi_1^\flat r^\frac{\varepsilon}{p}\rho\big|_\infty^p\leq  C(p,\nu).
\end{aligned}
\end{equation}
The proof of (i) is completed.

\smallskip
\textbf{2.} {\bf Proof for the $3$-D case.}
Let $p\in[1,\infty)$. It follows from Lemmas \ref{energy-BD}--\ref{far-p-infty} and \ref{hardy} that, 
for any $t\in [0,T]$,
\begin{align*}
\big|\chi_1^\flat r^{1-\frac{1}{p}}\rho\big|_p &=\big|\chi_1^\flat r^{\frac{1}{2}-\frac{1}{2p}}\sqrt{\rho}\big|_{2p}^2\leq C(p)\big(|\chi_1^\flat r \sqrt{\rho}|_2^2+|\chi_1^\flat r\sqrt{\rho}_r|_2^2\big)\leq C(p),\\
\big|\chi_1^\flat r \rho\big|_\infty& =\big|\chi_1^\flat r^{\frac{1}{2}}\sqrt{\rho}\big|_{\infty}^2\leq C_0\big(|\chi_1^\flat r \sqrt{\rho}|_2^2+|\chi_1^\flat r\sqrt{\rho}_r|_2^2\big)\leq C_0.
\end{align*}
The proof of (ii) is completed.
\end{proof}

Based on Lemmas \ref{far-p-infty}--\ref{l4.3}, we can show 
the $L^{p}(I)$-estimates  of $(r^m\rho)^{\frac{1}{p}}u$ for $p\geq 2$.

\begin{lem} \label{lma}
Let $\gamma\in (1,\infty)$ if $n=2$ and $\gamma\in (1,3)$ if $n=3$. For any  $p\in[2,\infty)$ and $\epsilon\in (0,1)$, there exist two positive  constants $C(p)$ and $C(p,\epsilon) $ such that, 
for any $t\in [0,T]$,
\begin{equation}\label{dt-u-p}
\begin{aligned}
&\frac{\mathrm{d}}{\mathrm{d}t}\big|(r^m\rho)^{\frac{1}{p}} u\big|_{p}^{p} + p\alpha \Big(\big|(r^m\rho)^{\frac{1}{2}}|u|^{\frac{p-2}{2}} u_r\big|_2^2+\big|(r^{m-2}\rho)^{\frac{1}{p}}u\big|_{p}^{p}\Big)\\
&\leq C(p)\Big(1+\big|(r^m\rho)^\frac{1}{p}u\big|_{p}^{p}\Big)+ C(p,\epsilon)+\epsilon\big|(r^m\rho^\gamma)^{\frac{1}{p}} v\big|_{p}^{p}.
\end{aligned}
\end{equation}
\end{lem}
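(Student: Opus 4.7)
The plan is to derive the estimate via the standard $L^p$-energy identity obtained by testing the momentum equation $\eqref{e1.5}_2$ against the weight $r^m|u|^{p-2}u$ and integrating over $I$. The regularity afforded by Theorems \ref{rth1}--\ref{rth133}, together with the vanishing boundary condition $u|_{r=0}=0$ and the decay at infinity, is sufficient to justify all integrations by parts (the boundary terms at $r=0$ and $r=\infty$ are shown to vanish by the same $W^{1,1}(I)$-type verification used in the proof of Lemma \ref{energy-BD}). Combining the time and convection integrals with the continuity equation $r^m \eqref{e1.5}_1$ produces the LHS time-derivative $\tfrac{1}{p}\tfrac{\mathrm{d}}{\mathrm{d}t}|(r^m\rho)^{1/p}u|_p^p$.

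Next, I would handle the viscous contribution. Rewriting the dissipation of $\eqref{e1.5}_2$ as $2\alpha(\rho u_r)_r+2\alpha m\rho u_r/r - 2\alpha m\rho u/r^2$ and integrating by parts against $r^m|u|^{p-2}u$, the two cross terms involving $r^{m-1}\rho|u|^{p-2}u u_r$ cancel exactly, leaving the two positive dissipation terms
\[
2\alpha(p-1)\int_0^\infty r^m\rho|u|^{p-2}(u_r)^2\,\mathrm{d}r + 2\alpha m\int_0^\infty r^{m-2}\rho|u|^p\,\mathrm{d}r
\]
on the LHS. After multiplying through by $p$, the coefficients $2\alpha p(p-1)$ and $2\alpha pm$ both dominate the claimed $p\alpha$ for $p\geq 2$ and $m\geq 1$, producing the two good terms in \eqref{dt-u-p}.

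For the pressure contribution, the key device is the effective-velocity identity $2\alpha(\log\rho)_r=v-u$ from Definition \ref{def-effective}, which yields $(\rho^\gamma)_r=\tfrac{\gamma}{2\alpha}\rho^\gamma(v-u)$. The pressure integral then splits as
\[
-A\int_0^\infty r^m|u|^{p-2}u(\rho^\gamma)_r\,\mathrm{d}r=\frac{A\gamma}{2\alpha}\int_0^\infty r^m\rho^\gamma|u|^p\,\mathrm{d}r-\frac{A\gamma}{2\alpha}\int_0^\infty r^m\rho^\gamma|u|^{p-2}uv\,\mathrm{d}r.
\]
The $v$-part is bounded by Young's inequality with exponents $(p,p/(p-1))$, producing exactly the $\epsilon|(r^m\rho^\gamma)^{1/p}v|_p^p$ term of the RHS plus an extra contribution $C(p,\epsilon)\int r^m\rho^\gamma|u|^p$. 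Thus the entire task reduces to bounding $\int_0^\infty r^m\rho^\gamma|u|^p$ by $C(p)(1+|(r^m\rho)^{1/p}u|_p^p)+C(p,\epsilon)$, up to a piece absorbable by the dissipation on the LHS.

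This last step is the main obstacle and is handled by the region-segmentation philosophy of \S\ref{subsub-2.2.2}. Outside the unit ball, Lemma \ref{far-p-infty} gives a uniform bound $\rho\leq C$, so $\rho^\gamma\leq C\rho$ and the exterior piece is immediately $\leq C|(r^m\rho)^{1/p}u|_p^p$. Inside the ball the weighted bounds of Lemma \ref{l4.3} must be used: in the 3-D case the pointwise estimate $\rho(t,r)\leq C/r$ on $[0,1]$ gives $r^m\rho^\gamma\leq Cr^{3-\gamma}\rho$, and $r^{3-\gamma}\leq 1$ precisely when $\gamma<3$, after which a Young split against $r^{m-2}\rho|u|^p$ (absorbable by the dissipation on the LHS provided the resulting coefficient is a small fraction of $p\alpha$ after choosing $\epsilon$ small in Young's splits) completes the estimate; in the 2-D case, the sharper freedom $\rho\leq C_\nu r^{-\nu}$ available for any $\nu>0$ from Lemma \ref{l4.3}(i) removes any upper restriction on $\gamma$, which is the structural reason for the different ranges of $\gamma$ in \eqref{cd1}. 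The delicate bookkeeping consists in choosing the Young exponents so that the leftover integrals land in the range covered by Lemmas \ref{far-p-infty}--\ref{l4.3} and by the Hardy inequality (Lemma \ref{hardy}) applied to $(\sqrt{\rho})_r$ controlled by the BD estimate of Lemma \ref{energy-BD}.
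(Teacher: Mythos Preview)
Your approach differs from the paper's in how the pressure is handled, and this difference creates a genuine gap. You substitute $(\rho^\gamma)_r=\tfrac{\gamma}{2\alpha}\rho^\gamma(v-u)$ directly, which leaves you with $\int_0^\infty r^m\rho^\gamma|u|^p\,\mathrm{d}r$ carrying the full power $|u|^p$. After Young on the $v$-piece, this integral picks up a coefficient $C(p,\epsilon)$ that \emph{grows} as $\epsilon\to0$ (Young puts the small parameter on one factor and the large constant on the other; your parenthetical ``after choosing $\epsilon$ small'' has this backwards). Your near-origin bound in 3D, $r^2\rho^\gamma\le C r^{3-\gamma}\rho\le C\rho$, then yields $C(p,\epsilon)\,\big|(r^{m-2}\rho)^{1/p}u\big|_p^p$ on the right, which cannot be absorbed by the fixed dissipation $p\alpha\big|(r^{m-2}\rho)^{1/p}u\big|_p^p$. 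Even if you insert an interpolation between $r^m\rho|u|^p$ and $r^{m-2}\rho|u|^p$ (via H\"older on $r^{3-\gamma}=r^{2\theta}$ with $\theta=(3-\gamma)/2$) and choose a second small parameter $\delta=\delta(\epsilon)$, you only obtain the lemma with $C(p,\epsilon)$, not $C(p)$, in front of $|(r^m\rho)^{1/p}u|_p^p$; this weaker inequality still suffices for Lemma~\ref{lemma-uv-p} once $\epsilon=\epsilon(p)$ is fixed, but it is not what is stated.

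The paper avoids this by integrating the pressure by parts \emph{first}, producing $\mathcal G_1=(p-1)A\int r^m\rho^\gamma|u|^{p-2}u_r$ and $\mathcal G_2=mA\int r^{m-1}\rho^\gamma|u|^{p-2}u$. The point is that Cauchy--Schwarz on $\mathcal G_1$ against the $u_r$-dissipation lowers the $u$-power to $p-2$; a subsequent Young inequality can then put the small coefficient on $\big|(r^{m-2}\rho)^{1/p}u\big|_p^p$ and push the large constant onto the $u$-free quantity $\mathcal G_{1,1}=\big|\chi_1^\flat r^{\frac{p+m-2}{p\gamma-p+1}}\rho\big|_{p\gamma-p+1}^{p\gamma-p+1}$. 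For $n=2$ (any $\gamma>1$) and for $n=3$ with $\gamma\in(1,2]$, this is bounded directly by Lemma~\ref{l4.3}, and the $\epsilon v$-term is not even needed there. Only in the 3D range $\gamma\in(2,3)$ does the paper invoke an iterated Hardy/integration-by-parts argument on $\mathcal G_{1,1}$, which ultimately feeds back $\tilde\epsilon\,\big|r^{2/p}(\rho^{\gamma/p})_r\big|_p^p$; expanding $(\rho^{\gamma/p})_r$ via $2\alpha(\log\rho)_r=v-u$ then produces the $\epsilon\big|(r^m\rho^\gamma)^{1/p}v\big|_p^p$ term with a \emph{small} coefficient. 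The missing idea in your write-up is this reduction of the $u$-power from $p$ to $p-2$ before estimating against the weighted density norms.
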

\begin{proof}We divide the proof into three steps.

\smallskip
\textbf{1.} Let $p\in [2,\infty)$. Multiplying $\eqref{e1.5}_2$ by $r^m|u|^{p-2}u$, along with $\eqref{e1.5}_1$, gives
\begin{equation}\label{eq:522}
\begin{aligned}
&\frac{1}{p}(r^m\rho |u|^{p})_t+2\alpha(p-1)r^m\rho|u|^{p-2}|u_r|^2+2\alpha mr^{m-2}\rho |u|^{p}\\
&=(p-1)Ar^m\rho^\gamma|u|^{p-2}u_r+m Ar^{m-1}\rho^\gamma|u|^{p-2}u\\
&\quad +\Big(\underline{2\alpha r^m\rho|u|^{p-2}u u_r-A r^m\rho^\gamma|u|^{p-2}u-\frac{1}{p}r^m\rho u |u|^{p}}_{:=\mathcal{B}_2}\Big)_r.
\end{aligned}
\end{equation}
Here we need to show  that $\mathcal{B}_2\in W^{1,1}(I)$ and $\mathcal{B}_2|_{r=0}=0$ 
for {\it a.e.} $t\in (0,T)$, which allows us to apply Lemma \ref{calculus} to obtain
\begin{equation}\label{eq:523}
\int_0^\infty (\mathcal{B}_2)_r\,\mathrm{d}r=-\mathcal{B}_2|_{r=0}=0.
\end{equation}
On one hand,  $\mathcal{B}_2|_{r=0}=0$ follows from the fact 
that $(\rho,u,u_r)\in C(\bar I)$ for each $t\in (0,T]$ 
due to \eqref{spd2} (or \eqref{spd233}). 
On the other hand,  it follows from \eqref{spd}--\eqref{spd2} 
(or \eqref{spd33}--\eqref{spd233}) that
\begin{equation*}
r^m\rho\in L^1(I),\quad \big(\rho,\rho_r,u,\frac{u}{r},u_r\big)\in L^\infty(I),
\quad  r^\frac{m}{2}\big((\rho^{\gamma-1})_r,u,u_r\big)\in L^2(I)
\end{equation*}
 for {\it a.e.} $t\in (0,T)$. 
 Thus, we obtain from the H\"older inequality that 
\begin{align*}
|\mathcal{B}_2|_1& \leq C_0\big|r^m\big(\rho |u|^{p-1}|u_r|,\rho^\gamma|u|^{p-1},\rho |u|^{p+1}\big)\big|_1\\
&\leq C_0|r^m \rho|_1 |u|_\infty^{p-1}|u_r|_\infty+C_0|r^m\rho|_1|\rho|_\infty^{\gamma-1}|u|_\infty^{p-1}+C_0|r^m\rho|_1 |u|_\infty^{p+1}<\infty,\\[1mm]
|(\mathcal{B}_2)_r|_1&\leq C_0\big|r^{m-1}\big(\rho |u|^{p-1}|u_r|,\rho^\gamma|u|^{p-1},\rho |u|^{p+1}\big)\big|_1\\
&\quad+ C(p)\big|r^m\big(\rho_r |u|^{p-1}u_r, \rho |u|^{p-2}u_r^2, \rho |u|^{p-1} u_{rr}\big)\big|_1\\
&\quad+C(p)\big|r^m\big(\rho^{\gamma-1}\rho_r |u|^{p-1}, \rho^\gamma |u|^{p-2}u_r, \rho_r |u|^{p+1}, \rho |u|^{p}u_r\big)\big|_1\\
&\leq C_0|r^m \rho|_1 |u|_\infty^{p-2}\Big|\frac{u}{r}\Big|_\infty\big(|u_r|_\infty+ |\rho|_\infty^{\gamma-1} +  |u|_\infty^2\big)\\
&\quad+ C(p)|u|_\infty^{p-2}\big(|\rho_r|_\infty|r^\frac{m}{2}u|_2 |r^\frac{m}{2}u_r|_2+ |\rho|_\infty |r^\frac{m}{2}u_r|_2^2+|\rho|_\infty|r^\frac{m}{2}u|_2|r^\frac{m}{2}u_{rr}|_2\big) \\
&\quad+C(p)\big(|\rho|_\infty|u|_\infty^{p-2} \big|r^\frac{m}{2}(\rho^{\gamma-1})_r\big|_2|r^\frac{m}{2}u|_2+|r^m\rho|_1|\rho|_\infty^{\gamma-1} |u|_\infty^{p-2}|u_r|_\infty\big) \\
&\quad +C(p)\big(|\rho_r|_\infty|u|^{p-1}_\infty|r^\frac{m}{2}u|_2^2+|\rho|_\infty|u|_\infty^{p-1}|r^\frac{m}{2} u|_2|r^\frac{m}{2} u_r|_2\big)<\infty.
\end{align*}

Then integrating \eqref{eq:522} over $I$, together with \eqref{eq:523}, leads to
\begin{equation}\label{add} 
\begin{split}
&\frac{1}{p} \frac{\mathrm{d}}{\mathrm{d}t}\big|(r^m\rho)^{\frac{1}{p}} u\big|_{p}^{p}  +2\alpha(p-1) \big|(r^m\rho)^{\frac12} |u|^{\frac{p-2}{2}} u_r\big|_2^2 +2\alpha m\big|(r^{m-2}\rho)^{\frac{1}{p}}u\big|_{p}^{p}\\
&= (p-1)A\int_0^\infty r^m \rho^{\gamma} |u|^{p-2} u_r\,\mathrm{d}r+mA\int_0^\infty r^{m-1}\rho^{\gamma} |u|^{p-2} u\,\mathrm{d}r:=\sum_{i=1}^2 \mathcal{G}_i.
\end{split}
\end{equation}

\smallskip
\textbf{2. Estimate of $\mathcal{G}_1$.}
For $\mathcal{G}_1$, it follows from Lemmas \ref{far-p-infty}--\ref{l4.3}, 
and the H\"older and Young inequalities that
\begin{equation}\label{J1}
\begin{aligned}
\mathcal{G}_1&\leq C(p)\big|(r^m\rho)^{\frac12}|u|^{\frac{p-2}{2}} u_r\big|_2\big|r^{\frac{m}{2}}\rho^{\gamma-\frac12}|u|^{\frac{p-2}{2}}\big|_2\\
&\leq \frac{\alpha}{8}\big|(r^m\rho)^{\frac12} u^{\frac{p-2}{2}} u_r\big|_2^2+C(p)\big|\chi_1^\flat r^{\frac{m}{2}}\rho^{\gamma-\frac12}|u|^{\frac{p-2}{2}}\big|_2^2+C(p)\big|\chi_1^\sharp r^{\frac{m}{2}}\rho^{\gamma-\frac12}|u|^{\frac{p-2}{2}}\big|_2^2\\
&\leq  \frac{\alpha}{8}\big|(r^m\rho)^{\frac12} u^{\frac{p-2}{2}} u_r\big|_2^2+C(p)\Big|\chi_1^\flat r^\frac{p+m-2}{p\gamma-p+1} \rho\Big|_{p\gamma-p+1}^\frac{2p\gamma-2p+2}{p} \big|(r^{m-2}\rho)^\frac{1}{p}u\big|_{p}^{p-2}\\
&\quad +C(p)|\chi_1^\sharp r^{2m(1-\gamma)}|_\infty |\chi_1^\sharp r^m\rho|_{p\gamma-p+1}^\frac{2p\gamma-2p+2}{p} \big|(r^m\rho)^\frac{1}{p}u\big|_{p}^{p-2}\\
&\leq \frac{\alpha}{8}\Big(\big|(r^m\rho)^{\frac12} |u|^{\frac{p-2}{2}} u_r\big|_2^2+\big|(r^{m-2}\rho)^{\frac1p}u\big|_p^p\Big)+C(p)\Big(1+\big|(r^m\rho)^\frac{1}{p}u\big|_{p}^{p}\Big)\\
&\quad +C(p)\underline{\big|\chi_1^\flat r^\frac{p+m-2}{p\gamma-p+1} \rho\big|_{p\gamma-p+1}^{p\gamma-p+1}}_{:=\mathcal{G}_{1,1}} .
\end{aligned}    
\end{equation}

\smallskip
\textbf{2.1. The estimate for  $\mathcal{G}_{1,1}$ when $n=2$ or $n=3$ and $\gamma\in (1,2]$.} Note that 
\begin{align*}
&\frac{p+m-2}{p\gamma-p+1}=\frac{p-1}{p\gamma-p+1}>0 && \text{if $n=2,\ m=1$, and $\gamma\in (1,\infty)$};\\
&\frac{p+m-2}{p\gamma-p+1}=\frac{p}{p\gamma-p+1}\geq 1-\frac{1}{p\gamma-p+1} 
&& \text{if $n=3,\, m=2$, and $\gamma\in (1,2]$}.
\end{align*}
Thus, we can simply use Lemma \ref{l4.3} to obtain
\begin{equation}\label{g1,1-1}
\mathcal{G}_{1,1}\leq C(p) \qquad \text{for all $t\in[0,T]$}.    
\end{equation}

\textbf{2.2. The estimate for $\mathcal{G}_{1,1}$ when $n=3$ and $\gamma\in (2,3)$.} Set 
\begin{equation*}
a_1:=p,\quad\,\, b_1:=p\gamma-p+1.
\end{equation*}
It follows from Lemma \ref{far-p-infty} (with $\omega=\frac{1}{2}$), integration by parts, and the H\"older and Young inequalities that, for any $\epsilon_1\in (0,1)$,
\begin{equation}\label{a1-b1}
\begin{aligned}
\mathcal{G}_{1,1}&=\int_0^1 r^{a_1}\rho^{b_1}\,\mathrm{d}r=\frac{1}{a_1+1}\rho^{b_1}(1)-\frac{b_1p}{\gamma(a_1+1)}\int_0^1 r^{a_1+1} \rho^{b_1-\frac{\gamma}{p}} (\rho^\frac{\gamma}{p})_r\,\mathrm{d}r\\
&\leq \frac{1}{a_1+1}\big|\chi_\frac{1}{2}^\sharp \rho\big|_\infty^{b_1}+\frac{b_1p}{\gamma(a_1+1)}\Big(\int_0^1 r^\frac{pa_1+p-2}{p-1} \rho^\frac{pb_1-\gamma}{p-1} \,\mathrm{d}r\Big)^\frac{p-1}{p}\big|\chi_1^\flat r^\frac{2}{p}(\rho^\frac{\gamma}{p})_r\big|_p\\
&\leq C(a_1,b_1)+\frac{C(p,a_1,b_1)}{\epsilon_1}\int_0^1 r^\frac{pa_1+p-2}{p-1} \rho^\frac{pb_1-\gamma}{p-1} \,\mathrm{d}r+\epsilon_1 \big|\chi_1^\flat r^\frac{2}{p}(\rho^\frac{\gamma}{p})_r\big|_p^p.
\end{aligned}
\end{equation}
For $j\in \mathbb{N}^*$, define the following two sequences $\{a_j\}_{j=1}^\infty$ and $\{b_j\}_{j=1}^\infty$ 
as
\begin{equation}\label{tongxiang-0}
a_{j+1}=\frac{p}{p-1}a_j+\frac{p-2}{p-1},\quad\,\, b_{j+1}=\frac{p}{p-1}b_j-\frac{\gamma}{p-1}.
\end{equation}
Then we can obtain from a calculation similar to \eqref{a1-b1} that, 
for all $\epsilon_j\in (0,1)$,
\begin{equation}\label{aj-bj}
\int_0^1 r^{a_j}\rho^{b_j}\,\mathrm{d}r \leq C(a_j,b_j)+\frac{C(p,a_j,b_j)}{\epsilon_j}\int_0^1 r^{a_{j+1}} \rho^{b_{j+1}} \,\mathrm{d}r+\epsilon_j \big|\chi_1^\flat r^\frac{2}{p}(\rho^\frac{\gamma}{p})_r\big|_p^p. 
\end{equation}
Define $(\epsilon_0,C(p,a_0,b_0))=(1,1)$. Then collecting \eqref{a1-b1}--\eqref{aj-bj} yields that, for $j\in \mathbb{N}^*$,
\begin{equation}\label{g11-po}
\begin{aligned}
\mathcal{G}_{1,1}
&\leq \sum_{k=1}^j\frac{\prod_{l=0}^{k-1}C(p,a_l,b_l) C(a_{k},b_k)}{\prod_{0=1}^{k-1}\epsilon_l}
+\prod_{l=1}^j \frac{C(p,a_l,b_l)}{\epsilon_l}\,\underline{\int_0^1 r^{a_{j+1}} \rho^{b_{j+1}} \,\mathrm{d}r}_{:=\tilde{\mathcal{G}}_{1,1}}\\
&\quad + \Big(\sum_{k=1}^j\frac{\prod_{l=0}^{k-1} C(p,a_l,b_l)}{\prod_{l=0}^{k-1}\epsilon_l} \epsilon_{k} \Big) \big|\chi_1^\flat r^\frac{2}{p}(\rho^\frac{\gamma}{p})_r\big|_p^p.    
\end{aligned}    
\end{equation}

For the estimate of $\tilde{\mathcal{G}}_{1,1}$, we can first obtain from \eqref{tongxiang-0} that, for $j\in \mathbb{N}^*$,
\begin{equation}\label{tongxiang}
a_{j+1}=2(p-1)\Big(\frac{p}{p-1}\Big)^{j}-p+2,\quad\,\, 
b_{j+1}=(p-1)(\gamma-1)\Big(\frac{p}{p-1}\Big)^{j}+\gamma.
\end{equation}
Note that, for each $p\in [2,\infty)$ and $\gamma\in (2,3)$, 
both $\{a_j\}_{j=1}^\infty$ and $\{b_j\}_{j=1}^\infty$ are strictly increasing  as $j\to\infty$. 
Moreover, for these $(p,\gamma)$, since
\begin{align*}
\frac{a_{j+1}}{b_{j+1}}\geq 1-\frac{1}{b_{j+1}}&\iff \frac{a_{j+1}}{b_{j+1}-1}\geq 1\\
&\iff  \frac{2\Big((p-1)\big(\frac{p}{p-1}\big)^{j}+1\Big)-p}{(\gamma-1)\Big((p-1)\big(\frac{p}{p-1}\big)^{j}+1\Big)}\geq 1\\
&\iff 3-\frac{p}{(p-1)\big(\frac{p}{p-1}\big)^{j}+1}\geq \gamma>2,
\end{align*}
we can choose $j=j_0$ to be sufficiently large such that 
$\frac{a_{j_0+1}}{b_{j_0+1}}\geq 1-\frac{1}{b_{j_0+1}}$. 
Thus, fixing this constant $j_0$ (depending only on $(p,\gamma)$), 
we obtain from Lemma \ref{l4.3-po} that
\begin{align*}
\tilde{\mathcal{G}}_{1,1}=\int_0^1 r^{a_{j_0+1}} \rho^{b_{j_0+1}} \,\mathrm{d}r\leq C(p).
\end{align*}
Substituting $\tilde{\mathcal{G}}_{1,1}$ into \eqref{g11-po} then yields
\begin{equation}\label{g11-po'}
\begin{aligned}
\mathcal{G}_{1,1}&\leq \sum_{k=1}^{j_0}\frac{\prod_{l=0}^{k-1}C(p,a_l,b_l) C(a_{k},b_k)}{\prod_{l=0}^{k-1}\epsilon_l}+\prod_{l=1}^{j_0} \frac{C(p,a_l,b_l)}{\epsilon_l}\,C(p)\\
&\quad + \Big(\sum_{k=1}^{j_0}\frac{\prod_{l=0}^{k-1} C(p,a_l,b_l)}{\prod_{l=0}^{k-1}\epsilon_l} \epsilon_{k} \Big) \big|\chi_1^\flat r^\frac{2}{p}(\rho^\frac{\gamma}{p})_r\big|_p^p.    
\end{aligned}    
\end{equation}
To further reduce the above inequality, for $\tilde\epsilon\in (0,1)$ sufficiently small, 
we set 
\begin{align*}
\epsilon_{k}&=\frac{\tilde\epsilon}{j_0}\,
\frac{\prod_{l=0}^{k-1}\epsilon_l}{\prod_{l=0}^{k-1} C(p,a_l,b_l)}=\frac{\tilde\epsilon^{2^{k-1}}}{j_0^{2^{k-1}}\prod_{l=0}^{k-1} C(p,a_l,b_l)^{2^{k-l-1}}} \qquad 
\text{for $1\leq k\leq j_0$}.
\end{align*}
Then it follows from  \eqref{V-expression}, \eqref{g11-po'}, 
and Lemma \ref{l4.3} that
\begin{equation}\label{g1,1-2}
\begin{aligned}
\mathcal{G}_{1,1}&\leq C(p,\tilde\epsilon) + \sum_{k=1}^{j_0} \frac{\tilde\epsilon}{j_0} \big|\chi_1^\flat r^\frac{2}{p}(\rho^\frac{\gamma}{p})_r\big|_p^p= C(p,\tilde\epsilon) + \tilde\epsilon \big|\chi_1^\flat r^\frac{2}{p}(\rho^\frac{\gamma}{p})_r\big|_p^p\\   
&\leq C(p,\tilde\epsilon)+C_0\tilde\epsilon\big(\big|(r^2\rho^{\gamma})^\frac{1}{p}v\big|_p^p+\big|\chi_1^\flat r^\frac{2}{\gamma-1}\rho\big|_\infty^{\gamma-1}|\rho^\frac{1}{p} u|_{p}^{p}\big)\\
&\leq C(p,\tilde\epsilon)+C_0\tilde\epsilon\big(\big|(r^m\rho^{\gamma})^\frac{1}{p}v\big|_p^p+\big|(r^{m-2}\rho)^\frac{1}{p} u\big|_{p}^{p}\big).
\end{aligned}    
\end{equation}
Thus, collecting \eqref{J1}--\eqref{g1,1-1} and \eqref{g1,1-2} leads to 
\begin{equation}\label{g1''}
\begin{aligned}
\mathcal{G}_1&\leq \frac{\alpha}{8}\Big(\big|(r^m\rho)^{\frac12} |u|^{\frac{p-2}{2}} u_r\big|_2^2+\big|(r^{m-2}\rho)^{\frac1p}u\big|_p^p\Big)+C(p)\big(1+\big|(r^m\rho)^\frac{1}{p}u\big|_{p}^{p}\big)\\
&\quad +C(p,\tilde\epsilon)+C(p)\tilde\epsilon \big|(r^m\rho^{\gamma})^\frac{1}{p}v\big|_p^p \qquad \text{for all $\tilde\epsilon\in (0,1)$}.
\end{aligned}    
\end{equation}

\smallskip
\textbf{3. Estimate of $\mathcal{G}_2$.} This can be derived similarly from Lemmas \ref{far-p-infty}--\ref{l4.3}, the bound of $\mathcal{G}_{1,1}$, and the H\"older and Young inequalities as follows: for any $\tilde\epsilon\in (0,1)$,
\begin{equation}\label{J2}
\begin{aligned}
\mathcal{G}_2&\leq C_0\big|\chi_1^\flat r^{\frac{m-1}{p-1}}\rho^\frac{\gamma}{p-1} u\big|_{p-1}^{p-1}+C_0\big|\chi_1^\sharp r^{\frac{m-1}{p-1}}\rho^\frac{\gamma}{p-1} u\big|_{p-1}^{p-1}\\
&\leq C_0\Big|\chi_1^\flat r^\frac{p+m-2}{p\gamma-p+1}\rho\Big|_{p\gamma-p+1}^\frac{p\gamma-p+1}{p}\big|(r^{m-2}\rho)^{\frac{1}{p}}u\big|_{p}^{p-1}\\
&\quad + C_0|\chi_1^\sharp r^{m(1-\gamma)-1}|_\infty |\chi_1^\sharp r^m\rho|_{p\gamma-p+1}^\frac{p\gamma-p+1}{p}\big|(r^m\rho)^{\frac{1}{p}}u\big|_{p}^{p-1}\\
&\leq \frac{\alpha}{8}\big|(r^{{m-2}}\rho)^{\frac{1}{p}}u\big|_{p}^{p}+C(p)\big(1+\big|(r^m\rho)^\frac{1}{p}u\big|_{p}^{p}\big)+C(p)\mathcal{G}_{1,1}\\
&\leq \frac{\alpha}{4}\big|(r^{{m-2}}\rho)^{\frac{1}{p}}u\big|_{p}^{p}+C(p)\big(1+\big|(r^m\rho)^\frac{1}{p}u\big|_{p}^{p}\big)+C(p,\tilde\epsilon)+C(p)\tilde\epsilon \big|(r^m\rho^{\gamma})^\frac{1}{p}v\big|_p^p.
\end{aligned}    
\end{equation}

Setting $\tilde\epsilon=C(p)^{-1}\epsilon$ in \eqref{g1''}--\eqref{J2} for sufficiently small $\epsilon\in (0,1)$ and then substituting \eqref{g1''}--\eqref{J2} into \eqref{add} lead to the desired conclusion.
 
The proof of Lemma \ref{lma} is completed.
\end{proof}

In addition, we can show the following $L^p(I)$-estimates of $(r^m\rho)^{\frac{1}{p}}v$.
\begin{lem}\label{lem-v-lp}
Let $\gamma\in(1,\infty)$ if $n=2$ and $\gamma\in (1,3)$ if $n=3$. For any  $p\in[2,\infty)$, there exists a constant $C(p)>0$  such that,  
for any $t\in [0,T]$,
\begin{equation}\label{dt-v-p}
\frac{\mathrm{d}}{\mathrm{d}t}\big|(r^m\rho)^\frac{1}{p} v\big|_{p}^{p}+\frac{A\gamma p}{4\alpha}\big|(r^m\rho^\gamma)^\frac{1}{p} v\big|_{p}^{p}\leq C(p)\Big(\big|(r^m\rho)^\frac{1}{p} u\big|_{p}^{p}+\big|(r^{m-2}\rho)^\frac{1}{p} u\big|_{p}^{p}\Big).
\end{equation}
\end{lem}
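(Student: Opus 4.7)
My plan is to run an $L^p$-type energy estimate directly on the transport equation \eqref{eq:effective2} for the effective velocity $v$, following the weighted framework of Lemma \ref{lma}. I would test \eqref{eq:effective2} against $p\,r^m\rho|v|^{p-2}v$ and combine with the conservative form $r^m\rho_t+(r^m\rho u)_r=0$ of $\eqref{e1.5}_1$ to obtain the pointwise identity
\begin{equation*}
\partial_t(r^m\rho|v|^p)+\partial_r(r^m\rho u|v|^p)+\frac{A\gamma p}{2\alpha}\,r^m\rho^\gamma|v|^{p-2}v(v-u)=0.
\end{equation*}
Integrating over $r\in I$, the conservative flux integrates to zero provided $r^m\rho u|v|^p\in W^{1,1}(I)$ with vanishing trace at the origin. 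This is handled exactly as in the proofs of Lemmas \ref{energy-BD} and \ref{lma}: the regularity of Theorems \ref{rth1}--\ref{rth133}, together with $(\log\rho)_r\in L^\infty$, ensures $v\in L^\infty$ for a.e.\ $t$, so the required integrability and boundary vanishing at $r=0$ follow and Lemma \ref{calculus} applies. The outcome is the identity
\begin{equation*}
\frac{\mathrm{d}}{\mathrm{d}t}\big|(r^m\rho)^{1/p}v\big|_p^p+\frac{A\gamma p}{2\alpha}\big|(r^m\rho^\gamma)^{1/p}v\big|_p^p=\frac{A\gamma p}{2\alpha}\int_0^\infty r^m\rho^\gamma u|v|^{p-2}v\,\mathrm{d}r.
\end{equation*}

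I would then apply Young's inequality on the right-hand side to absorb $\tfrac{A\gamma p}{4\alpha}|(r^m\rho^\gamma)^{1/p}v|_p^p$ into the dissipative term on the left, leaving a residual of the form $C(p)|(r^m\rho^\gamma)^{1/p}u|_p^p$. The remaining task is to dominate this residual by $|(r^m\rho)^{1/p}u|_p^p+|(r^{m-2}\rho)^{1/p}u|_p^p$, which I would carry out via the region segmentation that permeates this section. On the exterior $\{r\geq 1\}$, Lemma \ref{far-p-infty} yields $\|\chi_1^\sharp\rho\|_\infty\leq C$, hence $\chi_1^\sharp r^m\rho^\gamma\leq C\,\chi_1^\sharp r^m\rho$ pointwise, giving the contribution $C\,|(r^m\rho)^{1/p}u|_p^p$. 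On the interior $\{r\leq 1\}$, I would factor $r^m\rho^\gamma=(r^{2/(\gamma-1)}\rho)^{\gamma-1}\!\cdot r^{m-2}\rho$ and invoke Lemma \ref{l4.3}: when $n=2$ the weight $r^{2/(\gamma-1)}$ has positive exponent and Lemma \ref{l4.3}(i) is available for every $\gamma>1$; when $n=3$ the pointwise inequality $r^{2/(\gamma-1)}\rho\leq r^{2/(\gamma-1)-1}\cdot r\rho\leq C$ on $(0,1)$ holds precisely when $2/(\gamma-1)\geq 1$, which is exactly the hypothesis $\gamma<3$, and Lemma \ref{l4.3}(ii) supplies the required bound on $r\rho$. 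Combining these two contributions gives the claimed right-hand side.

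I anticipate the main obstacle to be the careful verification of the integration-by-parts step rather than the algebraic part of the estimate. Since $v=u+2\alpha(\log\rho)_r$ mixes $u$ with the potentially singular $(\log\rho)_r$, the flux $r^m\rho u|v|^p$ and its radial derivative must be shown to lie in $L^1(I)$ for each $p\geq 2$ and a.e.\ $t$, and to vanish at $r=0$; this is analogous to, but slightly more delicate than, the verification performed in Lemma \ref{lma}, and it hinges on the uniform control of $(\log\rho)_r$ provided by Theorems \ref{rth1}--\ref{rth133}. Once this check is in place, the rest of the argument is a direct application of Young's inequality together with the weighted density bounds of Lemmas \ref{far-p-infty}--\ref{l4.3}.
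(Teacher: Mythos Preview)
Your proposal is correct and follows essentially the same approach as the paper: multiply \eqref{eq:effective2} by $r^m\rho|v|^{p-2}v$, use the continuity equation to obtain the conservative identity, justify the integration by parts via Lemma \ref{calculus}, absorb half of the dissipative term by Young's inequality, and bound the residual $\big|(r^m\rho^\gamma)^{1/p}u\big|_p^p$ by the interior/exterior split using Lemmas \ref{far-p-infty}--\ref{l4.3}. The paper's interior estimate is exactly your factorization $r^m\rho^\gamma=(r^{2/(\gamma-1)}\rho)^{\gamma-1}\cdot r^{m-2}\rho$, and its exterior estimate is equivalent to your observation that $\chi_1^\sharp\rho\in L^\infty$.
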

\begin{proof}
First, multiplying \eqref{eq:effective2} by $r^m\rho|v|^{p-2}v$ with $p\in[2,\infty)$, 
along with $\eqref{e1.5}_1$, gives
\begin{equation}\label{eq:532}
\frac{1}{p}\big(r^m\rho |v|^p\big)_t+ \frac{1}{p}\big(\underline{r^m\rho u|v|^p}_{:=\mathcal{B}_3}\big)_r+\frac{A\gamma}{2\alpha} r^m\rho^{\gamma} |v|^p=\frac{A\gamma}{2\alpha}r^m\rho^\gamma uv|v|^{p-2}.
\end{equation}

Here we need to show that $\mathcal{B}_3\in W^{1,1}(I)$ and $\mathcal{B}_3|_{r=0}=0$ for {\it a.e.} $t\in (0,T)$, which allows us to apply Lemma \ref{calculus} to obtain
\begin{equation}\label{eq:B3}
\int_0^\infty (\mathcal{B}_3)_r\,\mathrm{d}r=-\mathcal{B}_3|_{r=0}=0.
\end{equation}
On the one hand,  we can obtain $\mathcal{B}_3|_{r=0}=0$ 
from \eqref{V-expression} and the fact that $(\rho,\rho_r,u)\in L^\infty(I)$ 
for {\it a.e.} $t\in (0,T)$ due to \eqref{spd}--\eqref{spd2} 
or \eqref{spd33}--\eqref{spd233}. 
On the other hand,  
based on \eqref{spd}--\eqref{spd2} (or \eqref{spd33}--\eqref{spd233}), we have 
\begin{equation*}
r^m\rho\in L^1(I),\quad r^\frac{m}{2}\big(\rho_r,u,(\log\rho)_{rr}\big)\in L^2(I),
\quad \Big(u,\frac{u}{r},u_r,(\log\rho)_r\Big)\in L^\infty(I)
\end{equation*} 
for {\it a.e.} $t\in (0,T)$. Thus, we obtain from the H\"older inequality that 
\begin{align*}
|\mathcal{B}_3|_1&\leq C(p)\big(\big|r^m\rho |u|^{p+1}\big|_1+\big|r^m\rho u|(\log\rho)_r|^p\big|_1\big)\\
&\leq C(p)\big(|r^m\rho|_1 |u|_\infty^{p+1}+ |r^m\rho|_1|u|_\infty|(\log\rho)_r|_\infty^p\big)<\infty,\\[1mm]
|(\mathcal{B}_3)_r|_1&\leq C(p)\big(\big|r^{m-1}\rho |u|^{p+1}\big|_1+\big|r^{m-1}\rho u|(\log\rho)_r|^p\big|_1+\big|r^{m}\rho_r |u|^{p+1}\big|_1\big)\\
&\quad+C(p)\big(\big|r^{m}\rho_r u|(\log\rho)_r|^p\big|_1+\big|r^{m}\rho |u|^pu_r\big|_1+\big|r^{m}\rho u_r|(\log\rho)_r|^p\big|_1\big)\\
&\quad +C(p)\big|r^{m}\rho u|(\log\rho)_r|^{p-1}|(\log\rho)_{rr}|\big|_1\\
&\leq C(p)|r^m\rho|_1 \Big|\frac{u}{r}\Big|_\infty\big( |u|_\infty^{p}+ |(\log\rho)_r|_\infty^p\big)+C(p)|r^\frac{m}{2}\rho_r|_2|r^\frac{m}{2}u|_2|u|_\infty^p\\
&\quad+C(p)|r^\frac{m}{2}\rho_r|_2|r^\frac{m}{2}u|_2|(\log\rho)_r|_\infty^p+C(p)|r^m\rho|_1|u_r|_\infty\big(|u|_\infty^p+|(\log\rho)_r|_\infty^p\big) \\
&\quad +C(p)|\rho|_\infty|r^\frac{m}{2}u|_2|(\log\rho)_r|_\infty^{p-1}|r^\frac{m}{2}(\log\rho)_{rr}|_2<\infty.
\end{align*}

Integrating \eqref{eq:532} over $I$, we obtain from \eqref{eq:B3}, 
$\frac{2}{\gamma-1}>1$ whenever $\gamma\in (1,3)$,
Lemmas \ref{far-p-infty}--\ref{lma}, and the Young inequality that
\begin{align*}
&\frac{1}{p}\frac{\mathrm{d}}{\mathrm{d}t}\big|(r^m\rho)^\frac{1}{p} v\big|_{p}^{p}+\frac{A\gamma}{2\alpha}\big|(r^m\rho^\gamma)^\frac{1}{p} v\big|_{p}^{p}\\
&\leq \frac{A\gamma}{2\alpha}\int_0^\infty r^m\rho^\gamma uv|v|^{p-2}\,\mathrm{d}r\\
&\leq \frac{A\gamma}{4\alpha}\big|(r^m\rho^\gamma)^\frac{1}{p} v\big|_{p}^p+C(p)\big|(r^m\rho^\gamma)^\frac{1}{p} u\big|_{p}^p\\
&\leq \frac{A\gamma}{4\alpha}\big|(r^m\rho^\gamma)^\frac{1}{p} v\big|_{p}^{p}+C(p)\Big(\big|\chi_1^\flat(r^m\rho^\gamma)^\frac{1}{p} u\big|_{p}^p+\big|\chi_1^\sharp(r^m\rho^\gamma)^\frac{1}{p} u\big|_{p}^p\Big)\\
&\leq \frac{A\gamma}{4\alpha}\big|(r^m\rho^\gamma)^\frac{1}{p} v\big|_{p}^{p}+C(p)\big|\chi_1^\flat r^\frac{2}{\gamma-1}\rho\big|_\infty^{\gamma-1} \big|(r^{m-2}\rho)^\frac{1}{p} u\big|_{p}^{p}\\
&\quad\, +C(p)|\chi_1^\sharp r^{-m}|_\infty^{\gamma-1} |\chi_1^\sharp r^m\rho|_\infty^{\gamma-1} \big|(r^m\rho)^\frac{1}{p} u\big|_{p}^{p}\\
&\leq \frac{A\gamma}{4\alpha}\big|(r^m\rho^\gamma)^\frac{1}{p} v\big|_{p}^{p} +C(p)\Big(\big|(r^m\rho)^\frac{1}{p} u\big|_{p}^{p}+\big|(r^{m-2}\rho)^\frac{1}{p} u\big|_{p}^{p}\Big).
\end{align*}

The proof of Lemma \ref{lem-v-lp} is completed.
\end{proof}

Consequently, based on Lemmas \ref{lma}--\ref{lem-v-lp}, 
we can derive the following important estimates on $(u,v)$.

\begin{lem}\label{lemma-uv-p}
Let $\gamma\in(1,\infty)$ if $n=2$ and $\gamma\in (1,3)$ if $n=3$. 
Then, for any  $p\in[2,\infty)$, there exists a constant $C(p, T)>0$ such that, 
for any $t\in [0,T]$,
\begin{align*}
&\big|(r^m\rho)^{\frac{1}{p}}u(t)\big|_{p}^{p}+\big|(r^m\rho)^{\frac{1}{p}}v(t)\big|_{p}^{p}\\
&+ \int_0^t\Big(\big|(r^m\rho)^{\frac{1}{2}}|u|^{\frac{p-2}{2}} u_r\big|_2^2+\big|(r^{m-2}\rho)^{\frac{1}{p}}u\big|_{p}^{p}+\big|(r^m\rho^\gamma)^\frac{1}{p} v\big|_{p}^{p}\Big)\,\mathrm{d}s\leq C(p,T).
\end{align*}
\end{lem}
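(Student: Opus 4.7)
The plan is to combine the two differential inequalities already established in Lemmas \ref{lma} and \ref{lem-v-lp} into a single Gr\"onwall-type inequality for the composite quantity
\begin{equation*}
Y(t) := \big|(r^m\rho)^{1/p}u(t)\big|_{p}^{p} + \delta\,\big|(r^m\rho)^{1/p}v(t)\big|_{p}^{p},
\end{equation*}
where $\delta\in(0,1)$ is a small parameter to be chosen. First I add the inequality in Lemma \ref{lma} to $\delta$ times the inequality in Lemma \ref{lem-v-lp}. The left-hand side then contains the dissipation
\begin{equation*}
p\alpha\,\big|(r^m\rho)^{1/2}|u|^{(p-2)/2}u_r\big|_2^2 \;+\; p\alpha\,\big|(r^{m-2}\rho)^{1/p}u\big|_p^p \;+\; \tfrac{\delta A\gamma p}{4\alpha}\,\big|(r^m\rho^\gamma)^{1/p}v\big|_p^p,
\end{equation*}
while the only ``bad'' terms on the right-hand side are $\delta C(p)\,|(r^{m-2}\rho)^{1/p}u|_p^p$ (from Lemma \ref{lem-v-lp}) and $\epsilon\,|(r^m\rho^\gamma)^{1/p}v|_p^p$ (from Lemma \ref{lma}), plus $C(p)(1+|(r^m\rho)^{1/p}u|_p^p)+C(p,\epsilon)$.

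Next I perform the absorption in the standard hierarchical order. First fix $\delta$ so small that $\delta C(p)\leq \tfrac{p\alpha}{2}$, which moves the first bad term into the dissipation. Then fix $\epsilon:=\tfrac{\delta A\gamma p}{8\alpha}$ (which in turn only enlarges the harmless constant $C(p,\epsilon)$ to some $C(p,\delta)=C(p)$), which likewise absorbs the second bad term. After absorption the inequality takes the clean form
\begin{equation*}
\frac{\mathrm{d}}{\mathrm{d}t}Y(t) + \tfrac{1}{2}\bigl(\text{dissipation}\bigr) \;\leq\; C(p)\bigl(1+Y(t)\bigr).
\end{equation*}

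To close the argument I bound $Y(0)$. By Lemma \ref{lemma-initial} and the mass estimate in Lemma \ref{far-p-infty} evaluated at $t=0$, one has $|r^m\rho_0|_1\leq C_0$. By the (classical and spherically symmetric critical) Sobolev embeddings recalled in Remark \ref{rk-nabla}, the initial-data hypotheses \eqref{id1}--\eqref{shangjie3} (resp.\ \eqref{id1-high}) yield $\boldsymbol{u}_0\in L^\infty(\mathbb{R}^n)$ and $\nabla\log\rho_0\in L^\infty(\mathbb{R}^n)$, hence $|u_0|_\infty+|v_0|_\infty\leq C_0$. Therefore $Y(0)\leq |r^m\rho_0|_1\,(|u_0|_\infty^p+\delta|v_0|_\infty^p)\leq C(p)$, and Gr\"onwall's inequality gives $Y(t)\leq C(p,T)$ for $t\in[0,T]$. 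Integrating the dissipation term in time then yields the three advertised $L^1([0,T])$ bounds. I do not foresee any substantive obstacle beyond keeping the order of small constants straight (choose $\delta$ small against $p\alpha/C(p)$, then $\epsilon$ small against $\delta$); the dimensional restriction $\gamma\in(1,3)$ when $n=3$ has already been consumed inside Lemma \ref{lma} in the iterative control of $\mathcal{G}_{1,1}$, so no further restriction enters here.
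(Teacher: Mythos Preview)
Your proposal is correct and follows essentially the same approach as the paper: linearly combine the differential inequalities from Lemmas~\ref{lma} and~\ref{lem-v-lp} with a small weight on the $v$-inequality, absorb the cross terms by choosing the small parameters appropriately, then apply Gr\"onwall and bound the initial data via $|r^m\rho_0|_1\leq C_0$ together with $|(u_0,v_0)|_\infty\leq C_0$. The only cosmetic difference is that the paper ties the two small parameters together (scaling \eqref{dt-v-p} by $\tfrac{8\alpha\epsilon}{A\gamma p}$ with the same $\epsilon$ as in Lemma~\ref{lma}, then setting $\epsilon=\min\{\alpha/C(p),1/2\}$), whereas you introduce $\delta$ and $\epsilon$ separately and choose them in sequence.
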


\begin{proof}
First, multiplying \eqref{dt-v-p} by $\frac{8\alpha\epsilon}{A\gamma p}$ 
with $\epsilon\in (0,1)$ leads to
\begin{equation*} 
\frac{8\alpha\epsilon}{A\gamma p}\frac{\mathrm{d}}{\mathrm{d}t}\big|(r^m\rho)^\frac{1}{p} v\big|_{p}^{p}+2\epsilon\big|(r^m\rho^\gamma)^\frac{1}{p} v\big|_{p}^{p}\leq C(p)\epsilon\Big(\big|(r^m\rho)^\frac{1}{p} u\big|_{p}^{p}+\big|(r^{m-2}\rho)^\frac{1}{p} u\big|_{p}^{p}\Big).
\end{equation*}
Then summing the above inequality with \eqref{dt-u-p} yields 
\begin{equation*}
\begin{aligned}
&\frac{\mathrm{d}}{\mathrm{d}t}\Big(\big|(r^m\rho)^{\frac{1}{p}} u\big|_{p}^{p}+\frac{8\alpha\epsilon}{A\gamma p} \big|(r^m\rho)^\frac{1}{p} v\big|_{p}^{p}\Big)\\
&+ p\alpha \Big(\big|(r^m\rho)^{\frac{1}{2}}|u|^{\frac{p-2}{2}} u_r\big|_2^2+\big|(r^{m-2}\rho)^{\frac{1}{p}}u\big|_{p}^{p}\Big)+\epsilon\big|(r^m\rho^\gamma)^\frac{1}{p} v\big|_{p}^{p}\\
&\leq C(p)\Big(1+\big|(r^m\rho)^\frac{1}{p}u\big|_{p}^{p}\Big)+ C(p,\epsilon)+C(p)\epsilon\big|(r^{m-2}\rho)^\frac{1}{p} u\big|_{p}^{p}.
\end{aligned}
\end{equation*}

As a consequence, we can set 
\begin{equation*}
\epsilon=\min\big\{\frac{\alpha}{C(p)},\frac{1}{2}\big\},
\end{equation*}
and then apply the Gr\"onwall inequality to the resulting inequality to obtain
the desired result. 
To achieve this, it still requires checking the $L^p(I)$-boundedness 
of $(r^m\rho_0)^{\frac{1}{p}}(u_0,v_0)$. 
Indeed, it follows from Lemmas \ref{ale1}, \ref{initial3}, and \ref{lemma-initial} that
\begin{align*}
\big|(r^m\rho_0)^\frac{1}{p}(u_0,v_0)\big|_p&\leq |r^m\rho_0|_1^\frac{1}{p}|(u_0,(\log\rho_0)_r)|_{\infty}\leq C_0\|\rho_0\|_{L^1}^\frac{1}{p}\|(\boldsymbol{u}_0,\nabla\log\rho_0)\|_{L^\infty} \leq C(p).
\end{align*}
This completes the proof.
\end{proof}

We now show the uniform $L^\infty(I)$-estimate of the density.
\begin{lem}\label{important2}
Let $\gamma\in (1,\infty)$ if $n=2$ and $\gamma\in (1,3)$ if $n=3$. Then there exists a constant $C(T)>0$  such that 
\begin{equation*}
|\rho(t)|_\infty\leq C(T)  \qquad \text{for any $t\in [0,T]$}.
\end{equation*}
\end{lem}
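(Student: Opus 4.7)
The strategy follows exactly the blueprint sketched in \S\ref{subsub-2.2.2} (see \eqref{000}) and consists of splitting the bound into two pieces. On the exterior region $\{r\geq 1\}$, Lemma \ref{far-p-infty} applied with $\omega=1$ already gives $|\chi^\sharp_1\rho|_\infty\leq |\chi^\sharp_1 r^m\rho|_\infty\leq C_0$ uniformly in $t\in[0,T]$ (since $m\geq 1$), so the only task is to control $\|\rho\|_{L^\infty(0,1)}$ uniformly, for which I will deploy a Sobolev embedding argument on $\rho^{1/p}$ combined with the new $L^p$-estimates of Lemma \ref{lemma-uv-p}.

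First I fix an exponent $p\in(n,\infty)$, say $p=4$, which lies in the range $[2,\infty)$ of Lemma \ref{lemma-uv-p}, so that the classical Morrey embedding $W^{1,p}(\mathbb{R}^n)\hookrightarrow L^\infty(\mathbb{R}^n)$ applies to the spherically symmetric scalar function $\rho^{1/p}$. To estimate $\|\rho^{1/p}\|_{L^p(\mathbb{R}^n)}$, I simply observe that $\|\rho^{1/p}\|_{L^p(\mathbb{R}^n)}^p=\|\rho\|_{L^1(\mathbb{R}^n)}\leq C_0$ by Lemma \ref{far-p-infty}. To estimate $\|\nabla(\rho^{1/p})\|_{L^p(\mathbb{R}^n)}$, I invoke the identity $2\alpha\nabla\log\rho=\boldsymbol{v}-\boldsymbol{u}$ coming from Definition \ref{def-effective} (in its M-D form $\boldsymbol{v}=\boldsymbol{u}+2\alpha\nabla\log\rho$) together with the chain rule $\nabla(\rho^{1/p})=\tfrac{1}{p}\rho^{1/p}\nabla\log\rho$ to obtain the pointwise bound
\begin{equation*}
|\nabla(\rho^{1/p})|\leq \frac{1}{2\alpha p}\rho^{1/p}\bigl(|\boldsymbol{u}|+|\boldsymbol{v}|\bigr).
\end{equation*}
Translating to spherical coordinates and using the convention $|(r^m\rho)^{1/p}(u,v)|_p=|(r^m\rho)^{1/p}u|_p+|(r^m\rho)^{1/p}v|_p$, this yields
\begin{equation*}
\|\nabla(\rho^{1/p})\|_{L^p(\mathbb{R}^n)}\leq C(p)\,\bigl|(r^m\rho)^{1/p}(u,v)\bigr|_p,
\end{equation*}
and the right-hand side is bounded by $C(p,T)$ uniformly in $t\in[0,T]$ by Lemma \ref{lemma-uv-p}.

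Combining these two bounds through the Sobolev embedding and raising to the $p$-th power produces
\begin{equation*}
\|\rho\|_{L^\infty(\mathbb{R}^n)}=\|\rho^{1/p}\|_{L^\infty(\mathbb{R}^n)}^p\leq C(p)\bigl(\|\rho\|_{L^1(\mathbb{R}^n)}+\bigl|(r^m\rho)^{1/p}(u,v)\bigr|_p^p\bigr)\leq C(T),
\end{equation*}
which is the desired conclusion. The main point of care in this argument is not computational but structural: one must ensure that $\rho^{1/p}$ belongs to $W^{1,p}(\mathbb{R}^n)$ so that the chain rule and Morrey embedding apply. This is guaranteed by the regularity built into the $s$-order regular solutions (Theorems \ref{rth1}--\ref{rth133}), namely $\rho>0$ and $(\log\rho)_r\in L^\infty([0,T]\times I)$ together with the integrability of $\nabla\log\rho=(2\alpha)^{-1}(\boldsymbol{v}-\boldsymbol{u})$ multiplied by $\rho^{1/p}$; the remaining verification that the Euclidean $L^p(\mathbb{R}^n)$ norm matches the weighted $|\cdot|_p$ via the Jacobian factor $r^m\,\mathrm{d}r$ is routine.
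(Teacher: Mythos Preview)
Your proof is correct and follows precisely the blueprint the paper sketches in \eqref{000}: apply the Morrey embedding $W^{1,p}(\mathbb{R}^n)\hookrightarrow L^\infty(\mathbb{R}^n)$ with $p>n$ to $\rho^{1/p}$, control $\|\rho^{1/p}\|_{L^p}^p=\|\rho\|_{L^1}$ via mass conservation, and control $\|\nabla(\rho^{1/p})\|_{L^p}$ through the effective-velocity identity $2\alpha\nabla\log\rho=\boldsymbol{v}-\boldsymbol{u}$ together with Lemma~\ref{lemma-uv-p}. Note that your Morrey argument already yields the global bound $\|\rho\|_{L^\infty(\mathbb{R}^n)}$, so the separate exterior estimate via Lemma~\ref{far-p-infty} is in fact redundant in your route.

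The paper's actual proof executes the interior step differently: instead of working with $\rho^{1/p}$ in $\mathbb{R}^n$, it applies the one-dimensional embedding $W^{1,1}(0,1)\hookrightarrow L^\infty(0,1)$ directly to $\rho$, writes $|\chi_1^\flat\rho_r|_1\leq C|\chi_1^\flat\rho(u,v)|_1$, and uses H\"older to bound this by $C(T)\,|\chi_1^\flat\rho|_\infty^{(p_0-1)/p_0}$ times the $L^{p_0}$ quantity from Lemma~\ref{lemma-uv-p}; a Young-inequality absorption then closes the estimate. Your version is slightly cleaner in that it avoids this absorption step entirely, at the price of invoking the $n$-dimensional Morrey embedding and the chain rule for $\rho^{1/p}$ rather than a purely one-dimensional argument. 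Both routes rest on the same substantive inputs (Lemma~\ref{far-p-infty}, Lemma~\ref{lemma-uv-p}, and the identity $\rho_r=\rho(v-u)/(2\alpha)$), so the difference is one of packaging rather than content.
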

\begin{proof}
Based on Lemma \ref{far-p-infty}, it suffices to show the $L^\infty(0,1)$-estimate of $\rho$:
\begin{equation}\label{near}
|\chi_1^\flat \rho(t)|_\infty \leq C(T) \qquad\text{for all $t\in [0,T]$}.
\end{equation}

It follows from Lemma \ref{lemma-uv-p} that, for $n=2,3$, there exists a constant $p_0>n$ such that 
\begin{equation}\label{p0-uv}
\big|(r^m\rho)^\frac{1}{p_0}(u,v)(t)\big|_{p_0}\leq C(T) \qquad
\text{for all $t\in [0,T]$}.
\end{equation}
Thus, we obtain from Lemma \ref{l4.3}, \eqref{V-expression}, \eqref{p0-uv}, 
$\frac{m}{p_0-1}\in (0,1)$, and the H\"older and Young inequalities that, 
for all $t\in [0,T]$,
\begin{align*}
|\chi_1^\flat \rho|_\infty&\leq  C_0\big(|\chi_1^\flat \rho|_1+|\chi_1^\flat \rho_r|_1\big) \leq  C_0\big(1+|\chi_1^\flat\rho (u,v)|_1\big)\\
&\leq  C_0+C_0\big|\chi_1^\flat r^{-\frac{m}{p_0-1}}\big|_1^\frac{p_0-1}{p_0}|\chi_1^\flat\rho|_\infty^\frac{p_0-1}{p_0} \big|(r^m\rho)^\frac{1}{p_0}(u,v)\big|_{p_0}\leq C(T)+\frac{1}{2}|\chi_1^\flat \rho|_\infty,
\end{align*}
which implies \eqref{near}.
\end{proof}

\section{Global Uniform Bound of the Effective Velocity}\label{section-effective}

This section is devoted to establishing the global uniform $L^\infty(I)$-estimate of the effective velocity $v$ in spherical coordinates for the case that $\bar \rho=0$. 
Let $T>0$ be any fixed time, and let $(\rho, u)(t,r)$ be the $s$-order $(s=2,3)$ 
regular solution of problem \eqref{e1.5} in $[0,T]\times I$ 
obtained in Theorems \ref{rth1}--\ref{rth133}. 
Moreover, throughout \S \ref{section-effective}--\S \ref{se46}, we always assume that 
\begin{equation}\label{gammafanwei}
\gamma\in (1,\infty)\,\,\,\,\text{when $n=2$},\qquad\,\, 
\gamma\in (1,3)\,\,\,\,\text{when $n=3$}.
\end{equation}
To obtain the  boundedness of $v$, we first  derives the following $L^p(I)$-estimates of $\rho^{\frac{1}{p}}u$:

\begin{lem}\label{rho u-L2}
For  any  $p\in[2,\infty)$, 
there exists a constant $C(p,T)>0$  such that, 
for any $t\in [0,T]$,
\begin{equation*}
\big|\rho^\frac{1}{p}u(t)\big|_p^p+ \int_0^t\Big(\big|\rho^\frac{1}{2}|u|^\frac{p-2}{2} u_r\big|_2^2+\big|(r^{-2}\rho)^\frac{1}{p}u\big|_p^p\Big)\,\mathrm{d}s \leq C(p,T)\Big(\big(\sup_{s\in[0,t]}|v|_\infty\big)^2+1\Big).
\end{equation*}
\end{lem}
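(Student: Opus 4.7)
The plan is to test the momentum equation $\eqref{e1.5}_2$ against the \textit{unweighted} function $|u|^{p-2}u$, in contrast to the weighted test $r^m|u|^{p-2}u$ used in Lemma~\ref{lemma-uv-p}. Dropping the volume factor produces additional $1/r$- and $1/r^2$-singular contributions which, after a careful algebraic cancellation, yield precisely the Hardy-type dissipation $\int r^{-2}\rho|u|^p$ appearing in the statement and leave a single ``bad'' source controlled by $|v|_\infty$. First I would rewrite the viscous stress in the convenient form $2\alpha\bigl(\rho u_r+\tfrac{m\rho u}{r}\bigr)_r-\tfrac{2\alpha m\rho_r u}{r}=2\alpha(\rho u_r)_r+\tfrac{2\alpha m\rho u_r}{r}-\tfrac{2\alpha m\rho u}{r^2}$, so that the good Hardy term $-2\alpha m\rho u/r^2$ is exposed. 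Multiplying $\eqref{e1.5}_2$ by $|u|^{p-2}u$, integrating over $I$, and invoking $\eqref{e1.5}_1$ (so that $\int(\rho u_t+\rho uu_r)|u|^{p-2}u\,\mathrm dr=\tfrac{1}{p}\tfrac{\mathrm d}{\mathrm dt}\int\rho|u|^p+\tfrac{m}{p}\int\tfrac{\rho u|u|^p}{r}\,\mathrm dr$), together with integration by parts on the pressure and $(\rho u_r)_r$ contributions, produces an identity in which three singular terms, namely $-\tfrac{m}{p}\int\tfrac{\rho u|u|^p}{r}$, $2\alpha m\int\tfrac{\rho u_r|u|^{p-2}u}{r}$, and $2\alpha m\int\tfrac{\rho|u|^p}{r^2}$, must be reconciled. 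All boundary terms at $r=0,\infty$ vanish by the regularity recorded in \eqref{spd}--\eqref{spd2}, as in the proof of Lemma~\ref{energy-BD}.

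The algebraic crux, which I expect to be the main obstacle, is to effect a miraculous cancellation among these singular terms. Using $2\alpha\rho_r=\rho(v-u)$ from \eqref{V-expression}, one rewrites
\[
-\tfrac{m}{p}\!\int\!\tfrac{\rho u|u|^p}{r}\,\mathrm dr=-\tfrac{m}{p}\!\int\!\tfrac{\rho v|u|^p}{r}\,\mathrm dr+\tfrac{2\alpha m}{p}\!\int\!\tfrac{\rho_r|u|^p}{r}\,\mathrm dr,
\]
and then integration by parts on the $\rho_r/r$ factor yields $\tfrac{2\alpha m}{p}\int\tfrac{\rho_r|u|^p}{r}\,\mathrm dr=-2\alpha m\int\tfrac{\rho u_r|u|^{p-2}u}{r}\,\mathrm dr+\tfrac{2\alpha m}{p}\int\tfrac{\rho|u|^p}{r^2}\,\mathrm dr$, the first piece exactly cancelling the $u_r/r$ cross term and the second reinforcing the Hardy dissipation. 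This yields the clean identity
\[
\tfrac{1}{p}\tfrac{\mathrm d}{\mathrm dt}\!\int\!\rho|u|^p+2\alpha(p-1)\!\int\!\rho|u|^{p-2}u_r^2+\tfrac{2\alpha m(p-1)}{p}\!\int\!\tfrac{\rho|u|^p}{r^2}=A(p-1)\!\int\!\rho^\gamma|u|^{p-2}u_r-\tfrac{m}{p}\!\int\!\tfrac{\rho v|u|^p}{r}.
\]

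The two remaining unsigned right-hand-side terms are then treated as in the hint of \S\ref{subsub-2.2.3}. For the pressure term, Young's inequality together with $|\rho|_\infty\le C(T)$ from Lemma~\ref{important2} gives $A(p-1)\int\rho^\gamma|u|^{p-2}u_r\le\alpha(p-1)\int\rho|u|^{p-2}u_r^2+C(p,T)\int\rho|u|^{p-2}\,\mathrm dr$; the residual $\int\rho|u|^{p-2}\,\mathrm dr$ is handled by splitting at $r=1$, using on $[0,1]$ the Young inequality $\rho|u|^{p-2}\le\tfrac{2}{p}\rho+\tfrac{p-2}{p}\rho|u|^p$ with $r^{-2}\ge 1$ to absorb the $|u|^p$-part into the Hardy dissipation, and on $[1,\infty)$ the bound $r^m\ge 1$ together with Lemmas~\ref{far-p-infty} and \ref{lemma-uv-p}. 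For the bad term $\mathcal G_3:=-\tfrac{m}{p}\int\tfrac{\rho v|u|^p}{r}$, writing $\tfrac{\rho|v||u|^p}{r}=|v|_\infty(\rho|u|^p)^{1/2}(\rho|u|^p/r^2)^{1/2}$ and applying Young produces $|\mathcal G_3|\le\epsilon\int\tfrac{\rho|u|^p}{r^2}+C(p,\epsilon)|v|_\infty^2\int\rho|u|^p$, as predicted by the hint. Choosing $\epsilon$ small enough to absorb all dissipation contributions on the LHS, one obtains the differential inequality $\tfrac{\mathrm d}{\mathrm dt}\int\rho|u|^p+c(p)\bigl(\int\rho|u|^{p-2}u_r^2+\int\tfrac{\rho|u|^p}{r^2}\bigr)\le C(p,T)+C(p)|v|_\infty^2\int\rho|u|^p$. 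Setting $M(t):=\sup_{s\in[0,t]}|v(s)|_\infty$ and combining with the a priori splitting $\int\rho|u|^p\le C(p,T)+\int\tfrac{\rho|u|^p}{r^2}$ (the same dichotomy at $r=1$ together with Lemma~\ref{lemma-uv-p}) to close the nonlinear source against the dissipation, a Gr\"onwall argument yields the claimed bound $C(p,T)(M(t)^2+1)$. Finiteness of $X(0)=\int\rho_0|u_0|^p\,\mathrm dr$ follows from the Sobolev embeddings of Remark~\ref{rk-nabla}, the mass bound $|r^m\rho_0|_1\le C_0$, and $|\rho_0|_\infty,|\boldsymbol u_0|_\infty\le C_0$.
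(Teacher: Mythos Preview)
Your derivation of the clean identity and the handling of $\mathcal G_3$ are correct and match the paper exactly. The genuine gap is in your closing step. After your estimates you reach
\[
\tfrac{\mathrm d}{\mathrm dt}X+c(p)\Bigl(\int\rho|u|^{p-2}u_r^2+\int\tfrac{\rho|u|^p}{r^2}\Bigr)\le C(p,T)+C(p)M^2X,\qquad X:=\int\rho|u|^p,
\]
and propose to substitute the pointwise splitting $X\le C(p,T)+\int r^{-2}\rho|u|^p$ so as to ``close the nonlinear source against the dissipation''. That substitution leaves $C(p)M^2\int r^{-2}\rho|u|^p$ on the right, and absorbing it into $c(p)\int r^{-2}\rho|u|^p$ requires $C(p)M^2<c(p)$, i.e.\ $M$ small --- which is precisely what is \emph{not} assumed. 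Applying Gr\"onwall directly to $\tfrac{\mathrm d}{\mathrm dt}X\le C(p)M^2X+C(p,T)$ is no better: it gives a bound exponential in $M^2T$, not the linear $C(p,T)(M^2+1)$ claimed.

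The paper's remedy is to use Lemma~\ref{lemma-uv-p} for its \emph{time-integrated} content rather than as a pointwise splitting: it furnishes $\int_0^T\big|(r^{m-2}\rho)^{1/p}u\big|_p^p\,\mathrm ds\le C(p,T)$ independently of $M$, whence (by the same dichotomy at $r=1$) also $\int_0^TX\,\mathrm ds\le C(p,T)$. The term $C(p)(M^2+1)\big|(r^{m-2}\rho)^{1/p}u\big|_p^p$ is then treated as a \emph{forcing} with $L^1(0,T)$-norm $\le C(p,T)(M^2+1)$, and Gr\"onwall (with coefficient $C(p,T)$ on $X$, coming from the pressure term) yields the bound linear in $M^2$. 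A secondary imprecision: your Young inequality $\rho|u|^{p-2}\le\tfrac{2}{p}\rho+\tfrac{p-2}{p}\rho|u|^p$ carries no free parameter, so the prefactor $C(p,T)\tfrac{p-2}{p}$ (inherited from $|\rho|_\infty^{2\gamma-2}$) in front of $\int_0^1 r^{-2}\rho|u|^p$ need not be absorbable into the Hardy term either; the paper sidesteps this by keeping the pressure contribution as $C(p,T)(1+X)$ via $|\rho|_\infty\le C(T)$ and H\"older.
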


\begin{proof}
Multiplying both sides of $\eqref{e1.5}_2$ by $|u|^{p-2}u$ with $p\in [2,\infty)$, together with $\eqref{e1.5}_1$ and \eqref{V-expression}, leads to
\begin{equation}\label{eq:6.3}
\begin{aligned}
&\frac{1}{p}(\rho |u|^p)_t+2\alpha(p-1)\rho |u|^{p-2} |u_r|^2+\frac{2\alpha m(p-1)}{p}\frac{\rho |u|^p}{r^2}-A(p-1)\rho^\gamma |u|^{p-2}u_r\\
&=\Big(\underline{2\alpha\rho u_r|u|^{p-2}u+\frac{2\alpha m}{p}\frac{\rho |u|^p}{r}-\frac{1}{p}\rho u |u|^p-A\rho^\gamma |u|^{p-2}u}_{:=\mathcal{B}_4}\Big)_r-\frac{m}{p}\frac{\rho v |u|^p}{r}.
\end{aligned}
\end{equation}
Here we need to show  that $\mathcal{B}_4\in W^{1,1}(I)$ 
and $\mathcal{B}_4|_{r=0}=0$ for {\it a.e.} $t\in (0,T)$, 
which allows us to apply Lemma \ref{calculus} to obtain
\begin{equation}\label{eq:B4}
\int_0^\infty (\mathcal{B}_4)_r\,\mathrm{d}r=-\mathcal{B}_4|_{r=0}=0.
\end{equation}
On one hand,  $\mathcal{B}_4|_{r=0}=0$ follows directly from 
\eqref{V-expression}, $p\geq 2$, $u|_{r=0}=0$, 
and the fact that $(\rho,u,\frac{u}{r},u_r)\in C(\bar I)$ for each $t\in (0,T]$ due to \eqref{spd2} (or \eqref{spd233}). 
On the other hand,  if $n=2$ $(m=1)$, \eqref{spd}--\eqref{spd2} 
or \eqref{spd33}--\eqref{spd233} imply that 
\begin{equation*}
\sqrt{r}\big(u,\frac{u}{r},u_r,u_{rr}\big)\in L^2(I),\quad \big(\rho,u,\frac{u}{r},u_r,(\log\rho)_r\big)\in L^\infty(I),\quad r\rho\in L^1(I)
\end{equation*}
for {\it a.e.} $t\in (0,T)$, so that 
\begin{equation*}
|\rho|_1\leq |\chi_1^\flat\rho|_1+|\chi_1^\sharp\rho|_1\leq |\rho|_\infty+|\chi_1^\sharp r^{-1}|_\infty|r\rho|_1<\infty, 
\end{equation*}
which, along with the H\"older inequality, yields that
\begin{align*}
|\mathcal{B}_4|_1 &\leq  C(p)\Big(\big|\rho |u|^{p-1}|u_r|\big|_1
+\Big|\frac{1}{r}\rho |u|^p\Big|_1+\big|\rho |u|^{p+1}\big|_1+\big|\rho^\gamma|u|^{p-1}\big|_1\Big)\\
&\leq C(p)|\rho|_\infty |u|_\infty^{p-2}\Big(\Big|\frac{u}{\sqrt{r}}\Big|_2|\sqrt{r}u_r|_2+\Big|\frac{u}{\sqrt{r}}\Big|_2^2\Big)\\
&\quad + C(p)|\rho|_\infty|u|_\infty^{p-1}|\sqrt{r}u|_2\Big|\frac{u}{\sqrt{r}}\Big|_2+C(p)|r\rho|_1|\rho|_\infty^{\gamma-1}\Big|\frac{u}{r}\Big|_\infty|u|_\infty^{p-2}<\infty,\\[1mm]
|(\mathcal{B}_4)_r|_1&\leq C(p)\big|\big(|\rho_r||u_r||u|^{p-1},\rho |u_{rr}||u|^{p-1},\rho u_r^2|u|^{p-2}\big)\big|_1\\
&\quad +C(p)\Big|\Big(\frac{1}{r^2}\rho |u|^p,\frac{1}{r}\rho_r |u|^p,\frac{1}{r}\rho |u|^{p-1}|u_r|\Big)\Big|_1+C(p)\big|\big(|\rho_r||u|^{p+1},\rho |u|^p |u_r|\big)\big|_1\\
&\quad + C(p)\big|\big(\rho^{\gamma-1}|\rho_r||u|^{p-1},\rho^{\gamma} |u|^{p-2} |u_r|\big)\big|_1\\
&\leq C(p)|\rho|_\infty|u|_\infty^{p-2} \Big|\frac{u}{\sqrt{r}}\Big|_2\!\big(|(\log\rho)_r|_\infty|\sqrt{r}u_r|_2\!+\!|\sqrt{r}u_{rr}|_2\big)\!+\!C(p)|\rho|_1|u|_\infty^{p-2}|u_r|_\infty^2\\
&\quad+C(p)|\rho|_1|u|_\infty^{p-2}\Big|\frac{u}{r}\Big|_\infty\Big(\Big|\frac{u}{r}\Big|_\infty +|(\log\rho)_r|_\infty |u|_\infty+|u_r|_\infty\Big)\\
&\quad+C(p)\big(|\rho|_1|u|_\infty^{p}+|\rho|_1|\rho|_\infty^{\gamma-1}|u|_\infty^{p-2}\big)\big(|(\log\rho)_r|_\infty |u|_\infty +|u_r|_\infty\big)<\infty.
\end{align*}
Furthermore, if $n=3$ $(m=2)$, \eqref{spd}--\eqref{spd2} or \eqref{spd33}--\eqref{spd233} imply that
\begin{equation*}
(u,ru_r,ru_{rr})\in L^2(I),\quad \big(\rho,u,\frac{u}{r},u_r,(\log\rho)_r\big)\in L^\infty(I),
\quad r^2\rho\in L^1(I) 
\end{equation*}
for {\it a.e.} $t\in (0,T)$, so that
\begin{align*}
|\rho|_1\leq |\chi_1^\flat\rho|_1+|\chi_1^\sharp\rho|_1\leq |\rho|_\infty+ |\chi_1^\sharp r^{-2}|_\infty |r^2\rho|_1<\infty,\qquad |\rho|_2\leq |\rho|_1^\frac{1}{2}|\rho|_\infty^\frac{1}{2}<\infty,
\end{align*}
which, along with the H\"older inequality, yields that
\begin{align*}
|\mathcal{B}_4|_1 &\leq C(p)\Big(\big|\rho |u|^{p-1}|u_r|\big|_1
+\Big|\frac{1}{r}\rho |u|^p\Big|_1+\big|\rho |u|^{p+1}\big|_1+\big|\rho^\gamma|u|^{p-1}\big|_1\Big)\\
&\leq C(p)|\rho|_1 |u|_\infty^{p-1}\Big(\Big|\big(u_r,\frac{u}{r}\big)\Big|_\infty+|u|_\infty^2+|\rho|_\infty^{\gamma-1}\Big) <\infty,\\[1mm]
|(\mathcal{B}_4)_r|_1&\leq C(p)\big|\big(|\rho_r||u_r||u|^{p-1},\rho |u_{rr}||u|^{p-1},\rho u_r^2|u|^{p-2}\big)\big|_1\\
&\quad +C(p)\Big|\Big(\frac{1}{r^2}\rho |u|^p,\frac{1}{r}\rho_r |u|^p,\frac{1}{r}\rho |u|^{p-1}|u_r|\Big)\Big|_1\!\!
+\!C(p)\big|\big(|\rho_r||u|^{p+1},\rho |u|^p |u_r|\big)\big|_1\\
&\quad + C(p)\big|\big(\rho^{\gamma-1}|\rho_r||u|^{p-1},\rho^{\gamma} |u|^{p-2} |u_r|\big)\big|_1\\
&\leq C(p)|\rho|_2|u|_\infty^{p-2} \Big|\frac{u}{r}\Big|_\infty \big(|(\log\rho)_r|_\infty|ru_r|_2+|ru_{rr}|_2\big)+C(p)|\rho|_1|u|_\infty^{p-2}|u_r|_\infty^2\\
&\quad+C(p)|\rho|_1|u|_\infty^{p-2}\Big|\frac{u}{r}\Big|_\infty\Big(\Big|\frac{u}{r}\Big|_\infty +|(\log\rho)_r|_\infty |u|_\infty+|u_r|_\infty\Big)\\
&\quad+C(p)\big(|\rho|_1|u|_\infty^{p}+|\rho|_1|\rho|_\infty^{\gamma-1}|u|_\infty^{p-2}\big)\big(|(\log\rho)_r|_\infty |u|_\infty +|u_r|_\infty\big)<\infty.
\end{align*}

As a consequence, integrating \eqref{eq:6.3} over $I$, we obtain from \eqref{eq:B4}, 
Lemmas \ref{far-p-infty} and \ref{important2}, and the H\"older and Young inequalities that
\begin{equation}\label{503}
\begin{aligned}
&\frac{1}{p}\frac{\mathrm{d}}{\mathrm{d}t}\big|\rho^\frac{1}{p}u\big|_p^p+ 2\alpha (p-1)\big|\rho^\frac{1}{2} |u|^\frac{p-2}{2} u_r\big|_2^2+\frac{2\alpha m(p-1)}{p}\big|(r^{-2}\rho)^\frac{1}{p}u\big|_p^p\\
&=A(p-1)\int_0^\infty \rho^\gamma |u|^{p-2} u_r\,\mathrm{d}r \underline{-\frac{m}{p}\int_0^\infty \frac{\rho v|u|^p}{r}\,\mathrm{d}r}_{:=\mathcal{G}_3}\\
&\leq  C(p)\big(\big|\chi_1^\flat \rho^{\gamma-\frac{1}{2}}|u|^\frac{p-2}{2}\big|_2+\big|\chi_1^\sharp \rho^{\gamma-\frac{1}{2}}|u|^\frac{p-2}{2}\big|_2\big)\,\big|\rho^\frac{1}{2}|u|^\frac{p-2}{2}u_r\big|_2+\mathcal{G}_3\\
&\leq  C(p)\Big(\big|\chi_1^\flat \rho|_{\infty}^{\frac{p\gamma-p+1}{p}}+|\chi_1^\sharp r^{-m}|_\infty^{\frac{p\gamma-p+1}{p}}\big|\chi_1^\sharp r^m\rho|_{p\gamma-p+1}^{\frac{p\gamma-p+1}{p}}\Big) \,\big|\rho^\frac{1}{p}u\big|_p^\frac{p-2}{2} \big|\rho^\frac{1}{2}|u|^\frac{p-2}{2}u_r\big|_2+\mathcal{G}_3\\
&\leq  C(p,T)\big(1+\big|\rho^\frac{1}{p}u\big|_p^p\big)+\frac{\alpha}{8}\big|\rho^\frac{1}{2} |u|^\frac{p-2}{2} u_r\big|_2^2+\mathcal{G}_3.
\end{aligned}    
\end{equation}
For $\mathcal{G}_3$, if $n=2$ ($m=1$), we directly have
\begin{equation}\label{R2-dim2}
\mathcal{G}_3=-\frac{1}{p}\int_0^\infty \frac{\rho v |u|^p}{r}\,\mathrm{d}r\leq C(p)|v|_\infty  \big|(r^{-1}\rho)^\frac{1}{p}u\big|_p^p\leq C(p)(|v|_\infty^2+1)  \big|(r^{m-2}\rho)^\frac{1}{p}u\big|_p^p;
\end{equation}
while, if  $n=3$ ($m=2$), it follows from the H\"older and Young inequalities that
\begin{equation}\label{R2-dim3}
\begin{aligned}
\mathcal{G}_3&=-\frac{2}{p}\int_0^\infty \frac{\rho v |u|^p}{r}\,\mathrm{d}r\leq C(p)|v|_\infty \big|\rho^\frac{1}{p} u\big|_{p}^\frac{p}{2}\big|(r^{-2}\rho)^\frac{1}{p}u\big|_p^{\frac{p}{2}}\\
&\leq C(p)|v|_\infty^2\big|(r^{m-2}\rho)^\frac{1}{p} u\big|_{p}^p+\frac{\alpha}{8}\big|(r^{-2}\rho)^\frac{1}{p}u\big|_p^p.
\end{aligned}
\end{equation}

Thus, collecting \eqref{503}--\eqref{R2-dim3} gives
\begin{equation}\label{711}
\begin{aligned}
&\frac{\mathrm{d}}{\mathrm{d}t}\big|\rho^\frac{1}{p}u\big|_p^p+ \big|\rho^\frac{1}{2}|u|^\frac{p-2}{2} u_r\big|_2^2+\big|(r^{-2}\rho)^\frac{1}{p}u\big|_p^p\\
&\leq C(p,T)\big(1+\big|\rho^\frac{1}{p}u\big|_{p}^{p}\big)+C(p)(|v|_\infty^2+1)  \big|(r^{m-2}\rho)^\frac{1}{p}u\big|_p^p,
\end{aligned}
\end{equation}
which, along with the Gr\"onwall inequality and Lemma \ref{lemma-uv-p}, yields that, for all $t\in [0,T]$,
\begin{equation*}
\begin{aligned}
&\big|\rho^\frac{1}{p}u(t)\big|_p^p+ \int_0^t\Big(\big|\rho^\frac{1}{2}|u|^\frac{p-2}{2} u_r\big|_2^2+\big|(r^{-2}\rho)^\frac{1}{p}u\big|_p^p\Big)\,\mathrm{d}s\\
&\leq C(p,T)\Big(\big|\rho_0^\frac{1}{p}u_0\big|_p^p+\big(\sup_{s\in[0,t]}|v|_\infty\big)^2+1\Big)\leq C(p,T)\Big(\big(\sup_{s\in[0,t]}|v|_\infty\big)^2+1\Big),
\end{aligned}
\end{equation*}
provided that the $L^p(I)$-boundedness of $\rho_0^{\frac{1}{p}}u_0$ can be obtained. 
Indeed, it follows from Lemmas \ref{ale1}, \ref{initial3}, and \ref{lemma-initial} that
\begin{align*}
\big|\rho_0^\frac{1}{p}u_0\big|_p&\leq \big|\chi_1^\flat \rho_0^\frac{1}{p}u_0\big|_p+\big|\chi_1^\sharp \rho_0^\frac{1}{p}u_0\big|_p\leq C_0|\rho_0|_\infty^\frac{1}{p}|u_0|_\infty+ |\rho_0|_\infty^\frac{1}{p}|\chi_1^\sharp r^{-\frac{m}{p}}|_\infty \big|r^\frac{m}{p}u_0\big|_p\\
&\leq C(p)\big(\|\boldsymbol{u}_0\|_{L^\infty}+\|\boldsymbol{u}_0\|_{L^p}\big)\leq C(p)\|\boldsymbol{u}_0\|_{H^2}\leq C(p).
\end{align*}

The proof of Lemma \ref{rho u-L2} is completed.
\end{proof}

Next, we obtain the $L^1([0,T];L^\infty(I))$-estimate of $\rho^{\gamma-1} u$.
\begin{lem}\label{cru3}
For any $\epsilon\in (0,1)$, there exists a constant $C(\epsilon,T)>0$ such that, for any $t\in [0,T]$,
\begin{equation*}
\int_0^t |\rho^{\gamma-1}u|_{\infty}\, \mathrm{d}s\leq C(\epsilon,T)\Big(1+\int_0^t |v|_\infty \,\mathrm{d}s\Big)+\epsilon\sup_{s\in[0,t]}|v|_\infty.
\end{equation*}
\end{lem}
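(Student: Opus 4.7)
The plan is to apply the one-dimensional Sobolev embedding $W^{1,1}(I)\hookrightarrow L^\infty(I)$ to the function $\rho^{\gamma-1}u$. Since $u|_{r=0}=0$ and $(\rho^{\gamma-1}u)(t,r)\to 0$ as $r\to\infty$ by the regularity class in Theorems \ref{rth1}--\ref{rth133} and the far-field vacuum, we obtain $|\rho^{\gamma-1}u|_\infty\leq |(\rho^{\gamma-1}u)_r|_1$. Expanding the derivative and using the effective-velocity identity $2\alpha\rho_r=\rho(v-u)$ gives
\begin{equation*}
(\rho^{\gamma-1}u)_r=\tfrac{\gamma-1}{2\alpha}\rho^{\gamma-1}(v-u)u+\rho^{\gamma-1}u_r,
\end{equation*}
hence the pointwise-in-$t$ estimate
\begin{equation*}
|\rho^{\gamma-1}u|_\infty\leq C\bigl(|v|_\infty|\rho^{\gamma-1}u|_1+|\rho^{\gamma-1}u^2|_1+|\rho^{\gamma-1}u_r|_1\bigr).
\end{equation*}

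Next, I would bound each of the three resulting integrals by quantities controlled in Lemmas \ref{energy-BD}--\ref{rho u-L2}. For $|\rho^{\gamma-1}u|_1$, H\"older in $r$ against $|(r^m\rho)^{1/p}u|_p$ (with the weight $r^{-m/p}$ placed on $\rho^{\gamma-1-1/p}$) combined with the near-origin bounds of Lemma \ref{l4.3}, the uniform bound $|\rho|_\infty\leq C(T)$ of Lemma \ref{important2}, and the far-field decay of Lemma \ref{far-p-infty}, yields a bound of the form $C(T)$ after a careful choice of $p$. A similar split, now against $|(r^{-2}\rho)^{1/p}u|_p^2$, controls $|\rho^{\gamma-1}u^2|_1$. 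For the most delicate piece $|\rho^{\gamma-1}u_r|_1$, I would apply Cauchy--Schwarz to pair with the energy quantity from Lemma \ref{rho u-L2}:
\begin{equation*}
|\rho^{\gamma-1}u_r|_1\leq \bigl|\rho^{1/2}|u|^{(p-2)/2}u_r\bigr|_2\cdot\Bigl(\int_0^\infty \rho^{2\gamma-3}|u|^{2-p}\,\mathrm{d}r\Bigr)^{1/2},
\end{equation*}
and show that after redistributing $\rho$-powers and $r$-weights and invoking the weighted $L^{p_0}$-bound on $\rho^{1/p_0}u$ from Lemma \ref{rho u-L2} (with an auxiliary exponent $p_0$), the residual factor is bounded by $C(T)(1+\sup_{s\in[0,t]}|v|_\infty)$.

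Finally, I would integrate the resulting pointwise bound over $[0,t]$ and apply H\"older in time at exponent $2/(2p-1)$, using the time-integrated estimate of Lemma \ref{rho u-L2} to deduce
\begin{equation*}
\int_0^t\bigl|\rho^{1/2}|u|^{(p-2)/2}u_r\bigr|_2^{2/(2p-1)}\,\mathrm{d}s\leq C(p,T)\bigl(1+\sup_{s\in[0,t]}|v|_\infty\bigr)^{2/(2p-1)}.
\end{equation*}
For any fixed $p>3/2$, the exponent $2/(2p-1)<1$, so Young's inequality gives $C(T)(\sup|v|_\infty)^{2/(2p-1)}\leq \epsilon\sup|v|_\infty+C(\epsilon,T)$, yielding the claimed bound with $C(\epsilon,T)(1+\int_0^t|v|_\infty\,\mathrm{d}s)+\epsilon\sup_{s\in[0,t]}|v|_\infty$.

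\textbf{The main obstacle} is the H\"older split for $|\rho^{\gamma-1}u_r|_1$ when $\gamma$ is close to $1$: the naive residual $\int\rho^{2\gamma-3}\,\mathrm{d}r$ is non-integrable near the far-field vacuum, and a pointwise $|u|^{2-p}$ factor is dangerous where $u$ is small. To overcome this, one must distribute weights among $\rho$, $r$, and $u$ so that near-origin control uses Lemma \ref{l4.3}, far-field decay uses Lemma \ref{far-p-infty}, and the $u$-dependent factor is reabsorbed through the Lemma \ref{rho u-L2} estimate for an appropriate auxiliary $p_0$; this is what ultimately forces the tunable exponent $2/(2p-1)$ and pins down the admissible ranges $\gamma\in(1,\infty)$ for $n=2$ and $\gamma\in(1,3)$ for $n=3$ already assumed in \eqref{gammafanwei}.
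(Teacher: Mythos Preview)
Your overall strategy—one-dimensional Sobolev embedding, expanding the derivative via $2\alpha\rho_r=\rho(v-u)$, and closing through Lemma~\ref{rho u-L2}—is the paper's. But the Cauchy--Schwarz split you propose for $|\rho^{\gamma-1}u_r|_1$ gives the residual $\bigl(\int_0^\infty\rho^{2\gamma-3}|u|^{2-p}\,\mathrm dr\bigr)^{1/2}$, and this is not controllable: for $p>2$ the factor $|u|^{2-p}$ diverges at every zero of $u$ (in particular at $r=0$, forced by the boundary condition $u|_{r=0}=0$), while for $p=2$ the factor $\rho^{2\gamma-3}$ diverges at the far-field vacuum whenever $\gamma<\tfrac32$. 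No redistribution of $r$-weights can repair a pointwise negative power of $|u|$, so the scheme as written fails for the full range~\eqref{gammafanwei}.

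The paper's fix is a single idea you are missing: apply the embedding to $|\rho^{\gamma-1}u|^q$ for a large auxiliary exponent $q$, rather than to $\rho^{\gamma-1}u$ itself. The $u_r$-piece becomes $\int\rho^{q(\gamma-1)}|u|^{q-1}|u_r|$, and pairing with $\rho^{1/2}|u|^{(q-2)/2}u_r$ leaves $\bigl(\int\rho^{2q(\gamma-1)-1}|u|^{q}\bigr)^{1/2}$, in which both exponents are positive once $q$ is large; this is dominated by $|\rho|_\infty^{q(\gamma-1)-1}|\rho^{1/q}u|_q^{q/2}$ and then handled, after a near/far split in $r$, by Lemmas~\ref{important2} and~\ref{lemma-uv-p}. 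Taking $q$ large enough that also $\gamma>q/(q-1)$ makes the extra factor $|\rho^{\gamma-1}u|_\infty$ arising from $|\rho^{q(\gamma-1)}u^{q+1}|_1$ absorbable by Young into the left side $|\rho^{\gamma-1}u|_\infty^q$. From that point your final step—H\"older in time at exponent $2/(2q-1)$ and Young's inequality to extract $\epsilon\sup_{s\in[0,t]}|v|_\infty$—is correct and matches the paper.
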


\begin{proof}
Let $q\geq 2$ be determined later. 
First, it follows from \eqref{V-expression}, Lemma \ref{ale1}, and  the H\"older inequality that
\begin{equation}\label{6.10}
\begin{aligned}
|\rho^{\gamma-1}u|_\infty^{q}&=\big||\rho^{\gamma-1}u|^q\big|_\infty\leq C_0\int_0^\infty \rho^{q\gamma-q} |u|^{q} \,\mathrm{d}r+C_0\int_0^\infty |(\rho^{q\gamma-q} |u|^{q})_r| \,\mathrm{d}r\\
&\leq C_0\int_0^\infty \rho^{q\gamma-q} |u|^{q} \,\mathrm{d}r+C(q)\int_0^\infty \rho^{q\gamma-q}(|v|+|u|)|u|^{q}\,\mathrm{d}r\\
&\quad + C(q)\int_0^\infty \rho^{q\gamma-q} |u|^{q-1} |u_r|\, \mathrm{d}r \\
&\leq C(q)(1+|v|_\infty)\big|\rho^{q\gamma-q}u^{q}\big|_1+C(q)\big|\rho^{q\gamma-q}u^{q+1}\big|_1\\
&\quad +C(q)\big|\rho^{2q\gamma-2q-1}u^{q}\big|_1^\frac{1}{2}\big|\rho^\frac{1}{2}|u|^\frac{q-2}{2}u_r\big|_2 \\
&\leq C(q)(1+|v|_\infty)|\rho|_\infty^{q\gamma-q-1}\big|\rho^{\frac{1}{q}}u\big|_{q}^{q}+C|\rho^{\gamma-1}u|_\infty|\rho|_\infty^{(q-1)\gamma-q}\big|\rho^{\frac{1}{q}}u\big|_{q}^{q}\\
&\quad +C(q)|\rho|_\infty^{q\gamma-q-1}\big|\rho^{\frac{1}{q}}u\big|_{q}^\frac{q}{2}\big|\rho^\frac{1}{2}|u|^\frac{q-2}{2}u_r\big|_2.
\end{aligned}    
\end{equation}

Next, setting $q=\tilde q\geq 2$ in \eqref{6.10} large enough such that 
\begin{equation*}
\gamma>\frac{\tilde q}{\tilde q-1}>1,
\end{equation*}
we obtain from the resulting inequality, Lemma \ref{important2}, and the  Young inequality  that
\begin{align*}
|\rho^{\gamma-1}u|_\infty^{\tilde q}&\leq C(T)(1+|v|_\infty)\big|\rho^\frac{1}{\tilde q}u\big|_{\tilde q}^{\tilde q}+C(T)\big|\rho^\frac{1}{\tilde q}u\big|_{\tilde q}^\frac{\tilde q^2}{\tilde q-1}\\
&\quad+C(T)\big|\rho^\frac{1}{\tilde q}u\big|_{\tilde q}^\frac{\tilde q}{2}\big|\rho^\frac{1}{2}|u|^\frac{\tilde q-2}{2}u_r\big|_2 +\frac{1}{2}|\rho^{\gamma-1}u|_\infty^{\tilde q},
\end{align*}
which, along with the fact that $\tilde q\geq 2$, Lemma \ref{lemma-uv-p}, and the Young inequality, leads to
\begin{align*}
|\rho^{\gamma-1}u|_\infty&\leq C(T)\Big(\big(1+|v|_\infty^\frac{1}{\tilde q}\big)\big|\rho^\frac{1}{\tilde q}u\big|_{\tilde q}+\big|\rho^\frac{1}{\tilde q}u\big|_{\tilde q}^\frac{\tilde q}{\tilde q-1}+\big|\rho^\frac{1}{\tilde q}u\big|_{\tilde q}^\frac{1}{2}\big|\rho^\frac{1}{2}|u|^\frac{\tilde q-2}{2}u_r\big|_2^\frac{1}{\tilde q}\Big)\\
&\leq C(T)\Big(1+|v|_\infty+\big|\rho^\frac{1}{\tilde q}u\big|_{\tilde q}^{\tilde q}+\big|\rho^\frac{1}{2}|u|^\frac{\tilde q-2}{2}u_r\big|_2^\frac{2}{2\tilde q-1}\Big)\\
&\leq C(T)\Big(1+|v|_\infty+\big|\chi_1^\flat\rho^\frac{1}{\tilde q}u\big|_{\tilde q}^{\tilde q}+\big|\chi_1^\sharp\rho^\frac{1}{\tilde q}u\big|_{\tilde q}^{\tilde q}+\big|\rho^\frac{1}{2}|u|^\frac{\tilde q-2}{2}u_r\big|_2^\frac{2}{2\tilde q-1}\Big)\\
&\leq C(T)\Big(1+|v|_\infty+|\chi_1^\flat r^{2-m}|_\infty \big|(r^{m-2}\rho)^\frac{1}{\tilde q}u\big|_{\tilde q}^{\tilde q}\Big)\\
&\quad +C(T)\Big(|\chi_1^\sharp r^{-m}| \big|(r^m\rho)^\frac{1}{\tilde q}u\big|_{\tilde q}^{\tilde q}+\big|\rho^\frac{1}{2}|u|^\frac{\tilde q-2}{2}u_r\big|_2^\frac{2}{2\tilde q-1}\Big)\\
&\leq C(T)\Big(1+|v|_\infty+\big|(r^{m-2}\rho)^\frac{1}{\tilde q}u\big|_{\tilde q}^{\tilde q}+\big|\rho^\frac{1}{2}|u|^\frac{\tilde q-2}{2}u_r\big|_2^\frac{2}{2\tilde q-1}\Big).
\end{align*}

Finally, integrating above over $[0,t]$, we obtain from Lemmas \ref{lemma-uv-p} and \ref{rho u-L2}, 
and the H\"older and Young inequalities that, for all $\epsilon\in (0,1)$,
\begin{align*}
\int_0^t |\rho^{\gamma-1}u|_\infty\,\mathrm{d}s&\leq C(T)\int_0^t\Big(1+|v|_\infty+\big|(r^{m-2}\rho)^\frac{1}{\tilde q}u\big|_{\tilde q}^{\tilde q}+\big|\rho^\frac{1}{2}|u|^\frac{\tilde q-2}{2}u_r\big|_2^\frac{2}{2\tilde q-1}\Big)\,\mathrm{d}s\\
&\leq C(T)\Big(1+ \int_0^t |v|_\infty\,\mathrm{d}s\Big)+C(T)\Big(\int_0^t\big|\rho^\frac{1}{2}|u|^\frac{\tilde q-2}{2}u_r\big|_2^2 \,\mathrm{d}s\Big)^\frac{1}{2\tilde q-1}\\
&\leq C(T)\Big(1+ \int_0^t |v|_\infty\,\mathrm{d}s\Big)+C(T)\Big(\sup_{s\in[0,t]}|v|_\infty\Big)^\frac{2}{2\tilde q-1}\\
&\leq C(\epsilon,T)\Big(1+ \int_0^t |v|_\infty\,\mathrm{d}s\Big)+\epsilon\sup_{s\in[0,t]}|v|_\infty.
\end{align*}

The proof of Lemma \ref{cru3} is completed.
\end{proof}

Now, the uniform $L^\infty(I)$-estimate of the effective velocity can be derived as follows:
\begin{lem}\label{l4.4}
There exists a constant $C(T)>0$  such that 
\begin{equation*}
|v(t)|_\infty\leq C(T)   \qquad \text{for any $t\in [0,T]$}.
\end{equation*}
\end{lem}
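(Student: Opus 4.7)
The plan is to exploit the damped transport structure of equation \eqref{eq:effective2} for $v$ and apply the method of characteristics, reducing the problem to controlling $\int_0^t |\rho^{\gamma-1}u|_\infty\,\mathrm{d}s$, which is precisely the quantity estimated in Lemma \ref{cru3}.

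First, I would introduce the flow map $\eta:[0,T]\times I\to I$ associated with the radial velocity $u$, defined by $\eta_t(t,r)=u(t,\eta(t,r))$ with $\eta(0,r)=r$. Since $u|_{r=0}=0$ and $u$ is sufficiently regular by \eqref{spd2} (or \eqref{spd233}), the map $\eta(t,\cdot):I\to I$ is well-defined and bijective for each $t\in[0,T]$. Along characteristics, \eqref{eq:effective2} becomes an ODE in the unknown $V(t,r):=v(t,\eta(t,r))$ of the form
\begin{equation*}
\frac{\mathrm{d}}{\mathrm{d}t}V + \frac{A\gamma}{2\alpha}\Lambda(t,r)V = \frac{A\gamma}{2\alpha}\Lambda(t,r)\,u(t,\eta(t,r)),
\end{equation*}
where $\Lambda(t,r):=\rho^{\gamma-1}(t,\eta(t,r))\geq 0$. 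Solving by the integrating factor $\exp\bigl(\int_0^t \tfrac{A\gamma}{2\alpha}\Lambda(\tau,r)\,\mathrm{d}\tau\bigr)$ yields the Duhamel representation
\begin{equation*}
v(t,\eta(t,r))=v_0(r)\,e^{-\int_0^t \tfrac{A\gamma}{2\alpha}\Lambda(\tau,r)\mathrm{d}\tau}+\frac{A\gamma}{2\alpha}\int_0^t (\rho^{\gamma-1}u)(s,\eta(s,r))\,e^{-\int_s^t \tfrac{A\gamma}{2\alpha}\Lambda(\tau,r)\mathrm{d}\tau}\,\mathrm{d}s.
\end{equation*}

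Second, since the exponentials are bounded by $1$ and $\eta(t,\cdot)$ covers $I$, taking the $L^\infty(I)$-norm gives
\begin{equation*}
|v(t)|_\infty \leq |v_0|_\infty + \frac{A\gamma}{2\alpha}\int_0^t |\rho^{\gamma-1}u(s)|_\infty\,\mathrm{d}s.
\end{equation*}
The initial quantity $|v_0|_\infty$ is finite: indeed $v_0=u_0+2\alpha(\log\rho_0)_r$, so by the Sobolev embeddings on spherically symmetric fields (Lemma \ref{Hk-Ck-vector} in dimension $2$ and the explicit hypothesis \eqref{shangjie3} in dimension $3$), together with $\boldsymbol{u}_0\in H^s\hookrightarrow L^\infty$, we obtain $|v_0|_\infty\leq C_0$.

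Third, applying Lemma \ref{cru3} with parameter $\epsilon\in(0,1)$ to be chosen, we get
\begin{equation*}
|v(t)|_\infty \leq C_0 + \frac{A\gamma}{2\alpha}\,C(\epsilon,T)\Big(1+\int_0^t |v(s)|_\infty\,\mathrm{d}s\Big) + \frac{A\gamma}{2\alpha}\,\epsilon\,\sup_{s\in[0,t]}|v(s)|_\infty.
\end{equation*}
Taking the supremum over $[0,t]$ on the left-hand side, then choosing $\epsilon$ small enough so that $\tfrac{A\gamma}{2\alpha}\epsilon\leq \tfrac{1}{2}$, we absorb the last term and obtain
\begin{equation*}
\sup_{s\in[0,t]}|v(s)|_\infty \leq \widetilde{C}(T)\Big(1+\int_0^t \sup_{\tau\in[0,s]}|v(\tau)|_\infty\,\mathrm{d}s\Big),
\end{equation*}
and the Gr\"onwall inequality yields the desired bound $|v(t)|_\infty\leq C(T)$ for all $t\in[0,T]$.

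The conceptual work has already been done in Lemmas \ref{rho u-L2} and \ref{cru3}; the only delicate point here is that the $\sup_{s\in[0,t]}|v|_\infty$-term on the right of Lemma \ref{cru3} must be absorbed into the left-hand side, which is why taking the $L^\infty$-norm \emph{after} the Duhamel representation (rather than, say, estimating via energy methods) is essential: the pointwise-in-characteristic inequality is linear in $|v(t)|_\infty$ with no loss of constants from the contractive exponential factors, so the absorption goes through cleanly.
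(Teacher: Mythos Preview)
Your proposal is correct and follows essentially the same approach as the paper: both use the flow map $\eta$ to convert \eqref{eq:effective2} into an ODE along characteristics, solve via the integrating factor to obtain the Duhamel representation, take the $L^\infty$-norm, invoke Lemma \ref{cru3}, absorb the $\epsilon\sup|v|_\infty$ term by choosing $\epsilon$ small, and close with Gr\"onwall. The only cosmetic difference is that the paper writes the constant as a generic $C_0$ and sets $\epsilon=\tfrac{1}{2C_0}$, whereas you track the explicit factor $\tfrac{A\gamma}{2\alpha}$; this is immaterial.
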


\begin{proof} 
First,  define the flow map $y=\eta(t,r): [0,T]\times I \to I$ as 
\begin{equation}\label{flow}
\eta_t(t,r)=u(t,\eta(t,r)) \qquad\text{with $\eta(0,r)=r$}.
\end{equation}
Then \eqref{eq:effective2}, together with \eqref{flow}, implies the following ODE:
\begin{equation*}
\frac{\mathrm{d}}{\mathrm{d}t}v(t,\eta(t,r))+\frac{A\gamma}{2\alpha}(\rho^{\gamma-1}v)(t,\eta(t,r))=\frac{A\gamma}{2\alpha}(\rho^{\gamma-1}u)(t,\eta(t,r)),
\end{equation*}
which, along with the characteristic method, yields that 
\begin{equation}\label{try}
\begin{aligned}
v(t,\eta(t,r))&=v_0(r)\exp\Big(-\int_0^t \frac{A\gamma}{2\alpha} \rho^{\gamma-1}(\tau,\eta(\tau,r))\,\mathrm{d}\tau\Big)\\
&\quad+\frac{A\gamma}{2\alpha}\int_0^t (\rho^{\gamma-1}u)(s,\eta(s,r))\, \exp\Big(-\int_s^t\frac{A\gamma}{2\alpha}\rho^{\gamma-1}(\tau,\eta(\tau,r))\,\mathrm{d}\tau\Big)\,\mathrm{d}s.
\end{aligned}
\end{equation}

Since $\rho\geq 0$, it follows from \eqref{try} and Lemma \ref{cru3} that, for all $\epsilon\in (0,1)$,
\begin{equation}\label{e-1.17} 
\begin{aligned}
\sup_{s\in[0,t]}|v|_\infty&\leq C_0 \Big(|v_0|_\infty+\int_0^t|\rho^{\gamma-1}u|_\infty\mathrm{d}s\Big)\\
&\leq C(\epsilon,T)\Big(1+\int_0^t |v|_\infty\,\mathrm{d}s\Big)+C_0\epsilon\sup_{s\in[0,t]}|v|_\infty,
\end{aligned}
\end{equation}
where the $L^\infty(I)$-norm of $v_0$ can be derived from Lemmas \ref{ale1}, \ref{initial3}, 
and \ref{lemma-initial} as
\begin{equation*}
|v_0|_\infty\leq C_0|(u_0,(\log\rho_0)_r)|_\infty \leq C_0\|(\boldsymbol{u}_0,\nabla\log\rho_0)\|_{L^\infty} \leq C_0.
\end{equation*}

Finally, letting $\epsilon=\frac{1}{2C_0}$ in \eqref{e-1.17} 
and applying the Gr\"onwall inequality to \eqref{e-1.17} lead to 
the desired conclusion.
\end{proof}

\section{Non-Formation of Cavitation
inside the Fluids  in Finite Time}\label{section-nonformation}

This section is devoted to showing that the cavitation does not form inside the fluids in finite 
time for the case that $\bar\rho=0$, and establishing the lower bound estimates of the density. 
Let $T>0$ be any fixed time, and let $(\rho, u)(t,r)$ be the $s$-order $(s=2,3)$ regular solution of 
problem \eqref{e1.5} in $[0,T]\times I$ obtained in Theorems \ref{rth1}--\ref{rth133}. 
Moreover, throughout this section, we always assume that \eqref{gammafanwei} holds.

We first show the following $L^2(I)$-estimate of $u$.
\begin{lem}\label{ele}
There  exists  a constant $C(T)>0$ such that 
\begin{equation*}
|(\rho^{\gamma-1} v,u)(t)|_2^2 +\int_0^t\Big(\Big|\big(u_r,\frac{u}{r}\big)\Big|_2^2+|u|^2_\infty\Big)\,\mathrm{d}s\leq C(T) \qquad\mbox{for any $t\in [0,T]$}.
\end{equation*}
\end{lem}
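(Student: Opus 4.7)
The strategy is to multiply $\eqref{e1.5}_2$ by $\rho^{-1}u$ and integrate over $I$. Using $v-u=2\alpha(\log\rho)_r$ to recast $A\rho^{-1}u(\rho^\gamma)_r=\frac{A\gamma}{2\alpha}\rho^{\gamma-1}u(v-u)$ and $2\alpha\rho^{-1}u(\rho u_r)_r=2\alpha uu_{rr}+uu_r(v-u)$, then integrating by parts the viscous term and rewriting $\int_0^\infty uu_r/r\,\mathrm{d}r=\tfrac{1}{2}|u/r|_2^2$ (both via $u|_{r=0}=0$ and the decay at infinity, together with $\int_0^\infty u^2u_r\,\mathrm{d}r=0$), one arrives at
\begin{equation*}
\frac{\mathrm{d}}{\mathrm{d}t}\frac{|u|_2^2}{2}+2\alpha|u_r|_2^2+\alpha m\big|u/r\big|_2^2=-\frac{A\gamma}{2\alpha}\int_0^\infty\rho^{\gamma-1}u(v-u)\,\mathrm{d}r+\int_0^\infty uu_r v\,\mathrm{d}r.
\end{equation*}
The vanishing of boundary terms in these integration-by-parts steps is verified in the spirit of Lemma \ref{energy-BD}, using the regularity \eqref{spd}--\eqref{spd2} (or \eqref{spd33}--\eqref{spd233}).

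\noindent\textbf{Right-hand side.}
By Lemmas \ref{important2}--\ref{l4.4}, $|\rho|_\infty+|v|_\infty\leq C(T)$. Cauchy--Schwarz and Young give $\big|\int uu_r v\,\mathrm{d}r\big|\leq|v|_\infty|u|_2|u_r|_2\leq\alpha|u_r|_2^2+C(T)|u|_2^2$. For the pressure-type term, decompose $\int\rho^{\gamma-1}u(v-u)\,\mathrm{d}r=\int\rho^{\gamma-1}uv\,\mathrm{d}r-\int\rho^{\gamma-1}u^2\,\mathrm{d}r$; the latter is at most $|\rho|_\infty^{\gamma-1}|u|_2^2\leq C(T)|u|_2^2$, while Cauchy--Schwarz yields $\big|\int\rho^{\gamma-1}uv\,\mathrm{d}r\big|\leq|\rho^{\gamma-1}v|_2|u|_2\leq C(T)(|\rho^{\gamma-1}v|_2^2+|u|_2^2)$. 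The crucial auxiliary estimate $|\rho^{\gamma-1}v|_2^2\leq C(T)$ is obtained by splitting $\int\rho^{2\gamma-2}v^2\,\mathrm{d}r$ at $r=1$: the interior piece is bounded by $|\rho|_\infty^{2\gamma-2}|v|_\infty^2\leq C(T)$, while on $[1,\infty)$ one writes $\rho^{2\gamma-2}v^2=\rho^{2\gamma-3}(\rho v^2)$ and combines $\rho v^2\leq r^m\rho v^2$ with the BD entropy $\int r^m\rho v^2\,\mathrm{d}r\leq C_0$ from Lemma \ref{energy-BD}. When $\gamma\geq\tfrac{3}{2}$, the factor $\rho^{2\gamma-3}\leq|\rho|_\infty^{2\gamma-3}\leq C(T)$ directly; when $\gamma\in(1,\tfrac{3}{2})$, an interpolation using $|\chi_1^\sharp r^m\rho|_p\leq C$ for every $p\in(1,\infty]$ (Lemma \ref{far-p-infty}) in conjunction with $|v|_\infty\leq C(T)$ furnishes the required control of $\int_1^\infty\rho^{2\gamma-2}\,\mathrm{d}r$.

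\noindent\textbf{Closing and $L^\infty$-in-$r$ bound.}
Absorbing $\alpha|u_r|_2^2$ into the left-hand side produces $\frac{\mathrm{d}}{\mathrm{d}t}|u|_2^2+\alpha(|u_r|_2^2+|u/r|_2^2)\leq C(T)(1+|u|_2^2)$. Gronwall's inequality together with the initial bound $|u_0|_2\leq C_0$ (from Lemma \ref{lemma-initial}) gives $\sup_{[0,T]}|u|_2^2+\int_0^T(|u_r|_2^2+|u/r|_2^2)\,\mathrm{d}s\leq C(T)$, which combined with the auxiliary bound above closes the $|(\rho^{\gamma-1}v,u)|_2^2$ piece of the claim. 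For $\int_0^T|u|_\infty^2\,\mathrm{d}s$, I use $u|_{r=0}=0$ and the fundamental theorem to write $u^2(r)=2\int_0^r uu_{r'}\,\mathrm{d}r'$, yielding $|u|_\infty^2\leq 2|u|_2|u_r|_2$; integrating in time and applying Cauchy--Schwarz with the just-established bounds gives $\int_0^T|u|_\infty^2\,\mathrm{d}s\leq 2T^{1/2}(\sup_{[0,T]}|u|_2)\big(\int_0^T|u_r|_2^2\,\mathrm{d}s\big)^{1/2}\leq C(T)$.

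\noindent\textbf{Main obstacle.}
The delicate step is the auxiliary bound $|\rho^{\gamma-1}v|_2\leq C(T)$ for $\gamma$ close to $1$: the factor $\rho^{2\gamma-3}$ becomes unbounded as $\rho\to 0$, so one cannot control $\int\rho^{2\gamma-2}v^2\,\mathrm{d}r$ merely by $|\rho|_\infty$ and the BD entropy. Resolving this requires the sharp far-field estimate of Lemma \ref{far-p-infty}, and the resulting threshold is precisely what dictates the range of $\gamma$ in \eqref{gammafanwei}: $(1,\infty)$ for $n=2$ and $(1,3)$ for $n=3$.
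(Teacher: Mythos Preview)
Your energy identity for $|u|_2^2$ and the handling of $\int vuu_r\,dr$ and $\int\rho^{\gamma-1}u^2\,dr$ are correct and match the paper. The gap is in the auxiliary bound $|\rho^{\gamma-1}v|_2\leq C(T)$. For $\gamma\geq\tfrac{3}{2}$ your splitting works. For $\gamma\in(1,\tfrac{3}{2})$, however, the claimed interpolation via Lemma~\ref{far-p-infty} fails: writing $\rho^{2\gamma-2}=(r^m\rho)^{2\gamma-2}r^{-m(2\gamma-2)}$ on $[1,\infty)$ and applying H\"older with exponents $(p,p')$, the two requirements $(2\gamma-2)p\geq 1$ and $m(2\gamma-2)p'>1$ are simultaneously satisfiable only when $(m+1)(2\gamma-2)>1$, i.e.\ $\gamma>1+\tfrac{1}{2n}$. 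For smaller $\gamma$ the integral $\int_1^\infty\rho^{2\gamma-2}\,dr$ is simply not controlled by any of the bounds available at this stage (total mass, BD entropy, $|\rho|_\infty$, $|v|_\infty$, Lemma~\ref{far-p-infty}): if $\rho\sim r^{-\beta}$ with $\beta$ just above $n$, then $\rho^{2\gamma-2}\sim r^{-\beta(2\gamma-2)}$ with exponent below $1$. Your concluding remark that this threshold ``dictates the range of $\gamma$ in~\eqref{gammafanwei}'' is therefore off --- that restriction comes entirely from the density upper-bound argument in \S\ref{section-upper-density} (the term $\mathcal{G}_{1,1}$ in Lemma~\ref{lma}), and Lemma~\ref{ele} itself imposes no constraint on $\gamma>1$.

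The paper avoids the obstruction by treating $|\rho^{\gamma-1}v|_2^2$ dynamically: multiplying the effective-velocity equation~\eqref{eq:effective2} by $\rho^{2\gamma-2}v$ and using $\eqref{e1.5}_1$ to rewrite $(\rho^{2\gamma-2})_t$ yields an evolution identity whose right-hand side is bounded by $|\rho|_\infty^{\gamma-1}|v|_\infty\,|\rho^{\gamma-1}v|_2\,|(u_r,\tfrac{u}{r})|_2+|\rho|_\infty^{2\gamma-2}|\rho^{\gamma-1}v|_2|u|_2$. Summed with your inequality for $|u|_2^2$, this gives a closed Gr\"onwall system for $|(\rho^{\gamma-1}v,u)|_2^2$; the initial value is finite because $\rho_0^{\gamma-1}v_0=\rho_0^{\gamma-1}u_0+\tfrac{2\alpha}{\gamma-1}(\rho_0^{\gamma-1})_r$ and $\nabla\rho_0^{\gamma-1}\in H^1$ is part of the data. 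No far-field integrability of $\rho^{2\gamma-2}$ is ever invoked, so the argument covers every $\gamma>1$.
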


\begin{proof} We divide the proof into two steps.

\smallskip
\textbf{1.}
First, multiplying $\eqref{e1.5}_2$ by $u$, along with \eqref{V-expression}, gives
\begin{equation}\label{eq:6.14}
\begin{aligned}
&\frac{1}{2}(u^2)_t+2\alpha |u_r|^2+\alpha m\frac{u^2}{r^2}\\
&=\Big(\underline{2\alpha u_ru+\alpha m\frac{u^2}{r}-\frac{2}{3}u^3}_{:=\mathcal{B}_5}\Big)_r-\frac{A\gamma}{2\alpha}\rho^{\gamma-1}(v-u)u+vu_ru.
\end{aligned}
\end{equation}
We need to show that $\mathcal{B}_5\in W^{1,1}(I)$ and $\mathcal{B}_5|_{r=0}=0$ for {\it a.e.} 
$t\in (0,T)$, which allows us to apply Lemma \ref{calculus} to obtain
\begin{equation}\label{eq:B5}
\int_0^\infty (\mathcal{B}_5)_r\,\mathrm{d}r=-\mathcal{B}_5|_{r=0}=0.
\end{equation}
On one hand,  $\mathcal{B}_5|_{r=0}=0$ follows from $u|_{r=0}=0$ 
and the fact that $(u,u_r)\in C(\bar I)$ for each $t\in (0,T]$ due to \eqref{spd2} or \eqref{spd233}. 
On the other hand,  it follows from \eqref{spd}--\eqref{spd2} or \eqref{spd33}--\eqref{spd233} that 
\begin{equation*}
(u,u_r.\frac{u}{r})\in L^\infty(I),\quad 
r^\frac{m}{2}(u,u_r,\frac{u}{r},\frac{u_{rr}}{r},\frac{1}{r}(\frac{u}{r})_r)\in L^2(I)\qquad \,\mbox{for {\it a.e.} $t\in (0,T)$}.
\end{equation*}
Then we obtain from Lemma \ref{hardy} that
\begin{equation}\label{ul2-3}
\begin{aligned}
\Big|\big(u,u_r,\frac{u}{r}\big)\Big|_2&\leq \Big|\chi^\flat_1\big(u,u_r,\frac{u}{r}\big)\Big|_2+\Big|\chi^\sharp_1\big(u,u_r,\frac{u}{r}\big)\Big|_2\\
&\leq \Big|\big(u,u_r,\frac{u}{r}\big)\Big|_\infty+|\chi^\sharp_1 r^{-\frac{m}{2}}|_\infty\Big|r^\frac{m}{2}\big(u,u_r,\frac{u}{r}\big)\Big|_2<\infty,
\end{aligned}    
\end{equation}
which, along with the H\"older's inequality, leads to
\begin{align*}
|\mathcal{B}_5|_1&\leq C_0|u|_2\Big(\Big|\big(u_r,\frac{u}{r}\big)\Big|_2+|u|_2|u|_\infty\Big) <\infty,\\
|(\mathcal{B}_5)_r|_1&\leq C_0\Big|\big((u_r)^2,u_{rr}u,u_r\frac{u}{r},u(\frac{u}{r})_r,u^2u_r\big)\Big|_1\\
&\leq C_0\Big(|u_r|_2\Big|\big(u_r,\frac{u}{r}\big)\Big|_2+|r^\frac{2-m}{2}u|_2\Big|r^\frac{m-2}{2}\big(u_{rr},(\frac{u}{r})_r\big)\Big|_2+|u|_\infty|u|_2|u_r|_2\Big)<\infty.
\end{align*}

Then integrating \eqref{eq:6.14} over $I$, together with \eqref{eq:B5}, yields
\begin{equation}\label{ell}
\frac{1}{2}\frac{\mathrm{d}}{\mathrm{d}t}|u|_2^2+ 2\alpha|u_r|_2^2+\alpha m\Big|\frac{u}{r}\Big|_2^2
=-\frac{A\gamma}{2\alpha}\int_0^\infty \rho^{\gamma-1}(v-u) u\,\mathrm{d}r+ \int_0^\infty v u_ru \,\mathrm{d}r :=\sum_{i=4}^5 \mathcal{G}_i.
\end{equation}
It follows from Lemmas \ref{important2} and \ref{l4.4}, and the H\"older and Young inequalities that 
\begin{equation}\label{j3-j5}
\begin{aligned}
\mathcal{G}_4&\leq C_0\big(|\rho^{\gamma-1} v|_2|u|_2 +|\rho|_\infty^{\gamma-1} |u|_2^2\big) \leq  C(T)|(\rho^{\gamma-1} v,u)|_2^2,\\
\mathcal{G}_5&\leq C_0|v|_\infty|u|_2 |u_r|_2 \leq \frac{\alpha}{8}|u_r|_2^2+C(T)|u|_2^2.
\end{aligned}
\end{equation}

Combining with \eqref{ell}--\eqref{j3-j5} yields
\begin{equation}\label{pol}
\frac{\mathrm{d}}{\mathrm{d}t}|u|_2^2+ \alpha\Big|\big(u_r,\frac{u}{r}\big)\Big|_2^2\leq  C(T)|(\rho^{\gamma-1} v,u)|_2^2. 
\end{equation}

\smallskip
\textbf{2.} For the $L^2(I)$-estimate of $\rho^{\gamma-1} v$, 
we first multiply \eqref{eq:effective2} by $\rho^{2\gamma-2}v$ and then obtain from $\eqref{e1.5}_1$ that
\begin{equation}\label{eq:B6-pre}
\begin{aligned}
&\frac{1}{2}(\rho^{2\gamma-2}v^2)_t+ \frac{1}{2} (\underline{u\rho^{2\gamma-2}v^2}_{:=\mathcal{B}_6})_r+\frac{A\gamma}{2\alpha}\rho^{3\gamma-3} v^2\\
&=\big(\frac{3}{2}-\gamma\big)\rho^{2\gamma-2} v^2 u_r-(\gamma-1)m\rho^{2\gamma-2} v^2\frac{u}{r}+\frac{A\gamma}{2\alpha}\rho^{3\gamma-3} vu.
\end{aligned}
\end{equation}

We need to show that $\mathcal{B}_6\in W^{1,1}(I)$ and $\mathcal{B}_6|_{r=0}=0$ 
for {\it a.e.} $t\in (0,T)$, which allows us to apply Lemma \ref{calculus} to obtain
\begin{equation}\label{eq:B6}
\int_0^\infty (\mathcal{B}_6)_r\,\mathrm{d}r=-\mathcal{B}_6|_{r=0}=0.   
\end{equation}
On one hand,  to obtain $\mathcal{B}_6|_{r=0}=0$, we first note that $v=u+2\alpha(\log\rho)_r$ and
\begin{equation*}
(u,(\log\rho)_r)\in L^\infty(I)
\qquad\mbox{for {\it a.e.} $t\in(0,T)$}
\end{equation*}
due to \eqref{spd}--\eqref{spd2} or \eqref{spd33}--\eqref{spd233}, which implies that 
\begin{equation}\label{psi,wuqiong}
v\in L^\infty(I)  \qquad\text{for {\it a.e.} $t\in (0,T)$}.
\end{equation}
Then it follows from $u|_{r=0}$ and $(\rho,u) \in C(\bar I)$ for each $t\in (0,T]$ (due to \eqref{spd2} or \eqref{spd233}) that $\mathcal{B}_6|_{r=0}=0$. 
On the other hand, it follows from \eqref{spd}--\eqref{spd2} (or \eqref{spd33}--\eqref{spd233}) 
and \eqref{ul2-3} that 
\begin{align*}
&(u,u_r,\frac{u}{r})\in L^2(I),\quad  (\rho,u,u_r,(\log\rho)_r)\in L^\infty(I),\\
&r^\frac{m}{2}\Big(u,u_r,(\rho^{\gamma-1})_r,\frac{1}{r}(\rho^{\gamma-1})_r,(\log\rho)_{rr}\Big)\in L^2(I)
\end{align*}
for {\it a.e.} $t\in (0,T)$. Thus, we obtain from Lemma \ref{hardy} that
\begin{align*}
|(\rho^{\gamma-1})_r|_2&\leq  \big|\chi_1^\flat(\rho^{\gamma-1})_r\big|_2+\big|\chi_1^\sharp(\rho^{\gamma-1})_r\big|_2\\
&\leq |\chi_1^\flat r^\frac{2-m}{2}|_\infty\big|r^\frac{m-2}{2}(\rho^{\gamma-1})_r\big|_2 + |\chi_1^\sharp r^{-\frac{m}{2}}|_\infty \big|r^\frac{m}{2}(\rho^{\gamma-1})_r\big|_2<\infty,
\end{align*}
which, along with \eqref{V-expression} and the H\"older and Young inequalities, yields that
\begin{align*}
|\mathcal{B}_6|_1&\leq C_0|u|_\infty\big(|\rho|_\infty^{2\gamma-2}|u|_2^2+|(\rho^{\gamma-1})_r|_2^2\big)<\infty,\\[1mm]
|(\mathcal{B}_6)_r|_1&\leq C_0\big|\big(u_r\rho^{2\gamma-2} v^2, u\rho^{\gamma-1}(\rho^{\gamma-1})_r v^2,u\rho^{2\gamma-2} vv_r\big)\big|_1\\
&\leq C_0|u_r|_\infty\big(|\rho|_\infty^{2\gamma-2}|u|_2^2+|(\rho^{\gamma-1})_r|_2^2\big)+C_0|\rho|_\infty^{\gamma-1}|(\rho^{\gamma-1})_r|_2|u|_2|(u,(\log\rho)_r)|_\infty^2\\
&\quad+ C_0|\rho|_\infty^{2\gamma-2}\Big|\frac{u}{r}\Big|_2|(u,(\log\rho)_r)|_\infty|\chi_1^\flat r(u_r,(\log\rho)_{rr})|_2\\
&\quad +C_0|\rho|_\infty^{2\gamma-2}|u|_2|(u,(\log\rho)_r)|_\infty|\chi_1^\sharp (u_r,(\log\rho)_{rr})|_2)\\
&\leq C_0|u_r|_\infty\big(|\rho|_\infty^{2\gamma-2}|u|_2^2+|(\rho^{\gamma-1})_r|_2^2\big)+C_0|\rho|_\infty^{\gamma-1}|(\rho^{\gamma-1})_r|_2|u|_2|(u,(\log\rho)_r)|_\infty^2\\
&\quad+ C_0|\rho|_\infty^{2\gamma-2}\Big|\frac{u}{r}\Big|_2|(u,(\log\rho)_r)|_\infty|\chi_1^\flat r^\frac{2-m}{2}|_\infty \big|r^\frac{m}{2} (u_r,(\log\rho)_{rr})\big|_2\\
&\quad +C_0|\rho|_\infty^{2\gamma-2}|u|_2|(u,(\log\rho)_r)|_\infty|\chi_1^\sharp r^{-\frac{m}{2}}|_\infty \big|r^\frac{m}{2}(u_r,(\log\rho)_{rr})\big|_2<\infty. 
\end{align*}

Thus, integrating \eqref{eq:B6-pre} over $I$, we obtain from \eqref{eq:B6}, Lemmas \ref{important2} and \ref{l4.4}, and the H\"older and Young inequalities that
\begin{equation}\label{new-v}
\begin{aligned}
&\frac{1}{2}\frac{\mathrm{d}}{\mathrm{d}t}|\rho^{\gamma-1} v|_2^2+ \frac{A\gamma}{2\alpha} \big|\rho^\frac{3\gamma-3}{2} v\big|_2^2\\
&=\big(\frac{3}{2}-\gamma\big)\int \rho^{2\gamma-2} v^2 u_r\,\mathrm{d}r-(\gamma-1)m\int \rho^{2\gamma-2}  v^2\frac{u}{r}\,\mathrm{d}r+\frac{A\gamma}{2\alpha}\int \rho^{3\gamma-3} vu\,\mathrm{d}r\\
&\leq  C_0|\rho|_\infty^{\gamma-1}|v|_\infty |\rho^{\gamma-1} v|_2 \Big|\big(u_r,\frac{u}{r}\big)\Big|_2+C_0|\rho|_\infty^{2\gamma-2} |\rho^{\gamma-1} v|_2 |u|_2\\
&\leq  C(T)|(\rho^{\gamma-1} v,u)|_2^2+\frac{\alpha}{8}\Big|\big(u_r,\frac{u}{r}\big)\Big|_2^2. 
\end{aligned}        
\end{equation}

Combining \eqref{pol} with \eqref{new-v} gives
\begin{equation*} 
\frac{\mathrm{d}}{\mathrm{d}t}|(\rho^{\gamma-1} v,u)|_2^2+ \frac{\alpha}{2}\Big|\big(u_r,\frac{u}{r}\big)\Big|_2^2\leq  C(T)|(\rho^{\gamma-1} v,u)|_2^2. 
\end{equation*}
which, along with the Gr\"onwall inequality, yields that, for all $t\in [0,T]$,
\begin{equation}\label{47}
|(\rho^{\gamma-1} v,u)(t)|_2^2 + \int_0^t\Big|\big(u_r,\frac{u}{r}\big)\Big|_2^2\,\mathrm{d}s\leq  C(T).
\end{equation}
We still needs to check the $L^2(I)$-boundedness of $(\rho_0^{\gamma-1} v_0,u_0)$. 
Indeed, it follows from Lemmas \ref{ale1}, \ref{initial3}, and \ref{lemma-initial} that
\begin{align*}
|u_0|_2&\leq |\chi_1^\flat u_0|_2+|\chi_1^\sharp u_0|_2\leq |u_0|_\infty+|\chi_1^\sharp r^{-\frac{m}{2}}|_\infty |r^\frac{m}{2}u_0|_2 \\
&\leq C_0\big(\|\boldsymbol{u}_0\|_{L^\infty}+\|\boldsymbol{u}_0\|_{L^2}\big)\leq C_0\|\boldsymbol{u}_0\|_{H^2}\leq C_0,\\[1mm]
|\rho_0^{\gamma-1} v_0|_2&\leq C_0|\rho_0^{\gamma-1}(u_0,(\log\rho_0)_r)|_2\leq C_0\big(|\rho_0|_\infty^{\gamma-1}|u_0|_2+ |(\rho_0^{\gamma-1})_r|_2\big) \\
&\leq C_0|\rho_0|_\infty^{\gamma-1}+C_0\big(|\chi_1^\flat(\rho_0^{\gamma-1})_r|_2+|\chi_1^\sharp(\rho_0^{\gamma-1})_r|_2\big)\\
&\leq C_0|\rho_0|_\infty^{\gamma-1}+C_0|\chi_1^\flat r^\frac{2-m}{2} |_\infty |r^\frac{m-2}{2}(\rho_0^{\gamma-1})_r|_2+C_0|\chi_1^\sharp r^{-\frac{m}{2}}|_\infty |r^\frac{m}{2} (\rho_0^{\gamma-1})_r|_2\\
&\leq C_0\Big(|\rho_0|_\infty^{\gamma-1}+\Big|r^\frac{m}{2}\Big(\frac{(\rho_0^{\gamma-1})_r}{r},(\rho_0^{\gamma-1})_{r}\Big)\Big|_2\Big)\\
&\leq C_0\big(\|\rho_0\|_{L^\infty}^{\gamma-1} +\|\nabla (\rho_0^{\gamma-1})\|_{H^1}\big) \leq C_0.
\end{align*}
 
Finally, it follows from \eqref{47} and Lemma \ref{ale1} that 
\begin{equation*}
\int_0^t |u|^2_\infty\,\mathrm{d}s\leq C_0\int_0^t |(u,u_r)|_2^2 \,\mathrm{d}s\leq C_0\Big(t\sup_{s\in [0,t]}|u|_2^2+\int_0^t |u_r|_2^2 \,\mathrm{d}s\Big)\leq C(T).
\end{equation*}

The proof of Lemma \ref{ele} is completed.
\end{proof}

Now, with the help of Lemmas \ref{l4.4} and \ref{ele}, we can show the pointwise estimates of $\rho$  
in the domain containing the origin.

\begin{lem}\label{lemma-inf-rho}
 Suppose that 
\begin{equation}\label{inf-rho0}
\inf_{z\in[0,r]}\rho_0(z)=\underline{\rho}(r)>0 \qquad\text{for $r>0$},
\end{equation}
with $\underline{\rho}(r)$, defined on $I$, satisfying $\underline{\rho}(r)\to 0$ as $r\to\infty$. 
Then,  for any  $R>0$, there exists a constant $C(T)>0$ such that 
\begin{equation*}
\begin{gathered}
\inf_{(t,r)\in [0,T]\times [0,R]} \rho(t,r)\geq \min\big\{C(T)^{-1},\big(e^{-1}\underline{\rho}(R)\big)^{C(T)(\sqrt{R}+1)}\big\}.
\end{gathered}   
\end{equation*}
In particular, the cavitation does not form in $[0,T]\times \{\boldsymbol{x}\in\mathbb{R}^n:\,|\boldsymbol{x}|\leq R\}$ 
for any  $R>0$.
\end{lem}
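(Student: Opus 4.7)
The plan is to propagate the bounds already established into an $H^1(0,R)$-control of $\log\rho$, and then exponentiate a Sobolev embedding with carefully tracked $R$-dependence. First, I invoke Lemma \ref{l4.4} (giving $|v|_\infty\le C(T)$) and Lemma \ref{ele} (giving $\sup_{[0,T]}|u|_2^2+\int_0^T |(u_r,u/r)|_2^2\,\mathrm{d}s\le C(T)$). Since $v-u=2\alpha(\log\rho)_r$, the derivative estimate
\begin{equation*}
\sup_{t\in[0,T]}\|(\log\rho)_r\|_{L^2(0,R)}\le \frac{1}{2\alpha}\bigl(|v|_\infty\sqrt{R}+|u|_2\bigr)\le C(T)(\sqrt{R}+1)
\end{equation*}
is immediate.

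Second, I would divide the continuity equation $\eqref{e1.5}_1$ by $\rho$ to obtain the transport identity $(\log\rho)_t+u(\log\rho)_r+(u_r+m u/r)=0$. Multiplying by $\log\rho$ against a smooth cutoff supported in $[0,R+1]$, integrating by parts in the convective term, and using the uniform upper bound $\rho\le C(T)$ from Lemma \ref{important2} together with the exterior pointwise control $|\chi^\sharp_1 r^m \rho|_\infty\le C_0$ of Lemma \ref{far-p-infty} to handle the cutoff derivative, one arrives at a Gr\"onwall-type differential inequality. Integration yields
\begin{equation*}
\sup_{t\in[0,T]}\|\log\rho\|_{L^2(0,R)}\le C(T)\bigl(\|\log\rho_0\|_{L^2(0,R+1)}+1\bigr)\le C(T)\sqrt{R+1}\,\bigl(|\log\underline{\rho}(R+1)|+1\bigr),
\end{equation*}
where the second step uses $\underline{\rho}(R+1)\le \rho_0\le|\rho_0|_\infty$ on $[0,R+1]$.

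Finally, combining the two estimates through the one-dimensional Gagliardo--Nirenberg inequality $\|f\|_{L^\infty(0,R)}\le C\bigl(R^{-1/2}\|f\|_{L^2(0,R)}+\|f\|_{L^2(0,R)}^{1/2}\|f_r\|_{L^2(0,R)}^{1/2}\bigr)$ applied to $f=\log\rho$ produces
\begin{equation*}
\sup_{t\in[0,T]}\|\log\rho\|_{L^\infty(0,R)}\le C(T)(\sqrt{R}+1)\bigl(|\log\underline{\rho}(R)|+1\bigr),
\end{equation*}
whereupon exponentiation gives the stated pointwise lower bound, in particular forbidding cavitation on $[0,T]\times\{|\boldsymbol{x}|\le R\}$. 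The main obstacle I anticipate is the $L^2(0,R)$-control of $\log\rho$ itself: because $\log\rho\to-\infty$ in the far field, the convective term $u\log\rho\cdot(\log\rho)_r$ generates awkward boundary contributions after integration by parts, so the cutoff has to be positioned just beyond $[0,R]$ in the annular region where Lemma \ref{far-p-infty} supplies usable pointwise upper control on $\rho$. Tracking the $R$-dependence of every constant so that the final exponent is $C(T)(\sqrt{R}+1)$, rather than some higher power of $R$, is the delicate bookkeeping step on which the clean form of the stated estimate depends.
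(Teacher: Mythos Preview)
Your overall architecture matches the paper's: bound $(\log\rho)_r$ in $L^2(0,R)$ via $v-u$, then $\log\rho$ in $L^2(0,R)$ from the transport equation, then embed into $L^\infty$. The difference is in how you run Step~2, and it dissolves the obstacle you anticipate.

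You propose integrating by parts in the convective term $\int\zeta\,u(\log\rho)_r\log\rho\,\mathrm{d}r$, which shifts a derivative onto $\zeta u$ and produces $\tfrac12\int\zeta' u(\log\rho)^2\,\mathrm{d}r+\tfrac12\int\zeta\,u_r(\log\rho)^2\,\mathrm{d}r$. Neither piece is tractable here: the lemmas you invoke (Lemmas~\ref{important2} and~\ref{far-p-infty}) give only \emph{upper} bounds on $\rho$, hence one-sided bounds on $\log\rho$, so they do not control $(\log\rho)^2$ on the annulus; and $\int\zeta\,u_r(\log\rho)^2$ would need $|u_r|_\infty\in L^1_t$, which at this point in the argument is not yet available (Lemma~\ref{ele} supplies only $\int_0^T\|(u_r,u/r)\|_{L^2}^2\le C(T)$).

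The paper simply does \emph{not} integrate by parts. Multiplying $(\log\rho)_t+u(\log\rho)_r+(u_r+\tfrac{m}{r}u)=0$ by the sharp cutoff $\chi_R^\flat\log\rho$, the convective term is estimated directly as
\[
\Bigl|\int_0^\infty \chi_R^\flat\, u(\log\rho)_r\log\rho\,\mathrm{d}r\Bigr|\le |u|_\infty\,\|\chi_R^\flat(\log\rho)_r\|_{L^2}\,\|\chi_R^\flat\log\rho\|_{L^2}\le C(T)(\sqrt{R}+1)\,|u|_\infty\,\|\chi_R^\flat\log\rho\|_{L^2},
\]
using your Step~1 and the bound $\int_0^T|u|_\infty^2\,\mathrm{d}s\le C(T)$ from Lemma~\ref{ele}. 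Gr\"onwall then gives $\|\chi_R^\flat\log\rho\|_{L^2}^2\le C(T)\bigl(\|\chi_R^\flat\log\rho_0\|_{L^2}^2+R+1\bigr)$ with no smooth cutoff, no annular tail, and no appeal to Lemma~\ref{far-p-infty}. Two minor points: the paper uses the additive embedding \eqref{impr} rather than Gagliardo--Nirenberg interpolation, which makes the $\sqrt{R}$-bookkeeping slightly cleaner; and the $\min$ in the stated bound comes from a case split---for $R\ge R_0$ chosen so that $\underline{\rho}(R)\le\min\{1,|\rho_0|_\infty^{-1}\}$ one has $\|\chi_R^\flat\log\rho_0\|_{L^\infty}\le-\log\underline{\rho}(R)$, while for $R\le R_0$ one simply quotes the fixed $R_0$-ball estimate to get $\rho\ge C(T)^{-1}$.
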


\begin{proof}
First, it follows from  \eqref{V-expression} and Lemmas \ref{l4.4} and \ref{ele} that, for all $t\in [0,T]$,
\begin{equation}\label{323}
|\chi_R^\flat (\log\rho)_r(t)|_2\leq C_0|\chi_R^\flat (v,u)(t)|_2 \leq C_0\big(\sqrt{R}|v(t)|_\infty+ |u(t)|_2\big)\leq C(T)(\sqrt{R}+1).
\end{equation}

Next, multiplying $\eqref{e1.5}_1$ by $\chi_R^\flat \rho^{-1}\log\rho$ and integrating over $I$,  we obtain from \eqref{323} and the H\"older and Young inequalities that
\begin{align*}
\frac{1}{2}\frac{\mathrm{d}}{\mathrm{d}t}|\chi_R^\flat \log\rho|_2^2&=-\int_0^\infty \chi_R^\flat u(\log\rho)_r\log\rho\,\mathrm{d}r- \int_0^\infty \chi_R^\flat\big(u_r+\frac{m}{r}u\big)\log\rho\,\mathrm{d}r\\
&\leq |u|_\infty|\chi_R^\flat(\log\rho)_r|_2|\chi_R^\flat\log\rho|_2+C_0\Big|\big(u_r,\frac{u}{r}\big)\Big|_2|\chi_R^\flat\log\rho|_2\\
&\leq C(T)\Big((R+1)|u|_\infty^2+ \Big|\big(u_r,\frac{u}{r}\big)\Big|_2^2+|\chi_R^\flat\log\rho|_2^2\Big),
\end{align*}
which, along with Lemma \ref{ele} and the Gr\"onwall inequality, leads to
\begin{equation}\label{324}
\begin{aligned}
|\chi_R^\flat \log\rho(t)|_2^2 &\leq C(T)\big(|\chi_R^\flat\log\rho_0|_2^2+R+1\big) \leq C(T)\big(R|\chi_R^\flat\log\rho_0|_\infty^2+R+1\big).
\end{aligned}
\end{equation}

Combining \eqref{323} with \eqref{324} and setting $R_0\geq 1$ large enough 
such that 
\begin{equation*}
\underline{\rho}(R)\leq \min\{1,|\rho_0|_\infty^{-1}\} \qquad \text{for $R\geq R_0$},
\end{equation*}
together with Lemma \ref{ale1}, yield that, for all $t\in [0,T]$ and $R\geq R_0$,
\begin{equation}\label{5220}
\begin{aligned}
|\chi_R^\flat\log\rho(t)|_\infty
& \leq C_0\Big(\sqrt{\frac{1+R}{R}} |\chi_R^\flat\log\rho(t)|_2+|\chi_R^\flat(\log\rho)_r(t)|_2\Big) \\
&\leq C(T)(\sqrt{R}+1)\big(|\chi_R^\flat\log\rho_0|_\infty+1\big)\\
&\leq C(T)(\sqrt{R}+1)\Big(\max\big\{\log\big(|\chi_R^\flat\rho_0|_{\infty}\big),\log\big(|\chi_R^\flat\rho_0^{-1}|_{\infty}\big)\big\}+1\Big)\\
&\leq C(T)(\sqrt{R}+1)\Big(\max\big\{\log(|\rho_0|_{\infty}), -\log\underline{\rho}(R)\big\}+1\Big)\\
&= -C(T)(\sqrt{R}+1)\big(\log(\underline{\rho}(R))-1\big)
= -C(T)(\sqrt{R}+1)\,\log\big(e^{-1}\underline{\rho}(R)\big),
\end{aligned}    
\end{equation}
which, along with $e^{-1}\underline{\rho}(R)<1$, implies that, 
for all $(t,r)\in[0,T]\times [0,R]$ and $R\geq R_0$, 
\begin{equation*}
 \rho(t,r)\geq \big(e^{-1}\underline{\rho}(R)\big)^{C(T)(\sqrt{R}+1)}.  
\end{equation*}

Finally, for $R\leq R_0$, it follows from Lemma \ref{ale1} and \eqref{323}--\eqref{324} that 
\begin{align*}
|\chi_{R}^\flat\log\rho(t)|_\infty&\leq |\chi_{R_0}^\flat\log\rho(t)|_\infty \leq C_0\Big(\sqrt{\frac{1+R_0}{R_0}} |\chi_{R_0}^\flat\log\rho(t)|_2+|\chi_{R_0}^\flat(\log\rho)_r(t)|_2\Big)\notag \\
&\leq  C(T)\big(|\chi_{R_0}^\flat\log\rho_0|_\infty+1\big)\leq C(T),
\end{align*}
which implies that $\rho(t,r)\geq C(T)^{-1}$ for all $(t,r)\in[0,T]\times [0,R]$ and $R\leq R_0$.
\end{proof}

\section{Global  Estimates for the \texorpdfstring{$2$}{}-Order 
Regular Solutions with Far-Field Vacuum}\label{section-global2}
The goal of this section is to establish the global-in-time uniform estimates for the $2$-order regular solutions 
when  $\bar\rho=0$.
Let $T>0$ be any fixed time, and let $(\rho, u)(t,r)$ be the $2$-order regular solution of problem \eqref{e1.5} in $[0,T]\times I$ 
obtained in Theorems \ref{rth1}.
Moreover, throughout this section, we always assume that 
\eqref{gammafanwei} holds.

Next, we consider the enlarged system \eqref{eqn1} in spherical coordinates. 
Specifically, we introduce the following two important quantities: 
\begin{equation}\label{tr}
\phi=\frac{A\gamma}{\gamma-1}\rho^{\gamma-1}, \qquad \psi=\frac{1}{\gamma-1}(\log \phi)_r=(\log \rho)_r.
 \end{equation}
Then \eqref{e1.5} can be rewritten as the problem of $(\phi,u,\psi)$ in $[0,T]\times I$:
\begin{equation}\label{e2.2}
\begin{cases}
\displaystyle \phi_t+u\phi_r+(\gamma-1)\phi\big(u_r+ \frac{m}{r}u\big)=0,\\[6pt]
\displaystyle u_t+u u_r+\phi_r=2\alpha\big(u_r+ \frac{m}{r}u\big)_r+2\alpha \psi u_r,\\[6pt]
\displaystyle \psi_t+(\psi u)_r+\big(u_r+ \frac{m}{r}u\big)_r=0,\\[6pt]
\displaystyle  (\phi,u,\psi)|_{t=0}=(\phi_0,u_0,\psi_0)
=(\frac{A\gamma}{\gamma-1}\rho_0^{\gamma-1},u_0,(\log \rho_0)_r) \qquad\text{for $r\in I$},\\[6pt]
\displaystyle u|_{r=0}=0 \qquad\qquad\qquad\qquad\quad \text{for $t\in [0,T]$},\\[6pt]
\displaystyle  (\phi,u)\to (0,0) \ \ \text{as $r\to\infty$}  \qquad \text{for $t\in [0,T]$}.
\end{cases}
\end{equation}
Clearly, the effective velocity $v=u+2\alpha(\log\rho)_r$ and its equation \eqref{eq:effective2} take the following forms, respectively:
\begin{align}
&v=u+2\alpha\psi=u+\frac{2\alpha}{\gamma-1}(\log\phi)_r,\label{v-express-2}\\
&v_t+uv_r+\frac{\gamma-1}{2\alpha}\phi(v-u)=0.\label{eq:effective1}
\end{align}

First, the following lemma will be frequently used in the later analysis, which can be seen as a consequence of the div-curl estimates for spherically symmetric functions in spherical coordinates.

\begin{lem}\label{im-1}
Let $p\in(1,\infty)$ and $\boldsymbol{f}(\boldsymbol{x})=f(r)\frac{\boldsymbol{x}}{r} \in C_{\rm c}^\infty(\mathbb{R}^n)$. 
Then 
\begin{align*}
&\|\nabla\boldsymbol{f}\|_{L^p} \sim \Big|r^\frac{m}{p}\big(f_r+\frac{m}{r}f\big)\Big|_{p},\quad \|\nabla^2\boldsymbol{f}\|_{L^p} \sim \Big|r^\frac{m}{p}\big(f_r+\frac{m}{r}f\big)_r\Big|_p,\\
&\|\nabla^3\boldsymbol{f}\|_{L^p} \sim \Big|r^\frac{m}{p}\Big(\big(f_r+\frac{m}{r}f\big)_{rr},\frac{1}{r}\big(f_r+\frac{m}{r}f\big)_r\Big)\Big|_p,\\
&\|\nabla^4\boldsymbol{f}\|_{L^p} \sim \Big|r^\frac{m}{p}\Big(\big(f_r+\frac{m}{r}f\big)_{rrr},\Big(\frac{1}{r}\big(f_r+\frac{m}{r}f\big)_{r}\Big)_r\Big)\Big|_p,
\end{align*}
where $F_1\sim F_2$ denotes that there exists a constant $C\geq 1$ depending only on $(n,p)$ 
such that $C^{-1}F_1\leq F_2\leq CF_1$. As a consequence, along with {\rm Lemma \ref{lemma-initial}}, 
the following estimates hold{\rm :} 
\begin{align*}
&\Big|r^\frac{m}{p}\big(f_r,\frac{f}{r}\big)\Big|_{p}\sim \Big|r^\frac{m}{p}\big(f_r+\frac{m}{r}f\big)\Big|_{p},\quad \Big|r^\frac{m}{p}\Big(f_{rr},\big(\frac{f}{r}\big)_r\Big)\Big|_{p}\sim \Big|r^\frac{m}{p}\big(f_r+\frac{m}{r}f\big)_r\Big|_p,\\
&\Big|r^\frac{m}{p}\Big(f_{rrr},\frac{f_{rr}}{r},\big(\frac{f}{r}\big)_{rr},\frac{1}{r}\big(\frac{f}{r}\big)_r\Big)\Big|_{p}\sim \Big|r^\frac{m}{p}\Big(\big(f_r+\frac{m}{r}f\big)_{rr},\frac{1}{r}\big(f_r+\frac{m}{r}f\big)_r\Big)\Big|_p,\\
&\Big|r^\frac{m}{p}\Big(f_{rrrr},\big(\frac{f_{rr}}{r}\big)_r,\big(\frac{f}{r}\big)_{rrr},\Big(\frac{1}{r}(\frac{f}{r}\big)_r\Big)_r\Big)\Big|_{p}\!\sim\! \Big|r^\frac{m}{p}\Big(\big(f_r+\frac{m}{r}f\big)_{rrr},\Big(\frac{1}{r}\big(f_r+\frac{m}{r}f\big)_{r}\Big)_r\Big)\Big|_p.
\end{align*}
\end{lem}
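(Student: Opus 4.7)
The plan is to reduce everything to the classical Calderón–Zygmund theory for curl-free vector fields and then convert Euclidean $L^p$ norms into weighted radial $L^p$ norms via polar coordinates.

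\textbf{Step 1 (structural identities).} A short direct computation shows that $\boldsymbol{f}(\boldsymbol{x})=f(r)\frac{\boldsymbol{x}}{r}\in C_{\rm c}^\infty(\mathbb{R}^n)$ is \emph{curl-free} and
$$
\diver \boldsymbol{f}(\boldsymbol{x}) \;=\; f_r(r)+\frac{m}{r}f(r),
$$
which is itself a radial scalar. Because $\partial_j f_i=\partial_i f_j$, each Cartesian component satisfies $\Delta f_i=\partial_i(\diver\boldsymbol{f})$, so by Calderón–Zygmund (second-order Riesz transforms are bounded on $L^p(\mathbb{R}^n)$ for $p\in(1,\infty)$) one obtains the clean elliptic equivalence
$$
\|\nabla^k\boldsymbol{f}\|_{L^p(\mathbb{R}^n)} \;\sim\; \|\nabla^{k-1}\diver\boldsymbol{f}\|_{L^p(\mathbb{R}^n)} \qquad\mbox{for every integer $k\ge1$}.
$$
The $\gtrsim$ direction is trivial (divergence is a trace of $\nabla\boldsymbol{f}$), and the $\lesssim$ direction is the Riesz-transform bound applied componentwise.

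\textbf{Step 2 (radial scalar estimates and polar coordinates).} Setting $g(r):=f_r+\frac{m}{r}f$, the quantities $\nabla^{k-1}g(\boldsymbol{x})$ are algebraic expressions in the radial derivatives $g_r,\,g_{rr},\,g_{rrr}$ weighted by powers of $1/r$ coming from differentiating $x_i/r$. Explicit computation gives the pointwise equivalences
$$
|\nabla g|\sim |g_r|,\qquad |\nabla^2 g|^2\sim |g_{rr}|^2+\frac{|g_r|^2}{r^2},\qquad |\nabla^3 g|^2\sim |g_{rrr}|^2+\frac{|g_{rr}|^2}{r^2}+\frac{|g_r|^2}{r^4},
$$
the $\sim$ following because the spherical averages of the entries of the Cartesian Hessian/third-derivative tensors yield strictly positive definite quadratic forms in the indicated radial quantities. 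Integrating against $r^m\,\mathrm{d}r$ (the Jacobian) turns each Euclidean $L^p$ norm into a weighted 1-D $L^p$ norm with weight $r^{m/p}$; combining with Step 1 already yields the first and second stated equivalences for $\|\nabla\boldsymbol{f}\|_{L^p}$ and $\|\nabla^2\boldsymbol{f}\|_{L^p}$.

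\textbf{Step 3 (higher-order bookkeeping).} For $\nabla^3\boldsymbol{f}\sim\nabla^2 g$, use the elementary identity
$$
\Big(\frac{g_r}{r}\Big)_r \;=\; \frac{g_{rr}}{r}-\frac{g_r}{r^2},
$$
to replace the three-term radial package $(g_{rrr},\frac{g_{rr}}{r},\frac{g_r}{r^2})$ by the pair $(g_{rrr},(\frac{g_r}{r})_r)$ modulo a weighted Hardy inequality of the form $|r^{m/p}\frac{g_r}{r^2}|_p\lesssim |r^{m/p}(\frac{g_r}{r})_r|_p$ (valid on $I$ for the weight $r^m$ once $g$ is smooth and compactly supported, $m\ge1$). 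The same algebraic manipulation and one further Hardy inequality controls $\nabla^4\boldsymbol{f}\sim\nabla^3 g$ by $(g_{rrr r},(\frac{1}{r}(\frac{g_r}{r})_r),\ldots)$, giving the third and fourth stated equivalences in the lemma.

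\textbf{Step 4 (consequences).} The stated equivalences of the weighted norms of the individual components $f_r,\,f/r,\,f_{rr},\,(f/r)_r,\ldots$ with those of $f_r+\frac{m}{r}f$ and its derivatives are exactly the content of Lemma \ref{lemma-initial}, applied with $f$ (or successive radial derivatives of $f$) in place of its argument. Chaining these with the four primary equivalences just established yields the consequence block of the lemma. The main technical obstacle is the bookkeeping of Step~3: one must verify that at each derivative order the Hardy-type inequality needed to pass between $\frac{g_r}{r^2}$ and $(\frac{g_r}{r})_r$ (or between $\frac{g_{rr}}{r}$ and higher derivatives) is available in the weighted class $L^p(r^m\,\mathrm{d}r)$ with the admissible exponent $p\in(1,\infty)$; this is harmless for compactly supported $\boldsymbol{f}\in C^\infty_{\rm c}$, which is the stated hypothesis, and allows the whole chain of equivalences to close.
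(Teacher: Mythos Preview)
Your Step~1 is exactly the paper's argument: use that $\boldsymbol{f}$ is curl-free to get $\Delta\boldsymbol{f}=\nabla\diver\boldsymbol{f}$, then apply a Fourier multiplier bound (the paper uses Mihlin--H\"ormander, you say Calder\'on--Zygmund, same content) to conclude $\|\nabla^{k}\boldsymbol{f}\|_{L^p}\sim\|\nabla^{k-1}\diver\boldsymbol{f}\|_{L^p}$. The paper then simply invokes Lemma~\ref{lemma-initial} (the pointwise radial conversion) applied to the scalar $g=\diver\boldsymbol{f}$, which already records the correct two-term packages for $|\nabla^2 g|$ and $|\nabla^3 g|$, and the proof is over.

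Your execution of the radial conversion contains a genuine error. The pointwise identity for $|\nabla^3 g|^2$ is \emph{not} the three-term expression you wrote; a direct computation (or the proof of Lemma~\ref{lemma-initial} applied to the vector field $g_r\frac{\boldsymbol{x}}{r}$) gives
\[
|\nabla^3 g|^2 \;=\; |g_{rrr}|^2 + 3m\Big|\Big(\frac{g_r}{r}\Big)_r\Big|^2,
\]
already a two-term package, so no Hardy inequality is needed. Worse, the Hardy step you propose, $\big|r^{m/p}\tfrac{g_r}{r^2}\big|_p\lesssim\big|r^{m/p}(\tfrac{g_r}{r})_r\big|_p$, is \emph{false} for $p\ge n$: writing $w=g_r/r$, this is $\big|r^{m/p-1}w\big|_p\lesssim\big|r^{m/p}w_r\big|_p$, and the weighted Hardy inequality on $(0,\infty)$ in this range requires $w(0)=0$, whereas for smooth radial $g$ one has $w(0)=\lim_{r\to 0}g_r/r=g_{rr}(0)$, which need not vanish (take $g(r)=r^2\phi(r)$ with $\phi$ a cutoff; then $\big|r^{m/p}\tfrac{g_r}{r^2}\big|_p=\infty$ for $p\ge n$ while the right side is finite). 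So Step~3 both rests on a wrong pointwise formula and invokes an inequality that fails in part of the claimed range; the fix is just to compute $|\nabla^3 g|$ correctly, after which Steps~1--2 alone finish the proof exactly as in the paper.
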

\begin{proof}
The proof can be derived via the $\boldsymbol{x}$-coordinates in $\mathbb{R}^n$.  Let $\boldsymbol{f}(\boldsymbol{x})=f(r)\frac{\boldsymbol{x}}{r}$. First, notice that $\boldsymbol{f}$ is curl-free, {\it i.e.}
\begin{equation}\label{curl}
\begin{aligned}
&\mathrm{curl}\,\boldsymbol{f}= \partial_1f_2 - \partial_2 f_1 =0 &&\quad\text{if $n=2$},\\
&\mathrm{curl}\,\boldsymbol{f}=(\partial_2f_3- \partial_3 f_2, \partial_3 f_1-\partial_1f_3, \partial_1f_2- \partial_2 f_1)^\top=\boldsymbol{0} &&\quad\text{if $n=3$},
\end{aligned}
\end{equation}
and the divergence of $\boldsymbol{f}$ takes the form:
\begin{equation}\label{div}
\diver\boldsymbol{f}=\sum_{i=1}^n \partial_if_i=f_r+\frac{m}{r}f.   
\end{equation}

Next, combining \eqref{curl}--\eqref{div} leads to the following two identities:
\begin{equation*}
\begin{aligned}
&-\Delta \boldsymbol{f} = -\nabla\diver\boldsymbol{f}-\nabla^\perp\mathrm{curl}\,\boldsymbol{f}=-\nabla\diver\boldsymbol{f}&&\quad\text{if $n=2$},\\
&-\Delta \boldsymbol{f} = -\nabla\diver\boldsymbol{f}+\nabla\times\mathrm{curl}\,\boldsymbol{f}=-\nabla\diver\boldsymbol{f}&&\quad\text{if $n=3$},
\end{aligned}    
\end{equation*}
where $\nabla^\perp=(-\partial_2,\partial_1)$. This implies that 
\begin{equation*}
\nabla \boldsymbol{f}= -\nabla(-\Delta)^{-1}\nabla\diver \boldsymbol{f},
\end{equation*}
where $(-\Delta)^{-1}$ is defined via the Fourier transform: 
\begin{equation*}
\big((-\Delta)^{-1}g\big)(\boldsymbol{x})
:=\mathcal{F}^{-1}\big[\frac{1}{4\pi|\boldsymbol{\omega}|^2}\mathcal{F}[g](\boldsymbol{\omega})\big] (\boldsymbol{x}),
\end{equation*}
with $\mathcal{F}[g](\boldsymbol{\omega})$ as
the Fourier transform of $g\in C^\infty_{\rm c}(\mathbb{R}^n)$, {\it i.e.} 
\begin{equation*}
\mathcal{F}[g](\boldsymbol{\omega})=\int_{\mathbb{R}^n} g(\boldsymbol{x})e^{-2\pi\mathrm{i}\boldsymbol{x}\cdot\boldsymbol{\omega}}\,\mathrm{d}\boldsymbol{x},
\end{equation*}
and $\mathcal{F}^{-1}[g](\boldsymbol{x})$ as its inverse Fourier transform.

It follows from the Mihlin-H\"ormander multiplier theorem (see \cite[Theorem 6.2.7]{lg}) that 
\begin{equation}\label{63}
\|\diver\boldsymbol{f}\|_{L^p}\leq C_0\|\nabla \boldsymbol{f}\|_{L^p}\leq C(p)\|\diver \boldsymbol{f}\|_{L^p},
\end{equation}
which, together with \eqref{div}, leads to
\begin{equation}
\|\nabla \boldsymbol{f}\|_{L^p}\sim \Big|r^\frac{m}{p}\big(f_r+\frac{m}{r}f\big)_r\Big|_{p}.
\end{equation}

Finally, for the higher derivatives, one can get from \eqref{63} that
\begin{equation} 
\|\nabla^{j+1} \boldsymbol{f}\|_{L^p}\sim \|\nabla^j\diver \boldsymbol{f}\|_{L^p} \qquad \text{for $j=1,2,3$},
\end{equation}
which, along with Lemma \ref{lemma-initial} in Appendix \ref{appb}, gives the desired results. 
\end{proof}

\subsection{Zeroth- and first-order estimates of the velocity}
The first lemma concerns the zeroth-order energy estimate for $u$.
\begin{lem}\label{l4.5}
There exists a constant $C(T)>0$ such that, for any $t\in [0,T]$,
\begin{equation*}
|r^{\frac{m}{2}}(u,\phi v)(t)|_2^2 +\int_0^t \Big|r^{\frac{m}{2}}\big(u_r,\frac{u}{r}\big)\Big|_2^2 \, \mathrm{d}s\leq C(T).
\end{equation*}
\end{lem}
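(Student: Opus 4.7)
The plan is to couple weighted $L^2$ identities for $u$ and for the combined quantity $w:=\phi v$, so that the pressure term in one balances against the damping in the other, and then close via Gr\"onwall using the global bounds of $|\rho|_\infty$, $|v|_\infty$, and $\int_0^T|u|_\infty^2\,\mathrm{d}s$ already established in Lemmas \ref{important2}, \ref{l4.4}, and \ref{ele}. Throughout, I work in the spherical variable $r$ with the weight $r^m$, so that $|r^{m/2}\cdot|_2^2$ is (up to a surface constant) the $L^2(\mathbb R^n)$ norm of the associated radial vector field; the identity $(r^m u)_r=r^m(u_r+\tfrac{m}{r}u)$ will be used repeatedly to avoid boundary contributions at $r=0$, which are licit because of the regularity provided by Theorem \ref{rth1} and the vanishing of $u$ at the origin.

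First I would multiply $\eqref{e2.2}_2$ by $r^m u$ and integrate over $I$. The viscous contribution gives, after two integrations by parts, the clean dissipation $-2\alpha|r^{m/2}u_r|_2^2-2\alpha m|r^{m/2}u/r|_2^2$. The convective cubic $\int r^m u^2 u_r\,\mathrm{d}r$ is handled by H\"older via $|u|_\infty|r^{m/2}u|_2|r^{m/2}u_r|_2$ and absorbed into the dissipation modulo the time-integrable factor $|u|_\infty^2$. The key step is the pressure term: rewriting $\phi_r=\tfrac{\gamma-1}{2\alpha}\phi(v-u)$ from \eqref{v-express-2} turns $-\int r^m u\phi_r\,\mathrm{d}r$ into
\[
\frac{\gamma-1}{2\alpha}\int r^m\phi u^2\,\mathrm{d}r-\frac{\gamma-1}{2\alpha}\int r^m u\,\phi v\,\mathrm{d}r,
\]
both of which are controlled by $C(T)(|r^{m/2}u|_2^2+|r^{m/2}\phi v|_2^2)$ once $|\phi|_\infty\le C(T)$ is invoked. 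The Navier-slip term $2\alpha\int r^m u\psi u_r\,\mathrm{d}r$ is treated by writing $\psi=(v-u)/(2\alpha)$ and again absorbing a fraction of the dissipation, at the price of the factor $(1+|u|_\infty^2)|r^{m/2}u|_2^2$. This yields
\[
\tfrac{\mathrm d}{\mathrm dt}|r^{m/2}u|_2^2+\alpha\big|r^{m/2}\big(u_r,\tfrac{u}{r}\big)\big|_2^2\le C(T)\big(1+|u|_\infty^2\big)\big(|r^{m/2}u|_2^2+|r^{m/2}\phi v|_2^2\big).
\]

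Next, I would derive the transport-damping equation for $w=\phi v$ by combining $\eqref{e2.2}_1$ and \eqref{eq:effective1}:
\[
w_t+uw_r+(\gamma-1)w\big(u_r+\tfrac{m}{r}u\big)+\tfrac{\gamma-1}{2\alpha}\phi\big(w-\phi u\big)=0.
\]
Multiplying by $r^m w$, integrating over $I$, and using $(r^m u)_r=r^m(u_r+\tfrac{m}{r}u)$ to combine the convective and compressibility terms produces the damping $\tfrac{\gamma-1}{2\alpha}|r^{m/2}\phi^{1/2}w|_2^2$ plus the remainder $(\gamma-\tfrac32)\int r^m w^2(u_r+\tfrac{m}{r}u)\,\mathrm{d}r$, which, since $|w|_\infty\le|\phi|_\infty|v|_\infty\le C(T)$, is bounded by $\epsilon|r^{m/2}(u_r,\tfrac{u}{r})|_2^2+C(T,\epsilon)|r^{m/2}w|_2^2$ via Cauchy--Schwarz and Lemma \ref{im-1}. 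The coupling term $\tfrac{\gamma-1}{2\alpha}\int r^m\phi^2 uw\,\mathrm{d}r$ is then split as $\epsilon|r^{m/2}\phi^{1/2}w|_2^2+C(T,\epsilon)|r^{m/2}u|_2^2$ using $|\phi|_\infty\le C(T)$.

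Adding the two identities, choosing $\epsilon$ small enough to absorb the cross-terms into the dissipation on the left, and setting $\mathcal E(t):=|r^{m/2}u|_2^2+|r^{m/2}\phi v|_2^2$, I would obtain
\[
\tfrac{\mathrm d}{\mathrm dt}\mathcal E+\tfrac{\alpha}{2}\big|r^{m/2}\big(u_r,\tfrac{u}{r}\big)\big|_2^2+\tfrac{\gamma-1}{4\alpha}|r^{m/2}\phi^{1/2}\phi v|_2^2\le C(T)\big(1+|u|_\infty^2\big)\mathcal E.
\]
Since $\int_0^T|u|_\infty^2\,\mathrm{d}s\le C(T)$ by Lemma \ref{ele}, Gr\"onwall's inequality closes the estimate, provided the initial bound $\mathcal E(0)\le C_0$ is established. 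For this I would verify using Lemma \ref{lemma-initial}, $\boldsymbol u_0\in H^2(\mathbb R^n)$, $\nabla\rho_0^{\gamma-1}\in H^1(\mathbb R^n)$, and $\nabla\log\rho_0\in L^\infty(\mathbb R^n)\cap D^1(\mathbb R^n)$ that both $\|\boldsymbol u_0\|_{L^2}$ and $\|\rho_0^{\gamma-1}(\boldsymbol u_0+2\alpha\nabla\log\rho_0)\|_{L^2}$ are finite. The main obstacle I anticipate is the pressure term, since $|r^{m/2}\phi|_2$ is \emph{not} directly controlled by the energy/BD estimates when $\gamma\in(1,\tfrac32)$; the idea of converting the pressure into the combination $\phi u^2-u\phi v$ via the effective velocity is what avoids this issue and dictates why the coupled functional $\mathcal E$, rather than $|r^{m/2}u|_2^2$ alone, is the right object to Gronwall.
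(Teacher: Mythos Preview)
Your proposal is correct and follows essentially the same route as the paper: multiply $\eqref{e2.2}_2$ by $r^m u$, rewrite both $\phi_r$ and $2\alpha\psi$ via the effective velocity identity $2\alpha\psi=v-u$, couple with the weighted $L^2$ estimate for $w=\phi v$ obtained from \eqref{eq:effective1} and $\eqref{e2.2}_1$, and close via Gr\"onwall using $|\phi|_\infty,|v|_\infty\le C(T)$ and $\int_0^T|u|_\infty^2\,\mathrm{d}s\le C(T)$. The paper's version differs only in bookkeeping (it keeps the viscous term as $|r^{m/2}(u_r+\tfrac{m}{r}u)|_2^2$ and invokes Lemma~\ref{im-1} at the end, and it carries out the boundary-term justification via explicit $W^{1,1}(I)$ checks on $\mathcal{B}_7,\mathcal{B}_8$), but the analytic content is the same.
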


\begin{proof} We divide the proof into two steps.

\smallskip
\textbf{1.} Multiplying $\eqref{e2.2}_2$ by $r^m u$ gives
\begin{equation}\label{eq:B7-pre}
\begin{aligned}
&\frac{1}{2}(r^m u^2)_t+2\alpha r^m\big(u_r+\frac{m}{r}u\big)^2\\
&=2\alpha\Big(\underline{r^m u\big(u_r+\frac{m}{r}u\big)}_{:=\mathcal{B}_7}\Big)_r -r^m \big(u u_r+\phi_r -2\alpha \psi u_r\big)u.
\end{aligned}
\end{equation}
 Then we need to show that $\mathcal{B}_7\in W^{1,1}(I)$ and $\mathcal{B}_7|_{r=0}=0$ for {\it a.e.} $t\in (0,T)$, 
 which allows us to apply Lemma \ref{calculus} to obtain
\begin{equation}\label{eq:B7}
\int_0^\infty (\mathcal{B}_7)_r\,\mathrm{d}r=-\mathcal{B}_7|_{r=0}=0.
\end{equation}

On one hand, $\mathcal{B}_7|_{r=0}=0$ follows from $u|_{r=0}=0$ and $(u,u_r)\in C(\bar I)$ for each $t\in (0,T]$ due to \eqref{spd2}. 
On the other hand,  based on \eqref{spd}, we see that $r^\frac{m}{2}(u,\frac{u}{r},u_r,u_{rr})\in L^2(I)$ for each $t\in (0,T]$.
Then it follow from the H\"older inequality that 
\begin{align*}
|\mathcal{B}_7|_1&\leq C_0|r^\frac{m}{2}u|_2\Big|r^\frac{m}{2}\big(u_r,\frac{u}{r}\big)\Big|_2<\infty, \\
|(\mathcal{B}_7)_r|_1&\leq C_0\big|\big(r^{m-1}uu_r,r^m u_r^2,r^m uu_{rr},r^{m-2}u^2\big)\big|_1\\
&\leq C_0 \Big|r^\frac{m}{2}\big(\frac{u}{r},u_r\big)\Big|_2|r^\frac{m}{2}u_r|_2+C_0|r^\frac{m}{2}u|_2|r^\frac{m}{2}u_{rr}|_2+C_0|r^\frac{m-2}{2}u|_2^2<\infty.
\end{align*}    

Thus, integrating \eqref{eq:B7-pre} over $I$, together with \eqref{eq:B7}, yields
\begin{equation}\label{e4.15}
\frac12\frac{\mathrm{d}}{\mathrm{d}t} |r^{\frac{m}{2}}u|_2^2 + 2\alpha\Big|r^{\frac{m}{2}}\big(u_r+\frac{m}{r}u\big)\Big|_2^2=-\int_0^\infty r^m \big(uu_r+\phi_r- 2\alpha\psi u_r\big) u\mathrm{d}r:=\sum_{i=6}^{8} \mathcal{G}_i.
\end{equation}
Then, for $\mathcal{G}_6$--$\mathcal{G}_8$, it follows from \eqref{v-express-2}, Lemmas \ref{important2}, \ref{l4.4}, and \ref{im-1}, 
and the H\"older and Young inequalities that
\begin{equation}\label{G6-G8}
\begin{aligned}
\mathcal{G}_{6}&=-\int_0^\infty r^m u^2 u_r\, \mathrm{d}r \leq  C_0|u|_\infty|r^\frac{m}{2}u|_2 |r^\frac{m}{2}u_r|_2 \\
&\leq C_0|u|_\infty^2 |r^{\frac{m}{2}}u|_2^2+\frac{\alpha}{8}\Big|r^{\frac{m}{2}}\big(u_r+\frac{m}{r}u\big)\Big|_2^2,\\
\mathcal{G}_{7}&=- \int_0^\infty r^m \phi_r u\,\mathrm{d}r=-\frac{\gamma-1}{2\alpha}\int_0^\infty r^m \phi(v-u) u\, \mathrm{d}r\\
&\leq  C_0|r^\frac{m}{2}\phi v|_{2} |r^{\frac{m}{2}}u|_{2} +C_0|\phi|_{\infty} |r^{\frac{m}{2}}u|_{2}^2 \leq  C(T)|r^\frac{m}{2}(\phi v,u)|_2^2,\\
\mathcal{G}_{8}&= 2\alpha\int_0^\infty r^m \psi u u_r\,\mathrm{d}r=\int_0^\infty r^m v u_r u\,\mathrm{d}r+\mathcal{G}_6\\
&\leq C_0|v|_{\infty}|r^{\frac{m}{2}}u|_{2} |r^{\frac{m}{2}}u_r|_{2}+\mathcal{G}_6\\
&\leq  C(T)(1+ |u|_\infty^2)|r^{\frac{m}{2}}u|_{2}^2+\frac{\alpha}{8}\Big|r^{\frac{m}{2}}\big(u_r+\frac{m}{r}u\big)\Big|_2^2.
\end{aligned}    
\end{equation}
Combining with \eqref{e4.15}--\eqref{G6-G8}, along with Lemma \ref{im-1}, leads to
\begin{equation}\label{eq4.16}
\frac{\mathrm{d}}{\mathrm{d}t} |r^{\frac{m}{2}}u|_2^2 + \alpha\Big|r^{\frac{m}{2}}\big(u_r,\frac{u}{r}\big)\Big|_2^2 \leq C(T)(1+|u|_\infty^2) |r^\frac{m}{2}(\phi v,u)|_2^2.
\end{equation}

\smallskip
\textbf{2.} For the $L^2(I)$-estimate of $r^\frac{m}{2}\phi v$,  
multiplying \eqref{eq:effective1} by $r^m\phi^2v$ and then using $\eqref{e2.2}_1$ yield
\begin{equation}\label{b8pre}
\begin{aligned}
&\frac{1}{2} (r^m \phi^2v^2)_t+\frac{\gamma-1}{2\alpha}r^m \phi^3 v^2\\
&=-\frac{1}{2}(\underline{r^m u\phi^2v^2}_{:=\mathcal{B}_8})_r+\big(\frac{3}{2}-\gamma\big)r^m\phi^2v^2\big(u_r+\frac{m}{r}u\big)+\frac{\gamma-1}{2\alpha}r^m \phi^3 vu.
\end{aligned}
\end{equation}
Next, we need to  show that $\mathcal{B}_8\in W^{1,1}(I)$ and $\mathcal{B}_8|_{r=0}=0$ for {\it a.e.} $t\in (0,T)$, which allows us to apply Lemma \ref{calculus} to obtain
\begin{equation}\label{eq:B8}
\int_0^\infty (\mathcal{B}_8)_r\,\mathrm{d}r=-\mathcal{B}_8|_{r=0}=0.
\end{equation}

On one hand, $\mathcal{B}_8|_{r=0}=0$ follows from \eqref{psi,wuqiong}, and 
$u|_{r=0}=0$ and $(\phi,u)\in C(\bar I)$ for {\it a.e.} $t\in (0,T]$ due to \eqref{spd}. On the other hand,  \eqref{spd}--\eqref{spd2} imply that
\begin{equation*}
\Big(\phi,u,\frac{u}{r},u_r,\psi\Big)\in L^\infty(I),\quad  
r^\frac{m}{2}(\phi_r,u,u_r,\psi_{r})\in L^2(I)
\qquad \mbox{for {\it a.e.} $t\in (0,T)$},
\end{equation*}
Then it follows from \eqref{v-express-2} 
and the H\"older inequality that
\begin{align*}
|\mathcal{B}_8|_1&\leq C_0|u|_\infty\big(|\phi|_\infty^2|r^\frac{m}{2}u|_2^2+ |r^\frac{m}{2}\phi_r|_2^2\big) <\infty, \\
|(\mathcal{B}_8)_r|_1&\leq C_0\big|\big(r^{m-1}u\phi^2 v^2,r^m u_r\phi^2 v^2,r^m u\phi \phi_r v^2,r^{m}u\phi^2 vv_r\big)\big|_1\\
&\leq C_0\Big|\big(\frac{u}{r},u_r\big)\Big|_\infty\big(|\phi|_\infty^2|r^\frac{m}{2}u|_2^2+ |r^\frac{m}{2}\phi_r|_2^2\big)+C_0|\phi|_\infty|(u,\psi)|_\infty^2 |r^\frac{m}{2}u|_2|r^\frac{m}{2}\phi_r|_2\\
&\quad + C_0|\phi|_\infty^2|(u,\psi)|_\infty|r^\frac{m}{2}u|_2|r^\frac{m}{2}(u_r,\psi_r)|_2<\infty. 
\end{align*}    

Thus, integrating \eqref{b8pre} over $I$, we obtain from  \eqref{eq:B8}, Lemmas \ref{important2}, and \ref{l4.4}, 
and the H\"older and Young inequalities that
\begin{equation}\label{555}
\begin{aligned}
&\frac{1}{2}\frac{\mathrm{d}}{\mathrm{d}t}|r^\frac{m}{2}\phi v|_2^2+ \frac{\gamma-1}{2\alpha} \big|r^\frac{m}{2}\phi^\frac{3}{2} v\big|_2^2\\
&=\Big(\frac{3}{2}-\gamma\Big)\int_0^\infty  r^m\phi^2 v^2 \big(u_r+\frac{m}{r}u\big)\,\mathrm{d}r +\frac{\gamma-1}{2\alpha}\int_0^\infty r^m\phi^3 vu\,\mathrm{d}r\\
&\leq  C_0|\phi|_\infty|v|_\infty |r^\frac{m}{2}\phi v|_2 \Big|r^\frac{m}{2}\big(u_r,\frac{u}{r}\big)\Big|_2+C_0|\phi|_\infty^2 |r^\frac{m}{2}\phi v|_2 |r^\frac{m}{2}u|_2\\
&\leq  C(T)|r^\frac{m}{2}(\phi v,u)|_2^2+\frac{\alpha}{8} \Big|r^\frac{m}{2}\big(u_r,\frac{u}{r}\big)\Big|_2^2,
\end{aligned}    
\end{equation}
which, together with \eqref{eq4.16}, gives
\begin{equation}\label{eq4.17}
\frac{\mathrm{d}}{\mathrm{d}t}|r^\frac{m}{2}(\phi v,u)|_2^2 + \frac{\alpha}{2}\Big|r^{\frac{m}{2}}\big(u_r,\frac{u}{r}\big)\Big|_2^2\leq C(T)(1+|u|_\infty^2)|r^\frac{m}{2}(\phi v,u)|_2^2.
\end{equation}

Applying  the Gr\"onwall  inequality to the above inequality, together with Lemma \ref{ele}, yields that, for all $t\in [0,T]$,
\begin{equation}\label{e4.16*}
|r^\frac{m}{2}(\phi v,u)|_2^2+\int_0^t \Big|r^{\frac{m}{2}}\big(u_r,\frac{u}{r}\big)\Big|_2^2 \,\mathrm{d}s\leq C(T)|r^\frac{m}{2}(\phi_0 v_0,u_0)|_2^2\leq C(T).
\end{equation}
Here, it remains to check the $L^2(I)$-boundedness of $r^\frac{m}{2}(\phi_0 v_0,u_0)$. Indeed, it follows from Lemmas \ref{ale1}, \ref{initial3}, and  \ref{lemma-initial} that
\begin{equation}\label{chuzhi0}
\begin{aligned}
|r^\frac{m}{2}(\phi_0v_0,u_0)|_2 &\leq C_0|r^\frac{m}{2}(\phi_0u_0,(\phi_0)_r,u_0)|_2 \leq C_0\big(\|\phi_0\boldsymbol{u}_0\|_{L^2}+\|\nabla \phi_0\|_{L^2}+\|\boldsymbol{u}_0\|_{L^2}\big)\\
&\leq C_0\big(\|\phi_0\|_{L^\infty}\|\boldsymbol{u}_0\|_{L^2}+\|\nabla \phi_0\|_{L^2}+\|\boldsymbol{u}_0\|_{L^2}\big)\leq C_0. 
\end{aligned}
\end{equation}
The proof of Lemma \ref{l4.5} is completed.
\end{proof}

The following lemma concerns the first-order energy estimate of $u$.
\begin{lem}\label{l4.6}
There exists a constant $C(T)>0$ such that, for any $t\in [0,T]$,
\begin{equation*} 
\Big|r^{\frac{m}{2}}\big(u_r,\frac{u}{r}\big)(t)\Big|_2^2+\int_0^t |r^{\frac{m}{2}}u_t|_2^2\,\mathrm{d}s\leq C(T).
\end{equation*}
\end{lem}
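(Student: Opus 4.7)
The plan is to perform the standard first-order energy estimate by multiplying the momentum equation $\eqref{e2.2}_2$ by $r^m u_t$ and integrating over $I$. The dissipation term will be transformed, via integration by parts, into the time derivative of $\bigl|r^{m/2}(u_r+\tfrac{m}{r}u)\bigr|_2^2$, and then Lemma \ref{im-1} will convert this back to the $r^{m/2}(u_r,u/r)$-norm. Concretely, since $r^m(u_r+\tfrac{m}{r}u)_r = \bigl(r^m(u_r+\tfrac{m}{r}u)\bigr)_r - mr^{m-1}(u_r+\tfrac{m}{r}u)$, a direct computation gives
\begin{equation*}
\int_0^\infty r^m\bigl(u_r+\tfrac{m}{r}u\bigr)_r u_t\,\mathrm{d}r
= -\tfrac{1}{2}\frac{\mathrm{d}}{\mathrm{d}t}\Bigl|r^{m/2}\bigl(u_r+\tfrac{m}{r}u\bigr)\Bigr|_2^2
\end{equation*}
once the boundary contributions at $r=0$ and $r=\infty$ are shown to vanish, which should follow by the same kind of $W^{1,1}(I)$ verification carried out for $\mathcal{B}_1,\dots,\mathcal{B}_8$ in the preceding lemmas, using the regularity in \eqref{spd}--\eqref{spd2} (in particular $(u,u_r)\in C(\bar I)$ and $u|_{r=0}=0$).

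Next I would handle the three forcing terms. For the convection term, $\bigl|\int r^m u u_r u_t\,\mathrm{d}r\bigr|\leq |u|_\infty|r^{m/2}u_r|_2|r^{m/2}u_t|_2$, absorbed by a Cauchy--Schwarz/Young inequality, with the $|u|_\infty^2$ factor integrable in time by Lemma \ref{ele}. For the pressure term, identity \eqref{v-express-2} gives $\phi_r=\tfrac{\gamma-1}{2\alpha}\phi(v-u)$, so
\begin{equation*}
-\int_0^\infty r^m\phi_r u_t\,\mathrm{d}r \leq \tfrac{1}{8}|r^{m/2}u_t|_2^2 + C(T)\bigl(|r^{m/2}\phi v|_2^2 + |\phi|_\infty^2|r^{m/2}u|_2^2\bigr),
\end{equation*}
which is controlled by Lemmas \ref{important2}, \ref{l4.4}, and \ref{l4.5}. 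For the singular source, I would rewrite $2\alpha\psi u_r=(v-u)u_r$ via \eqref{v-express-2}, so
\begin{equation*}
\Bigl|2\alpha\int_0^\infty r^m\psi u_r u_t\,\mathrm{d}r\Bigr|\leq \bigl(|v|_\infty+|u|_\infty\bigr)|r^{m/2}u_r|_2|r^{m/2}u_t|_2,
\end{equation*}
again absorbable by Young's inequality with $|v|_\infty\leq C(T)$ from Lemma \ref{l4.4}.

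Combining these and absorbing $\tfrac{3}{4}|r^{m/2}u_t|_2^2$ on the left gives a differential inequality of the form
\begin{equation*}
\alpha\frac{\mathrm{d}}{\mathrm{d}t}\Bigl|r^{m/2}\bigl(u_r+\tfrac{m}{r}u\bigr)\Bigr|_2^2 + \tfrac{1}{4}|r^{m/2}u_t|_2^2 \leq C(T)\bigl(1+|u|_\infty^2\bigr)\Bigl|r^{m/2}\bigl(u_r+\tfrac{m}{r}u\bigr)\Bigr|_2^2 + C(T),
\end{equation*}
where the last $C(T)$ absorbs $|r^{m/2}(\phi v,u)|_2^2\leq C(T)$ from Lemma \ref{l4.5}. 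Since $\int_0^T(1+|u|_\infty^2)\,\mathrm{d}t\leq C(T)$ by Lemma \ref{ele}, Grönwall's inequality closes the estimate, provided the initial quantity $\bigl|r^{m/2}(u_{0r}+\tfrac{m}{r}u_0)\bigr|_2$ is finite; this follows from $\boldsymbol{u}_0\in H^2(\mathbb{R}^n)$ via Lemma \ref{lemma-initial} together with Lemma \ref{im-1} (which relates the spherical and Cartesian norms). Finally, Lemma \ref{im-1} converts the left-hand side into the advertised $\bigl|r^{m/2}(u_r,u/r)\bigr|_2^2$.

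The main technical obstacle I anticipate is the careful justification of the integration by parts in the dissipation term, in particular verifying $\mathcal{B}:=r^m(u_r+\tfrac{m}{r}u)u_t\in W^{1,1}(I)$ with $\mathcal{B}|_{r=0}=0$ for a.e.\ $t\in(0,T)$ in the spirit of the $\mathcal{B}_i$ arguments above. In $n=3$ this is slightly delicate because the singularity $r^{-2}u$ in the expansion of $(u_r+\tfrac{m}{r}u)_r$ must be handled using \eqref{spd} and Hardy's inequality (Lemma \ref{hardy}). Once this is in hand, the rest of the argument is a routine Grönwall iteration built on the uniform bounds already established in Lemmas \ref{important2}--\ref{l4.5}.
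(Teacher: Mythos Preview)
Your proposal is correct and follows essentially the same approach as the paper: multiply $\eqref{e2.2}_2$ by $r^m u_t$, justify the integration by parts via a boundary term $\mathcal{B}_9=r^m u_t(u_r+\tfrac{m}{r}u)\in W^{1,1}(I)$ with $\mathcal{B}_9|_{r=0}=0$, estimate the three forcing terms exactly as you describe (using the rewriting $\phi_r=\tfrac{\gamma-1}{2\alpha}\phi(v-u)$ and $2\alpha\psi u_r=(v-u)u_r$), and close with Gr\"onwall using $\int_0^T|u|_\infty^2\,\mathrm{d}t\leq C(T)$ from Lemma~\ref{ele} and the norm equivalence in Lemma~\ref{im-1}. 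The paper's verification of $\mathcal{B}_9\in W^{1,1}(I)$ uses only $r^{m/2}(\tfrac{u}{r},u_r,u_t,(\tfrac{u}{r})_r,u_{rr},\tfrac{u_t}{r},u_{tr})\in L^2(I)$ from \eqref{spd} and $(u_r,\tfrac{u}{r})\in C(\bar I)$, without needing Hardy's inequality as you anticipated.
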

\begin{proof}
First, multiplying $\eqref{e2.2}_2$ by $r^m u_t$ gives
\begin{equation}\label{eq:B9pre}
\begin{aligned}
&r^m u_t^2 +\alpha \Big(r^m\big(u_r+ \frac{m}{r}u\big)^2\Big)_t\\
&=2\alpha \Big(\underline{r^m u_t\big(u_r+ \frac{m}{r}u\big)}_{:=\mathcal{B}_9}\Big)_r- r^m \big(u u_r +\phi_r -2\alpha \psi u_r\big)u_t.    
\end{aligned}    
\end{equation}
Next, we need to  show that $\mathcal{B}_9\in W^{1,1}(I)$ and $\mathcal{B}_9|_{r=0}=0$ for {\it a.e.} $t\in (0,T)$, which allows us to apply Lemma \ref{calculus} to obtain
\begin{equation}\label{eq:B9}
\int_0^\infty (\mathcal{B}_9)_r\,\mathrm{d}r=-\mathcal{B}_9|_{r=0}=0.
\end{equation}

On one hand,   by \eqref{spd}, $r^\frac{m}{2}(u_t,\frac{u_t}{r},u_{tr})\in L^2([0,T];L^2(I))$ so that $r^\frac{m}{2}u_t\in L^2([0,T];H^1(I))$, which implies from Lemma \ref{ale1} 
and $u|_{r=0}=0$ that 
\begin{equation}\label{ut,wuqiong}
r^\frac{m}{2}u_t\in L^2([0,T];C(\bar I))\implies r^\frac{m}{2}u_t|_{r=0}=0 \qquad \text{ for {\it a.e.} $t\in (0,T)$}.   
\end{equation}
This,  together with $(u_r,\frac{u}{r})\in C(\bar I)$ for $t\in (0,T]$ due to \eqref{spd2}, yields that $\mathcal{B}_9|_{r=0}=0$ for {\it a.e.} $t\in (0,T)$. On the other hand,  to obtain $\mathcal{B}_9\in W^{1,1}(I)$, by \eqref{spd}, we have 
\begin{equation*}
r^\frac{m}{2}\Big(\frac{u}{r},u_r,u_t,\big(\frac{u}{r}\big)_r,u_{rr},\frac{u_t}{r},u_{tr}\Big)\in L^2(I)    \qquad \text{for {\it a.e.} $t\in (0,T)$}.
\end{equation*}
This implies from the H\"older inequality that 
\begin{align*}
|\mathcal{B}_9|_1&\leq C_0|r^\frac{m}{2}u_t|_2\Big|r^\frac{m}{2}\big(u_r,\frac{u}{r}\big)\Big|_2<\infty,\\
|(\mathcal{B}_9)_r|_1&\leq C_0\Big|r^{m-1}u_t\big(u_r,\frac{u}{r}\big)\Big|_1+C_0\Big|r^{m}u_{tr}\big(u_r,\frac{u}{r}\big)\Big|_1+C_0\Big|r^{m}u_{t}\Big(u_{rr},\big(\frac{u}{r}\big)_r\Big)\Big|_1\\
&\leq C_0 \Big|r^\frac{m}{2}\big(u_{tr},\frac{u_t}{r}\big)\Big|_2\Big|r^\frac{m}{2}\big(u_r,\frac{u}{r}\big)\Big|_2+C_0|r^\frac{m}{2}u_t|_2\Big|r^\frac{m}{2}\Big(u_{rr},\big(\frac{u}{r}\big)_r\Big)\Big|_2<\infty.
\end{align*}

Therefore, integrating \eqref{eq:B9pre} over $I$, together with \eqref{eq:B9}, yields
\begin{equation}\label{e-1.20}
\alpha\frac{\mathrm{d}}{\mathrm{d}t}\Big|r^\frac{m}{2}\big(u_r+\frac{m}{r}u\big)\Big|_2^2+| r^{\frac{m}{2}}u_t|_2^2=-\int_0^\infty r^m\big(u u_r +\phi_r- 2\alpha \psi u_r\big) u_t\,\mathrm{d}r:=\sum_{i=9}^{11} \mathcal{G}_i.
\end{equation}

Then, for $\mathcal{G}_9$--$\mathcal{G}_{11}$, it follows from
\eqref{v-express-2}, Lemmas \ref{important2}, \ref{l4.4}, 
and \ref{l4.5}, and the H\"older and Young inequalities that
\begin{equation}\label{G9-G11}
\begin{aligned}
\mathcal{G}_{9}&=-\int_0^\infty r^mu u_r u_t\,\mathrm{d}r\leq C_0|u|_{\infty}|r^{\frac{m}{2}}u_r|_{2}|r^{\frac{m}{2}}u_t|_{2}\leq  C_0|u|_\infty^2|r^{\frac{m}{2}}u_r|_{2}^2+\frac{1}{8} |r^{\frac{m}{2}}u_t|_{2}^2,\\
\mathcal{G}_{10}&=- \int_0^\infty r^m \phi_r u_t\,\mathrm{d}r
= -\frac{\gamma-1}{2\alpha}\int r^m \phi (v-u) u_t\,\mathrm{d}r\\
&\leq  C_0\big(|r^{\frac{m}{2}} \phi v|_{2} + |\phi|_\infty |r^{\frac{m}{2}}u|_2\big) |r^{\frac{m}{2}}u_t|_2\leq C(T)+\frac{1}{8} |r^{\frac{m}{2}}u_t|^2_2,\\
\mathcal{G}_{11}&= 2\alpha\int_0^\infty r^m\psi u_r u_t\,\mathrm{d}r= \int_0^\infty r^mvu_r u_t\,\mathrm{d}r+ \mathcal{G}_9\\
&\leq C_0|v|_{\infty} |r^{\frac{m}{2}}u_r|_{2}|r^{\frac{m}{2}}u_t|_{2}+\mathcal{G}_9\leq C(T)(1+|u|_{\infty}^2) |r^{\frac{m}{2}}u_r|_2^2+\frac18|r^{\frac{m}{2}}u_t|_{2}^2.    
\end{aligned}    
\end{equation}

Substituting \eqref{G9-G11} into \eqref{e-1.20}, we obtain 
from Lemma \ref{im-1} that
\begin{equation}\label{e-1.23}
\begin{aligned}
\alpha\frac{\mathrm{d}}{\mathrm{d}t}\Big|r^\frac{m}{2}\big(u_r+\frac{m}{r}u\big)\Big|_2^2+|r^{\frac{m}{2}}u_t|_2^2&\leq C(T)(1+|u|_\infty^2)|r^\frac{m}{2}u_r|_2^2+C(T)\\
&\leq C(T)(1+|u|_\infty^2)\Big|r^\frac{m}{2}\big(u_r+\frac{m}{r}u\big)_r\Big|_2^2+C(T),
\end{aligned}
\end{equation}
which, along with Lemmas \ref{ele} and \ref{im-1}, and the Gr\"onwall inequality, yields that, for all $t\in [0,T]$, 
\begin{equation}\label{eg}
\Big|r^{\frac{m}{2}}\big(u_r,\frac{u}{r}\big)(t)\big|_2^2+\int_0^t  |r^{\frac{m}{2}}u_t|_2^2\,\mathrm{d}s\leq C(T)\Big|r^{\frac{m}{2}}\big((u_0)_r,\frac{u_0}{r}\big)\Big|_2^2+C(T)\leq C(T).  
\end{equation}
This completes the proof. 
\end{proof}

\subsection{The  second-order estimates of the velocity}
\begin{lem}\label{l4.8}
There exists a constant $C(T)>0$ such that
\begin{equation*}
|r^{\frac{m}{2}}u_t(t)|^2_2+\int_0^t \Big|r^{\frac{m}{2}}\big(u_{tr},\frac{u_t}{r}\big)\Big|_2^2 \,\mathrm{d}s \leq C(T)
\qquad \mbox{for any $t\in[0,T]$}.
\end{equation*}
\end{lem}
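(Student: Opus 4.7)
The plan is to differentiate the momentum equation $\eqref{e2.2}_2$ with respect to $t$, multiply by $r^m u_t$, and integrate over $I$. Integration by parts on the viscous term produces the positive dissipation $2\alpha|r^{m/2}(u_{tr}+\frac{m}{r}u_t)|_2^2$, which by Lemma \ref{im-1} controls $\alpha|r^{m/2}(u_{tr},u_t/r)|_2^2$. The boundary contributions at $r=0$ and $r=\infty$ are shown to vanish exactly as in the proof of Lemma \ref{l4.6} (see \eqref{ut,wuqiong}), using the $C(\bar I)$-continuity for \emph{a.e.} $t\in(0,T)$ recorded in \eqref{spd}--\eqref{spd2} together with $u_t(0)=0$. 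The initial bound $|r^{m/2}u_t(0)|_2\leq C_0$ is read off from $u_t|_{t=0}=-u_0(u_0)_r-(\phi_0)_r+2\alpha((u_0)_r+\frac{m}{r}u_0)_r+2\alpha\psi_0(u_0)_r$, by invoking the initial-data regularity of Theorem \ref{rth1} together with Lemma \ref{im-1}.

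\textbf{The pressure-type term.} The resulting identity has the schematic form
\begin{equation*}
\tfrac{1}{2}\tfrac{\mathrm{d}}{\mathrm{d}t}|r^{m/2}u_t|_2^2 + 2\alpha\big|r^{m/2}\big(u_{tr}+\tfrac{m}{r}u_t\big)\big|_2^2 = -\int_0^\infty r^m\big(u_tu_r+uu_{tr}+\phi_{tr}-2\alpha(\psi u_r)_t\big)u_t\,\mathrm{d}r.
\end{equation*}
For $\int r^m\phi_{tr}u_t\,\mathrm{d}r$, I would integrate by parts in $r$ to obtain $-\int r^m\phi_t(u_{tr}+\frac{m}{r}u_t)\,\mathrm{d}r$; the identity $\phi_r=(\gamma-1)\phi\psi=\frac{\gamma-1}{2\alpha}\phi(v-u)$, together with Lemmas \ref{important2}, \ref{l4.4}, and \ref{l4.5}, supplies $|r^{m/2}\phi_r|_2\leq C(T)$, and then $\eqref{e2.2}_1$ and Lemma \ref{l4.6} give $|r^{m/2}\phi_t|_2\leq C(T)$. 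This contribution is absorbed into $\frac{\alpha}{4}|r^{m/2}(u_{tr}+\frac{m}{r}u_t)|_2^2+C(T)$ via Young's inequality.

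\textbf{The viscous-source commutator and closure.} For $2\alpha\int r^m(\psi u_r)_tu_t\,\mathrm{d}r$, I exploit $2\alpha\psi=v-u$ to rewrite $2\alpha(\psi u_r)_t=v_tu_r+vu_{tr}-u_tu_r-uu_{tr}$. The pieces with $vu_{tr}$ and $uu_{tr}$ are controlled by $|(u,v)|_\infty$ (Lemmas \ref{important2}, \ref{l4.4}) times $|r^{m/2}u_{tr}|_2|r^{m/2}u_t|_2$ and absorbed via Young. For the piece with $v_t$, I would substitute \eqref{eq:effective1} to write $v_t=-uv_r-\frac{\gamma-1}{2\alpha}\phi(v-u)$: the zeroth-order term is bounded directly by $L^\infty$-norms, while the transport term $-\int r^m uv_ru_ru_t\,\mathrm{d}r$ is integrated by parts in $r$ to move the derivative off $v$; the second-order spatial derivatives of $u$ that appear are then re-expressed via $\eqref{e2.2}_2$ as $\frac{1}{2\alpha}(u_t+uu_r+\phi_r-2\alpha\psi u_r)$, so no pointwise control on $v_r$ or $\psi_r$ is required. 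Combining everything and invoking Lemma \ref{im-1}, I expect an inequality
\begin{equation*}
\tfrac{\mathrm{d}}{\mathrm{d}t}|r^{m/2}u_t|_2^2 + \alpha\big|r^{m/2}\big(u_{tr},\tfrac{u_t}{r}\big)\big|_2^2 \leq h_1(t)|r^{m/2}u_t|_2^2+h_2(t),
\end{equation*}
with $h_1,h_2\in L^1([0,T])$; Gronwall then delivers the stated estimate.

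\textbf{Main obstacle.} I expect the principal subtlety to lie in producing an $L^1_t$-integrable surrogate for $|u_r|_\infty$ appearing in $\int r^mu_t^2u_r\,\mathrm{d}r$ and in the transport piece of $\int r^m v_tu_ru_t\,\mathrm{d}r$. The remedy is the radial Sobolev embedding theory: in $n=2$, the critical embedding $D^1(\mathbb{R}^2)\hookrightarrow L^\infty(\mathbb{R}^2)$ of Lemma \ref{Hk-Ck-vector} applied to $\nabla\boldsymbol{u}$ reduces $|u_r|_\infty$ to $|r^{m/2}(u_{rr},(u/r)_r)|_2$, which by Lemma \ref{im-1} and $\eqref{e2.2}_2$ is controlled by $C(T)+C|r^{m/2}u_t|_2$; in $n=3$, the embedding $D^1(\mathbb{R}^3)\hookrightarrow L^6(\mathbb{R}^3)$ (Lemma \ref{lemma-L6}) combined with Gagliardo--Nirenberg and the same re-expression of the second-order spatial derivatives via $\eqref{e2.2}_2$ furnishes the required surrogate. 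The surrogate then enters $h_1$ together with the already-integrable $|u|_\infty^2$ from Lemma \ref{ele}, so that the Gronwall step closes the estimate.
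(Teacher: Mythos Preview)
Your overall architecture is right, and your handling of the $\phi_{tr}$ term matches the paper. The gap is in the ``main obstacle'' paragraph: the proposed surrogate for $|u_r|_\infty$ does not exist. Lemma~\ref{Hk-Ck-vector} applies only to spherically symmetric \emph{vector} fields $\boldsymbol{f}=f(r)\boldsymbol{x}/r$; the tensor $\nabla\boldsymbol{u}$ is not such an object, and in fact Remark~\ref{k=1rk} in the paper exhibits a radial $\boldsymbol{f}\in W^{2,n}(\mathbb{R}^n)$ with $\nabla\boldsymbol{f}\notin L^\infty$. So there is no inequality $|u_r|_\infty\le C\|\nabla^2\boldsymbol{u}\|_{L^2}=C|r^{m/2}(u_{rr},(u/r)_r)|_2$, and your $n=3$ variant via $D^1\hookrightarrow L^6$ and Gagliardo--Nirenberg runs into the same wall, since you only have $\nabla^2\boldsymbol{u}\in L^2$ at this stage. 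Your integration-by-parts on $\int r^m u v_r u_r u_t\,\mathrm{d}r$ also regenerates a term $\int r^m u_r^2 u_t v\,\mathrm{d}r$ of the same type, so the obstacle is not bypassed.

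The paper's mechanism is different and does not require $|u_r|_\infty$. First, it replaces $\psi_t$ via $\eqref{e2.2}_3$ (rather than going through $v_t$) and integrates by parts on the $(\psi u)_r$ contribution, so that only the quantities $|(u,v,\psi)|_\infty$, $|r^{m/2}(u_{rr},(u/r)_r)|_2$, and the \emph{weighted} sup $|r^{m/2}u_r|_\infty$ appear. Second, it proves the elementary bound $|r^{m/2}u_r|_\infty\le C(T)(|r^{m/2}u_t|_2+1)$ (see \eqref{6.4-d}), obtained by a one--dimensional argument (Hardy on $(0,1)$ and $H^1(1,\infty)\hookrightarrow L^\infty$) together with the elliptic relation \eqref{618}. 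Third, the dangerous cubic term $\int r^m u_r u_t^2\,\mathrm{d}r$ is handled by a near/far splitting: on $\{r<1\}$ one writes $r^{m/2}u_r\cdot r^{(m-2)/2}u_t\cdot r^{m/2}u_t$ and absorbs $|r^{(m-2)/2}u_t|_2=|r^{m/2}(u_t/r)|_2$ into the dissipation; on $\{r\ge 1\}$ the extra $r^{-m/2}$ is bounded. This produces a \emph{quartic} differential inequality $\frac{\mathrm{d}}{\mathrm{d}t}|r^{m/2}u_t|_2^2+\alpha|r^{m/2}(u_{tr},u_t/r)|_2^2\le C(T)(|r^{m/2}u_t|_2^4+1)$, not a linear one with $L^1_t$ coefficients; the closure then uses the already-established bound $\int_0^T|r^{m/2}u_t|_2^2\,\mathrm{d}s\le C(T)$ from Lemma~\ref{l4.6}, so that Gr\"onwall applies with $h_1(t)=C(T)|r^{m/2}u_t|_2^2\in L^1(0,T)$.
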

 
\begin{proof} We divide the proof into two steps.

\smallskip
\textbf{1.} It follows from Lemmas \ref{im-1}--\ref{l4.5}, \ref{GN-ineq}, and \ref{lemma-initial} that
\begin{equation}\label{u-infty}
\begin{aligned}
|u|_{\infty}&=\|\boldsymbol{u}\|_{L^\infty}\leq C_0\|\boldsymbol{u}\|_{L^2}^\frac{4-n}{4}\|\nabla^2\boldsymbol{u}\|_{L^2}^\frac{n}{4}\\
&\leq C_0|r^\frac{m}{2}u|_2^\frac{4-n}{4}\Big|r^\frac{m}{2}\big(u_r+\frac{m}{r}u\big)_r\Big|_2^\frac{n}{4}\leq C(T)\Big|r^\frac{m}{2}\big(u_r+\frac{m}{r}u\big)_r\Big|_2^\frac{n}{4}.
\end{aligned}
\end{equation}
Then, multiplying $\eqref{e2.2}_2$ by $r^\frac{m}{2}$ and taking the $L^2(I)$-norm of the resulting equality, we obtain from 
from \eqref{u-infty}, Lemmas \ref{important2}, \ref{l4.4}, 
and \ref{l4.5}--\ref{l4.6}, and the Young inequality that   
\begin{align*}
\Big|r^{\frac{m}{2}}\big(u_r+\frac{m}{r}u\big)_r\Big|_2&\leq C_0\big(|r^{\frac{m}{2}}u_t|_2+|r^{\frac{m}{2}} uu_r|_2+|r^{\frac{m}{2}} \phi_r|_2+ |r^{\frac{m}{2}} \psi u_r|_2\big)\\
&\leq  C_0\big(|r^{\frac{m}{2}}u_t|_2+|r^{\frac{m}{2}} uu_r|_2+|r^{\frac{m}{2}} \phi(v-u)|_2+|r^{\frac{m}{2}} (v-u) u_r|_2\big) \\
&\leq  C_0\big(|r^{\frac{m}{2}}u_t|_2+|r^{\frac{m}{2}}u_r|_2|(v,u)|_\infty+|r^\frac{m}{2}\phi v|_2 +|\phi|_\infty|r^{\frac{m}{2}}u|_2\big)\\
&\leq C(T)\bigg(|r^{\frac{m}{2}}u_t|_2+\Big|r^\frac{m}{2}\big(u_r+\frac{m}{r}u\big)_r\Big|_2^\frac{n}{4}+1\bigg)\\
&\leq C(T)\big(|r^{\frac{m}{2}}u_t|_2+1\big)+\frac{1}{2}\Big|r^\frac{m}{2}\big(u_r+\frac{m}{r}u\big)_r\Big|_2,
\end{align*}
which, along with \eqref{u-infty} and Lemma \ref{im-1}, leads to
\begin{equation}\label{618}
|u|_\infty+\Big|r^{\frac{m}{2}}\Big(u_{rr},\big(\frac{u}{r}\big)_{r}\Big)\Big|_2\leq C(T)\big(|r^{\frac{m}{2}}u_t|_2+1\big).
\end{equation}

Next, it follows from \eqref{618}, Lemmas \ref{ele}, \ref{l4.6}, \ref{ale1}, and \ref{hardy}, and the H\"older and Young inequalities that, for all $t\in[0,T]$,
\begin{equation}\label{6.4-d}
\begin{aligned}
|r^\frac{m}{2}u_r|_\infty&\leq |\chi_1^\flat r^\frac{m}{2}u_r|_\infty+|\chi_1^\sharp r^\frac{m}{2}u_r|_\infty\\
&\leq C_0\big(|\chi_1^\flat r^\frac{m+1}{2} u_r|_2+|\chi_1^\flat r^\frac{m+1}{2} u_{rr}|_2+|\chi_1^\sharp r^{\frac{m}{2}}u_r|_2+|\chi_1^\sharp(r^{\frac{m}{2}}u_r)_r|_2\big)\\
&\leq C_0\big(|\chi^\flat r^\frac{1}{2}|_\infty+|\chi^\sharp r^{-1}|_\infty+1\big)|r^{\frac{m}{2}}u_r|_2+C_0(|\chi^\flat r^\frac{1}{2}|_\infty+1)|r^{\frac{m}{2}}u_{rr}|_2\\
&\leq C(T)\big(|r^\frac{m}{2}u_{rr}|_2+1\big)\leq C(T)\big(|r^\frac{m}{2}u_{t}|_2+1\big).
\end{aligned}
\end{equation}

Finally,  according to \eqref{v-express-2},  Lemmas \ref{important2}, \ref{l4.4}, and \ref{l4.5}, we have 
\begin{equation}\label{6.6-1}
|r^{\frac{m}{2}}\phi_r|_2\leq C_0|r^{\frac{m}{2}}\phi(v-u)|_2 \leq C_0\big(|r^\frac{m}{2}\phi v|_2+|r^{\frac{m}{2}}u|_2|\phi|_\infty\big)\leq C(T).
\end{equation}
Multiplying $\eqref{e2.2}_1$ by $r^\frac{m}{2}$ and taking the $L^2(I)$-norm of the resulting equation, we obtain from \eqref{618}, 
\eqref{6.6-1}, and Lemmas \ref{important2} and \ref{l4.6} that 
\begin{equation}\label{6.6-2}
\begin{aligned}
|r^{\frac{m}{2}}\phi_t|_2&\leq  |r^{\frac{m}{2}}u\phi_r|_2+(\gamma-1)\Big|r^\frac{m}{2}\phi\big(u_r+\frac{m}{r}u\big)\Big|_2\\
&\leq |u|_\infty|r^{\frac{m}{2}}\phi_r|_2+C_0|\phi|_\infty \Big|r^{\frac{m}{2}}\big(u_{r},\frac{u}{r}\big)\Big|_2 \le C(T)\big(|r^{\frac{m}{2}}u_t|_2+1\big).
\end{aligned}    
\end{equation}

\smallskip
\textbf{2.}
Applying $r^mu_t\partial_t$ to both sides of $\eqref{e2.2}_2$, together with $\eqref{e2.2}_3$, gives
\begin{equation}\label{eq:B10pre}
\begin{aligned}
&\frac{1}{2}(r^m u_t^2)_t+2\alpha r^m\big(u_{tr}+ \frac{m}{r}u_t\big)^2\\
&=\Big(2\alpha r^m u_t \big(u_{tr}+ \frac{m}{r}u_t\big)-r^mu_t \phi_{t}\Big)_{r}- r^mu_t(uu_r)_t+r^m\phi_{t}\big(u_{tr}+\frac{m}{r}u_t\big)\\
&\quad+2\alpha r^m \psi u_t u_{tr}+2\alpha r^m  \psi_t u_tu_r \\
&=\Big(2\alpha r^m u_t \big(u_{tr}+ \frac{m}{r}u_t\big)-r^mu_t \phi_{t}\Big)_{r}- r^mu_t(uu_r)_t+r^m\phi_{t}\big(u_{tr}+\frac{m}{r}u_t\big)\\
&\quad +2\alpha r^m \psi u_t u_{tr}-2\alpha r^m(\psi u)_ru_tu_r -2\alpha r^m \big(u_t+\frac{m}{r}u\big)_ru_tu_r\\
&=\Big(\underline{2\alpha r^m u_t \big(u_{tr}+ \frac{m}{r}u_t\big)-r^mu_t \phi_{t}-2\alpha r^m \psi u u_tu_r}_{:=\mathcal{B}_{10}}\Big)_{r}- r^mu_t(uu_r)_t\\
&\quad +r^m\phi_{t}\big(u_{tr}+\frac{m}{r}u_t\big)+2\alpha r^m \psi u_t u_{tr}\\
& \quad +2\alpha r^m \psi u\big(u_{tr}u_r+u_tu_{rr}+\frac{m}{r}u_tu_r\big) -2\alpha r^m \big(u_r+\frac{m}{r}u\big)_ru_tu_r.
\end{aligned}    
\end{equation}
Next, we need to  show that $\mathcal{B}_{10}\in W^{1,1}(I)$ and $\mathcal{B}_{10}|_{r=0}=0$ for {\it a.e.} $t\in (0,T)$, which allows us to apply Lemma \ref{calculus} to obtain
\begin{equation}\label{eq:B10}
\int_0^\infty (\mathcal{B}_{10})_r\,\mathrm{d}r=-\mathcal{B}_{10}|_{r=0}=0.
\end{equation}

We first show that $\mathcal{B}_{10}|_{r=0}=0$. Indeed, according to \eqref{psi,wuqiong}, \eqref{ut,wuqiong}, and $(u,u_r)\in C(\bar I)$ for $t\in (0,T]$ due to \eqref{spd2}, it remains to show that 
\begin{equation}\label{727}
r^\frac{m}{2}\big(u_{tr}+\frac{m}{r}u_t\big)\Big|_{r=0}<\infty,\quad\ r^\frac{m}{2}\phi_t|_{r=0}<\infty \qquad \text{for {\it a.e.} $t\in (0,T)$}.
\end{equation}
Thanks to \eqref{spd}, we see that $r^\frac{m}{2}\big(u_{tr},\frac{u_t}{r},u_{trr},(\frac{u_t}{r})_r\big)\in L^2(I)$ for {\it a.e.} $t\in (0,T)$, which, along with Lemma \ref{im-1}, implies that 
\begin{equation}\label{837}
r^\frac{m}{2}\big(u_{tr}+\frac{m}{r}u_t,(u_{tr}+\frac{m}{r}u_t)_r\big) \in L^2(I) \qquad\,\,\ \text{for \ {\it a.e.} $t\in (0,T)$}.
\end{equation}

If $n=2$ ($m=1$), \eqref{837} implies that 
$r\big(u_{tr}+\frac{1}{r}u_t,(u_{tr}+\frac{1}{r}u_t)_r\big) \in L^2(0,1)$.
Thus, it follows from Lemma \ref{hardy} that 
\begin{equation*}
r^\frac{1}{2}\big(u_{tr}+\frac{1}{r}u_t\big)=r^\frac{m}{2}\big(u_{tr}+\frac{m}{r}u_t\big) \in C([0,1]) \,\qquad \text{for \ {\it a.e.} $t\in (0,T)$}.
\end{equation*}

If $n=3$ ($m=2$), we deduce from Lemmas \ref{ale1} and \ref{hardy} that, for {\it a.e.} $t\in (0,T)$,
\begin{equation*}
r\big(u_{tr}+\frac{2}{r}u_t\big)\in H^1(0,1) \implies r\big(u_{tr}+\frac{2}{r}u_t\big)\in C([0,1]),
\end{equation*}
that is, $r (u_{tr}+\frac{2}{r}u_t)|_{r=0}<\infty$ for {\it a.e.} $t\in (0,T)$. 
Similarly, the method mentioned above can be applied to show that 
$r^\frac{m}{2}\phi_t|_{r=0}<\infty$ for {\it a.e.} $t\in (0,T)$, since $r^\frac{m}{2}(\phi_t,\phi_{tr})\in L^2(I)$ for {\it a.e.} $t\in (0,T)$ due to \eqref{spd}. 

Next, we show that $\mathcal{B}_{10}\in W^{1,1}(I)$ for {\it a.e.} $t\in (0,T)$. By \eqref{spd}--\eqref{spd2}, we have
\begin{equation*}
(\psi,u,u_r)\in L^\infty(I),\,\,\,\, 
r^\frac{m}{2}\Big(\phi_t,u_r,u_t,u_{rr},\frac{u_t}{r},u_{tr},\big(\frac{u_t}{r}\big)_r,u_{trr},\psi_r\Big)\in L^2(I)
\qquad\mbox{for {\it a.e.} $t\in (0,T)$}.
\end{equation*}
Then it follows from the H\"older inequality that
\begin{equation}\label{cal-b10}
\begin{aligned}
|\mathcal{B}_{10}|_1&\leq C_0|r^\frac{m}{2}u_t|_2\Big(\Big|r^\frac{m}{2}\big(u_{tr},\frac{u_t}{r}\big)\Big|_2+|r^\frac{m}{2}\phi_t|_2+|\psi|_\infty|u|_\infty |r^\frac{m}{2}u_r|_2\Big)<\infty,\\
|(\mathcal{B}_{10})_r|_1&\leq C_0\Big|r^{m-1}u_t\big(u_{tr},\frac{u_t}{r},\phi_t,\psi uu_r\big)\Big|_1+C_0\Big|r^{m}u_{tr}\big(u_{tr},\frac{u_t}{r},\phi_t,\psi uu_r\big)\Big|_1\\
&\quad +C_0\Big|r^{m}u_{t}\Big(u_{trr},\big(\frac{u_t}{r}\big)_r,\phi_{tr},\psi_r uu_r,\psi (u_r)^2,\psi u u_{rr}\Big)\Big|_1\\
&\leq C_0 \Big|r^\frac{m}{2}\big(u_{tr},\frac{u_t}{r}\big)\Big|_2\Big(\Big|r^\frac{m}{2}\big(u_{tr},\frac{u_t}{r},\phi_t\big)\Big|_2+|\psi|_\infty |u|_\infty |r^\frac{m}{2}u_r|_2\Big)\\
&\quad +C_0|r^\frac{m}{2}u_t|_2\Big(\Big|r^\frac{m}{2}\Big(u_{trr},\big(\frac{u_t}{r}\big)_r,\phi_{tr}\Big)\Big|_2+|r^\frac{m}{2}\psi_r|_2|u|_\infty|u_r|_\infty\Big)\\
&\quad +C_0|r^\frac{m}{2}u_t|_2\big(|\psi|_\infty|u_r|_\infty|r^\frac{m}{2}u_r|_2+|\psi|_\infty|u|_\infty |r^\frac{m}{2}u_{rr}|_2\big)<\infty,
\end{aligned}    
\end{equation}
which implies the assertion.

Now, integrating \eqref{eq:B10pre} over $I$, together with \eqref{eq:B10}, yields
\begin{equation}\label{util2}
\begin{aligned}
&\frac{1}{2} \frac{\mathrm{d}}{\mathrm{d}t}|r^{\frac{m}{2}}u_t|^2_2+2\alpha \Big|r^{\frac{m}{2}}\big(u_{tr}+\frac{m}{r}u_t\big)\Big|_2^2\\
&=-\int_0^\infty r^m(uu_{r})_t u_t\,\mathrm{d}r+\int_0^\infty r^m \phi_t\big(u_{tr}+\frac{m}{r}u_t\big)\,\mathrm{d}r\\
&\quad + \!2\alpha\!\int_0^\infty \!r^m  \Big(\psi\big(u_t u_{tr}+ u(u_{tr}u_r+u_tu_{rr}+\frac{m}{r}u_tu_r)\big)\!-\!\big(u_r+\frac{m}{r}u\big)_ru_tu_r\!\Big)\mathrm{d}r\!:=\!\sum_{i=13}^{15} \mathcal{G}_i.
\end{aligned}    
\end{equation}

For $\mathcal{G}_{13}$--$\mathcal{G}_{14}$, it follows from \eqref{618}--\eqref{6.4-d}, \eqref{6.6-2}, Lemma \ref{im-1}, and the H\"older and Young inequalities that
\begin{equation}\label{G13-G14}
\begin{aligned}
\mathcal{G}_{13}& =-\int_0^\infty r^m(uu_{tr}+u_r u_t)u_t\,\mathrm{d}r\\
&\leq |u|_{\infty}|r^{\frac{m}{2}}u_{tr}|_2|r^{\frac{m}{2}}u_t|_2+ \big(|\chi_1^\flat r^\frac{m}{2} u_ru_t|_{2} +  |\chi_1^\sharp r^\frac{m}{2} u_ru_t|_{2}\big) |r^{\frac{m}{2}}u_t|_2\\
&\leq |u|_{\infty}|r^{\frac{m}{2}}u_{tr}|_2|r^{\frac{m}{2}}u_t|_2+  |\chi_1^\flat r^\frac{2-m}{2}|_\infty|r^\frac{m}{2} u_r|_{\infty} |r^{\frac{m-2}{2}}u_t|_2 |r^{\frac{m}{2}}u_t|_2\\
&\quad +  |\chi_1^\sharp r^{-\frac{m}{2}}|_\infty |r^\frac{m}{2}u_r|_{\infty}|r^{\frac{m}{2}}u_t|_2^2\\
&\leq C_0\big(|(u,r^\frac{m}{2}u_r)|_\infty^2+1\big)|r^{\frac{m}{2}}u_t|_2^2 +\frac{\alpha}{8}\Big|r^{\frac{m}{2}}\big(u_{tr}+\frac{m}{r}u_t\big)\Big|_2^2\\
&\leq C(T)\big(|r^\frac{m}{2}u_t|_2^4+1\big) +\frac{\alpha}{8}\Big|r^{\frac{m}{2}}\big(u_{tr}+\frac{m}{r}u_t\big)\Big|_2^2,\\
\mathcal{G}_{14}&=\int_0^\infty r^m \phi_t\big(u_{tr}+\frac{m}{r}u_t\big)\,\mathrm{d}r
\leq  |r^{\frac{m}{2}}\phi_t|_2 \Big|r^{\frac{m}{2}}\big(u_{tr}+\frac{m}{r}u_t\big)\Big|_2\\
&\leq C(T)\big(|r^{\frac{m}{2}}u_t|_2^2+1\big) +\frac{\alpha}{8}\Big|r^{\frac{m}{2}}\big(u_{tr}+\frac{m}{r}u_t\big)\Big|_2^2\\
&\leq C(T)\big(|r^{\frac{m}{2}}u_t|_2^4+1\big) +\frac{\alpha}{8}\Big|r^{\frac{m}{2}}\big(u_{tr}+\frac{m}{r}u_t\big)\Big|_2^2.
\end{aligned}    
\end{equation}

For $\mathcal{G}_{15}$, it follows from \eqref{v-express-2}, \eqref{618}, Lemmas \ref{im-1} and \ref{l4.6},  and the H\"older and Young inequalities that 
\begin{align}
\mathcal{G}_{15} &=\int_0^\infty  r^m (v-u) u_t u_{tr}\,\mathrm{d}r+\int_0^\infty r^m(v-u)  u\big(u_{tr}u_r+u_tu_{rr}+\frac{m}{r}u_tu_r\big) \,\mathrm{d}r\notag\\
&\quad - 2\alpha\int_0^\infty r^m\big(u_r+\frac{m}{r}u\big)_ru_tu_r \,\mathrm{d}r \notag\\
&\leq |(v,u)|_\infty|r^{\frac{m}{2}}u_t|_{2}|r^{\frac{m}{2}}u_{tr}|_2+ |(v,u)|_\infty|u|_\infty|r^\frac{m}{2}u_{rr}|_2|r^\frac{m}{2}u_t|_2 \notag\\
&\quad + |(v,u)|_\infty |u|_\infty |r^{\frac{m}{2}}u_{r}|_2 \Big|r^\frac{m}{2}\big(u_{tr}+\frac{m}{r}u_t\big)\Big|_2 \notag\\
&\quad + C_0\Big|r^\frac{m}{2}\Big(u_{rr},\big(\frac{u}{r}\big)_r\Big)\Big|_2\big(|\chi_1^\flat r^\frac{m}{2} u_r u_t|_2+|\chi_1^\sharp r^\frac{m}{2} u_r u_t|_2\big)\notag\\
&\leq |(v,u)|_\infty|r^{\frac{m}{2}}u_t|_{2}|r^{\frac{m}{2}}u_{tr}|_2+ |(v,u)|_\infty|u|_\infty|r^\frac{m}{2}u_{rr}|_2|r^\frac{m}{2}u_t|_2\label{G15} \\
&\quad + |(v,u)|_\infty |u|_\infty |r^{\frac{m}{2}}u_{r}|_2 \Big|r^\frac{m}{2}\big(u_{tr}+\frac{m}{r}u_t\big)\Big|_2\notag \\
&\quad + C_0\Big|r^\frac{m}{2}\Big(u_{rr},\big(\frac{u}{r}\big)_r\Big)\Big|_2|r^\frac{m}{2} u_r|_\infty\big(|\chi_1^\flat r^\frac{2-m}{2}|_\infty  |r^\frac{m-2}{2} u_t|_2+|\chi_1^\sharp r^{-\frac{m}{2}}|_\infty  |r^\frac{m}{2} u_t|_2\big)\notag\\
&\leq C(T)\big(|r^{\frac{m}{2}}u_t|_{2}^2+1\big)\Big(\Big|r^\frac{m}{2}\big(u_{tr}+\frac{m}{r}u_t\big)\Big|_2+ |r^\frac{m}{2} u_t|_2+1\Big)\notag\\
&\leq C(T)\big(|r^\frac{m}{2}u_t|_2^4+1\big)+\frac{\alpha}{8}\Big|r^\frac{m}{2}\big(u_{tr}+\frac{m}{r}u_t\big)\Big|_2^2.\notag
\end{align}

Substituting \eqref{G13-G14}--\eqref{G15} into \eqref{util2}, along with Lemma \ref{im-1}, gives
\begin{equation*} 
\frac{\mathrm{d}}{\mathrm{d}t}|r^{\frac{m}{2}}u_t|^2_2+\alpha\Big|r^{\frac{m}{2}}\big(u_{tr},\frac{u_t}{r}\big)\Big|_2^2\le C(T)\big(|r^{\frac{m}{2}}u_{t}|_2^4+1\big).
\end{equation*}
Integrating above over $(\tau, t)(\tau\in (0,t))$ yields
\begin{equation}\label{lg1}
|r^{\frac{m}{2}}u_t(t)|_2^2+\alpha\int_\tau^t\Big|r^{\frac{m}{2}}\big(u_{tr},\frac{u_t}{r}\big)\Big|_2^2\,\mathrm{d}s\leq  |r^{\frac{m}{2}}u_t(\tau)|_2^2+C(T)\int_\tau^t\big(|r^{\frac{m}{2}}u_{t}|_2^4+1\big)\,\mathrm{d}s.
\end{equation}
For the $L^2(I)$-boundedness of $r^{\frac{m}{2}}u_t(\tau)$ 
on the right-hand side of the above, we multiply $\eqref{e2.2}_2$ by $r^\frac{m}{2}$ and then take the $L^2(I)$-norm of the resulting equality to obtain 
\begin{equation*}
|r^{\frac{m}{2}}u_t(\tau)|_2\leq C_0\Big(|u|_\infty|r^{\frac{m}{2}}u_r|_2+|r^{\frac{m}{2}}\phi_r|_2+\Big|r^{\frac{m}{2}}\big(u_{r}+\frac{m}{r}u\big)_r\Big|_2+|\psi|_\infty|r^{\frac{m}{2}}u_r|_2\Big)(\tau),
\end{equation*}
which, along with the time-continuity of $(\phi,u,\psi)$ and 
Lemmas \ref{im-1}, \ref{ale1}, and \ref{lemma-initial}, yields
\begin{align*}
\limsup_{\tau\to 0} |r^{\frac{m}{2}}u_t(\tau)|_2&\leq C_0\Big(|u_0|_\infty|r^{\frac{m}{2}}(u_0)_r|_2+ |r^{\frac{m}{2}}(\phi_0)_r|_2\Big)\\
&\quad +C_0\Big(\Big|r^{\frac{m}{2}}\big((u_0)_{r}+\frac{m}{r}u_0\big)_r\Big|_2+ |\psi_0|_\infty|r^{\frac{m}{2}}(u_0)_r|_2\Big)\\
&\leq C_0 \big(\|\boldsymbol{u}_0\|_{L^\infty}\|\nabla\boldsymbol{u}_0\|_{L^2}+ \|\nabla\phi_0\|_{L^2}\big)\\
&\quad +C_0 \big(\|\nabla^2 \boldsymbol{u}_0\|_{L^2} + \|\boldsymbol{\psi}_0\|_{L^\infty} \|\nabla\boldsymbol{u}_0\|_{L^2}\big)\\
&\leq C_0\big(\|\boldsymbol{u}_0\|_{H^2}+\|\boldsymbol{\psi}_0\|_{L^\infty}+1\big)\|\boldsymbol{u}_0\|_{H^2} +C_0 \|\nabla\phi_0\|_{L^2} \leq C_0.
\end{align*}
Consequently, based on the above discussions, letting $\tau\to 0$ in \eqref{lg1}, then it follows from Lemma \ref{l4.6} and the Gr\"onwall inequality that, for all $t\in [0,T]$,
\begin{equation}\label{rut}
|r^{\frac{m}{2}}u_t(t)|^2_2+\int_0^t \Big|r^{\frac{m}{2}}\big(u_{tr},\frac{u_t}{r}\big)\Big|_2^2\, \mathrm{d}s \le C(T).
\end{equation}

This completes the proof of Lemma \ref{l4.8}.
\end{proof}

With the help of \eqref{618} and Lemma \ref{l4.8}, we can also obtain the following estimates:

\begin{lem}\label{lemma66}
There exists a constant $C(T)>0$ such that, for any $t\in [0,T]$,
\begin{equation*}
|(u,r^\frac{m}{2}u_r)(t)|_\infty + \Big|r^{\frac{m}{2}}\Big(\phi_r,\phi_t,u_{rr}, \big(\frac{u}{r}\big)_{r}\Big)(t)\Big|_2+ \int_0^t \Big|\big(u_r,\frac{u}{r}\big)\Big|_\infty^2\,\mathrm{d}s \leq C(T).
\end{equation*}
\end{lem}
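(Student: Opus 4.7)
\medskip
\noindent\emph{Proof proposal.}
The statement splits naturally into two blocks: the sup-in-$t$ bounds on $(u,r^{m/2}u_r)$ in $L^\infty$ and on $r^{m/2}(\phi_r,\phi_t,u_{rr},(u/r)_r)$ in $L^2$, and the time integral of $|(u_r,u/r)|_\infty^2$. The first block is essentially free once Lemma \ref{l4.8} is in hand: the auxiliary inequalities \eqref{618}, \eqref{6.4-d}, \eqref{6.6-1}, and \eqref{6.6-2} derived \emph{inside} the proof of Lemma \ref{l4.8} already give
\[
|u|_\infty + |r^{m/2}u_r|_\infty + \Big|r^{m/2}\Big(\phi_r,\phi_t,u_{rr},\big(\tfrac{u}{r}\big)_r\Big)\Big|_2 \leq C(T)\big(|r^{m/2}u_t|_2 + 1\big),
\]
so substituting the bound $\sup_{t\in[0,T]}|r^{m/2}u_t|_2 \leq C(T)$ from Lemma \ref{l4.8} closes part one.

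For the time integral, I start from $|(u_r,u/r)|_\infty \leq C\|\nabla\boldsymbol{u}\|_{L^\infty(\mathbb{R}^n)}$ (Lemma \ref{im-1}) and split by dimension. When $n=2$, the critical Sobolev embedding $D^1(\mathbb{R}^2)\hookrightarrow L^\infty(\mathbb{R}^2)$ for spherically symmetric vector fields (Lemma \ref{Hk-Ck-vector}) combined with Lemma \ref{im-1} yields $\|\nabla\boldsymbol{u}\|_{L^\infty} \leq C\|\nabla^2\boldsymbol{u}\|_{L^2} \leq C|r^{1/2}(u_{rr},(u/r)_r)|_2 \leq C(T)$ uniformly in $t$, and the integral is controlled by $T\cdot C(T)^2$. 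When $n=3$, I would use the Gagliardo-Nirenberg inequality $\|\nabla\boldsymbol{u}\|_{L^\infty(\mathbb{R}^3)}^2 \leq C\|\nabla^2\boldsymbol{u}\|_{L^2}\|\nabla^3\boldsymbol{u}\|_{L^2}$, so that, by part one and Cauchy--Schwarz in time, it suffices to establish $\int_0^T\|\nabla^3\boldsymbol{u}\|_{L^2}^2\,\mathrm{d}t \leq C(T)$.

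To do this, I would read $\eqref{e2.2}_2$ as an elliptic equation in the form $-2\alpha(u_r+\tfrac{2}{r}u)_r = -u_t - uu_r - \phi_r + 2\alpha\psi u_r$, differentiate once in $r$, and invoke Lemma \ref{im-1} to identify $\|\nabla^3\boldsymbol{u}\|_{L^2}$ with $|r(u_r+\tfrac{2}{r}u)_{rr}|_2$ up to lower order. The term $|ru_{tr}|_2$ that appears sits in $L^2([0,T])$ thanks to Lemma \ref{l4.8}; the nonlinear products $|r(uu_r)_r|_2$ and $|r(\psi u_r)_r|_2$ are absorbed using part one, the $L^\infty$ bound on $v$ (Lemma \ref{l4.4}), the BD entropy (Lemma \ref{energy-BD}), and the Hardy inequality near the origin. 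The potentially problematic $|r\phi_{rr}|_2$ should be avoided by trading derivatives via \eqref{v-express-2}: from $\phi_r = -\tfrac{\gamma-1}{2\alpha}\phi(v-u)$ one gets $\phi_{rr} = -\tfrac{\gamma-1}{2\alpha}(\phi(v-u))_r$, which reduces the second derivative of $\phi$ to first derivatives of $\phi$, $u$, and $v$, all covered by part one together with the previous global bounds.

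The main obstacle is the $3$-D estimate on $\int_0^T\|\nabla^3\boldsymbol{u}\|_{L^2}^2\,\mathrm{d}t$, and within it the $(\psi u_r)_r$ term: formally this contains $\psi_r = (\log\rho)_{rr}$, which is only known in $L^\infty_t L^2$ through the BD entropy (and even there with a weight $r^{m/2}$, not $r$). One must therefore be careful about distributing the radial weight; in particular, after writing $\psi_r u_r = \tfrac{1}{\sqrt{\rho}}\cdot 2(\sqrt{\rho})_{rr}\cdot u_r - 2\tfrac{(\sqrt{\rho})_r^2}{\rho}u_r$, the Hardy inequality together with $|u_r|_\infty \leq C(T)$ (Step one applied near $r\sim 1$) and the BD bound should allow absorption. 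Once this $3$-D bookkeeping is carried out, Cauchy--Schwarz and part one combine to produce the desired estimate $\int_0^t|(u_r,u/r)|_\infty^2\,\mathrm{d}s \leq C(T)$, completing the proof.
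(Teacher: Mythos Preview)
Your treatment of the first block is correct and matches the paper: the pointwise inequalities \eqref{618}--\eqref{6.6-2} plus the bound $\sup_t|r^{m/2}u_t|_2\le C(T)$ from Lemma \ref{l4.8} immediately give the sup-in-$t$ estimates.

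However, your approach to the time integral $\int_0^t|(u_r,u/r)|_\infty^2\,\mathrm{d}s$ has genuine gaps in both dimensions. In the $2$-D case you invoke Lemma \ref{Hk-Ck-vector} to obtain $\|\nabla\boldsymbol{u}\|_{L^\infty(\mathbb{R}^2)}\le C\|\nabla^2\boldsymbol{u}\|_{L^2(\mathbb{R}^2)}$, but that lemma applies only to vector fields of the form $f(r)\frac{\boldsymbol{x}}{r}$, not to their gradients; the embedding $W^{2,n}(\mathbb{R}^n)\hookrightarrow W^{1,\infty}(\mathbb{R}^n)$ is \emph{false} for spherically symmetric vector functions, and the paper supplies an explicit counterexample (Remark \ref{k=1rk}). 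In the $3$-D case your route through $\int_0^T\|\nabla^3\boldsymbol{u}\|_{L^2}^2\,\mathrm{d}t$ forces you to control $|r\psi_r u_r|_2$ and $|r\phi_{rr}|_2$, both of which involve $\psi_r=(\log\rho)_{rr}$. The quantitative bound on $|r^{m/2}\psi_r|_2$ is Lemma \ref{l4.7}, which is proved \emph{after} Lemma \ref{lemma66} and, crucially, \emph{uses} the time integral $\int_0^T|(u_r,u/r)|_\infty^2\,\mathrm{d}t$ from Lemma \ref{lemma66} to close the Gronwall estimate \eqref{cal-vr}. Your attempted workaround via the BD entropy does not help, since BD controls only $r^{m/2}(\sqrt{\rho})_r$ in $L^2$, not $(\sqrt{\rho})_{rr}$; the decomposition you write still contains an uncontrolled second derivative of $\sqrt{\rho}$.

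The paper avoids both obstructions with a single dimension-independent argument: it reads $\eqref{e2.2}_2$ as an elliptic equation in $L^4$ (not $L^2$), giving $\|\nabla^2\boldsymbol{u}\|_{L^4}\le C|r^{m/4}(u_t,uu_r,\phi_r,\psi u_r)|_4$. The right-hand side is estimated using only $|(u,v)|_\infty$, $|\phi|_\infty$, and Gagliardo--Nirenberg interpolation of $|r^{m/4}(u_t,u_r)|_4$ between $L^2$ norms, yielding $\|\nabla^2\boldsymbol{u}\|_{L^4}\le C(T)\big(|r^{m/2}(u_{tr},u_t/r)|_2^{n/4}+1\big)$. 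A second Gagliardo--Nirenberg step, $\|\nabla\boldsymbol{u}\|_{L^\infty}\le C\|\nabla\boldsymbol{u}\|_{L^2}^{(4-n)/(n+4)}\|\nabla^2\boldsymbol{u}\|_{L^4}^{2n/(n+4)}$, then reduces everything to $|r^{m/2}(u_{tr},u_t/r)|_2$, which is $L^2$ in time by Lemma \ref{l4.8}. This completely sidesteps $\psi_r$, $\phi_{rr}$, and third derivatives of $\boldsymbol{u}$, and works uniformly for $n=2,3$.
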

\begin{proof}
First, it follows from \eqref{618}--\eqref{6.6-2} and Lemma \ref{l4.8} that, for all $t\in[0,T]$,
\begin{equation}\label{6026}
|(u,r^\frac{m}{2}u_r)(t)|_\infty + \Big|r^{\frac{m}{2}}\Big(\phi_r,\phi_t,u_{rr}, \big(\frac{u}{r}\big)_{r}\Big)(t)\Big|_2\leq C(T)\big(|r^{\frac{m}{2}}u_t(t)|_2+1\big)\leq C(T).
\end{equation}

Next, multiplying $\eqref{e2.2}_2$ by $r^\frac{m}{4}$ and then taking the $L^4(I)$-norm of the resulting equality, we obtain from \eqref{v-express-2}, \eqref{6026}, and Lemmas \ref{im-1}, \ref{GN-ineq}, and \ref{lemma-initial} that
\begin{align*}
\|\nabla^2 \boldsymbol{u}\|_{L^4}&\leq C_0\Big|r^\frac{m}{4}\big(u_r+\frac{m}{r}u\big)_r\Big|_4\leq C_0\big|r^\frac{m}{4} \big(u_t,uu_r,\phi_r,\psi u_r\big)\big|_4\\
&\leq C_0\big(|r^\frac{m}{4} u_t|_4+|(u,v)|_\infty|r^\frac{m}{4} u_r|_4+|r^\frac{m}{4}\phi(v-u)|_4\big)\\
&\leq C(T)|r^\frac{m}{4}(u_t,u_r)|_4+C_0|\phi|_\infty^\frac{1}{2}|(u,v)|_\infty^\frac{1}{2}\big(|r^\frac{m}{2}\phi v|_2^\frac{1}{2}+|\phi|_\infty^\frac{1}{2} |r^\frac{m}{2}u|_2^\frac{1}{2}\big)\\
&\leq C(T)\big(\|(\boldsymbol{u}_t,\nabla\boldsymbol{u})\|_{L^4}+1\big)\\
&\leq C(T)\Big(\|\boldsymbol{u}_t\|_{L^2}^\frac{4-n}{4}\|\nabla\boldsymbol{u}_t\|_{L^2}^\frac{n}{4}+\|\nabla \boldsymbol{u}\|_{L^2}^\frac{4-n}{4}\|\nabla^2 \boldsymbol{u}\|_{L^2}^\frac{n}{4}+1\Big)\\
&\leq C(T)\Big(|r^\frac{m}{2}u_t|_2^\frac{4-n}{4}\Big|r^\frac{m}{2}\big(u_{tr},\frac{u_t}{r}\big)\Big|_2^\frac{n}{4}+\Big|r^\frac{m}{2}\big(u_r,\frac{u}{r}\big)\Big|_2^\frac{4-n}{4}\Big|r^\frac{m}{2}\Big(u_{rr},\big(\frac{u}{r}\big)_r\Big)\Big|_2^\frac{n}{4}+1\Big)\\
&\leq C(T)\Big(\Big|r^\frac{m}{2}\big(u_{tr},\frac{u_t}{r}\big)\Big|_2^\frac{n}{4}+1\Big),
\end{align*}
which, together with Lemmas \ref{GN-ineq} and \ref{lemma-initial}, and the Young inequality, yields that
\begin{equation}\label{ur-infty}
\begin{aligned}
\Big|\big(u_r,\frac{u}{r}\big)\Big|_\infty&\leq C_0\|\nabla \boldsymbol{u}\|_{L^\infty}\leq C_0\|\nabla \boldsymbol{u}\|_{L^2}^\frac{4-n}{n+4}\|\nabla^2\boldsymbol{u}\|_{L^4}^\frac{2n}{n+4}\\
&\leq C(T)\Big|r^\frac{m}{2}\big(u_r,\frac{u}{r}\big)\Big|_{2}^\frac{4-n}{n+4}\Big(\Big|r^\frac{m}{2}\big(u_{tr},\frac{u_t}{r}\big)\Big|_2^\frac{n^2}{2(n+4)}+1\Big)\\
&\leq C(T)\Big(\Big|r^\frac{m}{2}\big(u_{tr},\frac{u_t}{r}\big)\Big|_2^\frac{n^2}{2(n+4)}+1\Big)\leq C(T)\Big(\Big|r^\frac{m}{2}\big(u_{tr},\frac{u_t}{r}\big)\Big|_2+1\Big).
\end{aligned}   
\end{equation}
Thus, taking the square of the above and integrating the resulting inequality, along with Lemma \ref{l4.8}, lead to the desired conclusion.
\end{proof}

We now show the estimate of $\psi$.

\begin{lem}\label{l4.7}
There exists a constant $C(T)>0$ such that
\begin{equation*}\label{e4.34}
|\psi(t)|_\infty+\Big|r^{\frac{m}{2}}\big(\psi_r,\frac{\psi}{r},\psi_t\big)(t)\Big|_{2}\leq C(T)\qquad\,\mbox{for any $t\in [0,T]$}.
\end{equation*}
\end{lem}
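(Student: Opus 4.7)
The pointwise bound $|\psi|_\infty\leq C(T)$ is immediate from the identity $\psi=(v-u)/(2\alpha)$ together with $|v|_\infty\leq C(T)$ and $|u|_\infty\leq C(T)$ from Lemmas \ref{l4.4} and \ref{lemma66}. Next, $\eqref{e2.2}_3$ solves directly for $\psi_t$, giving $\psi_t=-(\psi u)_r-U_r$ with $U:=u_r+\frac{m}{r}u$. Using $|r^{m/2}U_r|_2\sim\|\nabla^2\boldsymbol{u}\|_{L^2}\leq C(T)$ (Lemma \ref{lemma66} via Lemma \ref{im-1}) combined with $|\psi|_\infty$, $|u|_\infty$, and $|r^{m/2}u_r|_2\leq C(T)$ (Lemma \ref{l4.6}), the bound on $|r^{m/2}\psi_t|_2$ will follow once $|r^{m/2}(\psi_r,\psi/r)|_2\leq C(T)$ is established. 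Hence the core task is to control the $r^{m/2}$-weighted $L^2$-norm of $(\psi_r,\psi/r)$.

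My plan for this is to pass through the effective velocity $v=u+2\alpha\psi$ and work with its spherical divergence $V:=v_r+\frac{m}{r}v$. The identity $\Psi:=\psi_r+\frac{m}{r}\psi=(V-U)/(2\alpha)$, together with $|r^{m/2}U|_2\leq C(T)$ from Lemma \ref{l4.5} and the equivalence $|r^{m/2}(\psi_r,\psi/r)|_2\sim|r^{m/2}\Psi|_2$ (Lemmas \ref{im-1} and \ref{lemma-initial}), reduces the task to bounding $|r^{m/2}V|_2$. Applying the spherical divergence operator $\partial_r+\frac{m}{r}$ to \eqref{eq:effective1} and using the identities $v_{rr}+\frac{m}{r}v_r=V_r+\frac{m}{r^2}v$ and $v_r=V-\frac{m}{r}v$, I will derive
\begin{equation*}
V_t+u V_r+u_r V+\frac{\gamma-1}{2\alpha}\phi V=m\Big(\frac{u}{r}\Big)_r v-\frac{\gamma-1}{2\alpha}\phi_r(v-u)+\frac{\gamma-1}{2\alpha}\phi U.
\end{equation*}
The critical cancellation is that of the apparent singular term $\frac{m}{r^2}uv$ with $-\frac{m}{r}u_r v$ (coming from $u_rv_r=u_rV-\frac{m}{r}u_rv$), producing $-m(u/r)_r v$, which becomes controllable since $|r^{m/2}(u/r)_r|_2\leq C(T)$ by Lemma \ref{lemma66}.

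Testing this equation with $r^m V$, integrating over $I$ (justifying the boundary terms via Lemma \ref{calculus} along the lines of the proofs of Lemmas \ref{l4.5}--\ref{l4.8}), and using $\int r^m u V V_r=-\frac{1}{2}\int (r^m u)_r V^2\,\mathrm{d}r$, I obtain
\begin{equation*}
\frac{1}{2}\frac{\mathrm{d}}{\mathrm{d}t}|r^{m/2}V|_2^2+\frac{\gamma-1}{2\alpha}\big|r^{m/2}\phi^{1/2}V\big|_2^2\leq C(T)\big(1+|u_r|_\infty+|u/r|_\infty\big)|r^{m/2}V|_2^2+C(T),
\end{equation*}
where the right-hand side is bounded using $|v|_\infty,|u|_\infty,|\phi|_\infty\leq C(T)$ together with $|r^{m/2}(\phi_r,(u/r)_r,U)|_2\leq C(T)$ from Lemmas \ref{l4.5}--\ref{lemma66}, and the term $C(T)|r^{m/2}V|_2$ is absorbed by Young's inequality. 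Since $\int_0^T(|u_r|_\infty+|u/r|_\infty)\,\mathrm{d}t\leq C(T)$ by the Cauchy--Schwarz inequality and the $L^2_t L^\infty_r$-bound in Lemma \ref{lemma66}, Gr\"onwall's inequality closes the estimate provided the initial datum satisfies $|r^{m/2}V_0|_2\sim\|\nabla\boldsymbol{v}_0\|_{L^2}\leq\|\nabla\boldsymbol{u}_0\|_{L^2}+2\alpha\|\nabla^2\log\rho_0\|_{L^2}\leq C_0$, which follows from $\boldsymbol{u}_0\in H^2$ and $\nabla\log\rho_0\in D^1$ via Lemma \ref{lemma-initial}.

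The main obstacle will be the careful derivation of the $V$-equation: the manipulations must be arranged so that every $1/r^2$ singularity is absorbed into a term involving $(u/r)_r$ (controllable by Lemma \ref{lemma66}) rather than into a term demanding a higher derivative of $u$ that we do not yet have at the 2-order level. A secondary technical point is verifying that the flux $r^m u V^2$ associated with the integration by parts lies in $W^{1,1}(I)$ and vanishes at $r=0$; this follows from $u|_{r=0}=0$ and the regularity class \eqref{spd} via the same bookkeeping used in the proofs of Lemmas \ref{l4.5}--\ref{l4.8}.
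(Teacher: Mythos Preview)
Your overall strategy is correct and matches the paper's: reduce to an $L^2$ energy estimate on the spherical divergence $V=v_r+\tfrac{m}{r}v=\diver\boldsymbol v$, then recover $(\psi_r,\psi/r)$ from $V-U$ and finally read off $\psi_t$ from $\eqref{e2.2}_3$. Your derivation of the radial $V$-equation, with the cancellation $-\tfrac{m}{r}u_rv+\tfrac{m}{r^2}uv=-m(\tfrac u r)_rv$, is the radial counterpart of the paper's identity $\nabla\boldsymbol u^\top\!:\!\nabla\boldsymbol v=u_rv_r+\tfrac{m}{r^2}uv$ in \eqref{derive-vr}--\eqref{cal-vr}, and the Gr\"onwall closure then proceeds identically.

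There is, however, a technical gap in your proposed justification of the energy identity. You plan to integrate $\int r^m uVV_r\,\mathrm dr$ by parts and verify that the flux $r^muV^2$ lies in $W^{1,1}(I)$ ``along the lines of Lemmas \ref{l4.5}--\ref{l4.8}''. But $(r^muV^2)_r$ contains the term $r^muVV_r$, and controlling $r^{m/2}V_r$ in $L^2$ would require $r^{m/2}\psi_{rr}=r^{m/2}(\log\rho)_{rrr}\in L^2$, which is \emph{not} part of the $2$-order regularity class \eqref{spd} (only $r^{m/2}(\psi_r,\psi/r)\in L^2$ is available there). The paper avoids this precisely by passing to the multidimensional transport form $(\diver\boldsymbol v)_t+\diver(\boldsymbol u\,\diver\boldsymbol v)=\tilde{\mathcal G}$ and invoking the DiPerna--Lions commutator Lemma~\ref{lemma-lions}, which delivers the renormalized identity
\[
\tfrac{\mathrm d}{\mathrm dt}\|\diver\boldsymbol v\|_{L^2}^2=-\int(\diver\boldsymbol u)(\diver\boldsymbol v)^2\,\mathrm d\boldsymbol x+2\int(\diver\boldsymbol v)\,\tilde{\mathcal G}\,\mathrm d\boldsymbol x
\]
under only $\diver\boldsymbol v\in L^\infty_tL^2_x$ and $\boldsymbol u\in L^1_tW^{1,\infty}_x$---no derivative of $V$ is needed. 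Your radial argument can be repaired by the same mollification device (applied to $V_t+(uV)_r+\tfrac{m}{r}uV=\tilde{\mathcal G}$ in one space dimension), but the flux-checking template of Lemmas~\ref{l4.5}--\ref{l4.8} does not carry over here.
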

\begin{proof}
First, it follows from \eqref{v-express-2} 
and Lemmas \ref{l4.4} and \ref{lemma66} that, for all $t\in [0,T]$,
\begin{equation*}
|\psi(t)|_\infty\leq C_0|(v,u)(t)|_\infty\leq C(T).    
\end{equation*}

Next, the equation of $\boldsymbol{v}$ in the 
M-D coordinates (see \eqref{eq-vv} in \S \ref{section-local-regular}) takes the form: 
\begin{equation}\label{845}
\boldsymbol{v}_t+\boldsymbol{u}\cdot\nabla\boldsymbol{v}+\frac{\gamma-1}{2\alpha}\phi(\boldsymbol{v}-\boldsymbol{u})=\boldsymbol{0}.
\end{equation}
Applying the divergence to both sides of the above leads to
the following equation holds in the sense of distributions:
\begin{equation}\label{nabla-v}
\begin{aligned}
(\diver\boldsymbol{v})_t+\diver\big(\boldsymbol{u} (\diver\boldsymbol{v})\big)&=(\diver\boldsymbol{u})(\diver\boldsymbol{v})-\nabla\boldsymbol{u}^\top:\nabla\boldsymbol{v}\\
&\quad -\frac{\gamma-1}{2\alpha}\nabla\phi\cdot(\boldsymbol{v}-\boldsymbol{u})-\frac{\gamma-1}{2\alpha}\phi(\diver\boldsymbol{v}-\diver\boldsymbol{u}):=\tilde{\mathcal{G}}.
\end{aligned}
\end{equation}
Using $\boldsymbol{v}=\boldsymbol{u}+2\alpha\nabla\log\rho$
and Lemmas \ref{zth1} and \ref{ale1}, 
it can be checked that
\begin{equation}
\diver\boldsymbol{v}\in L^\infty([0,T];L^2(\mathbb{R}^n)),\qquad \boldsymbol{u}\in L^1([0,T];W^{1,\infty}(\mathbb{R}^n)),    
\end{equation}
and $\tilde{\mathcal{G}}\in L^1([0,T];L^2(\mathbb{R}^n))$. 
Thus, it follows from Lemma \ref{lemma-lions} that, for {\it a.e.} $t\in (0,T)$,
\begin{align*}
&\frac{\mathrm{d}}{\mathrm{d}t}\|\diver\boldsymbol{v}\|_{L^2}^{2}+\frac{\gamma-1}{\alpha}\big\|\phi^\frac{1}{2}\diver\boldsymbol{v}\big\|_{L^2}^{2}\notag\\
&=\int_{\mathbb{R}^n}  (\diver\boldsymbol{u}) (\diver\boldsymbol{v})^2\,\mathrm{d}\boldsymbol{x}-2\int_{\mathbb{R}^n} (\nabla\boldsymbol{u}^\top:\nabla\boldsymbol{v})(\diver\boldsymbol{v})\,\mathrm{d}\boldsymbol{x} \\
&\quad + \frac{\gamma-1}{\alpha}\int_{\mathbb{R}^n} \big(-\nabla\phi\cdot(\boldsymbol{v}-\boldsymbol{u})+\phi(\diver\boldsymbol{u})\big)(\diver\boldsymbol{v})\,\mathrm{d}\boldsymbol{x},\notag
\end{align*}
which, along with the spherical coordinate transformation, yields that, 
for {\it a.e.} $t\in (0,T)$,
\begin{equation}\label{derive-vr}
\begin{aligned}
&\frac{\mathrm{d}}{\mathrm{d}t}\Big|r^{\frac{m}{2}}\big(v_r+\frac{m}{r}v\big)\Big|_{2}^{2}+\frac{\gamma-1}{\alpha}\Big|(r^m\phi)^\frac{1}{2}\big(v_r+\frac{m}{r}v\big)\Big|_{2}^{2}\\
&=\int_0^\infty r^m \big(u_r+\frac{m}{r}u\big)\big(v_r+\frac{m}{r}v\big)^2\,\mathrm{d}r-2\int_0^\infty r^m \Big(u_rv_r+\frac{m}{r^2}uv\Big)\big(v_r+\frac{m}{r}v\big)\,\mathrm{d}r\\
&\quad + \frac{\gamma-1}{\alpha}\int_0^\infty r^m\Big(-\phi_r(v-u)+\phi\big(u_r+\frac{m}{r}u\big)\Big)\big(v_r+\frac{m}{r}v\big)\,\mathrm{d}r.
\end{aligned}    
\end{equation}

We now continue to estimate the right-hand side of \eqref{derive-vr}. 
It follows from Lemmas \ref{important2}, \ref{im-1}, \ref{l4.6}, and \ref{lemma66}, and the H\"older and Young inequalities that
\begin{equation}\label{cal-vr}
\begin{aligned}
&\frac{\mathrm{d}}{\mathrm{d}t}\Big|r^{\frac{m}{2}}\big(v_r+\frac{m}{r}v\big)\Big|_{2}^{2}+\frac{\gamma-1}{\alpha}\Big|(r^m\phi)^\frac{1}{2}\big(v_r+\frac{m}{r}v\big)\Big|_{2}^{2}\\
&\leq C_0\Big|\big(u_r,\frac{u}{r}\big)\Big|_\infty\Big|r^\frac{m}{2}\big(v_r+\frac{m}{r}v\big)\Big|_{2}^{2}+C_0\Big|\big(u_r,\frac{u}{r}\big)\Big|_\infty\Big|r^\frac{m}{2}\big(v_r,\frac{v}{r}\big)\Big|_{2}\Big|r^\frac{m}{2}\big(v_r+\frac{m}{r}v\big)\Big|_{2}\\
&\quad +C_0\Big(|r^\frac{m}{2}\phi_r|_2|(v,u)|_\infty+|\phi|_{\infty}\Big|r^\frac{m}{2}\big(u_r+\frac{m}{r}u\big)\Big|_{2}\Big)\Big|r^\frac{m}{2}\big(v_r+\frac{m}{r}v\big)\Big|_{2}\\
&\leq C(T)\Big(\Big|\big(u_r,\frac{u}{r}\big)\Big|_\infty^2+1\Big)\Big|r^\frac{m}{2}\big(v_r+\frac{m}{r}v\big)\Big|_{2}^{2}+C(T),
\end{aligned}    
\end{equation}
which, along with Lemmas \ref{im-1}, \ref{lemma66}, and \ref{lemma-initial}, and the Gr\"onwall inequality, gives
\begin{equation}\label{v_r}
\begin{aligned}
\Big|r^\frac{m}{2}\big(v_r,\frac{v}{r}\big)(t)\Big|_{2}&\leq C(T)\Big|r^\frac{m}{2}\big((v_0)_r,\frac{v_0}{r}\big)\Big|_{2}+C(T)\\
&\leq C(T) \Big|r^\frac{m}{2}\Big((u_0)_r,\frac{u_0}{r},(\log\rho_0)_{rr},\frac{(\log\rho_0)_r}{r}\Big)\Big|_{2}+C(T)\\
&\leq C(T) \|(\nabla \boldsymbol{u}_0,\nabla^2\log\rho_0)\|_{L^{2}}+C(T)\leq C(T).
\end{aligned}  
\end{equation}
This, together with \eqref{v-express-2} and Lemma \ref{l4.6}, yields that, for all $t\in [0,T]$,
\begin{equation}\label{psi_r}
\Big|r^\frac{m}{2}\big(\psi_r,\frac{\psi}{r}\big)(t)\Big|_{2}\leq C_0\Big|r^\frac{m}{2}\big(v_r,\frac{v}{r}\big)(t)\Big|_{2}+C_0\Big|r^\frac{m}{2}\big(u_r,\frac{u}{r}\big)(t)\Big|_{2}\leq C(T).
\end{equation}

Finally, multiplying $\eqref{e2.2}_3$ by $r^\frac{m}{2}$ and then taking the $L^2(I)$-norm of the resulting equality, we obtain from the above and Lemmas \ref{l4.6} and \ref{lemma66} that
\begin{equation}\label{psi_t}
\begin{split}
|r^{\frac{m}{2}}\psi_t|_2&\leq  |r^{\frac{m}{2}}(\psi u)_r|_2+\Big|r^{\frac{m}{2}}\big(u_{r}+\frac{m}{r}u\big)_r\Big|_2 \\
&\leq  |r^{\frac{m}{2}}u_r|_2|\psi|_\infty+|r^{\frac{m}{2}}\psi_r|_2|u|_\infty+\Big|r^{\frac{m}{2}}\Big(u_{rr},\big(\frac{u}{r}\big)_r\Big)\Big|_2 \le C(T).
\end{split}
\end{equation}
The proof of Lemma \ref{l4.7} is completed. 
\end{proof}

The following lemma provides the high-order estimates of $(\phi,u)$.
\begin{lem}\label{l4.9}  
There exists a constant $C(T)>0$ such that, for any $t\in [0,T]$,
\begin{equation*}
\begin{split}
&\Big|r^{\frac{m}{2}}\Big(\phi_{rr},\frac{\phi_r}{r},\phi_{tr}\Big)(t)\Big|^2_2+\int_{0}^{t}  \Big|r^{\frac{m}{2}}\Big(u_{rrr},\frac{u_{rr}}{r},\big(\frac{u}{r}\big)_{rr},\frac{1}{r}\big(\frac{u}{r}\big)_r\Big)\Big|_2^2 \,\mathrm{d}s\leq C(T).
\end{split}
\end{equation*}
\end{lem}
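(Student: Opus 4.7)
My plan is to split the proof into two independent pieces: the pointwise-in-time $L^2$ bounds for the three $\phi$-quantities, which will follow from purely algebraic manipulations combined with the estimates already secured in Lemmas \ref{important2}, \ref{lemma66}, and \ref{l4.7}; and the space-time $L^2$ bounds for the third-order spatial derivatives of $u$, which will be extracted from the momentum equation $\eqref{e2.2}_2$ via the div-curl equivalence of Lemma \ref{im-1}.

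For the $\phi$-part, the decisive observation is the pointwise identity $\phi_r=(\gamma-1)\phi\psi$, which is a direct consequence of \eqref{tr} and \eqref{v-express-2}. Differentiating it in $r$ and $t$ yields
\begin{equation*}
\phi_{rr}=(\gamma-1)(\phi_r\psi+\phi\psi_r),\qquad
\phi_{tr}=(\gamma-1)(\phi_t\psi+\phi\psi_t),\qquad
\frac{\phi_r}{r}=(\gamma-1)\phi\,\frac{\psi}{r}.
\end{equation*}
Taking $L^2(r^m\,\mathrm{d}r)$ norms and invoking $|\phi|_\infty\leq C(T)$ from Lemma \ref{important2}, $|\psi|_\infty\leq C(T)$ together with the pointwise control on $|r^{m/2}(\psi_r,\psi/r,\psi_t)|_2$ from Lemma \ref{l4.7}, and $|r^{m/2}(\phi_r,\phi_t)|_2\leq C(T)$ from Lemma \ref{lemma66}, each of $|r^{m/2}\phi_{rr}|_2$, $|r^{m/2}\phi_r/r|_2$, and $|r^{m/2}\phi_{tr}|_2$ is estimated by $C(T)$ pointwise in $t$.

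For the $u$-part, applying $\partial_r$ to $\eqref{e2.2}_2$ gives
\begin{equation*}
2\alpha\big(u_r+\tfrac{m}{r}u\big)_{rr}=u_{tr}+(uu_r)_r+\phi_{rr}-2\alpha(\psi u_r)_r.
\end{equation*}
Squaring, integrating in $t$, and bounding each summand by H\"older and the triangle inequalities---using $\int_0^t|r^{m/2}u_{tr}|_2^2\,\mathrm{d}s\leq C(T)$ from Lemma \ref{l4.8}, the just-proved pointwise bound on $|r^{m/2}\phi_{rr}|_2$, the pointwise estimates on $|(u,\psi)|_\infty$ and $|r^{m/2}(u_r,u_{rr},\psi_r)|_2$, and crucially the $L^2_t$-integrability $\int_0^t|(u_r,u/r)|_\infty^2\,\mathrm{d}s\leq C(T)$ from Lemma \ref{lemma66}---yields $\int_0^t|r^{m/2}(u_r+m/r\,u)_{rr}|_2^2\,\mathrm{d}s\leq C(T)$. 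The matching $L^2_t$ bound on $|r^{(m-2)/2}(u_r+m/r\,u)_r|_2$ is then obtained from the undifferentiated identity $2\alpha(u_r+m/r\,u)_r=u_t+uu_r+\phi_r-2\alpha\psi u_r$ together with the Hardy inequality (Lemma \ref{hardy}) near the origin, the relevant boundary values at $r=0$ vanishing by spherical symmetry. Applying Lemma \ref{im-1} then converts these two weighted estimates on the divergence $u_r+m/r\,u=\diver\boldsymbol{u}$ into the desired $L^2_tL^2_r$ bound on $r^{m/2}(u_{rrr},u_{rr}/r,(u/r)_{rr},\tfrac{1}{r}(u/r)_r)$.

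The hardest point I anticipate is the pointwise-in-$t$ control of $\phi_{tr}$. A naive expansion $\phi_{tr}=-(u\phi_r)_r-(\gamma-1)(\phi(u_r+m/r\,u))_r$ produces a term $u_r\phi_r$ whose $L^2(r^m\,\mathrm{d}r)$ norm demands $|u_r|_\infty$, but Lemma \ref{lemma66} only provides this in $L^2_t$, not in $L^\infty_t$. The identity $\phi_r=(\gamma-1)\phi\psi$ precisely bypasses this obstruction by trading every spatial derivative of $u$ for a derivative of the already-controlled variables $\phi$ and $\psi$; the same trick underlies the $\phi_r/r$ estimate, which would otherwise require a Hardy-type bound on $\phi_r$ that is not directly available.
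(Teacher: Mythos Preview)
Your proposal is correct and follows essentially the same route as the paper: both proofs exploit the identity $\phi_r=(\gamma-1)\phi\psi$ to handle the $\phi$-estimates (this is exactly the device the paper uses to avoid a pointwise $|u_r|_\infty$ bound), and both extract the third-order $u$-estimates by reading off $\big(u_r+\tfrac{m}{r}u\big)_{rr}$ and $r^{-1}\big(u_r+\tfrac{m}{r}u\big)_r$ from the momentum equation and converting via Lemma~\ref{im-1}. The only minor deviation is your appeal to the Hardy inequality for the $r^{(m-2)/2}$-weighted piece: the paper instead observes directly that $|r^{(m-2)/2}f|_2=|r^{m/2}(f/r)|_2$ and invokes the already-established pointwise bounds on $|r^{m/2}(u/r,\psi/r)|_2$ (Lemmas~\ref{l4.6}--\ref{l4.7}) and the $L^2_t$ bound on $|r^{m/2}(u_t/r)|_2$ (Lemma~\ref{l4.8}), which is cleaner and avoids any boundary-value discussion.
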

\begin{proof} We divide the proof into two steps.

\smallskip
\textbf{1. Estimates on $\phi$.} For the $L^2(I)$-estimates of $r^{\frac{m}{2}}(\phi_{rr},\frac{\phi_r}{r})$ and $r^\frac{m}{2}\phi_{tr}$, it follows from Lemmas \ref{important2} and \ref{lemma66}--\ref{l4.7} that
\begin{equation}\label{phixxl1}
\begin{aligned}
\Big|r^\frac{m}{2}\big(\phi_{rr},\frac{\phi_r}{r}\big)\Big|_2&=(\gamma-1)\Big|r^\frac{m}{2}\Big((\phi\psi)_r,\frac{\phi\psi}{r}\Big)\Big|_2\leq (\gamma-1) \Big|r^\frac{m}{2}\Big(\phi_r\psi,\phi\psi_r,\frac{\phi\psi}{r}\Big)\Big|_2\\
&\leq C_0|\psi|_\infty|r^\frac{m}{2}\phi_r|_2+C_0|\phi|_\infty\Big|r^\frac{m}{2}\big(\psi_r,\frac{\psi}{r}\big)\Big|_2\leq C(T),\\
|r^{\frac{m}{2}}\phi_{tr}|_2&=(\gamma-1)|r^\frac{m}{2}(\phi\psi)_t|_2\leq (\gamma-1)|r^\frac{m}{2}(\phi_t\psi,\phi\psi_t)|_2 \\
&\leq C_0|\psi|_\infty |r^\frac{m}{2}\phi_t|_2+C_0|\phi|_\infty |r^\frac{m}{2}\psi_t|_2\leq C(T).
\end{aligned}
\end{equation}

\smallskip
\textbf{2. Estimates on $u$.} First, multiplying $\eqref{e2.2}_2$ by $r^{\frac{m-2}{2}}$ and taking 
the $L^2(I)$-norm of the resulting equality, we obtain from Lemmas \ref{important2}, \ref{l4.6}, and \ref{l4.7} that
\begin{equation}\label{6028}
\begin{aligned}
\Big|r^\frac{m-2}{2}\big(u_r+\frac{m}{r}u\big)_r\Big|_2 
&\leq |r^\frac{m-2}{2} u_t|_2+C_0|u_r|_\infty|r^\frac{m-2}{2}(u,\psi)|_2+C_0|\phi|_\infty |r^\frac{m-2}{2}\psi|_2\\
&\leq C(T)\big(|r^\frac{m-2}{2} u_t|_2+|u_r|_\infty+1\big). 
\end{aligned}    
\end{equation}

Similarly, applying $r^\frac{m}{2}\partial_r$ on both sides of $\eqref{e2.2}_2$ and taking $L^2(I)$-norm of the resulting equality,
we obtain from Lemmas \ref{important2}, \ref{l4.6}, 
and \ref{lemma66}--\ref{l4.7} that
\begin{equation}\label{uxxxl1}
\begin{aligned}
\Big|r^{\frac{m}{2}}\big(u_r+ \frac{m}{r}u\big)_{rr}\Big|_2&\leq  \big|r^{\frac{m}{2}}\big(u_{tr},(u  u_r)_r,\phi_{rr},2\alpha(\psi u_r)_r\big)\big|_2\\
&\leq |r^{\frac{m}{2}}u_{tr}|_{2}+|r^{\frac{m}{2}} u_r^2|_2+|r^{\frac{m}{2}}u u_{rr}|_2+C_0|r^{\frac{m}{2}}\phi_r\psi|_2\\
&\quad +C_0\big(|r^{\frac{m}{2}}\phi\psi_r|_2+|r^{\frac{m}{2}}\psi_r u_r|_2+|r^{\frac{m}{2}}\psi u_{rr}|_2)\\
&\leq |r^{\frac{m}{2}}u_{tr}|_{2}+|u_r|_\infty|r^{\frac{m}{2}} u_r|_2+C_0|(u,\psi)|_\infty|r^{\frac{m}{2}}u_{rr}|_2\\
&\quad +C_0\big(|\psi|_\infty|r^{\frac{m}{2}}\phi_r|_2
+|(\phi,u_r)|_\infty|r^{\frac{m}{2}}\psi_r|_2\big)\\
&\leq C(T)\big(|r^{\frac{m}{2}}u_{tr}|_{2}+|u_r|_\infty+1\big).
\end{aligned}    
\end{equation}

Thus, combining with \eqref{6028}--\eqref{uxxxl1}, together with Lemmas \ref{im-1} and \ref{l4.8}--\ref{lemma66}, gives
\begin{align*}
&\int_0^t \Big|r^{\frac{m}{2}}\Big(u_{rrr},\frac{u_{rr}}{r},\big(\frac{u}{r}\big)_{rr},\frac{1}{r}\big(\frac{u}{r}\big)_r\Big)\Big|_2^2\,\mathrm{d}s\\
&\leq C_0\int_0^t\Big|r^\frac{m}{2}\Big(\big(u_r+ \frac{m}{r}u\big)_{rr},\frac{1}{r}\big(u_r+ \frac{m}{r}u\big)_{r}\Big)\Big|_2^2\,\mathrm{d}s\\
&\leq C(T)\int_0^t\Big(\Big|r^{\frac{m}{2}}\big(u_{tr},\frac{u_t}{r}\big)\Big|_2^2+|u_r|_\infty^2\Big)\,\mathrm{d}s+C(T)\leq C(T).
\end{align*}

This completes the proof of Lemma \ref{l4.9}.
\end{proof}

\subsection{ Time-weighted energy estimates of the velocity}
\begin{lem}\label{l4.10}  
There exists a constant $C(T)>0$ such that
\begin{equation*}
\begin{split}
&t\Big|r^{\frac{m}{2}}\big(u_{tr},\frac{u_t}{r}\big)(t)\Big|^2_2+\int_{0}^{t} s |r^{\frac{m}{2}}u_{tt}|^2_2 \,\mathrm{d}s\leq C(T)
\qquad\mbox{for any $t\in [0,T]$}.
\end{split}
\end{equation*}
\end{lem}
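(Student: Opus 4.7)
The plan is to differentiate $\eqref{e2.2}_2$ in $t$, test the resulting $u_{tt}$-equation against the weight $r^m u_{tt}$, and then multiply by the temporal weight $t$ before integrating from $0$ to $t$. The factor $t$ is needed precisely because $|r^{m/2}u_{tt}(0)|_2$ is not directly controlled by the initial data (only $u_t|_{t=0}$ can be computed from $\eqref{e2.2}_2$), and the time-weighted formulation annihilates this initial layer.

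Applying $\partial_t$ to $\eqref{e2.2}_2$ yields
\begin{equation*}
u_{tt}+u_tu_r+uu_{tr}+\phi_{tr}=2\alpha\big(u_{tr}+\tfrac{m}{r}u_t\big)_r+2\alpha\psi_tu_r+2\alpha\psi u_{tr}.
\end{equation*}
Multiplying by $r^mu_{tt}$, integrating over $I$, and integrating by parts in the viscous term (using $(u_{ttr}+\tfrac{m}{r}u_{tt})(u_{tr}+\tfrac{m}{r}u_t)=\tfrac{1}{2}\partial_t(u_{tr}+\tfrac{m}{r}u_t)^2$) produces, once the boundary contribution at $r=0$ is shown to vanish by the same $\mathcal{B}|_{r=0}=0$ checking scheme as in Lemmas \ref{l4.6} and \ref{l4.8} (justified a posteriori by the regularity $\sqrt{t}r^{m/2}u_{tt}\in L^2([0,T];L^2(I))$ and $\sqrt{t}r^{m/2}(u_{tr},u_t/r)\in L^\infty([0,T];L^2(I))$ from Theorem \ref{rth1}), the identity
\begin{equation*}
|r^{\frac{m}{2}}u_{tt}|_2^2+\alpha\frac{\mathrm{d}}{\mathrm{d}t}\Big|r^{\frac{m}{2}}\big(u_{tr}+\tfrac{m}{r}u_t\big)\Big|_2^2=-\int_0^\infty r^mu_{tt}\big(u_tu_r+uu_{tr}+\phi_{tr}-2\alpha\psi_tu_r-2\alpha\psi u_{tr}\big)\,\mathrm{d}r.
\end{equation*}

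Multiplying by $t$, integrating over $[0,t]$, and rewriting $t\tfrac{\mathrm{d}}{\mathrm{d}t}F=\tfrac{\mathrm{d}}{\mathrm{d}t}(tF)-F$ for $F=|r^{m/2}(u_{tr}+\tfrac{m}{r}u_t)|_2^2$ gives
\begin{equation*}
\int_0^t s|r^{\frac{m}{2}}u_{tt}|_2^2\,\mathrm{d}s+\alpha t\Big|r^{\frac{m}{2}}\big(u_{tr}+\tfrac{m}{r}u_t\big)\Big|_2^2=\alpha\!\int_0^t\!\Big|r^{\frac{m}{2}}\big(u_{tr}+\tfrac{m}{r}u_t\big)\Big|_2^2\mathrm{d}s-\int_0^t\!s\!\int_0^\infty\! r^mu_{tt}[\cdots]\,\mathrm{d}r\mathrm{d}s,
\end{equation*}
where the first term on the right is already $\le C(T)$ by Lemma \ref{im-1} and Lemma \ref{l4.8}. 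Each of the six source terms on the right is then controlled by Cauchy--Schwarz and Young's inequality, absorbing an $\epsilon$-fraction of $s|r^{m/2}u_{tt}|_2^2$ into the left-hand side. Specifically, using Lemmas \ref{important2}, \ref{l4.4}, \ref{l4.6}, \ref{lemma66}, \ref{l4.7}, and \ref{l4.9}, the bounds $|u|_\infty,\,|\psi|_\infty\le C(T)$ and $|r^{m/2}(\phi_{tr},\psi_t)|_2\le C(T)$ handle the terms $uu_{tr}$, $\psi u_{tr}$, $\phi_{tr}$, and $\psi_tu_r$; the terms $u_tu_r$ and $\psi_tu_r$ pick up a factor $|u_r|_\infty$ which is only in $L^2_t$, contributing an integrable-in-time coefficient in front of $s|r^{m/2}(u_{tr}+\tfrac{m}{r}u_t)|_2^2$ via Lemma \ref{lemma66}. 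Grönwall's inequality applied in the variable $t|r^{m/2}(u_{tr}+\tfrac{m}{r}u_t)|_2^2$ then closes the estimate, and Lemma \ref{im-1} converts the divergence-type norm back into $|r^{m/2}(u_{tr},u_t/r)|_2$.

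The main obstacle is the rigorous justification of the integration by parts in $r$ and the vanishing of the boundary trace at the coordinate singularity $r=0$: since $u_{tt}$ is only in the time-weighted $L^2_tL^2$ class from Theorem \ref{rth1}, the verification that $r^mu_{tt}(u_{tr}+\tfrac{m}{r}u_t)\to 0$ as $r\to 0^+$ for a.e.\ $t\in(0,T)$ has to be carried out in the spirit of the $\mathcal{B}_{10}\in W^{1,1}(I)$ computation in Lemma \ref{l4.8}, invoking both the higher-order regularity $r^{m/2}(u_{tr},u_t/r,u_{trr},(u_t/r)_r)\in L^2_{\mathrm{loc}}((0,T];L^2(I))$ and Lemma \ref{hardy}. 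A secondary technical point is that the source term $\phi_{tr}$ can be traded against the viscous damping only because Lemma \ref{l4.9} already provides its $L^\infty_tL^2_r$-bound, which in turn relies crucially on $(\rho^{\gamma-1})_{tr}=((\gamma-1)\phi\psi)_t$ and on the established $L^\infty$-bound of $\psi$.
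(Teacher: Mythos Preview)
Your approach is essentially the same as the paper's: differentiate $\eqref{e2.2}_2$ in $t$, test with $r^m u_{tt}$, extract $\alpha\frac{\mathrm d}{\mathrm dt}\big|r^{m/2}(u_{tr}+\tfrac{m}{r}u_t)\big|_2^2$ from the viscous term, multiply by the time weight, and close using Lemmas \ref{l4.8}--\ref{l4.9}. Two technical points the paper handles more explicitly than your sketch: first, the identity $\mathcal B_{11}=\alpha\frac{\mathrm d}{\mathrm dt}\|\diver\boldsymbol u_t\|_{L^2}^2$ is justified not by an $r=0$ boundary check but by a time-mollification argument (valid for any $\boldsymbol f\in L^2([\tau,T];H^2)$ with $\boldsymbol f_t\in L^2([\tau,T];L^2)$), since the issue is pulling $\partial_t$ outside the integral when $u_{tt}$ has only weighted $L^2$ regularity; second, rather than integrating from $0$ directly, the paper integrates from $\tau>0$ and invokes Lemma \ref{bjr} to select $\tau_k\to 0$ with $\tau_k\big|r^{m/2}(u_{tr},\tfrac{u_t}{r})(\tau_k)\big|_2^2\to 0$, which rigorously disposes of the lower endpoint (you implicitly assume $tF(t)\to 0$ as $t\to 0$, which follows only along a subsequence). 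Also, no Gr\"onwall is actually needed: after multiplication by $s$ the right-hand side integrates to $C(T)\int_0^t(F+1)\,\mathrm ds\le C(T)$ directly by Lemma \ref{l4.8}.
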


\begin{proof}
First, applying $r^m u_{tt}\partial_t$ to both sides of $(\ref{e2.2})_2$ and integrating the resulting equality over $I$ yield that, for any $t\in [\tau,T]$ and $\tau\in (0,T)$,
\begin{equation}\label{eq:b12pre}
\begin{aligned}
&\underline{-2\alpha \int_0^\infty r^m\big(u_r+ \frac{m}{r}u\big)_{tr}u_{tt}\,\mathrm{d}r}_{:=\mathcal{B}_{11}}+|r^\frac{m}{2}u_{tt}|_2^2\\
&=- \int_0^\infty r^m\big((u u_r)_{t}+\phi_{tr}-2\alpha (\psi u_r)_t\big)u_{tt}\,\mathrm{d}r.
\end{aligned}
\end{equation}

Then we claim that, for any $t\in [\tau,T]$,
\begin{equation}\label{eq:b12}
\mathcal{B}_{11}=\alpha \frac{\mathrm{d}}{\mathrm{d}t}\Big|r^{\frac{m}{2}}\big(u_{tr}+\frac{m}{r}u_t\big)\Big|_2^2.
\end{equation}
From the spherical coordinate transformation 
and Lemma \ref{lemma-initial}, it suffices to prove that
\begin{equation}\label{7-45}
\frac{\mathrm{d}}{\mathrm{d}t}\|\diver \boldsymbol{f}\|_{L^2}^2=-2\int_{\mathbb{R}^n} \nabla\diver \boldsymbol{f}\cdot\boldsymbol{f}_t\,\mathrm{d}\boldsymbol{x}
\end{equation}
for any vector function $\boldsymbol{f}\in L^2([\tau,T];H^2(\mathbb{R}^n))$ with 
$\boldsymbol{f}_t\in L^2([\tau,T];L^2(\mathbb{R}^n))$. 
To achieve this, consider the standard mollifier 
$\omega_\varepsilon$ defined on $\mathbb{R}$, and set
\begin{equation*}
g^\varepsilon(t,\boldsymbol{x})=\int_{-\infty}^{\infty} g(t-t',\boldsymbol{x})\omega_\varepsilon(t')\,\mathrm{d}t'.
\end{equation*}
Then, after extension and regularization, $\boldsymbol{f}^\varepsilon\in C^\infty([\tau,T];H^2(\mathbb{R}^n))$, we see from integration by parts that, for any $\varepsilon,\iota>0$, 
\begin{equation*}
\begin{aligned}
\frac{\mathrm{d}}{\mathrm{d}t}\|\diver \boldsymbol{f}^\varepsilon-\diver \boldsymbol{f}^\iota\|_{L^2}^2&=2\int_{\mathbb{R}^n} \big(\diver \boldsymbol{f}^\varepsilon-\diver \boldsymbol{f}^\iota\big)\big(\diver (\boldsymbol{f}_t)^\varepsilon-\diver (\boldsymbol{f}_t)^\iota\big)\,\mathrm{d}\boldsymbol{x}\\
&=-2\int_{\mathbb{R}^n} \big(\nabla\diver \boldsymbol{f}^\varepsilon-\nabla\diver \boldsymbol{f}^\iota\big)\cdot \big((\boldsymbol{f}_t)^\varepsilon-(\boldsymbol{f}_t)^\iota\big)\,\mathrm{d}\boldsymbol{x}.
\end{aligned}
\end{equation*}
Integrating above over $[\tau,T]$, along with the Young inequality, yields
\begin{align*}
&\sup_{t\in [\tau,T]}\|\diver \boldsymbol{f}^\varepsilon-\diver \boldsymbol{f}^\iota\|_{L^2}^2\\
&\leq \|\diver \boldsymbol{f}^\varepsilon(\tau)-\diver \boldsymbol{f}^\iota(\tau)\|_{L^2}^2+\int_\tau^T \big(\|\nabla\diver(\boldsymbol{f}^\varepsilon- \boldsymbol{f}^\iota)\|_{L^2}^2+\|(\boldsymbol{f}_t)^\varepsilon-(\boldsymbol{f}_t)^\iota\|_{L^2}^2\big)\,\mathrm{d}t.
\end{align*}
Letting $(\varepsilon,\iota)\to (0,0)$ in the above inequality 
implies that $\{\diver \boldsymbol{f}^\varepsilon\}_{\varepsilon>0}$ is a Cauchy sequence in $C([\tau,T];L^2(\mathbb{R}^n))$, and hence $\diver\boldsymbol{f}^\varepsilon$ converges to $\diver \boldsymbol{f}\in C([\tau,T];L^2(\mathbb{R}^n))$ in $C([\tau,T];L^2(\mathbb{R}^n))$ as $\varepsilon\to 0$. Similarly, we obtain from the above calculation that, 
for all $s,t\in [\tau, T]$,
\begin{equation*}
\|\diver \boldsymbol{f}^\varepsilon(t)\|_{L^2}^2=\|\diver \boldsymbol{f}^\varepsilon(s)\|_{L^2}^2-2\int_s^t\int_{\mathbb{R}^n} \nabla\diver\boldsymbol{f}^\varepsilon \cdot  (\boldsymbol{f}_t)^\varepsilon \,\mathrm{d}\boldsymbol{x}\,\mathrm{d}s'. 
\end{equation*}
Passing to the limit $\varepsilon \to 0$ leads to 
\begin{equation*}
\|\diver \boldsymbol{f}(t)\|_{L^2}^2=\|\diver \boldsymbol{f}(s)\|_{L^2}^2-2\int_s^t\int_{\mathbb{R}^n} \nabla\diver\boldsymbol{f} \cdot  \boldsymbol{f}_t\,\mathrm{d}\boldsymbol{x}\,\mathrm{d}s',
\end{equation*}
which implies that the mapping: $t\to \|\diver \boldsymbol{f}(t)\|_{L^2}^2$ is absolutely continuous. Thus, differentiating above with respect to $t$
leads to \eqref{7-45} so that claim \eqref{eq:b12} holds.

Now, substituting \eqref{eq:b12} into \eqref{eq:b12pre}, we obtain 
from \eqref{ur-infty}, Lemmas \ref{im-1} and \ref{l4.8}--\ref{l4.9}, 
and the H\"older and Young inequalities that 
\begin{equation}\label{etrq}
\begin{aligned}
&\alpha\frac{\mathrm{d}}{\mathrm{d}t}\Big|r^{\frac{m}{2}}\big(u_{tr}+\frac{m}{r}u_t\big)\Big|_2^2+ |r^{\frac{m}{2}}u_{tt}|_2^2=-\int r^m\big((u u_r)_t+\phi_{tr}-2\alpha(\psi u_r)_t\big)u_{tt}\,\mathrm{d}r\\
&\leq C_0\big(|r^{\frac{m}{2}}(u_t,\psi_t)|_2|u_r|_\infty + |r^{\frac{m}{2}}u_{tr}|_2|(u,\psi)|_\infty+|r^{\frac{m}{2}}\phi_{tr}|_2\big)|r^{\frac{m}{2}}u_{tt}|_2\\
&\leq C(T)\Big(\Big|r^{\frac{m}{2}}\big(u_{tr},\frac{u_t}{r}\big)\Big|_2+1\Big)|r^{\frac{m}{2}}u_{tt}|_2\\
&\leq  C(T)\Big(\Big|r^{\frac{m}{2}}\big(u_{tr}+\frac{m}{r}u_t\big)\Big|_2^2+1\Big)+\frac{1}{8}|r^{\frac{m}{2}}u_{tt}|_2^2.
\end{aligned}    
\end{equation}
Multiplying above by $t$ and integrating the resulting inequality over $[\tau, t]$, along with Lemmas \ref{im-1} and \ref{l4.8}, imply that
\begin{equation}\label{etrq2}
t\Big|r^{\frac{m}{2}}\big(u_{tr},\frac{u_t}{r}\big)(t)\Big|_2^2+ \int_\tau^t s|r^{\frac{m}{2}}u_{tt}|_2^2\,\mathrm{d}s\leq C(T)\Big(\tau\Big|r^{\frac{m}{2}}\big(u_{tr},\frac{u_t}{r}\big)(\tau)\Big|_2^2+1\Big).
\end{equation}

Next, thanks to Lemma \ref{l4.8}, $r^{\frac{m}{2}}(u_{tr},\frac{u_t}{r})\in L^2([0,T];L^2(I))$. It follows from  
Lemma \ref{bjr} that there exists a sequence $\{\tau_k\}_{k=1}^\infty$ such that 
\begin{equation}
\tau_k\to 0, \quad \tau_k\Big|r^\frac{m}{2}\big(u_{tr},\frac{u_t}{r}\big)(\tau_k)\Big|_2^2\to 0 \qquad\,\, \text{as $k\to \infty$}.
\end{equation}
Choosing $\tau=\tau_k\to 0$  in  \eqref{etrq2} yields that, for all $t\in [0,T]$, 
\begin{equation}\label{t-use}
t\Big|r^\frac{m}{2}\big(u_{tr},\frac{u_t}{r}\big)(t)\Big|_2^2+ \int_0^t s|r^{\frac{m}{2}}u_{tt}|_2^2\,\mathrm{d}s\leq C(T).
\end{equation}
The proof of Lemma \ref{l4.10} is completed.
\end{proof}

\begin{lem}\label{l4.10-ell}  
There exists a constant $C(T)>0$ such that, for any $t\in [0,T]$,
\begin{equation*}
t\Big|r^\frac{m}{2}\Big(u_{rrr},\frac{u_{rr}}{r},\big(\frac{u}{r}\big)_{rr},\frac{1}{r}\big(\frac{u}{r}\big)_r\Big)(t)\Big|_{2}^2+\int_{0}^{t} s \Big|r^\frac{m}{2}\Big(u_{trr},\big(\frac{u_t}{r}\big)_{r}\Big)\Big|_{2}^2 \,\mathrm{d}s\leq C(T).
\end{equation*}
\end{lem}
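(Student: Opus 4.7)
The plan is to treat $\eqref{e2.2}_2$ and its $t$-derivative as elliptic problems for $u$ and $u_t$ respectively, reducing all third-order spatial derivatives in the statement to quantities already controlled in Lemmas \ref{l4.5}--\ref{l4.10}. Lemma \ref{im-1} will be used throughout to translate the divergence-form bounds $|r^{m/2}(u_r+\tfrac{m}{r}u)_{rr}|_2$ and $|r^{m/2}(u_{tr}+\tfrac{m}{r}u_t)_r|_2$ into the full set of weighted norms appearing in the conclusion.

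For the pointwise-in-$t$ part, I would revisit the two inequalities \eqref{6028} and \eqref{uxxxl1} that were already derived in the course of proving Lemma \ref{l4.9}, namely
\[
\Big|r^{\frac{m}{2}}\big(u_r+\tfrac{m}{r}u\big)_{rr}\Big|_2
+\Big|r^{\frac{m-2}{2}}\big(u_r+\tfrac{m}{r}u\big)_r\Big|_2
\leq C(T)\big(|r^{\frac{m}{2}}u_{tr}|_2+|r^{\frac{m-2}{2}}u_t|_2+|u_r|_\infty+1\big).
\]
Multiplying by $t$, Lemma \ref{l4.10} furnishes $t|r^{\frac{m}{2}}(u_{tr},\frac{u_t}{r})(t)|_2^2\leq C(T)$, while the Gagliardo--Nirenberg-type bound \eqref{ur-infty} combined with Lemma \ref{l4.10} gives $t|u_r|_\infty^2\leq C(T)$. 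Inserting these and invoking Lemma \ref{im-1} yields the first half of the claim.

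For the time-integrated part, differentiating $\eqref{e2.2}_2$ in $t$ produces the elliptic identity
\[
2\alpha\big((u_t)_r+\tfrac{m}{r}u_t\big)_r=u_{tt}+(uu_r)_t+\phi_{tr}-2\alpha(\psi u_r)_t,
\]
so taking the weighted $L^2$-norm gives
\[
\Big|r^{\frac{m}{2}}\big((u_t)_r+\tfrac{m}{r}u_t\big)_r\Big|_2
\leq C_0\Big(|r^{\frac{m}{2}}u_{tt}|_2+|(u,\psi)|_\infty|r^{\frac{m}{2}}u_{tr}|_2+|u_r|_\infty\,|r^{\frac{m}{2}}(u_t,\psi_t)|_2+|r^{\frac{m}{2}}\phi_{tr}|_2\Big).
\]
Squaring, multiplying by $s$, integrating over $[0,t]$, and applying Lemma \ref{im-1} would reduce the second half of the claim to controlling each piece on the right. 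The term with $u_{tt}$ is handled by Lemma \ref{l4.10}; the uniform bounds $|(u,\psi)|_\infty\leq C(T)$ from Lemmas \ref{lemma66} and \ref{l4.7} combined with Lemma \ref{l4.8} control the $u_{tr}$ contribution through $\int_0^t s|r^{\frac{m}{2}}u_{tr}|_2^2\,\mathrm ds\leq T\int_0^t|r^{\frac{m}{2}}u_{tr}|_2^2\,\mathrm ds$; and the $\phi_{tr}$ term is already bounded uniformly by Lemma \ref{l4.9}.

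The most delicate piece is the cross-term $|u_r|_\infty|r^{\frac{m}{2}}(u_t,\psi_t)|_2$: one needs the uniform-in-$t$ bound $\sup_{s\in[0,t]}|r^{\frac{m}{2}}(u_t,\psi_t)|_2\leq C(T)$ (available from Lemmas \ref{l4.8} and \ref{l4.7}) together with the time-integral bound $\int_0^t |u_r|_\infty^2\,\mathrm ds\leq C(T)$ from Lemma \ref{lemma66}, so that
\[
\int_0^t s\,|u_r|_\infty^2\,|r^{\frac{m}{2}}(u_t,\psi_t)|_2^2\,\mathrm ds
\leq T\Big(\sup_{s\in[0,t]}|r^{\frac{m}{2}}(u_t,\psi_t)|_2^2\Big)\int_0^t|u_r|_\infty^2\,\mathrm ds\leq C(T).
\]
The main obstacle in carrying this out is precisely this bookkeeping: each term produced by differentiating $uu_r$ and $\psi u_r$ in $t$ must be matched with the right combination of a pointwise bound (from Lemmas \ref{lemma66}, \ref{l4.7}) and a time-integrated bound (from Lemmas \ref{l4.6}, \ref{l4.8}, \ref{l4.9}), without ever requiring the $s$-weighted square-integrability of a derivative that is not yet controlled. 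Once this is organized, Lemma \ref{im-1} immediately upgrades the divergence-form bound into bounds on $u_{trr}$ and $(\tfrac{u_t}{r})_r$, completing the proof.
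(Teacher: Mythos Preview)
Your proposal is correct and follows essentially the same route as the paper: treat $\eqref{e2.2}_2$ and its $t$-derivative as elliptic identities, reuse \eqref{6028}--\eqref{uxxxl1} and \eqref{ur-infty} for the pointwise part, and apply Lemma~\ref{im-1} to recover the full set of weighted norms. The only tactical difference is in the time-integrated part: the paper first establishes the pointwise bound $\sqrt{t}\,|u_r|_\infty\leq C(T)$ (by multiplying \eqref{ur-infty} by $\sqrt{t}$ and invoking Lemma~\ref{l4.10}) and then uses it directly to absorb the $|u_r|_\infty|r^{m/2}(u_t,\psi_t)|_2$ term, arriving at $\sqrt{t}\,|r^{m/2}(u_{trr},(\tfrac{u_t}{r})_r)|_2\leq C(T)(\sqrt{t}\,|r^{m/2}u_{tt}|_2+1)$ before integrating; you instead keep $|u_r|_\infty$ inside the time integral and pair it with $\int_0^t|u_r|_\infty^2\,\mathrm ds\leq C(T)$ from Lemma~\ref{lemma66}. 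Both arguments close; the paper's version avoids the sup/integral splitting and is marginally cleaner, while yours stays closer to the already-available Lemma~\ref{lemma66} without first deriving the auxiliary bound $\sqrt{t}\,|u_r|_\infty\leq C(T)$.
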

\begin{proof}
First, it follows from \eqref{ur-infty} by multiplying $\sqrt{t}$ and Lemma \ref{l4.10} that, for all $t\in [0,T]$,
\begin{equation}\label{ur-infty-t}
\sqrt{t}\Big|\big(u_r,\frac{u}{r}\big)\Big|_\infty \leq C(T)\Big(\sqrt{t}\big|r^\frac{m}{2}\big(u_{tr},\frac{u_t}{r}\big)\Big|_2+1\Big)\leq C(T),
\end{equation}
which, along with \eqref{6028}--\eqref{uxxxl1}, \eqref{ur-infty-t}, and Lemmas \ref{im-1} and \ref{l4.10}, leads to
\begin{equation}\label{urrr-t}
\begin{aligned}
&\sqrt{t}\Big|r^\frac{m}{2}\Big(u_{rrr},\frac{u_{rr}}{r},\big(\frac{u}{r}\big)_{rr},\frac{1}{r}\big(\frac{u}{r}\big)_r\Big)\Big|_{2}\\
&\leq  C_0\sqrt{t}\Big|r^{\frac{m}{2}}\Big(\big(u_r+ \frac{m}{r}u\big)_{rr},\frac{1}{r}\big(u_r+ \frac{m}{r}u\big)_{r}\Big)\Big|_2\\
&\leq  C(T)\Big(\sqrt{t}\Big|r^{\frac{m}{2}}\big(u_{tr},\frac{u_t}{r}\big)\Big|_{2}+\sqrt{t}|u_r|_\infty+1\Big)\leq C(T).
\end{aligned}    
\end{equation}

Next, applying $\sqrt{t}r^\frac{m}{2}\partial_t$ to $(\ref{e2.2})_2$ and taking $L^2(I)$-norm of the resulting equality, along with \eqref{ur-infty-t}, and Lemmas \ref{im-1} and \ref{l4.8}--\ref{l4.10}, we have 
\begin{equation}\label{utt}
\begin{split}
& \sqrt{t}\Big|r^\frac{m}{2}\Big(u_{trr},\big(\frac{u_t}{r}\big)_{r}\Big)\Big|_{2}\leq C_0\sqrt{t}\Big|r^\frac{m}{2}\big(u_r+ \frac{m}{r}u\big)_{tr}\Big|_2\\
&\leq C_0\sqrt{t}\big|r^{\frac{m}{2}}\big(u_{tt},(u u_r)_t,\phi_{tr},(\psi u_r)_t\big)\big|_2\\
&\leq C_0\sqrt{t}\big(|r^{\frac{m}{2}}u_{tt}|_2 +|r^{\frac{m}{2}}(u_t,\psi_t)|_2|u_r|_\infty+|r^{\frac{m}{2}}u_{tr}|_2|(u,\psi)|_\infty+|r^{\frac{m}{2}}\phi_{tr}|_2\big) \\
&\leq C(T)\big(\sqrt{t}|r^{\frac{m}{2}}u_{tt}|_2 +1\big).
 \end{split}
\end{equation}
This, along with Lemma \ref{l4.10}, implies that, for all $t\in [0,T]$,
\begin{equation}
\int_{0}^{t} s \Big|r^\frac{m}{2}\Big(u_{trr},\big(\frac{u_t}{r}\big)_{r}\Big)\Big|_{2}^2\,\mathrm{d}s\leq C(T)\int_{0}^{t} s  |r^{\frac{m}{2}}u_{tt}|_2^2\,\mathrm{d}s+C(T)\leq C(T).
\end{equation}
This completes the proof of Lemma \ref{l4.10-ell}.
\end{proof}

\section{Global  Estimates for the \texorpdfstring{$3$}{}-Order Regular Solutions with Far-Field Vacuum}\label{section-global3}
This section is devoted to establishing the global estimates for the $3$-order regular solutions when  $\bar\rho=0$. 
Let $T>0$ be any fixed time,  
and let $(\rho, u)(t,r)$ be the $3$-order regular solution of 
problem \eqref{e1.5} in $[0,T]\times I$ obtained in Theorems \ref{rth133}.
Moreover, throughout this section, we always assume that \eqref{gammafanwei} holds.
Since the $H^2(\mathbb{R}^n)$-estimates are the same as those presented in Lemmas \ref{l4.5}--\ref{l4.9}, it suffices to focus on the $D^3(\mathbb{R}^n)$-estimates and the time-weighted $D^4(\mathbb{R}^n)$-estimates. 

\subsection{The third-order estimates of the velocity}
First, following the proofs of Lemmas \ref{l4.10}--\ref{l4.10-ell}, we can derive the following estimates: 
\begin{lem}\label{H3-1}
There exists a constant $C(T)>0$ such that, for any $t\in [0,T]$,
\begin{align*}
&\Big|r^{\frac{m}{2}}\big(u_{tr},\frac{u_t}{r}\big)(t)\Big|_2+\Big|r^\frac{m}{2}\Big(u_{rrr},\frac{u_{rr}}{r},\big(\frac{u}{r}\big)_{rr},\frac{1}{r}\big(\frac{u}{r}\big)_r\Big)(t)\Big|_{2}\\
&+\Big|\big(u_r,\frac{u}{r}\big)(t)\Big|_\infty+\int_{0}^{t} |r^{\frac{m}{2}}u_{tt}|^2_2\,\mathrm{d}s+\int_0^t\Big|r^\frac{m}{2}\Big(u_{trr},\big(\frac{u_t}{r}\big)_{r}\Big)\Big|_{2}^2\,\mathrm{d}s\leq C(T).
\end{align*}
\end{lem}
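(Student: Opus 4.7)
The statement is precisely the analogue of Lemmas \ref{l4.10}--\ref{l4.10-ell} with the time weights $t$ and $\sqrt{t}$ removed, which is made possible by the stronger initial regularity \eqref{id1-high} available here (as opposed to \eqref{id1} in the $2$-order theorem). The plan is to first upgrade the control of $|r^{m/2}(u_{tr},u_t/r)|_2$ and $\int_0^t|r^{m/2}u_{tt}|_2^2\,\mathrm{d}s$ uniformly down to $t=0$, and then feed this bound into the elliptic-type inequalities already established in \S\ref{section-global2} to recover the $D^3(\mathbb{R}^n)$-estimate for $u$ and the $L^\infty$-bound on $(u_r,u/r)$.

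First, note that the differential inequality \eqref{etrq} derived in the proof of Lemma \ref{l4.10} was obtained purely from $\eqref{e2.2}_2$ and the $H^2$-level bounds of Lemmas \ref{l4.5}--\ref{l4.9}; none of these use the extra regularity \eqref{id1-high}, so \eqref{etrq} continues to hold. The plan is to integrate \eqref{etrq} over $[0,t]$ directly (without first multiplying by $t$ as in \eqref{etrq2}) and apply Gr\"onwall to obtain
\begin{equation*}
\Bigl|r^{\frac{m}{2}}\bigl(u_{tr},\tfrac{u_t}{r}\bigr)(t)\Bigr|_2^2+\int_0^t|r^{\frac{m}{2}}u_{tt}|_2^2\,\mathrm{d}s\leq C(T)\Bigl(\Bigl|r^{\frac{m}{2}}\bigl(u_{tr},\tfrac{u_t}{r}\bigr)(0)\Bigr|_2^2+1\Bigr).
\end{equation*}
The whole task therefore reduces to showing that the initial-data quantity on the right is finite and bounded by $C_0$.

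To estimate this, evaluate $\eqref{e2.2}_2$ at $t=0$ to express
\begin{equation*}
u_t\big|_{t=0}=-u_0(u_0)_r-(\phi_0)_r+2\alpha\bigl((u_0)_r+\tfrac{m}{r}u_0\bigr)_r+2\alpha\,\psi_0(u_0)_r,
\end{equation*}
then differentiate in $r$ and take the $r^{m/2}$-weighted $L^2(I)$-norm. The terms that arise involve at most $\nabla^3\boldsymbol{u}_0$, $\nabla^2\phi_0$, products of $\boldsymbol{\psi}_0$ with $\nabla^2\boldsymbol{u}_0$, and products of $\nabla\boldsymbol{\psi}_0$ with $\nabla\boldsymbol{u}_0$. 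All of these are controlled by \eqref{id1-high} (i.e.\ $\boldsymbol{u}_0\in H^3$, $\nabla\phi_0\in H^2$, $\nabla\log\rho_0\in D^1\cap D^2$) together with Lemma \ref{lemma-initial} in Appendix \ref{appb} and the critical Sobolev embeddings for spherically symmetric functions of Appendix \ref{improve-sobolev} (which yield $\nabla\log\rho_0\in L^\infty$ in both $n=2,3$ under \eqref{id1-high}, so $\boldsymbol{\psi}_0\in L^\infty\cap D^1\cap D^2$). For the $u_t/r$ component, the Hardy inequality of Lemma \ref{hardy} handles the coordinate singularity at the origin.

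Once $|r^{m/2}(u_{tr},u_t/r)(t)|_2\leq C(T)$ is in hand, the remaining bounds are immediate from the inequalities already proved in \S\ref{section-global2}: inserting this bound into the Sobolev-type inequality \eqref{ur-infty} yields $|(u_r,u/r)(t)|_\infty\leq C(T)$, and inserting it into the elliptic-type inequalities \eqref{6028}--\eqref{uxxxl1} combined with Lemma \ref{im-1} yields
\begin{equation*}
\Bigl|r^{\frac{m}{2}}\Bigl(u_{rrr},\tfrac{u_{rr}}{r},\bigl(\tfrac{u}{r}\bigr)_{rr},\tfrac{1}{r}\bigl(\tfrac{u}{r}\bigr)_r\Bigr)(t)\Bigr|_2\leq C(T).
\end{equation*}

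The main obstacle is the initial bookkeeping: one must verify that the nonlinear products $\psi_0(u_0)_r$ and $u_0(u_0)_r$, once differentiated in $r$ and divided by $r$ where needed, remain in the $r^{m/2}$-weighted $L^2$-space. In particular, $r^{m/2}(\psi_0(u_0)_r)_r$ requires control of $\nabla\psi_0\cdot\nabla u_0$, which uses $\nabla\psi_0\in D^1$ together with the Sobolev embeddings in Appendix \ref{improve-sobolev} for $n=2,3$; and $r^{m/2-1}u_t(0)$ near the origin needs Hardy's inequality to transfer the lost power of $r$ onto a higher spatial derivative. This is routine but must be done carefully, as it is precisely the point where \eqref{id1-high} is indispensable.
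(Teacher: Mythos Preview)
Your proposal is correct and follows essentially the same approach as the paper: integrate \eqref{etrq} over $[\tau,t]$ without the time weight, bound the initial quantity $\bigl|r^{m/2}(u_{tr},u_t/r)(\tau)\bigr|_2$ as $\tau\to 0$ via $\eqref{e2.2}_2$ and the $H^3$ initial data (the paper does this through the $\limsup$ argument \eqref{cont-limsup}--\eqref{limsup} rather than a direct evaluation at $t=0$, but the content is identical), apply Gr\"onwall, and then feed the result into \eqref{ur-infty} and \eqref{6028}--\eqref{uxxxl1}. The one piece you leave implicit is the $L^2_t$-bound on $r^{m/2}\bigl(u_{trr},(u_t/r)_r\bigr)$, which the paper recovers from \eqref{utt} (dropping the $\sqrt t$) together with the $L^2_t$-bound on $r^{m/2}u_{tt}$ just obtained.
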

\begin{proof}We divide the proof into two steps.

\smallskip
\textbf{1.} First, it follows from \eqref{etrq} that, for all $t\in [\tau,T]$ and $\tau\in (0,T)$,
\begin{equation*}
\frac{\mathrm{d}}{\mathrm{d}t}\Big|r^{\frac{m}{2}}\big(u_{tr}+\frac{m}{r}u_t\big)\Big|_2^2+ |r^{\frac{m}{2}}u_{tt}|_2^2 \leq C(T)\Big(\Big|r^{\frac{m}{2}}\big(u_{tr}+\frac{m}{r}u_t\big)\Big|_2^2+1\Big). 
\end{equation*}

Integrating the above over $[\tau, t]$, together with Lemma \ref{im-1}, yields
\begin{equation}\label{lg1'} 
\begin{aligned}
&\Big|r^{\frac{m}{2}}\big(u_{tr},\frac{u_t}{r}\big)(t)\Big|_2^2+\int_\tau^t|r^{\frac{m}{2}}u_{tt}|_2^2\,\mathrm{d}s\\
&\leq C_0\Big|r^{\frac{m}{2}}\big(u_{tr},\frac{u_t}{r}\big)(\tau)\Big|_2^2+C(T)\Big(1+\int_\tau^t\Big|r^{\frac{m}{2}}\big(u_{tr},\frac{u_t}{r}\big)(\tau)\Big|_2^2\,\mathrm{d}s\Big).
\end{aligned}
\end{equation}
For the $L^2(I)$-boundedness of $r^{\frac{m}{2}}(u_{tr},\frac{u_t}{r})(\tau)$ on the right-hand side 
of the above, we apply $r^\frac{m}{2}\partial_r$ to $\eqref{e2.2}_2$ and multiply $\eqref{e2.2}_2$ 
by $r^\frac{m-2}{2}$, respectively, and then take the $L^2(I)$-norm of these two resulting equality to obtain
\begin{equation}\label{cont-limsup}
\begin{aligned}
\Big|r^\frac{m}{2}\big(u_{tr},\frac{u_t}{r}\big)(\tau)\Big|_2&\leq C_0|(u,\psi)(\tau)|_\infty|r^\frac{m}{2}u_{rr}(\tau)|_2\\
&\quad +C_0\Big|\big(u_r,\frac{u}{r}\big)(\tau)\Big|_\infty\Big|r^\frac{m}{2}\Big(u_r,\psi_r,\frac{\psi}{r}\Big)(\tau)\Big|_2\\
&\quad +\Big|r^\frac{m}{2}\big(\phi_{rr},\frac{\phi_r}{r}\big)(\tau)\Big|_2 +\Big|r^\frac{m}{2}\Big(\big(u_r+\frac{m}{r}u\big)_{rr},\frac{1}{r}\big(u_r+\frac{m}{r}u\big)_{r}\Big)(\tau)\Big|_2,
\end{aligned}    
\end{equation}
which, along with the time continuity of $(\phi,u,\psi)$, Lemmas \ref{im-1}, \ref{ale1}, 
and \ref{lemma-initial}, yields
\begin{equation}\label{limsup}
\begin{aligned}
&\limsup_{\tau\to 0} \Big|r^\frac{m}{2}\big(u_{tr},\frac{u_t}{r}\big)(\tau)\Big|_2\\
&\leq C_0|(u_0,\psi_0)|_\infty|r^\frac{m}{2}(u_0)_{rr}|_2  +C_0\Big|\big((u_0)_r,\frac{u_0}{r}\big)\Big|_\infty\Big|r^\frac{m}{2}\Big((u_0)_r,(\psi_0)_r,\frac{\psi_0}{r}\Big)\Big|_2\\
& \quad +\Big|r^\frac{m}{2}\big((\phi_0)_{rr},\frac{(\phi_0)_r}{r}\big)\Big|_2 +\Big|r^\frac{m}{2}\Big(\big((u_0)_r+\frac{m}{r}u_0\big)_{rr},\frac{1}{r}\big((u_0)_r+\frac{m}{r}u_0\big)_{r}\Big)\Big|_2\\
&\leq  C_0\|(\boldsymbol{u}_0,\boldsymbol{\psi}_0,\nabla \boldsymbol{u}_0)\|_{L^\infty}  \|(\nabla^2\boldsymbol{u}_0,\nabla\boldsymbol{u}_0,\nabla\boldsymbol{\psi}_0)\|_{L^2} +C_0 \|(\nabla^2\phi_0,\nabla^3\boldsymbol{u}_0)\|_{L^2} \leq C_0.
\end{aligned}    
\end{equation}
Consequently, based on the above discussions, letting $\tau\to 0$ in \eqref{lg1'}, we obtain 
from the Gr\"onwall inequality that, for all $t\in [0,T]$,
\begin{equation}\label{645}
\Big|r^\frac{m}{2}\big(u_{tr},\frac{u_t}{r}\big)(t)\Big|_2+\int_0^t |r^{\frac{m}{2}}u_{tt}|_2^2 \, \mathrm{d}s \leq C(T).
\end{equation}

\smallskip
\textbf{2.} Next, based on the estimates in \eqref{ur-infty} and \eqref{6028}--\eqref{uxxxl1}, following the calculations of \eqref{ur-infty-t}--\eqref{utt} in  Lemma \ref{l4.10-ell} without the time-weight $\sqrt{t}$, 
we have 
\begin{equation*}
\Big|r^\frac{m}{2}\Big(u_{rrr},\frac{u_{rr}}{r},\big(\frac{u}{r}\big)_{rr},\frac{1}{r}\big(\frac{u}{r}\big)_r\Big)(t)\Big|_{2}\!+\!\Big|\big(u_r,\frac{u}{r}\big)(t)\Big|_\infty\!+\!\int_0^t\Big|r^\frac{m}{2}\Big(u_{trr},\big(\frac{u_t}{r}\big)_{r}\Big)\Big|_{2}^2\mathrm{d}s\!\leq\! C(T).
\end{equation*}
This completes the proof of Lemma \ref{H3-1}.
\end{proof}

Next, we derive the higher-order estimates for $\psi$:
\begin{lem}\label{lemma-psi-high}
There exists a constant $C(T)>0$ such that, for any $t\in [0,T]$,
\begin{equation*}
\Big|r^\frac{m}{2}\Big(\psi_{rr},\big(\frac{\psi}{r}\big)_r,\psi_{tr},\frac{\psi_t}{r}\Big)(t)\Big|_2\leq C(T).
\end{equation*}
\end{lem}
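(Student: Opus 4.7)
My plan is to obtain the second-order estimates of $\psi$ by first upgrading the estimate on the effective velocity $v$ one order higher (in analogy with the proof of Lemma \ref{l4.7}), and then reading off $\psi_{rr}$, $(\psi/r)_r$ from the identity $2\alpha\psi = v - u$ together with the third-order estimates for $u$ already secured in Lemma \ref{H3-1}. Concretely, I start from \eqref{845} and apply $\nabla\diver$ to it. Following the justification in the proof of Lemma \ref{l4.7} (use of a standard mollification together with Lemma \ref{lemma-lions}), this yields, in the sense of distributions, an evolution equation of the form
\begin{equation*}
(\nabla\diver\boldsymbol{v})_t + \diver\bigl(\boldsymbol{u}\,\nabla\diver\boldsymbol{v}\bigr) + \tfrac{\gamma-1}{2\alpha}\phi\,\nabla\diver\boldsymbol{v} = \tilde{\boldsymbol{G}},
\end{equation*}
where $\tilde{\boldsymbol{G}}$ collects the commutator-type terms involving $\nabla\boldsymbol{u}\!\cdot\!\nabla^2\boldsymbol{v}$, $\nabla^2\boldsymbol{u}\!\cdot\!\nabla\boldsymbol{v}$, $\nabla\phi\!\cdot\!\nabla(\boldsymbol{v}-\boldsymbol{u})$, $\phi\,\nabla\diver\boldsymbol{u}$, and lower-order contributions.

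Next, I perform a weighted $L^2$ estimate by testing against $\nabla\diver\boldsymbol{v}$. In spherical coordinates, using Lemma \ref{im-1}, this amounts to deriving a Grönwall-type inequality
\begin{equation*}
\frac{\mathrm{d}}{\mathrm{d}t}\Bigl|r^{\tfrac{m}{2}}\bigl(v_r+\tfrac{m}{r}v\bigr)_r\Bigr|_2^{2} + \tfrac{\gamma-1}{\alpha}\Bigl|(r^m\phi)^{\tfrac12}\bigl(v_r+\tfrac{m}{r}v\bigr)_r\Bigr|_2^{2} \leq C(T)\,\Bigl(\Bigl|r^{\tfrac{m}{2}}\bigl(v_r+\tfrac{m}{r}v\bigr)_r\Bigr|_2^{2}+1\Bigr).
\end{equation*}
The right-hand side is controlled using Lemma \ref{important2} on $|\phi|_\infty$, Lemma \ref{l4.4} on $|v|_\infty$, Lemmas \ref{l4.6} and \ref{lemma66} on $|r^{m/2}\phi_r|_2$, Lemma \ref{l4.9} on $|r^{m/2}(\phi_{rr},\phi_r/r)|_2$, the bound $\bigl|(u_r,u/r)\bigr|_\infty \leq C(T)$ from Lemma \ref{H3-1}, and Lemma \ref{l4.7} on $r^{m/2}(v_r, v/r)$. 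The commutator $\nabla\phi\cdot\nabla^2\boldsymbol{v}$ is absorbed via an $L^\infty$ bound on $\nabla\phi$ coming from Lemma \ref{Hk-Ck-scalar} applied to $\phi_r$ (using $r^{m/2}(\phi_r,\phi_{rr},\phi_r/r)\in L^2(I)$). Applying Grönwall and checking the initial datum through the assumption $\boldsymbol{\psi}_0\in D^2(\mathbb{R}^n)$ together with Lemma \ref{lemma-initial} yields
\begin{equation*}
\Bigl|r^{\tfrac{m}{2}}\bigl(v_{rr},(v/r)_r\bigr)(t)\Bigr|_2 \leq C(T),
\end{equation*}
and hence, by subtracting the corresponding bounds for $u$ from Lemma \ref{H3-1}, the desired estimates on $r^{m/2}\psi_{rr}$ and $r^{m/2}(\psi/r)_r$.

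For the time-space derivatives, I read off $\psi_t$ directly from $\eqref{e2.2}_3$:
\begin{equation*}
\psi_t = -(\psi u)_r - \bigl(u_r+\tfrac{m}{r}u\bigr)_r,
\end{equation*}
differentiate once more in $r$, and take the $r^{m/2}$-weighted $L^2$-norm. Every resulting term, namely $r^{m/2}(\psi_{rr}u,\psi_r u_r, \psi u_{rr}, \psi_r u/r, \psi (u/r)_r, (u_r+\tfrac{m}{r}u)_{rr})$, is controlled by pairing the $L^\infty$ bounds of $(\psi,u,u_r,u/r)$ from Lemmas \ref{l4.7}, \ref{lemma66}, \ref{H3-1} with the $L^2$ bounds on the relevant second derivatives from Lemmas \ref{l4.9}, \ref{H3-1}, and the $\psi_{rr}$ estimate just proved; the same argument for $\partial_r\!\to\!\tfrac{1}{r}$ gives the bound on $r^{m/2}\psi_t/r$ after an additional application of Lemmas \ref{im-1} and \ref{lemma-initial}.

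The main obstacle is Step~1: the second-order $\boldsymbol{v}$-estimate. The difficulty is twofold — the weight $r^m$ degenerates at the origin (so one must carefully invoke Lemma \ref{im-1} to convert between $\nabla\diver$ and $\nabla^2$), and the commutator $\nabla\phi\cdot\nabla^2\boldsymbol{v}$ is borderline, requiring a genuine $L^\infty$-control of $\nabla\phi$. This is precisely where the additional regularity $\boldsymbol{\psi}_0\in D^2(\mathbb{R}^n)$ in the hypothesis \eqref{id1-high} of Theorem \ref{th1-high} (rather than merely $D^1$) is used, and where the critical spherically symmetric Sobolev embeddings of Appendix \ref{improve-sobolev} come into play.
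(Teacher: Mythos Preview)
Your plan matches the paper's proof: both differentiate the transport equation \eqref{nabla-v} for $\diver\boldsymbol{v}$ once more, run a weighted $L^2$ energy estimate for $\bigl|r^{m/2}(v_r+\tfrac{m}{r}v)_r\bigr|_2$, close by Gr\"onwall, subtract the $u$-estimates of Lemma~\ref{H3-1} to obtain the $\psi$-bounds, and then read off $(\psi_{tr},\psi_t/r)$ from $\eqref{e2.2}_3$.

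Two small corrections. First, the term you single out as ``$\nabla\phi\cdot\nabla^2\boldsymbol{v}$'' does not actually arise: after applying $\nabla$ to \eqref{nabla-v}, the $\phi$-related commutators are $\nabla^2\phi\cdot(\boldsymbol{v}-\boldsymbol{u})$, $\nabla\phi\cdot\nabla(\boldsymbol{v}-\boldsymbol{u})$, and $\nabla\phi\,(\diver\boldsymbol{v}-\diver\boldsymbol{u})$, none of which pairs $\nabla\phi$ with a second derivative of $\boldsymbol{v}$. Second, your appeal to Lemma~\ref{Hk-Ck-scalar} for $\|\nabla\phi\|_{L^\infty}$ would require $\phi\in W^{2,n}$, which is not available for $n=3$ at this stage (you only have $\nabla^2\phi\in L^2$). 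The bound is nonetheless immediate from the identity $\nabla\phi=(\gamma-1)\phi\,\boldsymbol{\psi}$ together with Lemmas~\ref{important2} and~\ref{l4.7}. The paper itself sidesteps any $L^\infty$-control of $\nabla\phi$: in the estimates of $\mathcal{G}_{17}$--$\mathcal{G}_{19}$ it uses near/far region splitting with Hardy's inequality (Lemma~\ref{hardy}) to handle the products $\phi_r v_r$ and $u_{rr}v_r$, feeding the resulting $\|\nabla^2\boldsymbol{v}\|_{L^2}$ factors back into the Gr\"onwall term. Either route closes the argument.
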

\begin{proof} We divide the proof into two steps.

\smallskip
\textbf{1.}
Applying $\partial_l$ ($l=1,\cdots\!,n)$ to both sides of \eqref{nabla-v} leads to the following equation in the sense of distributions:
\begin{equation}\label{partdivev}
\!\!\!\!\begin{aligned}
(\partial_l\diver\boldsymbol{v})_t+\mathrm{div}\big(\boldsymbol{u} (\partial_l\diver\boldsymbol{v})\big)\!&=(\diver\boldsymbol{u})(\partial_l\diver\boldsymbol{v})-\partial_l\boldsymbol{u}\cdot \nabla(\diver\boldsymbol{v})-\partial_l(\nabla\boldsymbol{u}^\top:\nabla\boldsymbol{v})\\
&\quad -\frac{\gamma-1}{2\alpha}\nabla(\partial_l\phi)\cdot(\boldsymbol{v}-\boldsymbol{u})-\frac{\gamma-1}{2\alpha}\nabla\phi\cdot(\partial_l\boldsymbol{v}-\partial_l\boldsymbol{u})\\
&\quad  -\frac{\gamma-1}{2\alpha}\big(\partial_l\phi(\diver\boldsymbol{v}-\diver\boldsymbol{u})\!+\!\phi(\partial_l\diver\boldsymbol{v}-\partial_l\diver\boldsymbol{u})\big)\!:=\tilde{\mathcal{G}}'\!.
\end{aligned}
\end{equation}
It can be checked due to $\boldsymbol{v}=\boldsymbol{u}+2\alpha\nabla\log\rho$, and Lemmas \ref{zth2} and \ref{ale1} that 
\begin{equation}
\nabla\diver\boldsymbol{v}\in L^\infty([0,T];L^2(\mathbb{R}^n)),\quad \boldsymbol{u}\in L^1([0,T];W^{1,\infty}(\mathbb{R}^n)),
\end{equation}
and $\tilde{\mathcal{G}}'\in L^1([0,T];L^2(\mathbb{R}^n))$, since
\begin{equation}
\begin{aligned}
\|\tilde{\mathcal{G}}'\|_{L^1([0,T];L^2(\mathbb{R}^n))}&\leq C_0\|\nabla\boldsymbol{u}\|_{L^1([0,T];L^\infty(\mathbb{R}^n))}\|\nabla^2\boldsymbol{v}\|_{L^\infty([0,T];L^2(\mathbb{R}^n))}\\
&\quad + C_0\|\nabla^2\boldsymbol{u}\|_{L^1([0,T];L^4(\mathbb{R}^n))}\|\nabla\boldsymbol{v}\|_{L^\infty([0,T];L^4(\mathbb{R}^n))}\\
&\quad +C_0\|\nabla^2\phi\|_{L^\infty([0,T];L^2(\mathbb{R}^n))}\|(\boldsymbol{u},\boldsymbol{v})\|_{L^1([0,T];L^\infty(\mathbb{R}^n))}\\
&\quad +C_0\|\nabla\phi\|_{L^\infty([0,T]\times \mathbb{R}^n)}\|(\nabla\boldsymbol{u},\nabla\boldsymbol{v})\|_{L^1([0,T];L^2(\mathbb{R}^n))}\\
&\quad +C_0\|\phi\|_{L^\infty([0,T]\times\mathbb{R}^n)}\|(\nabla^2\boldsymbol{u},\nabla^2\boldsymbol{v})\|_{L^1([0,T];L^2(\mathbb{R}^n))}<\infty.
\end{aligned}    
\end{equation}
Then it follows from Lemma \ref{lemma-lions} that, for {\it a.e.} $t\in (0,T)$, 
\begin{equation}
\begin{aligned}
&\frac{\mathrm{d}}{\mathrm{d}t}\|\partial_l\diver\boldsymbol{v}\|_{L^2}^{2}+\frac{\gamma-1}{\alpha}\big\|\phi^\frac{1}{2}\partial_l\diver\boldsymbol{v}\big\|_{L^2}^{2}\\
&=\int_{\mathbb{R}^n} \Big( (\diver\boldsymbol{u}) (\partial_l\diver\boldsymbol{v})^2 -2 \big(\partial_l\boldsymbol{u}\cdot\nabla\diver\boldsymbol{v}\big)(\partial_l\diver\boldsymbol{v})\Big) \,\mathrm{d}\boldsymbol{x}\\
&\quad-2\int_{\mathbb{R}^n} \partial_l(\nabla\boldsymbol{u}^\top:\nabla\boldsymbol{v})(\partial_l\diver\boldsymbol{v})\,\mathrm{d}\boldsymbol{x}- \frac{\gamma-1}{\alpha}\int_{\mathbb{R}^n} \big(\nabla(\partial_l\phi)\cdot(\boldsymbol{v}-\boldsymbol{u}) \big)(\partial_l\diver\boldsymbol{v})\,\mathrm{d}\boldsymbol{x}\\
&\quad - \frac{\gamma-1}{\alpha}\int_{\mathbb{R}^n} \Big(\nabla\phi\cdot(\partial_l\boldsymbol{v}-\partial_l\boldsymbol{u})+\partial_l\phi(\diver\boldsymbol{v}-\diver\boldsymbol{u})\Big)(\partial_l\diver\boldsymbol{v})\,\mathrm{d}\boldsymbol{x}\\
&\quad + \frac{\gamma-1}{\alpha}\int_{\mathbb{R}^n}  \phi(\partial_l\diver\boldsymbol{u}) (\partial_l\diver\boldsymbol{v})\,\mathrm{d}\boldsymbol{x}.
\end{aligned}    
\end{equation}
Summing  above over $l=1,\cdots\!,n$, together with the spherical coordinate transformation, yields that, for {\it a.e.} $t\in (0,T)$,
\begin{equation}\label{nabla2-v}
\begin{aligned}
&\frac{\mathrm{d}}{\mathrm{d}t}\Big|r^\frac{m}{2}\big(v_r+\frac{m}{r}v\big)_{r}\Big|_2^2+\frac{\gamma-1}{\alpha}\Big|(r^m\phi)^\frac{1}{2}\big(v_r+\frac{m}{r}v\big)_{r}\Big|_2^2\\
&=\int_0^\infty  r^m\big(\frac{m}{r}u- u_r\big)\big(v_r+\frac{m}{r}v\big)_{r}^2\,\mathrm{d}r-2\int_0^\infty r^m\Big(u_rv_r+\frac{m}{r^2}uv\Big)_r\big(v_r+\frac{m}{r}v\big)_{r}\,\mathrm{d}r\\
&\quad  -\frac{\gamma-1}{ \alpha}\int_0^\infty r^m\phi_{rr}(v-u)\big(v_r+\frac{m}{r}v\big)_{r}\,\mathrm{d}r\\
&\quad -\frac{\gamma-1}{ \alpha}\int_0^\infty  r^m\phi_{r}\Big(\big(2v_r+\frac{m}{r}v\big)-\big(2u_r+\frac{m}{r}u\big)\Big)\big(v_r+\frac{m}{r}v\big)_{r}\,\mathrm{d}r\\
&\quad +\frac{\gamma-1}{ \alpha}\int_0^\infty  r^m\phi \big(u_r+\frac{m}{r}u\big)_{r}\big(v_r+\frac{m}{r}v\big)_{r}\,\mathrm{d}r:=\sum_{i=16}^{20}\mathcal{G}_i.
\end{aligned}    
\end{equation}
For $\mathcal{G}_{16}$--$\mathcal{G}_{20}$, it follows from \eqref{v_r}, Lemmas \ref{important2}, \ref{l4.4}, \ref{im-1}, \ref{lemma66}, \ref{H3-1}, \ref{ale1}, and \ref{hardy}, and the H\"older and Young inequalities that
\begin{equation}\label{G16-20}
\begin{aligned}
\mathcal{G}_{16}&\leq C_0\Big|\big(u_r,\frac{u}{r}\big)\Big|_\infty \Big|\big(v_r+\frac{m}{r}v\big)_r\Big|_2^2\leq C(T)\Big|\big(v_r+\frac{m}{r}v\big)_r\Big|_2^2, \\
\mathcal{G}_{17}&=-\int_0^\infty r^m\Big(u_{rr}v_r+u_rv_{rr}+m\big(\frac{u}{r}\big)_r\frac{v}{r}+m \frac{u}{r}\big(\frac{v}{r}\big)_r\Big)\big(v_r+\frac{m}{r}v\big)_{r}\,\mathrm{d}r\\
&\leq C_0\big(|r^\frac{m-2}{2}u_{rr}|_2|\chi_1^\flat rv_r|_\infty+|r^\frac{m}{2}u_{rr}|_2|\chi_1^\sharp v_r|_\infty\big)\Big|r^\frac{m}{2}\big(v_r+\frac{m}{r}v\big)_r\Big|_2\\
&\quad +C_0|u_r|_\infty |r^\frac{m}{2}v_{rr}|_2 \Big|r^\frac{m}{2}\big(v_r+\frac{m}{r}v\big)_r\Big|_2\\
&\quad +C_0\Big(|v|_\infty\Big|r^{\frac{m-2}{2}}\big(\frac{u}{r}\big)_r\Big|_2+\Big|\frac{u}{r}\Big|_\infty \Big|r^\frac{m}{2}\big(\frac{v}{r}\big)_r\Big|_2\Big)\Big|r^\frac{m}{2}\big(v_r+\frac{m}{r}v\big)_r\Big|_2\\ 
&\leq C(T)\Big(|\chi_1^\flat r^\frac{3}{2}(v_r,v_{rr})|_2 + |\chi_1^\sharp (v_r,v_{rr})|_2\Big)\Big|r^\frac{m}{2}\big(v_r+\frac{m}{r}v\big)_r\Big|_2\\
&\quad +C(T)\Big|r^\frac{m}{2}\big(v_r+\frac{m}{r}v\big)_r\Big|_2^2\\ 
&\leq C(T)\Big(\big(|\chi_1^\flat r^\frac{3-m}{2}|^2_\infty + |\chi_1^\sharp r^{-\frac{m}{2}}|^2_\infty\big)|r^\frac{m}{2}(v_r,v_{rr})|^2_2 +\Big|r^\frac{m}{2}\big(v_r+\frac{m}{r}v\big)_r\Big|^2_2 +1\Big)\\
&\leq C(T)\Big(\Big|r^\frac{m}{2}\big(v_r+\frac{m}{r}v\big)_r\Big|_2^2+1\Big),\\
\mathcal{G}_{18}&\leq C_0|(u,v)|_\infty |r^\frac{m}{2}\phi_{rr}|_2\Big|r^\frac{m}{2}\big(v_r+\frac{m}{r}v\big)_r\Big|_2\leq C(T)\Big(\Big|r^\frac{m}{2}\big(v_r+\frac{m}{r}v\big)_r\Big|_2^2+1\Big), \\
\mathcal{G}_{19}&\leq C_0\big(|r^{\frac{m-2}{2}}\phi_r|_2 \big|(\chi_1^\flat rv_r,v)\big|_\infty+|r^{\frac{m}{2}}\phi_r|_2 |\chi_1^\sharp v_r|_\infty\big)\Big|r^\frac{m}{2}\big(v_r+\frac{m}{r}v\big)_r\Big|_2\\
&\quad+C_0|r^{\frac{m}{2}}\phi_r|_2\Big|\big(u_r,\frac{u}{r}\big)\Big|_\infty \Big|r^\frac{m}{2}\big(v_r+\frac{m}{r}v\big)_r\Big|_2\\
&\leq C(T) \big(|\chi_1^\flat r^\frac{3}{2}(v_r,v_{rr})|_2+|\chi_1^\sharp (v_r,v_{rr})|_2+1\big)\Big|r^\frac{m}{2}\big(v_r+\frac{m}{r}v\big)_r\Big|_2\\
&\leq C(T) \Big(\big(|\chi_1^\flat r^\frac{3-m}{2}|_\infty+|\chi_1^\sharp r^{-\frac{m}{2}}|_\infty\big)|r^\frac{m}{2}(v_r,v_{rr})|_2+1\Big)\Big|r^\frac{m}{2}\big(v_r+\frac{m}{r}v\big)_r\Big|_2\\
&\leq C(T)\Big(\Big|r^\frac{m}{2}\big(v_r+\frac{m}{r}v\big)_r\Big|_2^2+1\Big),\\
\mathcal{G}_{20}&\leq C|\phi|_\infty\Big|r^\frac{m}{2}\big(u_r+\frac{m}{r}u\big)_r\Big|_2 \Big|r^\frac{m}{2}\big(v_r+\frac{m}{r}v\big)_r\Big|_2\leq C(T)\Big(\Big|r^\frac{m}{2}\big(v_r+\frac{m}{r}v\big)_r\Big|_2^2+1\Big).
\end{aligned}    
\end{equation}

Thus, combining with \eqref{nabla2-v}--\eqref{G16-20} leads to
\begin{equation}
\frac{\mathrm{d}}{\mathrm{d}t}\Big|r^\frac{m}{2}\big(v_r+\frac{m}{r}v\big)_r\Big|_2^2\leq C(T)\Big(\Big|r^\frac{m}{2}\big(v_r+\frac{m}{r}v\big)_r\Big|_2^2+1\Big),
\end{equation}
which, along with Lemmas \ref{im-1} and \ref{lemma-initial}, and the Gr\"onwall inequality, yields that
\begin{equation}
\begin{aligned}
\Big|r^\frac{m}{2}\Big(v_{rr},\big(\frac{v}{r}\big)_r\Big)(t)\Big|_2^2&\leq C(T)\big(\|\nabla^2\boldsymbol{v}_0\|_{L^2}^2+1\big)\\
&\leq C(T)\big(\|(\nabla^2\boldsymbol{u}_0,\nabla^2\boldsymbol{\psi}_0)\|_{L^2}^2+1\big)\leq C(T).
\end{aligned}
\end{equation}
 
Consequently, it follows from the above estimate, \eqref{v-express-2}, and Lemma \ref{lemma66} that
\begin{equation}\label{high-psi}
\Big|r^\frac{m}{2}\Big(\psi_{rr},\big(\frac{\psi}{r}\big)_r\Big)(t)\Big|_2\leq C_0\Big|r^\frac{m}{2}\Big(v_{rr},\big(\frac{v}{r}\big)_r\Big)(t)\Big|_2+C_0\Big|r^\frac{m}{2}\Big(u_{rr},\big(\frac{u}{r}\big)_r\Big)(t)\Big|_2\leq C(T).
\end{equation}

\smallskip
\textbf{2.} Finally,  multiplying $\eqref{e2.2}_3$ by $r^\frac{m-2}{2}$ and applying $r^\frac{m}{2}\partial_r$ to $\eqref{e2.2}_3$, respectively, then taking the $L^2(I)$-norm of these two resulting equalities, we obtain from \eqref{high-psi} and Lemmas \ref{lemma66}--\ref{l4.7} and \ref{H3-1} that

\begin{equation}\label{psi-tr}
\begin{aligned}
\Big|r^\frac{m}{2}\big(\psi_{tr},\frac{\psi_t}{r}\big)\Big|_2&\leq \Big|r^\frac{m}{2}\Big((\psi u)_{rr},\frac{1}{r}(\psi u)_{r}\Big)\Big|_2 +\Big|r^\frac{m}{2}\Big(\big(u_r+ \frac{m}{r}u\big)_{rr},\frac{1}{r}\big(u_r+ \frac{m}{r}u\big)_r\Big)\Big|_2\\
&\leq C_0\Big|r^\frac{m}{2}\Big(\psi_{rr} u,\psi_r u_{r},\psi u_{rr},\psi_r\frac{u}{r},\frac{\psi}{r} u_r\Big)\Big|_2\\
&\quad +C_0\Big|r^\frac{m}{2}\Big(u_{rrr},\frac{u_{rr}}{r},\big(\frac{u}{r}\big)_{rr},\frac{1}{r}\big(\frac{u}{r}\big)_r\Big)\Big|_{2}\\
&\leq C_0|(u,\psi)|_\infty |r^\frac{m}{2}(u_{rr},\psi_{rr})|_2+C_0\Big|\big(u_r,\frac{u}{r}\big)\Big|_\infty\Big|r^\frac{m}{2}\big(\psi_r,\frac{\psi}{r}\big)\Big|_2\\
&\quad +C_0\Big|r^\frac{m}{2}\Big(u_{rrr},\frac{u_{rr}}{r},\big(\frac{u}{r}\big)_{rr},\frac{1}{r}\big(\frac{u}{r}\big)_r\Big)\Big|_{2}\leq C(T).
\end{aligned}    
\end{equation}
The proof of Lemma \ref{lemma-psi-high} is completed.
\end{proof}

We now derive the higher-order estimates for $(\phi,u)$.
\begin{lem}\label{Lemma6.12}
There exists a constant $C(T)>0$ such that
\begin{align*}
&\Big|r^\frac{m}{2}\Big(\phi_{rrr},\big(\frac{\phi_r}{r}\big)_r,\phi_{trr},\frac{\phi_{tr}}{r}\Big)(t)\Big|_2\\
&+\int_0^t\Big|r^\frac{m}{2}\Big(u_{rrrr},\big(\frac{u_{rr}}{r}\big)_{r},\big(\frac{u}{r}\big)_{rrr},\Big(\frac{1}{r}\big(\frac{u}{r}\big)_r\Big)_r\Big)\Big|_{2}^2 \,\mathrm{d}s\leq C(T)
\qquad\mbox{for all $t\in [0,T]$}.
\end{align*}
\end{lem}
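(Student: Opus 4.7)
The plan is to treat the $\phi$-part and the $u$-part separately, leveraging the algebraic identity $\phi_r=(\gamma-1)\phi\psi$ (which follows from $\psi=\frac{1}{\gamma-1}(\log\phi)_r$) together with the equations of \eqref{e2.2} and the lower-order bounds already established in Lemmas \ref{l4.5}--\ref{lemma-psi-high}.

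First I would derive the spatial third-order estimate for $\phi$. Writing $\phi_r=(\gamma-1)\phi\psi$, one obtains, by repeated differentiation,
\begin{align*}
\phi_{rrr}&=(\gamma-1)\big(\phi_{rr}\psi+2\phi_r\psi_r+\phi\psi_{rr}\big),\\
\Big(\frac{\phi_r}{r}\Big)_r&=(\gamma-1)\Big(\frac{\phi_r\psi}{r}+\phi\Big(\frac{\psi}{r}\Big)_r+\phi\cdot\frac{\psi_r}{r}-\phi\cdot\frac{\psi}{r^2}\Big).
\end{align*}
Taking the $L^2(I)$-norm with weight $r^{m/2}$ and using $|\phi|_\infty\leq C(T)$, $|\psi|_\infty\leq C(T)$ from Lemmas \ref{important2} and \ref{l4.7}, together with the bounds for $r^{m/2}(\phi_r,\phi_{rr},\frac{\phi_r}{r})$ in Lemmas \ref{lemma66} and \ref{l4.9} and the new bound $|r^{m/2}(\psi_{rr},(\frac{\psi}{r})_r)|_2\leq C(T)$ from Lemma \ref{lemma-psi-high}, essentially all terms reduce to $|r^{m/2}(\psi_r,\frac{\psi}{r},\psi_{rr},(\frac{\psi}{r})_r)|_2$ and products thereof. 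The only delicate piece is the factor $\frac{\psi}{r^2}$, which I will rewrite as $\frac{\psi}{r^2}=\frac{\psi_r}{r}-(\frac{\psi}{r})_r$ (so that its $r^{m/2}$-weighted $L^2$-norm is controlled by Lemmas \ref{l4.7}--\ref{lemma-psi-high}). This yields the claimed bound on $r^{m/2}(\phi_{rrr},(\frac{\phi_r}{r})_r)$.

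Next I would handle the mixed time-space derivatives $\phi_{trr}$ and $\frac{\phi_{tr}}{r}$. Using $\eqref{e2.2}_1$ and $\phi_r=(\gamma-1)\phi\psi$, one has $\phi_t=-(\gamma-1)\phi u\psi-(\gamma-1)\phi(u_r+\frac{m}{r}u)$, whence
\begin{equation*}
\phi_{tr}=-(\gamma-1)\big((\phi u\psi)_r+(\phi(u_r+\tfrac{m}{r}u))_r\big),
\end{equation*}
and $\phi_{trr}$ is obtained by a further differentiation. Expanding and applying the $L^\infty$ and $L^2$ bounds for $(\phi,u,\psi,\nabla\log\phi,\nabla\boldsymbol{u})$ from Lemmas \ref{important2}, \ref{lemma66}--\ref{l4.7}, and the third-order bounds for $u$ from Lemma \ref{H3-1} together with the bound for $r^{m/2}(\phi_{rrr},(\frac{\phi_r}{r})_r)$ obtained above, each resulting term is controlled. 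The factors $\frac{1}{r}$ and $\frac{1}{r^2}$ that appear are handled exactly as in the previous step via the identity $\frac{\psi}{r^2}=\frac{\psi_r}{r}-(\frac{\psi}{r})_r$ and the div-curl equivalences of Lemma \ref{im-1}.

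Finally, for the fourth-order spatial estimate of $u$, I apply $r^{m/2}\partial_r^2$ to $\eqref{e2.2}_2$ and take $L^2(I)$-norm. This gives
\begin{equation*}
\Big|r^{\frac{m}{2}}\big(u_r+\tfrac{m}{r}u\big)_{rrr}\Big|_2\leq C_0\big|r^{\frac{m}{2}}\big((u_{tr})_r,(uu_r)_{rr},\phi_{rrr},(\psi u_r)_{rr}\big)\big|_2,
\end{equation*}
and the left-hand side controls the full $r^{m/2}$-weighted $L^2$-norm of the quadruple $(u_{rrrr},(\frac{u_{rr}}{r})_r,(\frac{u}{r})_{rrr},(\frac{1}{r}(\frac{u}{r})_r)_r)$ by Lemma \ref{im-1}. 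The right-hand side is then squared and integrated in time; using Lemma \ref{H3-1} (which gives $\int_0^t|r^{m/2}(u_{trr},(\frac{u_t}{r})_r)|_2^2\,\mathrm{d}s\leq C(T)$), the third-order bounds on $u$ and $\psi$ from Lemmas \ref{H3-1}--\ref{lemma-psi-high}, and the $L^\infty$-bounds on $(u,u_r,\frac{u}{r},\psi)$, all terms close up. The main technical obstacle throughout is the coordinate singularity at $r=0$: every $\frac{1}{r^k}$ factor that arises from differentiating $\phi_r=(\gamma-1)\phi\psi$ or from the div-curl decomposition of $\nabla^k\boldsymbol{u}$ must be absorbed against an appropriate derivative of $\psi$ or $u$ via the Hardy inequality (Lemma \ref{hardy}) and Lemma \ref{im-1}; doing this cleanly for the $\frac{\psi}{r^2}$- and $(\frac{u}{r})_{rrr}$-type terms is where most of the bookkeeping lies.
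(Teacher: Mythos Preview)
Your overall strategy matches the paper's, but there is a genuine structural gap in the $u$-estimate. You claim that $\big|r^{m/2}(u_r+\tfrac{m}{r}u)_{rrr}\big|_2$ alone controls the full quadruple $(u_{rrrr},(\tfrac{u_{rr}}{r})_r,(\tfrac{u}{r})_{rrr},(\tfrac{1}{r}(\tfrac{u}{r})_r)_r)$ via Lemma~\ref{im-1}. It does not: that lemma says $\|\nabla^4\boldsymbol{u}\|_{L^2}$ is equivalent to the \emph{pair}
\[
\Big|r^{m/2}\Big(\big(u_r+\tfrac{m}{r}u\big)_{rrr},\ \Big(\tfrac{1}{r}\big(u_r+\tfrac{m}{r}u\big)_r\Big)_r\Big)\Big|_2,
\]
so you need a separate estimate for the second component. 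The paper obtains it by multiplying $\eqref{e2.2}_2$ by $\tfrac{1}{r}$, applying $r^{m/2}\partial_r$, and bounding $\big|r^{m/2}(\tfrac{1}{r}(u_r+\tfrac{m}{r}u)_r)_r\big|_2$ in terms of $\big|r^{m/2}(\tfrac{u_t}{r})_r\big|_2$, which is square-integrable in time by Lemma~\ref{H3-1}.

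There is also a subtlety you are glossing over in $(\psi u_r)_{rr}=\psi_{rr}u_r+2\psi_r u_{rr}+\psi u_{rrr}$. The middle term $r^{m/2}\psi_r u_{rr}$ is not handled by your listed $L^\infty$-bounds on $(u,u_r,\tfrac{u}{r},\psi)$: neither $\psi_r$ nor $u_{rr}$ is in $L^\infty$. The paper treats this product (and the analogous products $\phi_r\psi_t$, $\phi_t\psi_r$ arising in $\phi_{trr}$) by the region-segmentation trick: split at $r=1$, use the Hardy inequality to gain a weighted $L^\infty$ bound like $|\chi_1^\flat r\psi_r|_\infty\le C|r^{m/2}(\psi_r,\psi_{rr})|_2$ near the origin and $|\chi_1^\sharp\psi_r|_\infty\le C|r^{m/2}(\psi_r,\psi_{rr})|_2$ away from it, and then pair with the complementary $L^2$ factor. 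Your identity $\tfrac{\psi}{r^2}=\tfrac{\psi_r}{r}-(\tfrac{\psi}{r})_r$ is fine for the $\phi$-part, but it does not substitute for this argument on the $u$-side.
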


\begin{proof} We divide the proof into two steps.

\smallskip
\textbf{1.} First, for the $L^2(I)$-estimate of $r^\frac{m}{2}\big(\phi_{rrr},(\frac{\phi_{r}}{r})_{r}\big)$, it follows from Lemmas \ref{important2}, \ref{l4.7}--\ref{l4.9}, and \ref{lemma-psi-high} that
\begin{equation}\label{lem612-2}
\begin{aligned}
\Big|r^\frac{m}{2}\big(\frac{\phi_{r}}{r}\big)_{r}\Big|_2&\leq C_0\Big|r^\frac{m}{2}\big(\phi\frac{\psi}{r}\big)_{r}\Big|_2\leq  C_0|\phi|_\infty\Big|r^\frac{m}{2}\big(\frac{\psi}{r}\big)_{r}\Big|_2+C_0|\psi|_\infty|r^\frac{m-2}{2}\phi_r|_2 \leq  C(T),\\
|r^\frac{m}{2}\phi_{rrr}|_2 &\leq C_0|r^\frac{m}{2}\phi_{rr}\psi|_2 +C_0|r^\frac{m}{2}\phi_{r}\psi_r|_2+C_0|r^\frac{m}{2}\phi\psi_{rr}|_2\\
&\leq C_0|r^\frac{m}{2}\phi_{rr}\psi|_2 +C_0|r^\frac{m}{2}\phi\psi\psi_r|_2+C_0|r^\frac{m}{2}\phi\psi_{rr}|_2\\
&\leq C_0|(\phi,\psi)|_\infty |r^\frac{m}{2}(\phi_{rr},\psi_{rr})|_2+C_0|\phi|_\infty|\psi|_\infty|r^\frac{m}{2}\psi_r|_2 \leq C(T).
\end{aligned}
\end{equation}

Notice from Lemmas \ref{important2}, \ref{l4.7}--\ref{l4.9}, and \ref{lemma-psi-high} that
\begin{equation}\label{lem612-8}
\begin{aligned}
\Big|r^\frac{m}{2}\big(\phi_{trr},\frac{\phi_{tr}}{r}\big)\Big|_2&\leq C_0\Big|r^\frac{m}{2}\Big((\phi\psi)_{tr},\frac{1}{r}(\phi\psi)_t\Big)\Big|_2\\
&\leq C_0|(\phi,\psi)|_\infty \Big|r^\frac{m}{2}\Big(\phi_{tr},\psi_{tr},\frac{\psi_t}{r}\Big)\Big|_2+C_0|r^\frac{m-2}{2}\phi_t\psi|_2 \\
&\quad +C_0|r^\frac{m}{2}\phi_r\psi_t|_2+C_0|r^\frac{m}{2}\phi_t\psi_r|_2\\
&\leq C(T) +C_0|r^\frac{m}{2}\phi_r\psi_t|_2+C_0\Big|r^\frac{m}{2}\phi_t\big(\psi_r,\frac{\psi}{r}\big)\Big|_2.
\end{aligned}
\end{equation}
For the estimates of $|r^\frac{m}{2}\phi_r\psi_t|_2$ and $|r^\frac{m}{2}\phi_t(\psi_r,\frac{\psi}{r})|_2$
on the right-hand side of the above, it follows from Lemmas \ref{lemma66}--\ref{l4.9}, \ref{lemma-psi-high}, \ref{ale1}, and \ref{hardy} that
\begin{equation}
\begin{aligned}
|r^\frac{m}{2}\phi_r\psi_t|_2&\leq |\chi_1^\flat r^\frac{m}{2}\phi_r\psi_t|_2+|\chi_1^\sharp r^\frac{m}{2}\phi_r\psi_t|_2\\
&\leq  |\chi_1^\flat r\phi_r|_\infty|r^\frac{m-2}{2}\psi_t|_2+|\chi_1^\sharp \phi_r|_\infty|r^\frac{m}{2}\psi_t|_2\\
&\leq  C(T)\big(|\chi_1^\flat r^\frac{3}{2}(\phi_r,\phi_{rr})|_2+|\chi_1^\sharp (\phi_r,\phi_{rr})|_2\big) \\
&\leq  C(T)\big(|\chi_1^\flat r^\frac{3-m}{2}|_\infty+ |\chi_1^\sharp r^{-\frac{m}{2}}|_\infty\big) |r^\frac{m}{2}(\phi_r,\phi_{rr})|_2\leq C(T),    
\end{aligned}
\end{equation}
\begin{equation}\label{lem612-9}
\begin{aligned}
\Big|r^\frac{m}{2}\phi_t\big(\psi_r,\frac{\psi}{r}\big)\Big|_2&\leq \Big|\chi_1^\flat r^\frac{m}{2}\phi_t\big(\psi_r,\frac{\psi}{r}\big)\Big|_2+\Big|\chi_1^\sharp r^\frac{m}{2}\phi_t\big(\psi_r,\frac{\psi}{r}\big)\Big|_2\\
&\leq  C_0\Big|\chi_1^\flat r^\frac{m+2}{2}\phi_t\big(\psi_r,\frac{\psi}{r}\big)\Big|_2+C_0\Big|\chi_1^\flat r^\frac{m+2}{2}\phi_{tr} \big(\psi_r,\frac{\psi}{r}\big)\Big|_2\\
&\quad +C_0\Big|\chi_1^\flat r^\frac{m+2}{2}\phi_t\Big(\psi_{rr},\big(\frac{\psi}{r}\big)_r\Big)\Big|_2+|\chi_1^\sharp \phi_t|_\infty\Big|r^\frac{m}{2}\big(\psi_r,\frac{\psi}{r}\big)\Big|_2\\
&\leq  C_0 |(\chi_1^\flat r\psi_r,\psi)|_\infty |r^\frac{m}{2}(\phi_t,\phi_{tr})|_2\\
&\quad +C_0|\chi_1^\flat r\phi_t|_\infty \Big|r^\frac{m}{2}\Big(\psi_{rr},\big(\frac{\psi}{r}\big)_r\Big)\Big|_2 +C(T)|\chi_1^\sharp \phi_{t}|_\infty \\
&\leq  C(T) \big(1+|\chi_1^\flat r^\frac{3}{2}(\psi_r,\psi_{rr},\phi_t,\phi_{tr})|_2 +|\chi_1^\sharp(\phi_{t},\phi_{tr})|_2\big) \\
&\leq  C(T) \big(1+|r^\frac{m}{2}(\psi_r,\psi_{rr},\phi_{t},\phi_{tr})|_2\big)\leq C(T).
\end{aligned}
\end{equation}
Combining with \eqref{lem612-8}--\eqref{lem612-9} yields that, for all $t\in [0,T]$,
\begin{equation}\label{lem612-10}
\Big|r^\frac{m}{2}\big(\phi_{trr},\frac{\phi_{tr}}{r}\big)(t)\Big|_2\leq C(T).
\end{equation}

\smallskip
\textbf{2.} First, multiplying $\eqref{e2.2}_2$ by $\frac{1}{r}$, applying $r^\frac{m}{2}\partial_r$, and taking the $L^2(I)$-norm of both sides of the resulting equality, we obtain  from \eqref{lem612-2}, and Lemmas \ref{lemma66}--\ref{l4.7} and \ref{H3-1} that 
\begin{equation}\label{lem612-1}
\begin{aligned}
\Big|r^\frac{m}{2}\Big(\frac{1}{r}\big(u_r+\frac{m}{r}u\big)_{r}\Big)_r\Big|_2&\leq  \Big|r^\frac{m}{2}\Big(\big(\frac{u_{t}}{r}\big)_r,\big(\frac{u}{r}u_r\big)_{r},\big(\frac{\phi_{r}}{r}\big)_{r},2\alpha\big(\frac{\psi}{r} u_r\big)_{r}\Big)\Big|_2\\
&\leq  \Big|r^\frac{m}{2}\Big(\big(\frac{u_{t}}{r}\big)_r,\big(\frac{\phi_r}{r}\big)_r\Big)\Big|_2+C_0|(u,\psi)|_\infty|r^\frac{m-2}{2}u_{rr}|_2\\
&\quad +C_0\Big|r^\frac{m}{2}\Big(\big(\frac{u}{r}\big)_r,\big(\frac{\psi}{r}\big)_r\Big)\Big|_2|u_r|_\infty\leq \Big|r^\frac{m}{2}\big(\frac{u_{t}}{r}\big)_r\Big|_2 +C(T).
\end{aligned}
\end{equation}

Taking the square of above and then integrating over $[0,t]$, together with Lemma \ref{H3-1}, yield that, for all $t\in [0,T]$,
\begin{equation}\label{lem612-3}
\int_0^t\Big|r^\frac{m}{2}\Big(\frac{1}{r}\big(u_r+\frac{m}{r}u\big)_{r}\Big)_r\Big|_2^2\,\mathrm{d}s\leq C(T).
\end{equation}

Next, applying $r^\frac{m}{2}\partial_{r}^2$ to $\eqref{e2.2}_2$ and taking $L^2(I)$-norm of the resulting equality, we first obtain 
from \eqref{lem612-2}, and Lemmas \ref{lemma66}--\ref{l4.7} and \ref{H3-1}--\ref{lemma-psi-high} that
\begin{equation}\label{lem612-4}
\begin{aligned}
\Big|r^\frac{m}{2}\big(u_r+\frac{m}{r}u\big)_{rrr}\Big|_2&\leq \big|r^\frac{m}{2}\big(u_{trr},(uu_r)_{rr},\phi_{rrr},2\alpha(\psi u_r)_{rr}\big)\big|_2\\
&\leq |r^\frac{m}{2}(u_{trr},\phi_{rrr})|_2+|(u,\psi)|_\infty |r^\frac{m}{2}u_{rrr}|_2+C_0|r^\frac{m}{2}u_{rr} \psi_r |_2\\
&\quad + C_0|r^\frac{m}{2}(u_{rr},\psi_{rr})|_2|u_r|_\infty \\
&\leq |r^\frac{m}{2}u_{trr}|_2+C_0|r^\frac{m}{2}u_{rr} \psi_r|_2+C(T).
\end{aligned}
\end{equation}
Then, for the estimate of $|r^\frac{m}{2}u_{rr} \psi_r|_2$ on the right-hand side of the above, 
it follows from Lemmas \ref{lemma66}--\ref{l4.7}, \ref{H3-1}--\ref{lemma-psi-high}, \ref{ale1}, 
and \ref{hardy} that 
\begin{equation}\label{lem612-5}
\begin{aligned}
|r^\frac{m}{2}u_{rr}\psi_r|_2&\leq |\chi_1^\flat r^\frac{m}{2}u_{rr}\psi_r|_2+|\chi_1^\sharp r^\frac{m}{2}u_{rr}\psi_r|_2\\
&\leq C_0\big(|\chi_1^\flat r^\frac{m+2}{2}(u_{rr}\psi_r,u_{rrr}\psi_r,u_{rr}\psi_{rr})|_2+|\chi_1^\sharp r^\frac{m}{2}u_{rr}\psi_r|_2\big)\\
&\leq C_0\big(|r^\frac{m}{2}(u_{rr},u_{rrr})|_2|\chi_1^\flat r \psi_r|_\infty+ |r^\frac{m}{2}\psi_{rr}|_2|\chi_1^\flat ru_{rr}|_\infty+ |\chi_1^\sharp \psi_r|_\infty |r^\frac{m}{2}u_{rr}|_2\big)\\
&\leq C(T)\big(|\chi_1^\flat r^\frac{3}{2}(\psi_r,\psi_{rr},u_{rr},u_{rrr})|_2+|\chi_1^\sharp(\psi_r,\psi_{rr})|_2\big)\\
&\leq C(T)\big(|\chi_1^\flat r^\frac{3-m}{2}|_\infty+|\chi_1^\sharp r^{-\frac{m}{2}}|_\infty\big)|r^\frac{m}{2}(u_{rr},u_{rrr},\psi_r,\psi_{rr})|_2 \leq C(T).
\end{aligned}
\end{equation}
Thus, combining \eqref{lem612-4}--\eqref{lem612-5} gives
\begin{equation}\label{lem612-4''}
\Big|r^\frac{m}{2}\big(u_r+\frac{m}{r}u\big)_{rrr}\Big|_2 \leq |r^\frac{m}{2}u_{trr}|_2 +C(T).
\end{equation}
Taking the square of the above and then integrating over $[0,t]$, together with Lemma \ref{H3-1}, give that, for all $t\in [0,T]$,
\begin{equation}\label{lem612-6}
\int_0^t\Big|r^\frac{m}{2}\big(u_r+\frac{m}{r}u\big)_{rrr}\Big|_2^2\,\mathrm{d}s\leq C(T),
\end{equation}
which, along with \eqref{lem612-3}, and Lemma \ref{im-1}, leads to
\begin{equation}\label{lem612-7}
\int_0^t\Big|r^\frac{m}{2}\Big(u_{rrrr},\big(\frac{u_{rr}}{r}\big)_r,\big(\frac{u}{r}\big)_{rrr},\Big(\frac{1}{r}\big(\frac{u}{r}\big)_r\Big)_r\Big)\Big|_2^2\,\mathrm{d}s\leq C(T).
\end{equation}
This completes the proof.
\end{proof}

\subsection{Time-weighted estimates of the velocity}
We now establish the time-weighted fourth-order tangential estimates for $u$.
\begin{lem}\label{Lemma6.13}  
There exists a constant $C(T)>0$ such that
\begin{equation*}
\begin{split}
&t|r^{\frac{m}{2}}u_{tt}(t)|^2_2+\int_{0}^{t} s \Big|r^{\frac{m}{2}}\big(u_{ttr},\frac{u_{tt}}{r}\big)\Big|^2_2 \,\mathrm{d}s\leq C(T) \qquad \mbox{for any $t\in [0,T]$}.
\end{split}
\end{equation*}
\end{lem}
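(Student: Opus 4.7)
The strategy mirrors that of Lemma \ref{l4.10}, but with one additional time derivative: I will apply $\partial_t^2$ to $(\ref{e2.2})_2$, test against $r^m u_{tt}$, integrate by parts on the viscous term, and then introduce the time weight $t$ and integrate. Applying $\partial_t^2$ to $(\ref{e2.2})_2$ yields
$$u_{ttt}+(uu_r)_{tt}+\phi_{trr}-2\alpha\big(u_r+\tfrac{m}{r}u\big)_{ttr}-2\alpha(\psi u_r)_{tt}=0.$$
Multiplying by $r^m u_{tt}$, integrating over $I$, and identifying the viscous contribution via the identity \eqref{7-45} (applied to $\boldsymbol{f}=\boldsymbol{u}_t$, which belongs to $L^2([\tau,T];H^2(\mathbb{R}^n))$ with $\boldsymbol{f}_t\in L^2([\tau,T];L^2(\mathbb{R}^n))$ in view of \eqref{spd33}) then gives
$$\frac{1}{2}\frac{\mathrm{d}}{\mathrm{d}t}|r^{m/2}u_{tt}|_2^2+2\alpha\Big|r^{m/2}\big(u_{ttr}+\tfrac{m}{r}u_{tt}\big)\Big|_2^2=-\int_0^\infty r^m\big[(uu_r)_{tt}+\phi_{trr}-2\alpha(\psi u_r)_{tt}\big]u_{tt}\,\mathrm{d}r.$$
The vanishing of the boundary terms at $r=0$ and $r=\infty$ is verified as in the proof of Lemma \ref{l4.8} (cf.\ \eqref{eq:B10pre}--\eqref{cal-b10}), using the regularity statements \eqref{spd33}, \eqref{spd233}, and \eqref{phittx-psitt} and a Hardy-type bound for the traces at $r=0$.

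The right-hand side is controlled by the Cauchy--Schwarz and Young inequalities, using the uniform bounds already in hand (Lemmas \ref{energy-BD}--\ref{Lemma6.12}). After expanding $(uu_r)_{tt}=u_{tt}u_r+2u_tu_{tr}+uu_{ttr}$ and $(\psi u_r)_{tt}=\psi_{tt}u_r+2\psi_tu_{tr}+\psi u_{ttr}$, the most delicate contributions are those containing $u_{ttr}$, namely $\int r^m(u+\psi)u_{tt}u_{ttr}\,\mathrm{d}r$, which are absorbed by
$$\Big|\int_0^\infty r^m(u,\psi)u_{tt}u_{ttr}\,\mathrm{d}r\Big|\leq C|(u,\psi)|_\infty|r^{m/2}u_{tt}|_2\,|r^{m/2}u_{ttr}|_2\leq \frac{\alpha}{4}\big|r^{m/2}u_{ttr}\big|_2^2+C(T)|r^{m/2}u_{tt}|_2^2,$$
where $|(u,\psi)|_\infty$ is controlled uniformly by Lemmas \ref{lemma66} and \ref{l4.7}. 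The cross-terms $u_{tt}u_r u_{tt}$ and $u_t u_{tr}u_{tt}$ are handled using the $L^\infty$-bound for $(u_r,u/r)$ from Lemma \ref{H3-1} and the $L^2$-estimates of $(u_{tr},u_t/r)$ from Lemmas \ref{l4.8} and \ref{H3-1}; $\int r^m\phi_{trr}u_{tt}\,\mathrm{d}r$ is estimated by Cauchy--Schwarz using $|r^{m/2}\phi_{trr}|_2\leq C(T)$ from Lemma \ref{Lemma6.12}; and the $\psi_{tt}u_r u_{tt}$ piece is handled by putting $r^{m/2}\psi_{tt}\in L^2([0,T];L^2(I))$ (from \eqref{spd33}) in $L^2_t$ and the remaining factors in $L^\infty_t L^2_x\cap L^\infty_t L^\infty_x$. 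The upshot is
$$\frac{\mathrm{d}}{\mathrm{d}t}|r^{m/2}u_{tt}|_2^2+\alpha\Big|r^{m/2}\big(u_{ttr}+\tfrac{m}{r}u_{tt}\big)\Big|_2^2\leq C(T)\big(1+|r^{m/2}u_{tt}|_2^2\big)+h(t),$$
with $\|h\|_{L^1([0,T])}\leq C(T)$.

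Multiplying by $t$ and integrating over $[\tau,t]\subset(0,T]$ gives
$$t|r^{m/2}u_{tt}(t)|_2^2+\int_\tau^t s\Big|r^{m/2}\big(u_{ttr}+\tfrac{m}{r}u_{tt}\big)\Big|_2^2\,\mathrm{d}s\leq \tau|r^{m/2}u_{tt}(\tau)|_2^2+C(T)\Big(1+\int_\tau^t|r^{m/2}u_{tt}|_2^2\,\mathrm{d}s\Big),$$
where the last integral is bounded by $C(T)$ from Lemma \ref{H3-1}. Since the same lemma shows $r^{m/2}u_{tt}\in L^2([0,T];L^2(I))$, Lemma \ref{bjr} supplies a sequence $\tau_k\downarrow 0$ with $\tau_k|r^{m/2}u_{tt}(\tau_k)|_2^2\to0$. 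Sending $\tau=\tau_k\to 0$ and invoking the equivalence from Lemma \ref{im-1} to replace $|r^{m/2}(u_{ttr}+\frac{m}{r}u_{tt})|_2^2$ by $|r^{m/2}(u_{ttr},u_{tt}/r)|_2^2$ yields the claim. The principal obstacle is twofold: (i) the careful boundary-term analysis at the coordinate singularity $r=0$ when integrating by parts on the twice-time-differentiated viscous term, which must combine the Hardy inequality with the weighted regularity \eqref{spd33}; and (ii) ensuring that the top-order term $u u_{ttr}u_{tt}$ in $(uu_r)_{tt}$ (and its $\psi$-counterpart) can be absorbed into the dissipation without sacrificing the time-weight, which relies crucially on the uniform $L^\infty$-bounds of $u$ and $\psi$ obtained previously.
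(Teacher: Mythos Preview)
Your overall strategy---apply $\partial_t^2$ to $\eqref{e2.2}_2$, test against $r^m u_{tt}$, introduce the time weight $t$, and use Lemma~\ref{bjr} to dispose of the boundary term at $\tau\to 0$---matches the paper's approach. However, there is a concrete error: applying $\partial_t^2$ to $\phi_r$ produces $\phi_{ttr}$ (two time derivatives, one spatial), not $\phi_{trr}$ (one time, two spatial). These are different quantities, and while Lemma~\ref{Lemma6.12} does bound $|r^{m/2}\phi_{trr}|_2\le C(T)$ uniformly, the quantity actually appearing, $|r^{m/2}\phi_{ttr}|_2$, is \emph{not} uniformly bounded in $L^\infty_tL^2_x$. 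The paper obtains instead (via $\phi_r=(\gamma-1)\phi\psi$ and bounds on $\phi_{tt},\psi_{tt}$) the estimate $|r^{m/2}\phi_{ttr}|_2\le C(T)\big(|r^{m/2}(u_{trr},(u_t/r)_r)|_2+1\big)$, which lies only in $L^2_t$ by Lemma~\ref{H3-1}; this still closes after multiplying by $t$, but your citation of Lemma~\ref{Lemma6.12} is for the wrong term.

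Two related gaps: (i) citing \eqref{spd33} for $r^{m/2}\psi_{tt}\in L^2_tL^2_x$ gives only qualitative regularity on the local interval; to obtain the a~priori constant $C(T)$ you must derive $|r^{m/2}\psi_{tt}|_2\le C(T)\big(|r^{m/2}(u_{trr},(u_t/r)_r)|_2+1\big)$ directly from $\eqref{e2.2}_3$, as the paper does. (ii) The cross-terms $u_t u_{tr}u_{tt}$ and $\psi_t u_{tr}u_{tt}$ cannot be handled with uniform $L^\infty$ bounds on $u_t$ or $u_{tr}$ in three dimensions; the paper instead uses Hardy/Sobolev bounds of the form $|u_t|_\infty+|\chi_1^\flat r u_{tr}|_\infty+|\chi_1^\sharp u_{tr}|_\infty\le C(T)(|r^{m/2}u_{trr}|_2+1)$, so these terms also feed the $L^1_t$-forcing $h$ through $|r^{m/2}u_{trr}|_2^2$ rather than a constant.
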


\begin{proof} We divide the proof into three steps.

\smallskip
\textbf{1.} We give some estimates to be used later. First, it follows from Lemmas \ref{l4.8}, \ref{H3-1}, \ref{ale1}, and \ref{hardy} that
\begin{equation}\label{lem613-1}
\begin{aligned}
|u_t|_\infty&\leq C_0|(u_t,u_{tr})|_2\leq C_0|\chi_1^\flat (u_t,u_{tr})|_2+C_0|\chi_1^\sharp (u_t,u_{tr})|_2\\
&\leq C_0|\chi_1^\flat r(u_t,u_{tr},u_{trr})|_2+C_0|\chi_1^\sharp (u_t,u_{tr})|_2\\
&\leq C_0  |r^\frac{m}{2}(u_t,u_{tr},u_{trr})|_2 \leq C(T) \big(|r^\frac{m}{2}u_{trr}|_2+1\big),\\
|\chi_1^\flat ru_{tr}|_\infty &\leq C_0|\chi_1^\flat r^\frac{3}{2}(u_{tr},u_{trr})|_2 \leq C_0 |r^\frac{m}{2}(u_{tr},u_{trr})|_2 \leq C(T) \big(|r^\frac{m}{2}u_{trr}|_2+1\big),\\
|\chi_1^\sharp u_{tr}|_\infty&\leq C_0|\chi_1^\sharp (u_{tr},u_{trr})|_2\leq C_0 |r^\frac{m}{2}(u_{tr},u_{trr})|_2 \leq C(T) \big(|r^\frac{m}{2}u_{trr}|_2+1\big).
\end{aligned}    
\end{equation}

Next, applying $r^\frac{m}{2}\partial_t$ to $\eqref{e2.2}_3$ and taking the $L^2(I)$-norm of both sides of the resulting equality, we obtain from \eqref{lem613-1}, and
Lemmas \ref{lemma66}--\ref{l4.7} and \ref{H3-1}--\ref{lemma-psi-high} that
\begin{equation}\label{lem613-1'}
\begin{aligned}
|r^\frac{m}{2}\psi_{tt}|_2&\leq \Big|r^\frac{m}{2}\Big((\psi u)_{tr},\big(u_r+\frac{m}{r}u\big)_{tr}\Big)\Big|_2\\
&\leq |\psi|_\infty |r^\frac{m}{2}u_{tr}|_2+|u_r|_\infty|r^\frac{m}{2}\psi_t|_2+|u_t|_\infty|r^\frac{m}{2}\psi_r|_2+|u|_\infty |r^\frac{m}{2}\psi_{tr}|_2\\
&\quad +C_0\Big|r^\frac{m}{2}\Big(u_{trr},\big(\frac{u_t}{r}\big)_{r}\Big)\Big|_2\leq C(T) \Big(\Big|r^\frac{m}{2}\Big(u_{trr},\big(\frac{u_t}{r}\big)_{r}\Big)\Big|_2+1\Big).
\end{aligned}
\end{equation}

Finally, based on $\eqref{e2.2}_1$, it follows from Lemmas \ref{important2}, \ref{l4.8}--\ref{l4.9}, 
and \ref{H3-1} that
\begin{equation}\label{cal-phitt}
\begin{aligned}
|\phi_t|_\infty&\leq C_0 \Big|\Big(u\phi_r,\phi\big(u_r+\frac{m}{r}u\big)\Big)\Big|_\infty \leq C_0 |\phi|_\infty \Big(|u|_\infty |\psi|_\infty+\Big|\big(u_r,\frac{u}{r}\big)\Big|_\infty\Big)\leq C(T),\\
|r^\frac{m}{2}\phi_{tt}|_2&\leq |r^\frac{m}{2}(u\phi_r)_t|_2+C_0\Big|r^\frac{m}{2}\Big(\phi\big(u_r+\frac{m}{r}u\big)\Big)_t\Big|_2\\
&\leq |u|_\infty |r^\frac{m}{2}\phi_{tr}|_2+|\phi|_\infty|\psi|_\infty|r^\frac{m}{2}u_t|_2\\
&\quad +C_0|r^\frac{m}{2}\phi_t|_2\Big|\big(u_r,\frac{u}{r}\big)\Big|_\infty+C_0|\phi|_\infty\Big|r^\frac{m}{2}\big(u_{tr},\frac{u_t}{r}\big)\Big|_2\leq C(T),
\end{aligned}    
\end{equation}
which, along with the chain rule, \eqref{lem613-1'}, and Lemma \ref{lemma66}, leads to
\begin{equation}\label{phittr}
\begin{aligned}
|r^\frac{m}{2}\phi_{ttr}|_2&\leq C_0\big|r^\frac{m}{2}(\phi_{tt}\psi,\phi_t\psi_t,\phi\psi_{tt})\big|_2 \leq C_0\big(|(\phi,\psi)|_\infty \big|r^\frac{m}{2}(\phi_{tt},\psi_{tt})\big|_2+ |\phi_t|_\infty |r^\frac{m}{2}\psi_t|_2\big) \\
&\leq C(T) \Big(\Big|r^\frac{m}{2}\Big(u_{trr},\big(\frac{u_t}{r}\big)_{r}\Big)\Big|_2+1\Big).
\end{aligned}    
\end{equation}

\smallskip
\textbf{2.} We now proceed to prove Lemma \ref{Lemma6.13}. 
Formally, applying $r^mu_{tt}\partial_{t}^2$ to both sides of $\eqref{e2.2}_2$ and integrating the resulting equality over $I$ yield that
\begin{equation}\label{lem613-2}
\begin{aligned}
&\frac{1}{2}\frac{\mathrm{d}}{\mathrm{d}t}|r^\frac{m}{2}u_{tt}|_2^2+2\alpha\Big|r^\frac{m}{2}\big(u_{ttr}+\frac{m}{r}u_{tt}\big)\Big|_2^2\\
&=-\int_0^\infty r^m(uu_r)_{tt}u_{tt}\,\mathrm{d}r-\int_0^\infty r^m \phi_{ttr}u_{tt}\,\mathrm{d}r +2\alpha\int_0^\infty r^m(\psi u_r)_{tt}u_{tt}\,\mathrm{d}r:=\sum_{i=21}^{23}\mathcal{G}_i
\end{aligned}
\end{equation}
for {\it a.e.} $t\in (\tau,T)$ and $\tau\in (0,T)$. Here, we temporarily assume that the above energy equality holds, and the specific proof will be provided in Step 3 below.

Then, for $\mathcal{G}_{21}$--$\mathcal{G}_{22}$, it follows from \eqref{lem613-1}, \eqref{phittr}, Lemmas \ref{im-1}, \ref{lemma66}, and \ref{H3-1}, and the  H\"older and Young inequalities that
\begin{equation}\label{G21-G22}
\begin{aligned}
\mathcal{G}_{21}&= -\int_0^\infty r^m(u_{tt}u_r+2u_tu_{tr}+uu_{ttr})u_{tt}\,\mathrm{d}r\\
&\leq |u_r|_\infty |r^\frac{m}{2}u_{tt}|_2^2+2|u_t|_\infty |r^\frac{m}{2}u_{tr}|_2|r^\frac{m}{2}u_{tt}|_2+|u|_\infty |r^\frac{m}{2}u_{ttr}|_2|r^\frac{m}{2}u_{tt}|_2\\
&\leq C(T)\big(|r^\frac{m}{2}u_{tt}|_2^2+1\big)+\frac{\alpha}{8}\Big|r^\frac{m}{2}\big(u_{ttr}+\frac{m}{r}u_{tt}\big)\Big|_2^2,\\
\mathcal{G}_{22}&\leq C(T) \Big(|r^\frac{m}{2}u_{tt}|_2^2+\Big|r^\frac{m}{2}\Big(u_{trr},\big(\frac{u_t}{r}\big)_{r}\Big)\Big|_2^2+1\Big).
\end{aligned}
\end{equation}
For $\mathcal{G}_{23}$, it follows from $\eqref{e2.2}_3$, \eqref{lem613-1}--\eqref{lem613-1'}, 
Lemmas \ref{im-1}, \ref{lemma66}--\ref{l4.7}, and \ref{H3-1}--\ref{lemma-psi-high}, and the H\"older and Young inequalities that
\begin{equation}\label{G23}
\begin{aligned}
\mathcal{G}_{23}&=-2\alpha\int_0^\infty r^m(\psi_{tt}u_r+2\psi_tu_{tr}+\psi u_{ttr})u_{tt}\,\mathrm{d}r\\
&\leq C_0|u_r|_\infty|r^\frac{m}{2}\psi_{tt}|_2|r^\frac{m}{2}u_{tt}|_2+C_0|\chi_1^\flat ru_{tr}|_\infty |r^\frac{m-2}{2}\psi_t|_2|r^\frac{m}{2}u_{tt}|_2\\
&\quad +C_0|\chi_1^\sharp u_{tr}|_\infty |r^\frac{m}{2}\psi_t|_2|r^\frac{m}{2}u_{tt}|_2+C_0|\psi|_\infty |r^\frac{m}{2}u_{ttr}|_2|r^\frac{m}{2}u_{tt}|_2\\
&\leq C(T)\Big(\Big|r^\frac{m}{2}\Big(u_{trr},\big(\frac{u_t}{r}\big)_{r}\Big)\Big|_2+|r^\frac{m}{2}u_{ttr}|_2+1\Big)|r^\frac{m}{2}u_{tt}|_2\\
&\leq C(T)|r^\frac{m}{2}u_{tt}|_2^2+C(T)\Big(\Big|r^\frac{m}{2}\Big(u_{trr},\big(\frac{u_t}{r}\big)_{r}\Big)\Big|_2+1\Big)+\frac{\alpha}{8} \Big|r^\frac{m}{2}\big(u_{ttr}+\frac{m}{r}u_{tt}\big)\Big|_2^2.
\end{aligned}
\end{equation}

Thus, collecting \eqref{lem613-2}--\eqref{G23}, along with Lemma \ref{im-1}, gives
\begin{equation*} 
\begin{aligned}
\frac{\mathrm{d}}{\mathrm{d}t}|r^\frac{m}{2}u_{tt}|_2^2+\alpha\Big|r^\frac{m}{2}\big(u_{ttr},\frac{u_{tt}}{r}\big)\Big|_2^2&\leq C(T)\Big(|r^\frac{m}{2}u_{tt}|_2^2+\Big|r^\frac{m}{2}\Big(u_{trr},\big(\frac{u_{t}}{r}\big)_r\Big)\Big|_2^2 +1\Big).
\end{aligned}
\end{equation*}
Multiplying the above by $t$ and integrating the resulting inequality over $[\tau, t]$ for $\tau\in(0,t)$, along with Lemmas \ref{H3-1} and \ref{Lemma6.12}, imply that
\begin{equation}\label{etrq2'}
t|r^{\frac{m}{2}}u_{tt}(t)|_2^2+ \int_\tau^t s\Big|r^\frac{m}{2}\big(u_{ttr},\frac{u_{tt}}{r}\big)\Big|_2^2\,\mathrm{d}s\leq C(T)\big(\tau|r^{\frac{m}{2}}u_{tt}(\tau)|_2^2+1\big).
\end{equation}

Next, thanks to Lemma \ref{H3-1}, $r^{\frac{m}{2}}u_{tt}\in L^2([0,T];L^2(I))$, it follows from  Lemma \ref{bjr} that there exists a sequence $\{\tau_k\}_{k=1}^\infty$ such that 
\begin{equation}\label{qujin}
\tau_k\to 0, \quad\, \tau_k|r^\frac{m}{2}u_{tt}(\tau_k)|_2^2\to 0 \qquad\,\, \text{as $k\to \infty$}.
\end{equation}
Choosing $\tau=\tau_k\to 0$  in  \eqref{etrq2'} yields that, for all $t\in [0,T]$, 
\begin{equation}\label{qunjin2}
t|r^{\frac{m}{2}}u_{tt}(t)|_2^2+ \int_0^t s\Big|r^\frac{m}{2}\big(u_{ttr},\frac{u_{tt}}{r}\big)\Big|_2^2\,\mathrm{d}s\leq C(T).
\end{equation}

\smallskip 
\textbf{3.} In the final step, we give a rigorous proof of the energy equality \eqref{lem613-2}. Suppose that $\boldsymbol{\varphi}(\boldsymbol{x})=\varphi(r)\frac{\boldsymbol{x}}{r}$ is any given spherically symmetric vector function satisfying $r^\frac{m}{2}(\varphi,\varphi_r,\frac{\varphi}{r})\in L^2(I)$. Of course, by Lemma \ref{lemma-initial}, this is equivalent to $\boldsymbol{\varphi}\in H^1(\mathbb{R}^n)$. 
Then applying $r^m\varphi\partial_t$ to both sides of $\eqref{e2.2}_2$ and integrating the resulting equality over $I$ yield from integration by parts that 
\begin{equation*}
\begin{aligned}
\int_0^\infty r^mu_{tt}\varphi\,\mathrm{d}r&=-2\alpha\int_0^\infty r^m\big(u_{tr}+\frac{m}{r}u_t\big)\big(\varphi_r+\frac{m}{r}\varphi\big)\,\mathrm{d}r \\
&\quad -\int_0^\infty r^m\big((uu_r)_t+\phi_{tr}-2\alpha(\psi u_r)_t\big)\varphi\,\mathrm{d}r.
\end{aligned}    
\end{equation*}
Here, the process of integration by parts can be justified by following the similar discussions in \eqref{eq:B10pre}--\eqref{util2}. 

Next, differentiating the above with respect to $t$ leads to
\begin{equation}\label{above-for}
\begin{aligned}
\frac{\mathrm{d}}{\mathrm{d}t}\int_0^\infty r^mu_{tt}\varphi\,\mathrm{d}r&=-2\alpha\int_0^\infty r^m\big(u_{ttr}+\frac{m}{r}u_{tt}\big)\big(\varphi_r+\frac{m}{r}\varphi\big)\,\mathrm{d}r\\
&\quad -\int_0^\infty r^m\big((uu_r)_{tt}+\phi_{ttr}-2\alpha(\psi u_r)_{tt}\big)\varphi\,\mathrm{d}r.
\end{aligned}    
\end{equation}
Based on the {\it a priori} assumption $r^\frac{m}{2}(u_{ttr},\frac{u_{tt}}{r},\phi_{ttr},\psi_{tt})\in L^2([\tau,T];L^2(I))$ for $\tau\in (0,T)$, 
and the calculations of $\mathcal{G}_{21}$--$\mathcal{G}_{23}$ in Step 2, we obtain 
\begin{equation}
\frac{1}{\omega_n}\frac{\mathrm{d}}{\mathrm{d}t}\int_{\mathbb{R}^n} \boldsymbol{u}_{tt}\cdot\boldsymbol{\varphi}\,\mathrm{d}\boldsymbol{x}=\frac{\mathrm{d}}{\mathrm{d}t}\int_0^\infty r^mu_{tt}\varphi\,\mathrm{d}r\leq F(t)\Big|r^\frac{m}{2}\Big(\varphi,\varphi_r,\frac{\varphi}{r}\Big)\Big|_2
\end{equation}
for some positive function $F(t)\in L^2(\tau,T)$, where $\omega_n$ denotes the surface area of the $n$-sphere. Thus, it follows from Lemma \ref{lemma-initial} and Lemma 1.1 on \cite[page 250]{temam} that $\boldsymbol{u}_{ttt}\in L^2([\tau,T];H^{-1}(\mathbb{R}^n))$. Consequently, the energy equality follows easily from the spherical coordinates transformation and the following identity, due to Lemma \ref{triple}:
\begin{equation*}
\frac{\mathrm{d}}{\mathrm{d}t}\int_0^\infty r^m|u_{tt}|^2\,\mathrm{d}r=\frac{1}{\omega_n}\frac{\mathrm{d}}{\mathrm{d}t}\int_{\mathbb{R}^n} |\boldsymbol{u}_{tt}|^2\,\mathrm{d}\boldsymbol{x}=\frac{2}{\omega_n}\langle \boldsymbol{u}_{ttt},\boldsymbol{u}_{tt}\rangle_{H^{-1}(\mathbb{R}^n)\times H^1(\mathbb{R}^n)}.
\end{equation*}
The proof of Lemma \ref{Lemma6.13} is completed.
\end{proof}

\begin{lem}\label{Lemma6.14}  
There exists a constant $C(T)>0$ such that, for any $t\in [0,T]$,
\begin{equation*}
\begin{aligned}
&\sqrt{t}\Big|r^{\frac{m}{2}}\Big(u_{trr},\big(\frac{u_{t}}{r}\big)_r,u_{rrrr},\big(\frac{u_{rr}}{r}\big)_r,\big(\frac{u}{r}\big)_{rrr},\Big(\frac{1}{r}\big(\frac{u}{r}\big)_r\Big)_r\Big)(t)\Big|_2\\
&+\int_0^t s\Big|r^{\frac{m}{2}}\Big(u_{trrr},\frac{u_{trr}}{r},\big(\frac{u_t}{r}\big)_{rr},\frac{1}{r}\big(\frac{u_{tr}}{r}\big)_r\Big)\Big|_2^2\,\mathrm{d}s\leq C(T).
\end{aligned}
\end{equation*}
\end{lem}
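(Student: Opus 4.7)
The plan is to follow the scheme developed in Lemma \ref{l4.10-ell}, but applied one derivative order higher, combining the just-established time-weighted second time-derivative bound (Lemma \ref{Lemma6.13}) with the spatial estimates of Lemmas \ref{H3-1}--\ref{Lemma6.12} and the div--curl identifications of Lemma \ref{im-1}. The starting point is the identity obtained by differentiating $\eqref{e2.2}_2$ in $t$,
\begin{equation*}
2\alpha\big(u_r+\tfrac{m}{r}u\big)_{tr}=u_{tt}+(uu_r)_t+\phi_{tr}-2\alpha(\psi u_r)_t,
\end{equation*}
multiplied by $\sqrt{t}\,r^{m/2}$ and $L^2$-normed; here the right-hand side is bounded using Lemma \ref{Lemma6.13} and the time-weighted $L^\infty$-control of $|u_r|_\infty$, $|u|_\infty$, $|\psi|_\infty$, $|\phi_{tr}|_2$ (Lemmas \ref{lemma66}--\ref{l4.7}, \ref{H3-1}, \ref{lemma-psi-high}). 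Lemma \ref{im-1} then converts the divergence-form estimate into the pointwise $r^{m/2}\big(u_{trr},(u_t/r)_r\big)$ estimate, which yields the first block of time-weighted $D^3$-bounds for $u_t$.

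Next, for the fourth-order spatial estimates of $u$, I would apply $\sqrt{t}\,r^{m/2}\partial_r^2$ to $\eqref{e2.2}_2$, rearrange as
\begin{equation*}
2\alpha\big(u_r+\tfrac{m}{r}u\big)_{rrr}=u_{trr}+(uu_r)_{rr}+\phi_{rrr}-2\alpha(\psi u_r)_{rr},
\end{equation*}
take $L^2$-norms, and use Step 1 to absorb $\sqrt{t}|r^{m/2}u_{trr}|_2$, Lemma \ref{Lemma6.12} to bound $|r^{m/2}(\phi_{rrr},(\phi_r/r)_r)|_2$, and Lemmas \ref{lemma66}--\ref{lemma-psi-high}, \ref{H3-1} to bound the nonlinear $(uu_r)_{rr}$ and $(\psi u_r)_{rr}$ terms. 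The analogous bound $\sqrt{t}|r^{m/2}(1/r)(u_r+\frac{m}{r}u)_{rr}|_2\le C(T)$ is obtained by dividing the equation by $r$ before differentiating. Lemma \ref{im-1} then gives the desired $\sqrt{t}$-weighted bounds on $u_{rrrr},(u_{rr}/r)_r,(u/r)_{rrr},(\tfrac{1}{r}(u/r)_r)_r$.

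For the integral part, I would differentiate $\eqref{e2.2}_2$ in both $t$ and $r$ to get
\begin{equation*}
2\alpha\big(u_r+\tfrac{m}{r}u\big)_{trr}=u_{ttr}+(uu_r)_{tr}+\phi_{trr}-2\alpha(\psi u_r)_{tr},
\end{equation*}
multiply by $s^{1/2}r^{m/2}$, square, and integrate in $s\in[0,t]$. The leading piece $\int_0^t s|r^{m/2}u_{ttr}|_2^2\,\mathrm{d}s$ is controlled directly by Lemma \ref{Lemma6.13}, while $\int_0^t s|r^{m/2}\phi_{trr}|_2^2\,\mathrm{d}s$ follows from Lemma \ref{Lemma6.12}. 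Lemma \ref{im-1} once more converts the divergence-form bound into the desired $r^{m/2}$-weighted $L^2$-estimates for $u_{trrr},u_{trr}/r,(u_t/r)_{rr},\tfrac{1}{r}(u_{tr}/r)_r$. Analogously, dividing by $r$ first handles the companion piece with the singular weight.

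The main obstacle will be the cross-terms $(\psi u_r)_{tr}$ and $(uu_r)_{tr}$, which produce products like $\psi_t u_{rr}$, $\psi_r u_{tr}$, $u_t u_{rr}$, $u_r u_{tr}$; these require controlling $u_{tr}$, $\psi_t$, $\psi_r$, $u_{rr}$ in $L^\infty$-like norms on appropriate subintervals of $r$. I would treat them by region segmentation on $\{r<1\}$ and $\{r\ge 1\}$, employing the Hardy inequality (Lemma \ref{hardy}) together with the Sobolev embedding $H^1\hookrightarrow L^\infty$ (or the critical embeddings in Lemmas \ref{Hk-Ck-vector}--\ref{lemma-L6}) to upgrade the $H^1$-type bounds already collected in Lemmas \ref{H3-1}--\ref{Lemma6.12} to the $L^\infty$ bounds needed, exactly as in the derivation of \eqref{lem612-5}. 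Finally, to justify the energy identity at $t=0$ (the traces $r^{m/2}(u_{trr},(u_t/r)_r)(\tau)$ as $\tau\to 0$), I would mimic the limit argument in \eqref{cont-limsup}--\eqref{limsup} and, where the continuity fails at $t=0$, invoke the selection of $\tau_k\to 0$ as in \eqref{qujin}--\eqref{qunjin2} via Lemma \ref{bjr}.
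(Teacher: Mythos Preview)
Your proposal is correct and matches the paper's approach almost exactly: the paper first invokes the already-derived inequality \eqref{utt} together with Lemma \ref{Lemma6.13} to get $\sqrt{t}\big|r^{m/2}\big(u_{trr},(u_t/r)_r\big)\big|_2\leq C(T)$, then feeds this into the spatial estimates \eqref{lem612-1}, \eqref{lem612-4''} (Lemma \ref{Lemma6.12}) to obtain the fourth-order bounds on $u$, and finally applies $r^{\frac{m-2}{2}}\partial_t$ and $r^{\frac{m}{2}}\partial_t\partial_r$ to $\eqref{e2.2}_2$ and integrates $s\,|\cdot|_2^2$ using Lemma \ref{Lemma6.13} for $\int_0^t s\big|r^{m/2}(u_{ttr},u_{tt}/r)\big|_2^2\,\mathrm{d}s$, handling the cross-terms $(\psi u_r)_{tr}$, $(uu_r)_{tr}$ by the same near/far region splitting you describe. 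One small remark: your final paragraph about justifying traces at $t=0$ via \eqref{cont-limsup}--\eqref{limsup} or Lemma \ref{bjr} is unnecessary here, since this lemma involves no energy differential inequality integrated from $0$; all estimates are pointwise-in-$t$ bounds obtained from the momentum equation and previously established quantities.
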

\begin{proof}We divide the proof into two steps.

\smallskip
\textbf{1.} It follows from \eqref{utt} and Lemma \ref{Lemma6.13} that 
\begin{equation}\label{lem614-1}
\sqrt{t}\Big|r^{\frac{m}{2}}\Big(u_{trr},\big(\frac{u_{t}}{r}\big)_r\Big)(t)\Big|_2\leq C(T)\big(\sqrt{t}|r^\frac{m}{2}u_{tt}(t)|_2+1\big)\leq C(T).
\end{equation}
Then, according to \eqref{lem612-1}--\eqref{lem612-2}, \eqref{lem612-4}--\eqref{lem612-5}, \eqref{lem614-1}, 
and Lemma \ref{im-1}, we have
\begin{equation}\label{lem614-1'}
\begin{aligned}
&\sqrt{t}\Big|r^{\frac{m}{2}}\Big(u_{rrrr},\big(\frac{u_{rr}}{r}\big)_r,\big(\frac{u}{r}\big)_{rrr},\Big(\frac{1}{r}\big(\frac{u}{r}\big)_r\Big)_r\Big)(t)\Big|_2\\
&\leq C_0\sqrt{t}\Big|r^{\frac{m}{2}}\Big(u_{trr},\big(\frac{u_{t}}{r}\big)_r\Big)(t)\Big|_2+C(T)\leq C(T).
\end{aligned}    
\end{equation}

\smallskip
\textbf{2.} Apply $r^\frac{m-2}{2}\partial_t$ and $r^\frac{m}{2}\partial_{t}\partial_{r}$ to $\eqref{e2.2}_2$, respectively. Then, taking the $L^2(I)$-norm of these two resulting equality, 
we obtain from \eqref{lem613-1}, and Lemmas \ref{lemma66}--\ref{l4.7} and \ref{H3-1}--\ref{Lemma6.12} that
\begin{align*}
&\begin{aligned}
&\Big|r^\frac{m}{2}\Big(\frac{1}{r}\big(u_{tr}+\frac{m}{r}u_t\big)_r,\big(u_{tr}+\frac{m}{r}u_t\big)_{rr}\Big)\Big|_2\\
&\leq \Big|r^\frac{m}{2}\big(\frac{u_{tt}}{r},u_{ttr}\big)\Big|_2+\Big|r^\frac{m}{2}\Big(\frac{1}{r}(uu_r)_t,(uu_r)_{tr}\Big)\Big|_2+\Big|r^\frac{m}{2}\big(\frac{\phi_{tr}}{r},\phi_{trr}\big)\Big|_2\\
& \quad +2\alpha\Big|r^\frac{m}{2}\Big(\frac{1}{r}(\psi u_r)_t,(\psi u_r)_{tr}\Big)\Big|_2
\end{aligned}\\
&\begin{aligned}
&\leq \Big|r^\frac{m}{2}\big(\frac{u_{tt}}{r},u_{ttr}\big)\Big|_2+C_0|(u,\psi)|_\infty|r^\frac{m}{2}u_{trr}|_2\\
&\quad +C_0\Big|r^\frac{m}{2}\Big(\frac{u_t}{r},u_{tr},\frac{\psi_t}{r},\psi_{tr}\Big)\Big|_2|u_r|_\infty+C_0|r^\frac{m}{2}u_{tr}|_2\Big|\frac{u}{r}\Big|_\infty\\
&\quad +C_0|\chi_1^\flat r^\frac{m}{2}u_{tr}|_\infty \Big|\chi_1^\flat \big(\frac{\psi}{r},\psi_r\big)\Big|_2+C_0|\chi_1^\sharp u_{tr}|_\infty \Big|r^\frac{m}{2}\big(\frac{\psi}{r},\psi_r\big)\Big|_2\\
&\quad +C_0|u_t|_\infty|r^\frac{m}{2}u_{rr}|_2+C_0|r^\frac{m-2}{2}\psi_t|_2|\chi_1^\flat ru_{rr}|_\infty+C_0|r^\frac{m}{2}\psi_t|_2|\chi_1^\sharp u_{rr}|_\infty+C(T)
\end{aligned}\\
&\begin{aligned}
&\leq \Big|r^\frac{m}{2}\big(\frac{u_{tt}}{r},u_{ttr}\big)\Big|_2+C(T)|r^\frac{m}{2}u_{trr}|_2+C_0|\chi_1^\flat r^\frac{m}{2}u_{tr}|_\infty \Big|\chi_1^\flat \big(\frac{\psi}{r},\psi_r\big)\Big|_2+C_0|\chi_1^\sharp u_{tr}|_\infty\\
&\quad +C(T)\big(|\chi_1^\flat ru_{rr}|_\infty+|\chi_1^\sharp u_{rr}|_\infty+1\big)
\end{aligned}\\
&\begin{aligned}
&\leq \Big|r^\frac{m}{2}\big(\frac{u_{tt}}{r},u_{ttr}\big)\Big|_2+C(T)|r^\frac{m}{2}u_{trr}|_2\\
&\quad +C_0|\chi_1^\flat r^\frac{m+1}{2}(u_{tr},u_{trr})|_2 \Big|\chi_1^\flat r\Big(\frac{\psi}{r},\big(\frac{\psi}{r}\big)_r,\psi_r,\psi_{rr}\Big)\Big|_2+C_0|\chi_1^\sharp (u_{tr},u_{trr})|_2\\
&\quad +C(T)\big(|\chi_1^\flat r^\frac{3}{2}(u_{rr},u_{rrr})|_2+|\chi_1^\sharp (u_{rr},u_{rrr})|_2+1\big)
\end{aligned}\\
&\begin{aligned}
&\leq \Big|r^\frac{m}{2}\big(\frac{u_{tt}}{r},u_{ttr}\big)\Big|_2+C(T)|r^\frac{m}{2}u_{trr}|_2\\
&\quad +C_0|\chi_1^\flat r^\frac{1}{2}|_\infty|\chi_1^\flat r^\frac{2-m}{2}|_\infty |r^\frac{m}{2}(u_{tr},u_{trr})|_2 \Big|r^\frac{m}{2}\Big(\frac{\psi}{r},\big(\frac{\psi}{r}\big)_r,\psi_r,\psi_{rr}\Big)\Big|_2\\
&\quad +C_0|\chi_1^\sharp r^{-\frac{m}{2}}|_\infty |r^\frac{m}{2}(u_{tr},u_{trr})|_2\\
&\quad +C(T)\big(|\chi_1^\flat r^\frac{3-m}{2}|_\infty+|\chi_1^\sharp r^{-\frac{m}{2}}|_\infty\big) |r^\frac{m}{2}(u_{rr},u_{rrr})|_2+C(T)
\end{aligned}\\
&\begin{aligned}
&\leq \big|r^\frac{m}{2}(\frac{u_{tt}}{r},u_{ttr})\big|_2+C(T)|r^\frac{m}{2}u_{trr}|_2+C(T).
\end{aligned}
\end{align*}

It follows  from the above estimate, \eqref{lem614-1}, and Lemmas \ref{im-1} and \ref{Lemma6.13} that
\begin{align*}
&\int_0^t s\Big|r^{\frac{m}{2}}\Big(u_{trrr},\frac{u_{trr}}{r},\big(\frac{u_t}{r}\big)_{rr},\frac{1}{r}\big(\frac{u_{tr}}{r}\big)_r\Big)\Big|_2^2\,\mathrm{d}s\\
&\leq  C_0\int_0^t s\Big|r^\frac{m}{2}\Big(\frac{1}{r}\big(u_{tr}+\frac{m}{r}u_t\big)_r,\big(u_{tr}+\frac{m}{r}u_t\big)_{rr}\Big)\Big|_2^2\,\mathrm{d}s\\
&\leq C_0 \int_0^t s\Big|r^\frac{m}{2}\big(\frac{u_{tt}}{r},u_{ttr}\big)\Big|_2^2\,\mathrm{d}s+C(T)t \sup_{s\in[0,t]}|r^\frac{m}{2}u_{trr}|_2^2+C(T)t\leq C(T).
\end{align*}
This completes the proof.
\end{proof}

\section{Global Well-Posedness of  Regular Solutions with Far-Field Vacuum}\label{se46}
This section is devoted to proving Theorems \ref{th1}--\ref{th1-high} for the flow with far-field vacuum. 

\subsection{\textbf{Proof of Theorem \ref{th1}}}\label{solo} 
Based on  the  local well-posedness and the corresponding global uniform  estimates obtained above, now we can give the proof of   Theorem \ref{th1}. We divide the proof into four steps.

\smallskip
\textbf{1. Global well-posedness of $2$-order regular solutions.} First, according to Theorem \ref{zth1}, there exists a $2$-order 
regular solution $(\rho, \boldsymbol{u})(t,\boldsymbol{x})$ of the Cauchy problem \eqref{eq:1.1benwen}--\eqref{e1.3} in $[0,T_*]\times \mathbb{R}^n$ for some $T_*>0$, which takes form \eqref{duichenxingshi}.

Second, let $\overline{T}_*>0$ be the life span of $(\rho, \boldsymbol{u})(t,\boldsymbol{x})$, 
and let $T$ be any fixed time satisfying $T\in (0,\overline{T}_*)$.
Collecting the uniform {\it a priori} bounds obtained in Lemmas \ref{far-p-infty}, \ref{important2}, 
and \ref{l4.5}--\ref{l4.10-ell}, and then using \eqref{tr} and Lemma \ref{lemma-initial} yield that, for any  $t\in [0,T]$,
\begin{equation}\label{global-unifrom}
\begin{aligned}
\|\rho(t)\|_{L^1\cap L^\infty}+\|(\nabla\rho^{\gamma-1},(\rho^{\gamma-1})_t)(t)\|_{H^1}&\leq C(T),\\[4pt]
\|\nabla\log\rho(t)\|_{L^\infty}+\big\|\big(\nabla^2\log\rho,(\nabla\log\rho)_t\big)(t)\big\|_{L^2}&\leq C(T),\\[4pt]
\|\boldsymbol{u}(t)\|_{H^2}+\|\boldsymbol{u}_t(t)\|_{L^2}+\int_0^t \|(\nabla^3\boldsymbol{u},\nabla\boldsymbol{u}_t)\|_{L^2}^2 \,\mathrm{d}s&\leq C(T),\\
\sqrt{t}\|(\nabla^3\boldsymbol{u},\nabla\boldsymbol{u}_t)(t)\|_{L^2}+\int_0^t s\|(\nabla^2\boldsymbol{u}_t,\boldsymbol{u}_{tt})\|_{L^2}^2\,\mathrm{d}s&\leq C(T).
\end{aligned}    
\end{equation}

Clearly, $\overline{T}_*\ge T_*$.  Next, we show  that $\overline{T}_*=\infty$. Otherwise, if $\overline{T}_*<\infty$, 
according to  the uniform {\it a priori} estimates \eqref{global-unifrom} and the standard weak convergence arguments, for any sequence $\{t_k\}_{k=1}^\infty$ satisfying $0<t_k<\overline{T}_*$ and $t_k\to \overline{T}_*$ as $k\to \infty$, there exist a subsequence (still denoted by) $\{t_{k}\}_{k=1}^\infty$ and limits $(\rho,\boldsymbol{u}, \bar{\boldsymbol{f}}, \bar{\boldsymbol{\psi}},\bar{\mathcal{K}})(\overline{T}_*,\boldsymbol{x})$ satisfying 
\begin{align*}
&\rho(\overline{T}_*,\boldsymbol{x})\in L^p(\mathbb{R}^n) \ \ \text{for any $p\in (1,\infty]$},\quad\, \boldsymbol{u}(\overline{T}_*,\boldsymbol{x})\in H^2(\mathbb{R}^n),\\
&\bar{\boldsymbol{f}}(\overline{T}_*,\boldsymbol{x})\in H^1(\mathbb{R}^n),\quad \bar{\boldsymbol{\psi}}(\overline{T}_*,\boldsymbol{x})\in L^\infty(\mathbb{R}^n),\quad \bar{\mathcal{K}}(\overline{T}_*,\boldsymbol{x})\in L^2(\mathbb{R}^n), 
\end{align*}
and, as $k\to\infty$,
\begin{equation}\label{f2}
\begin{aligned}
\rho(t_k,\boldsymbol{x})\to \rho(\overline{T}_*,\boldsymbol{x}) \qquad &\text{weakly\  \, in } L^p(\mathbb{R}^n),\\
\boldsymbol{u}(t_{k},\boldsymbol{x})\to  \boldsymbol{u}(\overline{T}_*,\boldsymbol{x}) \qquad &\text{weakly\  \,  in } H^2(\mathbb{R}^n),\\
(\rho,\nabla\log\rho)(t_{k},\boldsymbol{x})\to  (\rho,\bar{\boldsymbol{\psi}})(\overline{T}_*,\boldsymbol{x}) \qquad &\text{weakly* in } L^\infty(\mathbb{R}^n),\\
\nabla\rho^{\gamma-1}(t_k,\boldsymbol{x})\to \bar{\boldsymbol{f}}(\overline{T}_*,\boldsymbol{x}) \qquad &\text{weakly  \, in } H^1(\mathbb{R}^n),\\ 
\nabla^2\log\rho(t_{k},\boldsymbol{x})\to  \bar{\mathcal{K}}(\overline{T}_*,\boldsymbol{x}) \qquad &\text{weakly  \, in } L^2(\mathbb{R}^n).
\end{aligned}
\end{equation}
Then we claim that
\begin{equation}\label{claim103}
(\bar{\boldsymbol{f}},\bar{\boldsymbol{\psi}},\bar{\mathcal{K}})(\overline{T}_*,\boldsymbol{x})=(\nabla\rho^{\gamma-1},\nabla\log\rho,\nabla^2\log\rho)(\overline{T}_*,\boldsymbol{x})
\qquad\mbox{for {\it a.e.} $\boldsymbol{x}\in \mathbb{R}^n$}.
\end{equation}
 For simplicity, we prove that $\bar{\boldsymbol{f}}(\overline{T}_*,\boldsymbol{x})=\nabla\rho^{\gamma-1}(\overline{T}_*,\boldsymbol{x})$ for {\it a.e.} $\boldsymbol{x}\in \mathbb{R}^n$, since the rest of \eqref{claim103} can be derived analogously. 

First, due to $\eqref{global-unifrom}_1$, there exists a subsequence such that 
\begin{equation}\label{phibar}
\rho^{\gamma-1}(t_k,\boldsymbol{x})\to  \bar\phi(\overline{T}_*,\boldsymbol{x}) \qquad \text{weakly* in $L^\infty(\mathbb{R}^n)$}
\end{equation}
for some limit  $\phi(\overline{T}_*,\boldsymbol{x}) \in L^\infty(\mathbb{R}^n)$.
On the other hand, it follows from $\eqref{global-unifrom}_1$ and Lemma \ref{lemma-inf-rho} that,  
for any fixed $R>0$, 
\begin{equation}\label{lll-222}
\sup_{t\in [0,T]}\|\rho^{\gamma-1}(t)\|_{H^2(B_R)}\leq C(R,T),\quad \quad \inf_{(t,\boldsymbol{x})\in [0,T]\times B_R} \rho(t,\boldsymbol{x})\geq C^{-1}(R,T)
\end{equation}
for some constant $C(R,T)>0$ depending only on $(C_0,R,T)$,
where $B_R=\{\boldsymbol{x}\in \mathbb{R}^n:\,|\boldsymbol{x}|< R\}$.

Then, since $H^2(B_R)$ is compactly embedded in $C(\overline{B_R})$, by extracting a subsequence, 
there exists a limit $0<\bar{\bar\phi}(\overline{T}_*,\boldsymbol{x})\in C(\mathbb{R}^n)$ such that, 
for each $R>0$,
\begin{equation}\label{BR}
\rho^{\gamma-1}(t_k,\boldsymbol{x})\to \bar{\bar\phi}(\overline{T}_*,\boldsymbol{x}) \qquad
\text{uniformly on $B_R$} \,\,\, \text{as $k\to\infty$}.
\end{equation}
Clearly,  $0<\bar\phi(\overline{T}_*,\boldsymbol{x})=\bar{\bar\phi}(\overline{T}_*,\boldsymbol{x})\in L^\infty(\mathbb{R}^n)\cap C(\mathbb{R}^n)$ due to the uniqueness of limits in \eqref{phibar} and \eqref{BR}, which, together with $\eqref{global-unifrom}_1$ and  \eqref{lll-222}--\eqref{BR},   yields  that 
\begin{equation} \label{denstylimit}
\rho(t_k,\boldsymbol{x})\to \bar\phi^\frac{1}{\gamma-1}(\overline{T}_*,\boldsymbol{x}) \qquad\text{for any $\boldsymbol{x}\in\mathbb{R}^n$} \,\,\, \text{as $k\to\infty$}. 
\end{equation}
Then it follows from  the uniqueness of the limits in $\eqref{f2}_1$ and \eqref{denstylimit} that  $\bar\phi^\frac{1}{\gamma-1}(\overline{T}_*,\boldsymbol{x})=\rho (\overline{T}_*,\boldsymbol{x})$, {\it i.e.}, $\bar\phi(\overline{T}_*,\boldsymbol{x})=\rho^{\gamma-1} (\overline{T}_*,\boldsymbol{x})$. 

Next, it follows from $\eqref{f2}_4$, \eqref{phibar},  $\bar\phi(\overline{T}_*,\boldsymbol{x})=\rho^{\gamma-1} (\overline{T}_*,\boldsymbol{x})$, and the Lebesgue dominated convergence theorem that, for any $\zeta(\boldsymbol{x})\in C^\infty_{\rm c}(\mathbb{R}^n)$ and $i=1,\cdots\!,n$, 
\begin{align*}
&\int_{\mathbb{R}^n}\rho^{\gamma-1}(\overline{T}_*,\boldsymbol{x})\zeta_{x_i}(\boldsymbol{x})\,\mathrm{d}\boldsymbol{x}=\lim_{k\to\infty}\int_{\mathbb{R}^n} \rho^{\gamma-1}(t_k,\boldsymbol{x})\zeta_{x_i}(\boldsymbol{x})\,\mathrm{d}\boldsymbol{x}\\
&=-\lim_{k\to\infty}\int_{\mathbb{R}^n}(\rho^{\gamma-1})_{x_i}(t_k,\boldsymbol{x})\zeta(\boldsymbol{x})\,\mathrm{d}\boldsymbol{x}=-\int_{\mathbb{R}^n}\bar{f}_i(\overline{T}_*,\boldsymbol{x})\zeta(\boldsymbol{x})\,\mathrm{d}\boldsymbol{x}.
\end{align*}
This implies that $\rho^{\gamma-1}(\overline{T}_*,\boldsymbol{x})$ admits the weak derivatives $(\rho^{\gamma-1})_{x_i}(\overline{T}_*,\boldsymbol{x})=\bar{f}_i(\overline{T}_*,\boldsymbol{x})\in L^2(\mathbb{R}^n)$ for $i=1,\cdots\!, n$, so that $\nabla\rho^{\gamma-1}(\overline{T}_*,\boldsymbol{x})=\bar{\boldsymbol{f}}(\overline{T}_*,\boldsymbol{x})$ for {\it a.e.} $\boldsymbol{x}\in \mathbb{R}^n$. 

We now continue to prove $\overline{T}_*=\infty$. We aim to show that functions 
$(\rho, \boldsymbol{u})(\overline{T}_*,\boldsymbol{x})$ satisfy all the initial assumptions 
given in Theorem \ref{zth1}, which consist of showing that 
$(\rho, \boldsymbol{u})(\overline{T}_*,\boldsymbol{x})$ are spherically symmetric 
and satisfy \eqref{id1}--\eqref{shangjie3}. 

\smallskip
\textbf{1.1. $(\rho,\boldsymbol{u})(\overline{T}_*,\boldsymbol{x})$ are spherically symmetric.} 
It suffices to show that $\boldsymbol{u}(\overline{T}_*,\boldsymbol{x})$ is spherically symmetric, 
since the proof for $\rho(\overline{T}_*,\boldsymbol{x})$ can be derived analogously. 
To achieve this, it suffices to show that 
$\boldsymbol{u}(\overline{T}_*,\boldsymbol{x})=(\mathcal{O}^\top\boldsymbol{u})(\overline{T}_*,\mathcal{O}\boldsymbol{x})$ for any $\mathcal{O}\in \mathrm{SO}(n)$. 
Indeed, since $\boldsymbol{u}(t_k,\boldsymbol{x})$ is spherically symmetric for each $t_k$ and, by \eqref{f2}, $\boldsymbol{u}(t_k,\boldsymbol{x})$ converges to $\boldsymbol{u}(\overline{T}_*,\boldsymbol{x})$ weakly in $L^2(\mathbb{R}^n)$ as $k\to \infty$, it follows from the coordinate transformation that, 
for any vector function $\boldsymbol{\zeta}(\boldsymbol{x})\in L^2(\mathbb{R}^n)$,
\begin{align*}
&\int_{\mathbb{R}^n} \boldsymbol{u}(\overline{T}_*,\boldsymbol{x})\cdot \boldsymbol{\zeta}(\boldsymbol{x})\,\mathrm{d}\boldsymbol{x}=\lim_{k\to\infty}\int_{\mathbb{R}^n}\boldsymbol{u}(t_k,\boldsymbol{x})\cdot \boldsymbol{\zeta}(\boldsymbol{x})\,\mathrm{d}\boldsymbol{x}\\
&=\lim_{k\to\infty}\int_{\mathbb{R}^n}(\mathcal{O}^\top\boldsymbol{u})(t_k,\mathcal{O}\boldsymbol{x})\cdot \boldsymbol{\zeta}(\boldsymbol{x})\,\mathrm{d}\boldsymbol{x}=\lim_{k\to\infty}\int_{\mathbb{R}^n}\boldsymbol{u}(t_k,\boldsymbol{x})\cdot (\mathcal{O}\boldsymbol{\zeta})(\mathcal{O}^\top\boldsymbol{x})\,\mathrm{d}\boldsymbol{x}\\
&=\int_{\mathbb{R}^n}\boldsymbol{u}(\overline{T}_*,\boldsymbol{x})\cdot (\mathcal{O}\boldsymbol{\zeta})(\mathcal{O}^\top\boldsymbol{x})\,\mathrm{d}\boldsymbol{x}=\int_{\mathbb{R}^n}(\mathcal{O}^\top\boldsymbol{u})(\overline{T}_*,\mathcal{O}\boldsymbol{x})\cdot \boldsymbol{\zeta}(\boldsymbol{x})\,\mathrm{d}\boldsymbol{x},
\end{align*}
which implies that 
$\boldsymbol{u}(\overline{T}_*,\boldsymbol{x})=(\mathcal{O}^\top\boldsymbol{u})(\overline{T}_*,\mathcal{O}\boldsymbol{x})$ for any $\mathcal{O}\in \mathrm{SO}(n)$.

\smallskip
\textbf{1.2. $(\rho,\boldsymbol{u})(\overline{T}_*,\boldsymbol{x})$ satisfies \eqref{id1}--\eqref{shangjie3}.} Clearly, by \eqref{f2}, it remains to show that $\rho(\overline{T}_*,\boldsymbol{x})\in L^1(\mathbb{R}^n)$. Since $\bar\phi^\frac{1}{\gamma-1}(\overline{T}_*,\boldsymbol{x})= \rho(\overline{T}_*,\boldsymbol{x})$ in \eqref{denstylimit} and $\rho(t_k,\boldsymbol{x})\geq 0$, it follows from \eqref{global-unifrom}, \eqref{denstylimit}, and Lemma \ref{Fatou} that
\begin{equation*}
\int_{\mathbb{R}^n} \rho(\overline{T}_*,\boldsymbol{x})\,\mathrm{d}\boldsymbol{x}=\int_{\mathbb{R}^n} \liminf_{k\to\infty}\rho(t_{k},\boldsymbol{x})\,\mathrm{d}\boldsymbol{x}\le \liminf_{k\to\infty}\int_{\mathbb{R}^n} \rho(t_{k},\boldsymbol{x})\,\mathrm{d}\boldsymbol{x}\leq C(T),
\end{equation*}
which implies that $\rho(\overline{T}_*,\boldsymbol{x})\in L^1(\mathbb{R}^n)$. 

\smallskip
\textbf{1.3. $\overline{T}_*=\infty$.}
To sum up, we have shown that $(\rho,\boldsymbol{u})(\overline{T}_*,\boldsymbol{x})$ satisfies all the initial assumptions on the initial data of Theorem \ref{zth1}. 
As a consequence, according to Theorem \ref{zth1},  there exists a constant $T_{**}>0$ such that the Cauchy problem \eqref{eq:1.1benwen}--\eqref{e1.3} admits a unique $2$-order regular solution 
$(\tilde\rho, \tilde{\boldsymbol{u}})(t,\boldsymbol{x})$ in 
$[\overline{T}_*,\overline{T}_*+T_{**}]\times \mathbb{R}^n$. 
Thus, by setting 
$(\rho,\boldsymbol{u}) (t,\boldsymbol{x})=(\tilde\rho, \tilde{\boldsymbol{u}})(t,\boldsymbol{x})$ in $[\overline{T}_*,\overline{T}_*+T_{**}]\times \mathbb{R}^n$, it can be shown that  $(\rho, \boldsymbol{u})(t,\boldsymbol{x})$ is actually the $2$-order regular solution in  $[0,\overline{T}_*+T_{**}]\times \mathbb{R}^n$ due to their time-continuities, which  contradicts to the maximality of $\overline{T}_*<\infty$. Therefore $\overline{T}_*=\infty$.

\medskip
\textbf{2. Proof of Theorem \ref{th1} (i).} 

\smallskip
\textbf{2.1. Time-spatial continuity of $(\rho,\nabla\rho)$.}
First, it follows  from \eqref{global-unifrom} and the relation:  
$\nabla \rho=\rho \nabla \log \rho$ that, for any finite  $T>0$,
\begin{equation}\label{706}
\begin{aligned}
\|\rho\|_{L^2(\mathbb{R}^2)}&\leq \|\rho\|_{L^1(\mathbb{R}^2)}^\frac{1}{2}\|\rho\|_{L^\infty(\mathbb{R}^2)}^\frac{1}{2}\leq C(T), \\
\|\rho\|_{D^1(\mathbb{R}^2)}&\leq \|\rho\|_{L^2(\mathbb{R}^2)}\|\nabla\log \rho\|_{L^\infty(\mathbb{R}^2)}\leq C(T),\\
\|\rho\|_{D^2(\mathbb{R}^2)}&\leq \|\rho\|_{L^2(\mathbb{R}^2)}\|\nabla\log \rho\|_{L^\infty(\mathbb{R}^2)}^2 +\|\rho\|_{L^\infty(\mathbb{R}^2)}\|\nabla\log \rho\|_{D^1(\mathbb{R}^2)}\leq C(T),
\end{aligned}
\end{equation}
which implies that 
$\rho\in L^\infty([0,T];H^2(\mathbb{R}^2))$. 
Since $\rho$ is the $2$-order regular solution, this implies that $\rho\in C([0,T];L^1(\mathbb{R}^2))$ and $\nabla\log\rho\in C([0,T];D^1(\mathbb{R}^2))$. Then, for any $t, t_0\in [0,T]$, 
repeating the calculations in \eqref{706} with $\rho(t)$ replaced by $(\rho(t)-\rho(t_0))$, 
along with \eqref{global-unifrom}, gives
\begin{equation}\label{706-con}
\begin{aligned}
&\lim_{t\to t_0}\|\rho(t)-\rho(t_0)\|_{H^2(\mathbb{R}^2)}\\
&\leq C(T)\lim_{t\to t_0}\|\rho(t)-\rho(t_0)\|_{L^1(\mathbb{R}^2)}^\frac{1}{2}
+C(T)\lim_{t\to t_0}\big\|\nabla\log \rho(t)-\nabla\log\rho(t_0)\big\|_{D^1(\mathbb{R}^2)}=0,\qquad
\end{aligned}
\end{equation}
so that $\rho\in C([0,T];H^2(\mathbb{R}^2))$. Then it follows from Lemma \ref{Hk-Ck-scalar} in Appendix \ref{improve-sobolev} that $\rho\in C([0,T];C^1(\overline{\mathbb{R}^2}))$.

\smallskip
\textbf{2.2. Time-spatial continuity of $(\boldsymbol{u},\nabla \boldsymbol{u},\nabla^2\boldsymbol{u},\boldsymbol{u}_t)$.} 
First, since $\boldsymbol{u}$ is the $2$-order regular solution, $\boldsymbol{u}\in C([0,T];H^2(\mathbb{R}^2))$. Hence, Lemma \ref{ale1} yields $\boldsymbol{u}\in C([0,T];C(\overline{\mathbb{R}^2}))$. 

Next, due to \eqref{global-unifrom}, then $t\boldsymbol{u}_t\in L^2([0,T];H^2(\mathbb{R}^3))$ and $(t\boldsymbol{u}_t)_{t}\in L^2([0,T];L^2(\mathbb{R}^3))$. 
It follows from Lemmas \ref{triple} and \ref{Hk-Ck-vector} that
\begin{equation}\label{emb}
t\boldsymbol{u}_t\in C([0,T];H^1(\mathbb{R}^2))\implies \boldsymbol{u}_t\in C((0,T];C(\overline{\mathbb{R}^2})). 
\end{equation}
 
Finally, to obtain $\boldsymbol{u}\in C((0,T];C^2(\overline{\mathbb{R}^2}))$, 
we show that $t\boldsymbol{u}\in C([0,T];H^3(\mathbb{R}^2))$. Rewrite \eqref{qiyi} as the following elliptic system:
\begin{equation}\label{guoc-1}
L\boldsymbol{u}=\underline{-\boldsymbol{u}_t-\boldsymbol{u}\cdot\nabla\boldsymbol{u}-\frac{A\gamma}{\gamma-1}\nabla\rho^{\gamma-1}}_{:=\boldsymbol{F}_1}+\underline{\nabla\log\rho\cdot Q(\boldsymbol{u})}_{:=\boldsymbol{F}_2}.
\end{equation}
Then, by the classical regularity theory for elliptic equations in Lemma \ref{df3}, we have
\begin{equation}\label{f1-f2-t}
\|t\boldsymbol{u}(t)-t_0\boldsymbol{u}(t_0)\|_{D^3(\mathbb{R}^2)}\leq C_0\sum_{i=1}^2\|t\boldsymbol{F}_i(t)-t_0\boldsymbol{F}_i(t_0)\|_{D^1(\mathbb{R}^2)}.
\end{equation}
For $\boldsymbol{F}_1$,  by \eqref{emb} and $\nabla\rho^{\gamma-1}\in C([0,T];H^1(\mathbb{R}^2))$, then $(t\boldsymbol{u}_t,t\nabla\rho^{\gamma-1})\in C([0,T];D^1(\mathbb{R}^2))$. 
Moreover, it follows from \eqref{global-unifrom} and Lemma \ref{ale1} that 
\begin{equation*}
t\boldsymbol{u}\cdot\nabla\boldsymbol{u}\in L^2([0,T]; H^2(\mathbb{R}^2)),\qquad (t\boldsymbol{u}\cdot\nabla\boldsymbol{u})_t\in L^2([0,T]; L^2(\mathbb{R}^2)).
\end{equation*}
This, along with Lemma \ref{triple}, implies that $t\boldsymbol{u}\cdot\nabla\boldsymbol{u}\in C([0,T]; H^1(\mathbb{R}^2))$, so that
\begin{equation}\label{f1-t}
t\boldsymbol{F}_1\in C([0,T];D^1(\mathbb{R}^2)).
\end{equation}
For $\boldsymbol{F}_2$, it follows from \eqref{global-unifrom}, Lemmas \ref{ale1}--\ref{GN-ineq} 
and \ref{Hk-Ck-vector}, and the Young inequality that, for all $0\leq t,t_0\leq T$ and $\omega\in (0,1)$,
\begin{equation}\label{f2-t}
\begin{aligned}
&\|t\boldsymbol{F}_2(t)-t_0\boldsymbol{F}_2(t_0)\|_{D^1(\mathbb{R}^2)}\\
&\leq C_0\big\|\nabla\log\rho(t)-\nabla\log\rho(t_0)\big\|_{D^1(\mathbb{R}^2)}\|t\boldsymbol{u}(t)\|_{D^{1,\infty}(\mathbb{R}^2)}\\[-4pt]
&\quad +C_0\big\|\nabla\log\rho(t_0)\big\|_{D^1(\mathbb{R}^2)}\|t\boldsymbol{u}(t)-t_0\boldsymbol{u}(t_0)\|_{D^{1}(\mathbb{R}^2)}^\frac{1}{2}\|t\boldsymbol{u}(t)-t_0\boldsymbol{u}(t_0)\|_{D^{3}(\mathbb{R}^2)}^\frac{1}{2}\\
&\quad +C_0\big\|\nabla\log\rho(t)-\nabla\log\rho(t_0)\big\|_{L^\infty(\mathbb{R}^2)}\|t\boldsymbol{u}(t)\|_{D^2(\mathbb{R}^2)}\\
&\quad +C_0\big\|\nabla\log\rho(t_0)\big\|_{L^\infty(\mathbb{R}^2)}\|t\boldsymbol{u}(t)-t_0\boldsymbol{u}(t_0)\|_{D^2(\mathbb{R}^2)}\\
&\leq C(T)\big\|\nabla\log\rho(t)-\nabla\log\rho(t_0)\big\|_{D^1(\mathbb{R}^2)}+C(\omega,T)\|\boldsymbol{u}(t)-\boldsymbol{u}(t_0)\|_{H^2(\mathbb{R}^2)}\\
&\quad +C(\omega,T)|t-t_0|+\omega\|t\boldsymbol{u}(t)-t_0\boldsymbol{u}(t_0)\|_{D^3(\mathbb{R}^2)}.
\end{aligned}
\end{equation}
Consequently, collecting \eqref{f1-f2-t}--\eqref{f2-t} and setting $\omega$ small enough, we see 
from the time-continuity of $(\boldsymbol{u},\nabla\log\rho)$ that, as $t\to t_0$,
\begin{equation}\label{guo3}
\begin{aligned}
&\|t\boldsymbol{u}(t)-t_0\boldsymbol{u}(t_0)\|_{D^3(\mathbb{R}^2)}\\
&\leq C_0\|t\boldsymbol{F}_1(t)-t_0\boldsymbol{F}_1(t_0)\|_{D^1(\mathbb{R}^2)}
+C(T)\big\|\nabla\log\rho(t)-\nabla\log\rho(t_0)\big\|_{D^1(\mathbb{R}^2)}\\
&\quad+C(T)\|\boldsymbol{u}(t)-\boldsymbol{u}(t_0)\|_{H^2(\mathbb{R}^2)}+C(T)|t-t_0|\to 0.
\end{aligned}
\end{equation}
This, together with $t\boldsymbol{u}\in L^\infty([0,T];D^3(\mathbb{R}^2))$ and $\boldsymbol{u}\in C([0,T];H^2(\mathbb{R}^2))$, implies that $t\boldsymbol{u}\in C([0,T];H^3(\mathbb{R}^2))$. Therefore, it follows from Lemma \ref{Hk-Ck-vector-3} that $\boldsymbol{u}\in C((0,T];C^2(\overline{\mathbb{R}^2}))$.

\smallskip
\textbf{2.3. Time-spatial continuity of $\rho_t$.} It follows from the relation:
\begin{equation}\label{776}
\rho_t=-\boldsymbol{u}\cdot\nabla \rho -\rho \diver\boldsymbol{u},
\end{equation}
and the conclusions obtained in Steps 2.1--2.2 that $\rho_t\in C((0,T];C(\overline{\mathbb{R}^2}))$, which implies Theorem \ref{th1} (i).

\medskip
\textbf{3. Proof of Theorem \ref{th1} (ii).} Let $\mathbb{R}^3_*$ be defined in Theorem \ref{th1} (ii). First, since $(\rho,\boldsymbol{u})$ is the $2$-order regular solution, then $(\rho,\boldsymbol{u})\in C([0,T];H^2(\mathbb{R}^3))$, which, along with Lemma \ref{ale1}, implies that $(\rho,\boldsymbol{u})\in C([0,T];C(\overline{\mathbb{R}^3}))$.

Next, we show that $\nabla \boldsymbol{u}\in C((0,T];C(\overline{\mathbb{R}^3}))$. 
To obtain this, we claim that
\begin{equation}\label{707}
t\boldsymbol{u}_t\in C([0,T];H^1(\mathbb{R}^3)),\qquad t\boldsymbol{u}\in C([0,T];H^3(\mathbb{R}^3)).   
\end{equation}
$\eqref{707}_1$ follows from the same argument as in Step 2.2 above, with $\mathbb{R}^2$ replaced by $\mathbb{R}^3$. 
For $\eqref{707}_2$, based on the discussions in \eqref{guoc-1}--\eqref{guo3} and $\eqref{707}_1$,
we see that 
\begin{equation}\label{f1-t-3}
t\boldsymbol{F}_1\in C([0,T];D^1(\mathbb{R}^3)),
\end{equation}
so it suffices to derive the estimates on $\boldsymbol{F}_2$. 
To this end, it follows from \eqref{global-unifrom}, Lemmas \ref{ale1}--\ref{GN-ineq}, 
and \ref{lemma-L6} that, for all $0\leq t,t_0\leq T$ and $\omega\in (0,1)$,
\begin{equation}\label{f2-t-3}
\begin{aligned}
&\|t\boldsymbol{F}_2(t)-t_0\boldsymbol{F}_2(t_0)\|_{D^1(\mathbb{R}^3)}\\
&\leq C_0\big\|\nabla\log\rho(t)-\nabla\log\rho(t_0)\big\|_{D^1(\mathbb{R}^3)}\|t\boldsymbol{u}(t)\|_{D^{1,\infty}(\mathbb{R}^3)}\\
&\quad +C_0\big\|\nabla\log\rho(t_0)\big\|_{D^1(\mathbb{R}^3)}\|t\boldsymbol{u}(t)-t_0\boldsymbol{u}(t_0)\|_{D^{1}(\mathbb{R}^3)}^\frac{2}{3}\|t\boldsymbol{u}(t)-t_0\boldsymbol{u}(t_0)\|_{D^{3}(\mathbb{R}^3)}^\frac{1}{3}\\
&\quad +C_0\big\|\nabla\log\rho(t)-\nabla\log\rho(t_0)\big\|_{L^6(\mathbb{R}^3)}\|t\boldsymbol{u}(t)\|_{D^{2,3}(\mathbb{R}^3)}\\
&\quad +C_0\big\|\nabla\log\rho(t_0)\big\|_{L^6(\mathbb{R}^3)}\|t\boldsymbol{u}(t)-t_0\boldsymbol{u}(t_0)\|_{D^{1}(\mathbb{R}^3)}^\frac{2}{3}\|t\boldsymbol{u}(t)-t_0\boldsymbol{u}(t_0)\|_{D^{3}(\mathbb{R}^3)}^\frac{1}{3}\\
&\leq C(T)\big\|\nabla\log\rho(t)-\nabla\log\rho(t_0)\big\|_{D^1(\mathbb{R}^3)}+C(\omega,T)\|\boldsymbol{u}(t)-\boldsymbol{u}(t_0)\|_{D^1(\mathbb{R}^3)}\\
&\quad +C(\omega,T)|t-t_0|
+\omega\|t\boldsymbol{u}(t)-t_0\boldsymbol{u}(t_0)\|_{D^3(\mathbb{R}^3)}.
\end{aligned}
\end{equation}
Consequently, collecting \eqref{f1-f2-t} (with $\mathbb{R}^2$ replaced by $\mathbb{R}^3$) and \eqref{f1-t-3}--\eqref{f2-t-3} and setting $\omega$ small enough, it follows from the time-continuity of $(\boldsymbol{u},\nabla\log\rho)$ that, as $t\to t_0$,
\begin{align*}
\|t\boldsymbol{u}(t)-t_0\boldsymbol{u}(t_0)\|_{D^3(\mathbb{R}^3)}
&\leq C(T)\|t\boldsymbol{F}_2(t)-t_0\boldsymbol{F}_2(t_0)\|_{D^1(\mathbb{R}^3)}\notag\\
&\quad+C(T)\big\|\nabla\log\rho(t)-\nabla\log\rho(t_0)\big\|_{D^1(\mathbb{R}^3)}\\
&\quad+C(T)\|\boldsymbol{u}(t)-\boldsymbol{u}(t_0)\|_{D^1(\mathbb{R}^3)}+C(T)|t-t_0|\to 0.\notag
\end{align*}
This, together with $t\boldsymbol{u}\in L^\infty([0,T];D^3(\mathbb{R}^3))$ and $\boldsymbol{u}\in C([0,T];H^2(\mathbb{R}^3))$, implies that $t\boldsymbol{u}\in C([0,T];H^3(\mathbb{R}^3))$.  Therefore, it follows from Lemma \ref{ale1} that $\nabla\boldsymbol{u}\in C((0,T];C(\overline{\mathbb{R}^3}))$.  

Now, it remains to show that
\begin{equation}
(\nabla\rho,\rho_t,\nabla\boldsymbol{u})\in C([0,T]\times \mathbb{R}^3_*),\qquad (\nabla^2\boldsymbol{u},\boldsymbol{u}_t)\in C((0,T]\times \mathbb{R}^3_*).
\end{equation}
To this end, it follows from \eqref{707}, $(\rho,\boldsymbol{u})\in C([0,T];H^2(\mathbb{R}^3))$, 
and Lemma \ref{lemma-initial} that
\begin{align*}
&r\Big(\rho_r,\rho_{rr},\frac{\rho_{r}}{r},u_r,\frac{u}{r},u_{rr},\big(\frac{u}{r}\big)_r\Big)\in C([0,T];L^2(I)),\\
&r\Big(u_{rrr},\frac{u_{rr}}{r},\big(\frac{u}{r}\big)_{rr},\frac{1}{r}\big(\frac{u}{r}\big)_r,u_{t},u_{tr},\frac{u_t}{r}\Big) \in C((0,T];L^2(I)),
\end{align*}
which, along with Lemma \ref{ale1}, leads to
\begin{gather*}
\big(\rho_r,u_r,\frac{u}{r}\big)\in C([0,T]\times[\sigma,\infty)), \qquad \big(u_{rr},(\frac{u}{r})_r,u_{t}\big)\in C((0,T]\times[\sigma,\infty)), 
\end{gather*}
for any $\sigma>0$. Hence, $(\rho_r,u_r,\frac{u}{r})\in C([0,T]\times(0,\infty))$ and $(u_{rr},(\frac{u}{r})_r,u_{t})\in C((0,T]\times(0,\infty))$, that is, $(\nabla\rho,\nabla\boldsymbol{u}) \in C([0,T]\times \mathbb{R}^3_*)$ and $(\nabla^2\boldsymbol{u},\boldsymbol{u}_t) \in C((0,T]\times \mathbb{R}^3_*)$ due to Lemma \ref{lemma-initial}. Finally, $\rho_t \in C([0,T]\times \mathbb{R}^3_*)$ can be derived directly via \eqref{776} and the fact that  $(\rho,\nabla\rho,\boldsymbol{u},\nabla\boldsymbol{u})\in C([0,T]\times\mathbb{R}^3_*)$. The proof of Theorem \ref{th1} (ii) is completed.   

\medskip
\textbf{4. Proof of Theorem \ref{th1}(iii)--(iv).}  We first show that $\mathcal{P}(t)\equiv \boldsymbol{0}$ for $t\in [0,T]$. For simplicity, we only give the proof for the 3-D case, since the 2-D case follows analogously.  Since $(\sqrt{\rho},\sqrt{\rho}\boldsymbol{u})\in C([0,T];L^2(\mathbb{R}^n))$, then 
$\rho\boldsymbol{u}\in C([0,T];L^1(\mathbb{R}^n))$, which implies that $\mathcal{P}(t)$ is bounded 
and continuous on $[0,T]$ and $r^2\rho u\in C([0,T];L^1(I))$. 
Next, according to the spherical coordinate transformations:
\begin{equation*}
\begin{aligned}
&\boldsymbol{x}=(x_1,x_2,x_3)^\top=(r\cos\theta_1\sin\theta_2,r\sin\theta_1\sin\theta_2,r\cos\theta_2)^\top,\\
&r\in I,\qquad \theta_1\in [0,2\pi],\qquad \theta_2\in [0,\pi],
\end{aligned}
\end{equation*}
we obtain from the spherical symmetry of $(\rho,\boldsymbol{u})$ that
\begin{align*}
\mathcal{P}(t)&=\int_{\mathbb{R}^3} (\rho\boldsymbol{u})(t,\boldsymbol{x}) \,\mathrm{d}\boldsymbol{x}=\int_0^\pi\int_0^{2\pi}\int_0^\infty \frac{(x_1,x_2,x_3)^\top}{r}(\rho u)(t,r) r^2\sin\theta_2\,\mathrm{d}r\mathrm{d}\theta_1\mathrm{d}\theta_2\\
&=\mathfrak{p}(t)\int_0^\pi\int_0^{2\pi} (\cos\theta_1\sin^2\theta_2,\sin\theta_1\sin^2\theta_2,\cos\theta_2\sin\theta_2)^\top \, \mathrm{d}\theta_1\mathrm{d}\theta_2\\
&=\mathfrak{p}(t)\int_0^\pi (0,0,\cos\theta_2\sin\theta_2)^\top \,  \mathrm{d}\theta_2=\boldsymbol{0},\qquad \text{with }\mathfrak{p}(t):=\int_0^\infty r^2(\rho u)(t,r)\,\mathrm{d}r,
\end{align*}
which yields that $\mathcal{P}(t)\equiv \boldsymbol{0}$ for $t\in [0,T]$.

The conservation of total mass can be simply derived by integrating the mass equation $\eqref{eq:1.1benwen}_1$ over $[0,T]\times \mathbb{R}^n$ and using the fact that $\rho\boldsymbol{u} \in C([0,T];W^{1,1}(\mathbb{R}^n))$.  Finally, Theorem \ref{th1} (iv) is a directly consequence of Lemmas \ref{far-p-infty}, \ref{important2}, and \ref{lemma-inf-rho}. 

The proof of Theorem \ref{th1} is completed.

\subsection{Proof of Theorem \ref{th1-high}}\label{soloo2}
Now we are ready to provide the proof of Theorem \ref{th1-high}. We divide the proof into three steps.

\smallskip
\textbf{1. Global well-posedness of $3$-order regular solutions.} 
First, according to Theorem \ref{zth2}, there exists a $3$-order 
regular solution $(\rho, \boldsymbol{u})(t,\boldsymbol{x})$ of the Cauchy problem \eqref{eq:1.1benwen}--\eqref{e1.3} in $[0,T_*]\times \mathbb{R}^n$ for some $T_*>0$, which takes form \eqref{duichenxingshi}.

Second, let $\overline{T}_*>0$ be the life span of $(\rho, \boldsymbol{u})(t,\boldsymbol{x})$, 
and let $T$ be any fixed time satisfying $0<T<\overline{T}_*$.
Then collecting the uniform {\it a priori} bounds obtained in Lemmas \ref{far-p-infty}, \ref{important2}, \ref{l4.5}--\ref{l4.10-ell}, and \ref{H3-1}--\ref{Lemma6.14}, together with \eqref{tr} and Lemma \ref{lemma-initial}, yields that, for any  $t\in [0,T]$,
\begin{equation}\label{global-unifrom33}
\begin{aligned}
\|\rho(t)\|_{L^1\cap L^\infty}+\big\|(\nabla\rho^{\gamma-1},(\rho^{\gamma-1})_t)(t)\big\|_{H^2} \leq C(T),\\[4pt]
\big\|\nabla\log\rho(t)\big\|_{L^\infty}+\big\|\big(\nabla^2\log\rho,(\nabla\log\rho)_t\big)(t)\big\|_{H^1}\leq C(T),\\
\|\boldsymbol{u}(t)\|_{H^3}+\|\boldsymbol{u}_t(t)\|_{H^1}+\int_0^t \big\|(\nabla^4\boldsymbol{u},\nabla^2\boldsymbol{u}_t,\boldsymbol{u}_{tt})\big\|_{L^2}^2 \,\mathrm{d}s\leq C(T),\\[-2pt]
\sqrt{t}\big\|(\nabla^4\boldsymbol{u},\nabla^2\boldsymbol{u}_t,\boldsymbol{u}_{tt})(t)\big\|_{L^2}+\int_0^t s\big\|(\nabla^3\boldsymbol{u}_t,\nabla\boldsymbol{u}_{tt})\big\|_{L^2}^2\,\mathrm{d}s\leq C(T).
\end{aligned}
\end{equation}
The remaining  proof here  is basically the same as that in Step 1 of \S\ref{solo}. We omit it here. 

\smallskip
\textbf{2. Proof of Theorem \ref{th1-high} (i).}
According to  Step 2 in the proof for Theorem \ref{th1} (see \S\ref{solo}), it suffices to 
show that 
\begin{equation}\label{theab}
(\rho_t,\nabla\boldsymbol{u},\nabla^2\boldsymbol{u},\boldsymbol{u}_t)\in C([0,T];C(\overline{\mathbb{R}^2})).
\end{equation}
Since $\boldsymbol{u}$ is the $3$-order regular solution, we have
\begin{equation*}
\boldsymbol{u}\in C([0,T];H^3(\mathbb{R}^2)),\qquad \boldsymbol{u}_t\in C([0,T];H^1(\mathbb{R}^2)),
\end{equation*}
which, along with  Lemma \ref{Hk-Ck-vector} and \eqref{776}, yields that 
$(\nabla\boldsymbol{u},\nabla^2\boldsymbol{u},\boldsymbol{u}_t,\rho_t)\in C([0,T];C(\overline{\mathbb{R}^2}))$.  

\smallskip
\textbf{3. Proof of Theorem \ref{th1-high} (ii).} According to  Step 3 in the proof for Theorem \ref{th1} (see \S\ref{solo}), it only remains to show that 
\begin{equation*}
(\nabla\rho,\rho_t,\nabla\boldsymbol{u})\in C([0,T];C(\overline{\mathbb{R}^3})),\qquad (\nabla^2\boldsymbol{u},\boldsymbol{u}_t)\in C((0,T];C(\overline{\mathbb{R}^3})).
\end{equation*}

\smallskip
\textbf{3.1. Time-spatial continuity of $(\nabla\rho,\rho_t,\nabla\boldsymbol{u})$.}
First, it follows from the fact that $\boldsymbol{u}\in C([0,T];H^3(\mathbb{R}^3))$ and  Lemma \ref{ale1} that $\boldsymbol{u}\in C([0,T];C^1(\overline{\mathbb{R}^3}))$. Next, following a calculation similar to \eqref{706}, with $\mathbb{R}^2$ replaced by $\mathbb{R}^3$, we obtain $\rho\in C([0,T];H^2(\mathbb{R}^3))$ and 
\begin{equation}
\begin{aligned}
\|\rho(t)\|_{D^3(\mathbb{R}^3)} &\leq \|\rho(t)\|_{L^1(\mathbb{R}^3)}^\frac{1}{2}\|\rho(t)\|_{L^\infty(\mathbb{R}^3)}^\frac{1}{2}\|\nabla\log \rho(t)\|_{L^\infty(\mathbb{R}^3)}^3\notag\\
&\quad +\|\rho(t)\|_{L^\infty(\mathbb{R}^3)}\|\nabla\log \rho(t)\|_{L^\infty(\mathbb{R}^3)}\|\nabla\log \rho(t)\|_{D^1(\mathbb{R}^3)}\\
&\quad +\|\rho(t)\|_{L^\infty(\mathbb{R}^3)}\|\nabla\log \rho(t)\|_{D^2(\mathbb{R}^3)}\leq C(T),
\end{aligned}
\end{equation}
which implies that $\rho\in L^\infty([0,T];H^3(\mathbb{R}^3))$. 
Since $\rho$ is the $3$-order regular solution, then $\rho\in C([0,T];L^1(\mathbb{R}^3))$ and $\nabla^2\log\rho\in C([0,T];H^1(\mathbb{R}^3))$. 
For any $t_0\in [0,T]$, repeating the above calculation with $\rho(t)$ replaced by $\rho(t)-\rho(t_0)$, 
together with \eqref{global-unifrom33}, gives
\begin{align*}
\lim_{t\to t_0}\|\rho(t)-\rho(t_0)\|_{D^3(\mathbb{R}^3)} &\leq C(T)\lim_{t\to t_0}\|\rho(t)-\rho(t_0)\|_{L^1(\mathbb{R}^3)}^\frac{1}{2}\\
&\quad +C(T)\lim_{t\to t_0}\|\nabla^2\log \rho(t)-\nabla^2\log \rho(t_0)\|_{H^1(\mathbb{R}^3)}=0,
\end{align*}
which implies $\rho\in C([0,T];H^3(\mathbb{R}^3))$. 
Hence, by Lemma \ref{ale1}, $\rho\in C([0,T];C^1(\overline{\mathbb{R}^3}))$. 
Finally, $\rho_t\in C([0,T];C(\overline{\mathbb{R}^3}))$ due to \eqref{776} and the facts that $(\rho,\boldsymbol{u})\in C([0,T];C^1(\overline{\mathbb{R}^3}))$.

\smallskip
\textbf{3.2. Time-spatial continuity of $(\nabla^2\boldsymbol{u}, \boldsymbol{u}_t)$.}
Thanks to \eqref{global-unifrom33}, we have 
\begin{align*}
(t\boldsymbol{u}_t,t\nabla^2\boldsymbol{u})\in L^\infty([0,T];H^2(\mathbb{R}^3)), \qquad
((t\boldsymbol{u}_t)_t,(t\nabla^2\boldsymbol{u})_t)\in L^2([0,T];L^2(\mathbb{R}^3)),
\end{align*}
which, along with Lemma \ref{triple}, yields
\begin{equation}\label{theaobve}
(t\boldsymbol{u}_t,t\nabla^2\boldsymbol{u})\in C([0,T];W^{1,q}(\mathbb{R}^3)) \qquad \text{for all $q\in [2,6)$}.
\end{equation} 
Therefore, it follows from \eqref{theaobve}  and Lemma \ref{ale1} that $(\boldsymbol{u}_t,\nabla^2\boldsymbol{u})\in C((0,T]\times \mathbb{R}^3)$. 

The proof of Theorem \ref{th1-high} is completed.

\subsection{Proof of Corollary \ref{coro1.1}} Corollary \ref{coro1.1} is a direct consequence of Lemma \ref{initial3} in Appendix \ref{appA} by taking $(f,\nu)=(\rho_0,\gamma-1)$.

\section{Global Well-Posedness of   Regular  Solutions  with Strictly Positive Initial Density}\label{nonvacuumfarfield}

This section is devoted to establishing the global well-posedness of spherically symmetric classical solutions of the Cauchy problem of system  \eqref{eq:1.1benwen} with general smooth initial data and
strictly positive initial density, {\it i.e.}, $\inf_{\boldsymbol{x}\in \mathbb{R}^n} \rho_0(\boldsymbol{x})>0$. Certainly, in this case, under the spherical coordinates, the Cauchy problem \eqref{eq:1.1benwen}--\eqref{e1.3} in $[0,T]\times \mathbb{R}^n$ for some $T>0$ can be written as the following initial-boundary value problem in $[0,T]\times I$:
\begin{equation}\label{e1.5hh}
\begin{cases}
\displaystyle 
\rho_t+(\rho u)_r+\frac{m\rho u}{r}=0,\\[3pt]
\displaystyle
\rho u_t+\rho uu_r+A(\rho^\gamma)_r=2\alpha\big(\rho u_r+\frac{m\rho u}{r} \big)_r-\frac{2\alpha m\rho_r u}{r},\\[5pt]
\displaystyle
(\rho,u)|_{t=0}=(\rho_0,u_0) \quad\,\,\, \text{for $r\in I$},\\[4pt]
\displaystyle
u|_{r=0}=0 \qquad\qquad\qquad\text{for $t\in [0,T]$},\\[4pt]
\displaystyle
(\rho,u)\to \left(\bar\rho>0,0\right)  \qquad\text{as $r\to \infty\,$ \, for $t\in [0,T]$}.
\end{cases}
\end{equation}

\subsection{Some notations and conventions}
Denote by $\chi_E$ the characteristic function with respect of a set $E\subset \mathbb{R}^n$
({\it i.e.}, $\chi_E=1$ on $E$ and $\chi_E=0$ on $\mathbb{R}^n\backslash E$). 

For $\sigma\geq 0$ and $t\geq 0$, define
\begin{equation*}
\begin{aligned}
E_{\sigma}:=E_{\sigma}(t)=\{\boldsymbol{x}\in\mathbb{R}^n:\,|\rho(t,\boldsymbol{x})-\bar\rho|\leq\sigma\},\\
E^{\sigma}:=E^{\sigma}(t)=\{\boldsymbol{x}\in\mathbb{R}^n:\,|\rho(t,\boldsymbol{x})-\bar\rho|>\sigma\}.
\end{aligned}
\end{equation*}
Clearly, $\mathbb{R}^n=E_\sigma\cup E^\sigma$ for each $\sigma\geq 0$ and $t\geq 0$.  Moreover, since the solution $(\rho, \boldsymbol{u})$ we considered is spherically symmetric,  we will still  denote by $(E_\sigma,E^\sigma)$ the subsets of $I$ under the spherical coordinates when no confusion arises, namely,
\begin{equation*}
\begin{aligned}
E_{\sigma}=E_{\sigma}(t)=\{r\in I:\,|\rho(t,r)-\bar\rho|\leq\sigma\},\\
E^{\sigma}=E^{\sigma}(t)=\{r\in I:\,|\rho(t,r)-\bar\rho|>\sigma\}.
\end{aligned}
\end{equation*}
In this case, $I=E_\sigma\cup E^\sigma$ for each $\sigma\geq 0$ and $t\geq 0$. 

Besides, we adapt the notations $(\chi_\sigma^\flat,\chi_\sigma^\sharp)$ defined in \eqref{chi-sigma} and the notation $j_\gamma(z)$ in \cite[Chapter 5]{lions} as
\begin{equation*}
j_\gamma(z)=(z^\gamma-\bar\rho^\gamma)-\gamma\bar\rho^{\gamma-1}(z-\bar\rho)\qquad 
\mbox{for $z\geq 0$}.
\end{equation*}

In the rest of this section, $C_0\in [1,\infty)$ denotes a generic constant depending only on $(\rho_0,\boldsymbol{u}_0)$ and fixed constants $(n,\alpha,\gamma,A,\bar\rho)$; $C(\nu_1,\cdots\!,\nu_k)\in [1,\infty)$ denotes a generic constant depending only on $C_0$ and parameters $(\nu_1,\cdots\!,\nu_k)$, 
which may be different at each occurrence.

\subsection{Some equivalent norms on the initial data} 
In order to obtain the desired global uniform estimates on the regular solutions 
by employing analogous arguments 
shown in \S\ref{section-upper-density}--\S\ref{section-global3}, 
we first give some equivalent norms on the initial density $\rho_0$, 
which will be frequently used in the subsequent analysis.

\begin{lem}\label{equiv-initial}
Let the initial condition \eqref{in-start} or \eqref{in-start-high} hold. Then
\begin{enumerate}
\item[$\mathrm{(i)}$] $0\leq j_\gamma(\rho_0)\in L^1(\mathbb{R}^n)${\rm ;}
\item[$\mathrm{(ii)}$] $\nabla\sqrt{\rho_0}\in L^2(\mathbb{R}^n)${\rm ;}
\item[$\mathrm{(iii)}$] if \eqref{in-start} holds, then $\nabla\rho_0^{\gamma-1}\in H^1(\mathbb{R}^n)$ and $\nabla\log\rho_0\in L^\infty(\mathbb{R}^n)\cap H^1(\mathbb{R}^n)${\rm ;}
\item[$\mathrm{(iv)}$] if \eqref{in-start-high} holds, then $\nabla\rho_0^{\gamma-1}\in H^2(\mathbb{R}^n)$ and $\nabla\log\rho_0\in H^2(\mathbb{R}^n)$.
\end{enumerate}
\end{lem}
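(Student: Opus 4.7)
The approach rests on three elementary observations. First, $j_\gamma$ is strictly convex on $[0,\infty)$ with $j_\gamma(\bar\rho)=j_\gamma'(\bar\rho)=0$, so $j_\gamma\ge 0$ on $[0,\infty)$, $j_\gamma(z)\le C(\gamma,\bar\rho)(z-\bar\rho)^2$ on any compact sub-interval of $(0,\infty)$ containing $\bar\rho$, and $j_\gamma$ is bounded on every bounded subset of $[0,\infty)$. Second, under either \eqref{in-start} or \eqref{in-start-high}, $\rho_0-\bar\rho\in H^2(\mathbb{R}^n)\hookrightarrow L^\infty(\mathbb{R}^n)$ for $n\le 3$, so $\rho_0$ is bounded above; together with $\inf\rho_0>0$, the density $\rho_0$ takes values in a compact sub-interval $[c_0,C_0]\subset(0,\infty)$. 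Third, the maps $z\mapsto z^{\gamma-1}$ and $z\mapsto\log z$ are smooth on $[c_0,C_0]$, so via the chain rule every derivative of $\rho_0^{\gamma-1}$ or $\log\rho_0$ becomes a polynomial in bounded powers of $\rho_0^{\pm 1}$ and derivatives of $\rho_0$. This reduces the lemma to Sobolev estimates on $\rho_0-\bar\rho$.

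For (i), I would split at the threshold $\sigma=\bar\rho/2$. On $E_{\bar\rho/2}$, the Taylor bound gives $0\le j_\gamma(\rho_0)\le C(\gamma,\bar\rho)(\rho_0-\bar\rho)^2$, which is integrable because $\rho_0-\bar\rho\in L^2$. On $E^{\bar\rho/2}$, the pointwise bound $j_\gamma(\rho_0)\le C(\gamma,\bar\rho,\|\rho_0\|_\infty)$ combined with Chebyshev's inequality $|E^{\bar\rho/2}|\le(2/\bar\rho)^{2}\|\rho_0-\bar\rho\|_{L^2}^{2}<\infty$ finishes the job. Assertion (ii) is immediate from $\nabla\sqrt{\rho_0}=\nabla\rho_0/(2\sqrt{\rho_0})$, $\nabla\rho_0\in L^2$, and $\inf\rho_0>0$.

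For (iii), the chain rule yields $\nabla\rho_0^{\gamma-1}=(\gamma-1)\rho_0^{\gamma-2}\nabla\rho_0$ and $\nabla\log\rho_0=\rho_0^{-1}\nabla\rho_0$, both in $L^2$ since $\rho_0^{\gamma-2}$ and $\rho_0^{-1}$ are bounded; the $L^\infty$-bound on $\nabla\log\rho_0$ then follows from $\nabla\rho_0\in L^\infty$, which is $H^2(\mathbb{R}^2)\hookrightarrow W^{1,\infty}(\mathbb{R}^2)$ when $n=2$ but requires the additional hypothesis \eqref{in-start1} when $n=3$. Differentiating once more produces a linear piece $\rho_0^{\gamma-2}\nabla^2\rho_0$ (handled by $\nabla^2\rho_0\in L^2$) and a quadratic piece $\rho_0^{\gamma-3}|\nabla\rho_0|^2$, whose $L^2$-control reduces to $\nabla\rho_0\in L^4(\mathbb{R}^n)$; the latter follows from $\nabla\rho_0\in H^1(\mathbb{R}^2)\hookrightarrow L^4$ in 2D and from $\nabla\rho_0\in L^2\cap L^\infty\hookrightarrow L^4$ (using \eqref{in-start1}) in 3D. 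The analogous computation for $\nabla^2\log\rho_0$ is identical. For (iv), one iterates the same scheme once more: the expansions of $\nabla^3\rho_0^{\gamma-1}$ and $\nabla^3\log\rho_0$ produce order-three monomials in $\nabla\rho_0,\nabla^2\rho_0,\nabla^3\rho_0$, whose $L^2$-integrability reduces to bounding $|\nabla\rho_0|^{3}$, $\nabla\rho_0\cdot\nabla^2\rho_0$, and $\nabla^3\rho_0$ in $L^2$; under \eqref{in-start-high}, the embedding $H^3(\mathbb{R}^n)\hookrightarrow W^{1,\infty}(\mathbb{R}^n)$ for $n\le 3$ gives $\nabla\rho_0\in L^\infty$ for free, so all three products lie in $L^2$ without any additional pointwise assumption. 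None of the individual steps is substantive; the one delicate piece of bookkeeping is tracking which Sobolev embedding is invoked in each dimension, in particular the necessity of \eqref{in-start1} for the $L^\infty$-component of (iii) when $n=3$, a point already flagged in Remark \ref{rk-nabla}.
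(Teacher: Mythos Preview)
Your proposal is correct and follows essentially the same route as the paper: Taylor expansion of $j_\gamma$ together with the two-sided bound $\rho_0\in[c_0,C_0]$ for (i), and chain rule with bounded coefficients $\rho_0^{\pm 1}$ for (ii)--(iv). Two cosmetic remarks: the splitting in (i) is redundant once you have $\rho_0\in[c_0,C_0]$, since the Taylor remainder $j_\gamma(\rho_0)=\tfrac{\gamma(\gamma-1)}{2}\rho_\vartheta^{\gamma-2}(\rho_0-\bar\rho)^2$ already gives a global quadratic bound (this is what the paper does); and the embedding you invoke as ``$H^2(\mathbb{R}^2)\hookrightarrow W^{1,\infty}(\mathbb{R}^2)$'' is false for general functions and holds here only because $\nabla\rho_0$ is a spherically symmetric \emph{vector} field to which Lemma~\ref{Hk-Ck-vector} applies --- you rightly point to Remark~\ref{rk-nabla}, but you should state the dependence on radial symmetry explicitly rather than phrasing it as a classical Sobolev embedding.
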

\begin{proof}We divide the proof into two steps.

\smallskip
\textbf{1. Proof of (i).} 
A direct calculation gives
\begin{equation*}
\begin{aligned}
&j_\gamma'(z)= \gamma(z^{\gamma-1}-\bar\rho^{\gamma-1}), \quad j_\gamma''(z)=\gamma(\gamma-1) z^{\gamma-2}> 0 \qquad\,\,\text{for $z>0$}. 
\end{aligned}
\end{equation*}
with 
$j_\gamma(\bar\rho)=j_\gamma'(\bar\rho)=0$.
Thus, $j_\gamma(z)$ is a convex function and $j_\gamma(z)\geq 0$ for $z\geq 0$. 
Moreover, by the Taylor expansion, we have
\begin{equation}\label{erjie-taylor}
j_\gamma(z)= \frac{1}{2!}j_\gamma''(z_{\vartheta})(z-\bar\rho)^2=\frac{\gamma(\gamma-1)}{2} z_{\vartheta}^{\gamma-2}(z-\bar\rho)^2,
\end{equation}
where $z_{\vartheta}=\vartheta z+(1-\vartheta)\bar\rho$ for some $\vartheta\in [0,1]$. Setting $\rho_{0*}=\inf_{\boldsymbol{x}\in\mathbb{R}^n}\rho_0(\boldsymbol{x})$ and taking $z=\rho_0$ imply that
\begin{align*}
&0\leq j_\gamma(\rho_0)\leq C_0\big(\min\{\rho_{0*},\bar\rho\}\big)^{\gamma-2}(\rho_0-\bar\rho)^2 \qquad\quad\,\text{if $\gamma\in(1,2)$},\\
&0\leq j_\gamma(\rho_0)\leq C_0\big(\max\{|\rho_0|_\infty,\bar\rho\}\big)^{\gamma-2}(\rho_0-\bar\rho)^2 \qquad\text{if $\gamma\in [2,\infty)$},
\end{align*}
which, along with $\rho_0-\bar\rho\in L^2(\mathbb{R}^n)$, yields (i).

\smallskip
\textbf{2. Proof of (ii)--(iv).}
(ii)--(iv) follows easily from the facts that $\rho_0\geq \rho_{0*}>0$, 
$\rho_0\in L^\infty(\mathbb{R}^n)$, 
and $\nabla\rho_0\in H^{l-1}(\mathbb{R}^n)$ ($l=2$ or $3$). For example, if \eqref{in-start-high} holds, it follows from Lemma \ref{ale1} and  
$\nabla \log\rho=\frac{\nabla \rho}{\rho}$  that 
\begin{align*}
\|\nabla\log\rho\|_{D^2}&\leq C_0\Big(\frac{1}{\rho_{0*}} +\frac{1}{\rho_{0*}^2}\|\nabla\rho_0\|_{L^\infty} +\frac{1}{\rho_{0*}^3}\|\nabla\rho_0\|_{L^\infty}^2\Big)\|\nabla\rho_0\|_{H^2}\leq C_0.
\end{align*}
The rest of (ii)--(iv) can be proved in the same manner, we omit the details here.
\end{proof}

\subsection{Local-in-time well-posedness with strictly positive initial density}

\subsubsection{Local-in-time well-posedness in M-D coordinates} 
When $\inf_{\boldsymbol{x}\in \mathbb{R}^n} \rho_0(\boldsymbol{x})>0$, 
we consider the following reformulated system in the M-D cases: 
\begin{equation}\label{re}
\begin{cases}
\rho_t+\diver(\rho\boldsymbol{u})=0,\\
\displaystyle\boldsymbol{u}_t+\boldsymbol{u}\cdot\nabla\boldsymbol{u}+\frac{A\gamma}{\gamma-1} \nabla\rho^{\gamma-1}+L\boldsymbol{u}=\nabla \log \rho \cdot Q(\boldsymbol{u}),
\end{cases}
\end{equation}
where the operators $(L,Q)$ are defined in \eqref{operatordefinition}.  Since the positive lower bound of $\rho$ can be obtained via its transport equation $\eqref{re}_1$ in short time for strong solutions, by adaptation of the methods used in Matsumura--Nishida \cite{MN} and Sundbye \cite{Sundbye2}, one can establish the following local well-posedness results of the $s$-order ($s=2,3$) regular solutions (as defined in Definition \ref{cjk-po})  when $\inf_{\boldsymbol{x}\in \mathbb{R}^n} \rho_0(\boldsymbol{x})>0$. For brevity, we omit the proof here.

\begin{thm}\label{zth1-po}
Let $n=2$ or $3$, $\bar\rho>0$ in \eqref{e1.3}, and  \eqref{cd1local} hold. 
If the initial data $(\rho_0,\boldsymbol{u}_0)(\boldsymbol{x})$ 
are spherically symmetric and satisfy \eqref{in-start}--\eqref{in-start1}, 
then there exists $T_*>0$ such that the Cauchy 
problem \eqref{eq:1.1benwen}{\rm--}\eqref{e1.3} admits a unique $2$-order regular solution $(\rho,\boldsymbol{u})(t,\boldsymbol{x})$ 
in $[0,T_*]\times\mathbb{R}^n$  satisfying \eqref{decay-est-po} with $T$ 
replaced by $T_*$. 
Moreover, $(\rho,\boldsymbol{u})$ is spherically symmetric with form \eqref{duichenxingshi}, and \eqref{2dclassical1}{\rm--}\eqref{3dclassical1} 
hold with $T$ replaced by $T_*$.
\end{thm}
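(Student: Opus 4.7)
\textbf{Proof proposal for Theorem \ref{zth1-po}.}

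Since $\inf_{\boldsymbol{x}} \rho_0 > 0$ and $\rho_0 - \bar\rho \in H^2(\mathbb{R}^n)$, the initial density is strictly bounded away from zero and infinity. The plan is to work directly with the reformulation \eqref{re} rather than the enlarged system \eqref{eqn1}, since here $\nabla\log\rho = \nabla\rho/\rho$ poses no degeneracy issue as long as a uniform lower bound for $\rho$ is maintained. With such a lower bound, the momentum equation $\eqref{re}_2$ is a genuine parabolic system for $\boldsymbol{u}$, and classical techniques à la Matsumura--Nishida \cite{MN} and Sundbye \cite{Sundbye2} apply.

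The first step is to construct solutions via a Picard-type iteration. Starting from $(\rho^0, \boldsymbol{u}^0) = (\rho_0, \boldsymbol{u}_0)$, I define the iterates $(\rho^{k+1}, \boldsymbol{u}^{k+1})$ by solving the linearized transport equation $\rho^{k+1}_t + \boldsymbol{u}^k \cdot \nabla \rho^{k+1} + \rho^{k+1} \diver\boldsymbol{u}^k = 0$ via the method of characteristics (giving $\rho^{k+1}(t,\boldsymbol{x}) = \rho_0(\boldsymbol{X}^{-1}(t,\boldsymbol{x}))\exp(-\int_0^t \diver\boldsymbol{u}^k\, ds)$, which immediately furnishes uniform positive lower and upper bounds for $\rho^{k+1}$ on a short time interval $[0,T_*]$), and then solving the linear parabolic system for $\boldsymbol{u}^{k+1}$ with variable coefficients $\rho^{k+1}$. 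The second step is to establish uniform-in-$k$ {\it a priori} bounds in the natural space
\begin{equation*}
\rho^{k+1} - \bar\rho \in L^\infty([0,T_*]; H^2(\mathbb{R}^n)), \qquad \boldsymbol{u}^{k+1} \in L^\infty([0,T_*]; H^2(\mathbb{R}^n)) \cap L^2([0,T_*]; D^3(\mathbb{R}^n)),
\end{equation*}
together with $\boldsymbol{u}^{k+1}_t \in L^2([0,T_*]; D^1(\mathbb{R}^n))$. The density bounds come from differentiating the transport equation up to second order and applying Gr\"onwall, while the velocity bounds follow from standard energy estimates on $\eqref{re}_2$ using the strict positivity of $\rho^{k+1}$ to control the singular-looking term $\nabla\log\rho^{k+1} \cdot Q(\boldsymbol{u}^{k+1})$.

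The third step is convergence: show that $(\rho^{k+1}-\rho^k, \boldsymbol{u}^{k+1}-\boldsymbol{u}^k)$ is Cauchy in the weaker norm $L^\infty([0,T_*]; L^2) \cap L^2([0,T_*]; H^1)$, shrinking $T_*$ if necessary. The limit $(\rho, \boldsymbol{u})$ is a solution; higher regularity is recovered by lower semicontinuity and the uniform bounds. Uniqueness follows from a standard energy estimate on the difference of two solutions. Spherical symmetry of the iterates is preserved because the linearized operators commute with rotations when the coefficients are spherically symmetric; this can be verified inductively, showing $(\rho^{k+1}, \boldsymbol{u}^{k+1})$ takes the form \eqref{duichenxingshi} whenever $(\rho^k, \boldsymbol{u}^k)$ does. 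The lower and upper bounds \eqref{decay-est-po} are immediate from the characteristic representation. The classical regularity claims \eqref{2dclassical1}--\eqref{3dclassical1} follow from the Sobolev embeddings: in 2D, $H^2(\mathbb{R}^2) \hookrightarrow C(\overline{\mathbb{R}^2})$ (with parabolic smoothing yielding $C^1$-regularity of $\boldsymbol{u}$ for $t > 0$), while in 3D, $H^2(\mathbb{R}^3)$ controls $\rho$ and $\boldsymbol{u}$ pointwise but $\nabla^2\boldsymbol{u}$ is only continuous away from the origin unless additional parabolic regularization is exploited at positive times.

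The main obstacle I anticipate is twofold. First, in the three-dimensional case, the classical Sobolev embedding $H^2(\mathbb{R}^3) \not\hookrightarrow W^{1,\infty}(\mathbb{R}^3)$ forces the additional hypothesis $\nabla\rho_0 \in L^\infty(\mathbb{R}^3)$ in \eqref{in-start1}; propagating this $L^\infty$-bound on $\nabla\rho$ through the iteration requires applying $\nabla$ to the transport equation and estimating $\nabla\rho^{k+1}$ in $L^\infty$, which in turn needs $\nabla^2\boldsymbol{u}^k \in L^1([0,T_*]; L^\infty)$---itself a borderline regularity obtained from the parabolic smoothing and an interpolation-type argument. Second, care is needed to ensure the coordinate singularity $1/r$ in the spherical formulation $\eqref{e1.5hh}$ does not enter the estimates; this is handled by working in the Cartesian formulation \eqref{re} throughout and invoking Lemma \ref{lemma-initial} only to translate the final regularity back into the radial variables. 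Both of these difficulties are of technical rather than conceptual nature, which is why the authors are content to reference classical sources.
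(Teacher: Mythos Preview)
Your proposal is correct and matches the paper's approach essentially verbatim: the paper states that since the positive lower bound of $\rho$ propagates via the transport equation $\eqref{re}_1$, the momentum equation $\eqref{re}_2$ becomes a nondegenerate parabolic system, and one can then adapt the classical methods of Matsumura--Nishida \cite{MN} and Sundbye \cite{Sundbye2}, exactly as you outline. The paper omits all details beyond this remark, so your sketch is in fact more complete than what appears there.
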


\begin{thm}\label{zth2-po}
Let $n=2$ or $3$, $\bar\rho>0$ in \eqref{e1.3}, and  \eqref{cd1local} hold. If the initial data $(\rho_0,\boldsymbol{u}_0)(\boldsymbol{x})$ are spherically symmetric and satisfy  \eqref{in-start-high}, 
then there exists $T_*>0$ such that the Cauchy 
problem \eqref{eq:1.1benwen}{\rm--}\eqref{e1.3} admits a unique  $3$-order regular solution $(\rho,\boldsymbol{u})(t,\boldsymbol{x})$ in $[0,T_*]\times\mathbb{R}^n$  satisfying \eqref{decay-est-po} with $T$ replaced by $T_*$ and satisfying
\begin{equation}
\rho_{tt}\in C([0,T_*];L^2(\mathbb{R}^n))\cap L^2([0,T_*];D^1(\mathbb{R}^n)).   
\end{equation}
Moreover, $(\rho,\boldsymbol{u})$ is spherically symmetric  with form \eqref{duichenxingshi}, 
and \eqref{2dclassical2}{\rm--}\eqref{3dclassical2} hold with $T$ replaced by $T_*$.
\end{thm}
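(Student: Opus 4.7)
The plan is to run a standard Picard-type iteration scheme on the reformulated system \eqref{re}, exploiting that the strict positivity $\inf\rho_0>0$ is preserved on a short time interval so that $\eqref{re}_2$ becomes a uniformly strongly parabolic system for $\boldsymbol{u}$ and $\nabla\log\rho$ is as smooth as $\nabla\rho$. Specifically, starting from $(\rho^0,\boldsymbol{u}^0)=(\rho_0,\boldsymbol{u}_0)$, I would define iterates $(\rho^{k+1},\boldsymbol{u}^{k+1})$ by solving the linear transport equation
\begin{equation*}
\rho^{k+1}_t+\boldsymbol{u}^k\cdot\nabla\rho^{k+1}+\rho^{k+1}\diver\boldsymbol{u}^k=0,\qquad \rho^{k+1}|_{t=0}=\rho_0,
\end{equation*}
and then the linear parabolic system
\begin{equation*}
\boldsymbol{u}^{k+1}_t+L\boldsymbol{u}^{k+1}=-\boldsymbol{u}^k\cdot\nabla\boldsymbol{u}^{k+1}-\frac{A\gamma}{\gamma-1}\nabla(\rho^{k+1})^{\gamma-1}+\nabla\log\rho^{k+1}\cdot Q(\boldsymbol{u}^{k+1}),\quad \boldsymbol{u}^{k+1}|_{t=0}=\boldsymbol{u}_0.
\end{equation*}
The method of characteristics applied to the transport equation provides $T_1>0$, independent of $k$, on which $\tfrac12\inf\rho_0\le\rho^{k+1}\le 2\sup\rho_0$.

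Next, I would establish uniform-in-$k$ bounds in the natural space
\begin{equation*}
\mathcal{X}_T=\big\{(\sigma,\boldsymbol{w}):\,\sigma-\bar\rho\in L^\infty([0,T];H^3),\ \boldsymbol{w}\in L^\infty([0,T];H^3)\cap L^2([0,T];D^4)\big\}.
\end{equation*}
Applying $\partial^\beta$ for $|\beta|\le 3$ to the transport equation and using Moser-type commutator estimates, together with the Gr\"onwall inequality, yields $H^3$-control of $\rho^{k+1}-\bar\rho$ by $\|\nabla\boldsymbol{u}^k\|_{L^1_TL^\infty}$. For $\boldsymbol{u}^{k+1}$, the strong ellipticity of $L$ together with the positive lower bound on $\rho^{k+1}$ gives classical parabolic $H^3$ estimates with the Sobolev norms of $\nabla(\rho^{k+1})^{\gamma-1}$ and $\nabla\log\rho^{k+1}$ in the forcing, which are already controlled by Lemma \ref{equiv-initial} and the transport estimate. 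Choosing $T_*\in(0,T_1]$ small, I get uniform bounds $\|(\rho^{k+1}-\bar\rho,\boldsymbol{u}^{k+1})\|_{\mathcal{X}_{T_*}}\le M$. The sequence is then shown to be Cauchy in the weaker space $L^\infty_{T_*}L^2\cap L^2_{T_*}H^1$ by standard difference estimates, producing a unique limit $(\rho,\boldsymbol{u})$; the uniform bounds transfer to the limit through weak-$*$ compactness, and strong convergence at the lower level upgrades to the required time continuity via interpolation.

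The additional regularity $\rho_{tt}\in C([0,T_*];L^2)\cap L^2([0,T_*];D^1)$ follows by differentiating $\rho_t=-\diver(\rho\boldsymbol{u})$ in time to obtain $\rho_{tt}=-\diver(\rho_t\boldsymbol{u})-\diver(\rho\boldsymbol{u}_t)$, then combining $\rho_t\in C([0,T_*];H^2)$ (from the mass equation) with $\boldsymbol{u}_t\in C([0,T_*];H^1)\cap L^2([0,T_*];H^2)$; the latter is produced by applying the same parabolic energy method to the equation satisfied by $\boldsymbol{u}_t$, after differentiating the momentum equation in time (using $\rho_t\in C([0,T_*];H^2)$ to handle the induced source terms). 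Spherical symmetry of the solution is automatic: for any $\mathcal{O}\in\mathrm{SO}(n)$, $(\rho(\mathcal{O}\cdot),\mathcal{O}^\top\boldsymbol{u}(\mathcal{O}\cdot))$ also solves \eqref{re} with the same (spherically symmetric) initial data by rotational invariance, so uniqueness forces the symmetric form \eqref{duichenxingshi}. Finally, the classical regularity \eqref{2dclassical2}--\eqref{3dclassical2} follows from $H^3(\mathbb{R}^n)\hookrightarrow C^1(\overline{\mathbb{R}^n})$ and, for $\nabla^2\boldsymbol{u}$ and $\boldsymbol{u}_t$ in $n=3$, from the time-regularity combined with $\boldsymbol{u}_t\in L^2_{T_*}H^2$ via the Aubin-Lions lemma.

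The main obstacle I anticipate is not in the $H^3$ energy estimates themselves, which are routine thanks to the positive lower bound of $\rho$, but rather in closing the estimates for $\rho_{tt}$: obtaining $\rho_{tt}\in L^2([0,T_*];D^1)$ forces one to control $\nabla\diver(\rho\boldsymbol{u}_t)$ in $L^2_tL^2_x$, which in turn requires the full $L^2_{T_*}H^2$ parabolic gain on $\boldsymbol{u}_t$ together with the $L^\infty_{T_*}W^{1,\infty}$ bound on $\rho$; this pairing, while classical, must be arranged carefully through the time-differentiated momentum equation and the compatibility of the initial data implicit in \eqref{in-start-high}.
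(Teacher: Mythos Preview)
Your approach is essentially what the paper does: it states that, since the strict positivity $\inf\rho_0>0$ is preserved on short time, $\eqref{re}_2$ becomes a non-degenerate parabolic system and the result follows by adapting the classical transport--parabolic iteration of Matsumura--Nishida and Sundbye; the paper then omits the proof entirely. Your Picard scheme, the $H^3$ energy estimates, the derivation of $\rho_{tt}$ regularity from the twice-differentiated mass equation, and the spherical symmetry argument via rotation plus uniqueness are all in line with this classical route.

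There is, however, a genuine gap in your last sentence. For $n=3$, obtaining $(\nabla^2\boldsymbol{u},\boldsymbol{u}_t)\in C((0,T_*]\times\mathbb{R}^3)$ as required by \eqref{3dclassical2} does \emph{not} follow from $\boldsymbol{u}_t\in C([0,T_*];H^1)\cap L^2([0,T_*];H^2)$ together with Aubin--Lions: $H^1(\mathbb{R}^3)$ does not embed into $L^\infty$, and $L^2_tH^2_x$ gives spatial continuity only at almost every time, not joint continuity in $(t,\boldsymbol{x})$. What is actually needed are the time-weighted estimates $\sqrt{t}\,(\nabla^4\boldsymbol{u},\nabla^2\boldsymbol{u}_t,\boldsymbol{u}_{tt})\in L^\infty([0,T_*];L^2)$ appearing in \eqref{spd33-po}; with these, $t\boldsymbol{u}_t\in L^\infty([0,T_*];H^2)$ and $(t\boldsymbol{u}_t)_t\in L^2([0,T_*];L^2)$, so Lemma~\ref{triple} gives $t\boldsymbol{u}_t\in C([0,T_*];W^{1,q})$ for $q<6$, hence $\boldsymbol{u}_t\in C((0,T_*]\times\mathbb{R}^3)$ by Sobolev embedding (and similarly for $\nabla^2\boldsymbol{u}$, cf.\ the argument around \eqref{theaobve}). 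These time-weighted bounds require one more round of energy estimates on the time-differentiated momentum equation, multiplied by $t$; your iteration can produce them, but they must be added to the scheme.
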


\subsubsection{Local-in-time well-posedness in spherical coordinates}

Based on relation  \eqref{duichenxingshi}, 
we say that $(\rho, u)(t,r)$ is the  $s$-order $(s=2,3)$ regular solution of problem \eqref{e1.5hh} in $[0,T_*]\times I$ if the vector function  $(\rho,\boldsymbol{u})(t,\boldsymbol{x})$ is the  $s$-order $(s=2,3)$ regular solution (as defined in Definition \ref{cjk-po}) 
of the Cauchy problem \eqref{eq:1.1benwen}--\eqref{e1.3} in $[0,T_*]\times\mathbb{R}^n$ ($n=2,3$) 
with $\inf_{\boldsymbol{x}\in \mathbb{R}^n} \rho_0(\boldsymbol{x})>0$.

Now, based on Lemma \ref{lemma-initial} in Appendix \ref{appb}, 
Theorems \ref{zth1-po}--\ref{zth2-po} in spherical coordinates 
can be stated as follows:

\begin{thm}\label{rth10po} Let \eqref{cd1local} hold. Assume the initial data $(\rho_0, u_0)(r)$ satisfy 
\begin{equation*}
\inf_{r\in I} \rho_0(r)>0, \quad  \ r^{\frac{m}{2}}\Big(\rho_0-\bar\rho, (\rho_0)_r, \frac{(\rho_0)_r}{r},(\rho_0)_{rr},u_0,\frac{u_0}{r},(u_0)_r,\big(\frac{u_0}{r}\big)_r,(u_0)_{rr}\Big)\in L^2(I),
\end{equation*}
and, in addition, $(\rho_0)_r\in L^\infty(I)$  when $n=3$.
Then there exists $T_*>0$ such that problem \eqref{e1.5hh} 
admits a unique $2$-order regular solution $(\rho, u)(t,r)$ in $[0,T_*]\times I$ satisfying 
\begin{equation}\label{spd-po}
\begin{aligned}
&\inf_{(t,r)\in [0,T_*]\times I}\rho(t,r)>0,\qquad (\rho, \rho_r)\in L^\infty([0,T_*]\times I),\\
&r^{\frac{m}{2}}\Big(\rho-\bar\rho,\rho_r, \frac{\rho_r}{r},\rho_{rr},\rho_t,\rho_{tr}\Big)\in C([0,T_*];L^2(I)),\\
&r^{\frac{m}{2}}\Big(u,\frac{u}{r},u_r,\big(\frac{u}{r}\big)_r,u_{rr},u_t\Big)\in C([0,T_*];L^2(I)),\\
&r^{\frac{m}{2}}\Big(\frac{1}{r}\big(\frac{u}{r}\big)_r,\big(\frac{u}{r}\big)_{rr},\frac{u_{rr}}{r},u_{rrr},\frac{u_t}{r},u_{tr}\Big)\in L^2([0,T_*];L^2(I)),\\
&t^{\frac{1}{2}}r^{\frac{m}{2}}\Big(\frac{1}{r}\big(\frac{u}{r}\big)_r,\big(\frac{u}{r}\big)_{rr},\frac{u_{rr}}{r},u_{rrr},\frac{u_t}{r},u_{tr}\Big)\in L^\infty([0,T_*];L^2(I)),\\
&t^{\frac{1}{2}}r^{\frac{m}{2}}\Big(u_{tt},\big(\frac{u_t}{r}\big)_r,u_{trr}\Big)\in L^2([0,T_*];L^2(I)),
\end{aligned}
\end{equation}
and 
\begin{equation}\label{spd2-po}
\begin{aligned}
\Big(\rho,\rho_r,u,\frac{u}{r},u_r\Big)\in C((0,T_*];C(\bar I)).
\end{aligned}
\end{equation}
\end{thm}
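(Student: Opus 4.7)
The plan is to derive Theorem \ref{rth10po} as a transcription of Theorem \ref{zth1-po} into spherical coordinates, with Lemma \ref{lemma-initial} serving as the dictionary between the weighted one-dimensional Sobolev norms $|r^{m/2}(\cdot)|_2$ and the standard Sobolev norms on $\mathbb{R}^n$ for spherically symmetric vector/scalar fields. Concretely, I would first verify that the hypotheses on $(\rho_0,u_0)$ in spherical coordinates are equivalent to the hypotheses $(\rho_0-\bar\rho,\boldsymbol{u}_0)\in H^2(\mathbb{R}^n)$ together with $\nabla\rho_0\in L^\infty(\mathbb{R}^3)$ (when $n=3$) required by Theorem \ref{zth1-po}. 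For the scalar $\rho_0-\bar\rho$, the equivalence of $r^{m/2}(\rho_0-\bar\rho,(\rho_0)_r,(\rho_0)_r/r,(\rho_0)_{rr})\in L^2(I)$ with $\rho_0-\bar\rho\in H^2(\mathbb{R}^n)$ is a direct consequence of Lemma \ref{lemma-initial}; for the vector $\boldsymbol{u}_0=\frac{\boldsymbol x}{r}u_0$, the same lemma converts $r^{m/2}(u_0,u_0/r,(u_0)_r,(u_0/r)_r,(u_0)_{rr})$ into the $H^2(\mathbb{R}^n)$-norm of $\boldsymbol{u}_0$. The $L^\infty$ condition on $(\rho_0)_r$ when $n=3$ translates directly.

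Once the hypotheses are aligned, Theorem \ref{zth1-po} yields a unique $2$-order regular solution $(\rho,\boldsymbol u)(t,\boldsymbol x)$ on $[0,T_\ast]\times\mathbb{R}^n$, which is spherically symmetric by uniqueness and the rotation invariance of system \eqref{eq:1.1benwen}. Writing $(\rho,\boldsymbol u)(t,\boldsymbol x)=(\rho(t,r),u(t,r)\frac{\boldsymbol x}{r})$ and invoking Lemma \ref{lemma-initial} again, I would translate the regularity statements of Definition \ref{cjk-po} into the weighted $L^p([0,T_\ast];L^2(I))$-bounds listed in \eqref{spd-po}. In particular, $\rho-\bar\rho\in C([0,T_\ast];H^2(\mathbb{R}^n))$ and $\rho_t\in C([0,T_\ast];H^1(\mathbb{R}^n))$ give the first two lines of \eqref{spd-po}; $\boldsymbol u\in C([0,T_\ast];H^2(\mathbb{R}^n))\cap L^2([0,T_\ast];D^3(\mathbb{R}^n))$ combined with Lemma \ref{im-1} and Lemma \ref{lemma-initial} yields the third and fourth lines (note that Lemma \ref{im-1} is what lets me relate the third-order derivatives $\nabla^3\boldsymbol u$ to the radial quantities $u_{rrr},\,u_{rr}/r,\,(u/r)_{rr},\,(1/r)(u/r)_r$). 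The strict positivity and boundedness $\inf\rho>0$ and $(\rho,\rho_r)\in L^\infty$ come from \eqref{decay-est-po} together with Lemma \ref{lemma-initial} and the $H^2\hookrightarrow L^\infty$ embedding for $\rho-\bar\rho$ (using the assumption $\nabla\rho_0\in L^\infty(\mathbb{R}^3)$ in $3$D to close the $L^\infty$-bound of $\rho_r$ via a transport argument on $\eqref{e1.5hh}_1$).

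The time-weighted part of \eqref{spd-po}, namely the $\sqrt t$-weighted $L^\infty([0,T_\ast];L^2(I))$-bounds on the third-order spatial derivatives of $u$, on $u_t/r,\,u_{tr}$, and the $L^2([0,T_\ast];L^2(I))$-bounds on $\sqrt t(u_{tt},(u_t/r)_r,u_{trr})$, will not be explicitly stated in Theorem \ref{zth1-po} but follow from the improved time-weighted regularity $\sqrt t\boldsymbol u\in L^\infty([0,T_\ast];D^3(\mathbb{R}^n))$, $\sqrt t\boldsymbol u_t\in L^\infty([0,T_\ast];D^1(\mathbb{R}^n))\cap L^2([0,T_\ast];D^2(\mathbb{R}^n))$, $\sqrt t\boldsymbol u_{tt}\in L^2([0,T_\ast];L^2(\mathbb{R}^n))$ that constitute the parabolic smoothing component of the $2$-order regular solution class (these are analogous to \eqref{er2}, and I will verify them in the construction that underlies Theorem \ref{zth1-po}). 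Under Lemma \ref{im-1} and Lemma \ref{lemma-initial}, these precisely translate into the weighted radial estimates claimed in \eqref{spd-po}. Finally, \eqref{spd2-po} follows by combining the regularity in \eqref{spd-po} with the Sobolev embeddings for spherically symmetric functions in Appendix \ref{improve-sobolev} (Lemmas \ref{Hk-Ck-vector}--\ref{Hk-Ck-scalar} and \ref{lemma-L6}), which yield continuity up to and including the origin; together with the lower bound of $\rho$ and the mass equation $\eqref{e1.5hh}_1$, one obtains the continuity of $\rho_r$ and $u/r$ on $(0,T_\ast]\times\bar I$.

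The main obstacle I anticipate is the proper handling of the origin $r=0$. The M-D regularity and the spherical symmetry together force $u|_{r=0}=0$ for $t>0$, but proving that $u/r,\,(u/r)_r$ extend continuously to $r=0$ (needed for \eqref{spd2-po}) relies on the critical embeddings for spherically symmetric functions; in dimension $3$ the borderline $D^1(\mathbb{R}^3)\hookrightarrow L^6(\mathbb{R}^3)$ together with $D^2(\mathbb{R}^3)$ is needed to upgrade to $L^\infty$. A secondary delicate point is verifying the $t^{1/2}$-weighted estimates when only $H^2$ regularity is assumed on the initial velocity: the parabolic smoothing needs to be performed at the linearized level during the Picard iteration that underlies Theorem \ref{zth1-po}, using the strict positivity of $\rho$ to treat $\eqref{re}_2$ as a genuinely non-degenerate parabolic system, and I will need to check that the constants degenerate no worse than a power of $\inf\rho_0$.
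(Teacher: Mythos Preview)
Your proposal is correct and follows essentially the same approach as the paper: the paper presents Theorem \ref{rth10po} simply as the transcription of Theorem \ref{zth1-po} into spherical coordinates via Lemma \ref{lemma-initial}, exactly as you outline. A minor simplification: Theorem \ref{zth1-po} already asserts that the solution is spherically symmetric, so you need not invoke rotation invariance and uniqueness separately; also, the time-weighted estimates you worry about are indeed part of the local construction (cf.\ \eqref{er2} in Theorem \ref{th1-po}, to which Theorem \ref{zth1-po} is the local precursor), so no additional verification is required beyond the translation via Lemma \ref{lemma-initial}.
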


\begin{thm}\label{rth133-po} 
Let \eqref{cd1local} hold. Assume the initial data $(\rho_0, u_0)(r)$ satisfy 
\begin{align*}
&\inf_{r\in I} \rho_0(r)>0, \quad
r^{\frac{m}{2}}\Big(\rho_0-\bar\rho, (\rho_0)_r, \frac{(\rho_0)_r}{r},(\rho_0)_{rr},\big(\frac{(\rho_0)_r}{r}\big)_r,(\rho_0)_{rrr}\Big)\in L^2(I),\\
&r^{\frac{m}{2}}\Big(u_0,\frac{u_0}{r},(u_0)_r,\big(\frac{u_0}{r}\big)_r,(u_0)_{rr},\frac{1}{r}\big(\frac{u_0}{r}\big)_r,\big(\frac{u_0}{r}\big)_{rr},\frac{(u_0)_{rr}}{r},(u_0)_{rrr}\Big)\in L^2(I).
\end{align*}
Then there exists $T_*>0$ such that problem \eqref{e1.5hh} 
admits a unique $3$-order regular solution $(\rho, u)(t,r)$ in $[0,T_*]\times I$ satisfying 
\begin{equation}\label{spd33-po}
\begin{aligned}
&\inf_{(t,r)\in [0,T_*]\times I}\rho(t,r)>0,\qquad (\rho, \rho_r)\in L^\infty([0,T_*]\times I),\\
&r^{\frac{m}{2}}\Big(\rho-\bar\rho,\rho_r, \frac{\rho_r}{r},\rho_{rr},\big(\frac{\rho_r}{r}\big)_r,\rho_{rrr},\rho_t,\rho_{tr},\frac{\rho_{tr}}{r},\rho_{trr}\Big)\in C([0,T_*];L^2(I)),\\
&r^{\frac{m}{2}}\rho_{tt}\in C([0,T_*];L^2(I)),\quad r^\frac{m}{2} \rho_{ttr} \in L^2([0,T_*];L^2(I)),\\
&r^{\frac{m}{2}}\Big(u,\frac{u}{r},u_r,\big(\frac{u}{r}\big)_r,u_{rr},\frac{1}{r}\big(\frac{u}{r}\big)_r,\big(\frac{u}{r}\big)_{rr},\frac{u_{rr}}{r},u_{rrr},u_t,\frac{u_t}{r},u_{tr}\Big)\in C([0,T_*];L^2(I)),\\
&r^{\frac{m}{2}}\Big(\big(\frac{1}{r}(\frac{u}{r})_r\big)_r,\big(\frac{u}{r}\big)_{rrr},\big(\frac{u_{rr}}{r}\big)_r,u_{rrrr},\big(\frac{u_t}{r}\big)_r,u_{trr}\Big)\in L^2([0,T_*];L^2(I)),\\
&t^\frac{1}{2}r^{\frac{m}{2}}\Big(\big(\frac{1}{r}(\frac{u}{r})_r\big)_r,\big(\frac{u}{r}\big)_{rrr},\big(\frac{u_{rr}}{r}\big)_r,u_{rrrr},\big(\frac{u_t}{r}\big)_r,u_{trr},u_{tt}\Big)\in L^\infty([0,T_*];L^2(I)),\\
&t^\frac{1}{2}r^{\frac{m}{2}}\Big(\frac{u_{tt}}{r},u_{ttr},u_{trrr},\frac{u_{trr}}{r},\big(\frac{u_t}{r}\big)_{rr},\frac{1}{r}\big(\frac{u_{tr}}{r}\big)_r\Big)\in L^2([0,T_*];L^2(I)),
\end{aligned}
\end{equation}
and 
\begin{equation}\label{spd233-po}
\Big(\rho,\rho_r,\rho_t,u,\frac{u}{r},u_r\Big)\in C([0,T_*];C(\bar I)),\quad\, \Big(u_t,\big(\frac{u}{r}\big)_r,u_{rr}\Big)\in C((0,T_*];C(\bar I)).
\end{equation}
\end{thm}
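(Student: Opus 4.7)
The plan is to deduce Theorem~\ref{rth133-po} from the M-D local well-posedness Theorem~\ref{zth2-po} by matching each hypothesis and conclusion to its spherical counterpart through Lemma~\ref{lemma-initial} (for scalar radial functions) and Lemma~\ref{im-1} (for curl-free spherically symmetric vector fields). First, I would verify that the weighted $L^2(I)$-bounds postulated on the spherical profiles $(\rho_0-\bar\rho, (\rho_0)_r, \tfrac{(\rho_0)_r}{r}, (\rho_0)_{rr}, (\tfrac{(\rho_0)_r}{r})_r, (\rho_0)_{rrr})$ and on the analogous radial and mixed derivatives of $u_0$ are equivalent, via Lemmas~\ref{lemma-initial}--\ref{im-1}, to $\rho_0-\bar\rho\in H^3(\mathbb{R}^n)$ and $\boldsymbol{u}_0\in H^3(\mathbb{R}^n)$; together with the pointwise lower bound $\inf_r \rho_0(r)>0$, this reproduces hypothesis \eqref{in-start-high} of Theorem~\ref{zth2-po}.

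Second, I would apply Theorem~\ref{zth2-po} to obtain a unique $3$-order regular solution $(\rho,\boldsymbol{u})(t,\boldsymbol{x})$ of problem \eqref{eq:1.1benwen}--\eqref{e1.3} on some interval $[0,T_*]\times\mathbb{R}^n$. Applying uniqueness to the rotated field $(\rho(t,\mathcal{O}\boldsymbol{x}), \mathcal{O}^\top\boldsymbol{u}(t,\mathcal{O}\boldsymbol{x}))$ for arbitrary $\mathcal{O}\in\mathrm{SO}(n)$, the solution itself is spherically symmetric and admits the form \eqref{duichenxingshi} for a radial profile $(\rho,u)(t,r)$. The pointwise inequalities in $\eqref{spd33-po}_1$, together with $(\rho,\rho_r)\in L^\infty([0,T_*]\times I)$, follow immediately from \eqref{decay-est-po} and \eqref{3dclassical2} under the change of variables $r=|\boldsymbol{x}|$.

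Third, I would convert every Cartesian regularity statement supplied by Theorem~\ref{zth2-po} into its weighted radial counterpart in \eqref{spd33-po}. The bounds on $(\rho,\rho_t,\rho_{tt})$ come from $\rho-\bar\rho\in C([0,T_*];H^3)$, $\rho_t\in C([0,T_*];H^2)$, and the extra conclusion $\rho_{tt}\in C([0,T_*];L^2)\cap L^2([0,T_*];D^1)$ of Theorem~\ref{zth2-po}, combined with Lemma~\ref{lemma-initial}. The bounds on $u$ and its space-time derivatives---including the $\sqrt{t}$-weighted fourth-order estimates---follow from the parabolic-smoothing regularities $\boldsymbol{u}\in C([0,T_*];H^3)\cap L^2([0,T_*];D^4)$, $\boldsymbol{u}_t\in C([0,T_*];H^1)\cap L^2([0,T_*];D^2)$, $\sqrt{t}\boldsymbol{u}\in L^\infty([0,T_*];D^4)$, $\sqrt{t}\boldsymbol{u}_t\in L^\infty([0,T_*];D^2)\cap L^2([0,T_*];D^3)$, and $\sqrt{t}\boldsymbol{u}_{tt}\in L^\infty([0,T_*];L^2)\cap L^2([0,T_*];D^1)$---all of which are produced by the Picard iteration scheme underlying Theorem~\ref{zth2-po} in exact analogy with \eqref{er2-high}---upon repeated application of Lemma~\ref{im-1} to the div-curl structure of the spherically symmetric velocity. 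The time-spatial continuity claim \eqref{spd233-po} is then obtained by combining the Cartesian continuity \eqref{3dclassical2} with Lemmas~\ref{lemma-initial} and~\ref{ale1}, following the arguments of \S\ref{solo}--\S\ref{soloo2} used to prove Theorems~\ref{th1}--\ref{th1-high}.

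The main technical obstacle is not analytic but organizational: each of the roughly twenty weighted norms in \eqref{spd33-po} must be paired explicitly with the correct Cartesian Sobolev quantity (some involving both $\partial_r$ and $1/r$ simultaneously, such as $r^{m/2}\tfrac{1}{r}(\tfrac{u_{tr}}{r})_r$), and one must verify that the successive div-curl equivalences of Lemma~\ref{im-1} applied to the tangential derivatives of $\boldsymbol{u}$ and $\boldsymbol{u}_t$ indeed generate every such term with the correct weight $r^{m/2}$. Because $\rho$ is uniformly bounded away from zero on $[0,T_*]$ by \eqref{decay-est-po}, the system is a non-degenerate parabolic one and no regularity argument beyond the standard M-D local theory is required.
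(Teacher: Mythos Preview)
Your proposal is correct and matches the paper's approach: the paper states that Theorems~\ref{rth10po}--\ref{rth133-po} are simply the restatements of Theorems~\ref{zth1-po}--\ref{zth2-po} in spherical coordinates via Lemma~\ref{lemma-initial}, and your plan spells out exactly this translation (with the div-curl equivalences of Lemma~\ref{im-1}, themselves consequences of Lemma~\ref{lemma-initial}). One minor simplification: the spherical symmetry of the solution is already asserted in the conclusion of Theorem~\ref{zth2-po}, so you need not re-derive it from uniqueness under rotation.
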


\smallskip
\subsection{Outline of the proof}

Our following proof is organized as follows: 
First, in \S \ref{subsub-1132}, we establish the global 
uniform upper bound of $\rho$. Second, in \S \ref{subsub-1133}, we establish the global uniform 
$L^\infty(\mathbb{R}^n)$-estimate for the effective velocity. 
Next, in \S \ref{subsub-1134}, we establish the global uniform lower bound of $\rho$. 
Finally, in \S\ref{globalestimates-2po}--\S\ref{subsub-final}, we make the global uniform estimates 
for the $2$- and $3$-order regular solutions and establish the global well-posedness of regular solutions for general smooth data with strictly positive initial density.

\subsection{Global uniform upper bound of the density}\label{subsub-1132}
In \S\ref{subsub-1132}--\S\ref{subsub-1134}, let $T>0$ be any fixed time, and 
let $(\rho, u)(t,r)$ be the $s$-order ($s=2,3$)  regular solution of problem \eqref{e1.5hh} 
in $[0,T]\times I$ obtained in Theorems \ref{rth10po}--\ref{rth133-po}.
This subsection is devoted to proving the uniform upper bound of $\rho$. 

First, we define the effective velocity, which is identical to 
Definition \ref{def-effective} in \S \ref{section-upper-density}; 
we present it here for subsequent development. 
\begin{mydef}\label{def-effective3}
Let $(\rho,u,\alpha)$ be defined as in {\rm \S \ref{section-intro}}. 
Define the effective velocity $v$ as
\begin{equation}\label{V-expression3}
v:=u+2\alpha (\log\rho)_r.
\end{equation}
Besides, define $v_0:=v|_{t=0}=u_0+2\alpha (\log\rho_0)_r$.
\end{mydef} 

Next, we establish the fundamental energy estimate and the BD entropy estimate.
\begin{lem}\label{energy-bd-po}
Let $j_\gamma(\rho)=j_\gamma(\rho)(t,r):=j_\gamma(\rho(t,r))$. There exists a constant $C_0>0$ such that, for any $t\in [0,T]$,
\begin{align*}
\int_0^\infty \big(r^m\rho u^2+r^mj_\gamma(\rho))(t,\cdot)\,\mathrm{d}r+\int_0^t\int_0^\infty r^m\Big(\rho |u_r|^2+\rho\frac{u^2}{r^2}\Big)\,\mathrm{d}r\mathrm{d}s\leq C_0,\\
\int_0^\infty \big(r^m\rho v^2+r^m |(\sqrt{\rho})_r|^2+r^mj_\gamma(\rho))(t,\cdot)\,\mathrm{d}r+\int_0^t\int_0^\infty r^m\rho^{\gamma-2}|\rho_r|^2\,\mathrm{d}r\mathrm{d}s\leq C_0.
\end{align*}
\end{lem}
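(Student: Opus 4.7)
The proof will follow the same overall architecture as Lemma~\ref{energy-BD}, but with the key structural modification that $\rho^\gamma$ must be replaced by the ``relative pressure'' quantity $j_\gamma(\rho)$, so that the density-dependent quantities being estimated actually decay at infinity under the far-field condition $\rho\to\bar\rho>0$. The role of Lemma~\ref{equiv-initial} is to ensure that the corresponding initial functionals are finite and controlled by $C_0$.

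\textbf{Step 1: the basic energy identity.} First, I would multiply the momentum equation $\eqref{e1.5hh}_2$ by $r^m u$ and rewrite $A r^m(\rho^\gamma)_r u = A r^m(\rho^\gamma-\bar\rho^\gamma)_r u$ (this replacement is allowed since $\bar\rho^\gamma$ is constant). The continuity equation, after multiplication by $j_\gamma'(\rho)=\gamma(\rho^{\gamma-1}-\bar\rho^{\gamma-1})$ and the algebraic identity $\rho j_\gamma'(\rho)-j_\gamma(\rho)=(\gamma-1)(\rho^\gamma-\bar\rho^\gamma)$, yields
\begin{equation*}
(r^m j_\gamma(\rho))_t+(r^m j_\gamma(\rho) u)_r+(\gamma-1)\Big(r^m(\rho^\gamma-\bar\rho^\gamma)u_r+m r^{m-1}(\rho^\gamma-\bar\rho^\gamma)u\Big)=0.
\end{equation*}
Combining this with the $r^m u$-multiplied momentum equation exactly as in Lemma~\ref{energy-BD} produces the local identity
\begin{equation*}
\Big(\tfrac{1}{2}r^m\rho u^2+\tfrac{A}{\gamma-1}r^m j_\gamma(\rho)\Big)_t+2\alpha r^m\rho u_r^2+2\alpha m r^{m-2}\rho u^2=(\mathcal{B})_r,
\end{equation*}
where $\mathcal{B}$ collects full spatial derivatives. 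Integrating over $I$ and showing $\mathcal{B}\in W^{1,1}(I)$ with $\mathcal{B}|_{r=0}=0$ (via the same computations as for $\mathcal{B}_1$ in the proof of Lemma~\ref{energy-BD}, replacing $\rho^\gamma$ by $\rho^\gamma-\bar\rho^\gamma$ so that the factor vanishes at infinity owing to the regularity in \eqref{spd-po} or \eqref{spd33-po}) yields the desired differential inequality; integrating in time and invoking Lemma~\ref{equiv-initial}(i) together with the $L^2$-bound on $(r^m\rho_0)^{1/2}u_0$ gives the first estimate.

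\textbf{Step 2: the BD entropy identity.} Following the derivation leading to \eqref{ess}, I would combine $\partial_r$ of $\eqref{e1.5hh}_1$ with $\eqref{e1.5hh}_2$ to obtain $\rho(v_t+uv_r)+A(\rho^\gamma)_r=0$. Multiplying by $r^m v$, using $v-u=2\alpha\rho_r/\rho$ to generate the dissipative term $2A\alpha\gamma r^m\rho^{\gamma-2}|\rho_r|^2$, and applying the $j_\gamma$ identity established in Step~1 to rewrite the remaining $-Ar^m(\rho^\gamma-\bar\rho^\gamma)_r u$ term yields
\begin{equation*}
\Big(\tfrac{1}{2}r^m\rho v^2+\tfrac{A}{\gamma-1}r^m j_\gamma(\rho)\Big)_t+2A\alpha\gamma\, r^m\rho^{\gamma-2}|\rho_r|^2=(\widetilde{\mathcal{B}})_r
\end{equation*}
for some flux $\widetilde{\mathcal{B}}$. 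After justifying the integration by parts as in Step~1 and integrating over $[0,t]\times I$, the bound follows provided that $|(r^m\rho_0)^{1/2}v_0|_2\leq C_0$, which in turn follows from Lemma~\ref{equiv-initial}(ii)--(iii) together with the decomposition $v_0=u_0+2\alpha(\log\rho_0)_r$ and the uniform positive lower bound on $\rho_0$. The bound on $|r^{m/2}(\sqrt{\rho})_r|_2$ then comes from the algebraic identity $2(\sqrt{\rho})_r=(\log\rho)_r\sqrt{\rho}=(v-u)\sqrt{\rho}/(2\alpha)$ combined with the two estimates just obtained.

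\textbf{Main obstacle.} The principal technical issue (not a conceptual difficulty, but the place where the $\bar\rho>0$ case genuinely differs from Lemma~\ref{energy-BD}) is the justification of the integration by parts at $r=\infty$: since $\rho^\gamma\not\to 0$ at infinity, the flux terms $\mathcal{B}, \widetilde{\mathcal{B}}$ must be recast so that each factor decays, and this is precisely why $\rho^\gamma-\bar\rho^\gamma$ (hence $j_\gamma(\rho)$) is the correct energy density. Verifying $\mathcal{B}, \widetilde{\mathcal{B}}\in W^{1,1}(I)$ with vanishing boundary values requires exploiting the full regularity in \eqref{spd-po}--\eqref{spd233-po} together with the positive lower bound of $\rho$ (so that $\rho^{\gamma-2}|\rho_r|^2$ is a bona fide $L^1$-dissipation and not singular), and this bookkeeping—while essentially mechanical—is what the formal argument hinges upon.
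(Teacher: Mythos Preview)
Your proposal is correct and follows essentially the same route as the paper. The paper derives the $j_\gamma$ conservation law in the equivalent form $\frac{A}{\gamma-1}(r^m j_\gamma(\rho))_t+(\frac{A\gamma}{\gamma-1}r^m(\rho^{\gamma-1}-\bar\rho^{\gamma-1})\rho u)_r=A(\rho^\gamma)_r r^m u$ and combines it with the $r^mu$-multiplied momentum equation, yielding the flux $\tilde{\mathcal{B}}_1=-\frac{A\gamma}{\gamma-1}r^m(\rho^{\gamma-1}-\bar\rho^{\gamma-1})\rho u+2\alpha r^m\rho uu_r-\frac{r^m}{2}\rho u^3$; your algebraic identity $\rho j_\gamma'(\rho)-j_\gamma(\rho)=(\gamma-1)(\rho^\gamma-\bar\rho^\gamma)$ leads to exactly the same flux after simplification, and Step~2 (the BD part via $r^m v$ and the $(\sqrt\rho)_r$ bound) coincides with the paper's argument verbatim.
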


\smallskip
\begin{proof}We divide the proof into two steps.

\smallskip
\textbf{1.} First, a direct calculation yields that 
\begin{equation}\label{12.10}
\frac{A}{\gamma-1}(r^m j_\gamma(\rho))_t+\Big(\frac{A\gamma}{\gamma-1}r^m (\rho^{\gamma-1}-\bar\rho^{\gamma-1}) \rho u\Big)_r=A(\rho^\gamma)_rr^mu.
\end{equation} 
Then multiplying $\eqref{e1.5hh}_2$ by $r^m u$, together with \eqref{12.10}, gives
\begin{equation}\label{eap1-0}
\begin{split}
&\Big(\frac{r^m}{2} \rho u^2+\frac{A}{\gamma-1} r^m j_\gamma(\rho)\Big)_t+ 2\alpha r^m\Big(\rho |u_r|^2+  m \rho \frac{u^2}{r^2}\Big)\\
&=\Big(\underline{-\frac{A\gamma}{\gamma-1}r^m (\rho^{\gamma-1}-\bar\rho^{\gamma-1}) \rho u+2\alpha r^m\rho u u_r-\frac{r^m}{2}\rho u^3}_{:=\tilde{\mathcal{B}}_1}\Big)_r.
\end{split}
\end{equation}

Next, we need to show  that $\tilde{\mathcal{B}}_1\in W^{1,1}(I)$ and $\tilde{\mathcal{B}}_1|_{r=0}=0$ for {\it a.e.} $t\in (0,T)$, which allows us to apply Lemma \ref{calculus} to obtain
\begin{equation}\label{in-B1-po}
\int_0^\infty (\tilde{\mathcal{B}}_1)_r\,\mathrm{d}r=-\tilde{\mathcal{B}}_1|_{r=0}=0.
\end{equation}
On one hand,  it follows from \eqref{spd-po}--\eqref{spd2-po} (or \eqref{spd33-po}--\eqref{spd233-po}) that 
\begin{equation*}
\rho_r\in L^\infty(I),\qquad \Big(\rho,u,\frac{u}{r},u_r\Big)\in C(\bar I), \qquad r^\frac{m}{2}\Big(\rho-\bar\rho,\rho_r,u,\frac{u}{r},u_r,u_{rr}\Big)\in L^2(I)
\end{equation*}
for {\it a.e.} $t\in (0,T)$, which yields 
that $\tilde{\mathcal{B}}_1|_{r=0}=0$. 
On the other hand,  it follows from \eqref{spd-po}--\eqref{spd2-po} (or \eqref{spd33-po}--\eqref{spd233-po})
and  the Taylor expansion that 
\begin{equation*}
z^{\gamma-1}-\bar\rho^{\gamma-1}=(\gamma-1)z_\vartheta^{\gamma-2}(z-\bar\rho) \qquad \text{with $z_\vartheta=\vartheta z+(1-\vartheta)\bar\rho\,\,$ for some $\vartheta\in [0,1]$}.
\end{equation*}
Then this, together with the H\"older inequality, yields that $\tilde{\mathcal{B}}_1\in W^{1,1}(I)$ for {\it a.e.} $t\in (0,T)$:
\begin{align*}
|\tilde{\mathcal{B}}_1|_1&\leq C_0\big|r^m\big((\rho^{\gamma-1}-\bar\rho^{\gamma-1})\rho u,\rho u u_r,\rho u^3\big)\big|_1\\
&\leq C_0|\rho_\vartheta^{\gamma-2}\rho|_\infty \big|r^\frac{m}{2}(\rho-\bar\rho)\big|_2|r^\frac{m}{2}u|_2\\
&\quad +C_0|\rho|_\infty\big(|r^\frac{m}{2}u|_2|r^\frac{m}{2}u_r|_2+|u|_\infty|r^\frac{m}{2}u|_2^2\big) <\infty,\\
|(\tilde{\mathcal{B}}_1)_r|_1&\leq C_0\big|r^{m-1}\big((\rho^{\gamma-1}-\bar\rho^{\gamma-1})\rho u,\rho u u_r,\rho u^3\big)\big|_1\\
&\quad+C_0\big|r^m\big(\rho^{\gamma-1}\rho_r u,\bar\rho^{\gamma-1}\rho_r u,(\rho^{\gamma-1}-\bar\rho^{\gamma-1})\rho u_r\big)\big|_1\\
&\quad +C_0\big|r^m\big(\rho_r u u_r,\rho (u_r)^2,\rho u u_{rr},\rho_r u^3,\rho u^2u_r\big)\big|_1\\
&\leq C_0|\rho_\vartheta^{\gamma-2}\rho|_\infty\big|r^\frac{m}{2}(\rho-\bar\rho)\big|_2\big|r^\frac{m-2}{2}u\big|_2\\
&\quad +C_0|\rho|_\infty\Big(|r^\frac{m-2}{2}u|_2|r^\frac{m}{2}u_r|_2+\Big|\frac{u}{r}\Big|_\infty|r^\frac{m}{2}u|_2^2\Big)\\
&\quad+ C_0(|\rho|_\infty^{\gamma-1}+1) \big|r^\frac{m}{2}\rho_r\big|_2|r^\frac{m}{2}u|_2+C_0|\rho_\vartheta^{\gamma-2} \rho|_\infty\big|r^\frac{m}{2}(\rho-\bar\rho)\big|_2|r^\frac{m}{2}u_r|_2\\
&\quad+ C_0|\rho_r|_\infty|r^\frac{m}{2}u|_2|r^\frac{m}{2}u_r|_2+ C_0|\rho|_\infty\big(|r^\frac{m}{2}u_r|_2^2+|r^\frac{m}{2}u|_2|r^\frac{m}{2}u_{rr}|_2\big)\\
&\quad+ C_0|\rho_r|_\infty|u|_\infty|r^\frac{m}{2}u|_2^2+ C_0|\rho|_\infty|u_r|_\infty|r^\frac{m}{2}u|_2^2<\infty,
\end{align*}
where $\rho_\vartheta=\vartheta\rho+(1-\vartheta)\bar\rho$. This, along with Lemma \ref{calculus}, yields \eqref{in-B1-po}.

Integrating above over $[0,t]\times I$, we obtain from \eqref{in-B1-po} and Lemma \ref{equiv-initial} that
\begin{equation}\label{eap2-0}
\begin{aligned}
&\int_0^\infty \Big(\frac{1}{2}r^m\rho u^2+ \frac{A}{\gamma-1}r^m j_\gamma(\rho)\Big)(t,r) \,\mathrm{d}r+ 2\alpha \int_0^t \int_0^\infty r^m\Big(\rho |u_r|^2+ m\rho \frac{u^2}{r^2}\Big)\,\mathrm{d}r\mathrm{d}t\\
&\leq  \int_0^\infty \Big(\frac{1}{2}r^m\rho_0 |u_0|^2+ \frac{A}{\gamma-1}r^m j_\gamma(\rho_0)\Big) \,\mathrm{d}r\leq C_0.
\end{aligned} 
\end{equation}

\smallskip
\textbf{2.} By Lemmas \ref{equiv-initial} and \ref{lemma-initial} and Definition \ref{def-effective3},  $(r^m\rho_0)^\frac{1}{2}v_0\in L^2(I)$. Thus, the BD estimate can be obtained via an analogous argument used 
in Step 2 of the proof of Lemma \ref{energy-BD}. 
First, we obtain the equation of effective velocity:
\begin{equation}\label{ess-po}
\rho(v_t+u v_r)+P_r=0.
\end{equation}

Next, multiplying \eqref{ess-po} by $r^m v$ and 
integrating the resulting equality over $[0,t]\times I$, we 
obtain from $\eqref{e1.5hh}_1$ and Lemma \ref{equiv-initial} that
\begin{equation}\label{es-v-po}
\begin{aligned}
&\int_0^\infty\Big(\frac12 r^m\rho v^2+\frac{A}{\gamma-1}r^mj_\gamma(\rho)\Big)(t, r)\,\mathrm{d}r+ 2A\alpha \gamma\int_0^t\int_0^\infty r^m\rho^{\gamma-2}|\rho_r|^2\,\mathrm{d}r\mathrm{d}s\\
&\leq  \int_0^\infty \Big(\frac{1}{2}r^m\rho_0 |v_0|^2+ \frac{A}{\gamma-1}r^m j_\gamma(\rho_0)\Big) \,\mathrm{d}r\leq C_0.
\end{aligned}
\end{equation}

Finally, \eqref{V-expression3}, together with the energy estimates and \eqref{es-v-po}, yields
\begin{equation*}
|r^\frac{m}{2}(\sqrt{\rho})_r|_2=\frac{1}{2}\big|(r^m\rho)^\frac{1}{2}(\log\rho)_r\big|_2\leq C_0\big|(r^m\rho)^\frac{1}{2}(v,u)\big|_2\leq C_0.
\end{equation*}
\end{proof}

Clearly, in this case, Corollary \ref{cor-v} in \S \ref{section-upper-density} still holds. We present it here for later use.
\begin{cor}\label{cor-v''}
The effective velocity $v$ satisfies the following equation{\rm :}
\begin{equation}\label{eq:effective3}
v_t+uv_r+ \frac{A\gamma}{2\alpha} \rho^{\gamma-1} (v-u)=0.
\end{equation}
\end{cor}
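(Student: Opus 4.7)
The corollary is a direct algebraic consequence of the definition $v = u + 2\alpha(\log\rho)_r$ combined with the mass and momentum equations $\eqref{e1.5hh}_1$--$\eqref{e1.5hh}_2$. My plan is to mirror the two-step argument used earlier in Lemma~\ref{energy-BD} (equations \eqref{e5.2}--\eqref{ess}) in the far-field vacuum case; the computation is formally identical here, only the justification for dividing by $\rho$ changes.

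First, I would differentiate the continuity equation $\eqref{e1.5hh}_1$ in $r$ after dividing by $\rho$, which yields
\begin{equation*}
\rho\bigl((\log\rho)_r\bigr)_t + \rho u (\log\rho)_{rr} + \Bigl(\rho\bigl(u_r + \tfrac{m}{r}u\bigr)\Bigr)_r - \tfrac{m\rho_r u}{r} = 0,
\end{equation*}
exactly as in \eqref{e5.2}. Multiplying by $2\alpha$ and adding the momentum equation $\eqref{e1.5hh}_2$, the viscous and coordinate-singularity terms $2\alpha(\rho u_r + \tfrac{m\rho u}{r})_r - \tfrac{2\alpha m \rho_r u}{r}$ cancel against $2\alpha(\rho(u_r+\tfrac{m}{r}u))_r - \tfrac{2\alpha m\rho_r u}{r}$, and the transport terms combine to give
\begin{equation*}
\rho(v_t + u v_r) + A(\rho^\gamma)_r = 0,
\end{equation*}
which is the spherical analogue of \eqref{ess}.

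Second, I would divide through by $\rho$. In the present setting this is legitimate everywhere on $[0,T]\times I$ because Theorems~\ref{rth10po}--\ref{rth133-po} provide the strict positivity $\inf_{(t,r)\in[0,T]\times I}\rho(t,r)>0$, in contrast to the far-field vacuum case where the same identity holds only pointwise where $\rho>0$. Using $A(\rho^\gamma)_r = A\gamma\rho^{\gamma-1}\rho_r$ and the identity $2\alpha(\log\rho)_r = v-u$ from Definition~\ref{def-effective3}, the pressure term becomes $\frac{A\gamma}{2\alpha}\rho^{\gamma-1}(v-u)$, yielding \eqref{eq:effective3}.

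There is no genuine obstacle: all terms appearing in the computation are classical in $(0,T]\times \bar I$ by the regularity \eqref{spd-po}--\eqref{spd2-po} (or \eqref{spd33-po}--\eqref{spd233-po}), so the manipulations above hold pointwise. The only point deserving care is the behaviour at $r=0$, where the factor $\tfrac{1}{r}$ appears; however, since $u|_{r=0}=0$ and $u,u_r\in C(\bar I)$, the singular combination $\tfrac{m}{r}u$ and its derivative are continuous up to the origin, so every line of the derivation is valid on all of $[0,T]\times I$ and the resulting identity \eqref{eq:effective3} holds classically.
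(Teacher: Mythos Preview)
Your proposal is correct and follows essentially the same approach as the paper: the paper simply notes that Corollary~\ref{cor-v} from the far-field vacuum case still holds here, and that corollary was obtained exactly by combining \eqref{e5.2} and \eqref{ess} with the definition of $v$ and dividing by $\rho$. Your additional remarks on the strict positivity of $\rho$ from Theorems~\ref{rth10po}--\ref{rth133-po} and on the regularity at $r=0$ are appropriate justifications that the paper leaves implicit.
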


Besides, we can also obtain the following estimates of $\rho$:
\begin{lem}\label{Oliciz-norm}
 $j_\gamma(\rho)(t,r)\geq 0$ for all $(t,r)\in[0,T]\times I$. Moreover, for any  $\sigma\in (0,\bar\rho)$, 
there exists a constant $C(\sigma)>0$ such that,
for all $t\in [0,T]$,
\begin{equation*}
\big|\chi_{E_{\sigma}}r^\frac{m}{2}(\rho(t)-\bar\rho)\big|_2+\big|\chi_{E^{\sigma}}r^\frac{m}{\gamma}(\rho(t)-\bar\rho)\big|_\gamma\leq C(\sigma).
\end{equation*}
\end{lem}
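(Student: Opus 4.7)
\textbf{Proof proposal for Lemma~\ref{Oliciz-norm}.}
The non-negativity $j_\gamma(\rho)\geq 0$ is immediate from the strict convexity of the scalar function $j_\gamma(\cdot)$ on $[0,\infty)$ together with $j_\gamma(\bar\rho)=j_\gamma'(\bar\rho)=0$, exactly as recorded in the proof of Lemma~\ref{equiv-initial}(i). Combined with the second-order Taylor identity
\begin{equation*}
j_\gamma(z)=\frac{\gamma(\gamma-1)}{2}\,z_\vartheta^{\gamma-2}(z-\bar\rho)^2,\qquad z_\vartheta=\vartheta z+(1-\vartheta)\bar\rho,\ \vartheta\in[0,1],
\end{equation*}
and the global BD-type bound $\int_0^\infty r^m j_\gamma(\rho)(t,\cdot)\,\mathrm{d}r\leq C_0$ from Lemma~\ref{energy-bd-po}, this is the only input needed. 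The strategy is thus to prove two pointwise comparison inequalities, one on $E_\sigma$ yielding the $L^2$-bound with the weight $r^{m/2}$, and one on $E^\sigma$ yielding the $L^\gamma$-bound with the weight $r^{m/\gamma}$.

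On $E_\sigma=\{|\rho-\bar\rho|\leq\sigma\}$ with $\sigma\in(0,\bar\rho)$, I have $\rho_\vartheta\in[\bar\rho-\sigma,\bar\rho+\sigma]\subset(0,\infty)$, so $\rho_\vartheta^{\gamma-2}\geq c(\sigma):=\min\{(\bar\rho-\sigma)^{\gamma-2},(\bar\rho+\sigma)^{\gamma-2}\}>0$, and the Taylor identity at once gives
\begin{equation*}
(\rho-\bar\rho)^2\leq C(\sigma)\,j_\gamma(\rho)\qquad\text{on }E_\sigma.
\end{equation*}
Multiplying by $r^m$ and integrating, then invoking Lemma~\ref{energy-bd-po}, controls $\big|\chi_{E_\sigma}r^{m/2}(\rho-\bar\rho)\big|_2$.

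The estimate on $E^\sigma=\{|\rho-\bar\rho|>\sigma\}$ is the main technical point, since $\rho$ may be unbounded there and the Taylor identity does not directly produce a $(\rho-\bar\rho)^\gamma$ growth. I would split $E^\sigma$ into $E_+^\sigma:=\{\rho>\bar\rho+\sigma\}$ and $E_-^\sigma:=\{0\leq\rho<\bar\rho-\sigma\}$. On $E_-^\sigma$, the interval $[0,\bar\rho-\sigma]$ is compact, $(\rho-\bar\rho)^\gamma\leq\bar\rho^\gamma$ is bounded, and $j_\gamma(\rho)\geq j_\gamma(\bar\rho-\sigma)>0$ by convexity, hence trivially $(\rho-\bar\rho)^\gamma\leq C(\sigma)j_\gamma(\rho)$. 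On $E_+^\sigma$, I consider the continuous function $\rho\mapsto j_\gamma(\rho)/(\rho-\bar\rho)^\gamma$ on $[\bar\rho+\sigma,\infty)$; it equals $j_\gamma(\bar\rho+\sigma)/\sigma^\gamma>0$ at the left endpoint and tends to $1$ as $\rho\to\infty$ (since the linear term in $j_\gamma$ is of lower order), so it is bounded below by a positive constant $c(\sigma)$, giving
\begin{equation*}
(\rho-\bar\rho)^\gamma\leq C(\sigma)\,j_\gamma(\rho)\qquad\text{on }E^\sigma.
\end{equation*}
Multiplying by $r^m$, integrating, and using Lemma~\ref{energy-bd-po} yields the $L^\gamma$-bound on $E^\sigma$. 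The only genuinely subtle step is the asymptotic/continuity argument on $E_+^\sigma$; the remaining estimates are purely algebraic, so I expect no further obstruction.
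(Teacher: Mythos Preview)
Your proposal is correct and follows essentially the same approach as the paper: both reduce to the pointwise comparisons $j_\gamma(\rho)\gtrsim(\rho-\bar\rho)^2$ on $E_\sigma$ (via the Taylor identity) and $j_\gamma(\rho)\gtrsim|\rho-\bar\rho|^\gamma$ on $E^\sigma$, then integrate against $r^m$ using the bound from Lemma~\ref{energy-bd-po}. The only cosmetic difference is that on $E^\sigma$ the paper computes $J_\gamma'$ explicitly to obtain monotonicity of $J_\gamma(z)=j_\gamma(z)/|z-\bar\rho|^\gamma$ and the sharp lower bounds $1$ and $\gamma-1$, whereas you use a softer continuity-plus-limit argument on $E_+^\sigma$ and a direct boundedness argument on $E_-^\sigma$; both yield a $\sigma$-dependent positive lower bound, which is all that is needed.
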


\begin{proof}
Since $j_\gamma(\rho)\ge 0$, it follows from Lemmas \ref{energy-bd-po} and \ref{lemma-initial} that $\|j_\gamma(\rho)(t)\|_{L^1}\leq C_0$ for all $t\in [0,T]$. Then, following the proof of Lemma 5.3 on 
\cite[Page 43]{lions}, we obtain
\begin{equation}\label{lionss}
\big\|\chi_{E_{\sigma}}(\rho(t)-\bar\rho)\big\|_{L^2}+\big\|\chi_{E^{\sigma}}(\rho(t)-\bar\rho)\big\|_{L^\gamma}\leq C(\sigma),
\end{equation}
for $\sigma\in (0,\bar\rho)$, which, along with Lemma \ref{lemma-initial}, yields the desired estimates. 

For the reader's convenience, we still provide a brief proof of \eqref{lionss}. 
Let $\sigma\in (0,\bar\rho)$. First, on the set $E_\sigma$, $\bar\rho-\sigma\leq \rho\leq 2\bar\rho$. Then it follows from \eqref{erjie-taylor} that
\begin{equation*} 
j_\gamma(\rho)=\frac{\gamma(\gamma-1)}{2} \rho_{\vartheta}^{\gamma-2}(\rho-\bar\rho)^2\geq C(\sigma)^{-1}(\rho-\bar\rho)^2,
\end{equation*} 
where $\rho_\vartheta=\vartheta \rho+(1-\vartheta)\bar\rho$. This implies that $\big\|\chi_{E_{\sigma}}(\rho(t)-\bar\rho)\big\|_{L^2}\leq C(\sigma)$. 

Next, on the set $E^\sigma$, we can show that
\begin{equation}\label{jg}
j_\gamma(\rho)\geq C_0^{-1}|\rho-\bar\rho|^\gamma.
\end{equation}
Indeed, define the function
\begin{equation*}
J_\gamma(z):=\frac{j_\gamma(z)}{|z-\bar\rho|^{\gamma}}=\frac{(z^\gamma-\bar\rho^\gamma)-\gamma\bar\rho^{\gamma-1}(z-\bar\rho)}{|z-\bar\rho|^\gamma} \qquad\,\, \text{for $|z-\bar\rho|>\sigma$ and $z\geq 0$}.
\end{equation*}
A direct calculation gives that 
\begin{equation*}
\begin{aligned}
J_\gamma'(z)&=\frac{z-\bar\rho}{|z-\bar\rho|}\frac{H_\gamma(z)}{|z-\bar\rho|^{\gamma+1}} \qquad \text{with $H_\gamma(z):=j_\gamma'(z)(z-\bar\rho)-\gamma j_\gamma(z)$},\\
H'_\gamma(z)&=j_\gamma''(z)(z-\bar\rho)-(\gamma-1)j_\gamma'(z)=\gamma(\gamma-1)\bar\rho(\bar\rho^{\gamma-2}-z^{\gamma-2}).
\end{aligned}
\end{equation*}
This yields that $J'_\gamma(z)<0$ on $(\bar\rho+\sigma,\infty)$, 
$J'_\gamma(z)>0$ on $(0,\bar\rho-\sigma)$, and $J_\gamma(z)$ is strictly decreasing on $(\bar\rho+\sigma,\infty)$ and strictly increasing on $(0,\bar\rho-\sigma)$. Hence, we see that
\begin{align*}
\frac{j_\gamma(z)}{|z-\bar\rho|^{\gamma}}&=J_\gamma(z)\geq \lim_{z\to\infty}J_\gamma(z)=1 &&\text{for all $z\in (\bar\rho+\sigma,\infty)$},\\
\frac{j_\gamma(z)}{|z-\bar\rho|^{\gamma}}&=J_\gamma(z)\geq \lim_{z\to 0}J_\gamma(z)=\gamma-1 &&\text{for all $z\in (0,\bar\rho-\sigma)$}.
\end{align*}
Setting $z=\rho$ above yields claim \eqref{jg}. 
Therefore, it follows from \eqref{jg} and Lemma \ref{equiv-initial} that $\big\|\chi_{E^{\sigma}}(\rho(t)-\bar\rho)\big\|_{L^\gamma}\leq C_0$. 
\end{proof}

The next lemma concerns the weighted $L^p(0,\omega)$-estimates of $\rho$
for $\omega>0$ and $p\in [1,\infty]$.
\begin{lem}\label{l4.3-po}
For any $t\in [0,T]$ and $\omega>0$,
\begin{enumerate}
\item[$\mathrm{(i)}$] When $n=2$, there exist two positive constants $C(p,\nu,\omega)$ and $C(\nu,\omega)$ 
such that
\begin{equation}\label{wprho2-po}
\begin{aligned}
\big|\chi_{\omega}^\flat r^{\nu}\rho(t)\big|_p&\leq C(p,\nu,\omega) &&\quad\text{for any }\nu>-\frac{1}{p}\text{ and }p\in[1,\infty),\\
\big|\chi_{\omega}^\flat r^{\nu}\rho(t)\big|_\infty&\leq C(\nu,\omega) &&\quad\text{for any }\nu>0;
\end{aligned}
\end{equation}
\item[$\mathrm{(ii)}$] When $n=3$, there exist two positive constants $C(p,\omega)$ and $C(\omega)$ such that
\begin{equation}\label{wprho3-po}
\big|\chi_{\omega}^\flat r^{1-\frac{1}{p}}\rho(t)\big|_p\leq C(p,\omega)  \ \ \text{for any $p\in[1,\infty)$},\qquad \big|\chi_{\omega}^\flat r \rho(t)\big|_\infty\leq C(\omega).
\end{equation}
\end{enumerate}
\end{lem}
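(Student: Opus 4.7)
The plan is to follow the proof of Lemma~\ref{l4.3} in \S\ref{subsection-upper density}, the only structural change being that the total-mass bound $|r^{m}\rho|_{1}\le C_{0}$ used there is unavailable here and must be replaced by the Orlicz-type bounds of Lemma~\ref{Oliciz-norm}, combined with the splitting $\rho=\bar\rho+(\rho-\bar\rho)\chi_{E_{\sigma}}+(\rho-\bar\rho)\chi_{E^{\sigma}}$.

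For the $L^{\infty}$ estimates in (i) (with $\nu>0$) and in (ii) (with $\nu=1$), I would start from $|\chi_{\omega}^{\flat}r^{\nu}\rho|_{\infty}=|\chi_{\omega}^{\flat}r^{\nu/2}\sqrt{\rho}|_{\infty}^{2}$ and apply the weighted Sobolev inequality of Lemma~\ref{l4.3}, namely
\[
|\chi_{\omega}^{\flat}r^{\nu/2}\sqrt{\rho}|_{\infty}^{2}
\le C(\nu,\omega)\bigl(|\chi_{\omega}^{\flat}r^{(\nu+1)/2}\sqrt{\rho}|_{2}^{2}+|\chi_{\omega}^{\flat}r^{(\nu+1)/2}(\sqrt{\rho})_{r}|_{2}^{2}\bigr).
\]
The derivative term is dominated by $C(\omega,\nu)|r^{m/2}(\sqrt{\rho})_{r}|_{2}^{2}$ and hence controlled by the BD entropy of Lemma~\ref{energy-bd-po}. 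The other term reduces to bounding $\int_{0}^{\omega}r^{a}\rho\,\mathrm{d}r$ for $a=\nu+1$ in the $2$-D case or $a=2$ in the $3$-D case, which I handle by writing $\rho=\bar\rho+(\rho-\bar\rho)$: the $\bar\rho$-part is elementary; on $E_{\sigma}$ one has $|\rho-\bar\rho|\le\sigma$; and on $E^{\sigma}$, H\"older with exponents $\gamma$ and $\gamma'=\gamma/(\gamma-1)$ together with Lemma~\ref{Oliciz-norm} yields
\[
\int_{0}^{\omega}r^{a}|\rho-\bar\rho|\chi_{E^{\sigma}}\,\mathrm{d}r
\le\Bigl(\int_{0}^{\omega}r^{(a-m/\gamma)\gamma'}\,\mathrm{d}r\Bigr)^{1/\gamma'}|\chi_{E^{\sigma}}r^{m/\gamma}(\rho-\bar\rho)|_{\gamma},
\]
which is finite because for the chosen $a$ the exponent $(a-m/\gamma)\gamma'$ reduces to $m$ (or lies strictly above $-1$), and hence is integrable near the origin.

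With the $L^{\infty}$ bounds in hand, the $L^{p}$ assertions follow by the same interpolation step as in Lemma~\ref{l4.3}: for (i), pick $\varepsilon\in(0,\min\{p\nu+1,1\})$ and apply $|\chi_{\omega}^{\flat}r^{\nu}\rho|_{p}^{p}\le\bigl(\int_{0}^{\omega}r^{p\nu-\varepsilon}\mathrm{d}r\bigr)|\chi_{\omega}^{\flat}r^{\varepsilon/p}\rho|_{\infty}^{p}$; for (ii), exploit the identity $|\chi_{\omega}^{\flat}r^{1-1/p}\rho|_{p}=|\chi_{\omega}^{\flat}r^{1/2-1/(2p)}\sqrt{\rho}|_{2p}^{2}$ together with the same Sobolev-type bound in terms of $|\chi_{\omega}^{\flat}r\sqrt{\rho}|_{2}$ and $|\chi_{\omega}^{\flat}r(\sqrt{\rho})_{r}|_{2}$ used in the vacuum-case $3$-D argument of Lemma~\ref{l4.3}(ii), whose right-hand side is again controlled via the same splitting of $\rho$.

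The main technical point is the book-keeping of the weighted H\"older exponents on $E^{\sigma}$; the argument closes precisely because the critical exponent $(m-m/\gamma)\gamma'=m$ is always integrable near $r=0$, which is what allows the conclusion uniformly for every $\gamma>1$ in the $2$-D statement and every $\gamma\in(1,3)$ in the $3$-D statement, without any additional smallness or decay assumption on the initial data.
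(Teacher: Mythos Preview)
Your proposal is correct and matches the paper's approach: both replace the $L^{1}$ mass bound in Lemma~\ref{l4.3} by the Orlicz-type splitting from Lemma~\ref{Oliciz-norm}, control the derivative term via the BD entropy of Lemma~\ref{energy-bd-po}, and then apply the Hardy-type inequality exactly as in the vacuum case. One minor remark: the statement and proof require no restriction on $\gamma$ beyond $\gamma>1$ (the exponent $(m-m/\gamma)\gamma'=m$ is always integrable near $r=0$), so your closing comment tying the argument to $\gamma\in(1,3)$ in $3$-D is unnecessary here---that restriction only enters later, in Lemma~\ref{lma-po}.
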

\begin{proof}
Since we focus only on the estimates of $\rho$ on $[0,\omega]$ for $\omega>0$,
we perform the argument as in the proof of Lemma \ref{l4.3} and use Lemmas \ref{energy-bd-po}--\ref{Oliciz-norm} and \ref{hardy} to obtain the desired conclusions. 
Here, for brevity, we only sketch the proof for the 3-D case.

First, it follows from Lemma \ref{Oliciz-norm} and the H\"older inequality that, for fixed $\sigma\in (0,\bar\rho)$, 
\begin{align*}
|\chi_{\omega}^\flat r\sqrt{\rho}|_2^2&=|\chi_{\omega}^\flat r^2 \rho|_1\leq \big|\chi_{\omega}^\flat \chi_{E_\sigma} r^2 \rho\big|_1+\big|\chi_{\omega}^\flat \chi_{E^\sigma} r^2 \rho\big|_1\\
&\leq \big|\chi_{\omega}^\flat \chi_{E_\sigma} r^2 (\rho-\bar\rho)\big|_1+\big|\chi_{\omega}^\flat \chi_{E^\sigma} r^2 (\rho-\bar\rho)\big|_1+C(\omega)\\
&\leq |\chi_{\omega}^\flat r|_2 \big|\chi_{E_\sigma} r(\rho-\bar\rho)\big|_2+|\chi_{\omega}^\flat r |_2^\frac{2\gamma-2}{\gamma}\big|\chi_{E^\sigma} r^\frac{2}{\gamma}(\rho-\bar\rho)\big|_\gamma+C(\omega)\leq C(\omega). 
\end{align*}
Next, let $p\in[1,\infty)$. 
According to the above estimate and Lemmas \ref{energy-bd-po} and \ref{hardy}, we obtain 
\begin{align*}
\big|\chi_{\omega}^\flat r^{1-\frac{1}{p}}\rho\big|_p&= \big|\chi_{\omega}^\flat r^{\frac{1}{2}-\frac{1}{2p}}\sqrt{\rho}\big|_{2p}^2\leq C(p,\omega)\big(\big|\chi_{\omega}^\flat r\sqrt{\rho}\big|_2^2+\big|\chi_{\omega}^\flat r(\sqrt{\rho})_r\big|_2^2\big)\leq C(p,\omega),\\
\big|\chi_{\omega}^\flat r \rho\big|_\infty& =\big|\chi_{\omega}^\flat r^{\frac{1}{2}}\sqrt{\rho}\big|_{\infty}^2\leq C(\omega)\big(\big|\chi_{\omega}^\flat r\sqrt{\rho}\big|_2^2+\big|\chi_{\omega}^\flat r(\sqrt{\rho})_r\big|_2^2\big)\leq C(\omega),
\end{align*}
which thus leads to \eqref{wprho3-po}.
\end{proof}

Now, based on Lemmas \ref{Oliciz-norm}--\ref{l4.3-po}, we can obtain the global uniform upper bound of $\rho$ in the exterior domain.
\begin{lem}\label{Lemma-12.4}
There exist a constant $C_0>0$ such that,  for any $t\in [0,T]$, $p\in [2,\infty]$, 
and $\omega\in (0,\infty)$,
\begin{equation}\label{ccc}
\big|\chi^\sharp_\omega r^\frac{m}{2}(\sqrt{\rho}(t)-\sqrt{\bar\rho})\big|_p\leq C_0.
\end{equation}
In particular, for any  $\omega\in (0,\infty)$, there exist a constant $C(\omega)>0$ such that,  for any $t\in [0,T]$,
\begin{equation}\label{cccc}
|\chi_\omega^\sharp \rho(t)|_\infty\leq C(\omega).
\end{equation}
\end{lem}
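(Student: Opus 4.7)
The plan is to reduce \eqref{ccc} to the two endpoint estimates for $p = 2$ and $p = \infty$, and then fill in $p \in (2,\infty)$ by $L^p$-interpolation. Writing $F := \sqrt{\rho} - \sqrt{\bar\rho}$ and $G := r^{m/2} F$, the target becomes $|G|_{L^2(I)} \leq C_0$ and $|G|_{L^\infty(I)} \leq C_0$, since both hold on the full interval $I$ and the restriction by $\chi_\omega^\sharp$ can only decrease the norms. Once the $L^\infty$ case is established, \eqref{cccc} will follow at once from $\rho \leq 2\bar\rho + 2F^2$ together with $F^2(t,r) \leq \omega^{-m}|G|_{L^\infty(I)}^2$ on $[\omega,\infty)$.

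For the $L^2$ bound I would fix some $\sigma \in (0,\bar\rho)$ (for instance $\sigma = \bar\rho/2$) and split $I = E_\sigma \cup E^\sigma$. On $E_\sigma$ the inequality $\sqrt{\rho} + \sqrt{\bar\rho} \geq \sqrt{\bar\rho}$ combined with the factorisation $F = (\rho-\bar\rho)/(\sqrt{\rho}+\sqrt{\bar\rho})$ gives $F^2 \leq \bar\rho^{-1}(\rho-\bar\rho)^2$; on $E^\sigma$ the elementary inequality $(\sqrt{\rho}-\sqrt{\bar\rho})^2 \leq |\rho-\bar\rho|$ together with $|\rho-\bar\rho| > \sigma$ and $\gamma \geq 1$ gives $F^2 \leq \sigma^{1-\gamma}|\rho-\bar\rho|^\gamma$. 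Applying Lemma \ref{Oliciz-norm} to each piece then produces $\int_I r^m F^2 \,\mathrm{d}r \leq C_0$.

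The $L^\infty$ bound is the central step. Because the regular solution satisfies $\rho(t,\cdot) \in C(\bar I)$ and obeys the far-field condition, $F(t,\cdot)$ is continuous on $I$ with $F(t,r) \to 0$ as $r \to \infty$. For any $r > 0$ and $R > r$ the fundamental theorem of calculus gives
\begin{equation*}
F^2(r) - F^2(R) = -2\int_r^R F(s)\, F'(s)\,\mathrm{d}s,
\end{equation*}
and the key observation is a weighted Cauchy--Schwarz estimate of the tail obtained by balancing weights as $s^{-m/2}\cdot s^{m/2}$:
\begin{equation*}
\Big|\int_r^\infty F F'\,\mathrm{d}s\Big| \leq \Big(\int_r^\infty s^{-m}F^2\,\mathrm{d}s\Big)^{1/2}\Big(\int_r^\infty s^m(F')^2\,\mathrm{d}s\Big)^{1/2}.
\end{equation*}
The second factor is bounded by $C_0^{1/2}$ by the BD entropy estimate of Lemma \ref{energy-bd-po}. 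For the first factor I would use $s^{-m} = s^{-2m}\cdot s^m$ together with the monotonicity $s^{-2m} \leq r^{-2m}$ on $[r,\infty)$, so the $L^2$ bound from the previous step yields $\int_r^\infty s^{-m}F^2\,\mathrm{d}s \leq r^{-2m}\int_r^\infty s^m F^2\,\mathrm{d}s \leq r^{-2m}C_0$. This proves absolute convergence of the tail integral; passing $R \to \infty$ and using $F(R) \to 0$ then gives $r^m F^2(r) \leq 2C_0$ for every $r > 0$, i.e., $|G|_{L^\infty(I)} \leq \sqrt{2C_0}$ uniformly in $\omega$.

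The main obstacle will be the rigorous justification of the fundamental theorem of calculus and the passage to the limit $R \to \infty$ in the $L^\infty$ step, particularly the uniform-in-$\omega$ character of the bound (which ultimately rests on the decay $F(R) \to 0$ in concert with the weighted tail estimate above, rather than on any Sobolev embedding on the half-line $[\omega,\infty)$, whose constants would necessarily depend on $\omega$). Once these are in hand, standard $L^p$-interpolation $|\chi_\omega^\sharp G|_p \leq |\chi_\omega^\sharp G|_2^{2/p}|\chi_\omega^\sharp G|_\infty^{1-2/p} \leq C_0$ delivers the intermediate $p \in (2,\infty)$ case, and \eqref{cccc} follows as indicated in the first paragraph.
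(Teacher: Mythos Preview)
Your proof is correct and takes a genuinely simpler route than the paper's.

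The paper couples the $L^2$ and $L^\infty$ estimates in a bootstrap: it bounds $|\chi_\omega^\sharp\chi_{E^\sigma} r^{m/2}F|_2$ by $|\chi_\omega^\sharp r^{m/2}F|_\infty^{1/2}|\chi_\omega^\sharp r^{m/2}F|_2^{1/2}$ times the square root of the measure of $E^\sigma$ (controlled via Chebyshev and Lemma~\ref{Oliciz-norm}), and simultaneously bounds the $L^\infty$ norm back by the $L^2$ norm through the fundamental theorem of calculus. This loop closes only when $\omega\geq\omega_0$ is large, and a second step invoking Lemma~\ref{l4.3-po} extends to small $\omega$. Your argument sidesteps the bootstrap entirely by observing that on $E^\sigma$ one has the pointwise bound $F^2\leq|\rho-\bar\rho|\leq\sigma^{1-\gamma}|\rho-\bar\rho|^\gamma$, so Lemma~\ref{Oliciz-norm} yields the global $L^2$ bound $|r^{m/2}F|_{L^2(I)}\leq C_0$ in one shot, with no coupling to the $L^\infty$ norm. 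With that in hand, your weighted Cauchy--Schwarz $\int_r^\infty|FF'|\,ds\leq(\int_r^\infty s^{-m}F^2)^{1/2}(\int_r^\infty s^m(F')^2)^{1/2}$ together with $\int_r^\infty s^{-m}F^2\leq r^{-2m}|r^{m/2}F|_2^2$ gives $r^mF^2(r)\leq 2C_0$ directly for every $r>0$, again uniformly in $\omega$. What your approach buys is a one-pass argument with no $\omega_0$ threshold and no need for Lemma~\ref{l4.3-po}; the paper's approach, while more involved, uses only the half-line embedding structure and Chebyshev, which may have been the authors' preference for making the role of the measure of $E^\sigma$ explicit.
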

\begin{proof}We divide the proof into two steps.

\smallskip
\textbf{1.} We first show that Lemma \ref{Lemma-12.4} holds for all $\omega\in [\omega_0,\infty)$ with some constant $\omega_0>0$ depending only on $(n,\alpha,\gamma,A,\bar\rho)$. Let $\omega>0$ and $\sigma\in (0,\bar\rho)$. It follows from Lemma \ref{Oliciz-norm} and the H\"older and Chebyshev inequality (Lemma \ref{cheby}) that
\begin{equation}\label{1-lwuqiong-l2'}
\begin{aligned}
&\big|\chi_\omega^\sharp\chi_{E^{\sigma}}r^\frac{m}{2} (\sqrt{\rho}-\sqrt{\bar\rho})\big|_2^2\\
&\leq \big|\chi_\omega^\sharp r^\frac{m}{2} (\sqrt{\rho}-\sqrt{\bar\rho}) \big|_\infty \big|\chi_\omega^\sharp r^\frac{m}{2}(\sqrt{\rho}-\sqrt{\bar\rho})\big|_2\Big(\int_\omega^\infty \chi_{E^\sigma}\,\mathrm{d}r\Big)^\frac{1}{2}\\
&\leq \omega^{-\frac{m}{2}}\big|\chi_\omega^\sharp r^\frac{m}{2} (\sqrt{\rho}-\sqrt{\bar\rho}) \big|_\infty \big|\chi_\omega^\sharp r^\frac{m}{2}(\sqrt{\rho}-\sqrt{\bar\rho})\big|_2\Big(\int_0^\infty \chi_{E^\sigma}r^m\,\mathrm{d}r\Big)^\frac{1}{2}\\
&\leq \omega^{-\frac{m}{2}}\big|\chi_\omega^\sharp r^\frac{m}{2} (\sqrt{\rho}-\sqrt{\bar\rho}) \big|_\infty \big|\chi_\omega^\sharp r^\frac{m}{2}(\sqrt{\rho}-\sqrt{\bar\rho})\big|_2 \frac{\big|r^\frac{m}{\gamma}(\rho(t)-\bar\rho)\chi_{E^{\sigma}}\big|_\gamma^\frac{\gamma}{2}}{\sigma^\frac{\gamma}{2}} \\
&\leq C(\sigma)\omega^{-\frac{m}{2}}\big|\chi_\omega^\sharp r^\frac{m}{2} (\sqrt{\rho}-\sqrt{\bar\rho}) \big|_\infty \big|\chi_\omega^\sharp r^\frac{m}{2}(\sqrt{\rho}-\sqrt{\bar\rho})\big|_2.
\end{aligned}
\end{equation}
On the other hand, it follows from Lemmas \ref{energy-bd-po} and \ref{calculus}, 
and the H\"older inequality that
\begin{equation}\label{2-lwuqiong-l2}
\begin{aligned}
\big|\chi_\omega^\sharp r^\frac{m}{2} (\sqrt{\rho}-\sqrt{\bar\rho}) \big|_\infty^2&\leq \int_\omega^\infty \big|\big(r^m (\sqrt{\rho}-\sqrt{\bar\rho})^2\big)_r\big|\,\mathrm{d}r\\
&\leq \big|\chi_\omega^\sharp r^\frac{m}{2} (\sqrt{\rho}-\sqrt{\bar\rho}) \big|_2\big|r^\frac{m}{2} (\sqrt{\rho})_r\big|_2+m\big|\chi_\omega^\sharp  r^\frac{m-1}{2} (\sqrt{\rho}-\sqrt{\bar\rho}) \big|_2^2\\
&\leq C_0\big|\chi_\omega^\sharp r^\frac{m}{2} (\sqrt{\rho}-\sqrt{\bar\rho}) \big|_2+m\omega^{-1}\big|\chi_\omega^\sharp r^\frac{m}{2} (\sqrt{\rho}-\sqrt{\bar\rho}) \big|_2^2. 
\end{aligned}
\end{equation}
Substituting the above in to \eqref{1-lwuqiong-l2'} leads to
\begin{equation}\label{1-lwuqiong-l2}
\begin{aligned}
\big|\chi_\omega^\sharp\chi_{E^{\sigma}}r^\frac{m}{2} (\sqrt{\rho}-\sqrt{\bar\rho})\big|_2&\leq C(\sigma) \omega^{-\frac{m}{4}} \big|\chi_\omega^\sharp r^\frac{m}{2}(\sqrt{\rho}-\sqrt{\bar\rho})\big|_2^\frac{3}{4}\\
&\quad +C(\sigma)\omega^{-\frac{m+1}{4}} \big|\chi_\omega^\sharp r^\frac{m}{2}(\sqrt{\rho}-\sqrt{\bar\rho})\big|_2. 
\end{aligned}
\end{equation}

Next, it follows from the Taylor expansion that 
\begin{equation*}
\sqrt{\rho}-\sqrt{\bar\rho}=\frac{1}{2\sqrt{\rho_\vartheta}}(\rho-\bar\rho),
\end{equation*}
where $\rho_\vartheta=\vartheta\rho+(1-\vartheta)\bar\rho$ for some $\vartheta\in [0,1]$. 
If $r\in E_\sigma$ and $\sigma\in (0,\frac{\bar\rho}{2})$, then $\rho_\vartheta\in [\frac{\bar\rho}{2},\frac{3\bar\rho}{2}]$ so that, for all $(t,r)\in [0,T]\times I$,
\begin{equation*}
C_0^{-1}\big(\chi_{E_\sigma}|\rho-\bar\rho|\big)\leq \chi_{E_\sigma}|\sqrt{\rho}-\sqrt{\bar\rho}|\leq C_0\big(\chi_{E_\sigma}|\rho-\bar\rho|\big),
\end{equation*}
which, along with Lemma \ref{Oliciz-norm}, implies that
\begin{equation}\label{3-wuqiong}
\big|\chi_\omega^\sharp\chi_{E_{\sigma}} r^\frac{m}{2} (\sqrt{\rho}-\sqrt{\bar\rho})\big|_2\leq C_0\big|\chi_{E_{\sigma}}r^\frac{m}{2}(\rho(t)-\bar\rho)\big|_2\leq C(\sigma).
\end{equation}

Therefore, choosing suitable fixed $\sigma\in (0,\frac{\bar\rho}{2})$ 
and collecting \eqref{1-lwuqiong-l2}--\eqref{3-wuqiong}, 
we obtain from the Young inequality that 
\begin{equation}\label{1123}
\begin{aligned}
&\big|\chi^\sharp_\omega r^\frac{m}{2}(\sqrt{\rho}(t)-\sqrt{\bar\rho})\big|_2\\
&\leq \big|\chi^\sharp_\omega\chi_{E^\sigma} r^\frac{m}{2}(\sqrt{\rho}(t)-\sqrt{\bar\rho})\big|_2+\big|\chi^\sharp_\omega\chi_{E_\sigma} r^\frac{m}{2}(\sqrt{\rho}(t)-\sqrt{\bar\rho})\big|_2\\
&\leq C_0\omega^{-\frac{m}{4}}\big|\chi_\omega^\sharp r^\frac{m}{2} (\sqrt{\rho}-\sqrt{\bar\rho}) \big|_2^\frac{3}{4}+C_0\omega^{-\frac{m+1}{4}}\big|\chi_\omega^\sharp r^\frac{m}{2} (\sqrt{\rho}-\sqrt{\bar\rho}) \big|_2+C_0\\
&\leq C_0\big(\omega^{-\frac{m}{3}}+\omega^{-\frac{m+1}{4}}\big)\big|\chi_\omega^\sharp r^\frac{m}{2} (\sqrt{\rho}-\sqrt{\bar\rho}) \big|_2+C_0.
\end{aligned}
\end{equation}
Choosing $\omega_0>0$ sufficiently large such that 
\begin{equation}\label{step1-def}
\omega_0^{-\frac{m}{3}}+\omega_0^{-\frac{m+1}{4}}= (2C_0)^{-1},
\end{equation}
we obtain from \eqref{1123} that, for all $t\in [0,T]$ and $\omega\in [\omega_0,\infty)$,
\begin{equation}\label{1214}
\big|\chi^\sharp_\omega r^\frac{m}{2}(\sqrt{\rho}(t)-\sqrt{\bar\rho})\big|_2\leq C_0.
\end{equation}
Of course, we also obtain from the above and \eqref{2-lwuqiong-l2} that, 
for all $t\in [0,T]$ and $\omega\in [\omega_0,\infty)$,
\begin{equation}\label{12.15}
\big|\chi^\sharp_\omega r^\frac{m}{2}(\sqrt{\rho}(t)-\sqrt{\bar\rho})\big|_\infty\leq C_0.
\end{equation}

The $L^p(I)$-estimates ($p\in (2,\infty)$) of $\chi^\sharp_\omega r^\frac{m}{2}(\sqrt{\rho}(t)-\sqrt{\bar\rho})$ follow from \eqref{1214}--\eqref{12.15}. Finally, it follows from \eqref{12.15} that, for all $t\in[0,T]$ and $\omega\in[\omega_0,\infty)$,
\begin{equation}\label{cccp}
\begin{aligned}
|\chi_\omega^\sharp \rho|_\infty =|\chi_\omega^\sharp \sqrt{\rho}|_\infty^2 &\leq C_0|\chi_\omega^\sharp(\sqrt{\rho}-\sqrt{\bar\rho})|_\infty^2+C_0\bar\rho\\
&\leq C_0\omega_0^{-m}\big|\chi_\omega^\sharp r^\frac{m}{2}(\sqrt{\rho}-\sqrt{\bar\rho})\big|_\infty^2+C_0\bar\rho\leq C_0.
\end{aligned}   
\end{equation}

To sum up, we have shown that Lemma \ref{Lemma-12.4} holds for all $\omega\in [\omega_0,\infty)$ with some constant $\omega_0>0$, depending only on $(n,\alpha,\gamma,A,\bar\rho)$

\smallskip
\textbf{2.} Now, based on Lemma \ref{l4.3-po}, we can show that Lemma \ref{Lemma-12.4} holds for arbitrary constant $\omega>0$. For brevity, we take $n=3$ $(m=2)$ as an example, since the 2-D case can be derived analogously. Let $\omega_0$ be defined in \eqref{step1-def}. 
Due to Lemma \ref{l4.3-po} and the fact that \eqref{ccc} holds for $\omega\in [\omega_0,\infty)$, 
then, for arbitrary $\omega\in (0,\omega_0)$,
\begin{align*}
\big|\chi^\sharp_{\omega} r(\sqrt{\rho}(t)-\sqrt{\bar\rho})\big|_p
&\leq \big|\chi_{[\omega,\omega_0]}r(\sqrt{\rho}-\sqrt{\bar\rho})\big|_{p}+\big|\chi^\sharp_{\omega_0} r(\sqrt{\rho}-\sqrt{\bar\rho})\big|_p\\
&\leq |\chi_{\omega_0}^\flat\sqrt{r}|_{p}\big|\chi_{\omega_0}^\flat \sqrt{r}(\sqrt{\rho}-\sqrt{\bar\rho})\big|_\infty+C_0\leq C_0\big|\chi_{\omega_0}^\flat r\rho\big|_\infty^\frac{1}{2}+C_0\leq C_0,
\end{align*}
where $p\in [2,\infty]$. Similarly, for $\omega\in (0,\omega_0)$, it follows from Lemma \ref{l4.3-po} and \eqref{cccp} that
\begin{equation*}
|\chi_\omega^\sharp \rho|_\infty \leq \big|\chi_{[\omega,\omega_0]}\rho\big|_{\infty}+|\chi_{\omega_0}^\sharp \rho|_\infty \leq \omega^{-1}|\chi_{\omega_0}^\flat r\rho|_\infty+C_0\leq C(\omega),
\end{equation*}
which thus leads to \eqref{cccc}. 
\end{proof}

Due to Lemmas \ref{l4.3-po}--\ref{Lemma-12.4}, we  can obtain the $L^2(I)$-estimate of $r^\frac{m}{2}(\rho-\bar\rho)$.
\begin{lem}\label{lll222}
There exists a constant $C_0>0$ such that, for any $t\in[0,T]$,
\begin{equation*}
\big|r^\frac{m}{2}(\rho(t)-\bar\rho)\big|_2\leq C_0.
\end{equation*}
\end{lem}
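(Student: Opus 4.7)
The plan is to decompose the integral $\int_0^\infty r^m(\rho - \bar\rho)^2 \, \mathrm{d}r$ along two complementary dichotomies: the level-set partition $I = E_\sigma \cup E^\sigma$ for a fixed small $\sigma \in (0, \bar\rho)$, and the radial partition $I = [0,1) \cup [1, \infty)$. This dichotomy is natural because Lemma \ref{Oliciz-norm} controls $\rho - \bar\rho$ in $L^2$ only on $E_\sigma$, while on $E^\sigma$ it controls $\rho - \bar\rho$ only in $L^\gamma$, and Lemmas \ref{l4.3-po} and \ref{Lemma-12.4} separately cover the interior of the unit ball and its exterior with different weights.

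First I would handle the contribution from $E_\sigma$. Here Lemma \ref{Oliciz-norm}, applied with any fixed $\sigma \in (0,\bar\rho)$ (say $\sigma = \bar\rho/4$), directly delivers $|\chi_{E_\sigma} r^{m/2}(\rho - \bar\rho)|_2 \le C_0$, so nothing more is required on this set.

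Next I would treat the piece on $E^\sigma \cap \{r \ge 1\}$. Lemma \ref{Lemma-12.4} supplies the uniform bound $|\chi_1^\sharp \rho|_\infty \le C_0$, hence $|\rho - \bar\rho| \le M := C_0 + \bar\rho$ pointwise there. The algebraic identity $(\rho - \bar\rho)^2 = |\rho - \bar\rho|^\gamma \cdot |\rho - \bar\rho|^{2-\gamma}$ then lets me trade the $L^2$-norm for the $L^\gamma$-norm supplied by Lemma \ref{Oliciz-norm}: when $\gamma \le 2$, I estimate $|\rho - \bar\rho|^{2-\gamma} \le M^{2-\gamma}$ using the pointwise upper bound; when $\gamma > 2$, I instead use $|\rho - \bar\rho| > \sigma$ on $E^\sigma$ to get $|\rho - \bar\rho|^{2-\gamma} \le \sigma^{2-\gamma}$. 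Either way the integral is controlled by $\int_1^\infty r^m |\rho - \bar\rho|^\gamma \chi_{E^\sigma} \, \mathrm{d}r \le C(\sigma)$.

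The remaining piece, $E^\sigma \cap [0,1)$, I would dominate by the elementary bound $(\rho - \bar\rho)^2 \le 2\rho^2 + 2\bar\rho^2$ and then reduce to the weighted $L^p(0,1)$-estimate from Lemma \ref{l4.3-po}. When $n = 2$, the choice $\nu = 1/2$, $p = 2$ in \eqref{wprho2-po} gives $\int_0^1 r \rho^2 \, \mathrm{d}r \le C_0$, which is precisely $\int_0^1 r^m \rho^2 \, \mathrm{d}r$. When $n = 3$, \eqref{wprho3-po} with $p = 2$ gives $\int_0^1 r \rho^2 \, \mathrm{d}r \le C_0$, and the trivial inequality $r^2 \le r$ on $[0,1]$ upgrades this to $\int_0^1 r^2 \rho^2 \, \mathrm{d}r \le C_0$. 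The residual term $\bar\rho^2 \int_0^1 r^m \, \mathrm{d}r$ is harmless. I expect no serious obstacle: the argument is largely bookkeeping assembled from previously established lemmas, and the only place demanding genuine care is the split in the interpolation step on the exterior of the unit ball, where the sign of $2-\gamma$ dictates whether one invokes the pointwise upper bound from Lemma \ref{Lemma-12.4} or the intrinsic lower bound $\sigma$ on $E^\sigma$. Both cases lie within the admissible range of $\gamma$ in \eqref{cd1} and no further ingredient is needed.
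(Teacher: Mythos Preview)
Your argument is correct, but it takes a different route from the paper. The paper's proof is a two-line computation that splits only radially at $r=1$: on the exterior it factors $\rho-\bar\rho=(\sqrt\rho-\sqrt{\bar\rho})(\sqrt\rho+\sqrt{\bar\rho})$ and invokes the $L^2$-estimate \eqref{ccc} for $r^{m/2}(\sqrt\rho-\sqrt{\bar\rho})$ together with the $L^\infty$-bound \eqref{cccc}, both from Lemma~\ref{Lemma-12.4}; on the interior it uses $|\rho-\bar\rho|\le\rho+\bar\rho$ and Lemma~\ref{l4.3-po}, exactly as you do. Your approach instead goes back to the level-set decomposition of Lemma~\ref{Oliciz-norm}, never using the $L^2$-part of Lemma~\ref{Lemma-12.4} but only its $L^\infty$-part, and then interpolates the $L^\gamma$-bound on $E^\sigma$ with pointwise information, forcing the case split on the sign of $2-\gamma$. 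Both arguments are valid; the paper's is shorter and exploits Lemma~\ref{Lemma-12.4} more fully, while yours is slightly more self-contained in that it relies only on the Orlicz-type bound and the weighted $L^p$-estimates already in hand.
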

\begin{proof}
This can be seen directly from Lemmas \ref{l4.3-po}--\ref{Lemma-12.4} that 
\begin{align*}
\big|r^\frac{m}{2}(\rho-\bar\rho)\big|_2&\leq  \big|\chi_{1}^\sharp r^\frac{m}{2}(\sqrt{\rho}-\sqrt{\bar\rho})\big|_2|\chi_{1}^\sharp(\sqrt{\rho}+\sqrt{\bar\rho})|_\infty+\big|\chi_{1}^\flat r^\frac{m}{2}(\rho-\bar\rho)\big|_2 \\
&\leq  C_0|\chi_{1}^\sharp \rho|_\infty^\frac{1}{2}+|\chi_{1}^\flat r^\frac{m}{2}\rho|_2+C_0\leq C_0. 
\end{align*}
\end{proof}

Next, we can show the $L^{p}(I)$-estimates $(p\in [4,\infty))$ of $(r^m\rho)^{\frac{1}{p}}u$.
\begin{lem} \label{lma-po}
Let $\gamma\in (1,\infty)$ if $n=2$ and $\gamma\in (1,3)$ if $n=3$. For any $p\in [4,\infty)$ and $\epsilon\in (0,1)$,
there exist two positive  constants $C(p)$ and $C(p,\epsilon)$ such that, for any $t\in [0,T]$, 
\begin{equation}\label{dt-u-p-po}
\begin{aligned}
&\frac{\mathrm{d}}{\mathrm{d}t}\big|(r^m\rho)^{\frac{1}{p}} u\big|_{p}^{p} + p\alpha \Big(\big|(r^m\rho)^{\frac{1}{2}}|u|^{\frac{p-2}{2}} u_r\big|_2^2+\big|(r^{m-2}\rho)^{\frac{1}{p}}u\big|_{p}^{p}\Big)\\
&\leq C(p)\big|(r^m\rho)^\frac{1}{p-2}u\big|_{p-2}^{p-2} + C(p,\epsilon)+\epsilon\big|(r^m\rho^\gamma)^{\frac{1}{p}} v\big|_{p}^{p}.
\end{aligned}
\end{equation}
\end{lem}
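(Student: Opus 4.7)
\medskip

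The strategy is to adapt the proof of Lemma~\ref{lma} to the strictly positive initial density setting, where the key advantage is the uniform upper bound of $\rho$ in the exterior domain $\{r \geq 1\}$ furnished by Lemma~\ref{Lemma-12.4}. The starting point will be the same multiplier identity: multiplying $\eqref{e1.5hh}_2$ by $r^m|u|^{p-2}u$ and using $\eqref{e1.5hh}_1$ produces the pointwise identity
\begin{equation*}
\tfrac{1}{p}(r^m\rho |u|^p)_t + 2\alpha(p-1) r^m\rho |u|^{p-2}|u_r|^2 + 2\alpha m r^{m-2}\rho|u|^p = (\tilde{\mathcal{B}})_r + (p-1)Ar^m\rho^\gamma|u|^{p-2}u_r + mAr^{m-1}\rho^\gamma|u|^{p-2}u,
\end{equation*}
for some boundary flux $\tilde{\mathcal{B}}$. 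Integrating over $I$ and invoking Lemma~\ref{calculus} in conjunction with the regularity \eqref{spd-po}--\eqref{spd2-po} (or \eqref{spd33-po}--\eqref{spd233-po}) and $u|_{r=0}=0$ will force $\tilde{\mathcal{B}}|_{r=0}=0$, producing the analog of \eqref{add}, which splits the right-hand side into $\mathcal{G}_1+\mathcal{G}_2$ as before.

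For the pressure-derivative term $\mathcal{G}_1=(p-1)A\int r^m\rho^\gamma|u|^{p-2}u_r\,\mathrm{d}r$, I would apply Cauchy--Schwarz to isolate the dissipation factor $|(r^m\rho)^{1/2}|u|^{(p-2)/2}u_r|_2$ (which will be absorbed into the left-hand side), and then split the residual integral $\int r^m\rho^{2\gamma-1}|u|^{p-2}\,\mathrm{d}r$ via $\chi_1^\flat + \chi_1^\sharp$. The interior piece is handled exactly as in the proof of Lemma~\ref{lma}: Hölder followed by Young produces $\tfrac{\alpha}{8}|(r^{m-2}\rho)^{1/p}u|_p^p$ plus the weighted density integral $\mathcal{G}_{1,1}=|\chi_1^\flat r^{(p+m-2)/(p\gamma-p+1)}\rho|_{p\gamma-p+1}^{p\gamma-p+1}$. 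The exterior piece is where the argument is genuinely different: by the uniform bound $|\chi_1^\sharp\rho|_\infty \leq C_0$ from Lemma~\ref{Lemma-12.4}, we can estimate $\rho^{2\gamma-1} \leq |\chi_1^\sharp\rho|_\infty^{2\gamma-2}\rho$ directly, so that
\begin{equation*}
\int \chi_1^\sharp r^m\rho^{2\gamma-1}|u|^{p-2}\,\mathrm{d}r \leq C_0^{2\gamma-2}\,|(r^m\rho)^{1/(p-2)}u|_{p-2}^{p-2},
\end{equation*}
which is precisely the lower-order term appearing on the right-hand side of \eqref{dt-u-p-po}. This substitution is the essential simplification compared to Lemma~\ref{lma}, where the BD entropy estimate with the weight $r^{m/2}$ had to be used to extract an analogous quantity. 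The term $\mathcal{G}_2$ is treated similarly, observing that $r^{m-1} \leq r^m$ on $\{r \geq 1\}$ so the exterior contribution again reduces to a multiple of $|(r^m\rho)^{1/(p-2)}u|_{p-2}^{p-2}$, and the interior contribution is controlled by $\mathcal{G}_{1,1}$ up to an absorbable $|(r^{m-2}\rho)^{1/p}u|_p^p$ term.

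The hard part will be the estimation of $\mathcal{G}_{1,1}$ in the three-dimensional case when $\gamma\in(2,3)$, since the direct application of Lemma~\ref{l4.3-po} fails: indeed, one needs $K \geq 1 - 1/p$ in that lemma, whereas $\mathcal{G}_{1,1}$ naturally requires the weight exponent $(p+m-2)/(p\gamma-p+1)$ with $q=p\gamma-p+1$ in $L^q(0,1)$, and for large $\gamma$ this ratio lies below the admissible range. The resolution is to carry over, verbatim, the iterative bootstrap already developed in the proof of Lemma~\ref{lma}: define the sequences $(a_j,b_j)$ by \eqref{tongxiang-0}, integrate by parts to transfer derivatives onto $\rho^{\gamma/p}$, and iterate finitely many times until the exponent pair $(a_{j_0+1},b_{j_0+1})$ satisfies $a_{j_0+1}/b_{j_0+1} \geq 1-1/b_{j_0+1}$, at which point Lemma~\ref{l4.3-po} applies. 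This bootstrap produces residual terms of the form $\epsilon|(r^m\rho^\gamma)^{1/p}v|_p^p$, $\epsilon|(r^{m-2}\rho)^{1/p}u|_p^p$, and a harmless constant $C(p,\epsilon)$; the first can be absorbed by the effective velocity term on the right-hand side of \eqref{dt-u-p-po}, while the second is absorbed by the dissipation on the left. Finally, the restriction $p\geq 4$ is consistent with how this estimate will be used in \S\ref{subsub-1133}: starting from $p-2 = 2$, where $|(r^m\rho)^{1/2}u|_2$ is bounded directly by Lemma~\ref{energy-bd-po}, one recursively advances $p=4,6,8,\ldots$ to obtain the full range $L^p$-control required for the subsequent $L^\infty$ bound on the effective velocity.
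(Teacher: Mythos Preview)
Your proposal is correct and follows essentially the same approach as the paper's proof: the same multiplier identity, the same $\chi_1^\flat/\chi_1^\sharp$ splitting, the use of Lemma~\ref{Lemma-12.4} on the exterior to produce the $|(r^m\rho)^{1/(p-2)}u|_{p-2}^{p-2}$ term, and the verbatim reuse of the iterative bootstrap from Lemma~\ref{lma} for $\mathcal{G}_{1,1}$ when $n=3$ and $\gamma\in(2,3)$. The only minor imprecision is in your handling of $\mathcal{G}_2$: since this term carries $|u|^{p-1}$ rather than $|u|^{p-2}$, the reduction to $|(r^m\rho)^{1/(p-2)}u|_{p-2}^{p-2}$ does not follow directly from $r^{m-1}\leq r^m$ alone---the paper first applies Cauchy--Schwarz to write $\mathcal{G}_2 \leq C(p)|(r^{m-2}\rho)^{1/p}u|_p^{p/2}\,|r^{m/2}\rho^{\gamma-1/2}|u|^{(p-2)/2}|_2$ and then reduces to $\mathcal{I}_1^\flat+\mathcal{I}_1^\sharp$, which is consistent with your phrase ``treated similarly.''
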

\begin{proof}We divide the proof into three steps.

\smallskip
\textbf{1.} Let $p\in [4,\infty)$. Multiplying $\eqref{e1.5hh}_2$ by $r^m|u|^{p-2}u$, along with $\eqref{e1.5hh}_1$, gives
\begin{equation}\label{eq:522-po}
\begin{aligned}
&\frac{1}{p}(r^m\rho |u|^{p})_t+2\alpha(p-1)r^m\rho|u|^{p-2}|u_r|^2+2\alpha mr^{m-2}\rho |u|^{p}\\
&=(p-1)Ar^m\rho^\gamma|u|^{p-2}u_r+mA r^{m-1}\rho^\gamma|u|^{p-2}u\\
&\quad +\Big(\underline{2\alpha r^m\rho|u|^{p-2}u u_r-Ar^m\rho^\gamma|u|^{p-2}u-\frac{1}{p}r^m\rho u |u|^{p}}_{:=\tilde{\mathcal{B}}_2}\Big)_r.
\end{aligned}
\end{equation}

Next, we need to show  that $\tilde{\mathcal{B}}_2\in W^{1,1}(I)$ and $\tilde{\mathcal{B}}_2|_{r=0}=0$ for {\it a.e.} $t\in (0,T)$, which allows us to apply Lemma \ref{calculus} to obtain
\begin{equation}\label{eq:523-po}
\int_0^\infty (\tilde{\mathcal{B}}_2)_r\,\mathrm{d}r=-\tilde{\mathcal{B}}_2|_{r=0}=0.
\end{equation}
$\tilde{\mathcal{B}}_2|_{r=0}=0$ follows easily from the fact that $(\rho,u,u_r)\in C(\bar I)$ for each $t\in (0,T]$ due to \eqref{spd2-po} (or \eqref{spd233-po}). On the other hand,  based on \eqref{spd-po}--\eqref{spd2-po} (or \eqref{spd33-po}--\eqref{spd233-po}), one has 
\begin{equation*}
\Big(\rho,\rho_r, u, \frac{u}{r}, u_r\Big)\in L^\infty(I),\quad\, r^\frac{m}{2}\Big(\rho_r,u,\frac{u}{r},u_r\Big)\in L^2(I)  \qquad\,
\mbox{for {\it a.e.} $t\in (0,T)$}.
\end{equation*}
Then we obtain from $p\in [4,\infty)$ and the H\"older inequality that 
\begin{align*}
|\tilde{\mathcal{B}}_2|_1&\leq C_0\big|r^m\big(\rho |u|^{p-1}|u_r|,\rho^\gamma|u|^{p-1},\rho |u|^{p+1}\big)\big|_1\\
&\leq C_0\big(|\rho|_\infty |u|_\infty^{p-2}|r^\frac{m}{2}u|_2|r^\frac{m}{2}u_r|_2+ |\rho|_\infty^{\gamma}|u|_\infty^{p-3}|r^\frac{m}{2}u|_2^2  + |\rho|_\infty |u|_\infty^{p-1}|r^\frac{m}{2}u|_2^2\big)<\infty,\\[1mm]
|(\tilde{\mathcal{B}}_2)_r|_1&\leq C_0\big|r^{m-1}\big(\rho |u|^{p-1}|u_r|,\rho^\gamma|u|^{p-1},\rho |u|^{p+1}\big)\big|_1\\
&\quad+ C(p)\big|r^m\big(\rho_r |u|^{p-1}u_r, \rho |u|^{p-2}u_r^2, \rho |u|^{p-1} u_{rr}\big)\big|_1\\
&\quad+C(p)\big|r^m\big(\rho^{\gamma-1}\rho_r |u|^{p-1}, \rho^\gamma |u|^{p-2}u_r, \rho_r |u|^{p+1}, \rho |u|^{p}u_r\big)\big|_1\\
&\leq C_0|\rho|_\infty |u|_\infty^{p-2}\big|r^\frac{m-2}{2}u\big|_2|r^\frac{m}{2}u_r|_2+ C_0|\rho|_\infty^{\gamma} |u|_\infty^{p-3}|r^\frac{m}{2}u|_2\big|r^\frac{m-2}{2}u\big|_2 \\
&\quad +  C_0|\rho|_\infty |u|_\infty^{p-1}|r^\frac{m}{2}u|_2\big|r^\frac{m-2}{2}u\big|_2\\
&\quad+ C(p)|u|_\infty^{p-2}\big(|\rho_r|_\infty|r^\frac{m}{2}u|_2|r^\frac{m}{2}u_r|_2\!+\! |\rho|_\infty |r^\frac{m}{2}u_r|_2^2\!+\!|\rho|_\infty|r^\frac{m}{2}u|_2|r^\frac{m}{2}u_{rr}|_2\big) \\
&\quad+C(p)\big(|\rho|_\infty^{\gamma-1}|u|_\infty^{p-2} \big|r^\frac{m}{2}\rho_r\big|_2|r^\frac{m}{2}u|_2+|\rho|_\infty^{\gamma} |u|_\infty^{p-3}|r^\frac{m}{2}u|_2|r^\frac{m}{2}u_r|_2\big) \\
&\quad +C(p)\big(|r^\frac{m}{2}\rho_r|_2|u|^{p}_\infty|r^\frac{m}{2}u|_2+|\rho|_\infty|u|_\infty^{p-1}|r^\frac{m}{2} u|_2|r^\frac{m}{2} u_r|_2\big)<\infty.
\end{align*}

Thus, integrating \eqref{eq:522-po} over $I$, together with \eqref{eq:523-po}, leads to
\begin{equation}\label{add-po} 
\begin{aligned}
&\frac{1}{p} \frac{\mathrm{d}}{\mathrm{d}t}\big|(r^m\rho)^{\frac{1}{p}} u\big|_{p}^{p}  +2\alpha(p-1) \big|(r^m\rho)^{\frac12} |u|^{\frac{p-2}{2}} u_r\big|_2^2 +2\alpha m\big|(r^{m-2}\rho)^{\frac{1}{p}}u\big|_{p}^{p}\\
&=  (p-1)A\int_0^\infty r^m \rho^{\gamma} |u|^{p-2} u_r\,\mathrm{d}r+mA\int_0^\infty r^{m-1}\rho^{\gamma} |u|^{p-2} u\,\mathrm{d}r:=\sum_{i=1}^2 \mathcal{I}_i.
\end{aligned}
\end{equation}

\smallskip
\textbf{2. Estimate of $\mathcal{I}_1$.} For $\mathcal{I}_1$, it follows from the H\"older and Young inequalities that
\begin{equation}\label{I1-zong}
\begin{aligned}
\mathcal{I}_1&\leq C(p)\big|(r^m\rho)^{\frac12}|u|^{\frac{p-2}{2}} u_r\big|_2\big|r^{\frac{m}{2}}\rho^{\gamma-\frac12}|u|^{\frac{p-2}{2}}\big|_2\\
&\leq \frac{\alpha}{8}\big|(r^m\rho)^{\frac12} u^{\frac{p-2}{2}} u_r\big|_2^2+C(p)\underline{\big|\chi_1^\flat r^{\frac{m}{2}}\rho^{\gamma-\frac12}|u|^{\frac{p-2}{2}}\big|_2^2}_{:=\mathcal{I}_1^\flat}\\
&\quad +C(p)\underline{\big|\chi_1^\sharp r^{\frac{m}{2}}\rho^{\gamma-\frac12}|u|^{\frac{p-2}{2}}\big|_2^2}_{:=\mathcal{I}_1^\sharp}.
\end{aligned}
\end{equation}

The calculation of $\mathcal{I}_1^\flat$ is the same as that of Step 2 in the proof of Lemma \ref{lma}. Indeed, for any $p\in [4,\infty)$, it follows from the H\"older and Young inequalities that
\begin{equation*}
\begin{aligned}
\mathcal{I}_1^\flat&\leq \big|\chi_1^\flat r^\frac{p+m-2}{p\gamma-p+1} \rho\big|_{p\gamma-p+1}^\frac{2p\gamma-2p+2}{p} \big|(r^{m-2}\rho)^\frac{1}{p}u\big|_{p}^{p-2}\\
&\leq \frac{\alpha}{8}\big|(r^{m-2}\rho)^{\frac1p}u\big|_p^p+C(p)\underline{\big|\chi_1^\flat r^\frac{p+m-2}{p\gamma-p+1} \rho\big|_{p\gamma-p+1}^{p\gamma-p+1}}_{:=\mathcal{I}_{1,1}^\flat}.  
\end{aligned}
\end{equation*}
Notice that the term $\mathcal{I}_{1,1}^\flat$ is nothing but $\mathcal{G}_{1,1}$ 
in Step 2 of the proof of Lemma \ref{lma}. 
Thus, it follows from Lemma \ref{l4.3-po} and Step 2 in the proof of Lemma \ref{lma} that, for all $p\in[4,\infty)$ and $\epsilon\in (0,1)$,
\begin{equation}\label{g1''-po}
\mathcal{I}_1^\flat\leq C(p,\epsilon)+\epsilon \big|(r^m\rho^{\gamma})^\frac{1}{p}v\big|_p^p+\epsilon\big|(r^{m-2}\rho)^{\frac1p}u\big|_p^p.    
\end{equation}

For the estimate of $\mathcal{I}_1^\sharp$, we employ Lemma \ref{Lemma-12.4} to obtain
\begin{equation}\label{I1,1-sharp}
\mathcal{I}_1^\sharp\leq  |\chi_1^\sharp \rho|_{\infty}^{2\gamma-2} \big|(r^m\rho)^\frac{1}{p-2}u\big|_{p-2}^{p-2}\leq C_0\big|(r^m\rho)^\frac{1}{p-2}u\big|_{p-2}^{p-2}.
\end{equation}

Thus, collecting \eqref{I1-zong}--\eqref{I1,1-sharp}, we obtain that, 
for all $p\in [4,\infty)$ and $\epsilon\in (0,1)$,
\begin{equation}\label{I1-zongzong}
\begin{aligned}
\mathcal{I}_1&\leq \frac{\alpha}{8}\big|(r^m\rho)^{\frac12} u^{\frac{p-2}{2}} u_r\big|_2^2+C(p)\epsilon\big|(r^{m-2}\rho)^{\frac1p}u\big|_p^p +C(p)\big|(r^m\rho)^\frac{1}{p-2}u\big|_{p-2}^{p-2}\\
&\quad +C(p,\epsilon)+ C(p)\epsilon \big|(r^m\rho^{\gamma})^\frac{1}{p}v\big|_p^p.
\end{aligned}
\end{equation}

\smallskip
\textbf{3. Estimate of $\mathcal{I}_2$.}
Now we deal with  $\mathcal{I}_{2}$. It follows from \eqref{g1''-po}--\eqref{I1,1-sharp} and the H\"older inequality that, for all $\epsilon\in (0,1)$,
\begin{equation} \label{J2-po}
\begin{aligned}
\mathcal{I}_2&\leq C(p)\big|(r^{m-2}\rho)^{\frac1p}u\big|_p^\frac{p}{2}\big|r^{\frac{m}{2}}\rho^{\gamma-\frac12}|u|^{\frac{p-2}{2}}\big|_2\leq \frac{\alpha}{8}\big|(r^{m-2}\rho)^{\frac1p}u\big|_p^p+C(p)(\mathcal{I}_1^\flat+\mathcal{I}_1^\sharp)\\
&\leq \big(\frac{\alpha}{8} +C(p)\epsilon\big)\big|(r^{m-2}\rho)^{\frac1p}u\big|_p^p+C(p)\big|(r^m\rho)^\frac{1}{p-2}u\big|_{p-2}^{p-2}\\
&\quad +C(p,\epsilon)+C(p)\epsilon \big|(r^m\rho^{\gamma})^\frac{1}{p}v\big|_p^p.
\end{aligned}
\end{equation}

Substituting \eqref{I1-zongzong}--\eqref{J2-po} into \eqref{add-po}, then choosing $\epsilon$ small enough (for example, set $\epsilon=C(p)^{-1}\tilde\epsilon$ with $\tilde\epsilon\in (0,1)$), we can arrive at the desired conclusion of this lemma.
\end{proof}

In addition, we  can show the following $L^p(I)$-estimates of $(r^m\rho)^{\frac{1}{p}}v$
for $p\in [4,\infty)$: 
\begin{lem}\label{lem-v-lp-po}
Let $\gamma\in(1,\infty)$ if $n=2$ and $\gamma\in (1,3)$ if $n=3$. 
Then, for any  $p\in[4,\infty)$, there exists a constant $C(p)>0$ such that, for any $t\in [0,T]$,
\begin{equation}\label{dt-v-p-po}
\frac{\mathrm{d}}{\mathrm{d}t}\big|(r^m\rho)^\frac{1}{p} v\big|_{p}^{p}+\frac{A\gamma p}{4\alpha}\big|(r^m\rho^\gamma)^\frac{1}{p} v\big|_{p}^{p}\leq C(p)\Big(\big|(r^m\rho)^\frac{1}{p} u\big|_{p}^{p}+\big|(r^{m-2}\rho)^\frac{1}{p} u\big|_{p}^{p}\Big).
\end{equation}
\end{lem}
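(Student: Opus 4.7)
My plan is to adapt the argument of Lemma \ref{lem-v-lp} to the strictly positive far-field density setting. First, I will multiply the effective velocity equation \eqref{eq:effective3} by $r^m\rho|v|^{p-2}v$ and combine with the continuity equation $\eqref{e1.5hh}_1$ to derive the pointwise identity
\begin{equation*}
\frac{1}{p}(r^m\rho|v|^p)_t+\frac{1}{p}(\tilde{\mathcal{B}}_3)_r+\frac{A\gamma}{2\alpha}r^m\rho^\gamma|v|^p=\frac{A\gamma}{2\alpha}r^m\rho^\gamma uv|v|^{p-2},
\end{equation*}
where $\tilde{\mathcal{B}}_3:=r^m\rho u|v|^p$. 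I will then integrate over $r\in I$ and verify, via the regularity in \eqref{spd-po}--\eqref{spd2-po} (or \eqref{spd33-po}--\eqref{spd233-po}), that $\tilde{\mathcal{B}}_3\in W^{1,1}(I)$ with $\tilde{\mathcal{B}}_3|_{r=0}=0$, so that Lemma \ref{calculus} eliminates the boundary contribution and produces the energy identity
\begin{equation*}
\frac{1}{p}\frac{\mathrm{d}}{\mathrm{d}t}\big|(r^m\rho)^{1/p}v\big|_p^p+\frac{A\gamma}{2\alpha}\big|(r^m\rho^\gamma)^{1/p}v\big|_p^p=\frac{A\gamma}{2\alpha}\int_0^\infty r^m\rho^\gamma uv|v|^{p-2}\,\mathrm{d}r.
\end{equation*}

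The source integral will be treated by the H\"older and Young inequalities to absorb half of the dissipation on the left, leaving the task of controlling $\big|(r^m\rho^\gamma)^{1/p}u\big|_p^p$. The plan here is a region segmentation adapted to the strictly positive density case. On the exterior $\{r\geq 1\}$, the uniform bound $|\chi_1^\sharp\rho|_\infty\leq C_0$ from Lemma \ref{Lemma-12.4} yields
\begin{equation*}
\big|\chi_1^\sharp(r^m\rho^\gamma)^{1/p}u\big|_p^p\leq |\chi_1^\sharp\rho|_\infty^{\gamma-1}\big|(r^m\rho)^{1/p}u\big|_p^p\leq C_0\big|(r^m\rho)^{1/p}u\big|_p^p,
\end{equation*}
which recovers the first term on the right of \eqref{dt-v-p-po}. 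On the interior $\{r<1\}$, the factorization $r^m\rho^\gamma=(r^{2/(\gamma-1)}\rho)^{\gamma-1}\cdot r^{m-2}\rho$ reduces matters to bounding $|\chi_1^\flat r^{2/(\gamma-1)}\rho|_\infty$, which is finite precisely because of the weighted density estimates of Lemma \ref{l4.3-po} in the admissible ranges $\gamma\in(1,\infty)$ for $n=2$ and $\gamma\in(1,3)$ for $n=3$; this produces the second term $\big|(r^{m-2}\rho)^{1/p}u\big|_p^p$. Note that the restriction $\gamma<3$ in three dimensions is precisely what ensures $2/(\gamma-1)>1$, which is the threshold that lets the 3-D estimate $|\chi_1^\flat r\rho|_\infty\leq C_0$ be applied.

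The main technical obstacle will be Step 2: the rigorous justification that $\tilde{\mathcal{B}}_3\in W^{1,1}(I)$. Unlike the far-field vacuum case treated in Lemma \ref{lem-v-lp}, here $\rho$ does not decay at infinity, so the integrability of $r^m\rho u|v|^p$ and of its weak derivative must be extracted from the decay of $u$, $v$, and $\nabla\log\rho$ alone. I plan to split $\rho=(\rho-\bar\rho)+\bar\rho$ and handle the two contributions separately: the $(\rho-\bar\rho)$ part will be controlled via the weighted $L^2$-bound $\big|r^{m/2}(\rho-\bar\rho)\big|_2\leq C_0$ from Lemma \ref{lll222} combined with the H\"older inequality, while the $\bar\rho$ part reduces to the integrability of $r^m u|v|^p$ and its derivative, which follows from the weighted bounds for $(u,v,\rho_r)$ in \eqref{spd-po} or \eqref{spd33-po} together with the Sobolev embeddings and the Hardy inequality to handle the coordinate singularity at the origin. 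Once this verification is complete, the remainder of the argument is a direct parallel of the calculations in Lemma \ref{lem-v-lp}.
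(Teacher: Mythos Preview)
Your overall strategy is correct and matches the paper: the same pointwise identity from multiplying \eqref{eq:effective3} by $r^m\rho|v|^{p-2}v$, the same absorption of half the dissipation via Young's inequality, and the same region segmentation for $\big|(r^m\rho^\gamma)^{1/p}u\big|_p^p$ using Lemma~\ref{Lemma-12.4} on $\{r\geq 1\}$ and Lemma~\ref{l4.3-po} on $\{r<1\}$.

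The only point of divergence is your plan for verifying $\tilde{\mathcal{B}}_3\in W^{1,1}(I)$. You propose to split $\rho=(\rho-\bar\rho)+\bar\rho$ and invoke Lemma~\ref{lll222} for the first piece. This works, but it is more elaborate than needed. The paper instead exploits the strictly positive density directly: since $\rho^{-1}\in L^\infty(I)$ is part of the regularity in \eqref{spd-po} (or \eqref{spd33-po}), one has $(\log\rho)_r=\rho^{-1}\rho_r$ with $\rho_r\in L^\infty$ and $r^{m/2}\rho_r\in L^2$, so after expanding $|v|^p\leq C(p)(|u|^p+|(\log\rho)_r|^p)$ the paper bounds, for instance,
\[
\big|r^m\rho u|(\log\rho)_r|^p\big|_1\leq |\rho^{1-p}|_\infty|u|_\infty|\rho_r|_\infty^{p-2}|r^{m/2}\rho_r|_2^2,
\]
with all decay supplied by the weighted $L^2$ norms of $u$ and $\rho_r$. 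Your observation that $r^m\rho\notin L^1$ forces a change from the vacuum case is correct, but the fix is simply to replace the role of $r^m\rho\in L^1$ by $\rho\in L^\infty$ and shift the integrability onto $|r^{m/2}u|_2$ and $|r^{m/2}\rho_r|_2$; no splitting of $\rho$ is required.
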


\begin{proof}
First, multiplying \eqref{eq:effective3} by $r^m\rho|v|^{p-2}v$ with $p\in[4,\infty)$, along with $\eqref{e1.5hh}_1$, gives
\begin{equation}\label{eq:532-po}
\frac{1}{p}\big(r^m\rho |v|^p\big)_t+ \frac{1}{p}\big(\underline{r^m\rho u|v|^p}_{:=\tilde{\mathcal{B}}_3}\big)_r+\frac{A\gamma}{2\alpha} r^m\rho^{\gamma} |v|^p=\frac{A\gamma}{2\alpha}r^m\rho^\gamma uv|v|^{p-2}.
\end{equation}

Next, we need to show  that $\tilde{\mathcal{B}}_3\in W^{1,1}(I)$ and $\tilde{\mathcal{B}}_3|_{r=0}=0$ for {\it a.e.} $t\in (0,T)$, which allows us to apply Lemma \ref{calculus} to obtain
\begin{equation}\label{eq:B3-po}
\int_0^\infty (\tilde{\mathcal{B}}_3)_r\,\mathrm{d}r=-\tilde{\mathcal{B}}_3|_{r=0}=0.
\end{equation}
On one hand,  we obtain $\mathcal{B}_3|_{r=0}=0$ from \eqref{V-expression3} 
and the fact that $(\rho,\rho_r,u)\in C(\bar I)$ for {\it a.e.} $t\in (0,T)$ 
due to \eqref{spd-po}--\eqref{spd2-po} (or \eqref{spd33-po}--\eqref{spd233-po}). 
On the other hand,  based on \eqref{spd-po}--\eqref{spd2-po} (or \eqref{spd33-po}--\eqref{spd233-po}), 
we have
\begin{equation*}
r^\frac{m}{2}\Big(\rho_r,\frac{\rho_r}{r},\rho_{rr},u\Big)\in L^2(I), \quad\, \Big(\rho,\rho^{-1},\rho_r,u,\frac{u}{r},u_r\Big)\in L^\infty(I)
\qquad\,\,\mbox{for {\it a.e.} $t\in (0,T)$}.
\end{equation*}
Then we obtain from the H\"older inequality that 
\begin{align*}
|\tilde{\mathcal{B}}_3|_1&\leq C(p)\big(\big|r^m\rho |u|^{p+1}\big|_1+\big|r^m\rho u|(\log\rho)_r|^p\big|_1\big)\\[1mm]
&\leq C(p)\big(|\rho|_\infty |u|_\infty^{p-1}|r^\frac{m}{2}u|_2^2+ |\rho^{1-p}|_\infty |u|_\infty|\rho_r|_\infty^{p-2}|r^\frac{m}{2}\rho_r|_2^2\big)<\infty,\\
|(\tilde{\mathcal{B}}_3)_r|_1&\leq C(p)\big(\big|r^{m-1}\rho |u|^{p+1}\big|_1+\big|r^{m-1}\rho u|(\log\rho)_r|^p\big|_1+\big|r^{m}\rho_r |u|^{p+1}\big|_1\big)\\
&\quad+C(p)\big(\big|r^{m}\rho_r u|(\log\rho)_r|^p\big|_1+\big|r^{m}\rho |u|^pu_r\big|_1+\big|r^{m}\rho u_r|(\log\rho)_r|^p\big|_1\big)\\
&\quad +C(p)\big|r^{m}\rho u|(\log\rho)_r|^{p-1}|(\log\rho)_{rr}|\big|_1\\
&\leq C(p)\big(|\rho|_\infty|u|_\infty^{p-1}|r^\frac{m}{2}u|_2 \big|r^\frac{m-2}{2}u\big|_2\!+\!|\rho^{1-p}|_\infty |u|_\infty |\rho_r|_\infty^{p-2}|r^\frac{m}{2}\rho_r|_2\big|r^\frac{m-2}{2}\rho_r\big|_2\big) \\
&\quad +C(p)|r^\frac{m}{2}\rho_r|_2|r^\frac{m}{2}u|_2|u|_\infty^p+C(p)|r^\frac{m}{2}\rho_r|_2|r^\frac{m}{2}u|_2|\rho^{-p}|_\infty |\rho_r|_\infty^p\\
&\quad +C(p)|\rho|_\infty|r^\frac{m}{2}u_r|_2\big(|u|_\infty^{p-1}|r^\frac{m}{2}u|_2+|\rho^{-p}|_\infty |\rho_r|_\infty^{p-1}|r^\frac{m}{2}\rho_r|_2\big) \\
&\quad +C(p)|r^\frac{m}{2}u|_2|\rho^{2-p}|_\infty|\rho_r|_\infty^{p-1}\big(|\rho^{-2}|_\infty|\rho_r|_\infty |r^\frac{m}{2}\rho_r|_2\!+\!|\rho^{-1}|_\infty|r^\frac{m}{2}\rho_{rr}|_2\big)<\infty.
\end{align*}

Integrating \eqref{eq:532-po} over $I$, then we obtain from \eqref{eq:B3-po}, the fact that $\frac{2}{\gamma-1}>1$ whenever $\gamma\in (1,3)$, Lemmas \ref{l4.3-po}--\ref{Lemma-12.4}, and the Young inequality that
\begin{equation}\label{4-13-po}
\begin{aligned}
&\frac{1}{p}\frac{\mathrm{d}}{\mathrm{d}t}\big|(r^m\rho)^\frac{1}{p} v\big|_{p}^{p}+\frac{A\gamma}{2\alpha}\big|(r^m\rho^\gamma)^\frac{1}{p} v\big|_{p}^{p}\\
&\leq \frac{A\gamma}{2\alpha}\int_0^\infty r^m\rho^\gamma uv|v|^{p-2}\,\mathrm{d}r \leq \frac{A\gamma}{4\alpha}\big|(r^m\rho^\gamma)^\frac{1}{p} v\big|_{p}^p+C(p)\big|(r^m\rho^\gamma)^\frac{1}{p} u\big|_{p}^p\\
&\leq \frac{A\gamma}{4\alpha}\big|(r^m\rho^\gamma)^\frac{1}{p} v\big|_{p}^{p}+C(p)\Big(\big|\chi_1^\flat(r^m\rho^\gamma)^\frac{1}{p} u\big|_{p}^p+\big|\chi_1^\sharp(r^m\rho^\gamma)^\frac{1}{p} u\big|_{p}^p\Big)\\
&\leq \frac{A\gamma}{4\alpha}\big|(r^m\rho^\gamma)^\frac{1}{p} v\big|_{p}^{p}+C(p)\big|\chi_1^\flat r^\frac{2}{\gamma-1}\rho\big|_\infty^{\gamma-1} \big|(r^{m-2}\rho)^\frac{1}{p} u\big|_{p}^{p}+C(p) |\chi_1^\sharp\rho|_\infty^{\gamma-1} \big|(r^m\rho)^\frac{1}{p} u\big|_{p}^{p}\\
&\leq \frac{A\gamma}{4\alpha}\big|(r^m\rho^\gamma)^\frac{1}{p} v\big|_{p}^{p} +C(p)\Big(\big|(r^m\rho)^\frac{1}{p} u\big|_{p}^{p}+\big|(r^{m-2}\rho)^\frac{1}{p} u\big|_{p}^{p}\Big),
\end{aligned}
\end{equation}
which implies the desired estimates of this lemma.
\end{proof}

Based on Lemmas \ref{lma-po}--\ref{lem-v-lp-po}, we  can derive the following important estimates on $(u,v)$:
\begin{lem}\label{lemma-uv-p-po}
Let $\gamma\in(1,\infty)$ if $n=2$ and $\gamma\in (1,3)$ if $n=3$. Then, for  any $p\in[2,\infty)$,
there exists a constant $C(p,T)>0$ such that, for any $t\in [0,T]$,
\begin{equation*}
\begin{aligned} 
&\big|(r^m\rho)^{\frac{1}{p}}u(t)\big|_{p}^{p}+\big|(r^m\rho)^{\frac{1}{p}}v(t)\big|_{p}^{p}\\
&+ \int_0^t\Big(\big|(r^m\rho)^{\frac{1}{2}}|u|^{\frac{p-2}{2}} u_r\big|_2^2+\big|(r^{m-2}\rho)^{\frac{1}{p}}u\big|_{p}^{p}+\big|(r^m\rho^\gamma)^\frac{1}{p} v\big|_{p}^{p}\Big)\,\mathrm{d}s\leq C(p,T).
\end{aligned}
\end{equation*}
\end{lem}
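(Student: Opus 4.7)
\medskip

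\noindent
\textbf{Proof Proposal for Lemma \ref{lemma-uv-p-po}.} The plan is to combine the two differential inequalities obtained in Lemmas \ref{lma-po} and \ref{lem-v-lp-po} in a recursive manner, starting from a base case treated by the fundamental energy and BD entropy estimates, and then to extend the conclusion from even integer exponents to all $p\in[2,\infty)$ via interpolation.

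First, I would establish the base case $p=2$ directly from Lemma \ref{energy-bd-po}. The bound on $|(r^m\rho)^{1/2}u|_2^2$ and on the dissipation $\int_0^t|(r^m\rho)^{1/2}u_r|_2^2+\int_0^t|(r^{m-2}\rho)^{1/2}u|_2^2$ is immediate. For the effective-velocity counterparts, note that $2\alpha\rho_r=\rho(v-u)$ gives $\rho^{\gamma-2}|\rho_r|^2=\rho^\gamma(v-u)^2/(4\alpha^2)$, so the BD term $\int_0^t\!\int r^m\rho^{\gamma-2}|\rho_r|^2\,\mathrm{d}r\mathrm{d}s\leq C_0$ together with $|\rho|_\infty\leq C_0$ and the energy bound yields $\int_0^t|(r^m\rho^\gamma)^{1/2}v|_2^2\,\mathrm{d}s\leq C_0$, while $|(r^m\rho)^{1/2}v(t)|_2^2\leq C_0$ is part of the BD statement.

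Next, for $p=2N$ with $N\geq 2$, I would proceed by induction. Given the $L^\infty_t L^{2(N-1)}$ bound on $(r^m\rho)^{1/(2N-2)}u$ from the previous step, I would combine the differential inequalities of Lemmas \ref{lma-po} and \ref{lem-v-lp-po} at $p=2N$ by forming
\[
\tfrac{\mathrm{d}}{\mathrm{d}t}\bigl(|(r^m\rho)^{1/p}u|_p^p+\delta|(r^m\rho)^{1/p}v|_p^p\bigr)+\text{(dissipation)}\leq \text{(forcing)},
\]
with $\delta$ chosen small enough that the cross term $\delta C(p)|(r^{m-2}\rho)^{1/p}u|_p^p$ coming from Lemma \ref{lem-v-lp-po} is absorbed into the positive $p\alpha|(r^{m-2}\rho)^{1/p}u|_p^p$ term on the left produced by Lemma \ref{lma-po}, and with $\epsilon$ in Lemma \ref{lma-po} chosen smaller than $\delta A\gamma p/(8\alpha)$ so that $\epsilon|(r^m\rho^\gamma)^{1/p}v|_p^p$ is absorbed into the dissipation $\delta A\gamma p/(4\alpha)|(r^m\rho^\gamma)^{1/p}v|_p^p$. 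The forcing term $C(p)|(r^m\rho)^{1/(p-2)}u|_{p-2}^{p-2}$ is then bounded uniformly in time by the induction hypothesis, and Gr\"onwall's inequality applied to the resulting linear-in-$|(r^m\rho)^{1/p}u|_p^p$ differential inequality closes the estimate after checking the initial bound $|(r^m\rho_0)^{1/p}(u_0,v_0)|_p\leq C(p)$ via Lemmas \ref{ale1}, \ref{equiv-initial}, and \ref{lemma-initial}.

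Finally, for general $p\in[2,\infty)$, choose $N$ so that $p\in[2,2N]$ and interpolate: for the supremum bounds, H\"older's inequality gives
\[
|(r^m\rho)^{1/p}u|_p^p\leq |(r^m\rho)^{1/2}u|_2^{2a}\,|(r^m\rho)^{1/(2N)}u|_{2N}^{2Nb}
\]
with exponents $a,b$ satisfying $a+b=1$ and $2a+2Nb=p$ (and likewise for $v$), which is bounded by the previous steps; for the dissipation integrals such as $\int_0^t|(r^m\rho)^{1/2}|u|^{(p-2)/2}u_r|_2^2\,\mathrm{d}s$, a double H\"older application (first in space with weights $\alpha=(2N-p)/(2N-2)$ and $\beta=(p-2)/(2N-2)$, then in time) reduces control to the already-known bounds at $p=2$ and $p=2N$. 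The main obstacle is the induction step: balancing the two small parameters $\delta,\epsilon$ so that every cross term on the right-hand side is simultaneously absorbed into the correct dissipation term on the left, since Lemma \ref{lem-v-lp-po} feeds $|(r^{m-2}\rho)^{1/p}u|_p^p$ back into the RHS, which is the very quantity one is trying to bound via Lemma \ref{lma-po}.
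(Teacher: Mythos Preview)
Your approach is essentially the paper's: combine Lemmas \ref{lma-po} and \ref{lem-v-lp-po} with a small multiplier on the $v$-inequality, absorb cross terms, apply Gr\"onwall to get \eqref{p-dt}, then recurse from $p=4$ upward in even steps and interpolate. One caveat: in your base case you invoke ``$|\rho|_\infty\leq C_0$'' to pass from $\int_0^t\!\int r^m\rho^\gamma(v-u)^2$ to $\int_0^t\!\int r^m\rho^\gamma v^2$, but the global $L^\infty$ bound on $\rho$ is Lemma \ref{important2-po}, which is proved \emph{after} (and using) the present lemma, so this is circular. The repair is immediate: split $\int_0^t\!\int r^m\rho^\gamma u^2$ into $r\leq 1$ and $r\geq 1$, use Lemma \ref{Lemma-12.4} for the exterior piece, and for the interior piece write $r^m\rho^\gamma u^2=(r^{m-2}\rho u^2)\cdot r^2\rho^{\gamma-1}$ and bound $r^2\rho^{\gamma-1}$ via the weighted estimates of Lemma \ref{l4.3-po} (this uses $\gamma<3$ when $n=3$); then the energy dissipation $\int_0^t\!\int r^{m-2}\rho u^2\leq C_0$ closes it.
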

\begin{proof}
First, multiplying \eqref{dt-v-p-po} by $\frac{8\alpha\epsilon}{A\gamma p}$ with $\epsilon\in (0,1)$ leads to
\begin{equation*} 
\frac{8\alpha\epsilon}{A\gamma p}\frac{\mathrm{d}}{\mathrm{d}t}\big|(r^m\rho)^\frac{1}{p} v\big|_{p}^{p}+2\epsilon\big|(r^m\rho^\gamma)^\frac{1}{p} v\big|_{p}^{p}\leq C(p)\epsilon\Big(\big|(r^m\rho)^\frac{1}{p} u\big|_{p}^{p}+\big|(r^{m-2}\rho)^\frac{1}{p} u\big|_{p}^{p}\Big).
\end{equation*}
Then summing the above inequality with \eqref{dt-u-p-po} yields that 
\begin{equation*}
\begin{aligned}
&\frac{\mathrm{d}}{\mathrm{d}t}\Big(\big|(r^m\rho)^{\frac{1}{p}} u\big|_{p}^{p}+\frac{8\alpha\epsilon}{A\gamma p} \big|(r^m\rho)^\frac{1}{p} v\big|_{p}^{p}\Big)\\
&+ p\alpha \Big(\big|(r^m\rho)^{\frac{1}{2}}|u|^{\frac{p-2}{2}} u_r\big|_2^2+\big|(r^{m-2}\rho)^{\frac{1}{p}}u\big|_{p}^{p}\Big)+\epsilon\big|(r^m\rho^\gamma)^\frac{1}{p} v\big|_{p}^{p}\\
& \leq C(p)\Big(\big|(r^m\rho)^\frac{1}{p}u\big|_{p}^{p}+\big|(r^m\rho)^\frac{1}{p-2}u\big|_{p-2}^{p-2}\Big)+ C(p,\epsilon)+C(p)\epsilon\big|(r^{m-2}\rho)^\frac{1}{p} u\big|_{p}^{p}.
\end{aligned}
\end{equation*}
As a consequence, setting
\begin{equation*}
\epsilon=\min\big\{\frac{\alpha}{C(p)},\frac{1}{2}\big\},
\end{equation*}
and then applying the Gr\"onwall inequality imply that, for all $t\in [0,T]$ and $p\in [4,\infty)$,
\begin{equation}\label{p-dt}
\begin{aligned}
&\sup_{t\in [0,T]}\Big(\big|(r^m\rho)^{\frac{1}{p}} u\big|_{p}^{p}+\big|(r^m\rho)^\frac{1}{p} v\big|_{p}^{p}\Big)\\
&+ \int_0^T \Big(\big|(r^m\rho)^{\frac{1}{2}}|u|^{\frac{p-2}{2}} u_r\big|_2^2+\big|(r^{m-2}\rho)^{\frac{1}{p}}u\big|_{p}^{p}+\big|(r^m\rho^\gamma)^\frac{1}{p} v\big|_{p}^{p}\Big)\,\mathrm{d}t\\
&\leq C(p,T)\sup_{t\in [0,T]}\big|(r^m\rho)^\frac{1}{p-2}u\big|_{p-2}^{p-2}+C(p,T).
\end{aligned}
\end{equation}
Here, we still need to check the $L^p(I)$-boundedness of $(r^m\rho_0)^{\frac{1}{p}}(u_0,v_0)$. 
Indeed, it follows from Lemmas \ref{equiv-initial}, \ref{ale1}, and \ref{lemma-initial} that
\begin{align*}
\big|(r^m\rho_0)^\frac{1}{p}(u_0,v_0)\big|_p&\leq |\rho_0|_\infty^\frac{1}{p}\big|r^\frac{m}{p}(u_0,(\log\rho_0)_r)\big|_{p}\leq C_0\|\rho_0\|_{L^\infty}^\frac{1}{p}\|(\boldsymbol{u}_0,\nabla\log\rho_0)\|_{L^p}\\
&\leq C_0\big(\|(\rho_0-\bar\rho)\|_{H^2}+1\big)^\frac{1}{p}\|(\boldsymbol{u}_0,\nabla\log\rho_0)\|_{H^1}\leq C(p).
\end{align*}

Next, setting $p=4$ in \eqref{p-dt}, together with Lemma \ref{energy-bd-po}, yields
\begin{equation*}
\begin{aligned}
&\sup_{t\in [0,T]}\Big(\big|(r^m\rho)^{\frac{1}{4}} u\big|_{4}^{4}+\big|(r^m\rho)^\frac{1}{4} v\big|_{4}^{4}\Big)\\
&+ \int_0^T \Big(\big|(r^m\rho)^{\frac{1}{2}}|u| u_r\big|_2^2+\big|(r^{m-2}\rho)^{\frac{1}{4}}u\big|_{4}^{4}+\big|(r^m\rho^\gamma)^\frac{1}{4} v\big|_{4}^{4}\Big)\,\mathrm{d}t\leq C(T).
\end{aligned}
\end{equation*}
Hence, taking $p=2N$ with $N\in \mathbb{N}$ and $N\geq 3$ in \eqref{p-dt}, 
we can iteratively obtain from the above 
and Lemma \ref{energy-bd-po} that, for all $N\in \mathbb{N}^*$,
\begin{equation*}
\begin{aligned}
&\sup_{t\in [0,T]}\Big(\big|(r^m\rho)^{\frac{1}{2N}} u\big|_{2N}^{2N}+\big|(r^m\rho)^\frac{1}{2N} v\big|_{2N}^{2N}\Big)\\
&+\int_0^T \Big(\big|(r^m\rho)^{\frac{1}{2}}|u|^{N-1} u_r\big|_2^2+\big|(r^{m-2}\rho)^{\frac{1}{2N}}u\big|_{2N}^{2N}+\big|(r^m\rho^\gamma)^\frac{1}{2N} v\big|_{2N}^{2N}\Big)\,\mathrm{d}t\leq C(N,T),
\end{aligned}
\end{equation*}
which, along with the interpolation, leads to the desired result. 
\end{proof}

Now, based on Lemmas \ref{Lemma-12.4} and \ref{lemma-uv-p-po}, using the same argument as in the proof of Lemma \ref{important2}, we can obtain the global uniform upper bound of $\rho$ in $[0,T]\times I$. 
\begin{lem}\label{important2-po}
Let $\gamma\in (1,\infty)$ if $n=2$ and $\gamma\in (1,3)$ if $n=3$. Then there exists a constant $C(T)>0$ such that
\begin{equation*}
|\rho(t)|_\infty\leq C(T)  \qquad \text{for any $t\in [0,T]$}.
\end{equation*}
\end{lem}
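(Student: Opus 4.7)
The plan is to mimic the proof of Lemma \ref{important2} in the far-field vacuum case, exploiting the fact that the strictly positive density setting has already been reduced to essentially the same structural estimates. The only conceptual difference is that the far-field $L^p$ bound on $r^m \rho$ obtained in Lemma \ref{far-p-infty} (vacuum case) must be replaced by the exterior bound now supplied by Lemma \ref{Lemma-12.4}.

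First, I would split the domain into the exterior $\{r \geq 1\}$ and the interior $\{r<1\}$. In the exterior region, applying Lemma \ref{Lemma-12.4} with $\omega=1$ gives directly $|\chi_1^\sharp \rho(t)|_\infty \leq C_0$ for all $t\in[0,T]$, so no further work is needed there. The entire burden is the interior bound $|\chi_1^\flat \rho(t)|_\infty$.

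For the interior, I would follow the Sobolev-embedding / effective-velocity trick from Lemma \ref{important2}. By Lemma \ref{lemma-uv-p-po}, applied for $n=2,3$, one may choose some $p_0 > n$ such that
\[
\big|(r^m\rho)^{1/p_0}(u,v)(t)\big|_{p_0} \leq C(T) \qquad \text{for all } t\in[0,T].
\]
Using the embedding $W^{1,p_0}(B_1) \hookrightarrow L^\infty(B_1)$, the identity $\nabla \rho = \tfrac{\rho}{2\alpha}(v-u)$ (equivalently $\rho_r = \tfrac{\rho}{2\alpha}(v-u)$ in radial coordinates), together with Lemma \ref{l4.3-po} to control lower-order weighted norms of $\rho$ in $(0,1)$, I would estimate
\[
|\chi_1^\flat \rho|_\infty \leq C_0\bigl(|\chi_1^\flat \rho|_1 + |\chi_1^\flat \rho_r|_1\bigr)
\leq C_0 + C_0\, \big|\chi_1^\flat \rho(u,v)\big|_1.
\]
Then, by Hölder with exponents $(p_0, p_0/(p_0-1))$ and using $m/(p_0-1) < 1$ since $p_0 > n \geq m+1$, I would split off the weight as
\[
\big|\chi_1^\flat \rho(u,v)\big|_1 \leq \big|\chi_1^\flat r^{-m/(p_0-1)}\big|_1^{(p_0-1)/p_0}\, |\chi_1^\flat \rho|_\infty^{(p_0-1)/p_0}\, \big|(r^m\rho)^{1/p_0}(u,v)\big|_{p_0}.
\]
Combining these and applying Young's inequality to absorb $\tfrac{1}{2}|\chi_1^\flat \rho|_\infty$ into the left-hand side yields $|\chi_1^\flat \rho(t)|_\infty \leq C(T)$.

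The argument is essentially soft once the heavy estimates of Lemmas \ref{Lemma-12.4} and \ref{lemma-uv-p-po} are in hand; the potential subtleties were already resolved upstream. In particular, the restriction $\gamma \in (1,3)$ in three dimensions enters only through Lemma \ref{lemma-uv-p-po} (which, via its reliance on Lemma \ref{lma-po}, requires $\gamma<3$ to close the induction on $(a_j,b_j)$), so no new dimension-dependent difficulty arises at this stage. The main place to be careful is the bookkeeping of the exponent $m/(p_0-1)$: one must verify $p_0 > n$ can indeed be chosen in Lemma \ref{lemma-uv-p-po} for both $n=2$ and $n=3$, which is immediate since that lemma provides the bound for \emph{all} $p\in[2,\infty)$.
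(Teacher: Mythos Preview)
Your proposal is correct and follows exactly the same approach as the paper, which simply states that the result follows from Lemmas \ref{Lemma-12.4} and \ref{lemma-uv-p-po} by the same argument as in the proof of Lemma \ref{important2}. Your detailed reconstruction of that argument is accurate, including the use of Lemma \ref{l4.3-po} to bound $|\chi_1^\flat\rho|_1$ and the H\"older--Young absorption step.
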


\subsection{Global uniform $L^\infty(I)$-estimate of $v$}\label{subsub-1133} 
In  \S\ref{subsub-1133}, we always assume that $\gamma\in (1,\infty)$ if $n=2$ and $\gamma\in (1,3)$ if $n=3$.
This subsection is devoted to providing the uniform boundedness of $v$. 
To get this, we  first derive the $L^p(I)$-estimates  of $\rho^{\frac{1}{p}}u$.
\begin{lem}\label{rho u-L2-po}
For any  $p\in[4,\infty)$,
there exists a constant $C(p,T)>0$ such that, for any $t\in [0,T]$,
\begin{equation*}
\big|\rho^\frac{1}{p}u(t)\big|_p^p+ \int_0^t\Big(\big|\rho^\frac{1}{2}|u|^\frac{p-2}{2} u_r\big|_2^2+\big|(r^{-2}\rho)^\frac{1}{p}u\big|_p^p\Big)\,\mathrm{d}s \leq C(p,T)\Big(\big(\sup_{s\in[0,t]}|v|_\infty\big)^2+1\Big).
\end{equation*}
\end{lem}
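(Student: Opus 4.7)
My plan is to mirror the proof of Lemma \ref{rho u-L2} from \S\ref{section-effective} line by line, replacing the far-field decay of $\rho$ (used there in a crucial way) by the global uniform $L^\infty(I)$-bound from Lemma \ref{important2-po}. The restriction $p\in[4,\infty)$, rather than $p\in[2,\infty)$ as in Lemma \ref{rho u-L2}, will arise as the natural compatibility condition for absorbing the pressure contribution via Lemma \ref{lemma-uv-p-po}.

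First, I would multiply $\eqref{e1.5hh}_2$ by $|u|^{p-2}u$ (without the $r^m$ weight), integrate over $I$, and use $\eqref{e1.5hh}_1$ to rewrite the time derivative. The regularity provided by Theorems \ref{rth10po}--\ref{rth133-po}, combined with Lemma \ref{calculus}, justifies the integration by parts and shows that the boundary term at $r=0$ vanishes, exactly as in the vacuum case. This gives the energy identity
\begin{align*}
&\frac{1}{p}\frac{\mathrm{d}}{\mathrm{d}t}|\rho^\frac{1}{p}u|_p^p + 2\alpha(p-1)\big|\rho^\frac{1}{2}|u|^\frac{p-2}{2} u_r\big|_2^2 + \frac{2\alpha m(p-1)}{p}\big|(r^{-2}\rho)^\frac{1}{p}u\big|_p^p \\
&= A(p-1)\int_0^\infty \rho^\gamma |u|^{p-2}u_r\,\mathrm{d}r - \frac{m}{p}\int_0^\infty\frac{\rho v|u|^p}{r}\,\mathrm{d}r.
\end{align*}
The second RHS term is treated exactly as in \eqref{R2-dim2}--\eqref{R2-dim3}: using the $L^\infty$-bound of $v$ together with Young's inequality, one obtains $C(p)(|v|_\infty^2+1)|(r^{m-2}\rho)^\frac{1}{p}u|_p^p + \frac{\alpha}{8}|(r^{-2}\rho)^\frac{1}{p}u|_p^p$, where the $r^{m-2}$ weight is harmless in 3D ($m=2$) and in 2D ($m=1$) is reduced via the interpolation $|(r^{-1}\rho)^\frac{1}{p}u|_p^p\leq|(r^{-2}\rho)^\frac{1}{p}u|_p^\frac{p}{2}|\rho^\frac{1}{p}u|_p^\frac{p}{2}$ combined with $|\rho|_\infty\leq C(T)$ from Lemma \ref{important2-po}.

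The decisive new ingredient is the treatment of the pressure term. Writing $\rho^\gamma\leq |\rho|_\infty^{\gamma-1}\rho$ (Lemma \ref{important2-po}) and splitting $\rho|u|^{p-2}u_r = (r^{-m/2}\sqrt{\rho}|u|^{p/2})\cdot(r^{m/2}\sqrt{\rho}|u|^{(p-4)/2}u_r)$, Cauchy--Schwarz yields
$$A(p-1)\int_0^\infty\rho^\gamma|u|^{p-2}u_r\,\mathrm{d}r \leq C(p,T)\big|(r^{-m}\rho)^\frac{1}{p}u\big|_p^\frac{p}{2}\big|(r^m\rho)^\frac{1}{2}|u|^\frac{p-4}{2} u_r\big|_2.$$
Young (plus the same 2D interpolation as above when $m=1$) bounds this by $\frac{\alpha}{8}|(r^{-2}\rho)^\frac{1}{p}u|_p^p + C(p,T)|\rho^\frac{1}{p}u|_p^p + C(p,T)|(r^m\rho)^\frac{1}{2}|u|^\frac{p-4}{2}u_r|_2^2$. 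The last factor forces the condition $p\geq 4$: the power $|u|^{(p-4)/2}$ must be non-negative, and Lemma \ref{lemma-uv-p-po} applied with parameter $q:=p-2\in[2,\infty)$ then supplies the time integrability $\int_0^T|(r^m\rho)^\frac{1}{2}|u|^\frac{p-4}{2}u_r|_2^2\,\mathrm{d}t\leq C(p,T)$.

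Collecting all contributions produces a differential inequality of the form
$$\frac{\mathrm{d}}{\mathrm{d}t}\big|\rho^\frac{1}{p}u\big|_p^p + \big|\rho^\frac{1}{2}|u|^\frac{p-2}{2}u_r\big|_2^2 + \big|(r^{-2}\rho)^\frac{1}{p}u\big|_p^p \leq C(p,T)\bigl(|v|_\infty^2+1\bigr)\big|\rho^\frac{1}{p}u\big|_p^p + F(t),$$
with $\|F\|_{L^1(0,T)}\leq C(p,T)$. Gronwall's inequality, together with the initial bound $|\rho_0^\frac{1}{p}u_0|_p\leq C(p)$ (which follows from Lemma \ref{equiv-initial}, $\boldsymbol{u}_0\in H^2\hookrightarrow L^p$, and the finite $L^\infty$-bound on $\rho_0$), yields the desired estimate with the RHS $C(p,T)((\sup_{s\in[0,t]}|v|_\infty)^2+1)$. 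The principal obstacle will be the 2D case: the natural Cauchy--Schwarz split produces the weight $r^{-1}$, which is not directly controlled by the available dissipation $|(r^{-2}\rho)^\frac{1}{p}u|_p^p$, so one must insert an extra interpolation that generates auxiliary $|\rho^\frac{1}{p}u|_p$ factors and track these through both the $\mathcal{G}_3$-estimate and the pressure-term estimate before Gronwall can be closed.
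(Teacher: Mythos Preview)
Your overall strategy matches the paper's: multiply by $|u|^{p-2}u$ without the $r^m$ weight, split the pressure term via Cauchy--Schwarz to produce the factor $\big|(r^m\rho)^{1/2}|u|^{(p-4)/2}u_r\big|_2$ (which forces $p\ge4$ and is controlled by Lemma~\ref{lemma-uv-p-po} with parameter $p-2$), and handle the $v$-term as in \eqref{R2-dim2}--\eqref{R2-dim3}.

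However, your closing step does not deliver the stated polynomial bound. Applying Gr\"onwall to the inequality you display, with $(|v|_\infty^2+1)$ sitting in the coefficient of $|\rho^{1/p}u|_p^p$, yields
\[
|\rho^{1/p}u(t)|_p^p \le C(p,T)\exp\Bigl(C(p,T)\,T\bigl((\sup_s|v|_\infty)^2+1\bigr)\Bigr),
\]
which is exponential in $(\sup|v|_\infty)^2$, not the claimed $C(p,T)((\sup|v|_\infty)^2+1)$. The paper avoids this precisely by \emph{not} absorbing the $|v|_\infty^2$-factor into the Gr\"onwall coefficient: it keeps the bound for $\mathcal{I}_4$ in the form $C(p)(|v|_\infty^2+1)\,|(r^{m-2}\rho)^{1/p}u|_p^p$ and treats this as a \emph{source term}, using the second a~priori bound from Lemma~\ref{lemma-uv-p-po},
\[
\int_0^t\big|(r^{m-2}\rho)^{1/p}u\big|_p^p\,\mathrm{d}s\le C(p,T),
\]
so that after integration this term contributes only $C(p,T)(\sup_s|v|_\infty)^2$. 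Gr\"onwall is then applied with the remaining $|v|$-free coefficient $C(p,T)$, giving the polynomial dependence. In particular, your 2D interpolation of $|(r^{-1}\rho)^{1/p}u|_p^p$ into $|\rho^{1/p}u|_p^p$ is unnecessary and counterproductive here---just leave the $r^{m-2}$-weighted term intact and invoke Lemma~\ref{lemma-uv-p-po} for its time integral.
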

\begin{proof}
First, multiplying both sides of $\eqref{e1.5hh}_2$ by $|u|^{p-2}u$ with $p\in [4,\infty)$, together with $\eqref{e1.5hh}_1$ and \eqref{V-expression3}, leads to
\begin{equation}\label{eq:6.3-po}
\begin{aligned}
&\frac{1}{p}\big(\rho |u|^p\big)_t+2\alpha(p-1)\rho |u|^{p-2} |u_r|^2+\frac{2\alpha m(p-1)}{p}\frac{\rho |u|^p}{r^2}-A(p-1)\rho^\gamma|u|^{p-2}u_r\\
&=\Big(2\alpha\rho u_r|u|^{p-2}u+\frac{2\alpha m}{p}\frac{\rho |u|^p}{r}-\frac{1}{p}\rho u |u|^p-A\rho^\gamma|u|^{p-2}u\Big)_r-\frac{m}{p}\frac{\rho v |u|^p}{r}.
\end{aligned}
\end{equation}

Next, we need to show  that $\tilde{\mathcal{B}}_4\in W^{1,1}(I)$ and $\tilde{\mathcal{B}}_4|_{r=0}=0$ for {\it a.e.} $t\in (0,T)$, which allows us to apply Lemma \ref{calculus} to obtain
\begin{equation}\label{eq:B4-po}
\int_0^\infty (\tilde{\mathcal{B}}_4)_r\,\mathrm{d}r=-\tilde{\mathcal{B}}_4|_{r=0}=0.
\end{equation}
On one hand,  $\tilde{\mathcal{B}}_4|_{r=0}=0$ follows directly from $p\geq 4$, $u|_{r=0}=0$ and the fact that $(\rho,u,\frac{u}{r},u_r)\in C(\bar I)$ for each $t\in (0,T]$ due to \eqref{spd2-po} (or \eqref{spd233-po}). 

On the other hand,  when $n=2$ $(m=1)$, \eqref{spd-po}--\eqref{spd2-po} (or \eqref{spd33-po}--\eqref{spd233-po}) imply that 
\begin{equation*}
 \sqrt{r}\Big(u,\frac{u}{r},u_r,u_{rr}\Big)\in L^2(I), \quad\,\,
\Big(\rho,\rho^{-1},\rho_r,u,\frac{u}{r},u_r\Big)\in L^\infty(I)
\qquad\,\, \mbox{for {\it a.e.} $t\in (0,T)$},
\end{equation*}
which, along with the H\"older inequality, yields 
\begin{align*}
|\tilde{\mathcal{B}}_4|_1&\leq C(p)\Big(\big|\rho |u|^{p-1}|u_r|\big|_1+\Big|\frac{1}{r}\rho |u|^p\Big|_1+\big|\rho |u|^{p+1}\big|_1+\big|\rho^\gamma|u|^{p-1}\big|_1\Big)\\
&\leq C(p)|\rho|_\infty |u|_\infty^{p-2}\Big(\Big|\frac{u}{\sqrt{r}}\Big|_2|\sqrt{r}u_r|_2+\Big|\frac{u}{\sqrt{r}}\Big|_2^2\Big)\\
&\quad+ C(p)|\rho|_\infty|u|_\infty^{p-1}|\sqrt{r}u|_2\Big|\frac{u}{\sqrt{r}}\Big|_2+C(p)|\rho|_\infty^{\gamma}|u|_\infty^{p-2}|\sqrt{r}u|_2\Big|\frac{u}{\sqrt{r}}\Big|_2<\infty,\\
|(\tilde{\mathcal{B}}_4)_r|_1&\leq C(p)\big|\big(|\rho_r||u_r||u|^{p-1},\rho |u_{rr}||u|^{p-1},\rho u_r^2|u|^{p-2}\big)\big|_1\\
&\quad +C(p)\Big|\Big(\frac{1}{r^2}\rho |u|^p,\frac{1}{r}\rho_r |u|^p,\frac{1}{r}\rho |u|^{p-1}|u_r|\Big)\Big|_1\!\!+\!C(p)\big|\big(|\rho_r||u|^{p+1},\rho |u|^p |u_r|\big)\big|_1\\
&\quad + C(p)\big|\big(\rho^{\gamma-1}|\rho_r||u|^{p-1},\rho^{\gamma} |u|^{p-2} |u_r|\big)\big|_1\\
&\leq C(p)|\rho|_\infty|u|_\infty^{p-2} \Big|\frac{u}{\sqrt{r}}\Big|_2\big(|\rho^{-1}|_\infty|\rho_r|_\infty|\sqrt{r}u_r|_2+|\sqrt{r}u_{rr}|_2\big)\\
&\quad +C(p)|\rho|_\infty|u|_\infty^{p-4}|\sqrt{r}u|_2\Big|\frac{u}{\sqrt{r}}\Big|_2\Big(|u_r|_\infty^2+\Big|\frac{u}{r}\Big|_\infty^2\Big)\\
&\quad+ C(p)|u|_\infty^{p-4}\Big|\frac{u}{r}\Big|_\infty|\sqrt{r}u|_2\Big|\frac{u}{\sqrt{r}}\Big|_2\big(|u|_\infty|\rho_r|_\infty+|\rho|_\infty |u_r|_\infty\big)\\
&\quad+C(p) |u|_\infty^{p-2}|\sqrt{r}u|_2\Big|\frac{u}{\sqrt{r}}\Big|_2\big(|u|_\infty |\rho_r|_\infty+|\rho|_\infty |u_r|_\infty\big)\\
&\quad+C(p)|\rho|_\infty^{\gamma-1} |u|_\infty^{p-4}|\sqrt{r}u|_2\Big|\frac{u}{\sqrt{r}}\Big|_2\big(|u|_\infty |\rho_r|_\infty+|\rho|_\infty |u_r|_\infty\big)<\infty.
\end{align*}

When $n=3$ $(m=2)$, by \eqref{spd-po}--\eqref{spd2-po} (or \eqref{spd33-po}--\eqref{spd233-po}), we have 
\begin{equation*}
(u,ru_r,ru_{rr})\in L^2(I), \quad\,\, \Big(\rho,\rho^{-1},\rho_r,u,\frac{u}{r},u_r\Big)\in L^\infty(I)\qquad\,\,\text{for {\it a.e.} $t\in (0,T)$},
\end{equation*}
which yields from the H\"older inequality that
\begin{align*}
|\tilde{\mathcal{B}}_4|_1&\leq C(p)\big(\big|\rho |u|^{p-1}|u_r|\big|_1+\Big|\frac{1}{r}\rho |u|^p\Big|_1+\big|\rho |u|^{p+1}\big|_1+\big|\rho^\gamma|u|^{p-1}\big|_1\big)\\
&\leq C(p)|\rho|_\infty |u|_\infty^{p-3}|u|_2^2\Big(\Big|\big(u_r,\frac{u}{r}\big)\Big|_\infty+|u|_\infty^2+|\rho|_\infty^{\gamma-1}\Big)<\infty,\\[1mm]
|(\tilde{\mathcal{B}}_4)_r|_1&\leq C(p)\big|\big(|\rho_r||u_r||u|^{p-1},\rho |u_{rr}||u|^{p-1},\rho u_r^2|u|^{p-2}\big)\big|_1\\
&\quad +C(p)\Big|\Big(\frac{1}{r^2}\rho |u|^p,\frac{1}{r}\rho_r |u|^p,\frac{1}{r}\rho |u|^{p-1}|u_r|\Big)\Big|_1\!\!+\!C(p)\big|\big(|\rho_r||u|^{p+1},\rho |u|^p |u_r|\big)\big|_1\\
&\quad + C(p)\big|\big(\rho^{\gamma-1}|\rho_r||u|^{p-1},\rho^{\gamma} |u|^{p-2} |u_r|\big)\big|_1\\
&\leq C(p)\Big|\frac{u}{r}\Big|_\infty\!|u|_\infty^{p-4}|u|_2\big(|\rho_r|_\infty|u|_\infty|ru_r|_2 \! + \!|\rho|_\infty|u|_\infty |ru_{rr}|_2\!+\!|\rho|_\infty|u_r|_\infty|ru_r|_2\big)\\
&\quad+C(p)|u|_\infty^{p-4}\Big|\frac{u}{r}\Big|_\infty|u|_2^2\Big(\Big|\frac{u}{r}\Big|_\infty|\rho|_\infty+|\rho_r|_\infty|u|_\infty+|\rho|_\infty|u_r|_\infty\Big)\\
&\quad +C(p)|u|_\infty^{p-2}|u|_2\Big(|\rho_r|_\infty|u|_\infty|u|_2+|\rho|_\infty\Big|\frac{u}{r}\Big|_\infty|ru_r|_2\Big)\\
&\quad +C(p)|\rho|_\infty^{\gamma-1}|u|_\infty^{p-4}|u|_2^2 \big(|\rho_r|_\infty|u|_\infty +|\rho|_\infty|u_r|_\infty\big) <\infty.
\end{align*}

As a consequence, integrating \eqref{eq:6.3-po} over $I$, we obtain from \eqref{eq:B4-po} that, for $p\in[4,\infty)$,
\begin{equation}\label{503-po}
\begin{aligned}
&\frac{1}{p}\frac{\mathrm{d}}{\mathrm{d}t}\big|\rho^\frac{1}{p}u\big|_p^p+ 2\alpha (p-1)\big|\rho^\frac{1}{2} |u|^\frac{p-2}{2} u_r\big|_2^2+\frac{2\alpha m(p-1)}{p}\big|(r^{-2}\rho)^\frac{1}{p}u\big|_p^p\\
&=A(p-1)\int_0^\infty \rho^\gamma |u|^{p-2} u_r\,\mathrm{d}r-\frac{m}{p}\int_0^\infty \frac{\rho v|u|^p}{r}\,\mathrm{d}r:=\sum_{i=3}^4\mathcal{I}_i.
\end{aligned}    
\end{equation}
For $\mathcal{I}_3$, it follows from Lemma \ref{important2-po} and the H\"older and Young inequality that
\begin{equation}
\begin{aligned}
\mathcal{I}_3&\leq C(p)\big|\chi_1^\flat\rho^\gamma |u|^{p-2} u_r\big|_1+C(p)\big|\chi_1^\sharp\rho^\gamma |u|^{p-2} u_r\big|_1\\
&\leq  C(p)|\rho|_{\infty}^{\gamma-1}\big|(r^m\rho)^\frac{1}{2}|u|^\frac{p-4}{2}u_r\big|_2\Big( |\chi_1^\flat r^\frac{2-m}{2}|_\infty\big|(r^{-2}\rho)^\frac{1}{p}u\big|_{p}^\frac{p}{2}+ |\chi_1^\sharp r^{-\frac{m}{2}}|_\infty\big|\rho^\frac{1}{p}u\big|_{p}^\frac{p}{2}\Big)\\
&\leq C(p,T)\Big(\big|(r^m\rho)^\frac{1}{2}|u|^\frac{p-4}{2}u_r\big|_2^2+\big|\rho^\frac{1}{p}u\big|_{p}^p\Big)+\frac{\alpha}{8}\big|(r^{-2}\rho)^\frac{1}{p}u\big|_{p}^p.
\end{aligned}
\end{equation}
For $\mathcal{I}_4$, we obtain from the same calculation of $\mathcal{G}_3$ in the proof 
of Lemma \ref{rho u-L2} that
\begin{equation}\label{R2-dim2-po}
\mathcal{I}_4\leq C(p)(|v|_\infty^2+1) \big|(r^{m-2}\rho)^\frac{1}{p}u\big|_p^p+\frac{\alpha}{8}\big|(r^{-2}\rho)^\frac{1}{p}u\big|_p^p.
\end{equation}

Thus, collecting \eqref{503-po}--\eqref{R2-dim2-po} gives
\begin{equation}\label{711-po}
\begin{aligned}
&\frac{\mathrm{d}}{\mathrm{d}t}\big|\rho^\frac{1}{p}u\big|_p^p+ \big|\rho^\frac{1}{2}|u|^\frac{p-2}{2} u_r\big|_2^2+\big|(r^{-2}\rho)^\frac{1}{p}u\big|_p^p\\
&\leq C(p,T)\Big(\big|(r^m\rho)^\frac{1}{2}|u|^\frac{p-4}{2}u_r\big|_2^2+\big|\rho^\frac{1}{p}u\big|_{p}^p\Big)+C(p)(|v|_\infty^2+1)  \big|(r^{m-2}\rho)^\frac{1}{p}u\big|_p^p,
\end{aligned}
\end{equation}
which, along with the Gr\"onwall inequality and Lemma \ref{lemma-uv-p-po}, yields that, for all $t\in[0,T]$ and $p\in[4,\infty)$,
\begin{equation*}
\big|\rho^\frac{1}{p}u(t)\big|_p^p+ \int_0^t\Big(\big|\rho^\frac{1}{2}|u|^\frac{p-2}{2} u_r\big|_2^2+\big|(r^{-2}\rho)^\frac{1}{p}u\big|_p^p\Big)\mathrm{d}s \leq C(p,T)\Big(\big(\sup_{s\in[0,t]}|v|_\infty\big)^2+1\Big).
\end{equation*}
Here, we still need to check the $L^p(I)$-boundedness of $\rho_0^{\frac{1}{p}}u_0$. Indeed, it follows from Lemmas \ref{ale1} and \ref{lemma-initial} that
\begin{align*}
\big|\rho_0^\frac{1}{p}u_0\big|_p&\leq \big|\chi_1^\flat \rho_0^\frac{1}{p}u_0\big|_p+\big|\chi_1^\sharp \rho_0^\frac{1}{p}u_0\big|_p\leq |\rho_0|_\infty^\frac{1}{p}|u_0|_\infty+ |\rho_0|_\infty^\frac{1}{p}|\chi_1^\sharp r^{-\frac{m}{p}}|_\infty \big|r^\frac{m}{p}u_0\big|_p\\
&\leq C(p)\big(\|\boldsymbol{u}_0\|_{L^\infty}+\|\boldsymbol{u}_0\|_{L^p}\big)\leq C(p)\|\boldsymbol{u}_0\|_{H^2}\leq C(p).
\end{align*}
This completes the proof.
\end{proof}

Next, based on Lemma \ref{rho u-L2-po}, we can obtain the $L^1([0,T];L^\infty(I))$-estimate of $\rho^{\gamma-1}u$.
\begin{lem}\label{cru3-po}
For any $\epsilon\in (0,1)$,
there exists a constant $C(\epsilon,T)>0$ such that
\begin{equation*}
\int_0^t |\rho^{\gamma-1}u|_{\infty}\, \mathrm{d}s\leq C(\epsilon,T)\Big(1+\int_0^t |v|_\infty \,\mathrm{d}s\Big)+\epsilon\sup_{s\in[0,t]}|v|_\infty \qquad \text{for any $\,t\in [0,T]$}.
\end{equation*}
\end{lem}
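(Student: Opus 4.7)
The plan is to transfer the argument from the proof of Lemma \ref{cru3} (the far-field vacuum case) to the non-vacuum setting. All the required ingredients are in place: the uniform upper bound $|\rho|_\infty \le C(T)$ from Lemma \ref{important2-po}, the weighted $L^p$-controls on $(r^m\rho)^{1/p}(u,v)$ from Lemma \ref{lemma-uv-p-po}, and the weighted dissipation estimate of Lemma \ref{rho u-L2-po}. Crucially, no lower bound on $\rho$ is required at this stage.

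First, by the one-dimensional Sobolev embedding $W^{1,1}(I)\hookrightarrow L^\infty(I)$, I would apply $\partial_r$ to $\rho^{q\gamma-q}|u|^q$ and use $2\alpha\rho_r=\rho(v-u)$ to eliminate derivatives of $\rho$. Combined with H\"older's inequality, this yields, for any $q\ge 2$,
\begin{equation*}
|\rho^{\gamma-1}u|_\infty^q\le C(q)(1+|v|_\infty)|\rho|_\infty^{q\gamma-q-1}|\rho^{1/q}u|_q^q+C(q)|\rho^{\gamma-1}u|_\infty|\rho|_\infty^{(q-1)\gamma-q}|\rho^{1/q}u|_q^q+C(q)|\rho|_\infty^{q\gamma-q-1}|\rho^{1/q}u|_q^{q/2}\bigl|\rho^{1/2}|u|^{(q-2)/2}u_r\bigr|_2.
\end{equation*}
Choosing $q=\tilde q\ge 4$ with $\tilde q>\gamma/(\gamma-1)$ (so that $\gamma>\tilde q/(\tilde q-1)$), Lemma \ref{important2-po} bounds every $|\rho|_\infty$ factor by $C(T)$, and the middle term carrying $|\rho^{\gamma-1}u|_\infty$ can be absorbed into the left-hand side.

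Second, to control $|\rho^{1/\tilde q}u|_{\tilde q}^{\tilde q}$, I would split it via $\chi_1^\flat$ and $\chi_1^\sharp$: the inner piece is bounded by $|\chi_1^\flat r^{2-m}|_\infty|(r^{m-2}\rho)^{1/\tilde q}u|_{\tilde q}^{\tilde q}$ and the outer piece by $|\chi_1^\sharp r^{-m}|_\infty|(r^m\rho)^{1/\tilde q}u|_{\tilde q}^{\tilde q}$, both of which are uniformly bounded by $C(T)$ via Lemma \ref{lemma-uv-p-po}. Taking the $\tilde q$-th root then gives the pointwise-in-time estimate
\begin{equation*}
|\rho^{\gamma-1}u|_\infty\le C(T)\Bigl(1+|v|_\infty+\bigl|\rho^{1/2}|u|^{(\tilde q-2)/2}u_r\bigr|_2^{2/(2\tilde q-1)}\Bigr).
\end{equation*}
Integrating over $[0,t]$ and invoking H\"older together with Lemma \ref{rho u-L2-po} produces
\begin{equation*}
\int_0^t\bigl|\rho^{1/2}|u|^{(\tilde q-2)/2}u_r\bigr|_2^{2/(2\tilde q-1)}\mathrm{d}s\le C(T)\Bigl(1+\sup_{s\in[0,t]}|v|_\infty\Bigr)^{2/(2\tilde q-1)},
\end{equation*}
and since $2/(2\tilde q-1)<1$ for $\tilde q\ge 4$, Young's inequality upgrades this to $C(\epsilon,T)+\epsilon\sup_{s\in[0,t]}|v|_\infty$. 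Combined with the $C(T)\int_0^t|v|_\infty\mathrm{d}s$ term already on the right, this yields the stated inequality.

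The main obstacle is essentially bookkeeping rather than analysis: one must simultaneously satisfy (i) $\tilde q>\gamma/(\gamma-1)$ for the absorption in Step 1, and (ii) $\tilde q\ge 4$ for the applicability of Lemma \ref{rho u-L2-po}. Since $\gamma>1$, such a $\tilde q$ always exists, so the proof proceeds in the same manner as Lemma \ref{cru3} without further structural modification.
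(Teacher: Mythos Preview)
Your proposal is correct and follows essentially the same approach as the paper. One minor point: the inner piece $|(r^{m-2}\rho)^{1/\tilde q}u|_{\tilde q}^{\tilde q}$ is not bounded pointwise in time by Lemma \ref{lemma-uv-p-po} (only its time integral is), so you should retain it in the pointwise estimate and bound it after integrating in $s$, exactly as the paper does.
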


\begin{proof}
Let $q\geq 4$ be a constant to  be determined later. First, it follows from \eqref{V-expression3}, Lemma \ref{ale1}, 
and  the H\"older inequality that
\begin{equation}\label{6.10-po}
\begin{aligned} 
|\rho^{\gamma-1}u|_\infty^{q}&=\big||\rho^{\gamma-1}u|^q\big|_\infty\leq C_0\int_0^\infty \rho^{q\gamma-q} |u|^{q} \,\mathrm{d}r+C_0\int_0^\infty |(\rho^{q\gamma-q} |u|^{q})_r| \,\mathrm{d}r\\
&\leq C_0\int_0^\infty \rho^{q\gamma-q} |u|^{q}\mathrm{d}r\!+\!C(q)\!\int_0^\infty \rho^{q\gamma-q}\Big((|v|+|u|)|u|^{q}+|u|^{q-1} |u_r|\Big)\mathrm{d}r\\
&\leq C(q)(1+|v|_\infty)\big|\rho^{q\gamma-q}u^{q}\big|_1+C(q)\big|\rho^{q\gamma-q}u^{q+1}\big|_1\\
&\quad +C(q)\big|\rho^{2q\gamma-2q-1}u^{q}\big|_1^\frac{1}{2}\big|\rho^\frac{1}{2}|u|^\frac{q-2}{2}u_r\big|_2 \\
&\leq C(q)(1+|v|_\infty)|\rho|_\infty^{q\gamma-q-1}\big|\rho^{\frac{1}{q}}u\big|_{q}^{q}+C(q)|\rho^{\gamma-1}u|_\infty|\rho|_\infty^{(q-1)\gamma-q}\big|\rho^{\frac{1}{q}}u\big|_{q}^{q}\\
&\quad +C(q)|\rho|_\infty^{q\gamma-q-1}\big|\rho^{\frac{1}{q}}u\big|_{q}^\frac{q}{2}\big|\rho^\frac{1}{2}|u|^\frac{q-2}{2}u_r\big|_2. 
\end{aligned}
\end{equation}

Next, setting $q=\tilde q\geq 4$ in \eqref{6.10-po} large enough such that 
\begin{equation*}
\gamma>\frac{\tilde q}{\tilde q-1}>1,
\end{equation*}
then we obtain from the resulting inequality, Lemma \ref{important2-po}, and the Young inequality that
\begin{align*}
|\rho^{\gamma-1}u|_\infty^{\tilde q}&\leq C(T)(1+|v|_\infty)\big|\rho^\frac{1}{\tilde q}u\big|_{\tilde q}^{\tilde q}+C(T)\big|\rho^\frac{1}{\tilde q}u\big|_{\tilde q}^\frac{\tilde q^2}{\tilde q-1}\\
&\quad+C(T)\big|\rho^\frac{1}{\tilde q}u\big|_{\tilde q}^\frac{\tilde q}{2}\big|\rho^\frac{1}{2}|u|^\frac{\tilde q-2}{2}u_r\big|_2 +\frac{1}{2}|\rho^{\gamma-1}u|_\infty^{\tilde q},
\end{align*}
which, along with the fact that $\tilde q\geq 4$, Lemma \ref{lemma-uv-p-po}, 
and the Young inequality, leads to
\begin{align*}
|\rho^{\gamma-1}u|_\infty&\leq C(T)\Big(\big(1+|v|_\infty^\frac{1}{\tilde q}\big)\big|\rho^\frac{1}{\tilde q}u\big|_{\tilde q}+\big|\rho^\frac{1}{\tilde q}u\big|_{\tilde q}^\frac{\tilde q}{\tilde q-1}+\big|\rho^\frac{1}{\tilde q}u\big|_{\tilde q}^\frac{1}{2}\big|\rho^\frac{1}{2}|u|^\frac{\tilde q-2}{2}u_r\big|_2^\frac{1}{\tilde q}\Big)\\
&\leq C(T)\Big(1+|v|_\infty+\big|\rho^\frac{1}{\tilde q}u\big|_{\tilde q}^{\tilde q}+\big|\rho^\frac{1}{2}|u|^\frac{\tilde q-2}{2}u_r\big|_2^\frac{2}{2\tilde q-1}\Big)\\
&\leq C(T)\Big(1+|v|_\infty+\big|\chi_1^\flat\rho^\frac{1}{\tilde q}u\big|_{\tilde q}^{\tilde q}+\big|\chi_1^\sharp\rho^\frac{1}{\tilde q}u\big|_{\tilde q}^{\tilde q}+\big|\rho^\frac{1}{2}|u|^\frac{\tilde q-2}{2}u_r\big|_2^\frac{2}{2\tilde q-1}\Big)\\
&\leq C(T)\Big(1+|v|_\infty+|\chi_1^\flat r^{2-m}|_\infty \big|(r^{m-2}\rho)^\frac{1}{\tilde q}u\big|_{\tilde q}^{\tilde q}\Big)\\
&\quad +C(T)\Big(|\chi_1^\sharp r^{-m}| \big|(r^m\rho)^\frac{1}{\tilde q}u\big|_{\tilde q}^{\tilde q}+\big|\rho^\frac{1}{2}|u|^\frac{\tilde q-2}{2}u_r\big|_2^\frac{2}{2\tilde q-1}\Big)\\
&\leq C(T)\Big(1+|v|_\infty+\big|(r^{m-2}\rho)^\frac{1}{\tilde q}u\big|_{\tilde q}^{\tilde q}+\big|\rho^\frac{1}{2}|u|^\frac{\tilde q-2}{2}u_r\big|_2^\frac{2}{2\tilde q-1}\Big).
\end{align*}

Finally, integrating the above over $[0,t]$, we see from Lemmas \ref{lemma-uv-p-po} and \ref{rho u-L2-po}, and the H\"older and Young inequalities that, for all $\epsilon\in (0,1)$,
\begin{align*}
\int_0^t |\rho^{\gamma-1}u|_\infty\,\mathrm{d}s&\leq C(T)\int_0^t\Big(1+|v|_\infty+\big|(r^{m-2}\rho)^\frac{1}{\tilde q}u\big|_{\tilde q}^{\tilde q}+\big|\rho^\frac{1}{2}|u|^\frac{\tilde q-2}{2}u_r\big|_2^\frac{2}{2\tilde q-1}\Big)\,\mathrm{d}s\\
&\leq C(T)\Big(1+ \int_0^t |v|_\infty\,\mathrm{d}s\Big)+C(T)\Big(\int_0^t\big|\rho^\frac{1}{2}|u|^\frac{\tilde q-2}{2}u_r\big|_2^2 \,\mathrm{d}s\Big)^\frac{1}{2\tilde q-1}\\
&\leq C(T)\Big(1+ \int_0^t |v|_\infty\,\mathrm{d}s\Big)+C(T)\Big(\sup_{s\in[0,t]}|v|_\infty\Big)^\frac{2}{2\tilde q-1}\\
&\leq C(\epsilon,T)\Big(1+ \int_0^t |v|_\infty\,\mathrm{d}s\Big)+\epsilon\sup_{s\in[0,t]}|v|_\infty.
\end{align*}

The proof of Lemma \ref{cru3-po} is completed.
\end{proof}

Now, with the help of Lemma \ref{cru3-po}, we obtain the $L^\infty(I)$-estimate of $v$. The proof is identical to that of Lemma \ref{l4.4}, we omit it here for brevity.
\begin{lem}\label{l4.4-po}
There exists a constant $C(T)>0$ such that,
\begin{equation*}
|v(t)|_\infty\leq C(T) \qquad \text{for any $t\in [0,T]$}.
\end{equation*}
\end{lem}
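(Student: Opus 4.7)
The plan is to follow the same strategy as in Lemma \ref{l4.4}: combine the transport equation for $v$ from Corollary \ref{cor-v''} with the method of characteristics, and then close the estimate via the bound on $\int_0^t|\rho^{\gamma-1}u|_\infty\,\mathrm{d}s$ obtained in Lemma \ref{cru3-po}. First I would introduce the flow map $\eta(t,r):[0,T]\times I\to I$ satisfying $\eta_t(t,r)=u(t,\eta(t,r))$ with $\eta(0,r)=r$, which is well defined because $u\in L^1([0,T];W^{1,\infty}(I))$ in view of the regularity stated in Theorems \ref{rth10po}--\ref{rth133-po}. Composition with \eqref{eq:effective3} then reduces the equation for $v$ to the scalar ODE
\begin{equation*}
\frac{\mathrm{d}}{\mathrm{d}t}v(t,\eta(t,r))+\frac{A\gamma}{2\alpha}\rho^{\gamma-1}(t,\eta(t,r))\,v(t,\eta(t,r))=\frac{A\gamma}{2\alpha}(\rho^{\gamma-1}u)(t,\eta(t,r)),
\end{equation*}
whose integrating factor $\exp\bigl(\int_0^t\frac{A\gamma}{2\alpha}\rho^{\gamma-1}(\tau,\eta)\,\mathrm{d}\tau\bigr)\geq 1$ yields the representation formula
\begin{equation*}
v(t,\eta(t,r))=v_0(r)\,e^{-\int_0^t\frac{A\gamma}{2\alpha}\rho^{\gamma-1}\,\mathrm{d}\tau}+\frac{A\gamma}{2\alpha}\int_0^t(\rho^{\gamma-1}u)(s,\eta(s,r))\,e^{-\int_s^t\frac{A\gamma}{2\alpha}\rho^{\gamma-1}\,\mathrm{d}\tau}\,\mathrm{d}s.
\end{equation*}

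Since $\rho\geq 0$ and hence both exponential factors are bounded by $1$, taking the $L^\infty(I)$-norm and using that $\eta(t,\cdot)$ is a bijection on $I$ gives the pointwise bound
\begin{equation*}
\sup_{s\in[0,t]}|v|_\infty\leq C_0\Bigl(|v_0|_\infty+\int_0^t|\rho^{\gamma-1}u|_\infty\,\mathrm{d}s\Bigr).
\end{equation*}
For the initial term I would invoke Lemma \ref{equiv-initial}(iii)--(iv), together with the embedding $H^2(\mathbb{R}^n)\hookrightarrow L^\infty(\mathbb{R}^n)$ for $n=2$ (respectively the extra assumption \eqref{in-start1} for $n=3$), to conclude $|v_0|_\infty\leq C_0(\|\boldsymbol{u}_0\|_{L^\infty}+\|\nabla\log\rho_0\|_{L^\infty})\leq C_0$. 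Combining this with Lemma \ref{cru3-po} applied to some $\epsilon\in(0,1)$ to be chosen gives
\begin{equation*}
\sup_{s\in[0,t]}|v|_\infty\leq C(\epsilon,T)\Bigl(1+\int_0^t|v|_\infty\,\mathrm{d}s\Bigr)+C_0\epsilon\sup_{s\in[0,t]}|v|_\infty.
\end{equation*}

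Finally, I would fix $\epsilon=\frac{1}{2C_0}$ in order to absorb the last term into the left-hand side, which leaves a linear Gr\"onwall-type inequality for the nondecreasing quantity $t\mapsto\sup_{s\in[0,t]}|v(s)|_\infty$; applying Gr\"onwall on $[0,T]$ yields the desired bound $|v(t)|_\infty\leq C(T)$. I do not anticipate any substantive obstacle here, since all the essential analytical work has already been carried out: Lemma \ref{cru3-po} produces exactly the estimate needed to close the loop, and Lemma \ref{equiv-initial} supplies the qualitative information on $\rho_0$ that in the vacuum case of Lemma \ref{l4.4} was furnished by \eqref{shangjie3}. The only conceptual point worth double-checking is that the flow map $\eta(t,\cdot)$ maps $I$ onto $I$ and preserves the boundary condition at $r=0$; this follows from $u|_{r=0}=0$ together with the continuity of $u$ up to $r=0$ given by \eqref{spd2-po} (or \eqref{spd233-po}).
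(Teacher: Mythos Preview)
Your proposal is correct and follows exactly the approach of the paper, which explicitly states that the proof is identical to that of Lemma~\ref{l4.4} and omits the details. Your write-up even fills in the justification of $|v_0|_\infty\leq C_0$ via Lemma~\ref{equiv-initial} and the well-definedness of the flow map, which the paper leaves implicit.
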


\smallskip
\subsection{Global uniform lower bound of the density}\label{subsub-1134}

In  \S\ref{subsub-1134}, we always assume that $\gamma\in (1,\infty)$ if $n=2$ and $\gamma\in (1,3)$ if $n=3$.
By the $L^\infty(I)$-estimates of $(\rho,v)$, we are able to obtain the uniform lower bound of $\rho$ in $[0,T]\times I$. Similar to the arguments presented in \S \ref{section-nonformation}, we first show the following $L^2(I)$-estimate of $u$.

\begin{lem}\label{ele-po}
There exists a constant $C(T)>0$ such that 
\begin{equation*}
|(\rho^{\gamma-1} v,u)(t)|_2^2 +\int_0^t\Big(\Big|\big(u_r,\frac{u}{r}\big)\Big|_2^2+|u|^2_\infty\Big)\,\mathrm{d}s\leq C(T) \qquad \text{for any $t\in [0,T]$}.   
\end{equation*}
\end{lem}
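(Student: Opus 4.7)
The plan mirrors the two-step structure in the proof of Lemma \ref{ele}, with the $L^\infty$-bounds on $\rho$ and $v$ now supplied by Lemmas \ref{important2-po} and \ref{l4.4-po}. I aim to derive a closed differential inequality for $|\rho^{\gamma-1}v|_2^2 + |u|_2^2$ that can be integrated via Gr\"onwall, and then to recover the integral of $|u|_\infty^2$ via a one-dimensional Gagliardo--Nirenberg interpolation.

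First, I would divide $\eqref{e1.5hh}_2$ by $\rho$ to rewrite it in the $\eqref{e2.2}_2$-form, multiply by $u$, and integrate over $I$. Using the identity $2\alpha(\log\rho)_r = v-u$, the pressure term becomes $-\frac{A\gamma}{2\alpha}\int_0^\infty \rho^{\gamma-1}(v-u)u\,\mathrm{d}r$, and after an integration by parts justified exactly as for the boundary term $\mathcal{B}_5$ in Lemma \ref{ele} (using $u|_{r=0}=0$ together with the regularity in \eqref{spd-po}/\eqref{spd33-po}), the resulting identity reads
\begin{equation*}
\frac{1}{2}\frac{\mathrm{d}}{\mathrm{d}t}|u|_2^2 + 2\alpha|u_r|_2^2 + \alpha m\Big|\frac{u}{r}\Big|_2^2 = -\frac{A\gamma}{2\alpha}\int_0^\infty \rho^{\gamma-1}(v-u)u\,\mathrm{d}r + \int_0^\infty vu_ru\,\mathrm{d}r.
\end{equation*}
Controlling the right-hand side via $|\rho|_\infty$ and $|v|_\infty$ yields
\begin{equation*}
\frac{\mathrm{d}}{\mathrm{d}t}|u|_2^2 + \alpha\Big|\big(u_r, \tfrac{u}{r}\big)\Big|_2^2 \leq C(T)\bigl(|\rho^{\gamma-1}v|_2^2 + |u|_2^2\bigr).
\end{equation*}

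Next, I would multiply the effective-velocity equation \eqref{eq:effective3} by $\rho^{2\gamma-2}v$, combine with $\eqref{e1.5hh}_1$, and integrate over $I$. This produces a dissipation term proportional to $|\rho^{(3\gamma-3)/2}v|_2^2$, while the residual terms involving $\rho^{2\gamma-2}v^2u_r$, $\rho^{2\gamma-2}v^2(u/r)$, and $\rho^{3\gamma-3}vu$ are absorbed into $C(T)(|\rho^{\gamma-1}v|_2^2 + |u|_2^2) + \frac{\alpha}{8}|(u_r, u/r)|_2^2$ after invoking Lemmas \ref{important2-po} and \ref{l4.4-po}. Summing the two differential inequalities and applying Gr\"onwall's inequality then closes the estimate, provided I can verify the initial-data bound $|\rho_0^{\gamma-1}v_0|_2 + |u_0|_2 \leq C_0$; this will follow from the splitting $|f|_2 \leq C_0(|f|_\infty + |r^{m/2}f|_2)$ together with Lemma \ref{equiv-initial} and the hypotheses \eqref{in-start}--\eqref{in-start1} (or \eqref{in-start-high}). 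The $L^2_tL^\infty_r$-estimate of $u$ then follows from the one-dimensional inequality $|u|_\infty^2 \leq C_0(|u|_2^2 + |u_r|_2^2)$ integrated in time.

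The principal delicate point is the justification of boundary-term vanishing at $r=\infty$ in the integration-by-parts identities: unlike in the vacuum case where $\rho\to 0$ supplies automatic decay of the flux, here $\rho\to\bar\rho>0$, so the vanishing of quantities such as $\rho u^3$, $\rho^{\gamma}u$, and $\rho^{2\gamma-2}v^2u$ at infinity must instead be extracted from the decay of $u$, $v$, and $\rho_r$. These decay properties are encoded in the regularity classes \eqref{spd-po}/\eqref{spd33-po} and the BD-type weighted controls of Lemmas \ref{energy-bd-po}--\ref{lll222}, and their verification mirrors the careful $W^{1,1}(I)$-membership arguments already executed in Lemmas \ref{energy-bd-po} and \ref{lma-po}.
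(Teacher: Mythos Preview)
Your proposal is correct and follows essentially the same approach as the paper: the two-step energy argument (multiplying the momentum equation by $\rho^{-1}u$ and the effective-velocity equation \eqref{eq:effective3} by $\rho^{2\gamma-2}v$), the use of Lemmas \ref{important2-po} and \ref{l4.4-po} to close the Gr\"onwall inequality, and the initial-data control via the near/far splitting and Lemma \ref{equiv-initial} all match the paper's proof. The paper handles the boundary-term justification in the same way you anticipate, by showing the relevant fluxes lie in $W^{1,1}(I)$ and invoking Lemma \ref{calculus}; the only distinction from the vacuum case is that the $W^{1,1}$-membership checks now use $(\rho,\rho^{-1},\rho_r)\in L^\infty(I)$ from \eqref{spd-po} rather than $\rho\in L^1$.
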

\begin{proof}
The proof of this lemma is similar to that of Lemma \ref{ele}, with merely minor modifications to the justifications of integration by parts. We divide the proof into two steps.

\smallskip
\textbf{1.} First, multiplying $\eqref{e1.5hh}_2$ by $u$, along with \eqref{V-expression3}, gives
\begin{equation}\label{eq:6.14-po}
\begin{aligned}
&\frac{1}{2}(u^2)_t+2\alpha |u_r|^2+\alpha m\frac{u^2}{r^2}\\
&=\Big(\underline{2\alpha u_ru+\alpha m\frac{u^2}{r}-\frac{2}{3}u^3}_{:=\tilde{\mathcal{B}}_5}\Big)_r-\frac{A\gamma}{2\alpha}\rho^{\gamma-1}(v-u)u+vu_ru.
\end{aligned}
\end{equation}
Here we need to show  that $\tilde{\mathcal{B}}_5\in W^{1,1}(I)$ and $\tilde{\mathcal{B}}_5|_{r=0}=0$ for {\it a.e.} $t\in (0,T)$, which allows us to apply Lemma \ref{calculus} to obtain
\begin{equation}\label{eq:B5-po}
\int_0^\infty (\tilde{\mathcal{B}}_5)_r\,\mathrm{d}r=-\tilde{\mathcal{B}}_5|_{r=0}=0.
\end{equation}
Since $\tilde{\mathcal{B}}_5$ consists of $u$ only, the proof of \eqref{eq:B5-po} is the same as that of \eqref{eq:B5} in the proof for Lemma \ref{ele}.

Thus, integrating \eqref{eq:6.14-po} over $I$, together with \eqref{eq:B5-po}, yields
\begin{equation}\label{ell-po}
\frac{1}{2}\frac{\mathrm{d}}{\mathrm{d}t}|u|_2^2+ 2\alpha|u_r|_2^2+\alpha m\Big|\frac{u}{r}\Big|_2^2=\frac{A\gamma}{2\alpha}\int_0^\infty \rho^{\gamma-1}(v-u) u\,\mathrm{d}r+ \int_0^\infty v u_ru \,\mathrm{d}r :=\sum_{i=5}^6 \mathcal{I}_i.
\end{equation}
Then it follows from Lemmas \ref{important2-po} and \ref{l4.4-po}, and the  H\"older and Young inequalities that 
\begin{equation}\label{j3-j5-po}
\begin{aligned}
\mathcal{I}_5&\leq C_0\big(|\rho^{\gamma-1} v|_2|u|_2 +|\rho|_\infty^{\gamma-1} |u|_2^2\big) \leq  C(T)|(\rho^{\gamma-1} v,u)|_2^2,\\
\mathcal{I}_6&\leq C_0|v|_\infty|u|_2 |u_r|_2 \leq \frac{\alpha}{8}|u_r|_2^2+C(T)|u|_2^2.
\end{aligned}
\end{equation}

Combining with \eqref{ell-po}--\eqref{j3-j5-po} yields
\begin{equation}\label{pol-po}
\frac{\mathrm{d}}{\mathrm{d}t}|u|_2^2+ \alpha\Big|\big(u_r,\frac{u}{r}\big)\Big|_2^2\leq  C(T)|(\rho^{\gamma-1} v,u)|_2^2. 
\end{equation}

\smallskip
\textbf{2.} For $|\rho^{\gamma-1} v|_2$, multiplying \eqref{eq:effective3} by $\rho^{2\gamma-2}v$, together with $\eqref{e1.5hh}_1$, yields
\begin{equation}\label{eq:B6-pre-po}
\begin{aligned}
&\frac{1}{2}(\rho^{2\gamma-2}v^2)_t+ \frac{1}{2} (\underline{u\rho^{2\gamma-2}v^2}_{:=\tilde{\mathcal{B}}_6})_r+\frac{A\gamma}{2\alpha}\rho^{3\gamma-3} v^2\\
&=\Big(\frac{3}{2}-\gamma\Big)\rho^{2\gamma-2} v^2 u_r-(\gamma-1)m\rho^{2\gamma-2} v^2\frac{u}{r}+\frac{A\gamma}{2\alpha}\rho^{3\gamma-3} vu.
\end{aligned}
\end{equation}

Next, we need to show that $\tilde{\mathcal{B}}_6\in W^{1,1}(I)$ and $\tilde{\mathcal{B}}_6|_{r=0}=0$ 
for {\it a.e.} $t\in (0,T)$, which allows us to apply Lemma \ref{calculus} to obtain
\begin{equation}\label{eq:B6-po}
\int_0^\infty (\tilde{\mathcal{B}}_6)_r\,\mathrm{d}r=-\tilde{\mathcal{B}}_6|_{r=0}=0.   
\end{equation}
To show $\tilde{\mathcal{B}}_6|_{r=0}=0$, we first note that $v=u+2\alpha(\log\rho)_r$ and
\begin{equation*}
(\rho, \rho^{-1}, u,\rho_r)\in L^\infty(I)
\qquad\,\,\mbox{ for {\it a.e.} $t\in (0,T)$}
\end{equation*}
due to \eqref{spd-po}--\eqref{spd2-po} (or \eqref{spd33-po}--\eqref{spd233-po}), which implies that $v\in L^\infty(I)$ for {\it a.e.} $t\in (0,T)$. 
Then it follows from $u|_{r=0}$ and $(\rho,u) \in C(\bar I)$ 
for each $t\in (0,T]$ (due to \eqref{spd2-po} or \eqref{spd233-po}) that $\tilde{\mathcal{B}}_6|_{r=0}=0$. 
On the other hand,  based on \eqref{spd-po}--\eqref{spd2-po} (or \eqref{spd33-po}--\eqref{spd233-po}), 
we have
\begin{equation*}
(\rho,\rho^{-1},\rho_r,u,u_r)\in L^\infty(I),\quad\,\, r^\frac{m}{2}(u,u_r,\rho_r,\rho_{rr})\in L^2(I)
\qquad\,\,\mbox{for {\it a.e.} $t\in (0,T)$}.
\end{equation*}
Thus, one can first obtains from Lemma \ref{hardy} that
\begin{align*}
|(u,\rho_r)|_2&\leq \big|\chi_1^\flat(u,\rho_r)\big|_2+\big|\chi_1^\sharp(u,\rho_r)\big|_2\leq C_0\big|\chi_1^\flat r\big(u,u_r,\rho_r,\rho_{rr}\big)\big|_2+\big|\chi_1^\sharp(u,\rho_r)\big|_2\\
&\leq C_0\big|\chi_1^\flat r^\frac{2-m}{2}\big|_\infty\big|r^\frac{m}{2}\big(u,u_r,\rho_r,\rho_{rr}\big)\big|_2+\big|\chi_1^\sharp r^{-\frac{m}{2}}\big|_\infty|r^\frac{m}{2}(u,\rho_r)|_2<\infty.
\end{align*}
Then it follows from the above, \eqref{V-expression3}, and the H\"older and Young inequalities that
\begin{align*}
|\tilde{\mathcal{B}}_6|_1&\leq C_0|u|_\infty\big(|\rho|_\infty^{2\gamma-2}|u|_2^2+|\rho^{2\gamma-4}|_\infty|\rho_r|_2^2\big)<\infty,\\[1mm]
|(\tilde{\mathcal{B}}_6)_r|_1&\leq C_0\big|\big(u_r\rho^{2\gamma-2} v^2, u\rho^{2\gamma-3}\rho_r v^2,u\rho^{2\gamma-2} vv_r\big)\big|_1\\
&\leq C_0(|u_r|_\infty|\rho|_\infty^{2\gamma-2}+ |\rho^{2\gamma-3}|_\infty |\rho_r|_\infty|u|_\infty) (|u|_2^2+|\rho^{-2}|_\infty|\rho_r|_2^2)\\
&\quad+ C_0\,|\rho|_\infty^{2\gamma-2}\big(|u|_\infty+|\rho^{-1}|_\infty|\rho_r|_\infty\big)\Big|\frac{u}{r}\Big|_2|\chi_1^\flat r^\frac{2-m}{2}|_\infty \\
&\quad\qquad\times \big(|r^\frac{m}{2}u_r|_2+|\rho^{-2}|_\infty|\rho_r|_\infty|r^\frac{m}{2}\rho_r|_2 +|\rho^{-1}|_\infty |r^\frac{m}{2}\rho_{rr}|_2\big)\\
&\quad+ C_0\,|\rho|_\infty^{2\gamma-2}\big(|u|_\infty+|\rho^{-1}|_\infty|\rho_r|_\infty\big)|u|_2|\chi_1^\sharp r^{-\frac{m}{2}}|_\infty\\
&\quad\qquad\times \big(|r^\frac{m}{2} u_r|_2+|\rho^{-2}|_\infty|\rho_r|_\infty|r^\frac{m}{2}\rho_r|_2 +|\rho^{-1}|_\infty |r^\frac{m}{2}\rho_{rr}|_2\big)<\infty.
\end{align*}

Thus, integrating \eqref{eq:B6-pre-po} over $I$, then we obtain from \eqref{eq:B6-po}, 
Lemmas \ref{important2-po} and \ref{l4.4-po}, and the H\"older and Young inequalities that
\begin{equation}\label{new-v-po}
\begin{aligned}
&\frac{1}{2}\frac{\mathrm{d}}{\mathrm{d}t}|\rho^{\gamma-1} v|_2^2+ \frac{A\gamma}{2\alpha} \big|\rho^\frac{3\gamma-3}{2} v\big|_2^2\\
&=\big(\frac{3}{2}-\gamma\big)\int \rho^{2\gamma-2} v^2 u_r\,\mathrm{d}r-(\gamma-1)m\int \rho^{2\gamma-2}  v^2\frac{u}{r}\,\mathrm{d}r+\frac{A\gamma}{2\alpha}\int \rho^{3\gamma-3} vu\,\mathrm{d}r\\
&\leq  C_0|\rho|_\infty^{\gamma-1}|v|_\infty |\rho^{\gamma-1} v|_2 \Big|\big(u_r,\frac{u}{r}\big)\Big|_2+C_0|\rho|_\infty^{2\gamma-2} |\rho^{\gamma-1} v|_2 |u|_2\\
&\leq  C(T)|(\rho^{\gamma-1} v,u)|_2^2+\frac{\alpha}{8}\Big|\big(u_r,\frac{u}{r}\big)\Big|_2^2.
\end{aligned}
\end{equation}

Combining with \eqref{pol-po} and \eqref{new-v-po} gives
\begin{equation*} 
\frac{\mathrm{d}}{\mathrm{d}t}|(\rho^{\gamma-1} v,u)|_2^2+ \frac{\alpha}{2}\Big|\big(u_r,\frac{u}{r}\big)\Big|_2^2\leq  C(T)|(\rho^{\gamma-1} v,u)|_2^2, 
\end{equation*}
which, along with the Gr\"onwall inequality, yields that, for all $t\in [0,T]$,
\begin{equation}\label{47-po}
|(\rho^{\gamma-1} v,u)(t)|_2^2 + \int_0^t\Big|\big(u_r,\frac{u}{r}\big)\Big|_2^2\,\mathrm{d}s\leq  C(T).
\end{equation}
Here, it is still required to check the $L^2(I)$-boundedness of $(\rho_0^{\gamma-1} v_0,u_0)$. 
Indeed, according to Lemmas \ref{equiv-initial}, \ref{ale1}, and \ref{lemma-initial}, we have
\begin{align*}
|u_0|_2&\leq |\chi_1^\flat u_0|_2+|\chi_1^\sharp u_0|_2\leq |u_0|_\infty+|\chi_1^\sharp r^{-\frac{m}{2}}|_\infty |r^\frac{m}{2}u_0|_2 \\
&\leq C_0\big(\|\boldsymbol{u}_0\|_{L^\infty}+\|\boldsymbol{u}_0\|_{L^2}\big)\leq C_0\|\boldsymbol{u}_0\|_{H^2}\leq C_0,\\[1mm]
|\rho_0^{\gamma-1} v_0|_2&\leq C_0|\rho_0^{\gamma-1}(u_0,(\log\rho_0)_r)|_2\leq  C_0|\rho_0|_\infty^{\gamma-1}|u_0,(\log\rho_0)_r|_2\\
&\leq C_0\big|\chi_1^\flat\big(u_0,(\log\rho_0)_r\big)\big|_2+C_0\big|\chi_1^\sharp\big(u_0,(\log\rho_0)_r\big)\big|_2 \\
&\leq C_0|\chi_1^\flat r^\frac{2-m}{2} |_\infty \big|r^\frac{m-2}{2}\big(u_0,(\log\rho_0)_r\big)\big|_2+C_0|\chi_1^\sharp r^{-\frac{m}{2}}|_\infty\big|r^\frac{m}{2}\big(u_0,(\log\rho_0)_r\big)\big|_2\\
&\leq C_0\big\|\big(\boldsymbol{u}_0,\nabla\log\rho\big)\big\|_{H^1} \leq C_0.
\end{align*}

Finally, it follows from \eqref{47-po} and Lemma \ref{ale1} that 
\begin{equation*}
\int_0^t |u|^2_\infty\,\mathrm{d}s\leq C_0\int_0^t |(u,u_r)|_2^2 \,\mathrm{d}s\leq C_0\Big(t\sup_{s\in [0,t]}|u|_2^2+\int_0^t |u_r|_2^2 \,\mathrm{d}s\Big)\leq C(T).
\end{equation*}

The proof of Lemma \ref{ele-po} is completed.
\end{proof}

Now, based on Lemmas \ref{l4.4-po}--\ref{ele-po}, we  obtain the global uniform lower bound of $\rho$.
\begin{lem}\label{lemma-lowerbound}
For any $(t,r)\in[0,T]\times I$,
\begin{equation*}
\rho(t,r)\geq C(T)^{-1},
\end{equation*}
where $C(T)\geq 1$ is a constant depending only on $(T,\bar\rho,\rho_0,u_0,n,\alpha,\gamma,A)$.
\end{lem}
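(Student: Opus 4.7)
The plan is to follow the strategy outlined in \S\ref{positive-application}: establish a global $L^\infty(I)$-bound on $w := \log(\rho/\bar\rho)$, from which the desired lower bound $\rho(t,r) \geq e^{-C(T)}\bar\rho$ follows directly. I would work with $w$ rather than $\log\rho$ to exploit the fact that $w \to 0$ in the far field.

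First, combining Lemma \ref{ele-po} (which provides the $L^\infty_t L^2_r$-bound on $u$ together with the $L^2_t L^2_r$-bound on $(u_r,u/r)$), Lemma \ref{l4.4-po} (the $L^\infty$-estimate of $v$), and the identity $2\alpha w_r = v - u$, I would derive a uniform-in-time bound on $w_r$ in $L^2(I)$. A Sobolev-type inequality for functions on $I$ (a variant of the fundamental theorem of calculus, as encapsulated in Lemma \ref{calculus} and Lemma \ref{ale1}) then yields
\begin{equation*}
|w(t)|_\infty \leq C(T)\big(|w(t)|_2 + 1\big) \qquad \mbox{for all $t \in [0,T]$}.
\end{equation*}

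Second, I would derive an $L^2(I)$-estimate for $w$ from the transport-type equation $w_t + u w_r + (u_r + mu/r) = 0$, obtained by dividing $\eqref{e1.5hh}_1$ by $\rho$. Multiplying by $r^m w$ and integrating, with integration by parts justified along the lines of Lemmas \ref{energy-bd-po} and \ref{ele-po}, the transport term produces $-\tfrac12\int r^m(u_r+mu/r)w^2\,dr$, while the production term contributes $-\int r^m(u_r+mu/r)w\,dr$ which is controlled by Cauchy--Schwarz against $|r^{m/2}(u_r,u/r)|_2 |r^{m/2}w|_2$. A Gronwall argument against the $L^2_t$-bound on $|(u_r,u/r)|_2$ from Lemma \ref{ele-po} then gives $|w(t)|_2 \leq C(T)(|w_0|_2 + 1)$.

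Third, to close the estimate I would verify that $|w_0|_2 \leq C_0$ by splitting $\mathbb{R}^n$ into the sets $E_\sigma(0)=\{|\rho_0-\bar\rho|\leq \sigma\}$ and its complement for some fixed $\sigma\in(0,\bar\rho/2)$: on $E_\sigma(0)$, Taylor expansion of $\log(\cdot/\bar\rho)$ about $\bar\rho$ yields $|w_0|\leq C_0|\rho_0-\bar\rho|$, which is $L^2$-integrable thanks to Lemma \ref{equiv-initial}, while on $E^\sigma(0)$ the pointwise bounds $\rho_{0*}\leq\rho_0\leq |\rho_0|_\infty$ give uniform control on $|w_0|$ and the Lebesgue measure of $E^\sigma(0)$ is estimated via the Chebyshev-type argument used in Lemma \ref{Oliciz-norm}.

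The principal obstacle is the quadratic term $\tfrac12\int r^m(u_r+mu/r)w^2\,dr$ produced by integration by parts in Step 2: $|(u_r,u/r)|_\infty$ is not known to be $L^1_t$-integrable at this stage, so a naive Gronwall closure fails. The remedy I expect to use is to absorb this term via H\"older and the bound $w_r=(v-u)/(2\alpha)\in L^\infty_t L^2_r$ (from Step 1) combined with the improved Sobolev inequality for spherically symmetric functions in Appendix \ref{improve-sobolev}, so that the quadratic term is controlled by $\|\nabla\boldsymbol{u}\|_{L^2_x}\|w\|_{H^1}^2$, which is integrable in time. All other technicalities (justification of integration by parts near the coordinate singularity at $r=0$ and in the far field) parallel the arguments already established in Lemmas \ref{energy-bd-po} and \ref{ele-po}.
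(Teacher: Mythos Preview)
Your overall strategy matches the paper's, but there is a genuine gap in Step~1. You claim that $w_r=(v-u)/(2\alpha)\in L^\infty_tL^2_r$ follows from $|u|_2\le C(T)$ and $|v|_\infty\le C(T)$. It does not: since $I=[0,\infty)$ is unbounded, $|v|_\infty$ gives no control on $|v|_{L^2(I)}$, and none of the available estimates (the BD entropy bound $|(r^m\rho)^{1/2}v|_2\le C_0$ or $|\rho^{\gamma-1}v|_2\le C(T)$) yields $|v|_2$ without already knowing a lower bound on $\rho$, which is precisely what you are proving. So the simple $H^1(I)\hookrightarrow L^\infty(I)$ route is blocked.

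The paper circumvents this by never isolating $|w_r|_2$. For Step~1 it uses the pointwise identity $w_r=(v-u)/(2\alpha)$ \emph{inside} the integral: starting from $|w|_\infty^3\le C\int(|w|^3+|w|^2|w_r|)\,\mathrm{d}r$, the $v$-part is paired with $|w|_2^2$ via $|v|_\infty$, while the $u$-part is paired with $|w|_\infty|w|_2$ via $|u|_2$; Young's inequality then closes to $|w|_\infty\le C(T)(|w|_2+1)$. For Step~2 the paper multiplies by the \emph{unweighted} $w$ and does \emph{not} integrate by parts: the transport term is estimated directly as $\int u\,w_r\,w\,\mathrm{d}r\le |u|_2|w_r|_\infty|w|_2$, where $|w_r|_\infty\le C(|v|_\infty+|u|_\infty)\le C(T)(1+|u|_\infty)$ and $|u|_\infty\in L^2_t$ from Lemma~\ref{ele-po}. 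This completely sidesteps the quadratic term you identified, so no improved Sobolev is needed (and indeed Remark~\ref{remarkc4} shows the scalar embedding you would need fails). Your remedy for the obstacle also relies on the broken $w_r\in L^2(I)$ bound, so it cannot close. There is also a mismatch in your draft between the weighted multiplier $r^mw$ in Step~2 and the unweighted $|w|_2$ needed in Step~1; dropping the weight (as the paper does) resolves this.
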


\begin{proof}
First, it follows from \eqref{V-expression3}, and Lemmas \ref{l4.4-po}--\ref{ele-po} and \ref{ale1} that
\begin{align*}
\big|\log(\rho/\bar\rho)\big|_\infty^3&\leq C_0\int_0^\infty\big|\log(\rho/\bar\rho)\big|^3\,\mathrm{d}r 
+ C_0\int_0^\infty \big|\log(\rho/\bar\rho)\big|^2\big|(\log\rho)_r\big|\,\mathrm{d}r\\
&\leq C_0\int_0^\infty\big|\log(\rho/\bar\rho)\big|^3\,\mathrm{d}r +C_0\int_0^\infty \big|\log(\rho/\bar\rho)\big|^2\big(|u|+|v|\big)\,\mathrm{d}r\\
&\leq C_0\big|\big(\log(\rho/\bar\rho),v\big)\big|_\infty\big|
\log(\rho/\bar\rho)\big|_2^2 +C_0 \big|\log(\rho/\bar\rho)\big|_\infty\big|\log(\rho/\bar\rho)\big|_2|u|_2\\
&\leq C(T)\Big(\big(\big|\log(\rho/\bar\rho)\big|_\infty+1\big)
\big|\log(\rho/\bar\rho)\big|_2^2+ \big|\log(\rho/\bar\rho)\big|_\infty\big|\log(\rho/\bar\rho)\big|_2\Big),
\end{align*}
which, along with the Young inequality, leads to
\begin{equation}\label{wuqiong-log-42}
\big|\log(\rho/\bar\rho)\big|_\infty\leq C(T)\big(\big|\log(\rho/\bar\rho)\big|_2+1\big).   
\end{equation}

Next, rewrite $\eqref{e1.5hh}_1$ as
\begin{equation}\label{new-e1.5}
\big(\log(\rho/\bar\rho)\big)_t+u(\log\rho)_r+\big(u_r+\frac{m}{r}u\big)=0.
\end{equation}
Multiplying the above by $\log(\rho/\bar\rho)$ and integrating over $I$, we obtain from \eqref{V-expression3}, Lemmas \ref{l4.4-po}--\ref{ele-po}, and the H\"older and Young inequalities that
\begin{equation*}
\begin{aligned}
\frac{1}{2}\frac{\mathrm{d}}{\mathrm{d}t}\big|\log(\rho/\bar\rho)\big|_2^2&=-\int_0^\infty  u(\log\rho)_r\log(\rho/\bar\rho)\,\mathrm{d}r
- \int_0^\infty  \big(u_r+\frac{m}{r}u\big)\log(\rho/\bar\rho)\,\mathrm{d}r\\
&\leq |u|_2|(\log\rho)_r|_\infty\big|\log(\rho/\bar\rho)\big|_2
+C_0\Big|\big(u_r,\frac{u}{r}\big)\Big|_2\big|\log(\rho/\bar\rho)\big|_2\\
&\leq C_0\Big(|u|_2^2|(u,v)|_\infty^2+\Big|\big(u_r,\frac{u}{r}\big)\Big|_2^2\Big)+\big|\log(\rho/\bar\rho)\big|_2^2\\
&\leq C(T)\Big(1+|u|_\infty^2+ \Big|\big(u_r,\frac{u}{r}\big)\Big|_2^2\Big)+\big|\log(\rho/\bar\rho)\big|_2^2,
\end{aligned}
\end{equation*}
which, along with Lemma \ref{ele-po} and the Gr\"onwall inequality, leads to
\begin{equation}\label{324-po}
\big|\log(\rho/\bar\rho)\big|_2\leq C(T)\big(\big|\log(\rho_0/\bar\rho)\big|_2+1\big).
\end{equation}

For the $L^2(I)$-boundedness of $\log(\rho_0/\bar\rho)$, observing that $\rho_0(r)\to \bar\rho$ as $r\to \infty$, one can find a sufficiently large $R_0>0$, depending only on $\bar\rho$, such that, for all $r\in [R_0,\infty)$, 
\begin{equation}\label{comment1}
\chi_{R_0}^\sharp\big|\log(\rho_0/\bar\rho)\big|
=\chi_{R_0}^\sharp\big|\log\big(1+(\rho_0/\bar\rho-1)\big)\big|\leq 2\chi_{R_0}^\sharp|\rho_0/\bar\rho-1|
\leq C_0\chi_{R_0}^\sharp|\rho_0-\bar\rho|.
\end{equation}
For such $R_0>0$, Lemmas \ref{equiv-initial} and \ref{lemma-initial}, together with $0<\inf_{r\in I}\rho_0\leq \rho_0\leq |\rho_0|_\infty$, imply 
\begin{equation}\label{comment2}
\begin{aligned}
\big|\log(\rho_0/\bar\rho)\big|_2&\leq \big|\chi_{R_0}^\flat\log(\rho_0/\bar\rho)\big|_2
+\big|\chi_{R_0}^\sharp\log(\rho_0/\bar\rho)\big|_2\\
&\leq C_0\sqrt{R_0}\big(|\log\rho_0|_\infty+|\log\bar\rho|\big)+C_0\big|\chi_{R_0}^\sharp (\rho_0-\bar\rho)\big|_2\\
&\leq C_0+C_0|r^\frac{m}{2} (\rho_0-\bar\rho)|_2\leq C_0+C_0\|\rho_0-\bar\rho\|_{L^2}\leq C_0.
\end{aligned}
\end{equation}

Thus, substituting above into \eqref{324-po}, together with \eqref{wuqiong-log-42}, gives
\begin{equation}
\big|\log(\rho/\bar\rho)\big|_\infty\leq C(T)\big(\big|\log(\rho/\bar\rho)\big|_2 +1\big)\leq C(T)\big(\big|\log(\rho_0/\bar\rho)\big|_2+1\big)\leq C(T).
\end{equation}
This implies that, for all $(t,r)\in [0,T]\times I$, $\rho(t,r)\geq C(T)^{-1}$ for some constant $C(T)\geq 1$ depending only on $(T,\bar\rho,\rho_0,u_0,n,\alpha,\gamma,A)$.
\end{proof}

\subsection{Global uniform \textbf{\textit{a priori}} estimates for the 2-order regular solutions}\label{globalestimates-2po}
In  \S\ref{globalestimates-2po}, 
we always assume that $\gamma\in (1,\infty)$ if $n=2$ and $\gamma\in (1,3)$ if $n=3$.
Let  $T>0$ be any fixed time, and  let $(\rho, u)(t,r)$ be the $2$-order regular solution 
of problem \eqref{e1.5hh} in $[0,T]\times I$ obtained in Theorem \ref{rth10po}. 
To establish the global-in-time uniform estimates for the $2$-order regular solutions 
when  $\bar\rho>0$, we consider system \eqref{re} in spherical coordinates:
\begin{equation}\label{re-r}
\begin{cases}
\displaystyle 
\rho_t+ u\rho_r+ \rho\big(u_r+\frac{m }{r}u\big)=0,\\[4pt]
\displaystyle
u_t+uu_r+\frac{A\gamma}{\gamma-1}(\rho^{\gamma-1})_r=2\alpha\big(u_r+\frac{m}{r}u\big)_r-2\alpha \frac{\rho_r}{\rho}u_r,\\[6pt]
\displaystyle
(\rho,u)|_{t=0}=(\rho_0,u_0) \qquad\qquad\qquad \ \ \text{for $r\in I$},\\[5pt]
\displaystyle
u|_{r=0}=0 \qquad\qquad\qquad\qquad\qquad\quad \text{for $t\in [0,T]$},\\[5pt]
\displaystyle
(\rho,u)\to \left(\bar\rho>0,0\right)  \ \ \text{as $r\to \infty$} \qquad \text{for $t\in [0,T]$}.
\end{cases}
\end{equation}
Besides, let $(\phi,\psi,\boldsymbol{\psi})$ be defined as in \eqref{bianhuan} and \eqref{tr}: 
\begin{equation}\label{new-def2}
\begin{aligned}
\phi=\frac{A\gamma}{\gamma-1}\rho^{\gamma-1},\qquad \psi=(\log\rho)_r,\qquad \boldsymbol{\psi}=\nabla\log\rho. 
\end{aligned}
\end{equation}

The first lemma is on the zeroth-order energy estimate of $u$. 
\begin{lem}\label{l4.5-po}
There exists a constant $C(T)>0$ such that, for any $t\in [0,T]$,
\begin{equation*}
\big|r^{\frac{m}{2}}(u,\rho^{\gamma-1} v)(t)\big|_2^2 +\int_0^t \Big|r^{\frac{m}{2}}\big(u_r,\frac{u}{r}\big)\Big|_2^2 \, \mathrm{d}s\leq C(T).
\end{equation*}
\end{lem}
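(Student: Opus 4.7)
The plan is to follow the blueprint of Lemma \ref{l4.5} from the vacuum case, now in the setting where the new ingredients $|\rho|_\infty \le C(T)$ (Lemma \ref{important2-po}), $\rho \ge C(T)^{-1}$ (Lemma \ref{lemma-lowerbound}), $|v|_\infty \le C(T)$ (Lemma \ref{l4.4-po}), and $\int_0^t |u|_\infty^2\,\mathrm{d}s \le C(T)$ (Lemma \ref{ele-po}) are at our disposal. The strategy is to couple a weighted energy estimate for $r^{m/2}u$ with a weighted estimate for the auxiliary quantity $r^{m/2}\rho^{\gamma-1}v$ (which plays the role of $r^{m/2}\phi v$ in Lemma \ref{l4.5}, the two differing by a fixed constant $A\gamma/(\gamma-1)$), and then close via the Gr\"onwall inequality.

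First, I would multiply $\eqref{re-r}_2$ (after multiplication by $\rho$, so as to match the divergence form of $\eqref{e1.5hh}_2$) by $r^m u$ and integrate over $I$. The boundary term $\tilde{\mathcal{B}} := 2\alpha r^m u(u_r+(m/r)u) - r^m u \phi - \tfrac{1}{2} r^m \rho u^3$-type flux vanishes at $r=0$ and belongs to $W^{1,1}(I)$ thanks to the regularity listed in \eqref{spd-po}--\eqref{spd2-po} (which now also furnishes $\rho^{-1} \in L^\infty$ so that no degeneracy occurs). Rewriting the pressure contribution via the identity $\frac{A\gamma}{\gamma-1}(\rho^{\gamma-1})_r = \frac{A\gamma}{2\alpha}\rho^{\gamma-1}(v-u)$ (a direct consequence of $v-u = 2\alpha(\log\rho)_r$) and using $|v|_\infty, |u|_\infty$ to absorb the convective and stress-coupling terms yields
\begin{equation*}
\frac{\mathrm{d}}{\mathrm{d}t}|r^{m/2}u|_2^2 + \alpha\Big|r^{m/2}\big(u_r,\tfrac{u}{r}\big)\Big|_2^2 \le C(T)(1+|u|_\infty^2)|r^{m/2}(\rho^{\gamma-1}v,u)|_2^2.
\end{equation*}
Next, multiplying the effective-velocity equation \eqref{eq:effective3} by $r^m\rho^{2\gamma-2}v$ and combining with $\eqref{re-r}_1$ (essentially the weighted version of the computation \eqref{eq:B6-pre-po}--\eqref{new-v-po} performed in the proof of Lemma \ref{ele-po}) produces the analogue of \eqref{b8pre} in Lemma \ref{l4.5}: the boundary flux $r^m u \rho^{2\gamma-2}v^2$ vanishes at $r=0$ and lies in $W^{1,1}(I)$ by the same $W^{1,1}$ bookkeeping used in \eqref{eq:B6-pre-po}--\eqref{eq:B6-po}, and the remaining right-hand side is controlled by $|\rho|_\infty$ and $|v|_\infty$ to give
\begin{equation*}
\frac{\mathrm{d}}{\mathrm{d}t}|r^{m/2}\rho^{\gamma-1}v|_2^2 + c_\gamma|r^{m/2}\rho^{(3\gamma-3)/2}v|_2^2 \le C(T)|r^{m/2}(\rho^{\gamma-1}v,u)|_2^2 + \tfrac{\alpha}{8}\Big|r^{m/2}\big(u_r,\tfrac{u}{r}\big)\Big|_2^2.
\end{equation*}

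Summing the two inequalities, absorbing the dissipation term, and invoking the Gr\"onwall lemma with the time-integrable exponent $|u|_\infty^2$ (Lemma \ref{ele-po}) will yield the desired estimate. The initial bound $|r^{m/2}(\rho_0^{\gamma-1}v_0,u_0)|_2 \le C_0$ follows from $(\rho_0-\bar\rho,u_0)\in H^2(\mathbb{R}^n)$ and the identity $\rho_0^{\gamma-1}v_0 = \rho_0^{\gamma-1}u_0 + \frac{2\alpha}{\gamma-1}(\rho_0^{\gamma-1})_r$, the second summand being in $L^2$ by Lemma \ref{equiv-initial}(iii). The main obstacle, as in Lemma \ref{l4.5}, is not analytic but bookkeeping: verifying pointwise vanishing at $r=0$ and $W^{1,1}(I)$ membership of each flux requires repeatedly invoking the pointwise continuity $(\rho,\rho_r,u,u/r,u_r)\in C(\bar I)$ together with the weighted $L^2$ information from \eqref{spd-po}. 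Fortunately, the strictly positive lower bound on $\rho$ (Lemma \ref{lemma-lowerbound}) makes these verifications strictly easier than in the vacuum setting, so no genuinely new difficulty should emerge.
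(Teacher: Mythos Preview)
Your proposal is correct and follows essentially the same approach as the paper: couple the weighted $L^2$-estimate for $r^{m/2}u$ (obtained by multiplying the velocity equation by $r^m u$) with the weighted $L^2$-estimate for $r^{m/2}\rho^{\gamma-1}v$ (obtained by multiplying \eqref{eq:effective3} by $r^m\rho^{2\gamma-2}v$), then close via Gr\"onwall using $\int_0^t|u|_\infty^2\,\mathrm{d}s\le C(T)$ from Lemma \ref{ele-po}. One minor point: the parenthetical ``after multiplication by $\rho$'' is unnecessary and slightly misleading---the paper works directly with $\eqref{re-r}_2$ (equivalently $\eqref{e2.2}_2$ with $\phi=\tfrac{A\gamma}{\gamma-1}\rho^{\gamma-1}$, $\psi=(\log\rho)_r$) multiplied by $r^m u$, which already yields $\tfrac{\mathrm{d}}{\mathrm{d}t}|r^{m/2}u|_2^2$ without any density weight; your flux term should accordingly not contain the $\rho u^3$ factor.
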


\begin{proof} We divide the proof into two steps.

\smallskip
\textbf{1.} Using the same arguments as in Step 1 of the proof for Lemma \ref{l4.5}, with $(\phi,\psi)$ replaced by $(\frac{A\gamma}{\gamma-1}\rho^{\gamma-1},(\log\rho)_r)$, we obtain \eqref{eq4.16}:
\begin{equation}\label{eq4.16-po}
\frac{\mathrm{d}}{\mathrm{d}t} |r^{\frac{m}{2}}u|_2^2 + \alpha\Big|r^{\frac{m}{2}}\big(u_r,\frac{u}{r}\big)\Big|_2^2 \leq C(T)(1+|u|_\infty^2) |r^\frac{m}{2}(\rho^{\gamma-1}v,u)|_2^2.
\end{equation}

\smallskip
\textbf{2.} For the $L^2(I)$-estimate of $r^\frac{m}{2}\rho^{\gamma-1} v$, one can first multiply \eqref{eq:effective3} by $r^m\rho^{2\gamma-2}v$ and then obtain from $\eqref{re-r}_1$ that 
\begin{equation}\label{b8pre-po}
\begin{aligned}
&\frac{1}{2} (r^m \rho^{2\gamma-2}v^2)_t+\frac{A\gamma}{2\alpha}r^m \rho^{3\gamma-3} v^2\\
&=-\frac{1}{2}(\underline{r^m u\rho^{2\gamma-2}v^2}_{:=\tilde{\mathcal{B}}_7})_r+\big(\frac{3}{2}-\gamma\big)r^m\rho^{2\gamma-2}v^2\big(u_r+\frac{m}{r}u\big)+\frac{A\gamma}{2\alpha}r^m \rho^{3\gamma-3} vu.
\end{aligned}
\end{equation}
Next, we need to  show that $\tilde{\mathcal{B}}_7\in W^{1,1}(I)$ and $\tilde{\mathcal{B}}_7|_{r=0}=0$ for {\it a.e.} $t\in (0,T)$, which allows us to apply Lemma \ref{calculus} to obtain
\begin{equation}\label{eq:B8-po}
\int_0^\infty (\tilde{\mathcal{B}}_7)_r\,\mathrm{d}r=-\tilde{\mathcal{B}}_7|_{r=0}=0.
\end{equation}

On one hand, $\tilde{\mathcal{B}}_7|_{r=0}=0$ follows from $v\in L^\infty(I)$ for {\it a.e.} $t\in (0,T)$, and $u|_{r=0}=0$ and $(\rho,u)\in C(\bar I)$ for {\it a.e.} $t\in (0,T]$ due to \eqref{spd-po}. On the other hand, thanks to \eqref{spd-po}--\eqref{spd2-po}, 
\begin{equation*}
\Big(\rho,\rho^{-1},\rho_r,u,\frac{u}{r},u_r\Big)\in L^\infty(I),\quad\,\, 
r^\frac{m}{2}(\rho_r,\rho_{rr},u,u_r)\in L^2(I) \qquad\,\, \mbox{for {\it a.e.} $t\in (0,T)$},
\end{equation*}
we obtain from \eqref{V-expression3} and the H\"older inequality that
\begin{align*}
|\tilde{\mathcal{B}}_7|_1&\leq C_0|u|_\infty\big(|\rho|_\infty^{2\gamma-2}|r^\frac{m}{2}u|_2^2+ |\rho^{2\gamma-4}|_\infty|r^\frac{m}{2}\rho_r|_2^2\big) <\infty, \\[1mm]
|(\tilde{\mathcal{B}}_7)_r|_1&\leq C_0\big|\big(r^{m-1}u\rho^{2\gamma-2} v^2,r^m u_r\rho^{2\gamma-2} v^2,r^m u\rho^{2\gamma-3}\rho_r v^2,r^{m}u\rho^{2\gamma-2} vv_r\big)\big|_1\\
&\leq C_0\Big|\big(\frac{u}{r},u_r\big)\Big|_\infty\big(|\rho|_\infty^{2\gamma-2}|r^\frac{m}{2}u|_2^2+ |\rho^{2\gamma-4}|_\infty|r^\frac{m}{2}\rho_r|_2^2\big)\\
&\quad +C_0|\rho^{2\gamma-3}|_\infty|v|_\infty^2 |r^\frac{m}{2}u|_2|r^\frac{m}{2}\rho_r|_2+C_0|\rho|_\infty^{2\gamma-2}|v|_\infty|r^\frac{m}{2}u|_2 |r^\frac{m}{2}u_r|_2\\
&\quad + C_0|\rho|_\infty^{2\gamma-2}|v|_\infty|r^\frac{m}{2}u|_2\big(|\rho^{-2}|_\infty|\rho_r|_\infty|r^\frac{m}{2}\rho_r|_2+|\rho^{-1}|_\infty|r^\frac{m}{2}\rho_{rr}|_2\big)<\infty. 
\end{align*}    

Integrating \eqref{b8pre-po} over $I$, then repeating the same calculations \eqref{555}--\eqref{chuzhi0} in the proof of Lemma \ref{l4.5}, with $(\phi,\psi)$ replaced by $(\frac{A\gamma}{\gamma-1}\rho^{\gamma-1},(\log\rho)_r)$, and using Lemma \ref{equiv-initial}, we can obtain the desired estimate of this lemma. 
\end{proof}

The second lemma concerns the first-order energy estimate for $u$.
\begin{lem}\label{l4.6-po}
There exists a constant $C(T)>0$ such that, for any $t\in [0,T]$,
\begin{equation*} 
\Big|r^{\frac{m}{2}}\big(u_r,\frac{u}{r}\big)(t)\Big|_2^2+\int_0^t |r^{\frac{m}{2}}u_t|_2^2\,\mathrm{d}s\leq C(T).
\end{equation*}
\end{lem}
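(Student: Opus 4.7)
The plan is to mimic the structure of Lemma \ref{l4.6} almost verbatim, substituting the reformulated momentum equation $\eqref{re-r}_2$ and invoking the strictly positive, uniformly bounded density machinery established in Lemmas \ref{important2-po}, \ref{l4.4-po}, \ref{lemma-lowerbound}, and \ref{l4.5-po}. Specifically, I would multiply $\eqref{re-r}_2$ by $r^m u_t$, writing $-2\alpha\tfrac{\rho_r}{\rho}u_r=-2\alpha\psi u_r$ with $\psi=(\log\rho)_r$, and rearranging to produce the pointwise identity
\begin{equation*}
r^m u_t^2+\alpha\Big(r^m\big(u_r+\tfrac{m}{r}u\big)^2\Big)_t=2\alpha\Big(r^m u_t\big(u_r+\tfrac{m}{r}u\big)\Big)_r -r^m\Big(uu_r+\tfrac{A\gamma}{\gamma-1}(\rho^{\gamma-1})_r-2\alpha\psi u_r\Big)u_t,
\end{equation*}
which is the direct analog of \eqref{eq:B9pre}.

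Next, I would justify the vanishing of the boundary term $r^m u_t(u_r+\tfrac{m}{r}u)|_{r=0}=0$ and its membership in $W^{1,1}(I)$ exactly as in the treatment of $\mathcal{B}_9$ in Lemma \ref{l4.6}: by \eqref{spd-po} one has $r^{m/2}(u_{t},\tfrac{u_t}{r},u_{tr})\in L^2(I)$ and $(u_r,\tfrac{u}{r})\in C(\overline{I})$ for each $t\in(0,T]$, with $u|_{r=0}=0$. After integration over $I$, the identity becomes
\begin{equation*}
\alpha\frac{\mathrm{d}}{\mathrm{d}t}\Big|r^{m/2}\big(u_r+\tfrac{m}{r}u\big)\Big|_2^2+|r^{m/2}u_t|_2^2=-\int_0^\infty r^m\Big(uu_r+\tfrac{A\gamma}{\gamma-1}(\rho^{\gamma-1})_r-2\alpha\psi u_r\Big)u_t\,\mathrm{d}r,
\end{equation*}
and the three source integrals are to be estimated exactly as $\mathcal{G}_9,\mathcal{G}_{10},\mathcal{G}_{11}$ in \eqref{G9-G11}. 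The convective term yields $|u|_\infty|r^{m/2}u_r|_2|r^{m/2}u_t|_2$, the pressure term is handled via the key relation $(\rho^{\gamma-1})_r=\tfrac{\gamma-1}{2\alpha}\rho^{\gamma-1}(v-u)$ so that Lemma \ref{l4.5-po} supplies $|r^{m/2}\rho^{\gamma-1}v|_2\leq C(T)$ and Lemma \ref{important2-po} supplies $|r^{m/2}\rho^{\gamma-1}u|_2\leq C(T)$, and the last term is rewritten as $2\alpha\psi u_r=(v-u)u_r$ so that $|v|_\infty\leq C(T)$ from Lemma \ref{l4.4-po} closes it up to an $|u|_\infty$-weighted piece.

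Gathering all estimates, absorbing $\frac{1}{8}|r^{m/2}u_t|_2^2$ terms, and applying Lemma \ref{im-1} produces the Gronwall-ready differential inequality
\begin{equation*}
\frac{\mathrm{d}}{\mathrm{d}t}\Big|r^{m/2}\big(u_r,\tfrac{u}{r}\big)\Big|_2^2+|r^{m/2}u_t|_2^2\leq C(T)(1+|u|_\infty^2)\Big|r^{m/2}\big(u_r,\tfrac{u}{r}\big)\Big|_2^2+C(T).
\end{equation*}
Since $\int_0^T|u|_\infty^2\,\mathrm{d}s\leq C(T)$ is precisely what Lemma \ref{ele-po} delivers, the Gronwall inequality closes provided the initial quantity $|r^{m/2}((u_0)_r,\tfrac{u_0}{r})|_2$ is finite; this is immediate from $\boldsymbol{u}_0\in H^2(\mathbb{R}^n)$ and Lemma \ref{lemma-initial}.

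The main obstacle, if any, is the pressure-related source, since the crude bound $|r^{m/2}(\rho^{\gamma-1})_r|_2$ is not a listed {\it a priori} estimate; the workaround is the effective-velocity identity $(\rho^{\gamma-1})_r=\tfrac{\gamma-1}{2\alpha}\rho^{\gamma-1}(v-u)$, which converts this gradient into quantities already controlled by the $L^\infty$ bounds on $(\rho,v)$ and the zeroth-order estimate from Lemma \ref{l4.5-po}. Everything else is structurally identical to the vacuum case, with the crucial simplification that the uniform lower bound of $\rho$ (Lemma \ref{lemma-lowerbound}) makes the coefficient $\rho^{-1}$ harmless and removes the degenerate-dissipation subtleties that had to be tracked in \S\ref{section-global2}.
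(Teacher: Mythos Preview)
Your proposal is correct and matches the paper's approach exactly: the paper simply states that Lemma \ref{l4.6-po} follows by the same argument as Lemma \ref{l4.6} with $(\phi,\psi)$ replaced by $(\tfrac{A\gamma}{\gamma-1}\rho^{\gamma-1},(\log\rho)_r)$, and omits the details. Your handling of the pressure term via the effective-velocity identity and the invocation of Lemmas \ref{important2-po}, \ref{l4.4-po}, \ref{ele-po}, and \ref{l4.5-po} are precisely the substitutions the paper has in mind.
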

\begin{proof}
Lemma \ref{l4.6-po} can be derived by the same argument as in the proof of Lemma \ref{l4.6}, with $(\phi,\psi)$ replaced by $(\frac{A\gamma}{\gamma-1}\rho^{\gamma-1},(\log\rho)_r)$. We omit the details here.
\end{proof}

The third lemma concerns the second-order energy estimate for $u$.
\begin{lem}\label{l4.8-po}
There exists a constant $C(T)>0$ such that, for any $t\in[0,T]$,
\begin{equation*}
|r^{\frac{m}{2}}u_t(t)|^2_2+\int_0^t \Big|r^{\frac{m}{2}}\big(u_{tr},\frac{u_t}{r}\big)\Big|_2^2 \,\mathrm{d}s \leq C(T).
\end{equation*}
\end{lem}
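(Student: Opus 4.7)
The plan is to mirror the proof of {\rm Lemma \ref{l4.8}} in the reformulated system \eqref{re-r}, using the upper and lower bounds on $\rho$ from {\rm Lemmas \ref{important2-po}} and {\rm \ref{lemma-lowerbound}} to handle the terms that previously involved $\phi$ and $\psi=\nabla\log\rho$. First, I would derive a collection of preliminary estimates analogous to \eqref{618}, \eqref{6.4-d}, and \eqref{6.6-2}: multiplying $\eqref{re-r}_2$ by $r^{m/2}$ and taking $L^2$-norms, together with the Gagliardo-Nirenberg inequality (Lemma \ref{GN-ineq}) and the lower-order bounds from {\rm Lemmas \ref{l4.5-po}--\ref{l4.6-po}}, yields
\[
|u|_\infty + |r^{m/2}u_r|_\infty + \Big|r^{\frac{m}{2}}\Big(u_{rr},\big(\tfrac{u}{r}\big)_r\Big)\Big|_2 \leq C(T)\big(|r^{\frac{m}{2}}u_t|_2+1\big),
\]
while using $\eqref{re-r}_1$ gives $|r^{m/2}\rho_t|_2 \leq C(T)(|r^{m/2}u_t|_2+1)$ and hence, by differentiating $\eqref{re-r}_1$ with respect to $r$, a similar bound on $|r^{m/2}(\log\rho)_{tr}|_2$. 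At this stage, the positivity of $\rho$ plays a decisive role, since it converts derivatives of $\log\rho$ directly into derivatives of $\rho$.

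Second, I would apply $r^m u_{tt}\partial_t$ to $\eqref{re-r}_2$ and integrate over $I$, justifying the integration by parts at $r=0$ and at infinity using the regularity \eqref{spd-po}--\eqref{spd2-po} together with the careful argument for $\mathcal{B}_9$ in the proof of {\rm Lemma \ref{l4.8}}. The main integration-by-parts identity
\[
-2\alpha\int_0^\infty r^m\big(u_r+\tfrac{m}{r}u\big)_{tr} u_{tt}\,\mathrm{d}r
=\alpha \frac{\mathrm{d}}{\mathrm{d}t}\Big|r^{\frac{m}{2}}\big(u_{tr}+\tfrac{m}{r}u_t\big)\Big|_2^2
\]
will be justified by the mollification/density argument given in Step 1 of the proof of {\rm Lemma \ref{l4.10}}, applied in the whole space $\mathbb{R}^n$. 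The remaining terms $\int r^m(u u_r)_t u_{tt}\,\mathrm{d}r$, $\int r^m(\rho^{\gamma-1})_{tr} u_{tt}\,\mathrm{d}r$, and $\int r^m(\tfrac{\rho_r}{\rho}u_r)_t u_{tt}\,\mathrm{d}r$ are treated exactly as $\mathcal{G}_{13}$, $\mathcal{G}_{14}$, and $\mathcal{G}_{15}$ in {\rm Lemma \ref{l4.8}}, using H\"older, Young, the preliminary bounds above, and {\rm Lemma \ref{im-1}}, to produce a differential inequality of the form
\[
\frac{\mathrm{d}}{\mathrm{d}t}|r^{\frac{m}{2}}u_t|_2^2+\alpha\Big|r^{\frac{m}{2}}\big(u_{tr},\tfrac{u_t}{r}\big)\Big|_2^2\leq C(T)\big(|r^{\frac{m}{2}}u_t|_2^4+1\big).
\]

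Third, since $r^{m/2}u_{tt}$ is not known to be in $L^2$ up to $t=0$, I would integrate the inequality over $[\tau,t]$ for $\tau\in(0,T)$ and bound $|r^{m/2}u_t(\tau)|_2$ by taking $L^2$-norms in $\eqref{re-r}_2$ and passing to the limit $\tau\to 0$ using the time-continuity of $(\rho-\bar\rho,u)$ in the space given by {\rm Theorem \ref{rth10po}}. The initial value is then controlled by $\|(\rho_0-\bar\rho,\boldsymbol{u}_0)\|_{H^2}$, using {\rm Lemma \ref{equiv-initial}} to handle $\|\nabla\log\rho_0\|$. Finally, applying the Gr\"onwall inequality together with {\rm Lemma \ref{l4.6-po}} closes the estimate.

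The principal obstacle will be the rigorous verification of the integration-by-parts identity and the boundary behavior at $r=0$, particularly because the coefficients $\tfrac{\rho_r}{\rho}$ and the coordinate singularity $\tfrac{1}{r}$ interact in the term $\int r^m(\tfrac{\rho_r}{\rho} u_r)_t u_{tt}\,\mathrm{d}r$; this requires extracting the explicit structure $\tfrac{\rho_r}{\rho}u\cdot(\tfrac{u_{tr}u_r+u_tu_{rr}+\tfrac{m}{r}u_tu_r}{\cdot})$ as in the treatment of $\mathcal{G}_{15}$, together with the Hardy inequality to move between $r^{(m-2)/2}$-weights near the origin and $r^{-m/2}$-weights in the exterior. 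Everything else is a direct adaptation of the far-field vacuum analysis, since the uniform positivity of $\rho$ on $[0,T]\times I$ actually simplifies, rather than complicates, the bounds on the lower-order coefficients.
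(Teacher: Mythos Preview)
Your overall strategy---mirror the proof of Lemma~\ref{l4.8}, replacing $(\phi,\psi)$ by $(\tfrac{A\gamma}{\gamma-1}\rho^{\gamma-1},(\log\rho)_r)$ and using the two-sided bounds on $\rho$---is exactly what the paper does, and your preliminary estimates, the $\tau\to 0$ limit, and the Gr\"onwall closure are all on target. The paper's proof is little more than an appeal to Lemma~\ref{l4.8}, with the only substantive new work being the justification of the boundary term $\mathcal{B}_{10}$ (not $\mathcal{B}_9$, which lives in Lemma~\ref{l4.6}) via the pointwise equivalences $|\phi_r|\sim|\psi|\sim|\rho_r|$ and $|\phi_t|\sim|\rho_t|$ coming from Lemmas~\ref{important2-po} and~\ref{lemma-lowerbound}.

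However, your Step~2 is internally inconsistent. You write that you will multiply $\partial_t\eqref{re-r}_2$ by $r^m u_{tt}$ and display the identity $-2\alpha\int r^m(u_r+\tfrac{m}{r}u)_{tr}u_{tt}\,\mathrm{d}r=\alpha\tfrac{\mathrm{d}}{\mathrm{d}t}|r^{m/2}(u_{tr}+\tfrac{m}{r}u_t)|_2^2$, which is the Lemma~\ref{l4.10} structure; yet you then claim to obtain $\tfrac{\mathrm{d}}{\mathrm{d}t}|r^{m/2}u_t|_2^2+\alpha|r^{m/2}(u_{tr},\tfrac{u_t}{r})|_2^2\le\cdots$ and invoke $\mathcal{G}_{13}$--$\mathcal{G}_{15}$, which are the terms arising from multiplication by $r^m u_t$. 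These are two different energy identities. The $u_{tt}$ route, if carried through, yields $\alpha\tfrac{\mathrm{d}}{\mathrm{d}t}|r^{m/2}(u_{tr}+\tfrac{m}{r}u_t)|_2^2+|r^{m/2}u_{tt}|_2^2\le\cdots$ and, to close via Gr\"onwall, requires $\limsup_{\tau\to 0}|r^{m/2}(u_{tr},\tfrac{u_t}{r})(\tau)|_2<\infty$; as in \eqref{cont-limsup}--\eqref{limsup} this needs $\|\nabla^3\boldsymbol{u}_0\|_{L^2}$, which is \emph{not} available in the $2$-order setting. The correct multiplier here is $r^m u_t$, exactly as in Lemma~\ref{l4.8}: the dissipation $2\alpha|r^{m/2}(u_{tr}+\tfrac{m}{r}u_t)|_2^2$ then appears directly (no time derivative, no mollification argument from Lemma~\ref{l4.10} needed), and the boundary justification is the $\mathcal{B}_{10}$ computation, adapted as the paper does using the $\rho$-equivalences.
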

\begin{proof} 
Note that, due to Lemmas \ref{important2-po} and \ref{lemma-lowerbound}, we can show that $(|\phi_r|,|\psi|)$ and $|\phi_t|$ are equivalent to  $|\rho_r|$ and $|\rho_t|$, respectively: 
\begin{equation}\label{equiv-eu}
\begin{gathered}
|\phi_r| \sim  |\psi| \sim  |\rho_r|,\quad\, |\phi_t|\sim  |\rho_t|
\qquad\,\,\,\mbox{for all $(t,r)\in [0,T]\times I$},
\end{gathered}
\end{equation}
where $E\sim F$ denotes $C(T)^{-1}E\leq F\leq C(T)E$. Hence, based on \eqref{equiv-eu} and Lemma \ref{equiv-initial},  following the same argument as in the proof of Lemma \ref{l4.8}, with $(\phi,\psi)$ replaced by $(\frac{A\gamma}{\gamma-1}\rho^{\gamma-1},(\log\rho)_r)$, we can complete the proof.

\smallskip
Thus, recalling \eqref{eq:B10pre}, we find that the major modification is to justify the process of integration by parts for 
\begin{equation*}
\mathcal{B}_{10}:=2\alpha r^m u_t \big(u_{tr}+ \frac{m}{r}u_t\big)-r^mu_t \phi_{t}-2\alpha r^m \psi u u_tu_r.
\end{equation*}
 
We first prove $\mathcal{B}_{10}|_{r=0}=0$. Using the same arguments as in Step 2 of the proof of Lemma \ref{l4.8}, we need to show that 
\begin{equation}\label{727-po}
r^\frac{m}{2}\phi_t|_{r=0}=A\gamma r^\frac{m}{2}\rho^{\gamma-2}\rho_t|_{t=0}<\infty \qquad \text{for {\it a.e.} $t\in (0,T)$}.
\end{equation}
Thanks to \eqref{spd-po}--\eqref{spd2-po}, one has 
\begin{equation}\label{shi}
 (\rho,\rho^{-1})\in C(\bar I), \quad\,\, r^\frac{m}{2}(\rho_t,\rho_{tr})\in L^2(I) 
 \qquad\,\, \text{for {\it a.e.} $t\in (0,T)$}.
\end{equation}
Then, if $n=2$ ($m=1$), it follows from \eqref{shi} and Lemma \ref{hardy} that, for {\it a.e.} $t\in (0,T)$, 
\begin{equation*}
r(\rho_t,\rho_{tr})\in L^2(0,1)\implies r^\frac{1}{2}\rho_t \in C([0,1]),
\end{equation*}
and hence $r^\frac{1}{2}\rho^{\gamma-2}\rho_t|_{r=0}<\infty$ for {\it a.e.} $t\in (0,T)$; while if $n=3$ ($m=2$), one can deduce from \eqref{shi} 
and Lemmas \ref{ale1} and \ref{hardy} that, for {\it a.e.} $t\in (0,T)$,
\begin{equation*}
r\rho_t\in H^1(0,1) \implies r\rho_t\in C([0,1]),
\end{equation*}
that is, 
\begin{equation*}
r \rho^{\gamma-2}\rho_t|_{r=0}<\infty \qquad \mbox{for {\it a.e.} $t\in (0,T)$}.
\end{equation*}
Thus, we conclude \eqref{727-po}.

Next, we show that $\mathcal{B}_{10}\in W^{1,1}(I)$ for {\it a.e.} $t\in (0,T)$. 
By \eqref{spd-po}--\eqref{spd2-po}, we have 
\begin{equation*}
(\rho_r,u,u_r)\in L^\infty(I),\quad\,\,\, 
r^\frac{m}{2}\Big(\rho_t,\rho_{tr},\rho_{rr},u_r,u_t,u_{rr},\frac{u_t}{r},u_{tr},\big(\frac{u_t}{r}\big)_r,u_{trr}\Big)\in L^2(I)
\end{equation*}
for {\it a.e.} $t\in (0,T)$. 
Thus, following a calculation similar to \eqref{cal-b10} in the proof of Lemma \ref{l4.8}, 
we obtain from the above, \eqref{equiv-eu}, and \eqref{shi} that 
\begin{align*}
|\mathcal{B}_{10}|_1&\leq C_0|r^\frac{m}{2}u_t|_2\Big(\Big|r^\frac{m}{2}\big(u_{tr},\frac{u_t}{r}\big)\Big|_2+|r^\frac{m}{2}\phi_t|_2+|\psi|_\infty|u|_\infty |r^\frac{m}{2}u_r|_2\Big)\\
&\leq C(T)|r^\frac{m}{2}u_t|_2\Big(\Big|r^\frac{m}{2}\big(u_{tr},\frac{u_t}{r}\big)\Big|_2+|r^\frac{m}{2}\rho_t|_2+|\rho_r|_\infty|u|_\infty |r^\frac{m}{2}u_r|_2\Big)<\infty,\\[1mm]
|(\mathcal{B}_{10})_r|_1
&\leq C_0 \Big|r^\frac{m}{2}\big(u_{tr},\frac{u_t}{r}\big)\Big|_2\Big(\Big|r^\frac{m}{2}\big(u_{tr},\frac{u_t}{r},\phi_t\big)\Big|_2+|\psi|_\infty |u|_\infty |r^\frac{m}{2}u_r|_2\Big)\notag\\
&\quad +C_0|r^\frac{m}{2}u_t|_2\Big(\Big|r^\frac{m}{2} u_{trr},\big(\frac{u_t}{r}\big)_r,\phi_{tr}\Big)\Big|_2+|r^\frac{m}{2}\psi_r|_2|u|_\infty|u_r|_\infty\Big)\notag\\
&\quad +C_0|r^\frac{m}{2}u_t|_2\big(|\psi|_\infty|u_r|_\infty|r^\frac{m}{2}u_r|_2+|\psi|_\infty|u|_\infty |r^\frac{m}{2}u_{rr}|_2\big)\\
&\leq C(T)\Big|r^\frac{m}{2}\big(u_{tr},\frac{u_t}{r}\big)\Big|_2\Big(\Big|r^\frac{m}{2}\big(u_{tr},\frac{u_t}{r},\rho_t\big)\Big|_2+|\rho_r|_\infty |u|_\infty |r^\frac{m}{2}u_r|_2\Big)\notag\\
&\quad +C_0|r^\frac{m}{2}u_t|_2\Big(\Big|r^\frac{m}{2} u_{trr},\big(\frac{u_t}{r}\big)_r\Big|_2+|\rho^{\gamma-3}|_\infty |r^\frac{m}{2}\rho_t|_2|\rho_{r}|_\infty+|\rho^{\gamma-2}|_\infty |r^\frac{m}{2}\rho_{tr}|_2\Big)\\
&\quad +C_0|r^\frac{m}{2}u_t|_2\Big(|\rho^{-2}|_\infty|\rho_r|_\infty\big|r^\frac{m}{2}\rho_r\big|_2+|\rho^{-1}|_\infty|r^\frac{m}{2}\rho_{rr}|_2\Big)|u|_\infty|u_r|_\infty\notag\\
&\quad +C(T)|r^\frac{m}{2}u_t|_2\big(|\rho_r|_\infty|u_r|_\infty|r^\frac{m}{2}u_r|_2+|\rho_r|_\infty|u|_\infty |r^\frac{m}{2}u_{rr}|_2\big)<\infty,
\end{align*}
which implies the assertion.
\end{proof}

With the help of \ref{l4.8-po}, we can also obtain the following estimates:
\begin{lem}\label{lemma66-po}
There exists a constant $C(T)>0$ such that, for any $t\in [0,T]$,
\begin{equation*}
|(u,r^\frac{m}{2}u_r)(t)|_\infty + \Big|r^{\frac{m}{2}}\Big(\rho_r,\rho_t,u_{rr}, \big(\frac{u}{r}\big)_{r}\Big)(t)\Big|_2+ \int_0^t \Big|\big(u_r,\frac{u}{r}\big)\Big|_\infty^2\,\mathrm{d}s \leq C(T).
\end{equation*}
\end{lem}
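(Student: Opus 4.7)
The plan is to mirror closely the strategy of Lemma \ref{lemma66} in the far-field vacuum setting, with modifications exploiting the fact that here $\rho$ admits both strictly positive lower (Lemma \ref{lemma-lowerbound}) and upper (Lemma \ref{important2-po}) bounds. The key observation is that, by \eqref{equiv-eu}, we have $|\rho_r|\sim|\psi|\sim|\phi_r|$ and $|\rho_t|\sim|\phi_t|$ pointwise, so arguments written in terms of $(\phi,\psi)$ transfer directly to arguments in terms of $(\rho_r,\rho_t)$ up to a multiplicative constant $C(T)$.

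First, I would view the momentum equation $\eqref{re-r}_2$ as an elliptic identity for $u$ and multiply it by $r^{m/2}$, then take the $L^2(I)$-norm. Using Lemmas \ref{important2-po}, \ref{lemma-lowerbound}, \ref{l4.4-po}--\ref{l4.6-po}, the equivalence \eqref{equiv-eu}, the representation $2\alpha\rho_r=\rho(v-u)$, and Lemma \ref{im-1}, this yields
\[
|u|_\infty+\Big|r^{m/2}\Big(\rho_r,u_{rr},\big(\tfrac{u}{r}\big)_r\Big)\Big|_2 \le C(T)\big(|r^{m/2}u_t|_2+1\big),
\]
in complete analogy with \eqref{618} and \eqref{6.6-1} from the proof of Lemma \ref{lemma66}. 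The $L^\infty$ bound on $u$ follows from a Gagliardo--Nirenberg interpolation using Lemma \ref{GN-ineq} together with Lemmas \ref{l4.5-po} and \ref{im-1}. The $L^2(I)$-estimate on $r^{m/2}\rho_t$ then comes from reading $\rho_t$ off the continuity equation $\eqref{re-r}_1$ and invoking the bounds just obtained.

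Next, for $|r^{m/2}u_r|_\infty$, I would split the integral into $[0,1]$ and $[1,\infty)$, apply Lemma \ref{ale1} on each piece, and use the Hardy inequality in the near-origin region (mirroring \eqref{6.4-d}). All resulting terms are controlled by $|r^{m/2}(u_{rr},(u/r)_r)|_2+|r^{m/2}u_r|_2$, which has already been bounded by $C(T)(|r^{m/2}u_t|_2+1)$. Combining the four displayed bounds and invoking Lemma \ref{l4.8-po}, which supplies $\sup_{t\in[0,T]}|r^{m/2}u_t|_2\le C(T)$, then closes the pointwise-in-$t$ part of the lemma.

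Finally, for the time-integrated $L^\infty$ bound $\int_0^t|(u_r,u/r)|_\infty^2\,\mathrm{d}s$, I follow the argument leading to \eqref{ur-infty} in Lemma \ref{lemma66}: multiply $\eqref{re-r}_2$ by $r^{m/4}$, take the $L^4(I)$-norm, and use $\|\nabla^2\boldsymbol{u}\|_{L^4}\lesssim |r^{m/4}(u_r+\tfrac{m}{r}u)_r|_4$ from Lemma \ref{im-1} to convert the resulting estimate into
\[
\|\nabla^2\boldsymbol{u}\|_{L^4}\le C(T)\Big(\Big|r^{m/2}\big(u_{tr},\tfrac{u_t}{r}\big)\Big|_2^{n/4}+1\Big),
\]
after which $\|\nabla\boldsymbol{u}\|_{L^\infty}\lesssim\|\nabla\boldsymbol{u}\|_{L^2}^{(4-n)/(n+4)}\|\nabla^2\boldsymbol{u}\|_{L^4}^{2n/(n+4)}$ and Lemma \ref{l4.8-po} (which controls the time integral of $|r^{m/2}(u_{tr},u_t/r)|_2^2$) finish the proof. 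The main technical obstacle I expect is the careful treatment of $\rho_r/\rho$ and the nonlinear term $uu_r$ when bounding the $L^4$-norm in the last step, particularly near the origin where one must combine the Hardy inequality with the strictly positive lower bound on $\rho$ to avoid losing regularity; apart from that, everything reduces to bookkeeping essentially identical to Lemma \ref{lemma66}.
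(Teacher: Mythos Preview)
Your proposal is correct and follows essentially the same approach as the paper: invoke the pointwise equivalence \eqref{equiv-eu} to translate between $(\phi,\psi)$ and $(\rho_r,\rho_t)$, then rerun the proof of Lemma \ref{lemma66} verbatim (the elliptic reading of the momentum equation yielding \eqref{618}--\eqref{6.6-2}, the split estimate \eqref{6.4-d}, and the $L^4$ argument leading to \eqref{ur-infty}), closing with Lemma \ref{l4.8-po}. The ``technical obstacle'' you flag about $\rho_r/\rho$ and $uu_r$ in the $L^4$ step is not a genuine issue, since \eqref{equiv-eu} and the two-sided bounds on $\rho$ reduce those terms to exactly the ones already handled in the far-field-vacuum case.
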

\begin{proof}
Based on \eqref{equiv-eu}, Lemma \ref{lemma66-po} follows directly from the same argument as in the proof of Lemma \ref{lemma66}, with $(\phi,\psi)$ replaced by $(\frac{A\gamma}{\gamma-1}\rho^{\gamma-1},(\log\rho)_r)$. 
We omit the details. Of course, we can also obtain from \eqref{ur-infty} that
\begin{equation}\label{ur-infty-po}
\Big|\big(u_r,\frac{u}{r}\big)\Big|_\infty \leq C(T)\Big(\Big|r^\frac{m}{2}\big(u_{tr},\frac{u_t}{r}\big)\Big|_2+1\Big). 
\end{equation}
This, together with Lemma \ref{l4.8-po}, leads to the desired estimates of this lemma. 
\end{proof}

The following lemma provides the high-order estimates of $(\rho,u)$.
\begin{lem}\label{l4.9-po}  
There exists a constant $C(T)>0$ such that, for any $t\in [0,T]$,
\begin{equation*}
|\rho_r(t)|_\infty+ \Big|r^{\frac{m}{2}}\Big(\rho_{rr},\frac{\rho_r}{r},\rho_{tr}\Big)(t)\Big|^2_2+\int_{0}^{t}\Big|r^{\frac{m}{2}}\Big(u_{rrr},\frac{u_{rr}}{r},\big(\frac{u}{r}\big)_{rr},\frac{1}{r}\big(\frac{u}{r}\big)_r\Big)\Big|_2^2 \,\mathrm{d}s\leq C(T).
\end{equation*}
\end{lem}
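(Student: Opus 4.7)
The plan is to adapt the structure of Lemma \ref{l4.9} to the strictly positive density setting, exploiting the invertible relation $\rho_r = \rho(v-u)/(2\alpha)$ (equivalently, $2\alpha(\log\rho)_r = v-u$) to translate between derivatives of $\rho$ and derivatives of $v$. Unlike the vacuum case, where $\phi$ and $\psi$ were controlled separately via the enlarged system \eqref{e2.2}, here we need an additional intermediate step absent from the preceding Lemmas \ref{l4.5-po}--\ref{lemma66-po}: establishing $r^{m/2}(v_r,v/r)\in L^\infty([0,T];L^2(I))$, the strictly-positive-density analog of Lemma \ref{l4.7}.

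I would begin with the $L^\infty$-bound on $\rho_r$: from the identity $\rho_r=\rho(v-u)/(2\alpha)$ and the $L^\infty$-bounds on $\rho$ (Lemma \ref{important2-po}), $v$ (Lemma \ref{l4.4-po}), and $u$ (Lemma \ref{lemma66-po}), one immediately gets $|\rho_r|_\infty\leq C(T)$. Next, I would establish $|r^{m/2}(v_r,v/r)|_2\leq C(T)$ by applying Lemma \ref{lemma-lions} to equation \eqref{nabla-v} governing $\diver\boldsymbol{v}$ (derived by taking the divergence of the effective-velocity equation \eqref{845}). The hypotheses of Lemma \ref{lemma-lions} are verified since $\boldsymbol{u}\in L^1([0,T];W^{1,\infty}(\mathbb{R}^n))$ by Lemma \ref{lemma66-po} and the source term lies in $L^1([0,T];L^2(\mathbb{R}^n))$ by Lemmas \ref{important2-po}--\ref{lemma66-po}. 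Estimating the right-hand side via Cauchy--Schwarz and Young's inequalities (mirroring the calculation \eqref{cal-vr} in the vacuum case), applying Gr\"onwall, and converting $\|\diver\boldsymbol{v}\|_{L^2}$ bounds to $\|\nabla\boldsymbol{v}\|_{L^2}$ bounds via Lemma \ref{im-1}, yields the required estimate. The initial data $r^{m/2}(v_0)_r, r^{m/2}v_0/r\in L^2(I)$ follows from Lemma \ref{equiv-initial} and $\boldsymbol{u}_0\in H^2(\mathbb{R}^n)$.

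With this intermediate bound in hand, the estimates on $r^{m/2}\rho_{rr}$ and $r^{m/2}\rho_r/r$ follow from differentiating the identity $\rho_r=\rho(v-u)/(2\alpha)$ and combining the $L^\infty$-bounds on $(\rho,\rho_r,v,u)$ with the $L^2$-bounds on $(v_r,v/r,u_r,u/r)$; the bound on $r^{m/2}\rho_{tr}$ follows by differentiating the mass equation $\eqref{re-r}_1$ in $r$ and substituting the bounds already obtained (together with $r^{m/2}(u_{rr},(u/r)_r)\in L^2$ from Lemma \ref{lemma66-po}). For the time-integrated velocity estimates, I would follow exactly the argument of Step 2 in the proof of Lemma \ref{l4.9}: solve $\eqref{re-r}_2$ for $(u_r+mu/r)_r$, differentiate, take $L^2(I)$-norms weighted by $r^{m/2}$ and $r^{(m-2)/2}$ respectively, control the resulting right-hand side using $|r^{m/2}u_{tr}|_2$, $|\rho_r|_\infty$, $|(u_r,u/r)|_\infty$, and the $\rho$-derivative bounds from the previous step, then integrate in time and apply Lemmas \ref{l4.8-po} and \ref{lemma66-po}. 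A final application of Lemma \ref{im-1} converts the bounds on $(u_r+mu/r)_{rr}$ and $(u_r+mu/r)_r/r$ into bounds on the individual third derivatives of $u$ listed in the conclusion.

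The main obstacle is Step 2, namely deriving $r^{m/2}(v_r,v/r)\in L^\infty L^2$ through energy estimates on the equation \eqref{nabla-v} for $\diver\boldsymbol{v}$. The challenge lies in carefully dominating each term on the right-hand side of \eqref{nabla-v}—notably the commutator-type term $\nabla\boldsymbol{u}^\top{:}\nabla\boldsymbol{v}$ and the product $\nabla\phi\cdot(\boldsymbol{v}-\boldsymbol{u})$—using only the estimates already at our disposal, while respecting the spherical-coordinate weights near the origin (handled, as in the vacuum proof, by splitting with $\chi_1^\flat$ and $\chi_1^\sharp$ and invoking the Hardy inequality).
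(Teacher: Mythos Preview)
Your proposal is correct and follows essentially the same approach as the paper. The paper's proof also first establishes $|\rho_r|_\infty\le C(T)$ from $\rho_r=\rho(v-u)/(2\alpha)$, then repeats the calculations \eqref{845}--\eqref{psi_t} from Lemma~\ref{l4.7} (i.e., the energy estimate on $\diver\boldsymbol{v}$ via Lemma~\ref{lemma-lions}, exactly as you describe) to obtain $|r^{m/2}(\psi_r,\psi/r,\psi_t)|_2\le C(T)$, and finally repeats \eqref{6028}--\eqref{uxxxl1} for the $u$-estimates; the only cosmetic difference is that the paper packages the translation between $\rho$-derivatives and $(\phi,\psi)$-derivatives into explicit pointwise equivalence relations \eqref{equiv-eu-high} before invoking these arguments, whereas you work directly with the identity $\rho_r=\rho(v-u)/(2\alpha)$.
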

\begin{proof}We divide the proof into two steps.

\smallskip
\textbf{1.} {\bf Estimates on $\rho$.} First, it follows from \eqref{V-expression3} and Lemmas \ref{important2-po}, \ref{l4.4-po}, and \ref{lemma66-po} that, for all $t\in [0,T]$,
\begin{equation}\label{rhor-infty}
|\rho_r(t)|_\infty\leq |\rho(t)|_\infty |(\log\rho)_r(t)|_\infty\leq C(T)|(v,u)(t)|_\infty\leq C(T).    
\end{equation}
Then, by \eqref{new-def2}, Lemmas \ref{important2-po} and \ref{lemma-lowerbound}, and the following identities: for $\partial=\partial_r$ or $\partial_t$,
\begin{equation}\label{cal-equiv-high}
\begin{aligned}
&\partial\phi_{r}=A\gamma(\gamma-2) \rho^{\gamma-3}\rho_r\partial\rho+A\gamma \rho^{\gamma-2}\partial\rho_{r},\quad \partial\psi= -\rho^{-2} \rho_r\partial\rho+ \rho^{-1}\partial\rho_{r},\\
&\partial\rho_{r}=(2-\gamma)\rho^{-1} \rho_r\partial\rho+(A\gamma)^{-1}\rho^{2-\gamma}\partial\phi_{r}=\psi\partial\rho+\rho\partial\psi,
\end{aligned}
\end{equation}
we obtain from \eqref{rhor-infty} that the following mutual constraints between $(|\partial\phi_{r}|,|\partial\psi|)$ and $(|\partial\rho|,|\partial\rho_{r}|)$ hold:
For all $(t,r)\in [0,T]\times I$,
\begin{equation}\label{equiv-eu-high}
\begin{aligned}
&|\partial\phi_{r}|+|\partial\psi|\leq C(T)(|\partial\rho|+ |\partial\rho_{r}|),\\[1mm]
&|\partial\rho_{r}|\leq C(T)(|\partial\rho|+ \mathcal{Z}_1) \qquad \text{with $\mathcal{Z}_1=|\partial\phi_{r}|$ or $|\partial\psi|$}.
\end{aligned}    
\end{equation}

Next, repeating the same calculations \eqref{845}--\eqref{psi_t} in the proof of Lemma \ref{l4.7}, with $\phi$ replaced by $\frac{A\gamma}{\gamma-1}\rho^{\gamma-1}$, we see from \eqref{equiv-eu} and Lemma \ref{equiv-initial} that, for $t\in [0,T]$,
\begin{equation*}
\Big|r^\frac{m}{2}\big(\psi_r,\frac{\psi}{r}\big)(t)\Big|_{2}+\big|r^\frac{m}{2}\psi_t(t)\big|_2 \leq C(T). 
\end{equation*}
Thus, it follows from  \eqref{equiv-eu} and \eqref{equiv-eu-high}, and Lemma \ref{lemma66-po} that 
\begin{equation}\label{rhorr}
\begin{aligned}
\Big|r^\frac{m}{2}\big(\rho_{rr},\frac{\rho_r}{r}\big)(t)\Big|_{2}&\leq C(T)\Big|r^\frac{m}{2}\big(\rho_r,\psi_r,\frac{\psi}{r}\big)(t)\Big|_{2} \leq C(T),\\
|r^{\frac{m}{2}}\rho_{tr}(t)|_2&\leq C(T)|r^{\frac{m}{2}}(\rho_t,\psi_t)|_2 \leq C(T).
\end{aligned}    
\end{equation}

\smallskip
\textbf{2.} {\bf Estimates on $u$.} 
To obtain the estimates for $u$, repeating the same calculations \eqref{6028}--\eqref{uxxxl1} 
as in the proof of Lemma \ref{l4.9}, we obtain from Lemmas \ref{important2-po} and \ref{lemma66-po}, 
\eqref{equiv-eu}, and \eqref{equiv-eu-high}--\eqref{rhorr} that
\begin{align*}
\Big|r^\frac{m-2}{2}\big(u_r+\frac{m}{r}u\big)_r\Big|_2
&\leq |r^\frac{m-2}{2} u_t|_2+C_0|u_r|_\infty|r^\frac{m-2}{2}(u,\psi)|_2+C_0|\phi|_\infty |r^\frac{m-2}{2}\psi|_2\notag\\
&\leq C(T)\big(|r^\frac{m-2}{2} u_t|_2+ |u_r|_\infty|r^\frac{m-2}{2}(u,\rho_r)|_2 +|\rho|_\infty^{\gamma-1} |r^\frac{m-2}{2}\rho_r|_2\big)\notag\\
&\leq C(T)\big(|r^\frac{m-2}{2} u_t|_2+|u_r|_\infty+1\big),\notag\\[1mm]
\Big|r^{\frac{m}{2}}\big(u_r+ \frac{m}{r}u\big)_{rr}\Big|_2&\leq |r^{\frac{m}{2}}u_{tr}|_{2}+|u_r|_\infty|r^{\frac{m}{2}} u_r|_2+C_0|(u,\psi)|_\infty|r^{\frac{m}{2}}u_{rr}|_2 \\
&\quad +C_0|\psi|_\infty|r^{\frac{m}{2}}\phi_r|_2+C_0|(\phi,u_r)|_\infty|r^{\frac{m}{2}}\psi_r|_2\notag\\
&\leq C(T)\big(r^{\frac{m}{2}}u_{tr}|_{2}+|u_r|_\infty|r^{\frac{m}{2}} u_r|_2+ |(u,\rho_r)|_\infty|r^{\frac{m}{2}}u_{rr}|_2\big)\notag\\
&\quad +C(T)\big(|\rho_r|_\infty|r^{\frac{m}{2}}\rho_r|_2+ |(\rho^{\gamma-1},u_r)|_\infty|r^{\frac{m}{2}}(\rho_r,\rho_{rr})|_2\big)\notag\\
&\leq C(T)\big(|r^{\frac{m}{2}}u_{tr}|_{2}+|u_r|_\infty+1\big).\notag
\end{align*}

Thus, the above estimates, together with Lemmas \ref{im-1} and \ref{l4.8-po}--\ref{lemma66-po}, 
give the desired estimates of this lemma. 
\end{proof}

Finally, the following two lemmas concern time-weighted estimates of the velocity.
\begin{lem}\label{l4.10-po}  
There exists a constant $C(T)>0$ such that, for any $t\in [0,T]$,
\begin{equation*}
\begin{split}
&t\Big|r^{\frac{m}{2}}\big(u_{tr},\frac{u_t}{r}\big)(t)\Big|^2_2+\int_{0}^{t} s |r^{\frac{m}{2}}u_{tt}|^2_2 \,\mathrm{d}s\leq C(T).
\end{split}
\end{equation*}
\end{lem}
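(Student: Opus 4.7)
The plan is to mimic the proof of Lemma \ref{l4.10} in the far-field vacuum setting, with the ingredients replaced by their counterparts already established in \S\ref{globalestimates-2po} for the strictly positive density case. Specifically, I would apply $r^m u_{tt}\partial_t$ to both sides of $\eqref{re-r}_2$ and integrate over $I$ to derive a pointwise-in-time energy identity
\[
\mathcal{B}_{11} + |r^{\frac{m}{2}}u_{tt}|_2^2 = -\int_0^\infty r^m\bigl((uu_r)_t + \phi_{tr} - 2\alpha(\psi u_r)_t\bigr)u_{tt}\,\mathrm{d}r,
\]
with $\phi = \frac{A\gamma}{\gamma-1}\rho^{\gamma-1}$, $\psi = (\log\rho)_r$, and $\mathcal{B}_{11} = -2\alpha\int_0^\infty r^m(u_r+\tfrac{m}{r}u)_{tr}u_{tt}\,\mathrm{d}r$. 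The justification of the integration by parts needed here is the analog of the $\tilde{\mathcal{B}}$-type verifications carried out in Lemmas \ref{l4.8-po}--\ref{l4.9-po}: the regularity provided by \eqref{spd-po}--\eqref{spd2-po} and the positive lower bound from Lemma \ref{lemma-lowerbound} make all the boundary and integrability checks go through just as before.

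Next, the identity $\mathcal{B}_{11} = \alpha\frac{\mathrm{d}}{\mathrm{d}t}|r^{\frac{m}{2}}(u_{tr}+\frac{m}{r}u_t)|_2^2$ is established via the same mollification-in-time argument as in the proof of Lemma \ref{l4.10} (Step 2 there), which reduces to proving
\[
\frac{\mathrm{d}}{\mathrm{d}t}\|\diver\boldsymbol{f}\|_{L^2}^2 = -2\int_{\mathbb{R}^n}\nabla\diver\boldsymbol{f}\cdot\boldsymbol{f}_t\,\mathrm{d}\boldsymbol{x}
\]
for any $\boldsymbol{f}\in L^2([\tau,T];H^2(\mathbb{R}^n))$ with $\boldsymbol{f}_t\in L^2([\tau,T];L^2(\mathbb{R}^n))$. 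Applied to $\boldsymbol{f}=\boldsymbol{u}_t$, this is legitimate thanks to Lemma \ref{l4.8-po}. The right-hand side terms are then estimated exactly as $\mathcal{G}_{21}$--$\mathcal{G}_{23}$ in Lemma \ref{l4.10}, using the equivalences \eqref{equiv-eu}--\eqref{equiv-eu-high} to replace $(\phi, \psi)$-quantities by $(\rho,\rho_r)$-quantities, and invoking Lemmas \ref{important2-po}, \ref{l4.4-po}, \ref{l4.6-po}, \ref{l4.8-po}, \ref{lemma66-po}, and \ref{l4.9-po} (together with \eqref{ur-infty-po}) to bound $|u|_\infty$, $|u_r|_\infty$, $|r^{\frac{m}{2}}\phi_{tr}|_2$, and $|r^{\frac{m}{2}}\psi_t|_2$. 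This yields
\[
\alpha\frac{\mathrm{d}}{\mathrm{d}t}\Bigl|r^{\frac{m}{2}}\bigl(u_{tr}+\frac{m}{r}u_t\bigr)\Bigr|_2^2 + |r^{\frac{m}{2}}u_{tt}|_2^2 \le C(T)\Bigl(\Bigl|r^{\frac{m}{2}}\bigl(u_{tr}+\frac{m}{r}u_t\bigr)\Bigr|_2^2 + 1\Bigr) + \tfrac{1}{8}|r^{\frac{m}{2}}u_{tt}|_2^2.
\]

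Finally, I would multiply this differential inequality by $t$, integrate over $[\tau,t]\subset(0,T]$, and apply Lemma \ref{im-1} to convert the $\diver$-type norm back to $|r^{\frac{m}{2}}(u_{tr},\frac{u_t}{r})|_2$. Since $r^{\frac{m}{2}}(u_{tr},\frac{u_t}{r})\in L^2([0,T];L^2(I))$ by Lemma \ref{l4.8-po}, Lemma \ref{bjr} supplies a sequence $\tau_k\downarrow 0$ with $\tau_k|r^{\frac{m}{2}}(u_{tr},\frac{u_t}{r})(\tau_k)|_2^2\to 0$, and sending $\tau=\tau_k\to 0$ delivers the desired bound. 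The main obstacle I anticipate is purely technical and is the same one faced in Lemma \ref{l4.8-po}: the careful verification of the boundary term vanishing and $W^{1,1}(I)$-integrability when integrating by parts with the $\frac{1}{r}$ singularity at the origin. However, these verifications are already worked out in essentially the form needed in the proof of Lemma \ref{l4.8-po}, and the positive lower bound on $\rho$ provided by Lemma \ref{lemma-lowerbound} makes the computations somewhat cleaner than in the far-field vacuum setting.
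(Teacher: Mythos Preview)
Your proposal is correct and follows essentially the same approach as the paper: mimic the proof of Lemma~\ref{l4.10} by applying $r^m u_{tt}\partial_t$ to the velocity equation, use the equivalences \eqref{equiv-eu}--\eqref{equiv-eu-high} to pass between $(\phi,\psi)$- and $\rho$-quantities, invoke Lemmas~\ref{l4.8-po}--\ref{l4.9-po} for the lower-order bounds, and conclude via the time-weighted integration combined with Lemma~\ref{bjr}. The paper's own proof is in fact just a terse pointer back to the calculations \eqref{eq:b12pre}--\eqref{etrq} and \eqref{etrq2}--\eqref{t-use}, so your more detailed outline is entirely in line with what is intended.
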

\begin{proof}
First, following the same calculations \eqref{eq:b12pre}--\eqref{etrq} in the proof of Lemma \ref{l4.10}, we see from \eqref{equiv-eu}, \eqref{ur-infty-po} and \eqref{equiv-eu-high}, Lemmas \ref{im-1} and \ref{l4.8-po}--\ref{l4.9-po}, and the H\"older and Young inequalities that 
\begin{align*}
&\alpha\frac{\mathrm{d}}{\mathrm{d}t}\Big|r^{\frac{m}{2}}\big(u_{tr}+\frac{m}{r}u_t\big)\Big|_2^2+ |r^{\frac{m}{2}}u_{tt}|_2^2\\
&=-\int r^m\big((u u_r)_t+\phi_{tr}-2\alpha(\psi u_r)_t\big)u_{tt}\,\mathrm{d}r\notag\\
&\leq C_0\big(|r^{\frac{m}{2}}(u_t,\psi_t)|_2|u_r|_\infty + |r^{\frac{m}{2}}u_{tr}|_2|(u,\psi)|_\infty+|r^{\frac{m}{2}}\phi_{tr}|_2\big)|r^{\frac{m}{2}}u_{tt}|_2 \\
&\leq C(T)\big(|r^{\frac{m}{2}}(u_t,\rho_t,\rho_{tr})|_2|u_r|_\infty + |r^{\frac{m}{2}}u_{tr}|_2|(u,\rho_r)|_\infty+|r^{\frac{m}{2}}(\rho_t,\rho_{tr})|_2\big)|r^{\frac{m}{2}}u_{tt}|_2\notag\\
&\leq C(T)\Big(\Big|r^{\frac{m}{2}}\big(u_{tr},\frac{u_t}{r}\big)\Big|_2+1\Big)|r^{\frac{m}{2}}u_{tt}|_2\\
&\leq   C(T)\Big(\Big|r^{\frac{m}{2}}\big(u_{tr}+\frac{m}{r}u_t\big)\Big|_2^2+1\Big)+\frac{1}{8}|r^{\frac{m}{2}}u_{tt}|_2^2.\notag
\end{align*}
 
Finally, based on the above inequality, repeating the same calculations \eqref{etrq2}--\eqref{t-use} 
as in the proof of Lemma \ref{l4.10}, we obtain the desired estimate.
\end{proof}

\begin{lem}\label{l4.10-ell-po}  
There exists a constant $C(T)>0$ such that, for any $t\in [0,T]$,
\begin{equation*}
t\Big|r^\frac{m}{2}\Big(u_{rrr},\frac{u_{rr}}{r},\big(\frac{u}{r}\big)_{rr},\frac{1}{r}\big(\frac{u}{r}\big)_r\Big)(t)\Big|_{2}^2+\int_{0}^{t} s \Big|r^\frac{m}{2}\Big(u_{trr},\big(\frac{u_t}{r}\big)_{r}\Big)\Big|_{2}^2 \,\mathrm{d}s\leq C(T).
\end{equation*}
\end{lem}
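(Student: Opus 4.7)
The plan is to mirror the strategy of Lemma \ref{l4.10-ell}, but with two systematic substitutions: replace the enlarged-system quantities $(\phi,\psi)$ by $(\tfrac{A\gamma}{\gamma-1}\rho^{\gamma-1},(\log\rho)_r)$, and use the equivalences \eqref{equiv-eu}--\eqref{equiv-eu-high} (which rely on the uniform lower and upper bounds of $\rho$ from Lemmas \ref{important2-po} and \ref{lemma-lowerbound}) to translate every bound on $(\phi,\psi,\phi_r,\psi_r,\phi_t,\psi_t,\phi_{tr})$ into one on $(\rho,\rho_r,\rho_{rr},\rho_t,\rho_{tr})$. Since all the relevant bounds on $\rho$-quantities have already been collected in Lemmas \ref{l4.5-po}--\ref{l4.10-po}, only algebraic reorganisation is required.

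First, combining \eqref{ur-infty-po} with Lemma \ref{l4.10-po} immediately gives the time-weighted $L^\infty$ control
\[
\sqrt{t}\,\Big|\big(u_r,\tfrac{u}{r}\big)(t)\Big|_\infty \leq C(T)\Big(\sqrt{t}\,\big|r^{\frac{m}{2}}(u_{tr},\tfrac{u_t}{r})\big|_2+1\Big)\leq C(T),
\]
which is the analog of \eqref{ur-infty-t}. Next, the two elliptic-type bounds \eqref{6028}--\eqref{uxxxl1} have already been redone inside the proof of Lemma \ref{l4.9-po}; multiplying each of them by $\sqrt{t}$ and using the time-weighted $L^\infty$ bound on $(u_r,u/r)$ together with Lemma \ref{l4.10-po} yields
\[
\sqrt{t}\,\Big|r^{\frac{m}{2}}\Big(\big(u_r+\tfrac{m}{r}u\big)_{rr},\,\tfrac{1}{r}\big(u_r+\tfrac{m}{r}u\big)_r\Big)\Big|_2 \leq C(T)\Big(\sqrt{t}\,\big|r^{\frac{m}{2}}(u_{tr},\tfrac{u_t}{r})\big|_2+\sqrt{t}|u_r|_\infty+1\Big)\leq C(T).
\]
An application of Lemma \ref{im-1} then converts this into the desired pointwise-in-$t$ bound for the third-order spatial derivatives $\sqrt{t}|r^{m/2}(u_{rrr},u_{rr}/r,(u/r)_{rr},\tfrac{1}{r}(u/r)_r)|_2$.

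For the parabolic part, I will apply $\sqrt{t}\,r^{m/2}\partial_r\partial_t$ to $\eqref{re-r}_2$ and take $L^2$-norms. This yields the right-hand-side expression
\[
\sqrt{t}\,\big|r^{\frac{m}{2}}\big(u_{tt},(uu_r)_t,\phi_{tr},(\psi u_r)_t\big)\big|_2,
\]
and each term is now handled exactly as in \eqref{utt}: the convective term $(uu_r)_t$ and the source term $(\psi u_r)_t$ are controlled by $|(u,\psi)|_\infty|r^{m/2}u_{tr}|_2 + |u_r|_\infty|r^{m/2}(u_t,\psi_t)|_2$, while $|r^{m/2}\phi_{tr}|_2$ is dominated by $|r^{m/2}(\rho_t,\rho_{tr})|_2$ through \eqref{equiv-eu-high}. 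Invoking Lemmas \ref{lemma66-po}--\ref{l4.9-po} to absorb all the lower-order pieces produces
\[
\sqrt{t}\,\Big|r^{\frac{m}{2}}\big(u_{trr},(\tfrac{u_t}{r})_r\big)\Big|_2 \leq C(T)\big(\sqrt{t}|r^{\frac{m}{2}}u_{tt}|_2+1\big),
\]
which is the analog of \eqref{utt}. Squaring and integrating in $s\in[0,t]$, using Lemma \ref{l4.10-po} to bound $\int_0^t s|r^{m/2}u_{tt}|_2^2\,\mathrm{d}s$, gives the claimed $L^2_t$ integral bound.

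The main obstacle, as usual in this paper, is not a single delicate estimate but rather the bookkeeping at the origin: one must verify that the equivalences \eqref{equiv-eu}--\eqref{equiv-eu-high} really let $(\phi_{tr},\psi_t)$ be traded for $(\rho_t,\rho_{tr})$ with time-independent constants, so that the resulting right-hand side in the differential inequality for $\sqrt{t}|r^{m/2}(u_{trr},(u_t/r)_r)|_2$ is indeed integrable against the control $\int_0^t s|r^{m/2}u_{tt}|_2^2\,\mathrm{d}s$ from Lemma \ref{l4.10-po}. Once this is in place---and it is, because the lower bound $\rho\geq C(T)^{-1}$ from Lemma \ref{lemma-lowerbound} is global-in-time on $[0,T]$---no new ideas beyond those of Lemma \ref{l4.10-ell} are needed.
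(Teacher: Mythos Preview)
Your proposal is correct and follows essentially the same route as the paper: first obtain the time-weighted $L^\infty$ bound on $(u_r,u/r)$ via \eqref{ur-infty-po} and Lemma \ref{l4.10-po}, then multiply the elliptic estimates from the proof of Lemma \ref{l4.9-po} by $\sqrt t$ and invoke Lemma \ref{im-1} for the spatial third-order part, and finally apply $\partial_t$ to $\eqref{re-r}_2$ and use the equivalences \eqref{equiv-eu}--\eqref{equiv-eu-high} together with Lemma \ref{l4.10-po} for the $\int_0^t s|r^{m/2}(u_{trr},(u_t/r)_r)|_2^2\,\mathrm{d}s$ bound. One small slip: you write ``apply $\sqrt t\,r^{m/2}\partial_r\partial_t$'' but the terms you then list are those obtained from applying only $\partial_t$ (as in \eqref{utt}); the latter is what is actually needed, so this is just a typo.
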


\begin{proof}
First, following the same calculations \eqref{ur-infty-t}--\eqref{urrr-t} as in the proof 
of Lemma \ref{l4.10-ell}, we see that 
\begin{equation}\label{ur-infty-t-po}
\sqrt{t}\Big|\big(u_r,\frac{u}{r}\big)\Big|_\infty+\sqrt{t}\Big|r^\frac{m}{2}\Big(u_{rrr},\frac{u_{rr}}{r},\big(\frac{u}{r}\big)_{rr},\frac{1}{r}\big(\frac{u}{r}\big)_r\Big)\Big|_{2} \leq C(T).
\end{equation}

Next, repeating the same calculation \eqref{utt} in the proof of Lemma \ref{l4.10-ell}, together with \eqref{equiv-eu}, \eqref{equiv-eu-high}, 
\eqref{ur-infty-t-po}, and Lemmas \ref{l4.8-po}--\ref{l4.10-po}, yields
\begin{align*}
&\sqrt{t}\Big|r^\frac{m}{2}\Big(u_{trr},\big(\frac{u_t}{r}\big)_{r}\Big)\Big|_{2}\\
&\leq C_0\sqrt{t}\big(|r^{\frac{m}{2}}u_{tt}|_2 +|r^{\frac{m}{2}}(u_t,\psi_t)|_2|u_r|_\infty+|r^{\frac{m}{2}}u_{tr}|_2|(u,\psi)|_\infty+|r^{\frac{m}{2}}\phi_{tr}|_2\big) \\
&\leq C(T)\sqrt{t}\big(|r^{\frac{m}{2}}u_{tt}|_2 +|r^{\frac{m}{2}}(u_t,\rho_t,\rho_{tr})|_2|u_r|_\infty+|r^{\frac{m}{2}}u_{tr}|_2|(u,\rho_r)|_\infty+|r^{\frac{m}{2}}(\rho_t,\rho_{tr})|_2\big) \\
&\leq C(T)\big(\sqrt{t}|r^{\frac{m}{2}}u_{tt}|_2 +1\big).
\end{align*}
Taking the square of the above and integrating the resulting inequality over $[0,t]$, 
together with Lemma \ref{l4.10-po}, leads to the desired estimates.
\end{proof}

\subsection{Global uniform \textbf{\textit{a priori}} estimates for 3-order regular solutions}\label{globalestimates-3po}
In  \S\ref{globalestimates-3po}, we always assume that $\gamma\in (1,\infty)$ if $n=2$ and $\gamma\in (1,3)$ if $n=3$. Let $T>0$ be any fixed time, and  let $(\rho, u) (t,r)$ be the $3$-order regular solution of
problem \eqref{e1.5hh} in $[0,T]\times I$ obtained in Theorems \ref{rth133-po}. 
The $H^2(\mathbb{R}^n)$-estimates are the same as those presented 
in Lemmas \ref{l4.5-po}--\ref{l4.9-po}, and thus we focus only on 
the $D^3(\mathbb{R}^n)$-estimates and the time-weighted estimates. 

We first give the $L^\infty(I)$-estimate of $(u_r,\frac{u}{r})$ and the third-order estimates of $u$.
\begin{lem}\label{H3-1-po}
There exists a constant $C(T)>0$ such that, for any $t\in [0,T]$,
\begin{align*}
&\Big|r^{\frac{m}{2}}\big(u_{tr},\frac{u_t}{r}\big)(t)\Big|_2+\Big|r^\frac{m}{2}\Big(u_{rrr},\frac{u_{rr}}{r},\big(\frac{u}{r}\big)_{rr},\frac{1}{r}\big(\frac{u}{r}\big)_r\Big)(t)\Big|_{2}\\
&+\Big|\big(u_r,\frac{u}{r}\big)(t)\Big|_\infty+\int_{0}^{t} |r^{\frac{m}{2}}u_{tt}|^2_2\,\mathrm{d}s+\int_0^t\Big|r^\frac{m}{2}\Big(u_{trr},\big(\frac{u_t}{r}\big)_{r}\Big)\Big|_{2}^2\,\mathrm{d}s\leq C(T).
\end{align*}
\end{lem}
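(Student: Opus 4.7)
The plan is to mirror the structure of Lemma \ref{H3-1} in the far-field vacuum case, exploiting the higher regularity of the $3$-order regular solution to remove the time weights that appeared in the $2$-order analogues (Lemmas \ref{l4.10-po}--\ref{l4.10-ell-po}). The overall strategy is to integrate the differential inequality \eqref{etrq} (already derived in the proof of Lemma \ref{l4.10-po}) over $[\tau,t]$ instead of multiplying by $t$, and then pass to the limit $\tau\to 0^+$ by appealing to continuity of $(\rho,u)$ at $t=0$ together with a bound on $|r^{m/2}(u_{tr},u_t/r)(\tau)|_2$ derived algebraically from $\eqref{re-r}_2$ and the initial data.

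For the first (time-derivative) estimate, I would start from the inequality
\begin{equation*}
\alpha\frac{\mathrm{d}}{\mathrm{d}t}\Big|r^{\frac{m}{2}}\big(u_{tr}+\tfrac{m}{r}u_t\big)\Big|_2^2+ |r^{\frac{m}{2}}u_{tt}|_2^2 \leq C(T)\Big(\Big|r^{\frac{m}{2}}\big(u_{tr}+\tfrac{m}{r}u_t\big)\Big|_2^2+1\Big),
\end{equation*}
obtained in the proof of Lemma \ref{l4.10-po}, and integrate it over $[\tau,t]$ for $\tau\in(0,t)$. To bound $|r^{m/2}(u_{tr},u_t/r)(\tau)|_2$ as $\tau\to 0^+$, I apply $r^{m/2}\partial_r$ to $\eqref{re-r}_2$ and also multiply $\eqref{re-r}_2$ by $r^{(m-2)/2}$ to take $L^2(I)$-norms, exactly in the spirit of \eqref{cont-limsup}--\eqref{limsup}. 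Using \eqref{equiv-eu}--\eqref{equiv-eu-high} to replace $(\phi,\psi)$-type quantities by $(\rho,\rho_r)$-quantities, the time continuity of the $3$-order regular solution, and Lemmas \ref{l4.8-po}, \ref{l4.9-po}, together with Lemmas \ref{equiv-initial}, \ref{ale1}, and \ref{lemma-initial}, one obtains
\begin{equation*}
\limsup_{\tau\to 0^+}\Big|r^{\frac{m}{2}}\big(u_{tr},\tfrac{u_t}{r}\big)(\tau)\Big|_2\leq C_0\|(\rho_0-\bar\rho,\boldsymbol{u}_0)\|_{H^3}\leq C_0.
\end{equation*}
Passing to the limit $\tau\to 0^+$ and applying the Gr\"onwall inequality then yields
\begin{equation*}
\Big|r^{\frac{m}{2}}\big(u_{tr},\tfrac{u_t}{r}\big)(t)\Big|_2^2+\int_0^t|r^{\frac{m}{2}}u_{tt}|_2^2\,\mathrm{d}s\leq C(T).
\end{equation*}

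For the second (spatial) estimate, I would repeat the elliptic arguments used in the proofs of Lemmas \ref{lemma66-po} and \ref{l4.9-po}, but now unweighted in time, since the bound just obtained on $|r^{m/2}(u_{tr},u_t/r)(t)|_2$ holds uniformly on $[0,T]$. Specifically, from \eqref{ur-infty-po} I immediately get $|(u_r,u/r)(t)|_\infty\le C(T)$. Then, by applying $r^{m/2}\partial_r$ and $r^{(m-2)/2}$ to $\eqref{re-r}_2$, taking $L^2(I)$-norms, and invoking Lemmas \ref{important2-po}, \ref{lemma-lowerbound}, \ref{lemma66-po}, \ref{l4.9-po}, together with \eqref{equiv-eu}--\eqref{equiv-eu-high}, I estimate $|r^{m/2}((u_r+\tfrac{m}{r}u)_{rr},\tfrac{1}{r}(u_r+\tfrac{m}{r}u)_r)|_2$ by $|r^{m/2}(u_{tr},u_t/r)|_2+|u_r|_\infty+1\le C(T)$. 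Lemma \ref{im-1} then converts this into the desired pointwise-in-time bound on $r^{m/2}(u_{rrr},u_{rr}/r,(u/r)_{rr},\tfrac{1}{r}(u/r)_r)$. Finally, an argument analogous to \eqref{utt}, again without the time weight $\sqrt{t}$, yields the $L^2([0,T];L^2(I))$-bound on $r^{m/2}(u_{trr},(u_t/r)_r)$ in terms of $|r^{m/2}u_{tt}|_2$ which is already controlled.

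The main obstacle is the rigorous identification of $\limsup_{\tau\to 0^+}|r^{m/2}(u_{tr},u_t/r)(\tau)|_2$ with the initial data: the equation $\eqref{re-r}_2$ only expresses $u_t$ in the sense of distributions, so we must work within the regularity class of Theorem \ref{rth133-po} (in particular \eqref{spd33-po}--\eqref{spd233-po}) to ensure the relevant traces at $t=0$ exist and to transfer $H^3$-regularity of $(\rho_0-\bar\rho,\boldsymbol{u}_0)$ into the radial weighted norms via Lemma \ref{lemma-initial}. Once this passage is justified, the argument is a clean transcription of the $\bar\rho=0$ proof of Lemma \ref{H3-1}.
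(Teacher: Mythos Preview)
Your proposal is correct and follows essentially the same approach as the paper: the proof there simply invokes the argument of Lemma \ref{H3-1} with $(\phi,\psi)$ replaced by $(\frac{A\gamma}{\gamma-1}\rho^{\gamma-1},(\log\rho)_r)$, and identifies as the only new ingredient the bound on $\limsup_{\tau\to 0}|r^{m/2}(u_{tr},u_t/r)(\tau)|_2$, which it obtains exactly as you describe---by taking $\tau\to 0$ in \eqref{cont-limsup} and using Lemma \ref{equiv-initial}, the time-continuity of $(\rho,u)$, and Lemmas \ref{im-1}, \ref{ale1}, \ref{lemma-initial} to reduce to $H^3$-norms of the initial data.
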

\begin{proof}
This can be proved by the same argument as in the proof of Lemma \ref{H3-1}, 
with $(\phi,\psi)$ replaced by $(\frac{A\gamma}{\gamma-1}\rho^{\gamma-1},(\log\rho)_r)$. 
To achieve this, it suffices to show that \eqref{limsup} holds.

Indeed, taking the limit as $\tau\to0$ in \eqref{cont-limsup}, 
we obtain from  Lemmas \ref{im-1}, \ref{equiv-initial}, \ref{ale1}, and \ref{lemma-initial},
and the time-continuity of $(\rho,u)$ presented in Lemma \ref{zth2-po}
that
\begin{align*}
\limsup_{\tau\to 0} \Big|r^\frac{m}{2}\big(u_{tr},\frac{u_t}{r}\big)(\tau)\Big|_2&\leq  C_0\|(\boldsymbol{u}_0,\nabla\log\rho_0,\nabla \boldsymbol{u}_0)\|_{L^\infty}  \|(\nabla^2\boldsymbol{u}_0,\nabla\boldsymbol{u}_0,\nabla^2\log\rho_0)\|_{L^2} \\
&\quad +C_0 \|(\nabla^2\rho_0^{\gamma-1},\nabla^3\boldsymbol{u}_0)\|_{L^2} \leq C_0.
\end{align*}
The proof of Lemma \ref{H3-1-po} is completed.
\end{proof}

Next, we derive the higher-order estimates for $(\rho,u)$.
\begin{lem}\label{Lemma6.12-po}
There exists a constant $C(T)>0$ such that, for any $t\in [0,T]$,
\begin{equation*}
\Big|r^\frac{m}{2}\!\Big(\rho_{rrr},\!\big(\frac{\rho_r}{r}\big)_r,\rho_{trr},\!\frac{\rho_{tr}}{r}\Big)(t)\Big|_2\!+\!\int_0^t\!\Big|r^\frac{m}{2}\!\Big(u_{rrrr},\!\big(\frac{u_{rr}}{r}\big)_{r},\!\big(\frac{u}{r}\big)_{rrr},\!\Big(\frac{1}{r}\big(\frac{u}{r}\big)_r\Big)_r\Big)\Big|_{2}^2\mathrm{d}s\leq C(T).
\end{equation*}
\end{lem}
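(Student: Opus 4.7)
The plan is to mirror the two-step structure of Lemma \ref{Lemma6.12} (the far-field vacuum analog), with every bound on $(\phi,\psi)$ translated into a bound on $\rho$ via the uniform two-sided control of the density afforded by Lemmas \ref{important2-po} and \ref{lemma-lowerbound}. First I would extend the pointwise equivalences \eqref{cal-equiv-high}--\eqref{equiv-eu-high} to one higher order. Differentiating $\phi_r = A\gamma \rho^{\gamma-2}\rho_r$ and $\psi = \rho_r/\rho$ twice and using $|\rho|_\infty + |\rho^{-1}|_\infty + |\rho_r|_\infty \leq C(T)$ together with the $L^2$-control of $r^{m/2}\rho_{rr}$ from Lemma \ref{l4.9-po} gives
\begin{equation*}
\big|r^\frac{m}{2}\partial\partial_r\rho_r\big|_2 \leq C(T)\big(\big|r^\frac{m}{2}\partial\partial_r\mathcal{Z}\big|_2 + \big|r^\frac{m}{2}\partial\rho_r\big|_2+1\big) \quad \text{with } \mathcal{Z}\in\{\phi_r,\psi\},
\end{equation*}
and a matching reverse inequality, for $\partial \in \{\partial_r,\partial_t\}$. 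This enables a free transfer between $\nabla^2\phi_r$ (or $\nabla^2\psi$) and $\nabla^2\rho_r$.

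Next, I would derive the positive-density analog of Lemma \ref{lemma-psi-high}, namely $\big|r^\frac{m}{2}(\psi_{rr},(\psi/r)_r,\psi_{tr},\psi_t/r)(t)\big|_2 \leq C(T)$. Following the proof of Lemma \ref{lemma-psi-high} verbatim, apply $\partial_l$ to the effective-velocity divergence equation \eqref{nabla-v}, perform an $L^2$-energy estimate via Lemma \ref{lemma-lions}, and control the commutators using $|(u,u_r,u/r)|_\infty$, the second-order estimates from Lemma \ref{l4.9-po}, the $L^\infty$-estimate of $v$ (Lemma \ref{l4.4-po}), and Lemma \ref{im-1}. Every input required by the vacuum argument has a positive-density counterpart already proved in Lemmas \ref{l4.4-po}, \ref{ele-po}, \ref{lemma66-po}, \ref{l4.9-po}, and \ref{H3-1-po}. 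Together with the equivalences above, this yields the first four quantities on the left-hand side of the claim.

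For the time-integrated fourth-order estimate on $u$, I would replicate the two sub-steps of \eqref{lem612-1}--\eqref{lem612-7}. Multiplying $\eqref{re-r}_2$ by $r^{-1}$, applying $r^{m/2}\partial_r$, and taking $L^2$ in $r$ produces
\begin{equation*}
\Big|r^\frac{m}{2}\Big(\tfrac{1}{r}\big(u_r+\tfrac{m}{r}u\big)_r\Big)_r\Big|_2 \leq \Big|r^\frac{m}{2}\big(\tfrac{u_t}{r}\big)_r\Big|_2 + C(T),
\end{equation*}
while applying $r^{m/2}\partial_r^2$ gives
\begin{equation*}
\Big|r^\frac{m}{2}\big(u_r+\tfrac{m}{r}u\big)_{rrr}\Big|_2 \leq |r^\frac{m}{2}u_{trr}|_2 + |r^\frac{m}{2}u_{rr}\psi_r|_2 + C(T),
\end{equation*}
after using the Paragraph~1 equivalences and the $L^\infty$ and $L^2$-bounds on $(u,u_r,u/r,\rho,\rho_r,\rho_{rr})$. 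The delicate term $|r^\frac{m}{2}u_{rr}\psi_r|_2$ is handled exactly as in \eqref{lem612-5} by splitting into $[0,1]$ (Hardy inequality plus the newly obtained $|r^\frac{m}{2}\psi_{rr}|_2$ bound) and $[1,\infty)$ (direct $L^\infty$-control of $\psi_r = \rho_r/\rho$). Squaring, integrating in time, and invoking the $L^2([0,T];L^2)$-bounds on $u_{trr}$ and $(u_t/r)_r$ from Lemma \ref{H3-1-po} together with Lemma \ref{im-1} closes the second bound.

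The main obstacle is the second paragraph: re-establishing the second-order $\psi$-estimates in the positive-density setting. Although the skeleton of Lemma \ref{lemma-psi-high}'s argument carries over, one must verify carefully that the Lions-type computation on \eqref{nabla-v} still closes when $\rho$ is only controlled above and below (rather than possessing BD-type pointwise decay), and that the critical embeddings invoked to handle $\nabla\boldsymbol{u}\cdot\nabla^2\boldsymbol{v}$-type terms remain valid. Once this technical point is settled, the equivalences of Paragraph 1 render the transition to $\rho$-derivatives routine, and the $u$-estimates are an almost verbatim repetition of the far-field vacuum argument.
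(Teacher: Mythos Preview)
Your proposal is correct and follows essentially the same route as the paper: first establish the higher-order pointwise equivalences between derivatives of $\rho$, $\phi$, and $\psi$ (the paper also records $|\rho_t|_\infty\le C(T)$ to handle the $\partial_t$-cases), then repeat the $\nabla\diver\boldsymbol{v}$ energy argument from Lemma~\ref{lemma-psi-high} to obtain the second-order $\psi$-bounds and translate them to $\rho$, and finally replicate \eqref{lem612-1}--\eqref{lem612-7} for the $u$-estimates. The only cosmetic difference is that the paper writes the delicate product as $|r^{m/2}u_{rr}\rho_{rr}|_2$ rather than $|r^{m/2}u_{rr}\psi_r|_2$, but these are interchangeable via the equivalences you set up, and the splitting argument is identical to \eqref{lem612-5}.
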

\begin{proof}We divide the proof into two steps.

\smallskip
\textbf{1.} {\bf Estimates on $\rho$.}
First, it follows from $\eqref{re-r}_1$, \eqref{rhor-infty}, and Lemmas \ref{lemma66-po} and \ref{H3-1-po} that, for all $t\in[0,T]$,
\begin{equation}\label{rhot-infty}
|\rho_t|_\infty\leq C_0\Big(|u|_\infty|\rho_r|_\infty+|\rho|_\infty\Big|\big(u_r,\frac{u}{r}\big)\Big|_\infty\Big)\leq C(T).
\end{equation}
Then, based on the above, \eqref{equiv-eu} and \eqref{rhor-infty}, following an argument 
similar to \eqref{cal-equiv-high}--\eqref{equiv-eu-high} in Step 1 of the proof of Lemma \ref{l4.9-po}, 
we can obtain some mutual constraints for the high-order derivatives of $(\rho,\phi,\psi)$: 
For all $(t,r)\in [0,T]\times I$,
\begin{align}
&\begin{cases}
|\phi_{rrr}|+|\psi_{rr}|\leq C(T)(|\rho_r|+|\rho_{rr}|+|\rho_{rrr}|),\\[6pt]
|\rho_{rrr}|\leq C(T)(|\rho_r|+|\rho_{rr}|+\mathcal{Z}_2) \qquad \text{with $\mathcal{Z}_2=|\phi_{rrr}|$ or $|\psi_{rr}|$},    
\end{cases}\notag\\
&\begin{cases}
|\phi_{trr}|+|\psi_{tr}|\leq C(T)(|\rho_t|+|\rho_{tr}|+|\rho_{rr}|+|\rho_{trr}|),\\[6pt]
|\rho_{tr}|\leq C(T)(|\rho_r|+\mathcal{Z}_3)  \qquad \text{with $\mathcal{Z}_3=|\phi_{tr}|$ or $|\psi_{t}|$},\\[6pt]
|\rho_{trr}|\leq C(T)(|\rho_t|+|\rho_{tr}|+|\rho_{rr}|+\mathcal{Z}_4) \qquad \text{with $\mathcal{Z}_4=|\phi_{trr}|$ or $|\psi_{tr}|$},    
\end{cases}\label{equiv-high-so}\\
&\begin{cases}
\displaystyle\Big|\big(\frac{\phi_r}{r}\big)_r\Big|+\Big|\big(\frac{\psi}{r}\big)_r\Big|\leq C(T)\Big(\Big|\frac{\rho_r}{r}\Big|+\Big|\big(\frac{\rho_r}{r}\big)_r\Big|\Big),\\[6pt]
\displaystyle\Big|\big(\frac{\rho_{r}}{r}\big)_r\Big|\leq C(T)\Big(\Big|\frac{\rho_r}{r}\Big|+\mathcal{Z}_5\Big) \qquad \text{with $\mathcal{Z}_5=\Big|\big(\frac{\phi_r}{r}\big)_r\Big|$ or $\Big|\big(\frac{\psi}{r}\big)_r\Big|$}.    
\end{cases}\notag
\end{align}

As a consequence, following the calculations similar to \eqref{partdivev}--\eqref{high-psi} in the proof of Lemma \ref{Lemma6.12}, together with \eqref{equiv-high-so}, and Lemmas \ref{equiv-initial} and \ref{lemma66-po}--\ref{l4.9-po}, one arrives at
\begin{equation}\label{high-psi-po}
\Big|r^\frac{m}{2}\Big(\rho_{rrr},\big(\frac{\rho_r}{r}\big)_r\Big)\Big|_2\leq C(T)\Big|r^\frac{m}{2}\Big(\rho_r,\rho_{rr},\frac{\rho_r}{r},\psi_{rr},\big(\frac{\psi}{r}\big)_r\Big)\Big|_2\leq C(T).
\end{equation}

Next, repeating the same calculation \eqref{psi-tr} in the proof of Lemma \ref{Lemma6.12}, we see from \eqref{equiv-eu}, \eqref{equiv-eu-high}, \eqref{equiv-high-so}--\eqref{high-psi-po}, and Lemmas \ref{lemma66-po}--\ref{l4.9-po} and \ref{H3-1-po} that 
\begin{align*}
\Big|r^\frac{m}{2}\big(\psi_{tr},\frac{\psi_t}{r}\big)\Big|_2&\leq C_0|(u,\psi)|_\infty |r^\frac{m}{2}(u_{rr},\psi_{rr})|_2+C_0\Big|\big(u_r,\frac{u}{r}\big)\Big|_\infty\Big|r^\frac{m}{2}\big(\psi_r,\frac{\psi}{r}\big)\Big|_2\\
&\quad +C_0\Big|r^\frac{m}{2}\Big(u_{rrr},\frac{u_{rr}}{r},\big(\frac{u}{r}\big)_{rr},\frac{1}{r}\big(\frac{u}{r}\big)_r\Big)\Big|_{2}\\
&\leq C(T)|(u,\rho_r)|_\infty |r^\frac{m}{2}(u_{rr},\rho_r,\rho_{rr},\rho_{rrr})|_2\\
&\quad +C(T)\Big|r^\frac{m}{2}\big(\rho_r,\rho_{rr},\frac{\rho_r}{r}\big)\Big|_2 +C(T)\leq C(T),
\end{align*}
which, along with \eqref{equiv-high-so} and Lemmas \ref{lemma66-po}--\ref{l4.9-po}, leads to
\begin{equation}
\Big|r^\frac{m}{2}\big(\rho_{trr},\frac{\rho_{tr}}{r}\big)\Big|_2\leq \Big|r^\frac{m}{2}\big(\rho_t,\rho_{tr},\rho_{rr},\frac{\rho_r}{r},\psi_{tr},\frac{\psi_t}{r}\big)\Big|_2\leq C(T).
\end{equation}

\smallskip
\textbf{2.} {\bf Estimates on $u$.} Repeating the calculations \eqref{lem612-1} 
and \eqref{lem612-4} in the proof of Lemma \ref{Lemma6.12}, 
we obtain from \eqref{equiv-eu}, \eqref{equiv-eu-high}, \eqref{equiv-high-so}--\eqref{high-psi-po}, 
and Lemmas \ref{lemma66-po}--\ref{l4.9-po} and \ref{H3-1-po} that 
\begin{equation} \label{lem612-1-po}
\begin{aligned}
\Big|r^\frac{m}{2}\Big(\frac{1}{r}\big(u_r+\frac{m}{r}u\big)_{r}\Big)_r\Big|_2 
&\leq  \Big|r^\frac{m}{2}\Big(\big(\frac{u_{t}}{r}\big)_r,\big(\frac{\phi_r}{r}\big)_r\Big)\Big|_2+C_0|(u,\psi)|_\infty|r^\frac{m-2}{2}u_{rr}|_2\\
&\quad+C_0\Big|r^\frac{m}{2}\Big(\big(\frac{u}{r}\big)_r,\big(\frac{\psi}{r}\big)_r\Big)_r\Big|_2|u_r|_\infty\\
&\leq  \Big|r^\frac{m}{2} \big(\frac{u_{t}}{r}\big)_r\Big|_2+C(T)\Big(|\rho_r|_\infty+\Big|r^\frac{m}{2}\Big(\frac{\rho_r}{r},\big(\frac{\rho_r}{r}\big)_r\Big)\Big|_2+1\Big)\\
&\leq \Big|r^\frac{m}{2}\big(\frac{u_{t}}{r}\big)_r\Big|_2 +C(T),\\
\Big|r^\frac{m}{2}\big(u_r+\frac{m}{r}u\big)_{rrr}\Big|_2
&\leq \big|r^\frac{m}{2}(u_{trr},\phi_{rrr})\big|_2+|(u,\psi)|_\infty |r^\frac{m}{2}u_{rrr}|_2+ C_0|r^\frac{m}{2}u_{rr}\psi_r|_2\\
&\quad + C_0|r^\frac{m}{2}(u_{rr},\psi_{rr})|_2|u_r|_\infty\\
&\leq |r^\frac{m}{2}u_{trr}|_2+C(T)|\rho_r|_\infty+C(T)|r^\frac{m}{2}u_{rr}|_2|\rho_r|_\infty \\
&\quad +C(T)|r^\frac{m}{2}u_{rr} \rho_{rr}|_2+C(T)|r^\frac{m}{2}(\rho_r,\rho_{rr},\rho_{rrr})|_2+C(T)\\
&\leq |r^\frac{m}{2}u_{trr}|_2+C(T)|r^\frac{m}{2}u_{rr} \rho_{rr}|_2 +C(T).
\end{aligned}
\end{equation}
Then, for the estimates of $|r^\frac{m}{2}u_{rr}\rho_{rr}|_2$ on the right-hand side of $\eqref{lem612-1-po}_2$, it follows from \eqref{high-psi-po}, Lemmas \ref{important2-po}, \ref{lemma66-po}--\ref{l4.9-po}, \ref{H3-1-po}, \ref{ale1}, and \ref{hardy} that 
\begin{align}
|r^\frac{m}{2}u_{rr}\rho_{rr}|_2&\leq |\chi_1^\flat r^\frac{m}{2}u_{rr}\rho_{rr}|_2+|\chi_1^\sharp r^\frac{m}{2}u_{rr}\rho_{rr}|_2\notag\\
&\leq C_0|\chi_1^\flat r^\frac{m+2}{2}u_{rr}\rho_{rr}|_2+C_0|\chi_1^\flat r^\frac{m+2}{2}u_{rrr}\rho_{rr}|_2\notag\\
&\quad +C_0|\chi_1^\flat r^\frac{m+2}{2}u_{rr}\rho_{rrr}|_2+|\chi_1^\sharp r^\frac{m}{2}u_{rr}\rho_{rr}|_2 \notag\\
&\leq C_0|r^\frac{m}{2}(u_{rr},u_{rrr})|_2|\chi_1^\flat r\rho_{rr}|_\infty\label{lem612-5-po}\\
&\quad + C_0 |r^\frac{m}{2}\rho_{rrr}|_2 |\chi_1^\flat ru_{rr}|_\infty +C_0|r^\frac{m}{2}u_{rr}|_2|\chi_1^\sharp\rho_{rr}|_\infty \notag\\
&\leq C(T)|\chi_1^\flat r^\frac{3}{2}(\rho_{rr},\rho_{rrr},u_{rr},u_{rrr})|_2 +C(T) |\chi_1^\sharp(\rho_{rr},\rho_{rrr})|_2 \notag\\
&\leq C(T)\big(|\chi_1^\flat r^\frac{3-m}{2}|_\infty+|\chi_1^\sharp r^{-\frac{m}{2}}|_\infty\big)|r^\frac{m}{2}(\rho_{rr},\rho_{rrr},u_{rr},u_{rrr})|_2 \leq C(T).\notag
\end{align}

Combining with  \eqref{lem612-1-po}--\eqref{lem612-5-po}, together with Lemmas \ref{im-1} and \ref{H3-1-po}, gives the desired estimates of this lemma. 
\end{proof}

\begin{rk}\label{rk11.1}
As shown by \eqref{equiv-eu}, \eqref{equiv-eu-high}, \eqref{equiv-high-so}, 
and {\rm Lemmas \ref{lemma66-po}--\ref{l4.9-po}} and {\rm \ref{Lemma6.12-po}}, 
each of the following $L^2(I)$-norms that we use:
\begin{equation*}
|r^\frac{m}{2}X|_2,\ |r^\frac{m}{2}X_r|_2,\ \big|r^\frac{m-2}{2}X\big|_2,\ |r^\frac{m}{2}X_{rr}|_2,\ \Big|r^\frac{m}{2}\big(\frac{X}{r}\big)_r\Big|_2,\ |r^\frac{m}{2}X_t|_2,\ |r^\frac{m}{2}X_{tr}|_2,\ \big|r^\frac{m-2}{2}X_t\big|_2 
\end{equation*}
with $X=\phi_r$ or $\psi$, can be controlled by the corresponding $L^2(I)$-norms 
with $X$ replaced by $\rho_r$, combined with some $L^2(I)$-norms of the
lower-order derivatives of $\rho$, for instance, 
\begin{equation*}
|r^\frac{m}{2}X_{tr}|_2\leq C(T)|r^\frac{m}{2}\rho_{trr}|_2+C(T)\underbrace{\big|r^\frac{m}{2}(\rho_t,\rho_{tr},\rho_{rr})\big|_2}_{\text{lower-order norms}}\leq C(T).
\end{equation*}
\end{rk}

Now, we establish the time-weighted estimates for $u$. The following two lemmas are similar to Lemmas \ref{Lemma6.13}--\ref{Lemma6.14}.
\begin{lem}\label{Lemma6.13-po}  
There exists a constant $C(T)>0$ such that, for any $t\in [0,T]$,
\begin{equation*}
\begin{split}
&t|r^{\frac{m}{2}}u_{tt}(t)|^2_2+\int_{0}^{t} s \Big|r^{\frac{m}{2}}\big(u_{ttr},\frac{u_{tt}}{r}\big)\Big|^2_2 \,\mathrm{d}s\leq C(T).
\end{split}
\end{equation*}
\end{lem}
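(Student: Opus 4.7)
The plan is to mirror the proof of Lemma~\ref{Lemma6.13} step by step, replacing the enlarged system $\eqref{e2.2}$ by the reformulated system $\eqref{re-r}$ and replacing the pair $(\phi,\psi)$ everywhere by $(\tfrac{A\gamma}{\gamma-1}\rho^{\gamma-1},(\log\rho)_r)$. The positive lower and upper bounds for $\rho$ from Lemmas~\ref{important2-po} and~\ref{lemma-lowerbound}, together with the mutual equivalences \eqref{equiv-eu}, \eqref{equiv-eu-high}, \eqref{equiv-high-so} (see also Remark~\ref{rk11.1}), allow us to translate back and forth between $(\partial^k\phi,\partial^k\psi)$-norms and $(\partial^k\rho,\partial^{k+1}\rho)$-norms at each stage, so all bounds established in Lemmas~\ref{lemma66-po}--\ref{l4.9-po}, \ref{H3-1-po}, and \ref{Lemma6.12-po} (together with \ref{l4.10-po}--\ref{l4.10-ell-po}) are available.

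The main preliminary is to reproduce the analogue of the three pointwise bounds in \eqref{lem613-1}:
$$
|u_t|_\infty+|\chi_1^\flat r u_{tr}|_\infty+|\chi_1^\sharp u_{tr}|_\infty\leq C(T)\big(|r^{\tfrac{m}{2}}u_{trr}|_2+1\big),
$$
obtained by combining $H^1(0,1)\hookrightarrow L^\infty$, $H^1(1,\infty)\hookrightarrow L^\infty$, the Hardy inequality (Lemma~\ref{hardy}), and Lemmas~\ref{H3-1-po}, \ref{ale1}. Then, differentiating $\eqref{re-r}_1$ in $t$ and using Lemmas~\ref{lemma66-po}--\ref{Lemma6.12-po} with the equivalences above yield an $L^2$-bound
$$
|r^{\tfrac{m}{2}}\rho_{tt}|_2+|r^{\tfrac{m}{2}}\rho_{ttr}|_2\leq C(T)\Big(\Big|r^{\tfrac{m}{2}}\big(u_{trr},(\tfrac{u_t}{r})_r\big)\Big|_2+1\Big),
$$
which, by \eqref{equiv-eu} and \eqref{equiv-high-so}, gives the corresponding bounds for $|r^{m/2}\phi_{tt}|_2$, $|r^{m/2}\phi_{ttr}|_2$, and $|r^{m/2}\psi_{tt}|_2$, replacing \eqref{lem613-1'}--\eqref{phittr}. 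We also need $|\phi_t|_\infty\leq C(T)$, which follows via $\eqref{re-r}_1$ from the uniform bounds on $\rho$, $\rho_r$, $u$, $u_r$ and $u/r$ provided by Lemmas~\ref{important2-po}, \ref{lemma66-po}, \ref{H3-1-po}.

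With these tools in hand, we apply $r^m u_{tt}\partial_t^2$ to the momentum equation $\eqref{re-r}_2$, formally integrate over $I$, and obtain the same identity as \eqref{lem613-2} with $\phi_{ttr}=(\tfrac{A\gamma}{\gamma-1}\rho^{\gamma-1})_{ttr}$ and $\psi=(\log\rho)_r$. The three resulting integrals $\mathcal{G}_{21}$, $\mathcal{G}_{22}$, $\mathcal{G}_{23}$ are then controlled exactly as in \eqref{G21-G22}--\eqref{G23}: H\"older plus Young, using the preliminary bounds just derived and Lemmas~\ref{l4.8-po}--\ref{Lemma6.12-po}, yielding
$$
\frac{\mathrm{d}}{\mathrm{d}t}|r^{\tfrac{m}{2}}u_{tt}|_2^2+\alpha\Big|r^{\tfrac{m}{2}}\big(u_{ttr},\tfrac{u_{tt}}{r}\big)\Big|_2^2\leq C(T)\Big(|r^{\tfrac{m}{2}}u_{tt}|_2^2+\Big|r^{\tfrac{m}{2}}\big(u_{trr},(\tfrac{u_t}{r})_r\big)\Big|_2^2+1\Big).
$$
Multiplying by $t$ and integrating on $[\tau,t]$, the time-weighted right-hand side is bounded by Lemmas~\ref{H3-1-po} and~\ref{Lemma6.12-po}. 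Since $r^{m/2}u_{tt}\in L^2([0,T];L^2(I))$ by Lemma~\ref{H3-1-po}, Lemma~\ref{bjr} furnishes a sequence $\tau_k\downarrow 0$ with $\tau_k|r^{m/2}u_{tt}(\tau_k)|_2^2\to 0$; passing to the limit $\tau=\tau_k\to 0$ closes the estimate.

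The main obstacle, identical in character to the corresponding step in the proof of Lemma~\ref{Lemma6.13} (Step~3 there), is the rigorous justification of the formal energy identity \eqref{lem613-2}. Strictly positive $\rho$ does simplify some integration-by-parts computations (no weights $\rho^{-1}$ become singular), but one still must establish $\boldsymbol{u}_{ttt}\in L^2([\tau,T];H^{-1}(\mathbb{R}^n))$ for $\tau\in(0,T)$ in order to apply Lemma~\ref{triple} and convert the absolute continuity of $t\mapsto\|\boldsymbol{u}_{tt}(t)\|_{L^2}^2$ into the differential identity. Following the argument in Step~3 of the proof of Lemma~\ref{Lemma6.13}, we test $\eqref{re-r}_2$ against an arbitrary spherically symmetric $\boldsymbol{\varphi}\in H^1(\mathbb{R}^n)$, differentiate twice in $t$, and bound the right-hand side by $F(t)\|\boldsymbol{\varphi}\|_{H^1}$ with $F\in L^2(\tau,T)$, where the estimates on $F$ use precisely the preliminary bounds established above (in particular, the $L^\infty$-bound on $u_t$, the $L^2$-bounds on $\rho_{ttr}$ and the spatial derivatives of $\rho$ up to third order, and Lemmas~\ref{lemma66-po}--\ref{H3-1-po}). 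This yields the required dual-space regularity and hence the energy identity, after which the remainder of the proof proceeds as sketched.
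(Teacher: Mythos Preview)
Your proposal is correct and follows essentially the same approach as the paper. The paper's proof is very terse: it cites \eqref{lem613-1} verbatim for the pointwise bounds on $u_t$ and $u_{tr}$, then invokes Remark~\ref{rk11.1} and the calculations \eqref{lem613-1'}--\eqref{phittr} to obtain the bound $|r^{m/2}(\phi_{ttr},\psi_{tt})|_2\leq C(T)(|r^{m/2}(u_{trr},(u_t/r)_r)|_2+1)$, and finally refers back to Steps~2--3 of the proof of Lemma~\ref{Lemma6.13} for the energy identity and its rigorous justification---exactly the three-part structure you outline, with only the cosmetic difference that you pass through bounds on $(\rho_{tt},\rho_{ttr})$ before translating to $(\phi_{ttr},\psi_{tt})$ rather than working with $(\phi,\psi)$ directly and invoking the equivalences afterward.
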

\begin{proof}
First, it follows from the same calculation \eqref{lem613-1} in the proof of Lemma \ref{Lemma6.13} that
\begin{equation}\label{lem613-1-po}
|u_t|_\infty+|\chi_1^\flat ru_{tr}|_\infty +|\chi_1^\sharp u_{tr}|_\infty  \leq C(T) \big(|r^\frac{m}{2}u_{trr}|_2+1\big).
\end{equation}
Next, based on Remark \ref{rk11.1}, repeating the same calculations \eqref{lem613-1'}--\eqref{phittr} in the proof of Lemma \ref{Lemma6.13}, one obtains that
\begin{equation}\label{phittr-psittr}
\big|r^\frac{m}{2}(\phi_{ttr},\psi_{tt})\big|_2 \leq C(T) \Big(\Big|r^\frac{m}{2}\Big(u_{trr},\big(\frac{u_t}{r}\big)_{r}\Big)\Big|_2+1\Big).
\end{equation}
As a consequence, with the help of \eqref{lem613-1-po}--\eqref{phittr-psittr}, the expected estimate follows directly from the same arguments as in Steps 2--3 of the proof of Lemma \ref{Lemma6.13}.
\end{proof}

\begin{lem}\label{Lemma6.14-po}  
There exists a constant $C(T)>0$ such that, for any $t\in [0,T]$,
\begin{equation*}
\begin{aligned}
&\sqrt{t}\Big|r^{\frac{m}{2}}\Big(u_{trr},\big(\frac{u_{t}}{r}\big)_r,u_{rrrr},\big(\frac{u_{rr}}{r}\big)_r,\big(\frac{u}{r}\big)_{rrr},\Big(\frac{1}{r}\big(\frac{u}{r}\big)_r\Big)_r\Big)(t)\Big|_2\\
&+\int_0^t s\Big|r^{\frac{m}{2}}\Big(u_{trrr},\frac{u_{trr}}{r},\big(\frac{u_t}{r}\big)_{rr},\frac{1}{r}\big(\frac{u_{tr}}{r}\big)_r\Big)\Big|_2^2\,\mathrm{d}s\leq C(T).
\end{aligned}
\end{equation*}
\end{lem}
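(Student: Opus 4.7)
\medskip

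The proof will parallel that of Lemma \ref{Lemma6.14}, with the enlarged variables $(\phi,\psi)$ used in the far-field vacuum case replaced by their direct analogs $(\frac{A\gamma}{\gamma-1}\rho^{\gamma-1},(\log\rho)_r)$, and with the mutual control relations \eqref{equiv-eu}, \eqref{equiv-eu-high}, and \eqref{equiv-high-so} (together with Remark \ref{rk11.1}) playing the role previously filled by estimates on $(\phi,\psi)$ directly. The proof divides naturally into two steps: first, a pointwise-in-time bound at level $\sqrt{t}$ on the mixed and spatial fourth-order derivatives of $u$; and second, an integrated bound for the even higher derivatives weighted by $s$.

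For Step~1, I will multiply $\eqref{e2.2}_2$ (in its strictly positive-density incarnation $\eqref{re-r}_2$) by $\sqrt{t}$ and apply $r^{m/2}\partial_t$, then take $L^2(I)$-norms, producing a bound of the form
\begin{equation*}
\sqrt{t}\,\Big|r^{\frac{m}{2}}\big(u_{trr},\big(\tfrac{u_t}{r}\big)_r\big)\Big|_2 \leq C(T)\big(\sqrt{t}\,|r^{\frac{m}{2}}u_{tt}|_2+1\big),
\end{equation*}
exactly as in \eqref{utt}, where the terms involving $\psi_t$, $\phi_{tr}$, and $u_{tr}|(u,\psi)|_\infty$ are now controlled by $|r^{m/2}(\rho_t,\rho_{tr})|_2$ and $|\rho_r|_\infty$ via \eqref{equiv-eu}--\eqref{equiv-eu-high} and Lemmas \ref{lemma66-po}--\ref{l4.9-po}. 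Together with Lemma \ref{Lemma6.13-po}, this yields the $\sqrt{t}$-bound on $(u_{trr},(u_t/r)_r)$. To pass to the remaining fourth-order spatial terms $(u_{rrrr},(u_{rr}/r)_r,(u/r)_{rrr},(\frac{1}{r}(u/r)_r)_r)$, I will multiply $\eqref{re-r}_2$ by $\sqrt{t}$, apply $r^{m/2}\partial_r^2$ and $r^{m/2}r^{-1}\partial_r$, and take $L^2(I)$-norms; proceeding exactly as in \eqref{lem612-1-po}--\eqref{lem612-5-po} (employing \eqref{equiv-high-so} and Lemma \ref{Lemma6.12-po} to translate all quantities back into derivatives of $\rho$), the right-hand side reduces to controlled terms plus $|r^{m/2}u_{trr}|_2$, which has just been bounded. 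Lemma \ref{im-1} then converts the divergence-form norms into the desired componentwise norms.

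For Step~2, I will apply $r^{(m-2)/2}\partial_t$ and $r^{m/2}\partial_t\partial_r$ to $\eqref{re-r}_2$, take $L^2(I)$-norms of both resulting equalities, and derive an estimate of the type
\begin{equation*}
\Big|r^{\frac{m}{2}}\Big(\tfrac{1}{r}\big(u_{tr}+\tfrac{m}{r}u_t\big)_r,\big(u_{tr}+\tfrac{m}{r}u_t\big)_{rr}\Big)\Big|_2 \leq \Big|r^{\frac{m}{2}}\big(\tfrac{u_{tt}}{r},u_{ttr}\big)\Big|_2+C(T)|r^{\frac{m}{2}}u_{trr}|_2+C(T),
\end{equation*}
following the same chain of inequalities used in Step~2 of Lemma \ref{Lemma6.14}. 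The interior terms containing $\psi_{tr}$, $\phi_{trr}$, and products such as $u_{tr}(\psi/r,\psi_r)$ or $u_{rr}\psi_t$ are bounded by applying Lemma \ref{lemma66-po}--\ref{l4.9-po} and the Hardy inequality (Lemma \ref{hardy}), splitting into $\chi_1^\flat$ and $\chi_1^\sharp$ pieces as in \eqref{G16-20} and \eqref{lem612-5-po}, and reducing all $(\phi,\psi)$ derivatives to $\rho$-derivatives via \eqref{equiv-high-so} and Remark \ref{rk11.1}. Squaring the displayed inequality, multiplying by $s$, integrating in time over $[0,t]$, and applying Lemmas \ref{Lemma6.13-po} together with the Step~1 bound on $\sqrt{t}|r^{m/2}u_{trr}|_2$ then completes the proof, once Lemma \ref{im-1} is invoked one more time to unbundle the divergence-form tangential norms.

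The main obstacle will be Step~2, specifically the treatment of the cubic term $r^{m/2}u_{rr}\psi_r$ (and its analog $r^{m/2}u_{tr}\psi_r$) when translating it into $\rho$-derivatives: this requires the delicate region-splitting argument of \eqref{lem612-5-po}, combined with the lower and upper bounds of $\rho$ from Lemmas \ref{important2-po} and \ref{lemma-lowerbound} to keep the conversion constants in \eqref{equiv-high-so} uniform in $T$. Everything else is essentially bookkeeping, carefully mirroring \eqref{utt}--\eqref{urrr-t} and Step~2 of Lemma \ref{Lemma6.14} while substituting the positive-density analogs for each auxiliary lemma invoked.
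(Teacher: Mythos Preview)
Your proposal is correct and follows exactly the approach the paper takes: the paper's proof simply says to use Remark~\ref{rk11.1} and repeat the argument of Lemma~\ref{Lemma6.14}, which is precisely the two-step parallel you outline, with the $(\phi,\psi)$-to-$\rho$ conversions handled by \eqref{equiv-eu}, \eqref{equiv-eu-high}, and \eqref{equiv-high-so}. Your identification of the $r^{m/2}u_{rr}\psi_r$-type term as the most delicate point is accurate and matches where the paper's earlier positive-density proofs invest effort (cf.~\eqref{lem612-5-po}).
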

\begin{proof}
Using Remark \ref{rk11.1} and the same argument as in the proof of Lemma \ref{Lemma6.14}, we can obtain the desired estimates of this lemma. We omit the details.
\end{proof}

\subsection{Global well-posedness of the M-D regular solutions with strictly positive initial density}\label{subsub-final} This part can be proved via the same argument as in \S \ref{se46}.

\smallskip
\section{Local Well-Posedness of Regular Solutions with Far-Field Vacuum}\label{section-local-regular}

This section is devoted to establishing the local existence of the unique regular solution of the Cauchy problem  \eqref{eq:1.1benwen}--\eqref{e1.3} in the M-D space variables when $\bar\rho=0$. Moreover, we show that, 
if the initial data are spherically symmetric, so is the corresponding M-D regular solution of this problem in late time. 
In the rest of \S \ref{section-local-regular}, $C\in [1,\infty)$ denotes a generic constant depending only on  $(n,\alpha,A,\gamma)$ and may be different at each occurrence. 
 
\subsection{Local well-posedness of the 2-order regular solutions 
with far-field vacuum}\label{section-local-2-regular}
We prove  Theorem \ref{thh1} in \S \ref{linear2}--\S\ref{bani}. 
Moreover, at the end of \S \ref{section-local-2-regular}, we show that 
this theorem indeed implies Theorem \ref{zth1}.

\subsubsection{Linearization}\label{linear2}
We start with the proof of Theorem \ref{thh1} by considering  the following linearized problem of $(\phi, \boldsymbol{u},\boldsymbol{\psi})$ in $[0,T]\times \mathbb{R}^n$:
\begin{equation}\label{li4}
\begin{cases}
\phi_t+\boldsymbol{w}\cdot \nabla\phi+(\gamma-1)\phi \diver\boldsymbol{w}=0,\\[5pt]
\boldsymbol{u}_t+\boldsymbol{w}\cdot \nabla \boldsymbol{w}+\nabla \phi+L\boldsymbol{u}=\boldsymbol{\psi}\cdot Q(\boldsymbol{w}),\\ 
\displaystyle\boldsymbol{\psi}_t+\sum\limits_{l=1}^nA_l(\boldsymbol{w})\partial_{l}\boldsymbol{\psi}+B(\boldsymbol{w}) \boldsymbol{\psi}+\nabla\diver\boldsymbol{w}=\boldsymbol{0},\\ 
\displaystyle (\phi,\boldsymbol{u},\boldsymbol{\psi})|_{t=0}=\Big(\phi_0,\boldsymbol{u}_0,\boldsymbol{\psi}_0=\frac{1}{\gamma-1}\nabla\log\phi_0\Big) \ \ \text{for} \ \ \boldsymbol{x}\in\mathbb{R}^n,\\[6pt]
(\phi,\boldsymbol{u})\to (0,\boldsymbol{0}) \ \ \text{as} \ \ |\boldsymbol{x}|\to \infty \ \ \text{for} \ \ t\in [0,T],
\end{cases}
\end{equation}
where the operators $(L,Q)$ are defined in \eqref{operatordefinition}, $A_l(\boldsymbol{w})$ ($l=1,\cdots\!,n$) and $B(\boldsymbol{w})$ are defined in \S \ref{see1}.
Moreover, we assume that the initial data $(\phi_0,\boldsymbol{u}_0,\boldsymbol{\psi}_0)$ are spherically symmetric, taking form \eqref{eqs:CauchyInit'}, and satisfy 
\begin{equation}\label{linear-initial}
\begin{gathered}
\phi_0>0,\quad \phi_0^\frac{1}{\gamma-1}\in L^1(\mathbb{R}^n),\quad \nabla \phi_0\in H^1(\mathbb{R}^n),\quad 
\boldsymbol{\psi}_0\in  D^1(\mathbb{R}^n),\quad \boldsymbol{u}_0\in H^2(\mathbb{R}^n),
\end{gathered}
\end{equation}
and $\boldsymbol{w}=(w_1,\cdots\!,w_n)^{\top}\in \mathbb{R}^n$ is a given spherically symmetric vector function satisfying $\boldsymbol{w}(0,\boldsymbol{x})=\boldsymbol{u}_0(\boldsymbol{x})$ and
\begin{equation}\label{3.3}
\boldsymbol{w}(t,\boldsymbol{x})=w(t,|\boldsymbol{x}|)\frac{\boldsymbol{x}}{|\boldsymbol{x}|},
\end{equation}
and, for any $T>0$,
\begin{equation}\label{vg}
\begin{aligned}
&\partial_t^l \boldsymbol{w}\in C([0,T]; H^{2-2l}(\mathbb{R}^n))\cap L^2([0,T]; D^{3-2l}(\mathbb{R}^n)),\quad l=0,1,\\
&\sqrt{t}\partial_t^l\boldsymbol{w}\in L^{\infty}([0,T]; D^{3-2l}(\mathbb{R}^n)),\quad \sqrt{t} \partial_t^{l+1}\boldsymbol{w} \in  L^2([0,T]; D^{2-2l}(\mathbb{R}^n)),\quad l=0,1.
\end{aligned}
\end{equation}

Then we have the following global well-posedness for the linearized problem \eqref{li4}, which can be obtained by classical  arguments shown in  \cite{CK3, evans}. 
\begin{lem}\label{lem1}
Let $n=2$ or $3$, and  \eqref{cd1local} hold. Let the initial data $(\phi_0,\boldsymbol{u}_0,\boldsymbol{\psi}_0) (\boldsymbol{x})$ be spherically symmetric and satisfy \eqref{linear-initial}. 
Then, for any $T>0$, there exists a unique solution $(\phi,\boldsymbol{u},\boldsymbol{\psi})(t,\boldsymbol{x})$ of the linearized problem \eqref{li4} in $[0,T]\times\mathbb{R}^n$ such that \eqref{psi=log-phi-0} holds, and
\begin{align*}
\mathrm{(i)}& \ (\phi>0, \boldsymbol{u}) \ \text{satisfies this problem in the sense of distributions};\\
\mathrm{(ii)}& \ \phi^\frac{1}{\gamma-1}\in C([0,T];L^1(\mathbb{R}^n)),\ \ \nabla\phi\in C([0,T];H^{1}(\mathbb{R}^n)), \ \ \phi_t \in C([0,T]; H^{1}(\mathbb{R}^n)),\\
& \ \boldsymbol{\psi}\in C([0,T];D^1(\mathbb{R}^n)), \quad \boldsymbol{\psi}_t \in C([0,T]; L^2(\mathbb{R}^n));\\
\mathrm{(iii)}& \ \boldsymbol{u}(t,\boldsymbol{x})|_{|\boldsymbol{x}|=0}=\boldsymbol{0} \ \ \text{for} \ \ t\in [0,T],\\
& \ \partial_t^l \boldsymbol{u}\in C([0,T]; H^{2-2l}(\mathbb{R}^n))\cap L^2([0,T]; D^{3-2l}(\mathbb{R}^n)),\quad l=0,1;\\
\mathrm{(iv)}& \ \sqrt{t}\partial_t^l\boldsymbol{u}\in L^{\infty}([0,T]; D^{3-2l}(\mathbb{R}^n)), \ \ \sqrt{t} \partial_t^{l+1}\boldsymbol{u} \in  L^2([0,T]; D^{2-2l}(\mathbb{R}^n)),\ \ l=0,1.
\end{align*}
Moreover, $(\phi,\boldsymbol{u},\boldsymbol{\psi})$ is spherically symmetric with  form \eqref{1.9'}. 
\end{lem}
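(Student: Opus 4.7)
The plan is to solve the three equations in \eqref{li4} sequentially, using the linear decoupling afforded by the fact that $\boldsymbol{w}$ is given. First I would construct $\phi$ from the transport equation $\eqref{li4}_1$ via the method of characteristics along the flow generated by $\boldsymbol{w}$. Since $\boldsymbol{w}\in L^1([0,T];W^{1,\infty}(\mathbb{R}^n))$ by \eqref{vg} and classical Sobolev embedding, the flow map is a bi-Lipschitz diffeomorphism for each $t\in[0,T]$, which both yields $\phi>0$ on $[0,T]\times\mathbb{R}^n$ (with explicit pointwise bounds $\phi\ge (\inf\phi_0)\exp(-(\gamma-1)\int_0^T\|\diver\boldsymbol{w}\|_{L^\infty}\,\mathrm{d}s)$) and propagates $\phi_0^{1/(\gamma-1)}\in L^1$. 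Standard energy estimates applied to $\nabla\phi$ and $\phi_t$ then give the required $C([0,T];H^1)$ regularity for $\nabla\phi$ and $\phi_t$.

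Next I would solve the symmetric hyperbolic system $\eqref{li4}_3$ for $\boldsymbol{\psi}$. Here the key structural observation is that $\sum_l A_l(\boldsymbol{w})\partial_l$ is a symmetric transport operator in $\boldsymbol{w}$ and the forcing $\nabla\diver\boldsymbol{w}\in L^2([0,T];D^1)$, so one obtains $\boldsymbol{\psi}\in C([0,T];D^1(\mathbb{R}^n))$ and $\boldsymbol{\psi}_t\in C([0,T];L^2(\mathbb{R}^n))$ by the standard $L^2$-energy method applied to $\nabla\boldsymbol{\psi}$, together with the initial datum $\boldsymbol{\psi}_0=\frac{1}{\gamma-1}\nabla\log\phi_0\in D^1(\mathbb{R}^n)$ implied by \eqref{linear-initial}. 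The identity $\boldsymbol{\psi}=\frac{1}{\gamma-1}\nabla\log\phi$ asserted in \eqref{psi=log-phi-0} is then obtained by noting that $\frac{1}{\gamma-1}\nabla\log\phi$ satisfies the same first-order system $\eqref{li4}_3$ (differentiate $\eqref{li4}_1$ divided by $\phi$) with the same initial value, and uniqueness in this class forces equality. Finally, $\boldsymbol{u}$ is produced from the parabolic system $\eqref{li4}_2$, whose right-hand side $-\boldsymbol{w}\cdot\nabla\boldsymbol{w}-\nabla\phi+\boldsymbol{\psi}\cdot Q(\boldsymbol{w})$ has already been controlled in the appropriate spaces; classical $L^2$-theory for the elliptic operator $L$ (which is strongly elliptic since $L\boldsymbol{u}=-\alpha\Delta\boldsymbol{u}-\alpha\nabla\diver\boldsymbol{u}$) together with the bootstrap of time-derivatives gives the $H^2$ and time-weighted $D^3$ estimates listed in (iii)--(iv), as well as the boundary value $\boldsymbol{u}|_{|\boldsymbol{x}|=0}=\boldsymbol{0}$ from Lemma \ref{lemma-initial}.

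The main obstacle, and the place where the problem departs from a routine Picard-type linear theory, is the control of the source term $\boldsymbol{\psi}\cdot Q(\boldsymbol{w})$ in $\eqref{li4}_2$. Since \eqref{linear-initial} only imposes $\boldsymbol{\psi}_0\in D^1(\mathbb{R}^n)$---with no $L^p$ integrability on $\boldsymbol{\psi}_0$ itself---one cannot pointwise-control $\boldsymbol{\psi}$ by the Sobolev embeddings valid for generic vector fields. This is precisely where the spherical symmetry of $\boldsymbol{w}$ and $\boldsymbol{\psi}_0$ enters: by the new critical embedding $D^1(\mathbb{R}^2)\hookrightarrow L^\infty(\mathbb{R}^2)$ for spherically symmetric vector fields (Lemma \ref{Hk-Ck-vector}) when $n=2$, and by $D^1(\mathbb{R}^3)\hookrightarrow L^6(\mathbb{R}^3)$ for spherically symmetric vector fields (Lemma \ref{lemma-L6}) combined with the additional assumption $\boldsymbol{\psi}_0\in L^\infty(\mathbb{R}^3)$ from \eqref{eq;th2.1-21} when $n=3$, the quantity $\boldsymbol{\psi}\cdot Q(\boldsymbol{w})$ can be estimated in $L^2([0,T];L^2(\mathbb{R}^n))$ and higher. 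To carry the symmetry through the iteration, I would verify that the radial ansatz \eqref{1.9'} is preserved by the linear system: plugging \eqref{3.3} and the symmetric ansatz into \eqref{li4} produces a well-posed $1$-D system in $(t,r)$, whose solution, lifted back to $\mathbb{R}^n$, solves \eqref{li4}; uniqueness in the class of Lemma \ref{lem1} then forces the symmetric form \eqref{1.9'}.
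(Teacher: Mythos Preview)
Your proposal is essentially correct and follows the same sequential strategy as the paper: solve the transport equation for $\phi$, then the symmetric hyperbolic system for $\boldsymbol{\psi}$, then the parabolic system for $\boldsymbol{u}$, and verify \eqref{psi=log-phi-0} by observing that $\tfrac{1}{\gamma-1}\nabla\log\phi$ satisfies $\eqref{li4}_3$ with the same data. Two points of divergence are worth noting. First, Lemma~\ref{lem1} assumes only \eqref{linear-initial}, not \eqref{eq;th2.1-21}, so you should not invoke $\boldsymbol{\psi}_0\in L^\infty(\mathbb{R}^3)$ here; the symmetric embedding $D^1(\mathbb{R}^3)\hookrightarrow L^6(\mathbb{R}^3)$ of Lemma~\ref{lemma-L6} alone suffices to place $\boldsymbol{\psi}\cdot Q(\boldsymbol{w})$ in the spaces required for (iii)--(iv), and the $L^\infty$ control of $\boldsymbol{\psi}$ is obtained only later, in Step~4 of \S\ref{bani}, via the effective-velocity equation. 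Second, for spherical symmetry the paper does not reduce to and solve a 1D problem; instead it shows directly that for each $\mathcal{O}\in\mathrm{SO}(n)$ the rotated triple $\big(\phi(t,\mathcal{O}\cdot),\,\mathcal{O}^\top\boldsymbol{u}(t,\mathcal{O}\cdot),\,\mathcal{O}^\top\boldsymbol{\psi}(t,\mathcal{O}\cdot)\big)$ again solves \eqref{li4} with the same data (the verification uses the radiality of $\boldsymbol{w}$), so uniqueness forces rotation invariance, from which the form \eqref{1.9'} and $\boldsymbol{u}|_{|\boldsymbol{x}|=0}=\boldsymbol{0}$ follow via Lemma~\ref{rmk31}. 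This route avoids having to establish separately the well-posedness of the reduced 1D system with its coordinate singularity at $r=0$, which your outline would still owe.
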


\begin{proof} We divide the proof into three steps.

\smallskip
\textbf{1.} We can first solve for $\phi$ from $\eqref{li4}_1$ via the standard theory of transport equations. Next, the well-posedness of $\boldsymbol{\psi}$ follows from the standard theory for the symmetric hyperbolic systems. Finally, rewrite $\eqref{li4}_2$ as
\begin{equation*}
{u}_t+L\boldsymbol{u}=-\boldsymbol{w}\cdot \nabla \boldsymbol{w}-\nabla \phi+\boldsymbol{\psi}\cdot Q(\boldsymbol{w}).
\end{equation*}
Then we can solve for $u$ from the above via the regularity properties of $(\phi,\boldsymbol{\psi})$ and the standard theory of linear parabolic equations.

\smallskip
\textbf{2.} Next, we give a brief proof for \eqref{psi=log-phi-0}. Let $\tilde{\boldsymbol{\psi}}:=\frac{1}{\gamma-1}\nabla\log\phi$. Then multiplying $\eqref{li4}_1$ by $\frac{1}{(\gamma-1)\phi}$ and applying $\nabla$ to the resulting equality yield 
\begin{equation}
\tilde{\boldsymbol{\psi}}_t+\sum_{l=1}^n A_l(\boldsymbol{w})\partial_l\tilde{\boldsymbol{\psi}}+B(\boldsymbol{w}) \tilde{\boldsymbol{\psi}}+\nabla \diver\boldsymbol{w}=0.
\end{equation}
This implies that $\boldsymbol{\psi}$ and $\tilde{\boldsymbol{\psi}}$ satisfy the same equation and take the same initial data. 
Therefore, the uniqueness of the solution yields \eqref{psi=log-phi-0}.

\smallskip
\textbf{3.} Finally, we show that the solution $(\phi,\boldsymbol{u},\boldsymbol{\psi})(t,\boldsymbol{x})$ is spherically symmetric and takes form \eqref{1.9'}. 
Indeed, since $(\phi_0,\boldsymbol{u}_0,\boldsymbol{\psi}_0)$ is spherically symmetric, for any  $\mathcal{O}\in \mathrm{SO}(n)$, 
\begin{equation}\label{vl}
\begin{gathered}
\phi_0(\boldsymbol{x})=\phi_0(\mathcal{O}\boldsymbol{x}), \ \  \boldsymbol{u}_0(\boldsymbol{x})= \mathcal{O}^\top\boldsymbol{u}_0(\mathcal{O}\boldsymbol{x}), \ \ \boldsymbol{\psi}_0(\boldsymbol{x})= \mathcal{O}^\top\boldsymbol{\psi}_0(\mathcal{O}\boldsymbol{x}).
\end{gathered}
\end{equation}
Thus, if denoting 
\begin{equation*}
\begin{gathered}
\hat\phi(t,\boldsymbol{x})=\phi(t,\mathcal{O}\boldsymbol{x}),\quad \hat{\boldsymbol{u}}(t,\boldsymbol{x})=\mathcal{O}^{\top}\boldsymbol{u}(t,\mathcal{O}\boldsymbol{x}),\quad \hat{\boldsymbol{\psi}}(t,\boldsymbol{x})=\mathcal{O}^{\top}\boldsymbol{\psi}(t,\mathcal{O}\boldsymbol{x}),
\end{gathered}
\end{equation*}
one obtains from  $|\mathcal{O}\boldsymbol{x}|=|\boldsymbol{x}|$, \eqref{e1.3'}, and \eqref{vl} that
\begin{align*}
&(\hat\phi,\hat{\boldsymbol{u}},\hat{\boldsymbol{\psi}})|_{t=0}=(\hat\phi_0,\hat{\boldsymbol{u}}_0,\hat{\boldsymbol{\psi}}_0)=(\phi_0,\boldsymbol{u}_0,\boldsymbol{\psi}_0),\\
&\hat{\boldsymbol{u}}(t,\boldsymbol{x})|_{|\boldsymbol{x}|=0} =\mathcal{O}^{\top}\boldsymbol{u}(t, \mathcal{O}\boldsymbol{x})|_{|\boldsymbol{x}|=0} =\boldsymbol{0} \qquad \text{for $t\in [0,T]$},\\
&(\hat\phi,\hat{\boldsymbol{u}})\to (0,\boldsymbol{0}) \qquad \text{for $t\in [0,T]$}.
\end{align*}
 
Next, we show that $(\hat\phi,\hat{\boldsymbol{u}},\hat{\boldsymbol{\psi}})$ is also a solution of \eqref{li4}.  For any $\mathcal{O}=(\mathcal{O}_{lk})_{n\times n}\in \mathrm{SO}(n)$, let $\boldsymbol{x}=(x_1,\cdots\!, x_n)^\top$ and $\boldsymbol{y}=(y_1,\cdots\!, y_n)^\top$ satisfy
\begin{equation}\label{sbo}
\boldsymbol{y}=\mathcal{O}\boldsymbol{x}\qquad  \text{{\it i.e.}, $\,\,y_l=\sum_{k=1}^n\mathcal{O}_{lk}x_k$}.
\end{equation}
Clearly, $\sum_{k=1}^n\mathcal{O}_{ki}\mathcal{O}_{kj}=\delta_{ij}$ and  $\boldsymbol{w}(t,\boldsymbol{x})=\mathcal{O}^\top \boldsymbol{w}(t,\boldsymbol{y})$.  For convenience, in Steps 3.1--3.3 below, 
We adopt the Einstein summation convention: any index that appears exactly twice in a term is summed over.

\smallskip
\textbf{3.1.} We first show that $\hat\phi$ satisfies $\eqref{li4}_1$ 
by the following direct calculations: 
\begin{align*}
\hat\phi_t(t,\boldsymbol{x})&=\phi_t(t,\boldsymbol{y}),\\
(\boldsymbol{w}\cdot\nabla\hat\phi)(t,\boldsymbol{x})&=(\mathcal{O}_{ki}\mathcal{O}_{li}w_l\partial_{y_k}\phi) (t,\boldsymbol{y})\\
&=(\delta_{kl}w_l\partial_{y_k}\phi)(t,\boldsymbol{y})=(\boldsymbol{w}\cdot\nabla_y\phi)(t,\boldsymbol{y}),\\
(\hat\phi\diver\boldsymbol{w})(t,\boldsymbol{x})&=(\phi\mathcal{O}_{kj}\mathcal{O}_{lj} \partial_{y_k}w_l)(t,\boldsymbol{y})\\
&=(\phi \delta_{kl} \partial_{y_k}w_l) (t,\boldsymbol{y})=(\phi\diver_y\boldsymbol{w})(t,\boldsymbol{y}).
\end{align*}
These identities imply that $\hat\phi(t,\boldsymbol{x})$ satisfies  $\eqref{li4}_1$ indeed.

\smallskip
\textbf{3.2.} Next, we show that $(\hat\phi,\hat{\boldsymbol{u}},\hat{\boldsymbol{\psi}})$ satisfies $\eqref{li4}_2$. 
Notice that
\begin{equation*}
\begin{aligned}
\hat{\boldsymbol{u}}_t(t,\boldsymbol{x})&=(\mathcal{O}_{qi} \partial_tu_q)(t,\boldsymbol{y})=(\mathcal{O}^{\top} \boldsymbol{u}_t)(t,\boldsymbol{y}),\\
(\boldsymbol{w}\cdot\nabla\boldsymbol{w})(t,\boldsymbol{x})
&=(\mathcal{O}_{lj}\mathcal{O}_{kj}\mathcal{O}_{qi} w_l \partial_{y_k}w_q) (t,\boldsymbol{y})=(\delta_{lk}\mathcal{O}_{qi} w_l \partial_{y_k}w_q) (t,\boldsymbol{y})\\
&=(\mathcal{O}_{qi} w_k \partial_{y_k}w_q) (t,\boldsymbol{y})=(\mathcal{O}^\top (\boldsymbol{w}\cdot\nabla_y \boldsymbol{w}))(t,\boldsymbol{y}),\\
\nabla\hat\phi(t,\boldsymbol{x})&=(\mathcal{O}_{ki}  \partial_{y_k}\phi) (t,\boldsymbol{y})=(\mathcal{O}^\top\nabla_y\phi)(t,\boldsymbol{y}),\\
\Delta \hat{\boldsymbol{u}}(t,\boldsymbol{x})&=(\mathcal{O}_{lj} \mathcal{O}_{kj} \mathcal{O}_{qi}\partial_{y_ly_k}u_q)(t,\boldsymbol{y})\\
&=(\delta_{lk}  \mathcal{O}_{qi}\partial_{y_ly_k}u_q)(t,\boldsymbol{y})=(\mathcal{O}^\top \Delta_y \boldsymbol{u})(t,\boldsymbol{y}),\\
\nabla\diver\hat{\boldsymbol{u}}(t,\boldsymbol{x})&=(\mathcal{O}_{li}\mathcal{O}_{kj}\mathcal{O}_{qj}\partial_{y_ly_k}u_q)(t,\boldsymbol{y})\\
&=(\mathcal{O}_{li}\delta_{kq}\partial_{y_ly_k}u_q)(t,\boldsymbol{y})=(\mathcal{O}^\top \nabla_y\diver_y\boldsymbol{u}) (t,\boldsymbol{y}),\\
2(\hat{\boldsymbol{\psi}}\cdot D(\boldsymbol{w}))(t,\boldsymbol{x})
&=(\mathcal{O}_{kj} \psi_k (\mathcal{O}_{lj}\mathcal{O}_{pi} \partial_{y_l}w_p+\mathcal{O}_{li}\mathcal{O}_{pj} \partial_{y_l}w_p))(t,\boldsymbol{y})\\
&=(\psi_k (\delta_{kl}\mathcal{O}_{pi} \partial_{y_l}w_p+\delta_{kp}\mathcal{O}_{li} \partial_{y_l}w_p))(t,\boldsymbol{y})\\
&=(\mathcal{O}_{li}\psi_k (\partial_{y_k}w_l+ \partial_{y_l}w_k))(t,\boldsymbol{y})=2(\mathcal{O}^\top(\boldsymbol{\psi}\cdot D_y(\boldsymbol{w})))(t,\boldsymbol{y}).
\end{aligned}
\end{equation*}
The above identities imply that $(\hat\phi,\hat{\boldsymbol{u}},\hat{\boldsymbol{\psi}})$ satisfies $\eqref{li4}_2$.

\smallskip
\textbf{3.3.} Finally, we show that $\hat{\boldsymbol{\psi}}$ satisfies $\eqref{li4}_3$. 
Since 
\begin{align*}
\hat{\boldsymbol{\psi}}_t(t,\boldsymbol{x})&=(\mathcal{O}_{qi}\partial_t\psi_q)(t,\boldsymbol{y})=(\mathcal{O}^{\top} \boldsymbol{\psi}_t)(t,\boldsymbol{y}),\\
(A_j(\boldsymbol{w}) \partial_j\hat{\boldsymbol{\psi}})(t,\boldsymbol{x})
&=(\mathcal{O}_{lj}\mathcal{O}_{kj}\mathcal{O}_{qi} w_l\partial_{y_k}\psi_{q}) (t,\boldsymbol{y})=(\delta_{lk}\mathcal{O}_{qi} w_l\partial_{y_k}\psi_{q}) (t,\boldsymbol{y})\\
&=(\mathcal{O}_{qi} w_k \partial_{y_k}\psi_{q})(t,\boldsymbol{y})=(\mathcal{O}^\top(A_k(\boldsymbol{w}) \partial_{y_k}\boldsymbol{\psi}))(t,\boldsymbol{y}),\\
(B(\boldsymbol{w}) \hat{\boldsymbol{\psi}})(t,\boldsymbol{x})
&=(\mathcal{O}_{ki}\mathcal{O}_{lj}\mathcal{O}_{qj} \partial_{y_k}w_l \psi_q)(t,\boldsymbol{y}) =(\mathcal{O}_{ki}\delta_{lq} \partial_{y_k}w_l \psi_q)(t,\boldsymbol{y})\\
&=(\mathcal{O}_{ki} \partial_{y_k}w_q \psi_q)(t,\boldsymbol{y})=(\mathcal{O}^\top(B_y(\boldsymbol{w})  \boldsymbol{\psi}))(t,\boldsymbol{y}),\\
\nabla\diver\boldsymbol{w}(t,\boldsymbol{x})&=(\mathcal{O}_{li}\mathcal{O}_{kj}\mathcal{O}_{qj} \partial_{y_l y_k}w_q) (t,\boldsymbol{y}) \\
&=(\mathcal{O}_{li}\delta_{kq} \partial_{y_l y_k}w_q) (t,\boldsymbol{y}) =(\mathcal{O}^\top \nabla_y\diver_y\boldsymbol{w}) (t,\boldsymbol{y}),
\end{align*}
with $B_y(\boldsymbol{w})=(\nabla_y \boldsymbol{w})^\top$, we see that $\hat{\boldsymbol{\psi}}$ satisfies  $\eqref{li4}_3$. 

\smallskip
\textbf{3.4.} In Steps 3.1--3.3, we have shown that $(\hat \phi,\hat{\boldsymbol{u}},\hat{\boldsymbol{\psi}})$  is also a solution of 
the linearized problem \eqref{li4}, which takes the initial value $(\phi_0,\boldsymbol{u}_0,\boldsymbol{\psi}_0)$. Consequently, the uniqueness of the solution implies that 
\begin{equation}\label{cls}
\phi(t,\mathcal{O}\boldsymbol{x})=\phi(t,\boldsymbol{x}), \qquad (\mathcal{O}^{\top}\boldsymbol{u})(t,\mathcal{O}\boldsymbol{x})=\boldsymbol{u}(t,\boldsymbol{x}), \qquad (\mathcal{O}^{\top}\boldsymbol{\psi})(t,\mathcal{O}\boldsymbol{x})=\boldsymbol{\psi}(t,\boldsymbol{x}).
\end{equation}
Clearly, it follows from $\eqref{cls}_1$ that  $\phi(t,\boldsymbol{x})=\phi(t,|\boldsymbol{x}|)$ with some function $\phi(t,r)$ defined on $[0,T]\times I$. 
It remains to show that
\begin{equation*}
(\boldsymbol{u},\boldsymbol{\psi})(t,\boldsymbol{x})
=(u(t,|\boldsymbol{x}|)\frac{\boldsymbol{x}}{|\boldsymbol{x}|},\psi(t,|\boldsymbol{x}|)\frac{\boldsymbol{x}}{|\boldsymbol{x}|})
\end{equation*} 
with some function $(u,\psi)(t,r)$ defined on $[0,T]\times I$. 
Here, we take the 3-D case and $\boldsymbol{u}(t,\boldsymbol{x})$ as an example, since
the proof for the 2-D case and $\boldsymbol{\psi}(t,\boldsymbol{x})$ can be derived similarly. 
Let $t_0\in [0,T]$ be any fixed time and $\boldsymbol{x}_0\in\mathbb{R}^3$ be any fixed displacement vector. Let $\boldsymbol{e}_1=\frac{\boldsymbol{x}_0}{|\boldsymbol{x}_0|}$ and $\mathcal{O}_1\in \mathrm{SO}(n)$ be a rotation by 180 degrees about an axis parallel to $\boldsymbol{x}_0$. Then $\eqref{cls}_2$ yields
\begin{equation}\label{cls'}
\mathcal{O}_1\boldsymbol{x}_0=\boldsymbol{x}_0 ,\qquad (\mathcal{O}_1\boldsymbol{u})(t_0,\boldsymbol{x}_0)=\boldsymbol{u}(t_0,\boldsymbol{x}_0).
\end{equation}

Next, let $\{\boldsymbol{e}_2,\boldsymbol{e}_3\}$ be two unit vectors such that $\{\boldsymbol{e}_1,\boldsymbol{e}_2,\boldsymbol{e}_3\}$ becomes an orthonormal basis in $\mathbb{R}^3$. 
Then there exist some constants $a_i=a_i(t_0,\boldsymbol{x}_0)\in \mathbb{R}$ $(i=1,2,3)$ depending only on $(t_0,\boldsymbol{x}_0)$ such that 
\begin{equation*}
\boldsymbol{u}(t_0,\boldsymbol{x}_0)=a_1\boldsymbol{e}_1+a_2\boldsymbol{e}_2+a_3\boldsymbol{e}_3.
\end{equation*}
This, together with \eqref{cls'}, gives 
\begin{equation*}
\begin{aligned}
& \ \, a_1\boldsymbol{e}_1+a_2\boldsymbol{e}_2+a_3\boldsymbol{e}_3=a_1\boldsymbol{e}_1-a_2\boldsymbol{e}_2-a_3\boldsymbol{e}_3\\[2pt]
&\implies a_2\boldsymbol{e}_2+a_3\boldsymbol{e}_3=\boldsymbol{0}\\
&\implies  a_2=a_3=0 \ \ \text{due to the linear independence of }\{\boldsymbol{e}_2,\boldsymbol{e}_3\},
\end{aligned}    
\end{equation*}
and hence, for any fixed $(t_0,\boldsymbol{x}_0)\in [0,T]\times \mathbb{R}^3$,
\begin{equation*}
\boldsymbol{u}(t_0,\boldsymbol{x}_0)=a_1\boldsymbol{e}_1=a_1(t_0,\boldsymbol{x}_0)\frac{\boldsymbol{x}_0}{|\boldsymbol{x}_0|}.
\end{equation*}
Based on $\eqref{cls}_2$, we see that $a_1(t_0,\boldsymbol{x}_0)=a_1(t_0,|\boldsymbol{x}_0|)$, which yields $\boldsymbol{u}(t,\boldsymbol{x})=u(t,|\boldsymbol{x}|)\frac{\boldsymbol{x}}{|\boldsymbol{x}|}$ with some function $u(t,r)$ defined on $[0,T]\times I$. Finally, $\boldsymbol{u}(t,\boldsymbol{x})|_{|\boldsymbol{x}|=0}=\boldsymbol{0}$ follows from Lemma \ref{rmk31} in Appendix \ref{appA}. 

The proof of Lemma \ref{lem1} is completed.
\end{proof}

\subsubsection{The uniform {\it a priori} estimates}\label{uape}
Let $(\phi,\boldsymbol{u},\boldsymbol{\psi})$ be a solution in $[0,T]\times\mathbb{R}^n$ obtained in Lemma \ref{lem1}. We now establish the corresponding  {\it a priori} estimates. 

We first choose a positive constant $c_0$ such that 
\begin{equation}\label{houmian}
2+\big\|\phi_0^{\frac{1}{\gamma-1}}\big\|_{L^1}+\|\nabla\phi_0\|_{H^1}+\|\boldsymbol{u}_0\|_{H^2}+\|\boldsymbol{\psi}_0\|_{D^1}\leq  c_0.
\end{equation}
Assume there exist $T^*\in (0,T)$ and constants $(c_1,c_2,c_3)$ 
such that $1< c_0\leq c_1 \leq c_2 \leq c_3$ and
\begin{equation}\label{jizhu1}
\begin{aligned}
\sup_{0\leq t \leq T^*}\|\boldsymbol{w}\|^2_{H^1}+\int_{0}^{T^*}\big(\|\nabla \boldsymbol{w}\|^2_{H^1}+\|\boldsymbol{w}_t\|_{L^2}^2\big)\,\mathrm{d}t \leq c^2_1,\\
\sup_{0\leq t \leq T^*}\|(\nabla^2\boldsymbol{w},\boldsymbol{w}_t)\|^2_{L^2}+\int_{0}^{T^*}\|(\nabla^3\boldsymbol{w},\nabla\boldsymbol{w}_t)\|^2_{L^2}\,\mathrm{d}t \leq c^2_2,\\
\sup_{0\leq t \leq T^*} t\|(\nabla^3\boldsymbol{w},\nabla\boldsymbol{w}_t)\|^2_{L^2}+\int_{0}^{T^*} t\|(\nabla^2\boldsymbol{w}_t,\boldsymbol{w}_{tt})\|^2_{L^2}\,\mathrm{d}t \leq c^2_3.
\end{aligned}
\end{equation}
Here,  $T^*$ and  $c_i$ ($i=1,2,3$) will be determined later, which depend only on $c_0$ 
and the fixed constants $(\alpha,\gamma,A,T)$.

We first give the estimates for $(\phi,\boldsymbol{\psi})$. 
\begin{lem}\label{bos-2-regular} 
For any $t\in [0,T_1]$ with $T_1:=\min \{T^{*}, (1+Cc_3)^{-2}\}$,
\begin{align*}
&\big\|\phi^\frac{1}{\gamma-1}(t)\big\|_{L^1}\leq c_0, \quad \|\phi(t)\|_{L^\infty}+ \|\nabla\phi(t)\|_{H^1}\leq Cc_0^{7\gamma-3},\\[1mm]
&\|\nabla^{k-1}\phi_t(t)\|_{L^2}\leq Cc_0^{7\gamma-3}c_k \quad\text{for $k=1,2$},\\[1mm]
&\|\boldsymbol{\psi}(t)\|_{D^1}\leq Cc_0, \quad\, \|\boldsymbol{\psi}_t(t)\|_{L^2}\leq Cc_0c_2.
\end{align*}
\end{lem}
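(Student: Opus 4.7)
\textbf{Proof proposal for Lemma \ref{bos-2-regular}.} The plan is to treat the three equations in \eqref{li4} in sequence, exploiting their decoupled structure: $\eqref{li4}_1$ is a pure transport equation for $\phi$, $\eqref{li4}_3$ is a symmetric hyperbolic system for $\boldsymbol{\psi}$, and both are driven by the given field $\boldsymbol{w}$ that satisfies the bounds in \eqref{jizhu1}. The key observation is that the time restriction $T_1=\min\{T^*,(1+Cc_3)^{-2}\}$ is tailored so that $\int_0^{T_1}\|\nabla\boldsymbol{w}\|_{L^\infty}\,\mathrm{d}s$ can be made uniformly small, independently of $(c_1,c_2,c_3)$, via the splitting
\begin{equation*}
\int_0^t\|\nabla\boldsymbol{w}\|_{H^2}\,\mathrm{d}s\leq t\sup_{[0,t]}\|\nabla\boldsymbol{w}\|_{H^1}+\int_0^t s^{-1/2}\big(\sqrt{s}\|\nabla^3\boldsymbol{w}\|_{L^2}\big)\mathrm{d}s\leq tc_2+2\sqrt{t}\,c_3\leq C\sqrt{t}(1+c_3),
\end{equation*}
together with the Sobolev embedding $H^2(\mathbb{R}^n)\hookrightarrow L^\infty(\mathbb{R}^n)$ (for $n=2,3$). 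Hence all exponential factors $\exp(C\int_0^t\|\nabla\boldsymbol{w}\|_{L^\infty}\mathrm{d}s)$ will be bounded by an absolute constant on $[0,T_1]$.

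The estimates on $\phi$ are then obtained as follows. First, setting $\rho:=\phi^{1/(\gamma-1)}$ and observing that $\eqref{li4}_1$ is equivalent to the continuity equation $\rho_t+\diver(\rho\boldsymbol{w})=0$, integration over $\mathbb{R}^n$ gives the conservation $\|\phi^{1/(\gamma-1)}(t)\|_{L^1}=\|\phi_0^{1/(\gamma-1)}\|_{L^1}\leq c_0$. Second, applying the method of characteristics to $\eqref{li4}_1$ yields $\|\phi(t)\|_{L^\infty}\leq\|\phi_0\|_{L^\infty}\exp\big((\gamma-1)\int_0^t\|\diver\boldsymbol{w}\|_{L^\infty}\mathrm{d}s\big)$; the initial bound $\|\phi_0\|_{L^\infty}$ is controlled in terms of $c_0$ through Gagliardo--Nirenberg interpolation between $\|\phi_0\|_{L^{1/(\gamma-1)}}\leq c_0^{\gamma-1}$ and $\|\phi_0\|_{D^2}\leq c_0$ (in $\mathbb{R}^3$ one passes through $\|\phi_0\|_{L^6}\lesssim\|\nabla\phi_0\|_{L^2}$ first), which produces a $\gamma$-dependent power of $c_0$. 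The $H^1$ estimate for $\nabla\phi$ is obtained by applying $\partial_i$ and $\partial_{ij}^2$ to $\eqref{li4}_1$ and performing standard $L^2$ energy estimates; the new ingredients are products like $\nabla\phi\cdot\nabla\boldsymbol{w}$ and $\phi\cdot\nabla^2\boldsymbol{w}$, which are controlled using $\|\phi\|_{L^\infty}$ (just obtained) together with the $H^2$-bound on $\boldsymbol{w}$ and Gr\"onwall. The bounds on $\partial_t^{k-1}\phi$ ($k=1,2$) follow from algebraic manipulation of $\eqref{li4}_1$, which expresses $\phi_t$ in terms of $\boldsymbol{w}\cdot\nabla\phi$ and $\phi\diver\boldsymbol{w}$; taking $L^2$-norms and using the already-obtained bounds on $\phi$ and $\boldsymbol{w}$ brings in the factor $c_k$ from $\|\boldsymbol{w}_t\|$-type terms after differentiation in time.

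The estimates on $\boldsymbol{\psi}$ are established via the symmetric hyperbolic structure of $\eqref{li4}_3$ (or equivalently, as in \eqref{eqn1psi}). Applying $\nabla$ to $\eqref{li4}_3$ and taking the $L^2$ inner product with $\nabla\boldsymbol{\psi}$, the transport term involving $\sum_lA_l(\boldsymbol{w})\partial_l$ integrates to a harmless $\|\diver\boldsymbol{w}\|_{L^\infty}\|\nabla\boldsymbol{\psi}\|_{L^2}^2$ via integration by parts, while the commutators and the zeroth-order term $B(\boldsymbol{w})\boldsymbol{\psi}$ are handled by $\|\nabla\boldsymbol{w}\|_{L^\infty}$; the forcing $\nabla\nabla\diver\boldsymbol{w}$ pairs with $\nabla\boldsymbol{\psi}$ giving $\|\nabla^3\boldsymbol{w}\|_{L^2}\|\nabla\boldsymbol{\psi}\|_{L^2}$, and a Gr\"onwall argument using the key bound $\int_0^{T_1}\|\nabla^3\boldsymbol{w}\|_{L^2}\mathrm{d}s\leq C\sqrt{T_1}\,c_3\leq C$ closes the $D^1$ estimate with bound $Cc_0$. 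Finally, $\|\boldsymbol{\psi}_t\|_{L^2}$ is read off directly from the equation: $\|\boldsymbol{\psi}_t\|_{L^2}\leq C\|\boldsymbol{w}\|_{L^\infty}\|\nabla\boldsymbol{\psi}\|_{L^2}+C\|\nabla\boldsymbol{w}\|_{L^\infty}\|\boldsymbol{\psi}\|_{L^2}+\|\nabla^2\boldsymbol{w}\|_{L^2}$, yielding the stated bound $Cc_0c_2$ after using \eqref{jizhu1} and the $D^1$ estimate.

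The main obstacle will be tracking the precise power $c_0^{7\gamma-3}$: it arises from a combination of the interpolation between $\|\phi_0\|_{L^{1/(\gamma-1)}}\leq c_0^{\gamma-1}$ and higher-regularity bounds (producing a $\gamma$-dependent power in $\|\phi_0\|_{L^\infty}$), compounded by the appearance of $\|\phi\|_{L^\infty}$ in the $\nabla^2\phi$ energy estimate where a further factor of $\|\phi\|_{L^\infty}^{\gamma-2}$-type quantity enters through the product rule; keeping all these powers consistent requires careful bookkeeping, while all other steps are essentially standard linear parabolic/hyperbolic estimates.
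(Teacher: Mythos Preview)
Your overall structure is sound, but there are two genuine gaps that would make the argument fail as written.

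First, in your $\boldsymbol{\psi}$-estimates you repeatedly rely on $\|\boldsymbol{\psi}\|_{L^2}$, most explicitly in the bound $\|\boldsymbol{\psi}_t\|_{L^2}\leq\ldots+C\|\nabla\boldsymbol{w}\|_{L^\infty}\|\boldsymbol{\psi}\|_{L^2}+\ldots$. But the hypothesis \eqref{linear-initial} and the definition \eqref{houmian} of $c_0$ give only $\boldsymbol{\psi}_0\in D^1(\mathbb{R}^n)$, with no $L^2$ control, and in general $\boldsymbol{\psi}\notin L^2$. The same issue arises in your $D^1$ energy estimate: after applying $\nabla$ to $\eqref{li4}_3$, the commutator $\partial_k(B(\boldsymbol{w})\boldsymbol{\psi})$ contains the term $(\nabla^2\boldsymbol{w})\boldsymbol{\psi}$, which cannot be handled by $\|\nabla\boldsymbol{w}\|_{L^\infty}$ as you claim. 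The paper resolves this via the critical embeddings for spherically symmetric vector fields (Lemmas~\ref{lemma-L6} and~\ref{Hk-Ck-vector}): in $\mathbb{R}^2$ one has $\|\boldsymbol{\psi}\|_{L^\infty}\leq C\|\boldsymbol{\psi}\|_{D^1}$, and in $\mathbb{R}^3$ one has $\|\boldsymbol{\psi}\|_{L^6}\leq C\|\boldsymbol{\psi}\|_{D^1}$, so that the problematic products are estimated as $\|\nabla\boldsymbol{w}\|_{L^2}\|\boldsymbol{\psi}\|_{L^\infty}$ or $\|\nabla\boldsymbol{w}\|_{L^3}\|\boldsymbol{\psi}\|_{L^6}$ respectively.

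Second, your route to $\|\phi\|_{L^\infty}$ via characteristics reduces to bounding $\|\phi_0\|_{L^\infty}$, which you propose to obtain by Gagliardo--Nirenberg interpolation between $\|\phi_0\|_{L^{1/(\gamma-1)}}$ and $\|\phi_0\|_{D^2}$. This is only licit when $1/(\gamma-1)\geq 1$, i.e.\ $\gamma\leq 2$; for $\gamma>2$ the exponent is below $1$ and the interpolation is unavailable. The paper avoids this entirely by reversing your order: it first establishes $\|\boldsymbol{\psi}(t)\|_{D^1}\leq Cc_0$, and then applies Gagliardo--Nirenberg to $\phi^{1/(\gamma-1)}$ itself (which \emph{is} in $L^1$ for all $\gamma>1$), using the identity $\nabla(\phi^{1/(\gamma-1)})=\phi^{1/(\gamma-1)}\boldsymbol{\psi}$ to write $\|\nabla^2(\phi^{1/(\gamma-1)})\|_{L^2}$ in terms of $\|\phi\|_{L^\infty}^{1/(\gamma-1)}$ and $\|\boldsymbol{\psi}\|_{D^1}$. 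This produces a closed inequality for $\|\phi\|_{L^\infty}^{1/(\gamma-1)}$ whose top-order term can be absorbed, and is precisely where the power $c_0^{7\gamma-7}$ for $\|\phi\|_{L^\infty}$ originates. The $D^2$ bound on $\phi$ is then obtained algebraically from $\nabla\phi=(\gamma-1)\phi\boldsymbol{\psi}$, rather than through a second-order energy estimate on the transport equation.
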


\smallskip
\begin{proof} We divide the proof into five steps.

\smallskip
\textbf{1. $L^1(\mathbb{R}^n)$-estimate on $\phi^\frac{1}{\gamma-1}$.} 
First, we multiply $\eqref{li4}_1$ by $\phi^{\frac{2-\gamma}{\gamma-1}}$ to obtain
\begin{equation*}
\frac{\mathrm{d}}{\mathrm{d}t}\phi^\frac{1}{\gamma-1}+\diver\big(\phi^\frac{1}{\gamma-1}\boldsymbol{w}\big)=0.
\end{equation*}
Then integrating the above over $\mathbb{R}^n$, along with \eqref{houmian}, yields 
\begin{equation}\label{gb-2-regular}
\begin{aligned}
\big\|\phi^{\frac{1}{\gamma-1}}(t)\big\|_{L^1}=\big\|\phi_0^{\frac{1}{\gamma-1}}\big\|_{L^1}\leq c_0 \qquad \text{for $t\in [0,T^*]$}.
\end{aligned}
\end{equation}

\smallskip
\textbf{2. $D^1(\mathbb{R}^n)$-estimates on $\boldsymbol{\psi}$.} Applying $\partial^\varsigma\boldsymbol{\psi}\partial^\varsigma$ with the multi-index $|\varsigma|=1$ to $\eqref{li4}_3$, then integrating the resulting equality over $\mathbb{R}^n$, we obtain 
from integration by parts, \eqref{jizhu1}, Lemmas \ref{ale1}, \ref{lemma-L6}, and \ref{Hk-Ck-vector}, and the H\"older and Young inequalities that
\begin{equation*}
\begin{aligned}
\frac{\mathrm{d}}{\mathrm{d}t} \|\partial^\varsigma\boldsymbol{\psi}\|_{L^2}^2 
&\leq C\big(\|\nabla \boldsymbol{w}\|_{L^\infty}\| \boldsymbol{\psi}\|_{D^1} +\delta_{2n}\|\nabla^2 \boldsymbol{w}\|_{L^2}\|\boldsymbol{\psi}\|_{D^1}\big) \\
&\quad +C\big(\delta_{3n}\|\nabla^2 \boldsymbol{w}\|_{H^1}\|\boldsymbol{\psi}\|_{D^1} +\|\nabla^3\boldsymbol{w}\|_{L^2}\big)\|\boldsymbol{\psi}\|_{D^1} 
\leq Cc_2^2\big(\|\boldsymbol{\psi}\|_{D^1}^2 +1\big),
\end{aligned}
\end{equation*}
which,  along with the Gr\"onwall inequality, yields that, for $t\in [0,T_1]$,
\begin{equation}\label{nabla-psi-2-regular}
\|\boldsymbol{\psi}(t)\|_{D^1}^2\leq e^{Cc_2^2t}\big(\|\boldsymbol{\psi}_0\|_{D^1}^2+Cc_2^2t\big)\leq Cc_0^2.
\end{equation} 

\medskip
\textbf{3. $L^\infty(\mathbb{R}^n)$-estimate on $\phi$.}
First, it follows from \eqref{psi=log-phi-0}, \eqref{gb-2-regular}--\eqref{nabla-psi-2-regular}, Lemma \ref{GN-ineq}, and the H\"older and Young inequalities that
\begin{equation}\label{eq:I1}
\begin{aligned}
\|\phi\|_{L^\infty}^\frac{1}{\gamma-1}&\leq \big\|\phi^\frac{1}{\gamma-1}\big\|_{L^\infty}\leq C\big\|\phi^\frac{1}{\gamma-1}\big\|_{L^1}^\frac{4-n}{n+4}\big\|\nabla^2(\phi^\frac{1}{\gamma-1})\big\|_{L^2}^\frac{2n}{n+4}\\
&\leq C\big\|\phi^\frac{1}{\gamma-1}\big\|_{L^1}^\frac{4-n}{n+4}\big(\underline{\big\|\phi^\frac{1}{\gamma-1}|\boldsymbol{\psi}|^2\big\|_{L^2}}_{:=I_1}+\big\|\phi^\frac{1}{\gamma-1}\nabla\boldsymbol{\psi}\big\|_{L^2}\big)^\frac{2n}{n+4}\\
&\leq C\big\|\phi^\frac{1}{\gamma-1}\big\|_{L^1}^\frac{4-n}{n+4}\big(I_1+\|\phi\|_{L^\infty}^\frac{1}{\gamma-1}\|\nabla\boldsymbol{\psi}\|_{L^2}\big)^\frac{2n}{n+4}\\
&\leq C\big\|\phi^\frac{1}{\gamma-1}\big\|_{L^1}^\frac{4-n}{n+4} I_1^\frac{2n}{n+4}+C\big\|\phi^\frac{1}{\gamma-1}\big\|_{L^1}^\frac{4-n}{n+4}\|\phi\|_{L^\infty}^\frac{2n}{(\gamma-1)(n+4)}\|\nabla\boldsymbol{\psi}\|_{L^2}^\frac{2n}{n+4}\\
&\leq Cc_0^\frac{4-n}{n+4} I_1^\frac{2n}{n+4}+Cc_0 \|\phi\|_{L^\infty}^\frac{2n}{(\gamma-1)(n+4)}.
\end{aligned}
\end{equation}
For the bound of $I_1$, if $n=2$, it follows from \eqref{gb-2-regular}--\eqref{nabla-psi-2-regular} and Lemma \ref{Hk-Ck-vector} that 
\begin{equation}\label{eq:I1-2}
\begin{aligned}
I_1&\leq \big\|\phi^\frac{1}{\gamma-1}\big\|_{L^2} \|\boldsymbol{\psi}\|_{L^\infty(\mathbb{R}^2)}^2\leq \big\|\phi^\frac{1}{\gamma-1}\big\|_{L^1}^\frac{1}{2}\|\phi\|_{L^\infty}^\frac{1}{2(\gamma-1)}  \|\boldsymbol{\psi}\|_{D^1}^2\leq C c_0^\frac{5}{2}\|\phi\|_{L^\infty}^\frac{1}{2(\gamma-1)};
\end{aligned}
\end{equation}
while, if $n=3$, it follows from \eqref{nabla-psi-2-regular}, Lemma \ref{lemma-L6}, 
and the H\"older inequality that
\begin{equation}\label{eq:I1-3}
\begin{aligned}
I_1&\leq \big\|\phi^\frac{1}{\gamma-1}\big\|_{L^6} \|\boldsymbol{\psi}\|_{L^6(\mathbb{R}^3)}^2\leq \big\|\phi^\frac{1}{\gamma-1}\big\|_{L^1}^\frac{1}{6}\|\phi\|_{L^\infty}^\frac{5}{6(\gamma-1)}  \|\boldsymbol{\psi}\|_{D^1}^2\leq C c_0^\frac{13}{6}\|\phi\|_{L^\infty}^\frac{5}{6(\gamma-1)}.
\end{aligned}
\end{equation}

Thus, collecting \eqref{eq:I1}--\eqref{eq:I1-3}, together with the Young inequality, gives
\begin{align*}
\|\phi\|_{L^\infty}^\frac{1}{\gamma-1}&\leq \delta_{2n}Cc_0^2\|\phi\|_{L^\infty}^\frac{1}{3(\gamma-1)}+\delta_{3n}Cc_0^2\|\phi\|_{L^\infty}^\frac{5}{7(\gamma-1)}+Cc_0 \|\phi\|_{L^\infty}^\frac{2n}{(\gamma-1)(n+4)}\notag\\
&\leq  Cc_0^3+Cc_0^7+Cc_0^\frac{n+4}{4-n}+\frac{1}{8}\|\phi\|_{L^\infty}^\frac{1}{\gamma-1} \leq Cc_0^7+\frac{1}{8}\|\phi\|_{L^\infty}^\frac{1}{\gamma-1},\notag
\end{align*}
which implies that, for all $t\in [0,T_1]$,
\begin{equation}\label{3.12-2regular}
\|\phi(t)\|_{L^\infty}\leq Cc_0^{7\gamma-7}.
\end{equation}

\smallskip

\textbf{4. $D^1(\mathbb{R}^n)$- and $D^2(\mathbb{R}^n)$-estimates on $\phi$.} 
Applying $\partial^\varsigma\phi\partial^\varsigma$ with the multi-index $|\varsigma|=1$ to $\eqref{li4}_1$ and then integrating the resulting equality over $\mathbb{R}^n$, we see 
from integration by parts, Lemma \ref{ale1}, and the H\"older inequality that
\begin{align*}
\frac{\mathrm{d}}{\mathrm{d}t}\|\partial^\varsigma \phi\|_{L^2}^2 &\leq C\|\nabla\boldsymbol{w}\|_{L^\infty}\|\nabla \phi\|_{L^2}^2 +C \|\nabla^{2}\boldsymbol{w}\|_{L^2}\|\phi\|_{L^\infty}\|\nabla \phi\|_{L^2}\\
&\leq C\|\nabla \boldsymbol{w}\|_{H^2}\big(\|\nabla\phi\|_{L^2}^2+c_0^{7\gamma-7}\|\nabla\phi\|_{L^2}\big),  
\end{align*}
which, along with the Gr\"onwall inequality, the  H\"older inequality, 
and \eqref{jizhu1}, yields that, for all $t\in [0,T_1]$,
\begin{equation}\label{guji323}
\begin{split}
\|\nabla\phi(t)\|_{L^2}&\leq \exp\Big(C\int_0^t \|\nabla\boldsymbol{w}\|_{H^2}\,\mathrm{d}s\Big)\Big(\|\nabla\phi_0\|_{L^2}+Cc_0^{7\gamma-7}\int_0^t \|\nabla\boldsymbol{w}\|_{H^2}\,\mathrm{d}s\Big)\\
&\leq e^{C c_2\sqrt{t}}\big(c_0^2+Cc_0^{7\gamma-7}c_2\sqrt{t}\big)\leq Cc_0^{7\gamma-5}.
\end{split}
\end{equation}

For the $L^2(\mathbb{R}^n)$-estimate of $\nabla^2\phi$, it follows from \eqref{psi=log-phi-0}, \eqref{nabla-psi-2-regular}, \eqref{3.12-2regular}, the H\"older inequality, Lemmas \ref{GN-ineq}, \ref{lemma-L6}, and \ref{Hk-Ck-vector} that
\begin{equation}\label{nabla-phi-guocheng-H1}
\begin{aligned}
\|\nabla^2\phi\|_{L^2}&\leq C\big(\delta_{2n}\|\nabla \phi\|_{L^2}\|\boldsymbol{\psi}\|_{L^\infty(\mathbb{R}^2)}+\delta_{3n} \|\nabla \phi\|_{L^3(\mathbb{R}^3)}\|\boldsymbol{\psi}\|_{L^6(\mathbb{R}^3)}+\|\phi\|_{L^\infty}\|\boldsymbol{\psi}\|_{D^1}\big) \\
&\leq C\big(\delta_{2n}\|\nabla \phi\|_{L^2}+\delta_{3n} \|\nabla \phi\|^{\frac{1}{2}}_{L^2}  \|\nabla^2\phi\|^{\frac{1}{2}}_{L^2}+\|\phi\|_{L^\infty}\big)\|\boldsymbol{\psi}\|_{D^1}\\
&\leq  C\big(c_0^{7\gamma-4}+\delta_{3n}c_0^{\frac{7\gamma-3}{2}} \|\nabla^2\phi\|^{\frac{1}{2}}_{L^2}\big),
\end{aligned}
\end{equation}
which, along with the Young inequality, yields that 
\begin{equation}\label{nabla-phi-H1}
\|\nabla^2\phi(t)\|_{L^2}\leq  Cc_0^{7\gamma-3} \qquad \text{for $t\in [0,T_1]$}.
\end{equation}

\smallskip
\textbf{5. Estimates on $(\phi_t,\boldsymbol{\psi}_t)$.}
First, it follows from $\eqref{li4}_3$, \eqref{jizhu1}, \eqref{nabla-psi-2-regular}, and Lemmas \ref{ale1}, \ref{lemma-L6}, and \ref{Hk-Ck-vector}
that, for all $t\in[0,T_1]$,
\begin{align*}
\|\boldsymbol{\psi}_t\|_{L^2} &\leq C\big(\delta_{2n}\|\nabla\boldsymbol{w}\|_{L^2(\mathbb{R}^2)} \|\boldsymbol{\psi}\|_{L^\infty(\mathbb{R}^2)}+\delta_{3n}\|\nabla\boldsymbol{w}\|_{L^3(\mathbb{R}^3)} \|\boldsymbol{\psi}\|_{L^6(\mathbb{R}^3)}\big)\\
&\quad +C\big(\|\boldsymbol{w}\|_{L^\infty}\|\boldsymbol{\psi}\|_{D^1}+\|\nabla^2\boldsymbol{w}\|_{L^2}\big)\leq C(\|\psi\|_{D^1}+1)\|\boldsymbol{w}\|_{H^2} \le Cc_0c_2.
\end{align*}

Next, it follows from $\eqref{li4}_1$, \eqref{jizhu1}, \eqref{3.12-2regular}--\eqref{nabla-phi-H1}, Lemma \ref{ale1}, and the H\"older inequality that, for all $t\in [0,T_1]$,
\begin{align*}
\|\phi_t(t)\|_{L^2}&\leq C\big(\|\boldsymbol{w}\|_{L^4}\|\nabla\phi\|_{L^4}+\|\phi\|_{\infty}\|\nabla\boldsymbol{w}\|_{L^2}\big)\\
&\leq C\|\boldsymbol{w}\|_{H^1}\big(\|\nabla\phi\|_{H^1}+\|\phi\|_{\infty}\big)\leq Cc_0^{7\gamma-3}c_1,\\
\|\nabla\phi_t(t)\|_{L^2}&\leq C\big(\|\nabla\boldsymbol{w}\|_{L^4}\|\nabla\phi\|_{L^4}+\|\boldsymbol{w}\|_{L^\infty}\|\nabla^2\phi\|_{L^2}+\|\phi\|_{\infty}\|\nabla^2\boldsymbol{w}\|_{L^2}\big)\\
&\leq C\|\boldsymbol{w}\|_{H^2}\big(\|\nabla\phi\|_{H^1}+\|\phi\|_{\infty}\big)\leq Cc_0^{7\gamma-3}c_2.
\end{align*} 
This completes the proof of Lemma \ref{bos}. 
\end{proof}

Next, to derive all the energy estimates for $\boldsymbol{u}$ systematically, we rewrite $\eqref{li4}_2$ as
\begin{equation}\label{eqZZ}
\boldsymbol{u}_t+L\boldsymbol{u}=\boldsymbol{\psi}\cdot Q(\boldsymbol{w})-\boldsymbol{w}\cdot \nabla \boldsymbol{w}-\nabla \phi:=Z(\boldsymbol{w}).
\end{equation}
Then we make the following useful estimates for $Z(\boldsymbol{w})$.
\begin{lem}\label{lemma-usefulZ}
For any $t\in [0,T_1]$,
\begin{align*}
&\|Z(\boldsymbol{w})\|_{L^2}\leq Cc_1^{7\gamma-3}c_2^\frac{n}{4},
\quad\, \|\nabla Z(\boldsymbol{w})\|_{L^2}\leq Cc_2^{7\gamma-3}\big(\|\nabla^3 \boldsymbol{w}\|_{L^2}^\frac{n}{4}+1\big),\\
&\|(Z(\boldsymbol{w}))_t\|_{L^2}\leq  C\Big(c_0c_2\big(\|\nabla \boldsymbol{w}\|_{H^2}+ \|\nabla\boldsymbol{w}_t\|_{L^2}\big)+c_0\|\nabla\boldsymbol{w}_t\|_{L^2}^\frac{1}{2}\|\nabla^2\boldsymbol{w}_t\|_{L^2}^\frac{1}{2}+ c_0^{7\gamma-3}c_2\Big).
\end{align*}
\end{lem}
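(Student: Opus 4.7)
\smallskip

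The plan is to estimate $Z(\boldsymbol{w})=\boldsymbol{\psi}\cdot Q(\boldsymbol{w})-\boldsymbol{w}\cdot\nabla\boldsymbol{w}-\nabla\phi$ piece by piece, combining the bounds on $(\phi,\boldsymbol{\psi})$ just obtained in Lemma \ref{bos-2-regular} with the bootstrap hypothesis \eqref{jizhu1} on $\boldsymbol{w}$, and interpolating via Gagliardo–Nirenberg. The key point is that $\boldsymbol{\psi}$ is only assumed to sit in $D^1(\mathbb{R}^n)$, so to extract any $L^p$-information on $\boldsymbol{\psi}$ itself we must invoke the new critical symmetric Sobolev embeddings $D^1(\mathbb{R}^2)\hookrightarrow L^\infty$ from Lemma \ref{Hk-Ck-vector} and $D^1(\mathbb{R}^3)\hookrightarrow L^6$ from Lemma \ref{lemma-L6}, which give $\|\boldsymbol{\psi}\|_{L^\infty(\mathbb{R}^2)}+\|\boldsymbol{\psi}\|_{L^6(\mathbb{R}^3)}\leq C\|\boldsymbol{\psi}\|_{D^1}\leq Cc_0$.

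For the $L^2$-estimate, the term $\|\nabla\phi\|_{L^2}$ is already controlled by $Cc_0^{7\gamma-5}$ from Lemma \ref{bos-2-regular}, and $\|\boldsymbol{\psi}\cdot Q(\boldsymbol{w})\|_{L^2}$ is handled by H\"older combined with the symmetric embeddings above: in $n=2$ directly by $\|\boldsymbol{\psi}\|_{L^\infty}\|\nabla\boldsymbol{w}\|_{L^2}$; in $n=3$ by $\|\boldsymbol{\psi}\|_{L^6}\|\nabla\boldsymbol{w}\|_{L^3}$ after an extra Gagliardo–Nirenberg interpolation $\|\nabla\boldsymbol{w}\|_{L^3}\leq C\|\nabla\boldsymbol{w}\|_{L^2}^{1/2}\|\nabla^2\boldsymbol{w}\|_{L^2}^{1/2}$. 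The subtle factor $c_2^{n/4}$ arises exclusively from the convection term $\boldsymbol{w}\cdot\nabla\boldsymbol{w}$: using $\|\boldsymbol{w}\|_{L^\infty}\leq C\|\boldsymbol{w}\|_{L^2}^{(4-n)/4}\|\nabla^2\boldsymbol{w}\|_{L^2}^{n/4}$ (Gagliardo–Nirenberg in $\mathbb{R}^n$, $n=2,3$), we get $\|\boldsymbol{w}\cdot\nabla\boldsymbol{w}\|_{L^2}\leq Cc_1^{2-n/4}c_2^{n/4}$, and the three pieces combine to the claimed bound.

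For $\|\nabla Z(\boldsymbol{w})\|_{L^2}$, the dominant contribution with the $\|\nabla^3\boldsymbol{w}\|_{L^2}^{n/4}$ factor comes from estimates involving $\|\nabla\boldsymbol{w}\|_{L^\infty}$, to be controlled by the Gagliardo–Nirenberg interpolation
\begin{equation*}
\|\nabla\boldsymbol{w}\|_{L^\infty(\mathbb{R}^n)}\leq C\|\nabla\boldsymbol{w}\|_{L^2}^{(4-n)/4}\|\nabla^3\boldsymbol{w}\|_{L^2}^{n/4}\leq Cc_1^{(4-n)/4}\|\nabla^3\boldsymbol{w}\|_{L^2}^{n/4},
\end{equation*}
which arises in the expansions of $\nabla(\boldsymbol{\psi}\cdot Q(\boldsymbol{w}))$ and $\nabla(\boldsymbol{w}\cdot\nabla\boldsymbol{w})$; the lower-order contributions reduce to $\|\nabla^2\phi\|_{L^2}$, $\|\boldsymbol{\psi}\|_{D^1}$, and $\|\boldsymbol{w}\|_{H^2}$ and are absorbed into $Cc_2^{7\gamma-3}$ via Lemma \ref{bos-2-regular} and \eqref{jizhu1}.

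For $\|(Z(\boldsymbol{w}))_t\|_{L^2}$, I will expand the time derivative into five terms $\boldsymbol{\psi}_t\cdot Q(\boldsymbol{w})$, $\boldsymbol{\psi}\cdot Q(\boldsymbol{w}_t)$, $\boldsymbol{w}_t\cdot\nabla\boldsymbol{w}$, $\boldsymbol{w}\cdot\nabla\boldsymbol{w}_t$, and $\nabla\phi_t$. The first, third, and fourth are controlled by the product of $\|\boldsymbol{\psi}_t\|_{L^2}$ or $\|\boldsymbol{w}_t\|_{L^2}$ (bounded via Lemma \ref{bos-2-regular} and \eqref{jizhu1} by $Cc_0c_2$) with $\|\nabla\boldsymbol{w}\|_{L^\infty}+\|\boldsymbol{w}\|_{L^\infty}\leq C\|\nabla\boldsymbol{w}\|_{H^2}$, producing the leading term $c_0c_2(\|\nabla\boldsymbol{w}\|_{H^2}+\|\nabla\boldsymbol{w}_t\|_{L^2})$. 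The last term $\|\nabla\phi_t\|_{L^2}$ is already bounded by $Cc_0^{7\gamma-3}c_2$. The main obstacle — and the reason the peculiar interpolation term $c_0\|\nabla\boldsymbol{w}_t\|_{L^2}^{1/2}\|\nabla^2\boldsymbol{w}_t\|_{L^2}^{1/2}$ appears — is the term $\boldsymbol{\psi}\cdot Q(\boldsymbol{w}_t)$ in $n=3$: since $\boldsymbol{\psi}$ lies only in $L^6(\mathbb{R}^3)$, we are forced to H\"older-pair it with $\|\nabla\boldsymbol{w}_t\|_{L^3}$, and then apply the Gagliardo–Nirenberg interpolation $\|\nabla\boldsymbol{w}_t\|_{L^3}\leq C\|\nabla\boldsymbol{w}_t\|_{L^2}^{1/2}\|\nabla^2\boldsymbol{w}_t\|_{L^2}^{1/2}$, yielding exactly the displayed bound. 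Assembling these pieces completes the proof.
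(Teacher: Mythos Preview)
Your proposal is correct and follows essentially the same approach as the paper: the same term-by-term decomposition of $Z(\boldsymbol{w})$, the same use of the critical symmetric embeddings $D^1(\mathbb{R}^2)\hookrightarrow L^\infty$ and $D^1(\mathbb{R}^3)\hookrightarrow L^6$ for $\boldsymbol{\psi}$, and the same Gagliardo--Nirenberg interpolations (in particular $\|\boldsymbol{w}\|_{L^\infty}\leq C\|\boldsymbol{w}\|_{L^2}^{(4-n)/4}\|\nabla^2\boldsymbol{w}\|_{L^2}^{n/4}$ producing the $c_2^{n/4}$ factor, and $\|\nabla\boldsymbol{w}_t\|_{L^3}\leq C\|\nabla\boldsymbol{w}_t\|_{L^2}^{1/2}\|\nabla^2\boldsymbol{w}_t\|_{L^2}^{1/2}$ producing the mixed time-derivative term). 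You have correctly identified the origin of every nontrivial factor in the statement.
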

\begin{proof}
It follows from \eqref{jizhu1}, \eqref{eqZZ}, Lemmas \ref{bos-2-regular}, \ref{ale1}--\ref{GN-ineq}, \ref{lemma-L6}, and \ref{Hk-Ck-vector}, and the H\"older inequality that, for all $t\in [0,T_1]$,
\begin{equation*}
\begin{aligned}
\|Z(\boldsymbol{w})\|_{L^2}&\leq C\big(\delta_{2n} \|\boldsymbol{\psi}\|_{L^\infty(\mathbb{R}^2)}\|\nabla\boldsymbol{w}\|_{L^2(\mathbb{R}^2)}+\delta_{3n} \|\boldsymbol{\psi}\|_{L^6(\mathbb{R}^3)}\|\nabla\boldsymbol{w}\|_{L^3(\mathbb{R}^3)}\big)\notag\\
&\quad +C\big(\|\boldsymbol{w}\|_{L^\infty} \|\nabla\boldsymbol{w}\|_{L^2} + \|\nabla\phi\|_{L^2}\big)\notag\\
&\leq  C\big(\delta_{2n} \|\nabla\boldsymbol{w}\|_{L^2}+\delta_{3n} \|\nabla\boldsymbol{w}\|_{L^2}^\frac{1}{2}\|\nabla^2\boldsymbol{w}\|_{L^2}^\frac{1}{2}\big)\|\boldsymbol{\psi}\|_{D^1} \\
&\quad +C\big(\|\boldsymbol{w}\|_{L^2}^\frac{4-n}{4}\|\nabla^2\boldsymbol{w}\|_{L^2}^\frac{n}{4}\|\nabla\boldsymbol{w}\|_{L^2} +  \|\nabla\phi\|_{L^2}\big)\notag\\
&\leq C\big(\delta_{2n}c_0c_1+\delta_{3n}c_0c_1^\frac{1}{2}c_2^\frac{1}{2}+c_1^\frac{8-n}{4}c_2^\frac{n}{4}+c_0^{7\gamma-3}\big)\leq Cc_1^{7\gamma-3}c_2^\frac{n}{4},\notag
\end{aligned}
\end{equation*}
\begin{equation*}
\begin{aligned}
\|\nabla Z(\boldsymbol{w})\|_{L^2}&\leq C\big(\delta_{2n}\|\boldsymbol{\psi}\|_{L^\infty(\mathbb{R}^2)} \|\nabla^2\boldsymbol{w}\|_{L^2(\mathbb{R}^2)}+\delta_{3n}\|\boldsymbol{\psi}\|_{L^6(\mathbb{R}^3)} \|\nabla^2\boldsymbol{w}\|_{L^3(\mathbb{R}^3)}\big)\notag\\
&\quad +C\big(\|(\nabla\boldsymbol{\psi},\nabla\boldsymbol{w})\|_{L^2}\|\nabla \boldsymbol{w}\|_{L^\infty}+\|\boldsymbol{w}\|_{L^\infty} \|\nabla^2\boldsymbol{w}\|_{L^2}+ \|\nabla^2\phi\|_{L^2}\big)\notag\\
&\leq C\big(\delta_{2n} \|\nabla^2\boldsymbol{w}\|_{L^2} + \delta_{3n}\|\nabla^2\boldsymbol{w}\|_{L^2}^\frac{1}{2}\|\nabla^3\boldsymbol{w}\|_{L^2}^\frac{1}{2}\big)\|\boldsymbol{\psi}\|_{D^1}\\
&\quad +C\big(\|(\nabla\boldsymbol{\psi},\nabla\boldsymbol{w})\|_{L^2}\|\nabla \boldsymbol{w}\|_{L^2}^\frac{4-n}{4}\|\nabla^3 \boldsymbol{w}\|_{L^2}^\frac{n}{4}+\|\boldsymbol{w}\|_{H^2}^2 + \|\nabla^2\phi\|_{L^2}\big)\notag\\
&\leq C\big(\delta_{2n}c_0c_2+\delta_{3n}c_0c_2^\frac{1}{2}\|\nabla^3 \boldsymbol{w}\|_{L^2}^\frac{1}{2}+c_1^\frac{8-n}{4}\|\nabla^3 \boldsymbol{w}\|_{L^2}^\frac{n}{4}+c_2^2+c_0^{7\gamma-3}\big)\notag\\
&\leq Cc_2^{7\gamma-3}\big(\|\nabla^3 \boldsymbol{w}\|_{L^2}^\frac{n}{4}+1\big),\notag\\
\|(Z(\boldsymbol{w}))_t\|_{L^2}&\leq C\big(\|(\boldsymbol{\psi}_t,\boldsymbol{w}_t)\|_{L^2} \|\nabla\boldsymbol{w}\|_{L^\infty}+\delta_{2n}\|\boldsymbol{\psi}\|_{L^\infty(\mathbb{R}^2)} \|\nabla\boldsymbol{w}_t\|_{L^2(\mathbb{R}^2)}\big)\notag\\
&\quad +C\big(\delta_{3n}\|\boldsymbol{\psi}\|_{L^6(\mathbb{R}^3)} \|\nabla\boldsymbol{w}_t\|_{L^3(\mathbb{R}^3)}+\|\boldsymbol{w}\|_{L^\infty} \|\nabla\boldsymbol{w}_t\|_{L^2} + \|\nabla\phi_t\|_{L^2}\big)\notag\\
&\leq C\big(\|(\boldsymbol{\psi}_t,\boldsymbol{w}_t)\|_{L^2} \|\nabla\boldsymbol{w}\|_{H^2}+\delta_{2n}\|\boldsymbol{\psi}\|_{D^1} \|\nabla\boldsymbol{w}_t\|_{L^2}\big)\notag\\
&\quad +C\big(\delta_{3n}\|\boldsymbol{\psi}\|_{D^1} \|\nabla\boldsymbol{w}_t\|_{L^2}^\frac{1}{2}\|\nabla^2\boldsymbol{w}_t\|_{L^2}^\frac{1}{2}+ \|\boldsymbol{w}\|_{H^2} \|\nabla\boldsymbol{w}_t\|_{L^2} + \|\nabla\phi_t\|_{L^2}\big)\\
&\leq C(c_0c_2+c_2)\|\nabla \boldsymbol{w}\|_{H^2}+C\delta_{2n}c_0\|\nabla\boldsymbol{w}_t\|_{L^2}\notag\\
&\quad +C\delta_{3n}c_0\|\nabla\boldsymbol{w}_t\|_{L^2}^\frac{1}{2}\|\nabla^2\boldsymbol{w}_t\|_{L^2}^\frac{1}{2}+Cc_2\|\nabla\boldsymbol{w}_t\|_{L^2}+Cc_0^{7\gamma-3}c_2\notag\\
&\leq C\Big(c_0c_2\big(\|\nabla \boldsymbol{w}\|_{H^2}+ \|\nabla\boldsymbol{w}_t\|_{L^2}\big)+c_0\|\nabla\boldsymbol{w}_t\|_{L^2}^\frac{1}{2}\|\nabla^2\boldsymbol{w}_t\|_{L^2}^\frac{1}{2}+ c_0^{7\gamma-3}c_2\Big).\notag
\end{aligned}
\end{equation*}
\end{proof}

Now, we derive the uniform estimates for $\boldsymbol{u}$.
\begin{lem}\label{llm3}
For any $t\in[0, T_2]$ with $T_2:=\min\big\{T_1, (1+Cc_3)^{-56\gamma}\big\}$,
\begin{align*}
\|\boldsymbol{u}(t)\|_{H^1}^2+\int_0^t\big(\| \nabla \boldsymbol{u}\|_{H^1}^2+\|\boldsymbol{u}_t\|_{L^2}^2\big)\,\mathrm{d} s&\leq Cc_0^2,\\
\|(\nabla^2\boldsymbol{u},\boldsymbol{u}_t)(t)\|_{L^2}^2+\int_0^t \|(\nabla^3\boldsymbol{u},\nabla\boldsymbol{u}_t)\|_{L^2}^2\,\mathrm{d} s&\leq Cc_1^{14\gamma-6} c_2^\frac{n}{2},\\
t\|(\nabla^3\boldsymbol{u},\nabla\boldsymbol{u}_t)(t)\|_{L^2}^2+\int_0^t s\|(\nabla^2\boldsymbol{u}_t,\boldsymbol{u}_{tt})\|_{L^2}^2\,\mathrm{d}s&\leq Cc_0^2.
\end{align*}
\end{lem}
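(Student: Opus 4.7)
The plan is to treat \eqref{eqZZ} as a linear parabolic system driven by the source $Z(\boldsymbol{w})$, and apply the standard Lamé energy/elliptic machinery. The Lamé operator $L\boldsymbol{u}=-\alpha\Delta\boldsymbol{u}-\alpha\nabla\diver\boldsymbol{u}$ is strongly elliptic, so for any sufficiently regular $\boldsymbol{u}$,
\begin{equation*}
\int_{\mathbb{R}^n}L\boldsymbol{u}\cdot\boldsymbol{u}\,\mathrm{d}\boldsymbol{x}
=\alpha\|\nabla\boldsymbol{u}\|_{L^2}^2+\alpha\|\diver\boldsymbol{u}\|_{L^2}^2,
\qquad
\|\nabla^{k+2}\boldsymbol{u}\|_{L^2}\lesssim \|\nabla^{k}L\boldsymbol{u}\|_{L^2}\quad (k=0,1),
\end{equation*}
and the bootstrap bounds from \eqref{jizhu1} together with Lemma \ref{bos-2-regular} give us full control on $\phi,\boldsymbol{\psi}$ over $[0,T_1]$. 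The bounds on $Z(\boldsymbol{w})$ packaged in Lemma \ref{lemma-usefulZ} are the only place where the unknown enters nonlinearly through the data. The smallness of $T_2$ in powers of $(1+Cc_3)^{-1}$ is then used to absorb the $c_2$- and $c_3$-powers arising on the right-hand side, closing everything in terms of $c_0$ (and, in one place, $c_1^{14\gamma-6}c_2^{n/2}$).

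For the first estimate I would test \eqref{eqZZ} with $\boldsymbol{u}$ and with $-\Delta\boldsymbol{u}$. Coercivity of $L$ produces the parabolic dissipation $\|\nabla\boldsymbol{u}\|_{H^1}^2$; Cauchy--Schwarz against $\|Z(\boldsymbol{w})\|_{L^2}\le Cc_1^{7\gamma-3}c_2^{n/4}$ from Lemma \ref{lemma-usefulZ}, followed by Gr\"onwall and the smallness $t\le T_2$, yields the first line with bound $Cc_0^2$. For the second line I would test \eqref{eqZZ} with $\boldsymbol{u}_t$, which produces $\|\boldsymbol{u}_t\|_{L^2}^2+\frac{\alpha}{2}\tfrac{\mathrm{d}}{\mathrm{d}t}(\|\nabla\boldsymbol{u}\|_{L^2}^2+\|\diver\boldsymbol{u}\|_{L^2}^2)=\int Z(\boldsymbol{w})\cdot\boldsymbol{u}_t$, and then differentiate \eqref{eqZZ} in time and test with $\boldsymbol{u}_t$ to control $\|\boldsymbol{u}_t\|_{L^2}^2+\int\|\nabla\boldsymbol{u}_t\|_{L^2}^2$; here the $(Z(\boldsymbol{w}))_t$ bound from Lemma \ref{lemma-usefulZ} is essential, and the initial slice $\|\boldsymbol{u}_t(0)\|_{L^2}\le C\|\nabla^2\boldsymbol{u}_0\|_{L^2}+C\|Z(\boldsymbol{w})|_{t=0}\|_{L^2}\le Cc_0^{7\gamma-3}c_0^{n/4}$ is read off \eqref{eqZZ}. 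Elliptic regularity applied to $L\boldsymbol{u}=Z(\boldsymbol{w})-\boldsymbol{u}_t$ then gives $\|\nabla^2\boldsymbol{u}\|_{L^2}\le C(\|Z(\boldsymbol{w})\|_{L^2}+\|\boldsymbol{u}_t\|_{L^2})$ and $\|\nabla^3\boldsymbol{u}\|_{L^2}\le C(\|\nabla Z(\boldsymbol{w})\|_{L^2}+\|\nabla\boldsymbol{u}_t\|_{L^2})$, which converts the parabolic estimates into the stated bound $Cc_1^{14\gamma-6}c_2^{n/2}$.

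For the $t$-weighted line I would multiply the time-differentiated equation $\boldsymbol{u}_{tt}+L\boldsymbol{u}_t=(Z(\boldsymbol{w}))_t$ by $t\boldsymbol{u}_{tt}$ and integrate in space and time. This yields
\begin{equation*}
\int_0^t s\|\boldsymbol{u}_{tt}\|_{L^2}^2\mathrm{d}s+\tfrac{\alpha}{2}\,t\,\|\nabla\boldsymbol{u}_t(t)\|_{L^2}^2
\le \tfrac{\alpha}{2}\!\int_0^t\!\|\nabla\boldsymbol{u}_t\|_{L^2}^2\mathrm{d}s+\int_0^t\! s\,\|(Z(\boldsymbol{w}))_t\|_{L^2}\|\boldsymbol{u}_{tt}\|_{L^2}\mathrm{d}s,
\end{equation*}
where the first term on the right is already controlled by the second line of the Lemma, and the source term is bounded using Lemma \ref{lemma-usefulZ} together with the $\sqrt{s}$-weighted bounds $\sqrt{s}\|\nabla^2\boldsymbol{w}_t\|_{L^2}\le c_3$ built into \eqref{jizhu1}. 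A second use of elliptic regularity, now on $L\boldsymbol{u}_t=(Z(\boldsymbol{w}))_t-\boldsymbol{u}_{tt}$ and on $L\boldsymbol{u}=Z(\boldsymbol{w})-\boldsymbol{u}_t$, upgrades this to the bound on $t\|(\nabla^3\boldsymbol{u},\nabla\boldsymbol{u}_t)(t)\|_{L^2}^2$.

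The main obstacle will be the bookkeeping in the third step: although each individual estimate is standard, the right-hand sides produced by Lemma \ref{lemma-usefulZ} carry factors $c_0 c_2$, $c_0^{7\gamma-3}c_2$, and $c_0\|\nabla\boldsymbol{w}_t\|_{L^2}^{1/2}\|\nabla^2\boldsymbol{w}_t\|_{L^2}^{1/2}$, and the last one requires the time-weight $\sqrt{s}$ to keep $\|\nabla^2\boldsymbol{w}_t\|_{L^2}$ under control. Choosing $T_2\le (1+Cc_3)^{-56\gamma}$ is precisely what is needed to trade each high power $c_j^{\beta}$ against a factor $T_2^{\delta}$ and recover a clean $Cc_0^2$ bound on the final $t$-weighted line; the careful exponent-chasing here is where I expect the only real work to be.
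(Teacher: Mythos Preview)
Your plan is correct and matches the paper's proof almost exactly: test \eqref{eqZZ} with $\boldsymbol{u}$ and $\boldsymbol{u}_t$, differentiate in time and test with $\boldsymbol{u}_t$ then with $t\boldsymbol{u}_{tt}$, and upgrade via elliptic regularity for $L$ (Lemma \ref{df3}) at each stage using the bounds of Lemma \ref{lemma-usefulZ}. The only point the paper handles more carefully than your sketch is the lower limit in the $t$-weighted integral: rather than starting from $\tau=0$, one integrates from $\tau>0$ and invokes Lemma \ref{bjr} to extract a sequence $\tau_k\to 0$ with $\tau_k\|\nabla\boldsymbol{u}_t(\tau_k)\|_{L^2}^2\to 0$, since $\|\nabla\boldsymbol{u}_t\|_{L^2}$ is only known to be $L^2$ in time at that stage.
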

\begin{proof}
We divide the proof into four steps.

\smallskip
\textbf{1. $L^2(\mathbb{R}^n)$-estimate on $\boldsymbol{u}$.}  Multiplying  $\eqref{li4}_2$ by $\boldsymbol{u}$ and integrating the resulting equality over $\mathbb{R}^n$, 
we obtain from Lemma \ref{lemma-usefulZ} and the H\"older inequality that
\begin{equation}\label{y5}
\begin{split}
&\frac12\frac{\mathrm{d}}{\mathrm{d}t} \|\boldsymbol{u}\|_{L^2}^2+\alpha \|\nabla \boldsymbol{u}\|_{L^2}^2+\alpha\|\diver\boldsymbol{u}\|_{L^2}^2\\
&=\int_{\mathbb{R}^n} Z(\boldsymbol{w})\cdot \boldsymbol{u}\,\mathrm{d}x \leq  \|Z(\boldsymbol{w})\|_{L^2}\|\boldsymbol{u}\|_{L^2}\leq  C\|\boldsymbol{u}\|_{L^2}^2+Cc_1^{14\gamma-6}c_2^\frac{n}{2},
\end{split}
\end{equation}
which, along with the Gr\"onwall inequality, yields that 
\begin{equation}\label{dilingjie}
\|\boldsymbol{u}(t)\|_{L^2}^2+\int_0^t \|\nabla\boldsymbol{u}\|_{L^2}^2\,\mathrm{d} s\le Cc_0^2,
\end{equation}
for all $t\in [0, T_2]$ with $T_2:=\min\big\{T_1, (1+Cc_3)^{-56\gamma}\big\}$.

\smallskip
\textbf{2. $D^1(\mathbb{R}^n)$-estimate on $\boldsymbol{u}$.} 
Multiplying  $\eqref{li4}_2$ by $\boldsymbol{u}_t$ and integrating the resulting equality over $\mathbb{R}^n$, we obtain from Lemma \ref{lemma-usefulZ}, and
the H\"older and Young inequalities that
\begin{equation*}
\frac{1}{2}\frac{\mathrm{d}}{\mathrm{d}t} \big(\alpha \|\nabla \boldsymbol{u}\|_{L^2}^2+\alpha\|\diver\boldsymbol{u}\|_{L^2}^2\big)+\|\boldsymbol{u}_t\|_{L^2}^2=\int_{\mathbb{R}^n} Z(\boldsymbol{w})\cdot \boldsymbol{u}_t\,\mathrm{d}x\leq  Cc_1^{14\gamma-6}c_2^\frac{n}{2}+\frac{1}{8}\|\boldsymbol{u}_t\|_{L^2}^2,
\end{equation*}
which, along with the Gr\"onwall inequality, implies that, for all $t\in[0,T_2]$,
\begin{equation}\label{diyijie}
\|\nabla \boldsymbol{u}(t)\|_{L^2}^2+\int_0^t \|\boldsymbol{u}_t\|_{L^2}^2\, \mathrm{d} s\le Cc_0^2.
\end{equation}

Next, rewrite \eqref{eqZZ} as
\begin{equation}\label{eqzz-tuoyuan}
L\boldsymbol{u}=-\boldsymbol{u}_t+Z(\boldsymbol{w}).
\end{equation}
Then it follows from \eqref{eqzz-tuoyuan}, 
Lemma \ref{lemma-usefulZ}, and the classical regularity theory for elliptic equations in Lemma \ref{df3} that 
\begin{equation}\label{y7}
\|\nabla^2\boldsymbol{u}\|_{L^2}\leq C\|(Z(\boldsymbol{w}),\boldsymbol{u}_t)\|_{L^2}\leq C\big(\|\boldsymbol{u}_t\|_{L^2}+c_1^{7\gamma-3}c_2^\frac{n}{4}\big).
\end{equation}
Consequently, it follows from \eqref{diyijie} and \eqref{y7} that, for all $t\in [0,T_2]$,
\begin{equation*}
\int_0^t \|\nabla^2\boldsymbol{u}\|_{L^2}^2\, \mathrm{d}s\leq C\int_0^t \|\boldsymbol{u}_t\|_{L^2}^2\, \mathrm{d}s+Cc_1^{14\gamma-6}c_2^\frac{n}{2}t\leq  Cc_0^2.    
\end{equation*}

\smallskip
\textbf{3. $D^2(\mathbb{R}^n)$-estimate on $\boldsymbol{u}$.}
Differentiating $\eqref{li4}_2$ with respect to $t$ gives
\begin{equation}\label{eqzz-t}
\boldsymbol{u}_{tt}+L\boldsymbol{u}_t=(Z(\boldsymbol{w}))_t.
\end{equation}
Then multiplying above by $\boldsymbol{u}_t$ and integrating over $\mathbb{R}^n$,
we see from the H\"older inequality that
\begin{align*}
&\frac12\frac{\mathrm{d}}{\mathrm{d}t} \|\boldsymbol{u}_t\|_{L^2}^2 +\alpha \|\nabla \boldsymbol{u}_t\|_{L^2}^2+\alpha\|\diver \boldsymbol{u}_t\|_{L^2}^2
= \int_{\mathbb{R}^n} (Z(\boldsymbol{w}))_t\cdot \boldsymbol{u}_t\,\mathrm{d}x\leq \|Z(\boldsymbol{w})_t\|_{L^2}\|\boldsymbol{u}_t\|_{L^2},\notag
\end{align*}
which, along with Lemma \ref{lemma-usefulZ},  implies that
\begin{align*} 
\frac{\mathrm{d}}{\mathrm{d}t} \|\boldsymbol{u}_t\|_{L^2}\leq Cc_0c_2\big(\|\nabla \boldsymbol{w}\|_{H^2}+\|\nabla\boldsymbol{w}_t\|_{L^2}\big)+Cc_0\|\nabla\boldsymbol{w}_t\|_{L^2}^\frac{1}{2}\|\nabla^2\boldsymbol{w}_t\|_{L^2}^\frac{1}{2}+Cc_0^{7\gamma-3}c_2.
\end{align*}
Integrating the above over $(\tau, t)$ with $\tau\in(0,t)$, together with \eqref{jizhu1}, and the H\"older and Young inequalities, yields that,  for all $t\in [\tau, T_2]$,
\begin{equation}\label{polo}
\begin{aligned}
\|\boldsymbol{u}_t(t)\|_{L^2}&\leq \|\boldsymbol{u}_t(\tau)\|_{L^2} +Cc_0c_2
\int_\tau^t \big(\|\nabla \boldsymbol{w}\|_{H^2}+\|\nabla\boldsymbol{w}_t\|_{L^2}\big)\,\mathrm{d}s\\
&\quad +Cc_0\int_0^t \|\nabla\boldsymbol{w}_t\|_{L^2}^\frac{1}{2}\|\nabla^2\boldsymbol{w}_t\|_{L^2}^\frac{1}{2}\,\mathrm{d}s+Cc_0^{7\gamma-3}c_2t\\
&\leq \|\boldsymbol{u}_t(\tau)\|_{L^2} +Cc_0c_2t^\frac{1}{2}
\Big(\int_0^t \big(\|\nabla \boldsymbol{w}\|_{H^2}^2+\|\nabla\boldsymbol{w}_t\|_{L^2}^2\big) \,\mathrm{d}s\Big)^\frac{1}{2}\\
&\quad +Cc_0\Big(\int_0^t \frac{1}{\sqrt{s}} \,\mathrm{d}s\Big)^\frac{1}{2}\Big(\int_0^t \|\nabla\boldsymbol{w}_t\|_{L^2}^2\,\mathrm{d}s\Big)^\frac{1}{4}\Big(\int_0^t s\|\nabla^2\boldsymbol{w}_t\|_{L^2}^2\,\mathrm{d}s\Big)^\frac{1}{4}+Cc_0\\
&\leq \|\boldsymbol{u}_t(\tau)\|_{L^2}+Cc_0c_2^2t^\frac{1}{2}+Cc_0c_2^\frac{1}{2}c_3^\frac{1}{2}t^\frac{1}{4}+Cc_0\leq \|\boldsymbol{u}_t(\tau)\|_{L^2} +Cc_0.
\end{aligned}
\end{equation}
It follows from $\eqref{li4}_2$, the time continuity of $(\phi,\boldsymbol{u},\boldsymbol{\psi})$, \eqref{vg}, \eqref{houmian}, Lemmas \ref{ale1} and \ref{lemma-L6}, and the H\"older inequality that 
\begin{align*}
\lim\sup_{\tau\to 0}\|\boldsymbol{u}_t(\tau)\|_{L^2}&\leq C \big(\|\boldsymbol{u}_0\|_{L^\infty} \|\nabla \boldsymbol{u}_0\|_{L^2} +\|\nabla\phi_0\|_{L^2}+\|\nabla^2 \boldsymbol{u}_0\|_{L^2}\big)\notag\\
&\quad+ C\big(\delta_{2n} \|\boldsymbol{\psi}_0\|_{L^\infty(\mathbb{R}^2)} \|\nabla \boldsymbol{u}_0\|_{L^2}+\delta_{3n} \|\boldsymbol{\psi}_0\|_{L^6(\mathbb{R}^3)} \|\nabla \boldsymbol{u}_0\|_{L^3(\mathbb{R}^3)}\big)\notag\\
&\leq C \big(\|\boldsymbol{u}_0\|_{L^\infty} \|\nabla \boldsymbol{u}_0\|_{L^2} +\|\nabla\phi_0\|_{L^2}+\|\nabla^2 \boldsymbol{u}_0\|_{L^2}\big)\\
&\quad+ C\big(\delta_{2n} \|\boldsymbol{\psi}_0\|_{L^\infty(\mathbb{R}^2)}+\delta_{3n} \|\boldsymbol{\psi}_0\|_{D^1(\mathbb{R}^3)} \big)\|\nabla \boldsymbol{u}_0\|_{H^1} \leq Cc_0^2.\notag
\end{align*}
Based on this, we let $\tau\to 0$ in \eqref{polo} and apply the Gr\"onwall inequality to 
obtain
\begin{equation}\label{y9}
\|\boldsymbol{u}_t(t)\|_{L^2}^2 +\int_0^t \|\nabla\boldsymbol{u}_t\|_{L^2}^2 \,\mathrm{d}s \leq Cc_0^4 \qquad\,\,  \text{for all $t\in [0,T_2]$}.
\end{equation}
It thus  follows from \eqref{y7}  that
\begin{equation}\label{y7'}
\|\nabla^2\boldsymbol{u}\|_{L^2}\leq C\big(\|\boldsymbol{u}_t\|_{L^2}+c_1^{7\gamma-3}c_2^\frac{n}{4}\big)\leq Cc_1^{7\gamma-3}c_2^\frac{n}{4}.
\end{equation}

Finally, it follows from  \eqref{eqzz-tuoyuan}, Lemma \ref{lemma-usefulZ}, 
and the classical  regularity theory for elliptic equations in Lemma \ref{df3} that
\begin{equation}\label{nabla3u}
\begin{split}
\|\nabla^3 \boldsymbol{u}\|_{L^2}&\leq C\|(\nabla\boldsymbol{u}_t,\nabla Z(\boldsymbol{w}))\|_{L^2}\leq  C\big(\|\nabla\boldsymbol{u}_t\|_{L^2} +c_2^{7\gamma-3}\|\nabla^3 \boldsymbol{w}\|_{L^2}^\frac{n}{4}+c_2^{7\gamma-3}\big),
\end{split}
\end{equation}
which, along with \eqref{jizhu1}, \eqref{y9}, and the H\"older inequality, 
implies that, for all $t\in [0,T_2]$,
\begin{equation}
\begin{aligned}
\int_0^t \|\nabla^3\boldsymbol{u}\|_{L^2}^2\,\mathrm{d}s &\leq C\int_0^t \|\nabla \boldsymbol{u}_t\|_{L^2}^2\,\mathrm{d}s+Cc_2^{14\gamma-6}\int_0^t \|\nabla^3\boldsymbol{w}\|_{L^2}^\frac{n}{2}\,\mathrm{d}s+Cc_2^{14\gamma-6}t\\
&\leq Cc_0^4+Cc_2^{14\gamma-6}t^\frac{4-n}{4}\Big(\int_0^t \|\nabla^3\boldsymbol{w}\|_{L^2}^2\,\mathrm{d}s\Big)^\frac{n}{4}\leq Cc_0^4. 
\end{aligned}
\end{equation}

\smallskip
\textbf{4. Time-weighted estimates on $\boldsymbol{u}$.}
First, multiplying \eqref{eqzz-t} by $\boldsymbol{u}_{tt}$ and integrating over $\mathbb{R}^n$, we see from  Lemma \ref{lemma-usefulZ} and the H\"older inequality that
\begin{equation}\label{lrq-pre}
\begin{aligned}
&\frac{\alpha}{2}\frac{\mathrm{d}}{\mathrm{d}t}\big(\|\nabla \boldsymbol{u}_t\|_{L^2}^2+\|\diver \boldsymbol{u}_t\|_{L^2}^2\big)+\|\boldsymbol{u}_{tt}\|_{L^2}^2=\int_{\mathbb{R}^n} (Z(\boldsymbol{w}))_t\cdot \boldsymbol{u}_{tt}\mathrm{d}x\\
& \leq  C\Big(c_0c_2\|\nabla \boldsymbol{w}\|_{H^2}+c_0c_2\|\nabla\boldsymbol{w}_t\|_{L^2} + c_0\|\nabla\boldsymbol{w}_t\|_{L^2}^\frac{1}{2}\|\nabla^2\boldsymbol{w}_t\|_{L^2}^\frac{1}{2}+c_0^{7\gamma-3}c_2\Big)\|\boldsymbol{u}_{tt}\|_{L^2}.
\end{aligned}
\end{equation}
Then multiplying above by $t$, together with \eqref{jizhu1} and the Young inequality, gives 
\begin{equation}\label{lrq}
\begin{aligned}
&\alpha\frac{\mathrm{d}}{\mathrm{d}t}\big(t\|\nabla \boldsymbol{u}_t\|_{L^2}^2+t\|\diver \boldsymbol{u}_t\|_{L^2}^2\big)+t\|\boldsymbol{u}_{tt}\|_{L^2}^2\\
& \leq Cc_0^2c_2^2c_3^2+Cc_0^2c_3\sqrt{t}\|\nabla^2\boldsymbol{w}_t\|_{L^2}+Cc_0^{14\gamma-6}c_2^2t +\alpha\|\nabla \boldsymbol{u}_t\|_{L^2}^2\\
&\leq Cc_3^6+ Cc_3^3\sqrt{t}\|\nabla^2\boldsymbol{w}_t\|_{L^2}+Cc_3^{14\gamma-4}t.
\end{aligned}
\end{equation}

Next, integrating the above over $[\tau,t]$ with $\tau\in (0,t)$, along with \eqref{jizhu1}, \eqref{y9}, and the H\"older inequality, gives that, for all $t\in [0,T_2]$,
\begin{equation}\label{yilu}
\begin{aligned}
&t\|\nabla \boldsymbol{u}_t(t)\|_{L^2}^2+\int_\tau^t s\|\boldsymbol{u}_{tt}\|_{L^2}^2\,\mathrm{d}s\\
&\leq C \tau\|\nabla \boldsymbol{u}_t(\tau)\|_{L^2}^2 +Cc_3^3\int_\tau^t \sqrt{s}\|\nabla^2\boldsymbol{w}_t\|_{L^2}\,\mathrm{d}s+Cc_3^{14\gamma-4}t^2+Cc_3^6t\\
&\leq C\tau\|\nabla \boldsymbol{u}_t(\tau)\|_{L^2}^2+Cc_3^3\sqrt{t}\Big(\int_0^t s \|\nabla^2\boldsymbol{w}_t\|_{L^2}^2\,\mathrm{d}s\Big)^\frac{1}{2}+Cc_0^2\leq C\tau\|\nabla \boldsymbol{u}_t(\tau)\|_{L^2}^2+Cc_0^2.
\end{aligned}
\end{equation}
Due to  \eqref{y9} and Lemma \ref{bjr}, there exists a sequence $\{\tau_k\}_{k=1}^\infty$ such that 
\begin{equation*}
\tau_k\to 0, \quad\, \tau_k\|\nabla \boldsymbol{u}_t(\tau_k)\|_{L^2}^2\to 0  \qquad\,\, \text{as $k\to \infty$}.
\end{equation*}
Thus, setting $\tau=\tau_k\to 0$  in  \eqref{yilu} yields that, for all $t\in[0,T_2]$,
\begin{equation}\label{cv}
t\|\nabla \boldsymbol{u}_t(t)\|_{L^2}^2+\int_\tau^t s\|\boldsymbol{u}_{tt}\|_{L^2}^2\,\mathrm{d}s\leq Cc_0^2. 
\end{equation}

Now, rewrite \eqref{eqzz-t} as
\begin{equation}\label{eqzz-t-tuoyuan}
L\boldsymbol{u}_t=-\boldsymbol{u}_{tt}+(Z(\boldsymbol{w}))_t.
\end{equation}
Then it follows from \eqref{eqzz-t-tuoyuan},  
Lemma \ref{lemma-usefulZ}, 
and the classical regularity theory for elliptic equations in Lemma \ref{df3} that
\begin{equation}\label{nabla2x1tu}
\begin{aligned}
&\|\nabla^2\boldsymbol{u}_t\|_{L^2}\leq C \big\|(\boldsymbol{u}_{tt},(Z(\boldsymbol{w}))_t)\big\|_{L^2}\\
& \leq C \|\boldsymbol{u}_{tt}\|_{L^2}+Cc_0c_2\big(\|\nabla \boldsymbol{w}\|_{H^2}+\|\nabla\boldsymbol{w}_t\|_{L^2}\big)+Cc_0\|\nabla\boldsymbol{w}_t\|_{L^2}^\frac{1}{2}\|\nabla^2\boldsymbol{w}_t\|_{L^2}^\frac{1}{2}+Cc_0^{7\gamma-3}c_2,
\end{aligned}
\end{equation}
which, along with \eqref{jizhu1}, \eqref{cv}, and the H\"older inequality, 
yields that, for $t\in [0,T_2]$,
\begin{equation}
\begin{aligned}
\int_0^t s\|\nabla^2\boldsymbol{u}_t\|_{L^2}^2\,\mathrm{d}s&\leq C \int_0^t s\|\boldsymbol{u}_{tt}\|_{L^2}^2\,\mathrm{d}s+Cc_2^4\int_0^t s\big(\|\nabla \boldsymbol{w}\|_{H^2}^2+\|\nabla\boldsymbol{w}_t\|_{L^2}^2\big)\,\mathrm{d}s\\
&\quad +Cc_0^2\int_0^ts\|\nabla\boldsymbol{w}_t\|_{L^2}\|\nabla^2\boldsymbol{w}_t\|_{L^2}\,\mathrm{d}s+Cc_0^{14\gamma-6}c_2^2t\\
&\leq Cc_0^2+Cc_2^4c_3^2t+Cc_0^2c_3 \sqrt{t}\Big(\int_0^ts\|\nabla^2\boldsymbol{w}_t\|_{L^2}^2\,\mathrm{d}s\Big)^\frac{1}{2}
\leq Cc_0^2.
\end{aligned}
\end{equation}

Finally, \eqref{nabla3u}, together with \eqref{jizhu1} and \eqref{cv}, gives that, for all $t\in [0,T_2]$,
\begin{equation}\label{nabla3u'}
\begin{split}
\sqrt{t}\|\nabla^3 \boldsymbol{u}\|_{L^2}
&\leq  C\sqrt{t}\|\nabla\boldsymbol{u}_t\|_{L^2}+Cc_2^{7\gamma-3}t^{\frac{4-n}{8}}\big(\sqrt{t}\|\nabla^3 \boldsymbol{w}\|_{L^2}\big)^\frac{n}{4} +Cc_2^{7\gamma-3}\sqrt{t}\\
&\leq Cc_0 +Cc_2^{7\gamma-3}\sqrt{t}+Cc_2^{7\gamma-3}c_3^\frac{n}{4}t^{\frac{4-n}{8}}\leq Cc_0.
\end{split}
\end{equation}

The proof of  Lemma \ref{llm3} is completed.
\end{proof}

\smallskip
Finally, choose $T^*$ 
and constants $c_i$ $(i=1,2,3)$ as:
\begin{equation*}
T^*=T_2,\qquad c_1=C^\frac{1}{2}c_0, \qquad c_2=c_3=C^\frac{2}{4-n}c_1^\frac{28\gamma-12}{4-n}=C^\frac{14\gamma-4}{4-n}c_0^\frac{28\gamma-12}{4-n} .
\end{equation*}
It follows from Lemmas \ref{bos-2-regular} and \ref{llm3} that, for any $t\in [0,T^*]$,
\begin{equation}\label{lgg}
\begin{aligned}
\|\boldsymbol{u}(t)\|_{H^1}^2+\int_0^t\big(\| \nabla \boldsymbol{u}\|_{H^1}^2+\|\boldsymbol{u}_t\|_{L^2}^2\big)\,\mathrm{d} s&\leq c_1^2,\\ 
\|(\nabla^2\boldsymbol{u},\boldsymbol{u}_t)(t)\|_{L^2}^2+\int_0^t \|(\nabla^3\boldsymbol{u},\nabla\boldsymbol{u}_t)\|_{L^2}^2\,\mathrm{d} s&\leq c_2^2,\\ 
t\|(\nabla^3\boldsymbol{u},\nabla\boldsymbol{u}_t)(t)\|_{L^2}^2+\int_0^t s\|(\nabla^2\boldsymbol{u}_t,\boldsymbol{u}_{tt})\|_{L^2}^2\,\mathrm{d}s&\leq  c_3^2,\\
\|\phi^\frac{1}{\gamma-1} (t)\|_{L^1}+\|\phi(t)\|_{L^\infty }+\|(\nabla\phi,\phi_t)(t)\|_{H^1}+\|\nabla\boldsymbol{\psi},\boldsymbol{\psi}_t)(t)\|_{L^2}&\leq Cc_3^{7\gamma-2}.
\end{aligned}    
\end{equation}

\smallskip
\subsubsection{Proof of {\rm Theorem \ref{thh1}}}\label{bani} 
We  give the proof of Theorem \ref{thh1}, which  is based on the classical iteration scheme and conclusions obtained in \S \ref{uape}. Denote as in \S \ref{uape} that 
\begin{equation*}
2+\big\|\phi_0^{\frac{1}{\gamma-1}}\big\|_{L^1}+\|\nabla\phi_0\|_{H^1}+\|\boldsymbol{u}_0\|_{H^2}+\|\boldsymbol{\psi}_0\|_{D^1(\mathbb{R}^3)} \leq c_0.
\end{equation*}
Next, let $\boldsymbol{u}^0$ be the unique spherically symmetric solution of the problem in $[0,\infty)\times \mathbb{R}^n$:
\begin{equation*}
\begin{cases}
\boldsymbol{u}^0_t-\Delta \boldsymbol{u}^0=0,\\
\boldsymbol{u}^0|_{t=0}=\boldsymbol{u}_0 &\qquad\text{for $\boldsymbol{x}\in\mathbb{R}^n$},\\
\boldsymbol{u}^0(t,\boldsymbol{x})\to \boldsymbol{0} \ \ \text{as} \ \ \left|\boldsymbol{x}\right|\to \infty &\qquad\text{for $t\ge 0$}.
\end{cases}
\end{equation*}
We can obtain  the global well-posedness of $\boldsymbol{u}^0$ via the standard theory of linear parabolic equations, and then prove that $\boldsymbol{u}^0$ is spherically symmetric via an argument similar to that in the proof of Lemma \ref{lem1}. Now we can choose $T'\in (0,T^*]$ small enough such that
\begin{equation}\label{jizhu}
\begin{aligned}
\sup_{t\in[0,T']}\|\boldsymbol{u}^0\|_{H^1}^2+\int_0^{T'}\big(\| \nabla \boldsymbol{u}^0\|_{H^1}^2+\|\boldsymbol{u}_t^0\|_{L^2}^2\big)\,\mathrm{d} s\leq c_1^2,\\
\sup_{t\in[0,T']}\|(\nabla^2\boldsymbol{u}^0,\boldsymbol{u}_t^0)\|_{L^2}^2+\int_0^{T'} \|(\nabla^3\boldsymbol{u}^0,\nabla\boldsymbol{u}_t^0)\|_{L^2}^2\,\mathrm{d} s\leq c_2^2,\\
\sup_{t\in[0,T']}t\|(\nabla^3\boldsymbol{u}^0, \nabla\boldsymbol{u}_t^0)\|_{L^2}^2+\int_0^{T'} s\|(\nabla^2\boldsymbol{u}_t^0,\boldsymbol{u}_{tt}^0)\|_{L^2}^2\,\mathrm{d}s\leq c_3^2.
\end{aligned}
\end{equation}

We divide the rest of the proof into four steps.

\smallskip
\textbf{1. Existence.} 
Starting the iteration with $\boldsymbol{w}=\boldsymbol{u}^0$ in \eqref{li4}, 
we first obtain a solution $(\phi^1,\boldsymbol{u}^1,\boldsymbol{\psi}^1)$ 
of problem \eqref{li4}. 
Then we inductively construct approximate sequences $(\phi^{k+1}, \boldsymbol{u}^{k+1},\boldsymbol{\psi}^{k+1})$ as follows: Suppose that $(\phi^{k},\boldsymbol{u}^{k},\boldsymbol{\psi}^k)$ for $k\geq 1$ has been obtained. then we define $(\phi^{k+1}, \boldsymbol{u}^{k+1},\boldsymbol{\psi}^{k+1})$  by solving the following problem in $[0,T']\times\mathbb{R}^n$,
\begin{equation}\label{li6}
\begin{cases}
\displaystyle \phi^{k+1}_t+\boldsymbol{u}^{k}\cdot \nabla \phi^{k+1}+(\gamma-1)\phi^{k+1}\diver  \boldsymbol{u}^{k}=0,\\[4pt]
\displaystyle \boldsymbol{u}^{k+1}_t+\boldsymbol{u}^{k}\cdot\nabla \boldsymbol{u}^{k} +\nabla \phi^{k+1}+L\boldsymbol{u}^{k+1}=\boldsymbol{\psi}^{k+1}\cdot Q(\boldsymbol{u}^{k}),\\
\displaystyle \boldsymbol{\psi}^{k+1}_t+\sum_{l=1}^n A_l(\boldsymbol{u}^k) \partial_l\boldsymbol{\psi}^{k+1}+B(\boldsymbol{u}^k) \boldsymbol{\psi}^{k+1}+\nabla \diver \boldsymbol{u}^k=0,\\[4pt]
\displaystyle (\phi^{k+1}, \boldsymbol{u}^{k+1}, \boldsymbol{\psi}^{k+1} )|_{t=0}=(\phi_0,\boldsymbol{u}_0, \boldsymbol{\psi}_0) \ \  \ \, \quad \text{for $\boldsymbol{x}\in\mathbb{R}^n$},\\[4pt]
\displaystyle
(\phi^{k+1},\boldsymbol{u}^{k+1})\to (0,\boldsymbol{0}) \ \  \text{as} \ \ |\boldsymbol{x}|\to\infty \qquad\quad  \text{for $t\in (0,T']$}.
 \end{cases}
\end{equation}
By replacing $\boldsymbol{w}$  with $\boldsymbol{u}^k$ in \eqref{li4}, we can solve problem \eqref{li6}. Clearly, $(\phi^k, \boldsymbol{u}^k, \boldsymbol{\psi}^k)$ satisfies the uniform estimates \eqref{lgg}: for all $t\in [0,T']$ and $k\in \mathbb{N}$,
\begin{equation}\label{lgg-k}
\begin{aligned}
\|\boldsymbol{u}^k(t)\|_{H^1}^2+\int_0^t\big(\| \nabla \boldsymbol{u}^k\|_{H^1}^2+\|\boldsymbol{u}^k_t\|_{L^2}^2\big)\,\mathrm{d} s&\leq  c_1^2,\\
\|(\nabla^2\boldsymbol{u}^k,\boldsymbol{u}^k_t)(t)\|_{L^2}^2+\int_0^t \|(\nabla^3\boldsymbol{u}^k,\nabla\boldsymbol{u}^k_t)\|_{L^2}^2\,\mathrm{d} s&\leq  c_2^2,\\
t\|(\nabla^3\boldsymbol{u}^k,\nabla\boldsymbol{u}^k_t)(t)\|_{L^2}^2+\int_0^t s\|(\nabla^2\boldsymbol{u}^k_t,\boldsymbol{u}^k_{tt})\|_{L^2}^2\,\mathrm{d}s&\leq  c_3^2,\\
\big\|(\phi^k)^\frac{1}{\gamma-1}(t)\big\|_{L^1}+\|\phi^k(t)\|_{L^\infty}+\|(\nabla\phi^k,\phi_t^k)(t)\|_{H^1}+\|(\nabla\boldsymbol{\psi}^k,\boldsymbol{\psi}^k_t)(t)\|_{L^2}&\leq  Cc_3^{7\gamma-2}.
\end{aligned}    
\end{equation}

Next, we are going to prove that the whole sequence $(\phi^k,\boldsymbol{u}^k,\boldsymbol{\psi}^k)$ converges strongly to a limit $(\phi,\boldsymbol{u},\boldsymbol{\psi})$ in some Sobolev space. Let
\begin{equation*}
\hat \phi^{k+1}=\phi^{k+1}-\phi^k,\qquad  \hat{\boldsymbol{u}}^{k+1}=\boldsymbol{u}^{k+1}-\boldsymbol{u}^k,\qquad  \hat{\boldsymbol{\psi}}^{k+1}=\boldsymbol{\psi}^{k+1}-\boldsymbol{\psi}^k.
\end{equation*}
Then, by \eqref{li6}, one can deduce that
\begin{equation}\label{eq:1.2w}
\begin{cases}
\hat{\phi}^{k+1}_t+\boldsymbol{u}^k\cdot \nabla\hat{\phi}^{k+1} +(\gamma-1)\hat{\phi}^{k+1} \diver\boldsymbol{u}^k=\hat{\mathcal{R}}_1^k,\\[4pt]
\hat{\boldsymbol{u}}^{k+1}_t+ \boldsymbol{u}^k\cdot\nabla \hat{\boldsymbol{u}}^{k}+\nabla\hat\phi^{k+1} +L\hat{\boldsymbol{u}}^{k+1}=\hat{\boldsymbol{\psi}}^{k+1}\cdot Q(\boldsymbol{u}^k)+\hat{\mathcal{R}}_2^k,\\
\displaystyle\hat{\boldsymbol{\psi}}^{k+1}_t+\sum_{l=1}^nA_l(\boldsymbol{u}^k) \partial_l\hat{\boldsymbol{\psi}}^{k+1}+B(\boldsymbol{u}^k)\hat{\boldsymbol{\psi}}^{k+1}+\nabla \diver\hat{\boldsymbol{u}}^k=\hat{\mathcal{R}}^k_3,\\
(\hat{\phi}^{k+1},\hat{\boldsymbol{u}}^{k+1},\hat{\boldsymbol{\psi}}^{k+1})|_{t=0}=(0,\boldsymbol{0}, \boldsymbol{0}) \qquad\quad \ \ \,\text{for $\boldsymbol{x}\in\mathbb{R}^n$},\\[4pt]
\displaystyle (\hat\phi^{k+1},\hat{\boldsymbol{u}}^{k+1})\to (0,\boldsymbol{0}) \ \  \text{as} \ \ |\boldsymbol{x}|\to\infty \qquad\quad\text{for $t\in (0,T']$},
\end{cases}
\end{equation}
where $\hat{\mathcal{R}}_i^k$ ($i=1,2,3$) are defined by
\begin{equation}
\begin{aligned}
\hat{\mathcal{R}}^k_1&=-\hat{\boldsymbol{u}}^k\cdot\nabla\phi ^{k}-(\gamma-1)\phi^{k}\diver\hat{\boldsymbol{u}}^k,\qquad \hat{\mathcal{R}}^k_2=\boldsymbol{\psi}^{k}\cdot Q(\hat{\boldsymbol{u}}^{k})-\hat{\boldsymbol{u}}^{k} \cdot \nabla \boldsymbol{u}^{k-1},\\
\hat{\mathcal{R}}_3^k&=-\sum_{l=1}^n A_l(\hat{\boldsymbol{u}}^k)\partial_l\boldsymbol{\psi}^{k}-B(\hat{\boldsymbol{u}}^k)\boldsymbol{\psi}^{k}.
\end{aligned}
\end{equation}

\smallskip
\textbf{1.1. Estimates of  $\hat{\boldsymbol{\psi}}^{k+1}$.} First, we prove the following axillary lemma, which implies that $\hat{\boldsymbol{\psi}}^{k+1}\in L^\infty([0,T'];L^2(\mathbb{R}^n))$ for each $k\in\mathbb{N}$.

\begin{lem}\label{lpsi}
For each $k\in \mathbb{N}$, $\,\hat{\boldsymbol{\psi}}^{k+1}\in L^\infty([0,T'];L^2(\mathbb{R}^n))$.
\end{lem}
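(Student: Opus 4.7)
The plan is to derive an $L^2$-energy estimate for $\hat{\boldsymbol{\psi}}^{k+1}$ directly from $\eqref{eq:1.2w}_3$, exploiting the symmetric hyperbolic structure. Since $\hat{\boldsymbol{\psi}}^{k+1}|_{t=0}=\boldsymbol{0}$ and the equation is linear, a Gronwall argument will deliver the conclusion, provided the source terms are in $L^2$ uniformly in time.

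First, because a priori we only know $\hat{\boldsymbol{\psi}}^{k+1}\in C([0,T'];D^1(\mathbb{R}^n))$ and not that it lies in $L^2(\mathbb{R}^n)$, I would begin with a mollification/truncation in space to obtain an admissible test function, multiply $\eqref{eq:1.2w}_3$ by the regularized $\hat{\boldsymbol{\psi}}^{k+1}$, and pass to the limit. The symmetry of the matrices $A_l(\boldsymbol{u}^k)=u_l^k I$ and an integration by parts produce
\begin{equation*}
\tfrac{1}{2}\tfrac{\mathrm{d}}{\mathrm{d}t}\|\hat{\boldsymbol{\psi}}^{k+1}\|_{L^2}^2 \leq C\|\nabla \boldsymbol{u}^k\|_{L^\infty}\|\hat{\boldsymbol{\psi}}^{k+1}\|_{L^2}^2 +\|\hat{\boldsymbol{\psi}}^{k+1}\|_{L^2}\bigl(\|\nabla\diver\hat{\boldsymbol{u}}^k\|_{L^2}+\|\hat{\mathcal{R}}_3^k\|_{L^2}\bigr).
\end{equation*}
The uniform bounds \eqref{lgg-k} immediately control $\|\nabla\boldsymbol{u}^k\|_{L^\infty}\in L^1_t$ (via $H^2\hookrightarrow L^\infty$ in $2$-D or Lemmas \ref{ale1} and \ref{lemma-L6} in $3$-D), and $\|\nabla\diver\hat{\boldsymbol{u}}^k\|_{L^2}\leq 2c_2$.

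The main obstacle is bounding $\hat{\mathcal{R}}_3^k=-\sum_{l=1}^nA_l(\hat{\boldsymbol{u}}^k)\partial_l\boldsymbol{\psi}^k-B(\hat{\boldsymbol{u}}^k)\boldsymbol{\psi}^k$ in $L^2$. The transport-type piece is easy: $\|A_l(\hat{\boldsymbol{u}}^k)\partial_l\boldsymbol{\psi}^k\|_{L^2}\leq \|\hat{\boldsymbol{u}}^k\|_{L^\infty}\|\nabla\boldsymbol{\psi}^k\|_{L^2}$, which is bounded by \eqref{lgg-k} and the Sobolev embedding $H^2\hookrightarrow L^\infty$. The dangerous piece is $B(\hat{\boldsymbol{u}}^k)\boldsymbol{\psi}^k=(\nabla\hat{\boldsymbol{u}}^k)^\top\boldsymbol{\psi}^k$, because $\boldsymbol{\psi}^k$ is only known to belong to $D^1(\mathbb{R}^n)$, and not to any $L^p$ by itself in the classical theory. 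This is precisely where the spherical symmetry is crucial: from Appendix \ref{improve-sobolev}, Lemma \ref{Hk-Ck-vector} gives $\|\boldsymbol{\psi}^k\|_{L^\infty(\mathbb{R}^2)}\leq C\|\boldsymbol{\psi}^k\|_{D^1}$ when $n=2$, while Lemma \ref{lemma-L6} gives $\|\boldsymbol{\psi}^k\|_{L^6(\mathbb{R}^3)}\leq C\|\boldsymbol{\psi}^k\|_{D^1}$ when $n=3$.

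With these embeddings, H\"older's inequality closes the estimate: in $2$-D, $\|B(\hat{\boldsymbol{u}}^k)\boldsymbol{\psi}^k\|_{L^2}\leq \|\boldsymbol{\psi}^k\|_{L^\infty(\mathbb{R}^2)}\|\nabla\hat{\boldsymbol{u}}^k\|_{L^2}\leq Cc_3^{7\gamma-2}c_2$; in $3$-D, $\|B(\hat{\boldsymbol{u}}^k)\boldsymbol{\psi}^k\|_{L^2}\leq \|\boldsymbol{\psi}^k\|_{L^6(\mathbb{R}^3)}\|\nabla\hat{\boldsymbol{u}}^k\|_{L^3(\mathbb{R}^3)}\leq C\|\boldsymbol{\psi}^k\|_{D^1}\|\nabla\hat{\boldsymbol{u}}^k\|_{L^2}^{1/2}\|\nabla^2\hat{\boldsymbol{u}}^k\|_{L^2}^{1/2}$, which again is controlled by \eqref{lgg-k}. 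Consequently $\|\hat{\mathcal{R}}_3^k\|_{L^2}^2$ and $\|\nabla\diver\hat{\boldsymbol{u}}^k\|_{L^2}^2$ are $L^\infty$ in time on $[0,T']$, and applying Gronwall's inequality with $\hat{\boldsymbol{\psi}}^{k+1}(0)=\boldsymbol{0}$ yields $\hat{\boldsymbol{\psi}}^{k+1}\in L^\infty([0,T'];L^2(\mathbb{R}^n))$, completing the proof.
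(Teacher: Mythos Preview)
Your approach is essentially the same as the paper's: localize, exploit the symmetric hyperbolic structure for an $L^2$-energy estimate on $\hat{\boldsymbol{\psi}}^{k+1}$, control the sources via the spherical-symmetry embeddings of Lemmas \ref{Hk-Ck-vector} and \ref{lemma-L6}, and close by Gronwall with zero initial data.

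There is one point you glossed over that the paper handles explicitly. When you truncate with a cutoff $\zeta_R$ and pass to the equation for $\hat{\boldsymbol{\psi}}^{k+1}_{(R)}:=\zeta_R\hat{\boldsymbol{\psi}}^{k+1}$, an extra commutator term $\hat{\boldsymbol{\psi}}^{k+1}(\boldsymbol{u}^k\cdot\nabla\zeta_R)$ appears on the right-hand side (see \eqref{gga}). To bound its $L^2$-norm uniformly in $R$ you need control of $\hat{\boldsymbol{\psi}}^{k+1}$ itself, not just $\boldsymbol{\psi}^k$; the paper uses the same spherical-symmetry embeddings applied to $\hat{\boldsymbol{\psi}}^{k+1}$ (namely $\|\hat{\boldsymbol{\psi}}^{k+1}\|_{L^\infty(\mathbb{R}^2)}\le C\|\hat{\boldsymbol{\psi}}^{k+1}\|_{D^1}$ or $\|\hat{\boldsymbol{\psi}}^{k+1}\|_{L^6(\mathbb{R}^3)}\le C\|\hat{\boldsymbol{\psi}}^{k+1}\|_{D^1}$), which is available since $\hat{\boldsymbol{\psi}}^{k+1}\in C([0,T'];D^1(\mathbb{R}^n))$. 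Once this commutator is bounded, Gronwall gives $\|\hat{\boldsymbol{\psi}}^{k+1}_{(R)}(t)\|_{L^2}\le C(c_0,T')$ uniformly in $R$, and Fatou's lemma yields the conclusion.
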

\begin{proof}
Let $\zeta(\boldsymbol{x})\in C_{\rm c}^\infty(\mathbb{R}^n)$ be a test function satisfying 
\begin{equation}
0\le \zeta(\boldsymbol{x})\le 1, \quad \,\,
\zeta(\boldsymbol{x})=\begin{cases}
1&\text{if } 0\leq |\boldsymbol{x}|\leq 1,\\
0&\text{if } |\boldsymbol{x}|\geq 2.
\end{cases}
\end{equation}
Define $\zeta_R(\boldsymbol{x})=\zeta(\frac{\boldsymbol{x}}{R})$ and $  \hat{\boldsymbol{\psi}}^{k+1}_{(R)}=\hat{\boldsymbol{\psi}}^{k+1}\zeta_R$ for $R\geq 1$. Then it follows from
$\eqref{eq:1.2w}_3$ that 
\begin{equation}\label{gga}
\big(\hat{\boldsymbol{\psi}}^{k+1}_{(R)}\big)_t+\sum_{l=1}^nA_l(\boldsymbol{u}^k)\partial_l\hat{\boldsymbol{\psi}}^{k+1}_{(R)}+B(\boldsymbol{u}^k)\hat{\boldsymbol{\psi}}^{k+1}_{(R)}+\zeta_R\nabla\diver\hat{\boldsymbol{u}}^k=\zeta_R\hat{\mathcal{R}}^k_3+\hat{\boldsymbol{\psi}}^{k+1}(\boldsymbol{u}^k\cdot\nabla \zeta_R).
\end{equation}

Multiplying \eqref{gga} by $2\hat{\boldsymbol{\psi}}^{k+1}_{(R)}$ and integrating the resulting equality over $\mathbb{R}^n$, we obtain from \eqref{lgg-k}, Lemmas \ref{ale1}, \ref{lemma-L6}, and \ref{Hk-Ck-vector}, and the H\"older inequality that
\begin{equation}\label{xxx}
\begin{split}
\frac{\mathrm{d}}{\mathrm{d}t}\big\|\hat{\boldsymbol{\psi}}^{k+1}_{(R)}\big\|_{L^2}&\leq C \|\nabla \boldsymbol{u}^k\|_{L^\infty}\big\|\hat{\boldsymbol{\psi}}^{k+1}_{(R)}\big\|_{L^2}+C\|\nabla^2\hat{\boldsymbol{u}}^k\|_{L^2}+C\|\nabla\boldsymbol{\psi}^k\|_{L^2}\|\hat{\boldsymbol{u}}^k\|_{L^\infty}\\
&\quad +C\delta_{2n}\|\boldsymbol{\psi}^k\|_{L^\infty(\mathbb{R}^2)}\|\nabla\hat{\boldsymbol{u}}^k\|_{L^2} +C\delta_{3n}\|\boldsymbol{\psi}^k\|_{L^6(\mathbb{R}^3)}\|\nabla\hat{\boldsymbol{u}}^k\|_{L^3(\mathbb{R}^3)}\\
&\quad +C\delta_{2n}\|\boldsymbol{u}^k\|_{L^2}\|\hat{\boldsymbol{\psi}}^{k+1}\|_{L^\infty(\mathbb{R}^2)}+C\delta_{3n}\|\boldsymbol{u}^k\|_{L^3(\mathbb{R}^3)}\|\hat{\boldsymbol{\psi}}^{k+1}\|_{L^6(\mathbb{R}^3)}\\
&\leq C(c_0)\big(\big\|\hat{\boldsymbol{\psi}}^{k+1}_{(R)}\big\|_{L^2}+1\big),
\end{split}
\end{equation}
where $C(c_0)>0$ is a constant depending only on $(C,c_0)$. Then applying the Gr\"onwall inequality to \eqref{xxx} gives that, for all $t\in [0,T']$ and $R\geq 1$,
\begin{equation*}
\|\hat{\boldsymbol{\psi}}^{k+1}_{(R)}(t)\|_{L^2}\leq C(c_0) T'\exp\big(C(c_0)T'\big),    
\end{equation*}
which, along with Lemma \ref{Fatou}, yields that 
\begin{equation}\label{dah}
\hat{\boldsymbol{\psi}}^{k+1}\in L^\infty([0,T'];L^2(\mathbb{R}^n)).
\end{equation}

The proof of Lemma \ref{lpsi} is completed.
\end{proof}

Now we turn to derive the uniform estimate of $\hat{\boldsymbol{\psi}}^{k+1}$. Multiplying $(\ref{eq:1.2w})_3$ by $2\hat{\boldsymbol{\psi}}^{k+1}$ and integrating the resulting equality over $\mathbb{R}^n$, we see from \eqref{lgg-k}, Lemmas \ref{ale1}, \ref{lemma-L6}, and \ref{Hk-Ck-vector}, and the  H\"older and Young inequalities that
\begin{equation}\label{go64aa}
\begin{aligned}
\frac{\mathrm{d}}{\mathrm{d}t}\|\hat{\boldsymbol{\psi}}^{k+1}\|_{L^2}^2&\leq  C \big(\|\nabla\boldsymbol{u}^k\|_{L^\infty} \|\hat{\boldsymbol{\psi}}^{k+1}\|_{L^2}\!+\!\|\nabla^2 \hat{\boldsymbol{u}}^k\|_{L^2}\!+\!\|\nabla \boldsymbol{\psi}^k\|_{L^2}\|\hat{\boldsymbol{u}}^k\|_{L^\infty}\big)\|\hat{\boldsymbol{\psi}}^{k+1}\|_{L^2}\\
&\quad+C\delta_{2n}\|\boldsymbol{\psi}^k\|_{L^\infty(\mathbb{R}^2)}\|\nabla\hat{\boldsymbol{u}}^k\|_{L^2(\mathbb{R}^2)}\|\hat{\boldsymbol{\psi}}^{k+1}\|_{L^2}\\
&\quad+C\delta_{3n}\|\boldsymbol{\psi}^k\|_{L^6(\mathbb{R}^3)}\|\nabla\hat{\boldsymbol{u}}^k\|_{L^3(\mathbb{R}^3)}\|\hat{\boldsymbol{\psi}}^{k+1}\|_{L^2}\\
&\leq C(c_0)\big(\omega^{-1}+t^{-\frac{1}{2}}\big)\|\hat{\boldsymbol{\psi}}^{k+1}\|_{L^2}^2+\omega\big(\|\hat{\boldsymbol{u}}^k\|_{L^2}^2+\|\nabla\hat{\boldsymbol{u}}^k\|_{H^1}^2\big)
\end{aligned}
\end{equation}
for $t\in[0,T']$, where $C(c_0)$ is a constant depending only on $(C,c_0)$ and $\omega \in (0,1)$ is a constant to be determined later.

\smallskip
\textbf{1.2. Estimates of $\hat\phi^{k+1}$.} First, multiplying $(\ref{eq:1.2w})_1$ by $2\hat\phi^{k+1}$ and then integrating the resulting equality over $\mathbb{R}^n$, 
we obtain from \eqref{lgg-k}, Lemma \ref{ale1}, and the H\"older and Young inequalities that 
\begin{equation}\label{fly1}
\begin{aligned}
\frac{\mathrm{d}}{\mathrm{d}t}\|\hat\phi^{k+1}\|_{L^2}^2&\leq  C\|\nabla \boldsymbol{u}^k\|_{L^\infty}\|\hat\phi^{k+1}\|_{L^2}^2+ C\|\hat{\boldsymbol{u}}^k\|_{L^6}\|\nabla \phi^k\|_{L^3}\|\hat\phi^{k+1}\|_{L^2}\\
&\quad+C\|\nabla\hat{\boldsymbol{u}}^k\|_{L^2}\|\phi^k\|_{L^\infty}\|\hat\phi^{k+1}\|_{L^2}\\
&\leq C(c_0)\big(\omega^{-1}+t^{-\frac{1}{2}}\big)\|\hat\phi^{k+1}\|_{L^2}^2+\omega\big(\|\hat{\boldsymbol{u}}^k\|_{L^2}^2+\|\nabla\hat{\boldsymbol{u}}^k\|_{L^2}^2\big).
\end{aligned}
\end{equation}
Next, applying $2\partial^\varsigma\hat{\phi}^{k+1}\partial^{\varsigma}$ with the multi-index $|\varsigma|=1$ to $(\ref{eq:1.2w})_{1}$ and  integrating the resulting equality over $\mathbb{R}^n$, we similarly obtain 
\begin{equation}\label{fly2}
\begin{aligned}
\frac{\mathrm{d}}{\mathrm{d}t}\|\partial^\varsigma\hat\phi^{k+1}\|_{L^2}^2
&\leq C\big(\|\nabla \boldsymbol{u}^k\|_{L^\infty} \|\nabla \hat{\phi}^{k+1}\|_{L^2}+\|\hat \phi^{k+1}\|_{L^6} \|\nabla^2\boldsymbol{u}^k\|_{L^3}\big)\|\nabla \hat{\phi}^{k+1}\|_{L^2}\\
&\quad+C\big(\|\hat{\boldsymbol{u}}^k\|_{L^\infty}\|\nabla^2 \phi^{k}\|_{L^2}+ \|\nabla\phi^{k}\|_{L^6}\|\nabla \hat{\boldsymbol{u}}^k\|_{L^3}\big)\|\nabla \hat{\phi}^{k+1}\|_{L^2}\\
&\quad +C\|\phi^k\|_{L^\infty}\|\nabla^2\hat{\boldsymbol{u}}^k\|_{L^2}\|\nabla \hat{\phi}^{k+1}\|_{L^2},
\end{aligned}
\end{equation}
which, along with  \eqref{lgg-k}, \eqref{fly1}, Lemma \ref{ale1}, 
and the Young inequality, yields  that, for $ t\in[0,T']$ and $\omega\in (0,1)$,
\begin{equation}\label{fly3}
\frac{\mathrm{d}}{\mathrm{d}t}\| \hat{\phi}^{k+1}\|^2_{H^1}\leq  C(c_0)\big(\omega^{-1}+t^{-\frac{1}{2}}\big)\|\hat{\phi}^{k+1}\|^2_{H^1}+\omega\big(\|\hat{\boldsymbol{u}}^k\|_{L^2}^2+\|\nabla\hat{\boldsymbol{u}}^k\|_{H^1}^2\big).
\end{equation}

\smallskip
\textbf{1.3. Estimates of $\hat{\boldsymbol{u}}^{k+1}$.} Multiplying $(\ref{eq:1.2w})_2$ by $2\hat{\boldsymbol{u}}^{k+1}$ and integrating the resulting equality over $\mathbb{R}^n$, we obtain from Lemmas \ref{ale1}--\ref{GN-ineq}, \ref{lemma-L6}, and \ref{Hk-Ck-vector}, and the H\"older inequality that 
\begin{align*}
&\frac{\mathrm{d}}{\mathrm{d}t}\|\hat{\boldsymbol{u}}^{k+1}\|_{L^2}^2+2\alpha\|\nabla\hat{\boldsymbol{u}}^{k+1}\|_{L^2}^2+2\alpha\|\diver\hat{\boldsymbol{u}}^{k+1}\|_{L^2}^2\notag\\
&\leq C\|\boldsymbol{u}^k\|_{L^\infty} \|\nabla\hat{\boldsymbol{u}}^{k}\|_{L^2}\|\hat{\boldsymbol{u}}^{k+1}\|_{L^2}+C\|\hat{\boldsymbol{u}}^k\|_{L^6} \|\nabla \boldsymbol{u}^{k-1}\|_{L^3}\|\hat{\boldsymbol{u}}^{k+1}\|_{L^2}\notag\\
&\quad +C\|\hat{\boldsymbol{\psi}}^{k+1}\|_{L^2} \|\nabla \boldsymbol{u}^{k}\|_{L^6}\|\hat{\boldsymbol{u}}^{k+1}\|_{L^3}+C\delta_{2n}\|\boldsymbol{\psi}^{k}\|_{L^\infty(\mathbb{R}^2)} \|\nabla\hat{\boldsymbol{u}}^{k}\|_{L^2}\|\hat{\boldsymbol{u}}^{k+1}\|_{L^2}\notag\\
&\quad +C\delta_{3n}\|\boldsymbol{\psi}^{k}\|_{L^6(\mathbb{R}^3)} \|\nabla\hat{\boldsymbol{u}}^{k}\|_{L^2}\|\hat{\boldsymbol{u}}^{k+1}\|_{L^3(\mathbb{R}^3)}+C\|\nabla \hat{\boldsymbol{u}}^{k+1}\|_{L^2} \|\hat\phi^{k+1}\|_{L^2}\notag\\
&\leq C\|\boldsymbol{u}^k\|_{H^2} \|\nabla\hat{\boldsymbol{u}}^{k}\|_{L^2}\|\hat{\boldsymbol{u}}^{k+1}\|_{L^2} +C\|\hat{\boldsymbol{u}}^k\|_{H^1} \|\nabla \boldsymbol{u}^{k-1}\|_{H^1}\|\hat{\boldsymbol{u}}^{k+1}\|_{L^2}\\
&\quad +C\|\hat{\boldsymbol{\psi}}^{k+1}\|_{L^2} \|\nabla \boldsymbol{u}^{k}\|_{H^1}\|\hat{\boldsymbol{u}}^{k+1}\|_{L^2}^\frac{6-n}{6}\|\nabla\hat{\boldsymbol{u}}^{k+1}\|_{L^2}^\frac{n}{6}\notag\\
&\quad +C\delta_{2n}\|\boldsymbol{\psi}^{k}\|_{D^1} \|\nabla\hat{\boldsymbol{u}}^{k}\|_{L^2}\|\hat{\boldsymbol{u}}^{k+1}\|_{L^2}\notag\\
&\quad +C\delta_{3n}\|\boldsymbol{\psi}^{k}\|_{D^1} \|\nabla\hat{\boldsymbol{u}}^{k}\|_{L^2}\|\nabla\hat{\boldsymbol{u}}^{k+1}\|_{L^2}^\frac{1}{2}\|\hat{\boldsymbol{u}}^{k+1}\|_{L^2}^\frac{1}{2}  +C \|\nabla \hat{\boldsymbol{u}}^{k+1}\|_{L^2}\|\hat\phi^{k+1}\|_{L^2},\notag
\end{align*}
which, along with \eqref{lgg-k} and the Young inequality, implies that 
\begin{equation}\label{ghbbb}
\begin{aligned}
&\frac{\mathrm{d}}{\mathrm{d}t}\|\hat{\boldsymbol{u}}^{k+1}\|_{L^2}^2+\alpha\|\nabla\hat{\boldsymbol{u}}^{k+1}\|_{L^2}^2\\
& \leq C(c_0) \omega^{-1} \big(\|\hat{\boldsymbol{u}}^{k+1}\|_{L^2}^2+\|\hat\phi^{k+1}\|_{L^2}^2+\|\hat{\boldsymbol{\psi}}^{k+1}\|_{L^2}^2\big)+\omega\big(\|\hat{\boldsymbol{u}}^k\|_{L^2}^2+\|\nabla\hat{\boldsymbol{u}}^k\|_{L^2}^2\big).    
\end{aligned}
\end{equation}

Similarly, applying $2\partial^\varsigma\hat{\boldsymbol{u}}^{k+1}\partial^{\varsigma}$ with the multi-index $|\varsigma|=1$ to $(\ref{eq:1.2w})_2$ and then integrating the resulting equality over $\mathbb{R}^n$, we have 
\begin{align*}
&\frac{\mathrm{d}}{\mathrm{d}t}\|\partial^\varsigma\hat{\boldsymbol{u}}^{k+1}\|^2_{L^2}+2\alpha\|\partial^\varsigma\nabla \hat{\boldsymbol{u}}^{k+1} \|^2_{L^2}+2\alpha\| \partial^\varsigma\!\diver \hat{\boldsymbol{u}}^{k+1}\|^2_{L^2}\\
&\leq C \|(\nabla \boldsymbol{u}^{k-1},\nabla \boldsymbol{u}^k)\|_{L^6} \|\nabla\hat{\boldsymbol{u}}^{k}\|_{L^3}\|\nabla \hat{\boldsymbol{u}}^{k+1}\|_{L^2} +\|\boldsymbol{u}^k\|_{L^\infty} \|\nabla^2\hat{\boldsymbol{u}}^{k}\|_{L^2}\|\nabla \hat{\boldsymbol{u}}^{k+1}\|_{L^2}\\
&\quad  +C\|\hat{\boldsymbol{u}}^{k}\|_{L^\infty} \|\nabla^2 \boldsymbol{u}^{k-1}\|_{L^2} \|\nabla \hat{\boldsymbol{u}}^{k+1}\|_{L^2}+C\|\hat{\boldsymbol{\psi}}^{k+1}\|_{L^2} \|\nabla\boldsymbol{u}^{k}\|_{L^\infty}\|\partial^\varsigma\nabla \hat{\boldsymbol{u}}^{k+1}\|_{L^2}\\
&\quad +C\delta_{2n} \|\boldsymbol{\psi}^{k}\|_{L^\infty(\mathbb{R}^2)} \|\nabla^2 \hat{\boldsymbol{u}}^{k}\|_{L^2} \|\nabla \hat{\boldsymbol{u}}^{k+1}\|_{L^2} +C\delta_{3n} \|\boldsymbol{\psi}^{k}\|_{L^6(\mathbb{R}^3)} \|\nabla^2 \hat{\boldsymbol{u}}^{k}\|_{L^2} \|\nabla \hat{\boldsymbol{u}}^{k+1}\|_{L^3(\mathbb{R}^3)} \\
&\quad +C\|\nabla\boldsymbol{\psi}^{k}\|_{L^2} \|\nabla \hat{\boldsymbol{u}}^{k}\|_{L^6} \|\nabla \hat{\boldsymbol{u}}^{k+1}\|_{L^3}+C\|\nabla\hat\phi^{k+1}\|_{L^2}\| \partial^\varsigma\nabla\hat{\boldsymbol{u}}^{k+1}\|_{L^2}\\
&\leq C \|(\nabla \boldsymbol{u}^{k-1},\nabla \boldsymbol{u}^k)\|_{H^1}\|\nabla\hat{\boldsymbol{u}}^{k}\|_{H^1} \|\nabla \hat{\boldsymbol{u}}^{k+1}\|_{L^2}  +\|\boldsymbol{u}^k\|_{H^2} \|\nabla^2\hat{\boldsymbol{u}}^{k}\|_{L^2}\|\nabla \hat{\boldsymbol{u}}^{k+1}\|_{L^2}\\
& \quad +C\|\hat{\boldsymbol{u}}^{k}\|_{H^2} \|\nabla^2 \boldsymbol{u}^{k-1}\|_{L^2} \|\nabla \hat{\boldsymbol{u}}^{k+1}\|_{L^2} \!+\!C\|\hat{\boldsymbol{\psi}}^{k+1}\|_{L^2} \|\nabla\boldsymbol{u}^{k}\|_{L^2}^\frac{4-n}{4}\|\nabla^3\boldsymbol{u}^{k}\|_{L^2}^\frac{n}{4}\|\partial^\varsigma\nabla \hat{\boldsymbol{u}}^{k+1}\|_{L^2}\\
&\quad +C\|\boldsymbol{\psi}^{k}\|_{D^1} \|\nabla^2\hat{\boldsymbol{u}}^{k}\|_{L^2}\big(\delta_{2n}\|\nabla \hat{\boldsymbol{u}}^{k+1}\|_{L^2}+\delta_{3n}\|\nabla^2\hat{\boldsymbol{u}}^{k+1}\|_{L^2}^\frac{1}{2} \|\nabla \hat{\boldsymbol{u}}^{k+1}\|_{L^2}^\frac{1}{2}\big)\\
&\quad +C\|\nabla\boldsymbol{\psi}^{k}\|_{L^2} \|\nabla\hat{\boldsymbol{u}}^{k}\|_{H^1}\|\nabla^2 \hat{\boldsymbol{u}}^{k+1}\|_{L^2}^\frac{n}{6} \|\nabla \hat{\boldsymbol{u}}^{k+1}\|_{L^2}^\frac{6-n
}{6} +C\|\nabla\hat\phi^{k+1}\|_{L^2}\|\partial^\varsigma\nabla\hat{\boldsymbol{u}}^{k+1}\|_{L^2},
\end{align*}
which, along with \eqref{lgg-k}, \eqref{ghbbb}, 
and  the Young inequality, yields 
that 
\begin{equation}\label{gogo13}
\begin{aligned}
&\frac{\mathrm{d}}{\mathrm{d}t}\|\hat{\boldsymbol{u}}^{k+1}\|^2_{H^1}+\alpha \|\nabla \hat{\boldsymbol{u}}^{k+1} \|^2_{H^1}\\
&\leq  C(c_0)\big(\omega^{-1}\!+\!t^{-\frac{n}{4}}\big) \big(\|\hat{\boldsymbol{u}}^{k+1}\|^2_{H^1}\!+\!\|\hat{\phi}^{k+1}\|^2_{H^1}+ \|\hat{\boldsymbol{\psi}}^{k+1}\|^2_{L^2}\big)\!+\!\omega\big(\|\hat{\boldsymbol{u}}^k\|_{L^2}^2\!+\!\|\nabla\hat{\boldsymbol{u}}^k\|_{H^1}^2\big),
\end{aligned}    
\end{equation}
for $ t\in[0,T']$ and $\omega\in (0,1)$.

\smallskip
\textbf{1.4.} Finally, define the energy function:
\begin{equation*}\begin{split}
\hat{\mathcal{E}}^{k+1}(t)=&\sup_{\tau\in [0,t]} \|\hat{\phi}^{k+1}(\tau)\|^2_{H^1}+\sup_{\tau\in [0,t]}\|\hat{\boldsymbol{u}}^{k+1}(\tau)\|^2_{H^1}+\sup_{\tau\in [0,t]}\|\hat{\boldsymbol{\psi}}^{k+1}(\tau)\|^2_{L^2}.
\end{split}
\end{equation*}
Collecting \eqref{go64aa}, \eqref{fly3}, and \eqref{gogo13} yields that
\begin{equation*}
\frac{\mathrm{d}}{\mathrm{d}t}\hat{\mathcal{E}}^{k+1}+\alpha \|\nabla\hat{\boldsymbol{u}}^{k+1}\|^2_{H^1}\leq  C(c_0)\big(\omega^{-1}+t^{-\frac{1}{2}}+t^{-\frac{n}{4}}\big)\hat{\mathcal{E}}^{k+1}+\omega\hat{\mathcal{E}}^{k}+\omega \|\nabla\hat{\boldsymbol{u}}^{k}\|^2_{H^1} .
\end{equation*}
which, along with the Gr\"onwall inequality, implies that 
\begin{equation}\label{eee}
\begin{split}
&\hat{\mathcal{E}}^{k+1}(t)+\alpha\int_{0}^{t}\|\nabla\hat{\boldsymbol{u}}^{k+1}\|^2_{H^1}\mathrm{d}s\\
& \leq C(c_0)\exp\Big(C(c_0)\big(\frac{t}{\omega}+t^\frac{1}{2}+t^\frac{4-n}{4}\big)\Big)\Big(\omega t\hat{\mathcal{E}}^{k}(t)+\omega\int_{0}^{t}\|\nabla\hat{\boldsymbol{u}}^{k}\|^2_{H^1}\mathrm{d}s\Big).
\end{split}
\end{equation}

Choose $\omega\in (0,1)$ and $T_* \in (0,\min\{1,T'\})$ small enough such that
\begin{equation*}
\omega=T_*^\frac{1}{2}, \qquad  \exp\Big(C(c_0)\big(T_*^\frac{1}{2}+T_*^\frac{4-n}{4}\big)\Big)T_*^\frac{1}{2}\leq \min\big\{\frac{1}{4},\frac{\alpha}{4}\big\}.    
\end{equation*}
Then we conclude from \eqref{eee} that 
\begin{equation*}\begin{split}
\sum_{k=0}^\infty\Big(\hat{\mathcal{E}}^{k+1}(T_*)+ \alpha\int_{0}^{T_*}\|\nabla\hat{\boldsymbol{u}}^{k+1}\|^2_{H^1}\mathrm{d}s\Big)\leq C(c_0)<\infty,
\end{split}
\end{equation*}
which implies that the whole sequence $(\phi^k,\boldsymbol{u}^k,\boldsymbol{\psi}^k)$ converges to some limit $(\phi,\boldsymbol{u},\boldsymbol{\psi})$ in the following sense as $k\to\infty$: 
\begin{equation}\label{str}
\begin{split}
&\phi^k\rightarrow \phi \ \ \ \text{in} \ \  L^\infty([0,T_*];H^1(\mathbb{R}^n)),\qquad \boldsymbol{u}^k\rightarrow \boldsymbol{u} \ \ \text{in} \ \  L^\infty ([0,T_*];H^1(\mathbb{R}^n)),\\
&\boldsymbol{\psi}^k\rightarrow \boldsymbol{\psi} \ \  \text{in} \ \  L^\infty([0,T_*];L^2(B_R)) \qquad \text{for any $R>0$},
\end{split}
\end{equation}
where $B_R=\{\boldsymbol{x}\in\mathbb R^n: |\boldsymbol{x}|\leq R\}$. 

On the other hand, by virtue of the uniform estimates \eqref{lgg-k}, there exists a subsequence (still denoted by) $(\phi^k, \boldsymbol{u}^k,\boldsymbol{\psi}^k)$ converging to the limit $(\phi, \boldsymbol{u},\boldsymbol{\psi})$ in the weak or weak*  sense. According to the lower semi-continuity of norms, the corresponding estimates in \eqref{lgg} for $(\phi,\boldsymbol{u},\boldsymbol{\psi})$ still hold. Therefore, it is direct to show that $(\phi,\boldsymbol{u},\boldsymbol{\psi})$ is a weak solution of system \eqref{eqn1} in the sense of distributions and 
satisfies the following regularities:
\begin{equation}\label{rjkqq}
\begin{aligned}
&\phi^\frac{1}{\gamma-1} \in L^\infty([0,T_*];L^{1}(\mathbb{R}^n)),\\[1mm]
&(\nabla\phi,\phi_t) \in L^\infty([0,T_*];H^1(\mathbb{R}^n)), \ \ (\nabla \boldsymbol{\psi}, \boldsymbol{\psi}_t) \in L^\infty([0,T_*]; L^2(\mathbb{R}^n)),\\[1mm]
&\partial_t^l\boldsymbol{u}\in L^\infty([0,T_*]; H^{2-2l}(\mathbb{R}^n))\cap L^2([0,T_*] ; D^{3-2l}(\mathbb{R}^n)), \ \ l=0,1,\\[1mm]
&\sqrt{t}\partial_t^l\boldsymbol{u}\in L^{\infty}([0,T_*]; D^{3-2l}(\mathbb{R}^n)),\quad \sqrt{t} \partial_t^{l+1}\boldsymbol{u}\in L^2([0,T_*]; D^{2-2l}(\mathbb{R}^n)), \ \ l=0,1. 
\end{aligned}
\end{equation}

\smallskip
\textbf{2. Uniqueness.}   Let $(\phi_1,\boldsymbol{u}_1,\boldsymbol{\psi}_1)$ and $(\phi_2,\boldsymbol{u}_2,\boldsymbol{\psi}_2)$ be two regular solutions of 
problem \eqref{eqn1} satisfying the uniform estimates in \eqref{lgg}. Set
\begin{equation*}
\hat{\phi}=\phi_1-\phi_2,\qquad\hat{\boldsymbol{u}}=\boldsymbol{u}_1-\boldsymbol{u}_2,\qquad  \hat{\boldsymbol{\psi}}=\boldsymbol{\psi}_1-\boldsymbol{\psi}_2.
\end{equation*}
Then $(\hat{\phi},\hat{\boldsymbol{u}},\hat{\boldsymbol{\psi}})$ satisfies $(\hat{\phi},\hat{\boldsymbol{u}},\hat{\boldsymbol{\psi}})|_{t=0}=(0,\boldsymbol{0},\boldsymbol{0})$ and the system:
\begin{equation}\label{zhuzhu}
\begin{cases}
\hat{\phi}_t+\boldsymbol{u}_1\cdot \nabla\hat{\phi} +(\gamma-1)\hat{\phi}\diver \boldsymbol{u}_1 =\hat{\mathcal{R}}_1,\\[2pt]
\hat{\boldsymbol{u}}_t+ \boldsymbol{u}_1\cdot\nabla \hat{\boldsymbol{u}}+\nabla\hat\phi +L\hat{\boldsymbol{u}}=\boldsymbol{\psi}_1\cdot Q(\hat{\boldsymbol{u}})+\hat{\mathcal{R}}_2,\\[2pt]
\hat{\boldsymbol{\psi}}_t+ \boldsymbol{u}_1\cdot\nabla\hat{\boldsymbol{\psi}}+\nabla\boldsymbol{u}_1\cdot\hat{\boldsymbol{\psi}}+\nabla \diver \hat{\boldsymbol{u}}=\hat{\mathcal{R}}_3,
\end{cases}
\end{equation}
where $\hat{\mathcal{R}}_i$ $(i=1,2,3)$ are defined by
\begin{align*}
\hat{\mathcal{R}}_1&=-\hat{\boldsymbol{u}}\cdot\nabla\phi_{2}-(\gamma-1)\phi_{2}\diver \hat{\boldsymbol{u}},\qquad  \hat{\mathcal{R}}_2 =\hat{\boldsymbol{\psi}}\cdot Q(\boldsymbol{u}_2)- \hat{\boldsymbol{u}}\cdot \nabla \boldsymbol{u}_{2},\\ 
\hat{\mathcal{R}}_3&=-\sum_{l=1}^n A_l(\hat{\boldsymbol{u}})\partial_l\boldsymbol{\psi}_{2}-B(\hat{\boldsymbol{u}})\boldsymbol{\psi}_{2}.
\end{align*}

Next, let
\begin{equation*}
\hat{\mathcal{E}}(t)=\sup_{s\in[0,t]}\|\hat{\phi}(s)\|^2_{H^1}+\sup_{s\in[0,t]}\|\hat{\boldsymbol{u}}(s)\|^2_{H^1}+\sup_{s\in[0,t]}\|\hat{\boldsymbol{\psi}}(s)\|^2_{ L^2}.
\end{equation*}
Similarly to the arguments in Steps 1.1--1.4 above, we can show that 
\begin{equation}\label{gonm}\begin{split}
\frac{\mathrm{d}}{\mathrm{d}t}\hat{\mathcal{E}}(t)+\|\nabla \hat{\boldsymbol{u}}(t)\|^2_{H^1}\leq F(t)\hat{\mathcal{E}}(t) \qquad \text{for some $F(t)\in L^1(0,T_*)$},
\end{split}
\end{equation}
which, along with the Gr\"onwall inequality and $\hat{\mathcal{E}}(0)=0$, leads to $\hat{\mathcal{E}}(t)=0$ for all $t\in [0,T_*]$, that is, $(\hat{\phi},\hat{\boldsymbol{u}},\hat{\boldsymbol{\psi}})=(0,\boldsymbol{0},\boldsymbol{0})$, which leads to the uniqueness. Moreover, by the same argument used in Step 2 of the proof 
for Lemma \ref{lem1}, we see that \eqref{psi=log-phi-0} holds.

\medskip
\textbf{3. Time-continuity and spherical symmetry of the solutions.} The time-continuity follows directly from the same arguments as in Steps 2--3 in \S\ref{solo}, except for replacing $T$ with $T_*$.  To show that $(\phi,\boldsymbol{u},\boldsymbol{\psi})$ is spherically symmetric, taking form \eqref{1.9'}, let $\mathcal{O}\in \mathrm{SO}(n)$ and set 
\begin{equation*}
\hat\phi(t,\boldsymbol{x})=\phi(t,\mathcal{O}\boldsymbol{x}),\qquad \hat{\boldsymbol{u}}(t,\boldsymbol{x})=\mathcal{O}^\top \boldsymbol{u}(t,\mathcal{O}\boldsymbol{x}),\qquad \hat{\boldsymbol{\psi}}(t,\boldsymbol{x})=\mathcal{O}^\top \boldsymbol{\psi}(t,\mathcal{O}\boldsymbol{x}).
\end{equation*}
Then, following the analogous calculations in Steps 3.1--3.3 of the proof of Lemma \ref{lem1} with $\boldsymbol{w}$ replaced by $\hat{\boldsymbol{u}}$, we 
can show that $(\hat\phi,\hat{\boldsymbol{u}},\hat{\boldsymbol{\psi}})$ is also a solution to system \eqref{eqn1} taking the initial data $(\phi_0,\boldsymbol{u}_0,\boldsymbol{\psi}_0)$. Finally, following the same argument as in Step 3.4 of the proof of Lemma \ref{lem1}, 
we obtain the spherical symmetry of $(\phi,\boldsymbol{u},\boldsymbol{\psi})$.

\medskip
\textbf{4. Initial data with $\boldsymbol{\psi}_0\in L^\infty(\mathbb{R}^3)\cap D^1(\mathbb{R}^3)$ in the 3-D case.} So far, we have derived Theorem \ref{thh1} 
for the 2-D case (note that $\boldsymbol{\psi}\in L^\infty([0,T_*]\times \mathbb{R}^2)$ is a direct consequence of Lemma \ref{Hk-Ck-vector} and $\boldsymbol{\psi}\in C([0,T_*];D^1(\mathbb{R}^2))$. Now we focus on Theorem \ref{thh1} for the 3-D case. Assume that the initial data $(\phi_0,\boldsymbol{u}_0,\boldsymbol{\psi}_0)$ with $\phi_0>0$ are spherically symmetric and satisfy \eqref{eq;th2.1-2}. Then, according to the proof of Steps 1--3 above, there exists a unique spherically symmetric solution $(\phi,\boldsymbol{u},\boldsymbol{\psi})$ of the nonlinear problem \eqref{eqn1}--\eqref{e1.3'} in $[0,T_*]\times \mathbb{R}^n$, 
satisfying \eqref{psi=log-phi-0}, and  (i)--(iv) of Lemma \ref{lem1} with $T$ replaced by $T_*$. Thus, it remains to show that 
\begin{equation}\label{claim-psi-linfty}
\boldsymbol{\psi}\in L^\infty([0,T_*]\times \mathbb{R}^3).
\end{equation}
To this end, we introduce the so-called effective velocity:
\begin{equation}\label{def-vv}
\boldsymbol{v}(t,\boldsymbol{x}):=\boldsymbol{u}(t,\boldsymbol{x})+2\alpha\boldsymbol{\psi}(t,\boldsymbol{x})=\boldsymbol{u}(t,\boldsymbol{x})+\frac{2\alpha}{\gamma-1}\nabla\log\phi(t,\boldsymbol{x}).
\end{equation}
Then combining $\eqref{eqn1}_2$ and \eqref{eqn1psi} with \eqref{def-vv} yields that the equation of $\boldsymbol{v}$ takes the form:
\begin{equation}\label{eq-vv}
\boldsymbol{v}_t+\boldsymbol{u}\cdot\nabla\boldsymbol{v}+\frac{\gamma-1}{2\alpha}\phi(\boldsymbol{v}-\boldsymbol{u})=\boldsymbol{0},
\end{equation}
where we have used the fact that $\nabla\boldsymbol{v}=(\nabla\boldsymbol{v})^\top$, since $\boldsymbol{v}$ is spherically symmetric. 

Next, we define the flow mapping $\boldsymbol{\eta}(t,\boldsymbol{x})$ associated with $\boldsymbol{u}$:
\begin{equation}
\boldsymbol{\eta}_t(t,\boldsymbol{x})=\boldsymbol{u}(t,\boldsymbol{\eta}(t,\boldsymbol{x})) \qquad \text{with} \ \ \boldsymbol{\eta}(0,\boldsymbol{x})=\boldsymbol{x}.    
\end{equation}
Then, via the above flow mapping, \eqref{eq-vv} becomes
\begin{equation}\label{3066}
\frac{\mathrm{d}}{\mathrm{d}t}\boldsymbol{v}(t,\boldsymbol{\eta}(t,\boldsymbol{x}))+\frac{\gamma-1}{2\alpha}(\phi \boldsymbol{v})(t,\boldsymbol{\eta}(t,\boldsymbol{x})) =\frac{\gamma-1}{2\alpha}(\phi \boldsymbol{u})(t,\boldsymbol{\eta}(t,\boldsymbol{x})). 
\end{equation}
Solving the above ODE leads to 
\begin{equation}
\begin{aligned}
\boldsymbol{v}(t,\boldsymbol{\eta}(t,\boldsymbol{x}))&=\boldsymbol{v}(0,\boldsymbol{x})\exp\Big(-\int_0^t \frac{\gamma-1}{2\alpha}\phi(\tau,\boldsymbol{\eta}(\tau,\boldsymbol{x}))\,\mathrm{d}\tau\Big)\\
&\quad +\frac{\gamma-1}{2\alpha}\!\int_0^t (\phi\boldsymbol{u})(s,\boldsymbol{\eta}(s,\boldsymbol{x}))\cdot \exp\Big(\!-\!\int_s^t \frac{\gamma-1}{2\alpha}\phi(\tau,\boldsymbol{\eta}(\tau,\boldsymbol{x}))\,\mathrm{d}\tau\Big)\mathrm{d}s,
\end{aligned}
\end{equation}
which, along with $\phi>0$, implies that
\begin{equation*}
\|\boldsymbol{v}(t)\|_{L^\infty(\mathbb{R}^3)}
\leq C\|\boldsymbol{v}(0)\|_{L^\infty(\mathbb{R}^3)}+CT_*\sup_{t\in[0,T_*]}\|\phi\|_{L^\infty(\mathbb{R}^3)}\|\boldsymbol{u}\|_{L^\infty(\mathbb{R}^3)}\leq C(1+T_*),
\end{equation*}
that is, $\boldsymbol{v}\in L^\infty([0,T_*]\times\mathbb{R}^3)$. Therefore, by \eqref{def-vv}, we derive claim \eqref{claim-psi-linfty}.

The proof of Theorem \ref{thh1} is completed.

\subsubsection{Proof of {\rm Theorem \ref{zth1}}} \label{bani2}

First, it follows from Theorem \ref{thh1} that there exist $T_*>0$ and a unique $2$-order regular solution $(\phi,\boldsymbol{u},\boldsymbol{\psi})$ of problem \eqref{eqn1}--\eqref{e1.3'} in $[0,T_*]\times\mathbb{R}^n$ satisfying \eqref{er2} and \eqref{psi=log-phi-0}, which is spherically symmetric taking form \eqref{1.9'}.

Next, based on \eqref{bianhuan}, we define
\begin{equation*} 
\rho(t,\boldsymbol{x}):=\Big(\frac{\gamma-1}{A\gamma}\Big)^{\frac{1}{\gamma-1}}\phi^{\frac{1}{\gamma-1}}(t,\boldsymbol{x}).
\end{equation*}
Clearly, $\rho(t,\boldsymbol{x})>0$ in $[0,T_*]\times \mathbb{R}^n$, due to $\phi(t,\boldsymbol{x})>0$. By a direct calculation, we have 
\begin{equation*}
\frac{\partial\rho}{\partial \phi}(t,\boldsymbol{x})=\frac{1}{\gamma-1}\Big(\frac{\gamma-1}{A\gamma}\Big)^{\frac{1}{\gamma-1}}\phi^{\frac{2-\gamma}{\gamma-1}}(t,\boldsymbol{x}).
\end{equation*}
Then multiplying $\eqref{eqn1}_1$ by $\frac{\partial \rho}{\partial \phi}(t,\boldsymbol{x})$ yields the continuity equation $\eqref{eq:1.1benwen}_1$, and multiplying $\eqref{eqn1}_2$ by $\rho(t,\boldsymbol{x})$ gives the momentum equations $\eqref{eq:1.1benwen}_2$.

Therefore, $(\rho,\boldsymbol{u})$ satisfies the Cauchy problem \eqref{eq:1.1benwen}--\eqref{e1.3} in the sense of distributions and, clearly,  satisfies (ii)--(iii) in Definition \ref{cjk} with $s=2$ and $T$ replaced by $T_*$, and \eqref{er2} with $T$ replaced by $T_*$. In summary, the Cauchy problem \eqref{eq:1.1benwen}--\eqref{e1.3} admits a 2-unique regular solution $(\rho, \boldsymbol{u})$.

Finally, since $(\phi,\boldsymbol{u})$ are spherically symmetric, $(\rho,\boldsymbol{u})$ are also spherically symmetric taking form \eqref{duichenxingshi}. The proof of  Theorem \ref{zth1} is completed.

\subsection{Local well-posedness of the 3-order regular solutions with far-field vacuum}\label{section-local-3-regular}

We now prove Theorem \ref{thh133} in \S \ref{linear233}--\S \ref{bani33}.
At the end of \S \ref{section-local-3-regular}, we also show that 
this theorem indeed implies Theorem \ref{zth2}.

\subsubsection{Linearization}\label{linear233}
We start  with the proof of Theorem \ref{thh133} by considering  the  linearized problem \eqref{li4} with $(\phi_0,\boldsymbol{u}_0,\boldsymbol{\psi}_0)$, $\phi_0>0$, satisfying \eqref{th78qq33}. In this case, the vector function $\boldsymbol{w}=(w_1,\cdots\!,w_n)^{\top}\in \mathbb{R}^n$ given in \eqref{li4} is supposed to be spherically symmetric satisfying $\boldsymbol{w}(0,\boldsymbol{x})=\boldsymbol{u}_0(\boldsymbol{x})$ and, for any $T>0$,
\begin{equation}\label{vg33}
\begin{aligned}
&\boldsymbol{w}(t,\boldsymbol{x})|_{|\boldsymbol{x}|=0}=\boldsymbol{0}, \quad \boldsymbol{w}(t,\boldsymbol{x})=w(t,|\boldsymbol{x}|)\frac{\boldsymbol{x}}{|\boldsymbol{x}|} \ \qquad \text{for $t\in [0,T]$},\\
&\partial_t^l\boldsymbol{w}\in C([0,T]; H^{3-2l}(\mathbb{R}^n))\cap L^2([0,T]; D^{4-2l}(\mathbb{R}^n)), \quad l=0,1,\\[1mm]
&\sqrt{t}\partial_t^l\boldsymbol{w}\in L^{\infty}([0,T]; D^{4-2l}(\mathbb{R}^n)),\quad l=0,1,2,\\[1mm]
&\sqrt{t}\partial_t^{l+1}\boldsymbol{w} \in L^2([0,T]; D^{3-2l}(\mathbb{R}^n)),\quad l=0,1.
\end{aligned}
\end{equation}
 
We then establish the global well-posedness of the linearized problem \eqref{li4} with \eqref{vg33}, which follows from classical arguments as in \cite{CK3,evans} and the similar discussion in the proof of Lemma \ref{lem1}. 

\begin{lem}\label{lem133}
Let $n=2$ or $3$, and  \eqref{cd1local} hold. Assume that the initial data $(\phi_0,\boldsymbol{u}_0,\boldsymbol{\psi}_0)(\boldsymbol{x})$ are spherically symmetric and satisfy \eqref{th78qq33}. Then, for any $T>0$, there exists a unique solution $(\phi,\boldsymbol{u},\boldsymbol{\psi})(t,\boldsymbol{x})$  of the linearized problem \eqref{li4} in $[0,T]\times\mathbb{R}^n$ such that \eqref{psi=log-phi-0} holds and 
\begin{align*}
\mathrm{(i)}& \ (\phi, \boldsymbol{u}) \ \text{with $\phi>0$ satisfies this problem in the sense of distributions};\\
\mathrm{(ii)}& \ \phi^\frac{1}{\gamma-1}\in C([0,T];L^1(\mathbb{R}^n)), \  \ (\nabla\phi,\phi_t) \in C([0,T]; H^{2}(\mathbb{R}^n)), \\
& \boldsymbol{\psi} \in C([0,T]; D^1(\mathbb{R}^n)\cap D^2(\mathbb{R}^n)),\ \  \boldsymbol{\psi}_t\in C([0,T]; H^1(\mathbb{R}^n)), \\
&\boldsymbol{\psi}_{tt}\in L^2([0,T_*];L^2(\mathbb{R}^n)),\ \
\phi_{tt}\in C([0,T_*];L^2(\mathbb{R}^n))\cap L^2([0,T_*];D^1(\mathbb{R}^n));\\
\mathrm{(iii)}& \ 
\boldsymbol{u}(t,\boldsymbol{x})|_{|\boldsymbol{x}|=0}=\boldsymbol{0} \ \ \text{for} \ \ t\in [0,T],\\ 
& \ \partial_t^l\boldsymbol{u}\in C([0,T]; H^{3-2l}(\mathbb{R}^n))\cap L^2([0,T]; D^{4-2l}(\mathbb{R}^n)), \quad l=0,1;\\
\mathrm{(iv)}& \ \sqrt{t}\partial_t^l\boldsymbol{u}\in L^{\infty}([0,T]; D^{4-2l}(\mathbb{R}^n)),\quad l=0,1,2,\\
& \ \sqrt{t}\partial_t^{l+1}\boldsymbol{u} \in L^2([0,T]; D^{3-2l}(\mathbb{R}^n)),\quad l=0,1.
\end{align*}
Moreover, $(\phi,\boldsymbol{u},\boldsymbol{\psi})$ is spherically symmetric which takes form \eqref{1.9'}. 
\end{lem}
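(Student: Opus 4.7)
The plan is to adapt the argument used in the proof of Lemma \ref{lem1} to the higher-regularity setting indicated by \eqref{th78qq33} and \eqref{vg33}, solving the three equations in $\eqref{li4}$ sequentially. First I would solve the scalar transport equation $\eqref{li4}_1$ for $\phi$ by the method of characteristics using the velocity field $\boldsymbol{w}$: since $\phi_0>0$ and $\boldsymbol{w}$ is Lipschitz in space (by $\boldsymbol{w}\in L^\infty([0,T];H^3(\mathbb{R}^n))$ and Lemma \ref{ale1}), the flow is well-defined and $\phi$ remains positive on $[0,T]$. Standard energy estimates applied to $\partial^\varsigma\phi$ for $|\varsigma|\leq 3$, together with the improved regularity of $\boldsymbol{w}$ in \eqref{vg33}, will give $\nabla\phi\in C([0,T];H^2(\mathbb{R}^n))$; differentiating $\eqref{li4}_1$ once and twice in time and using the $\boldsymbol{w}_t$ and $\boldsymbol{w}_{tt}$ bounds in \eqref{vg33} will yield $\phi_t\in C([0,T];H^2(\mathbb{R}^n))$ and $\phi_{tt}\in C([0,T];L^2(\mathbb{R}^n))\cap L^2([0,T];D^1(\mathbb{R}^n))$ respectively.

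Next I would solve for $\boldsymbol{\psi}$ from the linear symmetric hyperbolic system $\eqref{li4}_3$ by classical theory. The assumptions $\boldsymbol{\psi}_0\in D^1(\mathbb{R}^n)\cap D^2(\mathbb{R}^n)$ and $\boldsymbol{w}\in L^2([0,T];D^4(\mathbb{R}^n))$ provide the forcing $\nabla\diver\boldsymbol{w}$ in the right class, and standard $L^2$-energy estimates for $\partial^\varsigma\boldsymbol{\psi}$ with $|\varsigma|=1,2$ deliver $\boldsymbol{\psi}\in C([0,T];D^1(\mathbb{R}^n)\cap D^2(\mathbb{R}^n))$. Reading the time derivatives directly off $\eqref{li4}_3$ then gives $\boldsymbol{\psi}_t\in C([0,T];H^1(\mathbb{R}^n))$ by the product rule in Sobolev spaces, and one further differentiation in time, combined with $\sqrt{t}\boldsymbol{w}_{tt}\in L^\infty([0,T];L^2(\mathbb{R}^n))$, yields $\boldsymbol{\psi}_{tt}\in L^2([0,T];L^2(\mathbb{R}^n))$. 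The identity \eqref{psi=log-phi-0} is obtained exactly as in Step 2 of the proof of Lemma \ref{lem1}, by observing that $\tilde{\boldsymbol{\psi}}:=\frac{1}{\gamma-1}\nabla\log\phi$ satisfies the same linear hyperbolic system as $\boldsymbol{\psi}$ with the same initial data.

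For $\boldsymbol{u}$, I would rewrite $\eqref{li4}_2$ as $\boldsymbol{u}_t+L\boldsymbol{u}=\boldsymbol{\psi}\cdot Q(\boldsymbol{w})-\boldsymbol{w}\cdot\nabla\boldsymbol{w}-\nabla\phi$. The right-hand side belongs to $C([0,T];H^1(\mathbb{R}^n))$ by the estimates just obtained together with Sobolev product inequalities and Lemmas \ref{lemma-L6}--\ref{Hk-Ck-vector}, so the classical theory of second-order linear parabolic systems provides a unique $\boldsymbol{u}\in C([0,T];H^3(\mathbb{R}^n))\cap L^2([0,T];D^4(\mathbb{R}^n))$ with $\boldsymbol{u}_t\in C([0,T];H^1(\mathbb{R}^n))\cap L^2([0,T];D^2(\mathbb{R}^n))$. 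The time-weighted bounds in (iv) for $l=1,2$ follow by successively differentiating the parabolic equation in $t$, multiplying by $t$, and running the $L^2$-energy estimate; the initialization at $t=0$ is handled via passage to the limit $\tau\to 0^+$ using Lemma \ref{bjr}, in the same spirit as \eqref{yilu}--\eqref{cv} and Lemma \ref{Lemma6.13}. The spherical symmetry of $(\phi,\boldsymbol{u},\boldsymbol{\psi})$ follows immediately from the uniqueness argument in Step 3 of the proof of Lemma \ref{lem1}: for any $\mathcal{O}\in\mathrm{SO}(n)$, the triple $(\phi(t,\mathcal{O}\boldsymbol{x}),\mathcal{O}^\top\boldsymbol{u}(t,\mathcal{O}\boldsymbol{x}),\mathcal{O}^\top\boldsymbol{\psi}(t,\mathcal{O}\boldsymbol{x}))$ solves the same linearized problem with the same initial data and the same (spherically symmetric) $\boldsymbol{w}$, so it coincides with the original solution.

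The main obstacle I anticipate is closing the estimate for $\boldsymbol{\psi}_{tt}\in L^2([0,T];L^2(\mathbb{R}^n))$ and, correspondingly, for $\phi_{tt}\in C([0,T];L^2(\mathbb{R}^n))\cap L^2([0,T];D^1(\mathbb{R}^n))$: differentiating $\eqref{li4}_1$ and $\eqref{li4}_3$ twice in $t$ produces terms of the form $\boldsymbol{w}_{tt}\cdot\nabla\phi$, $\boldsymbol{w}_{tt}\boldsymbol{\psi}$, and $\nabla\diver\boldsymbol{w}_t$, which are critical with respect to the available regularity of $\boldsymbol{w}$. The degenerate weight $\sqrt{t}$ in the bounds on $\boldsymbol{w}_{tt}$ and $\nabla^2\boldsymbol{w}_t$ in \eqref{vg33} forces one to absorb the corresponding integrable singularity $s^{-1/2}$ at $s=0$ via Cauchy--Schwarz in time, rather than via Grönwall, and to read off the time-continuity of $\phi_{tt}$ at $t=0$ from the integral form of the equation combined with the bound $\phi_t|_{t=0}$ determined from $\eqref{li4}_1$ applied at the initial time.
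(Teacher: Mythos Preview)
Your proposal is correct and matches the paper's approach exactly: the paper's own proof of this lemma is a single sentence referring to classical arguments as in \cite{CK3,evans} together with the proof of Lemma \ref{lem1}, which is precisely the sequential solve (transport for $\phi$, symmetric hyperbolic for $\boldsymbol{\psi}$, parabolic for $\boldsymbol{u}$) followed by the rotation-uniqueness argument you describe. One small correction on the anticipated obstacle: reading $\boldsymbol{\psi}_{tt}$ directly off the once-$t$-differentiated equation $\eqref{li4}_3$ involves only $\boldsymbol{w}_t$, $\nabla\boldsymbol{w}_t$, and $\nabla^2\boldsymbol{w}_t$ (the last already in $L^2([0,T];L^2)$ by the unweighted $l=1$ line of \eqref{vg33}), not $\boldsymbol{w}_{tt}$, so no $s^{-1/2}$ singularity needs to be absorbed there---compare the computation in \eqref{phi-psi-xxx1}.
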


\subsubsection{The uniform {\it a priori} estimates}\label{uape33}
Let $(\phi,\boldsymbol{u},\boldsymbol{\psi})(t,\boldsymbol{x})$ be a solution in $[0,T]\times\mathbb{R}^n$ obtained in Lemma \ref{lem133}. We now establish the corresponding  {\it a priori} estimates. For this purpose, we first choose a constant $c_0>0$ such that 
\begin{equation}\label{houmian33}
\begin{split}
2+\big\|\phi_0^{\frac{1}{\gamma-1}}\big\|_{L^1}+\|\nabla\phi_0\|_{H^2}+\|\boldsymbol{u}_0\|_{H^3}+\|\nabla\boldsymbol{\psi}_0\|_{H^1}\leq c_0.
\end{split}
\end{equation}
We assume that there exist $T^*\in (0,T)$ and constants $c_i$ ($i=1,2,3$) such that $1< c_0\leq c_1 \leq c_2 \leq c_3\leq c_4$ and
\begin{equation}\label{jizhu133}
\begin{aligned}
\sup_{t\in [0,T^*]}\|\boldsymbol{w}\|^2_{H^1}+\int_{0}^{T^*}\big(\|\nabla \boldsymbol{w}\|^2_{H^1}+\|\boldsymbol{w}_t\|_{L^2}^2\big)\,\mathrm{d}t \leq c^2_1,\\ 
\sup_{t\in [0,T^*]}\|(\nabla^2\boldsymbol{w},\boldsymbol{w}_t)\|^2_{L^2}+\int_{0}^{T^*} \|(\nabla^3\boldsymbol{w},\nabla\boldsymbol{w}_t)\|^2_{L^2} \,\mathrm{d}t \leq c^2_2,\\
\sup_{t\in [0,T^*]}\|(\nabla^3\boldsymbol{w},\nabla\boldsymbol{w}_t)\|^2_{L^2}+\int_{0}^{T^*} \|(\nabla^4\boldsymbol{w},\nabla^2\boldsymbol{w}_t,\boldsymbol{w}_{tt})\|^2_{L^2} \,\mathrm{d}t \leq c^2_3,\\
\sup_{t\in [0,T^*]} t\|(\nabla^4\boldsymbol{w},\nabla^2\boldsymbol{w}_t,\boldsymbol{w}_{tt})\|^2_{L^2}+\int_{0}^{T^*} t\|(\nabla^3\boldsymbol{w}_t,\nabla\boldsymbol{w}_{tt})\|^2_{L^2}\,\mathrm{d}t \leq c^2_4,
\end{aligned}
\end{equation}
where  $T^*$ and  $c_i$ ($i=1,2,3,4$) will be determined  later, depending only on $c_0$ and the fixed constants $(\alpha,\gamma,A, T)$.

First, we can follow the proof of Lemma \ref{bos-2-regular} to derive the following estimates for $(\phi, \boldsymbol{\psi})$.
\begin{lem}\label{bos} 
For any $t\in [0, T_1]$ with $T_1:=\min \{T^{*}, (1+Cc_3)^{-2}\}$,
\begin{gather*}
\big\|\phi^\frac{1}{\gamma-1}(t)\big\|_{L^1}\leq c_0, \quad \|\phi(t)\|_{L^\infty}+\|\nabla\phi(t)\|_{H^1}\leq Cc_0^{7\gamma-3},\\[2pt]
\|\nabla^{k-1}\phi_t(t)\|_{L^2}\leq Cc_0^{7\gamma-3}c_k \  (k=1,2),\ \ \|\boldsymbol{\psi}(t)\|_{L^\infty\cap D^1}\leq Cc_0, \quad \|\boldsymbol{\psi}_t(t)\|_{L^2}\leq Cc_0c_2.
\end{gather*}
\end{lem}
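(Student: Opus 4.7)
The claims on $\|\phi^{\frac{1}{\gamma-1}}(t)\|_{L^1}$, $\|\phi(t)\|_{L^\infty}+\|\nabla\phi(t)\|_{H^1}$, $\|\nabla^{k-1}\phi_t(t)\|_{L^2}$, the $D^1$ component of the $\boldsymbol{\psi}$ bound, and $\|\boldsymbol{\psi}_t(t)\|_{L^2}$ are formally identical to those in Lemma \ref{bos-2-regular}. My approach is to rerun the proof of Lemma \ref{bos-2-regular} line by line: the assumption \eqref{th78qq33} strengthens rather than weakens \eqref{eq;th2.1-2}, and the bootstrap parameters $c_1,c_2,c_3$ for $\boldsymbol{w}$ in \eqref{jizhu133} dominate the analogous ones in \eqref{jizhu1}. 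Concretely, the conservation $\|\phi^{\frac{1}{\gamma-1}}\|_{L^1}\leq c_0$ comes from multiplying $\eqref{li4}_1$ by $\phi^{\frac{2-\gamma}{\gamma-1}}/(\gamma-1)$ to obtain the divergence form $\partial_t\phi^{\frac{1}{\gamma-1}}+\diver(\phi^{\frac{1}{\gamma-1}}\boldsymbol{w})=0$; the $D^1$ bound on $\boldsymbol{\psi}$ follows from $\partial^{\varsigma}\boldsymbol{\psi}\partial^{\varsigma}$ energy estimates ($|\varsigma|=1$) on the hyperbolic system \eqref{eqn1psi}; the $L^\infty$ and $H^1$ bounds on $\phi$ then follow from the identity $\phi^{\frac{1}{\gamma-1}}\boldsymbol{\psi}=\frac{1}{\gamma-1}\nabla\phi^{\frac{1}{\gamma-1}}$ (via \eqref{psi=log-phi-0}) combined with the critical Sobolev embeddings of Lemmas \ref{Hk-Ck-vector} and \ref{lemma-L6}, exactly as in \eqref{eq:I1}--\eqref{nabla-phi-H1}; finally, the time-derivative bounds come from substituting the spatial estimates back into $\eqref{li4}_1$ and $\eqref{li4}_3$. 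The short-time length $T_1=(1+Cc_3)^{-2}$ is chosen to absorb the quadratic-in-$c_3$ Gr\"onwall exponents arising in these estimates.

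The only genuinely new point is the $L^\infty(\mathbb{R}^n)$ bound on $\boldsymbol{\psi}$. In the $2$-D case this bound is immediate from the $D^1$ estimate together with the critical embedding $D^1(\mathbb{R}^2)\hookrightarrow L^\infty(\mathbb{R}^2)$ for spherically symmetric vector fields (Lemma \ref{Hk-Ck-vector}). In the $3$-D case, $D^1(\mathbb{R}^3)$ alone does not embed in $L^\infty(\mathbb{R}^3)$, so one has to upgrade the regularity one level. I would apply $\partial^{\varsigma}\boldsymbol{\psi}\partial^\varsigma$ with $|\varsigma|=2$ to \eqref{eqn1psi} and integrate over $\mathbb{R}^3$. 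The linear forcing $\nabla^2(\nabla\diver\boldsymbol{w})$ is controlled in $L^2_tL^2$ by the new bootstrap bound $\int_0^t\|\nabla^4\boldsymbol{w}\|_{L^2}^2\,\mathrm{d}s\leq c_3^2$ from \eqref{jizhu133}; the nonlinear terms $\nabla^2(B(\boldsymbol{w})\boldsymbol{\psi})$ and $\nabla^2(A_l(\boldsymbol{w})\partial_l\boldsymbol{\psi})$ are handled by H\"older's inequality, Lemma \ref{lemma-L6} for the $L^6$ bound on $\boldsymbol{\psi}$, and the higher bounds $\|\nabla^3\boldsymbol{w}\|_{L^2}\leq c_3$ from \eqref{jizhu133}. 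A Gr\"onwall argument on $[0,T_1]$ then yields $\|\boldsymbol{\psi}(t)\|_{D^2(\mathbb{R}^3)}\leq Cc_0$. Combining this with the $D^1$ bound, the embedding $D^1(\mathbb{R}^3)\hookrightarrow L^6(\mathbb{R}^3)$ for spherically symmetric vector fields (Lemma \ref{lemma-L6}), and the classical Gagliardo--Nirenberg embedding $L^6(\mathbb{R}^3)\cap D^2(\mathbb{R}^3)\hookrightarrow L^\infty(\mathbb{R}^3)$ (as in Remark \ref{rk-nabla}) gives $\|\boldsymbol{\psi}(t)\|_{L^\infty(\mathbb{R}^3)}\leq Cc_0$.

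\textbf{Main obstacle.} The subtle point is that the final $L^\infty$ constant on $\boldsymbol{\psi}$ must be linear in $c_0$, independent of the bootstrap constants $c_1,\dots,c_3$. This is only possible because the initial datum $\|\boldsymbol{\psi}_0\|_{D^2}\leq c_0$ is built into \eqref{th78qq33}, and because the choice $T_1=(1+Cc_3)^{-2}$ forces the Gr\"onwall exponential factor $\exp(C c_3^2 T_1)$ arising in the $D^2$-estimate of $\boldsymbol{\psi}$ to remain bounded by an absolute constant. Without the strengthened initial regularity from \eqref{th78qq33} (compared with \eqref{eq;th2.1-2}), this $L^\infty$ bound would be unattainable at the closure level $Cc_0$, and the whole bootstrap scheme of \S\ref{uape33} would break down.
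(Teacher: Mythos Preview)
Your proposal is correct. For the shared estimates (everything except $\|\boldsymbol{\psi}\|_{L^\infty}$) your approach coincides with the paper's: rerun Lemma \ref{bos-2-regular} verbatim, noting that \eqref{jizhu133} strengthens \eqref{jizhu1}. For the new $L^\infty(\mathbb{R}^n)$ bound on $\boldsymbol{\psi}$, your $n=2$ argument matches the paper's, but your $n=3$ argument takes a genuinely different route.

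The paper does not go through a $D^2$ energy estimate at this stage. Instead it uses the method of characteristics on the transport system $\eqref{li4}_3$: along the flow map $\boldsymbol{\xi}_t=\boldsymbol{w}(t,\boldsymbol{\xi})$, one has $\frac{\mathrm{d}}{\mathrm{d}t}\boldsymbol{\psi}(t,\boldsymbol{\xi})=-(\nabla\boldsymbol{w}\cdot\boldsymbol{\psi}+\nabla\diver\boldsymbol{w})(t,\boldsymbol{\xi})$, and a direct $L^\infty$ Gr\"onwall argument using $\|\nabla^2\boldsymbol{w}\|_{L^\infty}\leq C\|\nabla^2\boldsymbol{w}\|_{H^2}$ (which is exactly where the new bootstrap bound $\int_0^{T_1}\|\nabla^4\boldsymbol{w}\|_{L^2}^2\,\mathrm{d}s\leq c_3^2$ enters) gives $\|\boldsymbol{\psi}(t)\|_{L^\infty}\leq\|\boldsymbol{\psi}_0\|_{L^\infty}+Cc_0$; the initial term is then controlled by $\|\boldsymbol{\psi}_0\|_{D^1}^{1/2}\|\boldsymbol{\psi}_0\|_{D^2}^{1/2}\leq c_0$. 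Your route instead pulls the $D^2$ estimate on $\boldsymbol{\psi}$ forward from Lemma \ref{bos'}, avoiding the would-be circularity (the commutator term $\nabla^3\boldsymbol{w}\cdot\boldsymbol{\psi}$ needs a bound on $\boldsymbol{\psi}$) by pairing $\|\nabla^3\boldsymbol{w}\|_{L^3}$ with $\|\boldsymbol{\psi}\|_{L^6}\leq C\|\boldsymbol{\psi}\|_{D^1}$ from Lemma \ref{lemma-L6}; this closes on $[0,T_1]$ using only $c_3$ via $\int_0^{T_1}\|\nabla^4\boldsymbol{w}\|_{L^2}\,\mathrm{d}s\leq\sqrt{T_1}\,c_3$. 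The paper's characteristic argument is shorter and cleanly separates the $L^\infty$ control from the later $D^2$ energy estimate in Lemma \ref{bos'} (where $\|\boldsymbol{\psi}\|_{L^\infty}$ is then freely available to absorb $\nabla^3\boldsymbol{w}\cdot\boldsymbol{\psi}$); your argument is slightly heavier here but effectively proves part of Lemma \ref{bos'} for free.
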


\begin{proof}
We only show how the $L^\infty(\mathbb{R}^n)$-estimate ($n=2,3$) of $\boldsymbol{\psi}$ for $t\in [0,T_1]$ can be obtained. 

When $n=2$, it follows directly from Lemma \ref{Hk-Ck-vector} that,  
for $t\in [0,T_1]$,
\begin{equation*}
\|\boldsymbol{\psi}(t)\|_{L^\infty(\mathbb{R}^2)}\leq C\|\boldsymbol{\psi}\|_{D^1(\mathbb{R}^2)}\leq Cc_0.
\end{equation*}
When $n=3$, consider the flow mapping $\boldsymbol{\xi}:[0,T]\times \mathbb{R}^3\to \mathbb{R}^3$ satisfying 
\begin{equation}\label{flow-xi}
\boldsymbol{\xi}_t(t,\boldsymbol{x})=\boldsymbol{w}(t,\boldsymbol{\xi}(t,\boldsymbol{x})) \qquad \text{with $\boldsymbol{\xi}(0,\boldsymbol{x})=\boldsymbol{x}$}. 
\end{equation}
Then $\eqref{li4}_3$ can be written as
\begin{equation*}
\frac{\mathrm{d}}{\mathrm{d}t}\boldsymbol{\psi}(t,\boldsymbol{\xi}(t,\boldsymbol{x}))=-(\nabla \boldsymbol{w}\cdot \boldsymbol{\psi})(t,\boldsymbol{\xi}(t,\boldsymbol{x}))-\nabla\diver \boldsymbol{w}(t,\boldsymbol{\xi}(t,\boldsymbol{x})).
\end{equation*}
Integrating the above over $[0,t]$ and taking the $L^\infty(\mathbb{R}^3)$-norm of the resulting equality, along with \eqref{jizhu133}, Lemma \ref{ale1}, 
and the H\"older inequality, yield that
\begin{align*}
\|\boldsymbol{\psi}(t)\|_{L^\infty}&\leq \|\boldsymbol{\psi}_0\|_{L^\infty}+\Big(\sup_{s\in [0,t]}\|\nabla\boldsymbol{w}\|_{L^\infty}\Big)\int_0^t \|\boldsymbol{\psi}\|_{L^\infty}\,\mathrm{d}s+\int_0^t \|\nabla^2\boldsymbol{w}\|_{L^\infty}\,\mathrm{d}s\\
&\leq \|\boldsymbol{\psi}_0\|_{L^\infty}+Cc_3\int_0^t \|\boldsymbol{\psi}\|_{L^\infty}\,\mathrm{d}s+\sqrt{t}\Big(\int_0^t \|\nabla^2\boldsymbol{w}\|_{H^2}^2\,\mathrm{d}s\Big)^\frac{1}{2}\\
&\leq \|\boldsymbol{\psi}_0\|_{L^\infty}+Cc_3\int_0^t \|\boldsymbol{\psi}\|_{L^\infty}\,\mathrm{d}s+c_3\sqrt{t},
\end{align*}
which, along with the Gr\"onwall inequality and Lemmas \ref{GN-ineq} and \ref{lemma-L6}, implies that, for all $t\in [0,T_1]$,
\begin{align*} 
\|\boldsymbol{\psi}(t)\|_{L^\infty(\mathbb{R}^3)}&\leq \|\boldsymbol{\psi}_0\|_{L^\infty(\mathbb{R}^3)}+Cc_0\leq \|\boldsymbol{\psi}_0\|_{L^6(\mathbb{R}^3)}^\frac{1}{2}\|\boldsymbol{\psi}_0\|_{D^2(\mathbb{R}^3)}^\frac{1}{2}+Cc_0\\
&\leq \|\boldsymbol{\psi}_0\|_{D^1(\mathbb{R}^3)}^\frac{1}{2}\|\boldsymbol{\psi}_0\|_{D^2(\mathbb{R}^3)}^\frac{1}{2}+Cc_0\leq Cc_0.
\end{align*}
\end{proof}

Next, we obtain the higher-order estimates for $(\phi,\boldsymbol{\psi})$.
\begin{lem}\label{bos'} 
For any $t\in [0,T_2]$ with $T_2:=\min \{T_1, (1+Cc_4)^{-2}\}$,
\begin{gather*}
\|\nabla^3\phi(t)\|_{L^2}\leq Cc_0^{7\gamma-2},\quad \|\nabla^2\phi_t(t)\|_{L^2}\leq Cc_0^{7\gamma-2}c_3,\quad \|\nabla^2\boldsymbol{\psi}(t)\|_{L^2}\leq Cc_0, \\
\|\nabla\boldsymbol{\psi}_t(t)\|_{L^2}\leq Cc_0c_3,\ \ \|\phi_{tt}(t)\|_{L^2}^2+\int_0^t \big\|(\nabla\phi_{tt},\boldsymbol{\psi}_{tt})\big\|_{L^2}^2 \,\mathrm{d}s\leq Cc_3^{14\gamma}.
\end{gather*}
\end{lem}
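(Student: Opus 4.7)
The strategy is to lift each estimate in Lemma \ref{bos} by exactly one order of derivative, exploiting the same three mechanisms: (i) symmetric hyperbolic energy estimates for $\boldsymbol{\psi}$, (ii) transfer of regularity from $\boldsymbol{\psi}$ to $\phi$ via the identity $\boldsymbol{\psi}=\frac{1}{\gamma-1}\nabla\log\phi$ guaranteed by \eqref{psi=log-phi-0}, and (iii) direct $L^2$-estimation of the time-differentiated equations. The extra hypotheses $\nabla\phi_0\in H^2$ and $\nabla\boldsymbol{\psi}_0\in H^1$ in \eqref{th78qq33} provide precisely the initial regularity needed to start a second-derivative energy estimate on $\boldsymbol{\psi}$ and to control the time trace of $\phi_{tt}$ as $\tau\to 0$.

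First I would apply $\partial^\varsigma$ with $|\varsigma|=2$ to $\eqref{li4}_3$, test against $\partial^\varsigma\boldsymbol{\psi}$, and integrate over $\mathbb{R}^n$. The symmetric hyperbolic structure cancels the top-order term from $A_l(\boldsymbol{w})\partial_l$ upon integration by parts, leaving commutators $[\partial^\varsigma,A_l(\boldsymbol{w})]\partial_l\boldsymbol{\psi}$, the lower-order piece $\partial^\varsigma(B(\boldsymbol{w})\boldsymbol{\psi})$, and the source $\partial^\varsigma\nabla\diver\boldsymbol{w}$. Controlling these by \eqref{jizhu133} (in particular the $L^2_tL^2_x$ bound $Cc_3$ on $\nabla^4\boldsymbol{w}$), the $L^\infty$ and $D^1$ estimates of $\boldsymbol{\psi}$ in Lemma \ref{bos}, and the Sobolev embeddings of Lemmas \ref{ale1}, \ref{lemma-L6}, and \ref{Hk-Ck-vector}, yields a Gr\"onwall inequality of the form $\frac{d}{dt}\|\nabla^2\boldsymbol{\psi}\|_{L^2}^2\leq Cc_3^2(1+\|\nabla^2\boldsymbol{\psi}\|_{L^2}^2)$, integrating to $\|\nabla^2\boldsymbol{\psi}(t)\|_{L^2}\leq Cc_0$ on $[0,T_2]$. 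The bound $\|\nabla^3\phi(t)\|_{L^2}\leq Cc_0^{7\gamma-2}$ then follows by differentiating the identity $\boldsymbol{\psi}=\frac{1}{\gamma-1}\nabla\log\phi$ twice and expressing $\nabla^3\phi$ as a polynomial in $\phi,\nabla\phi,\nabla^2\phi,\boldsymbol{\psi},\nabla\boldsymbol{\psi},\nabla^2\boldsymbol{\psi}$, invoking Lemma \ref{bos} for the lower-order factors. The estimates on $\|\nabla^2\phi_t\|_{L^2}$ and $\|\nabla\boldsymbol{\psi}_t\|_{L^2}$ are then read off by applying $\nabla^2$ to $\eqref{li4}_1$ and $\nabla$ to $\eqref{li4}_3$ respectively, and dominating each product term by the spatial bounds just derived together with \eqref{jizhu133}.

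For the second time derivatives I differentiate $\eqref{li4}_1$ and $\eqref{li4}_3$ once in $t$, obtaining
\begin{equation*}
\phi_{tt}=-\boldsymbol{w}_t\cdot\nabla\phi-\boldsymbol{w}\cdot\nabla\phi_t-(\gamma-1)\phi_t\diver\boldsymbol{w}-(\gamma-1)\phi\diver\boldsymbol{w}_t
\end{equation*}
together with a parallel identity for $\boldsymbol{\psi}_{tt}$. The $L^\infty_tL^2_x$-bound $\|\phi_{tt}(t)\|_{L^2}\leq Cc_3^{7\gamma}$ follows by direct $L^2$-estimation using Lemma \ref{bos} and the uniform bounds $\|\boldsymbol{w}\|_{H^2}\leq c_2$, $\|\boldsymbol{w}_t\|_{H^1}\leq c_3$ from \eqref{jizhu133}, together with $\|\nabla\phi\|_{L^\infty}\leq C\|\nabla\phi\|_{H^2}$ via Sobolev and the already-established control of $\nabla^3\phi$. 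Applying $\nabla$ to the above identity, squaring, and integrating in time produces the $\nabla\phi_{tt}$-bound, the critical term being $\|\phi\|_{L^\infty}^2\int_0^t\|\nabla^2\boldsymbol{w}_t\|_{L^2}^2\,ds\leq Cc_0^{14\gamma-6}c_3^2$. The $L^2_tL^2_x$-bound on $\boldsymbol{\psi}_{tt}$ is obtained analogously from the differentiated hyperbolic equation, the forcing $\nabla\diver\boldsymbol{w}_t$ being absorbed into $\int_0^t\|\nabla^2\boldsymbol{w}_t\|_{L^2}^2\,ds\leq c_3^2$. The main obstacle will be the careful bookkeeping of the time-weighted norms: several forcing terms contain $\|\boldsymbol{w}_t\|_{L^\infty}$, $\|\nabla^2\boldsymbol{w}\|_{L^\infty}$, or $\nabla\boldsymbol{w}_{tt}$, which are controlled in the time-weighted $L^\infty_t$- or $L^2_t$-sense only through $c_4$, so one must split integrals via $\int_0^t F\,ds=\int_0^t s^{-1/2}(s^{1/2}F)\,ds$ and apply Cauchy--Schwarz to extract factors of $\sqrt{t}$ that are absorbed into the smallness condition $t\leq(1+Cc_4)^{-2}$ built into $T_2$.
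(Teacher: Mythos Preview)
Your proposal is correct and follows essentially the same route as the paper: second-order symmetric-hyperbolic energy estimate on $\boldsymbol{\psi}$ via $\eqref{li4}_3$, transfer to $\nabla^3\phi$ through the relation $\nabla\phi=(\gamma-1)\phi\boldsymbol{\psi}$, then direct $L^2$ estimation of the time-differentiated equations. Two small remarks. First, the paper handles both $\|\nabla^2\phi_t\|_{L^2}$ and $\|\nabla\phi_{tt}\|_{L^2}$ by writing $\nabla\phi=(\gamma-1)\phi\boldsymbol{\psi}$ and differentiating this product, rather than applying derivatives to $\eqref{li4}_1$ directly; your route also works but the paper's avoids having to place $\nabla^2\boldsymbol{w}$ in $L^\infty$. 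Second, $\nabla\boldsymbol{w}_{tt}$ does not in fact appear anywhere in this lemma (it enters only later in the time-weighted velocity estimates), so your final caveat about splitting integrals to absorb $c_4$ is relevant only for the $\|\nabla^4\boldsymbol{w}\|_{L^2}$ forcing in the $\nabla^2\boldsymbol{\psi}$ step, where the paper uses the pointwise bound $\|\nabla^4\boldsymbol{w}\|_{L^2}\le c_4 t^{-1/2}$ and integrates to $Cc_4\sqrt{t}$.
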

\begin{proof}
We divide the proof into three steps.

\smallskip
\textbf{1. Estimates on $(\nabla^3\phi,\nabla^2\boldsymbol{\psi})$.} Applying $\partial^\varsigma$ with multi-index $|\varsigma|=2$ to $\eqref{li4}_3$, then multiplying the resulting equality by $\partial^\varsigma\boldsymbol{\psi}$, 
and integrating over $\mathbb{R}^n$, we obtain from \eqref{jizhu133}, Lemmas \ref{bos} and \ref{ale1}, and  the H\"older inequality that
\begin{align*}
\frac{\mathrm{d}}{\mathrm{d}t} \|\partial^\varsigma\boldsymbol{\psi}\|_{L^2}^2
&\leq C\big(\|\nabla \boldsymbol{w}\|_{L^\infty}\| \nabla^2\boldsymbol{\psi}\|_{L^2} +\|\nabla^2 \boldsymbol{w}\|_{L^4}\|\nabla\boldsymbol{\psi}\|_{L^4} +\|\nabla^4\boldsymbol{w}\|_{L^2}\big)\|\partial^\varsigma\boldsymbol{\psi}\|_{L^2}\\
&\leq C\big(\|\nabla \boldsymbol{w}\|_{H^2}\| \nabla^2\boldsymbol{\psi}\|_{L^2} +\|\nabla^2 \boldsymbol{w}\|_{H^1}\|\nabla\boldsymbol{\psi}\|_{H^1} +\|\nabla^4\boldsymbol{w}\|_{L^2}\big)\|\partial^\varsigma\boldsymbol{\psi}\|_{L^2}\\
&\leq Cc_3\|\nabla^2\boldsymbol{\psi}\|_{L^2}^2 +C (c_3^2+ c_4t^{-\frac{1}{2}})\|\nabla^2\boldsymbol{\psi}\|_{L^2}.
\end{align*}
Summing the above with respect to $\varsigma$ over $|\varsigma|=2$ leads to
\begin{equation*}
\frac{\mathrm{d}}{\mathrm{d}t} \|\nabla^2 \boldsymbol{\psi}\|_{L^2}\leq Cc_3\|\nabla^2\boldsymbol{\psi}\|_{L^2} +C(c_3^2+ c_4t^{-\frac{1}{2}}),
\end{equation*}
which, along with the Gr\"onwall inequality, yields that 
\begin{equation}\label{y4}
\|\nabla^2\boldsymbol{\psi}(t)\|_{L^2}\leq e^{Cc_3t}\big(\|\nabla^2\boldsymbol{\psi}_0\|_{L^2}+Cc_3^2t+Cc_4\sqrt{t}\big)\leq Cc_0.
\end{equation}
for all $t\in [0,T_2]$ with $T_2:=\min\{T_1,(1+Cc_4)^{-2}\}$.

Next, for the $L^2(\mathbb{R}^n)$-estimate of $\nabla^3\phi$, it follows from \eqref{psi=log-phi-0}, \eqref{y4}, Lemmas \ref{bos} and \ref{ale1}, and the H\"older inequality that, for all $t\in [0,T_2]$,
\begin{equation}\label{y55}
\begin{aligned}
\|\nabla^3\phi\|_{L^2}&\leq C\big(\|\nabla^2\phi\|_{L^2}\|\boldsymbol{\psi}\|_{L^\infty}+\|\nabla\phi\|_{L^4}\|\nabla\boldsymbol{\psi}\|_{L^4}+\|\phi\|_{L^\infty}\|\nabla^2\boldsymbol{\psi}\|_{L^2}\big) \\
&\leq C\big(\|\nabla^2\phi\|_{L^2}\|\boldsymbol{\psi}\|_{L^\infty}+\|\nabla\phi\|_{H^1}\|\nabla \boldsymbol{\psi}\|_{H^1}\!+\!\|\phi\|_{L^\infty}\|\nabla^2\boldsymbol{\psi}\|_{L^2}\big)\leq Cc_0^{7\gamma-2}. 
\end{aligned}    
\end{equation}

\smallskip
\textbf{2. Estimates on $(\phi_t,\boldsymbol{\psi}_t)$.}
First,  it follows from $\eqref{li4}_3$, \eqref{y4}, 
and Lemma \ref{ale1} that, for $ t\in [0,T_2]$,
\begin{equation}\label{y555}
\begin{aligned}
\|\nabla\boldsymbol{\psi}_t\|_{L^2} &\leq C\big(\|\nabla^2\boldsymbol{w}\|_{L^2} \|\boldsymbol{\psi}\|_{L^\infty}+\|\nabla\boldsymbol{w}\|_{L^\infty} \|\nabla\boldsymbol{\psi}\|_{L^2}\big)\\
&\quad +C\big(\|\boldsymbol{w}\|_{L^\infty}\|\nabla^2 \boldsymbol{\psi}\|_{L^2}+\|\nabla^3\boldsymbol{w}\|_{L^2}\big)\\
&\leq C\big(\|\boldsymbol{\psi}\|_{L^\infty}+\|\nabla \boldsymbol{\psi}\|_{H^1}+1\big)\|\boldsymbol{w}\|_{H^3} \le Cc_0c_3.
\end{aligned}   
\end{equation}

Then, for the $L^2(\mathbb{R}^n)$-estimate of $\nabla^2\phi_t$, it follows from \eqref{psi=log-phi-0}, \eqref{jizhu133}--\eqref{y4} and \eqref{y555}, Lemmas \ref{bos} and \ref{ale1}--\ref{GN-ineq}, and the H\"older inequality that, for all $t\in [0,T_2]$,
\begin{equation}\label{nabla2phit}
\begin{aligned}
\|\nabla^2\phi_t\|_{L^2}&\leq C\big(\|\nabla \phi_t\|_{L^2}\|\boldsymbol{\psi}\|_{L^\infty}\!+\! \|\phi_t\|_{L^4}\|\nabla \boldsymbol{\psi}\|_{L^4} \!+\!\|\nabla\phi\|_{L^4}\|\boldsymbol{\psi}_t\|_{L^4}\!+\! \|\phi\|_{L^\infty}\|\nabla\boldsymbol{\psi}_t\|_{L^2}\big)\\
&\leq C\big(\|\nabla \phi_t\|_{L^2}\|\boldsymbol{\psi}\|_{L^\infty}+ \|\phi_t\|_{H^1}\|\nabla \boldsymbol{\psi}\|_{H^1}\big)\\
&\quad + C\big(\|\nabla\phi\|_{H^1}\|\boldsymbol{\psi}_t\|_{H^1}+ \|\phi\|_{L^\infty}\|\nabla\boldsymbol{\psi}_t\|_{L^2}\big) \leq Cc_0^{7\gamma-2}c_3.
\end{aligned}
\end{equation}

\smallskip
\textbf{3. Estimates on $(\phi_{tt},\boldsymbol{\psi}_{tt})$.}
We first apply  $\partial_t$ to both sides of $\eqref{li4}_1$ and $\eqref{li4}_3$, respectively, to obtain
\begin{align*}
\phi_{tt}&=-\boldsymbol{w}_t\cdot\nabla\phi-\boldsymbol{w}\cdot\nabla\phi_t-(\gamma-1)(\phi_t\diver\boldsymbol{w}+\phi\diver\boldsymbol{w}_t),\\
\boldsymbol{\psi}_{tt}&=-\sum_{l=1}^n A_l(\boldsymbol{w}_t)\partial_l\boldsymbol{\psi}-\sum_{l=1}^n A_l(\boldsymbol{w})\partial_l\boldsymbol{\psi}_t-(B(\boldsymbol{w}_t)\boldsymbol{\psi}+B(\boldsymbol{w})\boldsymbol{\psi}_t)-\nabla\diver\boldsymbol{w}_t.
\end{align*}
It follows from the above, \eqref{psi=log-phi-0}, \eqref{jizhu133}--\eqref{nabla2phit}, Lemmas \ref{bos} and \ref{ale1}, and the H\"older inequality that 
\begin{equation}\label{phi-psi-xxx1}
\begin{aligned}
\|\boldsymbol{\psi}_{tt}\|_{L^2}&\leq \|\boldsymbol{w}_t\|_{L^4}\|\nabla\boldsymbol{\psi}\|_{L^4}+\|\boldsymbol{w}\|_{L^\infty}\|\nabla\boldsymbol{\psi}_t\|_{L^2}+\|\nabla\boldsymbol{w}_t\|_{L^2}\|\boldsymbol{\psi}\|_{L^\infty}\\
&\quad +\|\nabla\boldsymbol{w}\|_{L^4}\|\boldsymbol{\psi}_t\|_{L^4}+\|\nabla^2\boldsymbol{w}_t\|_{L^2}\\
&\leq C\|\boldsymbol{w}_t\|_{H^1}\|\nabla\boldsymbol{\psi}\|_{H^1}+C\|\boldsymbol{w}\|_{H^2}\|\nabla\boldsymbol{\psi}_t\|_{L^2}+\|\nabla\boldsymbol{w}_t\|_{L^2}\|\boldsymbol{\psi}\|_{L^\infty}\\
&\quad +C\|\nabla\boldsymbol{w}\|_{H^1}\|\boldsymbol{\psi}_t\|_{H^1}+\|\nabla^2\boldsymbol{w}_t\|_{L^2}\leq C\big(c_0c_2c_3 +\|\nabla^2\boldsymbol{w}_t\|_{L^2}\big),\\
\|\phi_{tt}\|_{L^2}&\leq C\big(\|\boldsymbol{w}_t\|_{L^2}\|\nabla \phi\|_{L^\infty} +\|\boldsymbol{w}\|_{L^\infty}\|\nabla\phi_{t}\|_{L^2}\big)\\
&\quad + C\big(\|\phi_t\|_{L^\infty}\|\nabla\boldsymbol{w}\|_{L^2}+ \|\phi\|_{L^\infty}\|\nabla\boldsymbol{w}_{t}\|_{L^2}\big)\\
&\leq C\big(\|\boldsymbol{w}_t\|_{L^2}\|\nabla \phi\|_{H^2} +\|\boldsymbol{w}\|_{H^2}\|\nabla\phi_{t}\|_{L^2}\big)\\
&\quad + C\big(\|\phi_t\|_{H^2}\|\nabla\boldsymbol{w}\|_{L^2}+ \|\phi\|_{L^\infty}\|\nabla\boldsymbol{w}_{t}\|_{L^2}\big)\\
&\leq C\big(c_0^{7\gamma-2}c_2 +c_0^{7\gamma-3}c_2^2+c_0^{7\gamma-2}c_1c_3 +c_0^{7\gamma-3}c_3\big)\leq Cc_0^{7\gamma-2}c_2c_3,\\
\|\nabla\phi_{tt}\|_{L^2}&\leq C\|(\phi\boldsymbol{\psi})_{tt}\|_{L^2}\leq C\|\phi_{tt}\boldsymbol{\psi}\|_{L^2}+C\|\phi_t\boldsymbol{\psi}_{t}\|_{L^2}+C\|\phi\boldsymbol{\psi}_{tt}\|_{L^2}\\
&\leq C\|\phi_{tt}\|_{L^2}\|\boldsymbol{\psi}\|_{L^\infty}+C\|\phi_t\|_{L^\infty}\|\boldsymbol{\psi}_{t}\|_{L^2}+C\|\phi\|_{L^\infty}\|\boldsymbol{\psi}_{tt}\|_{L^2}\\
&\leq C\|\phi_{tt}\|_{L^2}\|\boldsymbol{\psi}\|_{L^\infty}+C\|\phi_t\|_{H^2}\|\boldsymbol{\psi}_{t}\|_{L^2}+C\|\phi\|_{L^\infty}\|\boldsymbol{\psi}_{tt}\|_{L^2}\\
&\leq C\big(c_0^{7\gamma-1}c_2c_3+c_0^{7\gamma-3}\|\nabla^2\boldsymbol{w}_{t}\|_{L^2}\big).
\end{aligned}
\end{equation}
Finally, the above inequalities, together with \eqref{jizhu133}, yields that, for all $t\in [0,T_2]$,
\begin{equation}
\int_0^t \big\|(\nabla\phi_{tt},\boldsymbol{\psi}_{tt})\big\|_{L^2}^2 \,\mathrm{d}s\leq Cc_3^{14\gamma+2}t+c_0^{14\gamma-6}\int_0^t\|\nabla^2\boldsymbol{w}_{t}\|_{L^2}^2\,\mathrm{d}s\leq Cc_3^{14\gamma}.
\end{equation}

The proof of  Lemma \ref{bos} is completed.
\end{proof}

Finally, we derive the uniform energy estimates for $\boldsymbol{u}$.

\begin{lem}\label{llm33}
For any $t\in [0,T_3]$ with $T_3:=\min\big\{T_2, (1+Cc_4)^{-28\gamma}\big\}$,
\begin{align*}
\|\boldsymbol{u}(t)\|_{H^1}^2+\int_0^t\big(\| \nabla \boldsymbol{u}\|_{H^1}^2+\|\boldsymbol{u}_t\|_{L^2}^2\big)\,\mathrm{d} s&\leq Cc_0^2,\\
\|(\nabla^2\boldsymbol{u},\boldsymbol{u}_t)(t)\|_{L^2}^2+\int_0^t \|(\nabla^3\boldsymbol{u},\nabla\boldsymbol{u}_t)\|_{L^2}^2\,\mathrm{d} s&\leq Cc_1^{14\gamma-6} c_2^\frac{n}{2},\\
\|(\nabla^3\boldsymbol{u},\nabla\boldsymbol{u}_t)(t)\|_{L^2}^2+\int_0^t \|(\nabla^4\boldsymbol{u},\nabla^2\boldsymbol{u}_t,\boldsymbol{u}_{tt})\|_{L^2}^2\,\mathrm{d}s&\leq Cc_2^{14\gamma-6}c_3^\frac{n}{2},\\
t\|(\nabla^4\boldsymbol{u},\nabla^2\boldsymbol{u}_t,\boldsymbol{u}_{tt})(t)\|_{L^2}^2+\int_0^t s\|(\nabla^3\boldsymbol{u}_t,\nabla\boldsymbol{u}_{tt})\|_{L^2}^2\,\mathrm{d}s&\leq Cc_0^4.
\end{align*}
\end{lem}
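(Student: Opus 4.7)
My plan is to mirror the four-step architecture of Lemma \ref{llm3}, but upgraded to the $H^3$-level, exploiting the stronger regularity of $\boldsymbol{w}$ in \eqref{vg33} and the improved bounds on $(\phi,\boldsymbol{\psi})$ obtained in Lemmas \ref{bos}--\ref{bos'}. The master equation remains $\boldsymbol{u}_t+L\boldsymbol{u}=Z(\boldsymbol{w})$ as in \eqref{eqZZ}. First I would establish an extended version of Lemma \ref{lemma-usefulZ} that, in addition to the three quantities already bounded there, controls $\|\nabla^2 Z(\boldsymbol{w})\|_{L^2}$, $\|\nabla (Z(\boldsymbol{w}))_t\|_{L^2}$, and $\|(Z(\boldsymbol{w}))_{tt}\|_{L^2}$ in terms of $(c_0,c_1,c_2,c_3,c_4)$ and appropriate norms of $\boldsymbol{w}$. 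The first two of these use Lemmas \ref{bos}--\ref{bos'} together with Lemmas \ref{ale1}--\ref{GN-ineq} and \ref{lemma-L6}--\ref{Hk-Ck-vector}, exactly as in the proof of Lemma \ref{lemma-usefulZ}. The third requires expanding $(Z(\boldsymbol{w}))_{tt}$ into terms such as $\boldsymbol{\psi}_{tt}\!\cdot\! Q(\boldsymbol{w})$, $\boldsymbol{w}_{tt}\!\cdot\!\nabla\boldsymbol{w}$, $\nabla\phi_{tt}$, and handling the rough factors via Lemma \ref{bos'}.

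The first two estimates of Lemma \ref{llm33} follow verbatim from Steps 1--3 of Lemma \ref{llm3}, since those arguments depend on the initial data only through $\|\boldsymbol{u}_0\|_{H^2}$, $\|\nabla\phi_0\|_{H^1}$, and $\|\boldsymbol{\psi}_0\|_{D^1}$, all of which are still controlled by $c_0$ under the stronger assumption \eqref{houmian33}. This yields the $L^2\!\cap\! D^1$-estimate of $\boldsymbol{u}$ and the $D^2$-estimate of $\boldsymbol{u}$ together with the $L^2$-bound of $\boldsymbol{u}_t$ and the spacetime $L^2$-bound of $(\nabla^3\boldsymbol{u},\nabla\boldsymbol{u}_t)$ on the interval $[0,T_3]$, with $T_3\leq T_2$ chosen so that the Gr\"onwall factors absorb all $c_j$-dependent constants.

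For the third estimate, I would multiply \eqref{eqzz-t}, namely $\boldsymbol{u}_{tt}+L\boldsymbol{u}_t=(Z(\boldsymbol{w}))_t$, by $\boldsymbol{u}_{tt}$ (now without the time weight $t$) and integrate over $\mathbb{R}^n$, following the structure of \eqref{lrq-pre}. The key new ingredient is that $\|\nabla\boldsymbol{u}_t\|_{L^2}$ is now finite at $t=0$: using $\boldsymbol{u}_t=-L\boldsymbol{u}+Z(\boldsymbol{w})$ and elliptic regularity (Lemma \ref{df3}) plus \eqref{houmian33}, $\boldsymbol{u}_t|_{t=0}\in H^1(\mathbb{R}^n)$ with norm controlled by $c_0$. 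Gr\"onwall's inequality then provides the pointwise bound for $\|\nabla\boldsymbol{u}_t(t)\|_{L^2}$ and the spacetime $L^2$-bound for $\boldsymbol{u}_{tt}$. Applying Lemma \ref{df3} successively to $L\boldsymbol{u}_t=-\boldsymbol{u}_{tt}+(Z(\boldsymbol{w}))_t$ and $L\boldsymbol{u}=-\boldsymbol{u}_t+Z(\boldsymbol{w})$ then yields the $L^2$-control of $\nabla^2\boldsymbol{u}_t$ and $\nabla^3\boldsymbol{u}$ at the pointwise level, and of $\nabla^4\boldsymbol{u}$ at the spacetime level, via the new estimate of $\|\nabla^2 Z(\boldsymbol{w})\|_{L^2}$.

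For the fourth estimate, I would differentiate $\boldsymbol{u}_t+L\boldsymbol{u}=Z(\boldsymbol{w})$ twice in time to obtain $\boldsymbol{u}_{ttt}+L\boldsymbol{u}_{tt}=(Z(\boldsymbol{w}))_{tt}$, multiply by $t\boldsymbol{u}_{tt}$ and integrate, producing $\tfrac{d}{dt}(t\|\boldsymbol{u}_{tt}\|_{L^2}^2)$ plus a coercive term $t\|\nabla\boldsymbol{u}_{tt}\|_{L^2}^2$, in the same spirit as Step 4 of Lemma \ref{llm3}. The Aubin--Lions trick combined with Lemma \ref{bjr} lets me take $\tau\to 0$ through a selected sequence so that the initial contribution $\tau\|\boldsymbol{u}_{tt}(\tau)\|_{L^2}^2$ vanishes. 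Finally, Lemma \ref{df3} applied to $L\boldsymbol{u}_{tt}=-\boldsymbol{u}_{ttt}+(Z(\boldsymbol{w}))_{tt}$, $L\boldsymbol{u}_t=-\boldsymbol{u}_{tt}+(Z(\boldsymbol{w}))_t$, and $L\boldsymbol{u}=-\boldsymbol{u}_t+Z(\boldsymbol{w})$ converts the bound for $\boldsymbol{u}_{ttt}$ into the desired $t$-weighted estimates for $\nabla^2\boldsymbol{u}_t$ and $\nabla^4\boldsymbol{u}$. The main obstacle lies precisely here: the term $\|(Z(\boldsymbol{w}))_{tt}\|_{L^2}$ contains $\boldsymbol{w}_{tt}\cdot\nabla\boldsymbol{w}$ and $\nabla\phi_{tt}$, which are only $\sqrt{t}$-weighted $L^2$-in-space and $L^2$-in-time quantities under \eqref{vg33} and Lemma \ref{bos'}; matching each such rough factor with the compensating $\sqrt{t}$ from the time-weighted energy identity and applying H\"older and Young inequalities judiciously will be the delicate point, and will force the final smallness condition $t\leq (1+Cc_4)^{-2}$ built into $T_3$.
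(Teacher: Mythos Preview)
Your proposal is correct and follows essentially the same route as the paper: the first two estimates are borrowed verbatim from Steps 1--3 of Lemma \ref{llm3}; the third comes from multiplying \eqref{eqzz-t} by $\boldsymbol{u}_{tt}$ without the time weight (cf.\ \eqref{lrq-pre}) and using that $\limsup_{\tau\to 0}\|\nabla\boldsymbol{u}_t(\tau)\|_{L^2}\le Cc_0^2$, followed by elliptic regularity to recover $\nabla^3\boldsymbol{u}$, $\nabla^2\boldsymbol{u}_t$, and $\nabla^4\boldsymbol{u}$; the fourth comes from the second-in-time differentiated equation $\boldsymbol{u}_{ttt}+L\boldsymbol{u}_{tt}=(Z(\boldsymbol{w}))_{tt}$ tested against $t\boldsymbol{u}_{tt}$, with Lemma \ref{bjr} handling the $\tau\to 0$ limit. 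One point the paper treats that you should flag explicitly: the energy identity for $\boldsymbol{u}_{tt}$ (the one produced by testing $\boldsymbol{u}_{ttt}+L\boldsymbol{u}_{tt}=(Z(\boldsymbol{w}))_{tt}$ against $\boldsymbol{u}_{tt}$) requires a rigorous justification via the duality $H^{-1}\times H^1$ and Lemma \ref{triple}, since $\boldsymbol{u}_{ttt}$ is only known in $L^2([\tau,T'];H^{-1})$; this is the paper's Step 3 and is not merely an Aubin--Lions compactness issue.
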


\begin{proof}
The $H^2(\mathbb{R}^n)$-estimates of $\boldsymbol{u}$ can be derived via the same argument as in Steps 1--3 of the proof of Lemma \ref{llm3}. We thus only establish the $D^3(\mathbb{R}^n)$ and  time-weighted estimates of $\boldsymbol{u}$.
We divide the proof into three steps.

\smallskip
\textbf{1. $D^3(\mathbb{R}^n)$-estimate on $\boldsymbol{u}$.} Suppose that $Z(\boldsymbol{w})$ is defined as in \eqref{eqZZ}. We first show the $L^2(\mathbb{R}^n)$-estimate of $\nabla^2 Z(\boldsymbol{w})$. Indeed, it follows from \eqref{eqZZ}, \eqref{jizhu133}, Lemmas \ref{bos}--\ref{bos'}, and \ref{ale1}, and the H\"older inequality that 
\begin{equation}\label{Z-2}
\begin{aligned}
\|\nabla^2 Z(\boldsymbol{w})\|_{L^2}&\leq C\|(\boldsymbol{\psi},\boldsymbol{w})\|_{L^\infty}\|\nabla^3\boldsymbol{w}\|_{L^2}+C\|(\nabla\boldsymbol{\psi},\nabla\boldsymbol{w})\|_{L^4}\|\nabla^2\boldsymbol{w}\|_{L^4}\\
&\quad +C\|(\nabla^2\boldsymbol{\psi},\nabla^2\boldsymbol{w})\|_{L^2}\|\nabla\boldsymbol{w}\|_{\infty}+C\|\nabla^3\phi\|_{L^2},\\
&\leq C\big(\|\boldsymbol{\psi}\|_{L^\infty}+\|\boldsymbol{w}\|_{H^2}+\|\nabla\boldsymbol{\psi}\|_{H^1}\big)\|\nabla \boldsymbol{w}\|_{H^2}+C\|\nabla^3\phi\|_{L^2}\\
&\leq Cc_2c_3+Cc_0^{7\gamma-2}\leq Cc_2^{7\gamma-3}c_3.
\end{aligned}
\end{equation}

Next, it follows from \eqref{lrq-pre}, \eqref{jizhu133}, and the Young inequality that 
\begin{align*}
&\alpha\frac{\mathrm{d}}{\mathrm{d}t}\big(\|\nabla \boldsymbol{u}_t\|_{L^2}^2+\|\diver \boldsymbol{u}_t\|_{L^2}^2\big)+\|\boldsymbol{u}_{tt}\|_{L^2}^2\\ 
&\leq C\Big(c_0c_2\|\nabla \boldsymbol{w}\|_{H^2}+c_0c_2\|\nabla\boldsymbol{w}_t\|_{L^2} + c_0\|\nabla\boldsymbol{w}_t\|_{L^2}^\frac{1}{2}\|\nabla^2\boldsymbol{w}_t\|_{L^2}^\frac{1}{2}+c_0^{7\gamma-3}c_2\Big)\|\boldsymbol{u}_{tt}\|_{L^2}\\
&\leq  C\Big(c_0c_3^\frac{1}{2}\|\nabla^2\boldsymbol{w}_t\|_{L^2}^\frac{1}{2}+c_3^{7\gamma-2}\Big)\|\boldsymbol{u}_{tt}\|_{L^2}\leq C\big(c_0^2c_3 \|\nabla^2\boldsymbol{w}_t\|_{L^2} + c_3^{14\gamma-4}\big)+ \frac{1}{8}\|\boldsymbol{u}_{tt}\|_{L^2}^2.
\end{align*}
Integrating the above over $[\tau,t]$, together with \eqref{jizhu133} and the H\"older inequality, yields that, for all $t\in [0,T_3]$ with  $T_3:=\min\big\{T_2, (1+Cc_4)^{-28\gamma}\big\}$,
\begin{equation}\label{4.8}
\begin{aligned}
&\,\|\nabla \boldsymbol{u}_t(t)\|_{L^2}^2+\int_\tau^t \|\boldsymbol{u}_{tt}\|_{L^2}^2\,\mathrm{d}s\leq C\Big(\|\nabla \boldsymbol{u}_t(\tau)\|_{L^2}^2+ c_0^2c_3\int_0^t \|\nabla^2\boldsymbol{w}_t\|_{L^2}\,\mathrm{d}s+ c_3^{14\gamma-4}t\Big)\\
&\leq C\Big(\|\nabla \boldsymbol{u}_t(\tau)\|_{L^2}^2+ c_0^2c_3\sqrt{t}\Big(\int_0^t \|\nabla^2\boldsymbol{w}_t\|_{L^2}^2\,\mathrm{d}s\Big)^\frac{1}{2} + c_3^{14\gamma-4}t\Big) \leq C(\|\nabla \boldsymbol{u}_t(\tau)\|_{L^2}^2+ c_0^2).
\end{aligned}
\end{equation}
For $\|\nabla \boldsymbol{u}_t(\tau)\|_{L^2}$, it follows from $\eqref{li4}_2$, the time continuity of $(\phi,\boldsymbol{u},\boldsymbol{\psi})$, \eqref{vg33}--\eqref{houmian33}, Lemma \ref{ale1}, and the H\"older inequality that 
\begin{align*}
\limsup_{\tau\to 0}\|\nabla \boldsymbol{u}_t(\tau)\|_{L^2}&\leq C\big(\|\nabla\boldsymbol{u}_0\|_{L^4}^2+\|\boldsymbol{u}_0\|_{L^\infty}\|\nabla^2\boldsymbol{u}_0\|_{L^2}+\|\nabla^2\phi_0\|_{L^2}+\|\nabla^3 \boldsymbol{u}_0\|_{L^2}\big)\\
&\quad +C\big(\|\nabla\boldsymbol{\psi}_0\|_{L^4}\|\nabla\boldsymbol{u}_0\|_{L^4}+\|\boldsymbol{\psi}_0\|_{L^\infty}\|\nabla^2\boldsymbol{u}_0\|_{L^2}\big)\\
&\leq C\big(\|\boldsymbol{u}_0\|_{H^2}^2+\|\nabla^2\phi_0\|_{L^2}+\|\boldsymbol{u}_0\|_{H^3}\big) \\
&\quad +C\big(\|\nabla\boldsymbol{\psi}_0\|_{H^1} +\|\boldsymbol{\psi}_0\|_{L^\infty}\big)\|\boldsymbol{u}_0\|_{H^2} \leq Cc_0^2.
\end{align*}
Thus, based on the above estimates, we let $\tau\to 0$ in \eqref{4.8} to obtain 
\begin{equation}\label{4.9}
\|\nabla \boldsymbol{u}_t(t)\|_{L^2}^2+\int_0^t \|\boldsymbol{u}_{tt}\|_{L^2}^2\,\mathrm{d}s\leq Cc_0^4 \qquad \text{for all $t\in [0,T_3]$}.
\end{equation}
Besides, it follows from \eqref{nabla3u}, \eqref{nabla2x1tu}, \eqref{jizhu133}, \eqref{4.9}, and the H\"older inequality that 
\begin{equation}\label{l2l2-utxx}
\begin{aligned}
&\|\nabla^3\boldsymbol{u}\|_{L^2}^2+\int_0^t\|\nabla^2\boldsymbol{u}_t\|_{L^2}^2\,\mathrm{d}s\\
&\leq  C\big(\|\nabla\boldsymbol{u}_t\|_{L^2}^2+c_2^{14\gamma-6}\|\nabla^3\boldsymbol{w}\|_{L^2}^\frac{n}{2}+c_2^{14\gamma-6}\big)\\
&\quad +C\int_0^t \big(\|\boldsymbol{u}_{tt}\|_{L^2}^2+c_3^6\|\nabla^2\boldsymbol{w}_t\|_{L^2}+c_3^{14\gamma-4}\big)\,\mathrm{d}s\\
&\leq  C\big(c_0^4+c_2^{14\gamma-6}c_3^\frac{n}{2}+c_2^{14\gamma-6}+c_0^4+c_3^7\sqrt{t}+c_3^{14\gamma-8}t\big)\leq Cc_2^{14\gamma-6}c_3^\frac{n}{2}.
\end{aligned}
\end{equation}

Finally, it follows from \eqref{eqzz-tuoyuan}, \eqref{jizhu133}, \eqref{Z-2}, \eqref{l2l2-utxx}, Lemma \ref{lemma-usefulZ}, and the classical regularity theory for elliptic equations in Lemma \ref{df3} that  
\begin{equation}\label{uxxxx}
\|\nabla^4 \boldsymbol{u}\|_{L^2}^2\leq C\|(\nabla^2\boldsymbol{u}_{t},\nabla^2Z(\boldsymbol{w}))\|_{L^2}^2 \leq C\big(\|\nabla^2\boldsymbol{u}_{t}\|_{L^2}^2+c_2^{14\gamma-6}c_3^2\big). 
\end{equation}
Integrating above over $[0,t]$ yields that, for all $t\in [0,T_3]$,
\begin{equation*}
\int_0^t\|\nabla^4 \boldsymbol{u}\|_{L^2}^2\,\mathrm{d}s\leq C\int_0^t\|\nabla^2\boldsymbol{u}_t\|_{L^2}^2\,\mathrm{d}s+ c_2^{14\gamma-6}c_3^2t\leq Cc_2^{14\gamma-6}c_3^\frac{n}{2}.  
\end{equation*}

\smallskip
\textbf{2. Time-weighted estimates on $\boldsymbol{u}$.} We first establish the $L^2(\mathbb{R}^n)$-estimate of $(Z(\boldsymbol{w}))_{tt}$. Indeed, it follows from \eqref{eqZZ}, \eqref{jizhu133}, \eqref{phi-psi-xxx1}, Lemmas \ref{bos}--\ref{bos'} and \ref{ale1}, and the H\"older inequality that, for all $t\in [0,T_3]$,
\begin{equation}\label{ztt-2}
\begin{aligned}
\|(Z(\boldsymbol{w}))_{tt}\|_{L^2}&\leq C\|(\boldsymbol{\psi},\boldsymbol{w})\|_{L^\infty} \|\nabla\boldsymbol{w}_{tt}\|_{L^2}+C\|(\boldsymbol{\psi}_t,\boldsymbol{w}_t)\|_{L^4} \|\nabla\boldsymbol{w}_{t}\|_{L^4}\\
&\quad +C\|(\boldsymbol{\psi}_{tt},\boldsymbol{w}_{tt})\|_{L^2} \|\nabla\boldsymbol{w}\|_{L^\infty}+C\|\nabla\phi_{tt}\|_{L^2}\\
&\leq C\big(\|\boldsymbol{\psi}\|_{L^\infty}+\|\boldsymbol{w}\|_{H^2}\big) \|\nabla\boldsymbol{w}_{tt}\|_{L^2}+C\|(\boldsymbol{\psi}_t,\boldsymbol{w}_t)\|_{H^1} \|\nabla\boldsymbol{w}_{t}\|_{H^1}\\
&\quad +C\|(\boldsymbol{\psi}_{tt},\boldsymbol{w}_{tt})\|_{L^2} \|\nabla\boldsymbol{w}\|_{H^2}+C\|\nabla\phi_{tt}\|_{L^2}\\
&\leq C\big(c_2\|\nabla\boldsymbol{w}_{tt}\|_{L^2}\!+\!c_3^3\!+\!c_3^2\|\nabla^2\boldsymbol{w}_{t}\|_{L^2} \!+\!c_3\|(\boldsymbol{\psi}_{tt},\boldsymbol{w}_{tt})\|_{L^2}\!+\!\|\nabla\phi_{tt}\|_{L^2}\big),\\
&\leq Cc_3^{7\gamma+1}\big(\|(\nabla\boldsymbol{w}_{tt},\nabla^2\boldsymbol{w}_{t},\boldsymbol{w}_{tt})\|_{L^2}+1\big).
\end{aligned}
\end{equation}

Next, formally applying $\partial_{t}^2$ to \eqref{eqZZ} gives
\begin{equation}\label{eqzz-tt}
\boldsymbol{u}_{ttt}+L\boldsymbol{u}_{tt}=(Z(\boldsymbol{w}))_{tt}.    
\end{equation}
Then multiplying above by $\boldsymbol{u}_{tt}$ and integrating over $I$ lead to 
the following energy equality: for $t\in [\tau,T']$ ($\tau>0$),
\begin{equation}\label{energy-uttt}
\frac{1}{2}\frac{\mathrm{d}}{\mathrm{d}t}\|\boldsymbol{u}_{tt}\|_{L^2}^2+\alpha\|\nabla\boldsymbol{u}_{tt}\|_{L^2}^2+\alpha\|\diver\boldsymbol{u}_{tt}\|_{L^2}^2=\int_{\mathbb{R}^n} (Z(\boldsymbol{w}))_{tt}\cdot\boldsymbol{u}_{tt}\,\mathrm{d}\boldsymbol{x}.    
\end{equation}
Here, we temporarily assume that the above energy
equality holds. The rigorous proof of \eqref{energy-uttt} will be given in 
Step 3 below.

Then it follows from \eqref{ztt-2}, \eqref{energy-uttt}, and the Young inequality that 
\begin{equation}\label{cal-uttt}
\begin{aligned}
&\frac{\mathrm{d}}{\mathrm{d}t}\|\boldsymbol{u}_{tt}\|_{L^2}^2+\alpha\|\nabla\boldsymbol{u}_{tt}\|_{L^2}^2+\alpha\|\diver\boldsymbol{u}_{tt}\|_{L^2}^2\\ 
&\leq  Cc_3^{14\gamma+2}t\big(\|(\nabla\boldsymbol{w}_{tt},\nabla^2\boldsymbol{w}_{t},\boldsymbol{w}_{tt})\|_{L^2}^2+1\big)+\frac{C}{t}\|\boldsymbol{u}_{tt}\|_{L^2}^2. 
\end{aligned}
\end{equation}
Multiplying the above by $t$ and integrating the resulting inequality over $[\tau,t]$ ($\tau>0$), we obtain from \eqref{jizhu133} and \eqref{4.9} that, for all $t\in [\tau,T_3]$,
\begin{equation}\label{sss}
\begin{aligned}
&t\|\boldsymbol{u}_{tt}(t)\|_{L^2}^2+\int_\tau^ts\|\nabla\boldsymbol{u}_{tt}\|_{L^2}^2\,\mathrm{d}s\\
&\leq  \tau\|\boldsymbol{u}_{tt}(\tau)\|_{L^2}^2+Cc_3^{14\gamma+2} \Big(\int_0^ts\big(s\|\nabla\boldsymbol{w}_{tt}\|_{L^2}^2+c_4^2\big)\,\mathrm{d}s+t^2\Big) +C\int_0^t \|\boldsymbol{u}_{tt}\|_{L^2}^2\,\mathrm{d}s\\
&\leq  \tau\|\boldsymbol{u}_{tt}(\tau)\|_{L^2}^2+C\big(c_3^{14\gamma+2}c_4^2t +c_3^{14\gamma+2}c_4^2t^2+c_0^4\big)\leq \tau\|\boldsymbol{u}_{tt}(\tau)\|_{L^2}^2+Cc_0^4.
\end{aligned}
\end{equation}
For $\tau\|\boldsymbol{u}_{tt}(\tau)\|_{L^2}$, by \eqref{4.9} and Lemma \ref{bjr}, 
there exists a sequence $\{\tau_k\}_{k=1}^\infty$ such that 
\begin{equation*}
\tau_k\to 0, \quad\, \tau_k\|\boldsymbol{u}_{tt}(\tau_k)\|_{L^2}\to 0 \qquad\,\, \text{as $k\to\infty$}.
\end{equation*}
Thus, setting $\tau=\tau_k$ and letting $\tau_k\to 0$ in \eqref{sss} yield that, for all $t\in [0,T_3]$,
\begin{equation}\label{4.19}
t\|\boldsymbol{u}_{tt}(t)\|_{L^2}^2+\int_0^ts\|\nabla\boldsymbol{u}_{tt}\|_{L^2}^2\,\mathrm{d}s\leq Cc_0^4.
\end{equation}

Next, it follows from \eqref{nabla2x1tu}, \eqref{jizhu133}, \eqref{4.9}, 
\eqref{uxxxx}, and \eqref{4.19} that, for all $t\in [0,T_3]$,
\begin{equation}\label{utxx-uxxxx}
\begin{aligned}
t\|\nabla^2\boldsymbol{u}_t\|_{L^2}^2 &\leq Ct\big(\|\boldsymbol{u}_{tt}\|_{L^2}^2+c_3^6\|\nabla^2\boldsymbol{w}_t\|_{L^2}+c_3^{14\gamma-6}\big) \leq C\big(c_0^4+ c_3^7\sqrt{t}\big)\leq Cc_0^4,\\
t\|\nabla^4 \boldsymbol{u}\|_{L^2}^2&\leq Ct\big(\|\nabla^2\boldsymbol{u}_{t}\|_{L^2}^2+c_2^{14\gamma-6}c_3^2\big)\leq Cc_0^4. 
\end{aligned}    
\end{equation}

Finally, we establish the $L^2(\mathbb{R}^n)$-estimate of $\nabla^3\boldsymbol{u}_t$. First, it follows from \eqref{eqZZ}, \eqref{jizhu133}, Lemmas \ref{bos}--\ref{bos'} and \ref{ale1}, and the H\"older inequality that
\begin{align*}
\|\nabla (Z(\boldsymbol{w}))_t\|_{L^2}&\leq C\big(\|(\nabla\boldsymbol{\psi}_t,\nabla\boldsymbol{w}_t)\|_{L^2}\|\nabla\boldsymbol{w}\|_{L^\infty}+\|(\boldsymbol{\psi}_t,\boldsymbol{w}_t)\|_{L^4}\|\nabla^2\boldsymbol{w}\|_{L^4}\big)\\
&\quad +C\big(\|(\nabla\boldsymbol{\psi},\nabla\boldsymbol{w})\|_{L^4}\|\nabla\boldsymbol{w}_t\|_{L^4}\!+\!\|(\boldsymbol{\psi},\boldsymbol{w})\|_{L^\infty}\|\nabla^2\boldsymbol{w}_t\|_{L^2}\big)\!+\!\|\nabla^2\phi_t\|_{L^2}\\
&\leq C\big(\|(\boldsymbol{\psi}_t,\boldsymbol{w}_t)\|_{H^1}\|\nabla\boldsymbol{w}\|_{H^2} +\|(\nabla\boldsymbol{\psi},\nabla\boldsymbol{w})\|_{H^1}\|\nabla\boldsymbol{w}_t\|_{H^1}\big)\\
&\quad +C\big(\|\boldsymbol{\psi}\|_{L^\infty}+\|\boldsymbol{w}\|_{H^2}\big)\|\nabla^2\boldsymbol{w}_t\|_{L^2}+\|\nabla^2\phi_t\|_{L^2}\\
&\leq Cc_3^2\|\nabla^2\boldsymbol{w}_t\|_{L^2}+Cc_3^{7\gamma-1}.
\end{align*}
Next, we obtain from the above, \eqref{eqzz-t-tuoyuan}, \eqref{jizhu133}, and the classical regularity  theory for elliptic equations in Lemma \ref{df3} that
\begin{align*}
\|\nabla^3\boldsymbol{u}_t\|_{L^2}\leq C\big\|\big(\nabla\boldsymbol{u}_{tt},\nabla(Z(\boldsymbol{w}))_{t}\big)\big\|_{L^2}
\leq C\big(\|\nabla\boldsymbol{u}_{tt}\|_{L^2} + c_3^2\|\nabla^2\boldsymbol{w}_t\|_{L^2}+ c_3^{7\gamma-1}\big),
\end{align*}
which, along with \eqref{4.19}, implies that, for all $t\in [0,T_3]$,
\begin{align*}
\int_0^t s\|\nabla^3\boldsymbol{u}_t\|_{L^2}^2\,\mathrm{d}s
&\leq C\int_0^t s\big(\|\nabla\boldsymbol{u}_{tt}\|_{L^2}^2 + c_3^4\|\nabla^2\boldsymbol{w}_t\|_{L^2}^2+ c_3^{14\gamma-2}\big)\,\mathrm{d}s\\
&\leq C\int_0^t s \|\nabla\boldsymbol{u}_{tt}\|_{L^2}^2\,\mathrm{d}s+Cc_3^6t+Cc_3^{14\gamma-2}t^2\leq Cc_0^4.
\end{align*}

\smallskip
\textbf{3. Justification of the energy equality \eqref{energy-uttt}.} Let $\boldsymbol{\varphi}\in H^1(\mathbb{R}^n)$ be any vector valued test function. We can first multiply \eqref{eqzz-t} by $\boldsymbol{\varphi}$ and then integrate the resulting equality over $\mathbb{R}^n$ to obtain
\begin{equation*}
\int_{\mathbb{R}^n}\boldsymbol{u}_{tt}\cdot\boldsymbol{\varphi}\,\mathrm{d}\boldsymbol{x}=-\alpha\int_{\mathbb{R}^n}(\nabla\boldsymbol{u}_t:\nabla\boldsymbol{\varphi}+\diver\boldsymbol{u}_t \diver\boldsymbol{\varphi})\,\mathrm{d}\boldsymbol{x}+\int_{\mathbb{R}^n}(Z(\boldsymbol{w}))_{t}\cdot\boldsymbol{\varphi}\,\mathrm{d}\boldsymbol{x}.
\end{equation*}

Differentiate the above with respect to $t$. Then it follows from 
the calculation of \eqref{cal-uttt} and {\it a priori} assumptions $(\nabla\boldsymbol{u}_{tt},\nabla\phi_{tt},\boldsymbol{\psi}_{tt})\in L^2([\tau,T'];L^2(\mathbb{R}^n))$ shown in Lemma \ref{lem133} that 
\begin{equation}
\frac{\mathrm{d}}{\mathrm{d}t}\int_{\mathbb{R}^n}\boldsymbol{u}_{tt}\cdot\boldsymbol{\varphi}\,\mathrm{d}\boldsymbol{x}\leq F(t)\|\boldsymbol{u}_{tt}\|_{H^1} 
\end{equation}
holds for $t\in [\tau,T']$ and for some positive function $F(t)\in L^2(\tau,T')$. According to Lemma 1.1 on \cite[page 250]{temam}, the above equality implies that $\boldsymbol{u}_{ttt}\in L^2([\tau,T'];H^{-1}(\mathbb{R}^n))$. \eqref{energy-uttt} thus follows directly from the following identity, due to Lemma \ref{triple}:
\begin{equation*}
\frac{\mathrm{d}}{\mathrm{d}t}\int_{\mathbb{R}^n}|\boldsymbol{u}_{tt}|^2 \,\mathrm{d}\boldsymbol{x}=2\langle \boldsymbol{u}_{ttt},\boldsymbol{u}_{tt}\rangle_{H^{-1}(\mathbb{R}^n)\times H^1(\mathbb{R}^n)} \qquad \text{for {\it a.e.} $t\in (\tau,T')$}.
\end{equation*}
The proof of Lemma \ref{llm33} is completed.
\end{proof} 

\smallskip
Now, choose $T^*$ 
and constants $c_i$ $(i=1,2,3,4)$ as
\begin{align*}
&T^*=T_3,\qquad c_1=C^\frac{1}{2}c_0, \qquad c_2=C^\frac{2}{4-n}c_1^\frac{28\gamma-12}{4-n}=C^\frac{14\gamma-4}{4-n}c_0^\frac{28\gamma-12}{4-n},\\
&c_3=c_4=C^\frac{2}{4-n}c_2^\frac{28\gamma-12}{4-n}= C^\frac{8-2n+(14\gamma-4)(28\gamma-12)}{(4-n)^2}c_0^\frac{(28\gamma-12)^2}{(4-n)^2} .
\end{align*}
Then it follows from Lemmas \ref{bos}--\ref{llm33} that,  for $t\in [0,T^*]$,
\begin{equation}\label{lgg33}
\begin{aligned}
\|\boldsymbol{u}(t)\|_{H^1}^2+\int_0^t\big(\|\nabla \boldsymbol{u}\|_{H^1}^2+\|\boldsymbol{u}_t\|_{L^2}^2\big)\,\mathrm{d} s&\leq c_1^2,\\
\|(\nabla^2\boldsymbol{u},\boldsymbol{u}_t)(t)\|_{L^2}^2+\int_0^t \|(\nabla^3\boldsymbol{u},\nabla\boldsymbol{u}_t)\|_{L^2}^2\,\mathrm{d} s&\leq c_2^2,\\
\|(\nabla^3\boldsymbol{u},\nabla\boldsymbol{u}_t)(t)\|_{L^2}^2+\int_0^t \|(\nabla^4\boldsymbol{u},\nabla^2\boldsymbol{u}_t,\boldsymbol{u}_{tt})\|_{L^2}^2\,\mathrm{d}s&\leq c_3^2,\\
t\|(\nabla^4\boldsymbol{u},\nabla^2\boldsymbol{u}_t,\boldsymbol{u}_{tt})(t)\|_{L^2}^2+\int_0^t s\|(\nabla^3\boldsymbol{u}_t,\nabla\boldsymbol{u}_{tt})\|_{L^2}^2\,\mathrm{d}s&\leq c_4^2,\\
\big\|\phi^\frac{1}{\gamma-1}(t)\big\|_{L^1}+\|(\phi,\boldsymbol{\psi})(t)\|_{L^\infty}+\big\|(\nabla\phi,\phi_t)(t)\big\|_{H^2}+\big\|(\nabla\boldsymbol{\psi},\boldsymbol{\psi}_t)(t)\big\|_{H^1}&\leq Cc_3^{7\gamma-1},\\[2pt]
\|\phi_{tt}(t)\|_{L^2}^2+\int_0^t \big\|(\nabla\phi_{tt},\boldsymbol{\psi}_{tt})\big\|_{L^2}^2 \,\mathrm{d}s&\leq Cc_3^{14\gamma}.
\end{aligned}
\end{equation}

\subsubsection{Proof of {\rm Theorem \ref{thh133}}}\label{bani33} With the help of the uniform estimates \eqref{lgg33}, the existence and uniqueness of the $3$-order regular solutions can be derived by an argument similar to that in Steps 1--2 in \S \ref{bani}. To show that the $3$-order regular solutions are actually classical ones, we can follow the same argument as in Steps 2--3 in \S\ref{soloo2}. We omit the details here for brevity.

\subsubsection{Proof of {\rm Theorem \ref{zth2}}}\label{bani44} 
Theorem \ref{zth2} can be proved by the same argument as in \S\ref{bani2}. We omit the details here for brevity.

\appendix

\section{Some Basic Lemmas}\label{appA}

This appendix lists some useful lemmas that have been used frequently in the previous sections. The first is the classical  Sobolev embedding theorem.  

\begin{lem}[\cite{af}]\label{ale1}
Let $\Omega \subset \mathbb{R}^n$ $(n\in \mathbb{N}^*)$ be a domain with smooth boundary, and $f\in W^{k,p}(\Omega)$ for some $k\in \mathbb{N}^*$ and $p\in [1,\infty]$. 
\begin{enumerate}
\item[$\mathrm{(i)}$] 
Let $kp\leq n$. Then $W^{k,p}(\Omega)\hookrightarrow L^s(\Omega)$ for all $s\in\big[p,\frac{np}{n-kp}\big]$ if $kp<n$ and all $s\in[p,\infty)$ if $kp=n$, and there exists a constant $C_1>0$ depending only on $(k,p,s,n,\Omega)$ such that 
\begin{equation*}
\|f\|_{L^s(\Omega)}\leq C_1\|f\|_{W^{k,p}(\Omega)}.
\end{equation*}
In particular, if $kp<n$, $s=\frac{np}{n-kp}$, 
and $f\in D^{k,p}\cap L^{\frac{np}{n-kp}}$, then 
\begin{equation}\label{class-L6}
\|f\|_{L^{\frac{np}{n-kp}}(\Omega)}\leq C_1\|\nabla^k f\|_{L^p(\Omega)}.
\end{equation}
\item[$\mathrm{(ii)}$] 
Let $(k,p)=(n,1)$. Then $W^{n,1}(\Omega)\hookrightarrow C(\bar\Omega)$ and  there exists a constant $C_2>0$ depending only on $(n,\Omega)$ such that
\begin{equation*}
\|f\|_{L^\infty(\Omega)}\leq C_2\|f\|_{W^{n,1}(\Omega)}. 
\end{equation*}
\item[$\mathrm{(iii)}$] 
Let $kp>n$. Then $W^{k,p}(\Omega)\hookrightarrow C^\ell(\bar\Omega)$ for all $\ell\in \mathbb{N}$ and $0\leq \ell<k-n/p$,  and  there exists a  constant $C_3>0$ depending only on $(k,p,\ell,n,\Omega)$ such that
\begin{equation*}
\max_{0\leq j\leq \ell}\|\nabla ^j f\|_{L^\infty(\Omega)}\leq C_3\|f\|_{W^{k,p}(\Omega)}, 
\end{equation*}
where $C^\ell(\bar\Omega)$ $(\ell\in \mathbb{N},\,C(\bar \Omega)=C^0(\bar\Omega))$ denotes the space of all functions $f$ for which $\nabla^j f$ $(0\leq j\leq \ell)$ are bounded and uniformly continuous in $\Omega$.  In particular, the following inequality holds for $f=f(r)\in H^1(0,R)$ $(R>0)${\rm :}
\begin{equation}\label{impr}
\|f\|_{L^\infty(0,R)}^2 \leq (1+R^{-1})\|f\|_{L^2(0,R)}^2+\|f_r\|_{L^2(0,R)}^2.
\end{equation}
\end{enumerate}
Moreover, if $\Omega=\mathbb{R}^n$, then the above constants $(C_1,C_2,C_3)$ are independent of $\Omega$.

\end{lem}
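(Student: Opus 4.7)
Since Lemma~\ref{ale1} is the classical Sobolev embedding theorem, attributed to \cite{af}, the plan is not to develop new machinery but to outline how each part follows from standard techniques, with particular care for the one piece of the statement that is explicit and non-textbook, namely the 1D inequality \eqref{impr} with the sharp-looking constant $1+R^{-1}$. In broad outline, I would reduce everything to the case $\Omega=\mathbb{R}^n$ via a bounded linear extension operator (available because $\partial\Omega$ is smooth), handle the core Sobolev/Morrey inequalities on $\mathbb{R}^n$, and then verify the $\Omega=\mathbb{R}^n$ claim that the constants $C_1,C_2,C_3$ are intrinsic by observing that no extension is then needed and that the underlying estimates are scale invariant.

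For part~$\mathrm{(i)}$, the base case is the Gagliardo--Nirenberg--Sobolev inequality $\|f\|_{L^{np/(n-p)}(\mathbb{R}^n)}\leq C\|\nabla f\|_{L^p(\mathbb{R}^n)}$ for $1\leq p<n$, proved by writing $f$ as a product of $n$ one-dimensional line integrals and applying the general H\"older inequality. Higher-order versions ($k\geq 2$) follow by iterating the first-order embedding on $\nabla^{k-1} f$, and the critical endpoint $kp=n$ (which only gives embedding into every finite $L^s$, not $L^\infty$) is handled by a separate logarithmic/Trudinger-type argument after truncation. For part~$\mathrm{(ii)}$, the key identity for $f\in C_{\rm c}^\infty(\mathbb{R}^n)$ is
\[
f(\boldsymbol{x})=\int_{-\infty}^{x_1}\!\!\cdots\!\int_{-\infty}^{x_n}\partial_1\cdots\partial_n f(\boldsymbol{y})\,\mathrm{d}\boldsymbol{y},
\]
which immediately yields $\|f\|_{L^\infty}\leq \|\partial_1\cdots\partial_n f\|_{L^1}\leq \|f\|_{W^{n,1}}$; a density argument and the extension operator extend this to $W^{n,1}(\Omega)$ with uniform continuity of the limit.

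For part~$\mathrm{(iii)}$, I would invoke Morrey's inequality: when $p>n$, one has the pointwise Campanato-type bound $|f(\boldsymbol{x})-f(\boldsymbol{y})|\leq C|\boldsymbol{x}-\boldsymbol{y}|^{1-n/p}\|\nabla f\|_{L^p}$, obtained by comparing $f$ with its averages over balls of radius $|\boldsymbol{x}-\boldsymbol{y}|$. Iterating on $\nabla^\ell f$ yields $W^{k,p}\hookrightarrow C^\ell(\bar\Omega)$ for every integer $\ell<k-n/p$. The one item deserving an explicit computation is the quantitative 1D inequality \eqref{impr}: for $f\in H^1(0,R)$, fix $x\in(0,R)$ and write $f(x)^2=f(y)^2+2\int_y^x f\,f_r\,\mathrm{d}r$; averaging over $y\in(0,R)$ and applying Cauchy--Schwarz gives
\[
R\,f(x)^2\leq \|f\|_{L^2(0,R)}^2+2R\|f\|_{L^2(0,R)}\|f_r\|_{L^2(0,R)},
\]
after which a single application of the elementary inequality $2ab\leq a^2+b^2$ produces precisely the claimed constant $1+R^{-1}$.

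The main ``obstacle'' in writing this up in full is not conceptual but cosmetic: tracking that all constants $C_1,C_2,C_3$ are independent of $\Omega$ in the case $\Omega=\mathbb{R}^n$, which amounts to observing that the extension operator, the only source of $\Omega$-dependence in the argument for bounded $\Omega$, is simply absent on the whole space, while each of the underlying inequalities (Gagliardo--Nirenberg--Sobolev, the product-of-line-integrals representation, Morrey) is dimensionally homogeneous and dilation invariant on $\mathbb{R}^n$. Since every nontrivial analytic step is already contained in \cite{af}, I would be content to cite the textbook proof for the general statement and include only the short direct verification of \eqref{impr} above, which is the specific inequality invoked repeatedly throughout \S\ref{section-upper-density}--\S\ref{se46}.
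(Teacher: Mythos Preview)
Your proposal is correct and matches the paper's approach: the paper likewise cites \cite{af} for parts (i)--(iii) and only proves \eqref{impr} directly, via the same fundamental-theorem-of-calculus identity $f(r)^2=f(\tilde r)^2+2\int_{\tilde r}^r ff_r$ followed by averaging in $\tilde r$ and the elementary inequality $2ab\leq a^2+b^2$. The only cosmetic difference is that the paper applies Young's inequality before averaging while you average first and then apply it; both yield the identical constant $1+R^{-1}$.
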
 
\begin{proof}
We only give the proof for inequality \eqref{impr}. It follows from  the fundamental theorem of calculus and the Young inequality that, for all $0\leq \tilde r<r\leq R$,
\begin{align*}
|f(r)|^2&=|f(\tilde r)|^2+2\int_{\tilde r}^r f(z)f_r(z)\,\mathrm{d}z\\
&\leq |f(\tilde r)|^2+\|f\|_{L^2(0,R)}^2+\|f_r\|_{L^2(0,R)}^2.
\end{align*}
Then integrating the above over $[0,R]$ with respect to $\tilde r$ yields
\begin{align*}
R|f(r)|^2&\leq\|f\|_{L^2(0,R)}^2+R\|f\|_{L^2(0,R)}^2+R\|f_r\|_{L^2(0,R)}^2,
\end{align*}
which leads to 
\begin{equation*}
|f(r)|^2\leq  (1+R^{-1})\|f\|_{L^2(0,R)}^2+\|f_r\|_{L^2(0,R)}^2 \qquad 
\text{for all $r\in [0,R]$}.
\end{equation*}
\end{proof}

The second lemma concerns the well-known Gagliardo-Nirenberg inequality.  

\begin{lem}[\cite{ln}]\label{GN-ineq}
Assume that  $f\in L^{p}(\mathbb{R}^n)\cap D^{\ell,q}(\mathbb{R}^n)$ for   $1 \leq p,  q \leq \infty$.  Let real numbers $(\epsilon,b)$ and  natural numbers $(n,\ell,j)$ satisfy
\begin{equation*}
\frac{1}{{b}} = \frac{j}{n} + \Big( \frac{1}{q} - \frac{\ell}{n} \Big) \epsilon + \frac{1 - \epsilon}{p}, \qquad \frac{j}{\ell} \leq \epsilon \leq 1.
\end{equation*}
Then $f\in D^{j,{b}}(\mathbb{R}^n)$, and  there exists $C>0$ depending only on $(\ell,n,j,p,q,\epsilon)$ such that
\begin{equation*} 
\| \nabla^{j} f \|_{L^{{b}}} \leq C \| \nabla^{\ell} f \|_{L^{q}}^{\epsilon} \| f \|_{L^{p}}^{1 - \epsilon}.
\end{equation*}
Moreover, if $j = 0$, $\ell q < n$, and $p = \infty$, then it is necessary to make the additional assumption that either f tends to zero at infinity or that f lies in $L^s(\mathbb{R}^n)$ for some finite $s > 0${\rm;} if $1 < q < \infty$ and $\ell -j -n/q$ is a non-negative integer, then it is necessary to assume also that $\epsilon \neq 1$.
\end{lem}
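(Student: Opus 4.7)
My plan is to prove the inequality by reducing to an endpoint Sobolev-type estimate and then obtaining the full family of inequalities by interpolation and iteration. First I would establish the base case $j=0$, $\ell=1$, which is the classical Sobolev-Gagliardo-Nirenberg inequality
\begin{equation*}
\|f\|_{L^b}\le C\|\nabla f\|_{L^q}^{\epsilon}\|f\|_{L^p}^{1-\epsilon},\qquad \tfrac{1}{b}=\epsilon\bigl(\tfrac{1}{q}-\tfrac{1}{n}\bigr)+\tfrac{1-\epsilon}{p},\ \ 0\le\epsilon\le 1.
\end{equation*}
The endpoint $\epsilon=1$, $p=q=1$, $b=n/(n-1)$, namely $\|f\|_{L^{n/(n-1)}}\le C\|\nabla f\|_{L^1}$, is proved for $f\in C_{\rm c}^\infty(\mathbb{R}^n)$ by the slicing argument: write $|f(\boldsymbol{x})|\le \int_{-\infty}^{x_i}|\partial_i f|\,\mathrm{d}x_i$ for each coordinate direction $i=1,\dots,n$, take the geometric mean of the $n$ resulting bounds, and integrate successively using the Loomis-Whitney inequality.

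Next I would extend the endpoint to $1<q<n$ by applying it to $|f|^\sigma$ for a suitably chosen $\sigma>1$: this gives $\||f|^\sigma\|_{L^{n/(n-1)}}\le C\sigma\||f|^{\sigma-1}\nabla f\|_{L^1}$, and then Hölder's inequality on the right and rearrangement yield $\|f\|_{L^{nq/(n-q)}}\le C\|\nabla f\|_{L^q}$. With this Sobolev endpoint available, the general $\epsilon\in[0,1]$ case follows from a Hölder interpolation between $\|f\|_{L^{nq/(n-q)}}\le C\|\nabla f\|_{L^q}$ (the $\epsilon=1$ endpoint) and the trivial bound $\|f\|_{L^p}\le\|f\|_{L^p}$ (the $\epsilon=0$ endpoint), since the scaling relation on exponents is precisely the linear interpolation. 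The cases $q=n$ and $q>n$ are handled similarly using truncation and the $W^{1,q}\hookrightarrow L^r$ embeddings from Lemma \ref{ale1}, with the additional hypothesis on vanishing at infinity invoked exactly in the critical situations indicated in the statement.

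For higher $\ell\ge 2$ and intermediate $j$ with $j/\ell\le\epsilon\le 1$, I would proceed by iteration: applying the $(j,\ell)=(k-1,k)$ case gives $\|\nabla^{k-1}f\|_{L^{\tilde b}}\le C\|\nabla^k f\|_{L^q}^{\mu}\|f\|_{L^p}^{1-\mu}$ for appropriate $\tilde b,\mu$, and successive application to the gradient $\nabla f$ in place of $f$ converts $\ell$-th order bounds into $j$-th order bounds. The general exponent $\epsilon$ is then obtained by a second Hölder interpolation between the endpoint values $\epsilon=j/\ell$ and $\epsilon=1$, exploiting the linearity of the scaling relation $\tfrac{1}{b}=\tfrac{j}{n}+(\tfrac{1}{q}-\tfrac{\ell}{n})\epsilon+\tfrac{1-\epsilon}{p}$ in $\epsilon$.

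The main obstacle is the bookkeeping required to verify that the scaling exponents match at every step of the iteration, and in particular isolating the delicate borderline cases ($q=n$ with $j=0$, and $1<q<\infty$ with $\ell-j-n/q\in\mathbb{N}_0$) where the additional decay or integrability assumption on $f$ is genuinely needed to rule out constants or polynomial growth that would otherwise defeat the inequality. Once these exceptional cases are isolated, a density argument extends the estimate from $C_{\rm c}^\infty(\mathbb{R}^n)$ to all admissible $f\in L^p(\mathbb{R}^n)\cap D^{\ell,q}(\mathbb{R}^n)$, completing the proof.
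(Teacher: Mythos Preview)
The paper does not prove this lemma; it is stated as a known result cited from Nirenberg's original paper \cite{ln}, with no proof provided. Your outline is the standard route to the Gagliardo--Nirenberg inequality (Loomis--Whitney slicing for the $L^1$ Sobolev endpoint, bootstrap to $L^q$ via powers $|f|^\sigma$, H\"older interpolation for intermediate $\epsilon$, and iteration for higher $\ell$), and it is essentially what one finds in Nirenberg's paper and in standard texts, so there is nothing to compare against here.
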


The third lemma is on the Hardy inequality.
\begin{lem}[\cite{brown,opic}]\label{hardy}  
The following statements hold{\rm :}
\begin{enumerate}
\item[$\mathrm{(i)}$] Let $q\in [2,\infty)$ and $b>0$. Assume that $f=f(r)$ is a function defined on $[0,b]$ such that $r^{K +1+\frac{1}{p}-\frac{1}{q}}(f,f_r)\in L^q(0,b)$ for some $p\in [q,\infty]$ and $K>-\frac{1}{p}$ $(K>0$ if $p=\infty)$. Then $r^K f\in L^p(0,b)$ and the following inequalities hold{\rm :}
\begin{equation*}
\begin{aligned}
\|r^K f\|_{L^p(0,b)}&\leq C(K,p,q,b)\big\|r^{K+1+\frac{1}{p}-\frac{1}{q}}(f,f_r)\big\|_{L^q(0,b)} \qquad\text{if }p\in [q,\infty),\\
\|r^K f\|_{L^\infty(0,b)}&\leq C(K,q,b) \big\|r^{K+1-\frac{1}{q}}(f,f_r)\big\|_{L^q(0,b)}  \qquad\qquad\text{if }p=\infty,
\end{aligned}
\end{equation*}
where $C(K,p,q,b)$ and $C(K,q,b)$ are positive constants depending only on $(K,p,q,b)$ and  $(K,q,b)$, respectively. In particular, if $r^{K +1-\frac{1}{q}}(f,f_r)\in L^q(0,b)$ for some $K>0$, then $r^K f\in C([0,b])$. 
\item[$\mathrm{(ii)}$] Let $f=f(r)\in W^{1,p}(0,\infty)$ for some $p\in (1,\infty)$ with $f|_{r=0}=0$. Then $\frac{f}{r}\in L^p(0,\infty)$ and the following inequality holds{\rm :} 
\begin{equation*}
\Big\|\frac{f}{r}\Big\|_{L^p(0,\infty)}\leq C(p)\|f_r\|_{L^p(0,\infty)}.
\end{equation*}
\end{enumerate}

\end{lem}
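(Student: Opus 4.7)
The plan is to prove part (ii) from scratch by the classical integration-by-parts argument, and to reduce the weighted inequalities in part (i) to a single integration-by-parts identity combined with one-dimensional (weighted) Sobolev embeddings of Hardy--Kufner--Opic type.

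For part (ii), since $C_c^\infty((0,\infty))$ is dense in the subspace of $W^{1,p}(0,\infty)$ with vanishing trace at the origin, I would first verify the inequality for $f\in C_c^\infty((0,\infty))$ and then pass to the limit. For such $f$, I would use the identity
\begin{equation*}
\frac{|f|^p}{r^p} = -\frac{1}{p-1}\frac{\mathrm{d}}{\mathrm{d}r}\Big(\frac{|f|^p}{r^{p-1}}\Big) + \frac{p}{p-1}\frac{|f|^{p-2}f\, f_r}{r^{p-1}},
\end{equation*}
integrate over $(0,\infty)$, observe that the total-derivative term contributes $0$ (its primitive vanishes at $\infty$ by compact support, and at $0$ since $p-1<p$ and $f(0)=0$), and then apply H\"older's inequality to the remaining integral with conjugate pair $(p,p/(p-1))$. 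This yields $\|f/r\|_{L^p(0,\infty)}\le\frac{p}{p-1}\|f_r\|_{L^p(0,\infty)}$, and density concludes the argument.

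For part (i), I would again reduce to smooth $f$ by density (the weight $r^{(K+1+1/p-1/q)q}$ being locally integrable near $0$ thanks to $K>-1/p$ and $q\ge 2$). The starting point is the identity
\begin{equation*}
(Kp+1)\int_0^b r^{Kp}|f|^p\,\mathrm{d}r = b^{Kp+1}|f(b)|^p - p\int_0^b r^{Kp+1}|f|^{p-2} f f_r \,\mathrm{d}r,
\end{equation*}
which is valid because $Kp+1>0$ guarantees that the boundary contribution at $r=0$ vanishes. Splitting the integrand as $r^{Kp+1}|f|^{p-1}|f_r|=(r^K|f|)^{p-1}\cdot r^{K+1}|f_r|$ and applying H\"older with exponents $(p/(p-1),p)$ closes the estimate immediately when $p=q$, after controlling the boundary term $b^{Kp+1}|f(b)|^p$ via the 1-D Sobolev embedding $W^{1,p}(b/2,b)\hookrightarrow L^\infty(b/2,b)$ (the weight being harmless on $(b/2,b)$ since $r\ge b/2$). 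For the intermediate range $q<p<\infty$, an additional interpolation between weighted $L^p$ and weighted $L^q$ norms of $f$ is needed to re-express everything in terms of $\|r^{K+1+1/p-1/q}(f,f_r)\|_{L^q(0,b)}$; this step is supplied by the weighted one-dimensional Hardy embeddings in \cite{opic}. The $p=\infty$ case is treated separately via the pointwise identity
\begin{equation*}
r^K f(r) = r_0^K f(r_0) + \int_{r_0}^r \bigl(Ks^{K-1}f(s)+s^K f_r(s)\bigr)\,\mathrm{d}s,
\end{equation*}
averaging over $r_0\in (r/2,r)$ (which, since $K>0$, lets us absorb $r_0^K f(r_0)$ into a weighted $L^q$ norm of $f$) and applying H\"older in $q$. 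The continuity assertion $r^K f\in C([0,b])$ then follows because the bound furnished in the $p=\infty$ step is in fact a modulus-of-continuity estimate in disguise.

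The main obstacle is the intermediate case $q<p<\infty$ in part (i): one has to match the single weight $r^{K+1+1/p-1/q}$ on the right-hand side with both the boundary term $b^{Kp+1}|f(b)|^p$ and the interior integral produced by integration by parts. This matching is precisely what the Kufner--Opic weighted Hardy theory is designed for, and we invoke it as a black box. In our applications throughout \S\ref{section-upper-density}--\S\ref{nonvacuumfarfield} we use only the endpoint cases $p=q$ and $p=\infty$, for which the argument reduces to the self-contained integration-by-parts computation sketched above.
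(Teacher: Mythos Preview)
Your treatment of part (ii) matches the paper's exactly, and your handling of the $p=q$ case of (i) is close (the paper cancels the boundary term by combining \emph{two} integration-by-parts identities, with weights $r^{qK}$ and $r^{qK+q}$, rather than estimating $|f(b)|$ via Sobolev on $(b/2,b)$, but either works).

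Where you diverge more substantially is the intermediate range $q<p<\infty$. The paper does \emph{not} invoke the Kufner--Opic machinery as a black box here; instead it bootstraps from the two endpoint cases you already have. After the integration-by-parts identity with exponent $p$, one writes
\[
\int_0^1 r^{pK+1}|f|^{p-1}|f_r|\,\mathrm{d}r
\le \big\|r^{K+\frac{1}{p}}f\big\|_{L^\infty}^{\,p-q}\,
\big\|r^{K+\frac{1}{p}-\frac{1}{q}}f\big\|_{L^q}^{\,q-1}\,
\big\|r^{K+1+\frac{1}{p}-\frac{1}{q}}f_r\big\|_{L^q},
\]
and then applies the $p=\infty$ estimate (with $K$ replaced by $K+\tfrac{1}{p}>0$) to the first factor and the $p=q$ estimate (with $K$ replaced by $K+\tfrac{1}{p}-\tfrac{1}{q}>-\tfrac{1}{q}$) to the second. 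Both reduce to the same right-hand side $\|r^{K+1+1/p-1/q}(f,f_r)\|_{L^q}$. This is entirely self-contained and avoids the ``matching'' difficulty you flag.

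One factual correction: your closing remark that the applications in the paper use only the endpoint cases is not accurate. The proof of Lemma~\ref{l4.3} (the weighted $L^p(0,1)$ bounds on $\rho$) applies part (i) with $q=2$ and target exponent $2p$ for arbitrary $p\in[1,\infty)$, which is squarely in the intermediate range.
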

\begin{proof}
For brevity, when giving the proof for (i), we only consider $b=1$, 
since the proof for general $b>0$ can be derived analogously. We divide the proof into five steps.

\smallskip
\textbf{1. Proof of (i) when $p=q$.} First, for $K>-\frac{1}{q}$, it follows from integration by parts that
\begin{align}
\int_0^1 r^{qK} |f|^q\,\mathrm{d}r&=\frac{1}{qK+1}\Big(|f|^q(1)-q\int_0^1 r^{qK+1}|f|^{q-2}ff_r \,\mathrm{d}r\Big),\label{AA1}\\
\int_0^1 r^{qK+q} |f|^q\,\mathrm{d}r 
&=\frac{1}{qK+q+1}\Big( |f|^q(1)-q\int_0^1 r^{qK+q+1}|f|^{q-2}ff_r\,\mathrm{d}r\Big).\label{AA2}
\end{align}
Combining  \eqref{AA1}--\eqref{AA2} with the H\"older and Young inequalities 
leads to
\begin{align*}
\int_0^1 r^{qK} |f|^q\,\mathrm{d}r&=\frac{qK+q+1}{qK+1} \int_0^1 r^{qK+q} |f|^q\,\mathrm{d}r +\frac{q}{qK+1} \int_0^1 r^{qK+q+1}|f|^{q-2}ff_r \,\mathrm{d}r\\
&\quad\, -\frac{q}{qK+1}\int_0^1 r^{qK+1}|f|^{q-2}ff_r \,\mathrm{d}r \\
&\leq C(K,q)\Big( \int_0^1 r^{qK+q} |f|^q\,\mathrm{d}r+ \int_0^1 r^{qK+q} |f_r|^q\,\mathrm{d}r\Big) +\frac{1}{2}\int_0^1 r^{qK} |f|^q\,\mathrm{d}r, 
\end{align*}
which implies that
\begin{equation}\label{step1-2}
\int_0^1 r^{qK} |f|^q\,\mathrm{d}r \leq C(K,q)\Big( \int_0^1 r^{qK+q} |f|^q\,\mathrm{d}r+\int_0^1 r^{qK+q} |f_r|^q\,\mathrm{d}r\Big). 
\end{equation}

\smallskip
\textbf{2. Proof of (i) when $p=\infty$.}  
Next, for any $K>0$, since $r^K f|_{r=0}=0$, it follows from \eqref{step1-2}, Lemma \ref{calculus}, and the Young inequality that
\begin{equation}\label{step-infyt}
\begin{aligned}
\|r^K f\|_{L^\infty(0,1)}^q&\leq \int_0^1 |(r^{qK} |f|^q)_r|\,\mathrm{d}r\leq qK \int_0^1  r^{qK-1} |f|^q \,\mathrm{d}r+ q\int_0^1  r^{qK} |f|^{q-1}|f_r|\,\mathrm{d}r \\
&\leq C(K,q)\Big(\int_0^1  r^{qK-1} |f|^q \,\mathrm{d}r+ \int_0^1  r^{qK+q-1} |f_r|^q \,\mathrm{d}r\Big) \\
&\leq C(K,q)\Big( \int_0^1  r^{qK+q-1} |f|^q \,\mathrm{d}r+ \int_0^1  r^{qK+q-1} |f_r|^q \,\mathrm{d}r\Big). 
\end{aligned}
\end{equation}

\smallskip
\textbf{3. Proof of (i) when $p\in (q,\infty)$.} Finally, for $K>-\frac{1}{p}$, via integration by parts,
\begin{align}
\int_0^1 r^{pK} |f|^p\,\mathrm{d}r&=\frac{1}{pK+1}\Big( |f|^p(1)-p\int_0^1 r^{pK+1}|f|^{p-2}ff_r \,\mathrm{d}r\Big),\label{AA3}\\
\int_0^1 r^{pK+p} |f|^p\,\mathrm{d}r 
&=\frac{1}{pK+p+1}\Big( |f|^{p}(1)-p\int_0^1 r^{pK+p+1}|f|^{p-2}ff_r \,\mathrm{d}r\Big).\label{AA4}
\end{align}

By \eqref{step1-2}--\eqref{step-infyt} and the H\"older inequality, we have
\begin{equation*}
\begin{aligned}
\int_0^1 r^{pK+1}|f|^{p-2}ff_r \,\mathrm{d}r&\leq \big\|r^{K+\frac{1}{p}}f\big\|_{L^\infty(0,1)}^{p-q}\big\|r^{K+\frac{1}{p}-\frac{1}{q}}f\big\|_{L^q(0,1)}\big\|r^{K+1+\frac{1}{p}-\frac{1}{q}}f_r\big\|_{L^q(0,1)}\\
&\leq C(K,p)\Big( \big\|r^{K+1+\frac{1}{p}-\frac{1}{q}}f\big\|_{L^q(0,1)}^p+\big\|r^{K+1+\frac{1}{p}-\frac{1}{q}}f_r\big\|_{L^q(0,1)}^p\Big).
\end{aligned}
\end{equation*}
Thus, based on the above inequality and the similar discussions in Step 1, we conclude the inequality 
from \eqref{AA3}--\eqref{AA4}. 

\smallskip
\textbf{4.} Now we show that, if $r^{K +1-\frac{1}{q}}(f,f_r)\in L^q(0,b)$ for some $K>0$ and $q\in [2,\infty)$, then $r^K f\in C([0,b])$. 
For brevity, we give the proof only when $q=2$ and $b=1$. 
For any $f$ satisfying $r^{K +\frac{1}{2}}(f,f_r)\in L^2(0,1)$, 
according to Theorem 7.2 in \cite{kufner}, 
there exists a sequence of $\{f^\epsilon\}_{\epsilon>0}\subset C^\infty([0,1])$ such that 
\begin{equation*}
\|r^{K +\frac{1}{2}}(f^\epsilon-f,f_r^\epsilon-f_r)\|_{L^2(0,1)}\to 0\qquad\,\,
\text{as $\epsilon\to 0$}.
\end{equation*}
Then, following the same calculation \eqref{step-infyt}, we obtain that $\{r^Kf^\epsilon\}_{\epsilon>0}$ is a Cauchy sequence in $L^\infty(0,1)$. Since $\{r^Kf^\epsilon\}_{\epsilon>0}\subset C^\infty([0,1])$, there exists a limit $g\in C([0,1])$ such that
\begin{equation*}
r^Kf^\epsilon\to g \ \text{ uniformly on $[0,1]$} \qquad \text{as $\epsilon\to 0$.}
\end{equation*}
On the other hand, since $f^\epsilon\to f$ in $H^1(\sigma,1)$ 
as $\epsilon\to 0$ for any fixed $\sigma>0$, we obtain from Lemma \ref{ale1} that, for any fixed $\sigma>0$, $f^\epsilon\to f$ uniformly on $[\sigma,1]$ as $\epsilon\to 0$ so that 
\begin{equation*}
r^Kf^\epsilon\to r^K f \ \text{ pointwisely on $(0,1]$} \qquad \text{as $\epsilon\to 0$.}
\end{equation*}
The uniqueness of the limit thus yields $r^Kf=g$ for {\it a.e.} $r\in (0,1)$ and $r^Kf\in C([0,1])$.

\medskip
\textbf{5. Proof of (ii).} We first consider $f\in C_{\rm c}^\infty((0,\infty))$. 
By integration by parts, we have
\begin{align*}
\Big\|\frac{f}{r}\Big\|_{L^p(0,\infty)}^p&=\int_0^\infty \frac{|f|^p}{r^p}\,\mathrm{d}r=\frac{p}{p-1}\int_0^\infty \frac{1}{r^{p-1}}|f|^{p-2}ff_r\,\mathrm{d}r\leq C(p)\Big\|\frac{f}{r}\Big\|_{L^p(0,\infty)}^{p-1}\|f_r\|_{L^p(0,\infty)}.
\end{align*}
Applying the Young inequality yields
\begin{equation}\label{har}
\Big\|\frac{f}{r}\Big\|_{L^p(0,\infty)}\leq C(p)\|f_r\|_{L^p(0,\infty)}.
\end{equation}

Now, for any $f\in W^{1,p}(0,\infty)$ with $f|_{r=0}=0$, there exists a sequence of 
$\{f^\epsilon\}_{\epsilon>0}\subset C_{\rm c}^\infty((0,\infty))$ such that $f^\epsilon\to f$ in $W^{1,p}(0,\infty)$ as $\epsilon\to0$. Then \eqref{har} implies that $\{\frac{f^\epsilon}{r}\}_{\epsilon>0}$ is a Cauchy sequence in $L^p(0,\infty)$, converging to some limit $g\in L^p(0,\infty)$ as $\epsilon\to 0$: 
\begin{equation*}
\frac{f^\epsilon}{r}\to g \,\, \text{ in $L^p(0,\infty)$} \qquad \text{as $\epsilon\to 0$}.
\end{equation*}
On the other hand, thanks to Lemma \ref{ale1}, $f^\epsilon\to f$ pointwisely on $(0,\infty)$ so that 
\begin{equation*}
\frac{f^\epsilon}{r}\to \frac{f}{r} \,\, \text{ for {\it a.e.} $r\in (0,\infty)$} 
\qquad \text{as $\epsilon\to 0$}.
\end{equation*}
Therefore, the uniqueness of the limit yields $\frac{f}{r}=g\in L^p(0,\infty)$.
\end{proof}

The fourth lemma is on the fundamental theorem of calculus.
\begin{lem}[\cite{realrudin}]\label{calculus}
Let $f(r)\in W^{1,1}(a,\infty)$ for some  $a\in[0,\infty)$. Then,
for any $r,r_0\in [a,\infty)$,
\begin{equation}\label{newton}
f(r)=-\int_r^{\infty} f'(z)\,\mathrm{d}z=f(r_0)+\int_{r_0}^r f'(z)\,\mathrm{d}z.
\end{equation}
Moreover, $f(r)\to 0$ as $r\to \infty$.
\end{lem}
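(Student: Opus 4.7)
The plan is to exploit the one-dimensional structure of $W^{1,1}(a,\infty)$, identifying $f$ with its absolutely continuous representative, and then use the $L^1$-integrability of both $f$ and $f'$ to pass to the limit at infinity. Since the statement is pointwise, I will work throughout with the canonical absolutely continuous representative, which is well-defined up to modification on a set of measure zero.

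First, I will recall the standard one-dimensional Sobolev embedding: any $f\in W^{1,1}(a,\infty)$ admits a representative (still denoted $f$) that is absolutely continuous on every compact subinterval $[a,R]\subset[a,\infty)$. Consequently, the classical fundamental theorem of calculus for absolutely continuous functions yields
\begin{equation*}
f(r)=f(r_0)+\int_{r_0}^{r}f'(z)\,\mathrm{d}z \qquad\text{for all }r,r_0\in[a,\infty),
\end{equation*}
which gives the second equality in \eqref{newton}. Here $f'$ denotes the weak (equivalently, pointwise a.e.) derivative, which belongs to $L^1(a,\infty)$ by assumption.

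Next, I will show the limit $\lim_{r\to\infty}f(r)$ exists and equals $0$. Since $f'\in L^1(a,\infty)$, the dominated convergence theorem implies that $\int_{r_0}^{R}f'(z)\,\mathrm{d}z$ converges to $\int_{r_0}^{\infty}f'(z)\,\mathrm{d}z$ as $R\to\infty$. Combining with the identity above shows that $L:=\lim_{R\to\infty}f(R)$ exists. To identify $L$, I will argue by contradiction: if $L\neq 0$, then $|f(r)|\geq |L|/2$ for all sufficiently large $r$, which forces $f\notin L^1(a,\infty)$ on an unbounded tail, contradicting $f\in W^{1,1}(a,\infty)\subset L^1(a,\infty)$. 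Hence $L=0$, and $f(r)\to 0$ as $r\to\infty$.

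Finally, sending $r\to\infty$ in the identity $f(r)=f(r_0)+\int_{r_0}^{r}f'(z)\,\mathrm{d}z$ yields $0=f(r_0)+\int_{r_0}^{\infty}f'(z)\,\mathrm{d}z$, i.e., $f(r_0)=-\int_{r_0}^{\infty}f'(z)\,\mathrm{d}z$. Relabeling $r_0$ as $r$ gives the first equality in \eqref{newton}, completing the proof. The main subtlety, rather than an obstacle, is the identification of $f$ with its absolutely continuous representative; once this is secured, every step is a direct application of classical one-dimensional Lebesgue integration theory and the integrability hypothesis on $(f,f')$.
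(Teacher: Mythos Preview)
Your proof is correct and takes a somewhat different route from the paper. The paper proceeds by approximation: it picks a sequence $\{f^\epsilon\}\subset C_{\rm c}^\infty([a,\infty))$ converging to $f$ in $W^{1,1}(a,\infty)$, notes (via the Sobolev embedding $W^{1,1}\hookrightarrow L^\infty$ in one dimension) that this convergence is also uniform, and then passes to the limit in the identity $f^\epsilon(r)=-\int_r^\infty (f^\epsilon)'(z)\,\mathrm{d}z$, which is trivially true for compactly supported smooth functions. For the decay at infinity, the paper applies dominated convergence to $f'(z)\chi_{[r,\infty)}(z)$. By contrast, you work directly with the absolutely continuous representative and obtain the second equality immediately from the classical fundamental theorem of calculus; you then deduce existence of the limit from $f'\in L^1$ and identify it as zero by the $L^1$ constraint on $f$ itself. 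Your argument is arguably more self-contained and avoids the approximation machinery, while the paper's version makes the role of the embedding $W^{1,1}\hookrightarrow C\cap L^\infty$ (Lemma~\ref{ale1}) more explicit, which is consistent with how that lemma is cited elsewhere in the paper.
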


\begin{proof}
First, Lemma \ref{ale1} implies that $f\in C([a,\infty))$, 
and thus $f(r)$ is well-defined for each $r\in [a,\infty)$. 
Next, it suffices to check the first formula in \eqref{newton}, 
since the second one can be derived in the same way.

For any $f\in W^{1,1}(a,\infty)$ with $a\in[0,\infty)$, there exists a sequence of 
$\{f^\epsilon\}_{\epsilon>0}\subset C_{\rm c}^\infty([a,\infty))$ satisfying
\begin{equation}\label{AA9}
f^\epsilon \to f \ \ \text{ in $W^{1,1}(a,\infty)$} \qquad\text{as $\epsilon\to 0$}.
\end{equation}
This, together with Lemma \ref{ale1}, implies also that
\begin{equation}\label{AA10}
f^\epsilon \to f \ \ \text{ in $L^\infty(a,\infty)$} \qquad\text{as $\epsilon\to 0$}.
\end{equation}

Thus, based on \eqref{AA9}--\eqref{AA10} and  
\begin{equation*}
f^\epsilon(r)=-\int_{r}^\infty (f^\epsilon)'(z)\,\mathrm{d}z,
\end{equation*}
we let $\epsilon\to 0$ in the above equality to obtain the desired formula in \eqref{newton}.

Finally, let $\chi_{E}$ be the characteristic function defined on the set $E\subset [a,\infty)$, {\it i.e.}, $\chi_E=1$ on $E$ and $\chi_E=0$ on $[a,\infty)\backslash E$. Note that $f'$ is a measurable function defined {\it a.e.} on $[a,\infty)$ due to $f'\in L^1(a,\infty)$. Then it follows from \eqref{newton}, the fact that
\begin{equation*}
\lim_{r\to\infty}f'(z)\chi_{[r,\infty)}(z)=0 \qquad \text{for {\it a.e.} $z\in [a,\infty)$},
\end{equation*}
and the Lebesgue dominated convergence theorem that
\begin{equation*}
\begin{aligned}
\lim_{r\to\infty} f(r)&=-\lim_{r\to \infty}\int_r^\infty f'(z)\,\mathrm{d}z=-\lim_{r\to \infty}\int_a^\infty f'(z)\chi_{[r,\infty)}(z)\,\mathrm{d}z\\
&=-\int_a^\infty \lim_{r\to \infty}f'(z)\chi_{[r,\infty)}(z)\,\mathrm{d}z=0.
\end{aligned}
\end{equation*}
\end{proof}

The fifth lemma is the well-known Fatou lemma which can be found in \cite{realrudin}.
\begin{lem}[\cite{realrudin}]\label{Fatou}
Let  $\{f_n\}$ be a sequence of measurable non-negative functions $f_n: \mathbb{R}^n\rightarrow [0,\infty]$.  Then $f(\boldsymbol{x}):= \liminf_{n\rightarrow \infty} f_n(\boldsymbol{x})$ is measurable and
\begin{equation*}
\int_{\mathbb{R}^n} f(\boldsymbol{x})\,\mathrm{d}\boldsymbol{x} \leq \liminf_{n\rightarrow \infty} \int_{\mathbb{R}^n} f_n(\boldsymbol{x}) \,\mathrm{d}\boldsymbol{x} .
\end{equation*}
\end{lem}

The sixth lemma is used to obtain the time-weighted estimates of the velocity.
\begin{lem}[\cite{bjr}]\label{bjr}
Let $E\subset \mathbb{R}^n$ $(n\in \mathbb{N}^*)$ be any set and $f\in L^2([0,T]; L^2(E))$. Then there exists a sequence $\{t_k\}_{k=1}^\infty$ such that
\begin{equation*}
t_k\rightarrow 0, \quad\ t_k \|f(t_k)\|^2_{L^2(E)}\rightarrow 0 \qquad\,\, \text{as $k\rightarrow\infty$}.
\end{equation*}
\end{lem}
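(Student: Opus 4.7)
Set $g(t):=\|f(t)\|_{L^2(E)}^2$ for $t\in[0,T]$. Since $f\in L^2([0,T];L^2(E))$, Fubini's theorem ensures that $g$ is measurable and $\int_0^T g(t)\,\mathrm{d}t<\infty$, so in particular $g\in L^1(0,T)$ and $g(t)<\infty$ for almost every $t\in(0,T)$. Let $N\subset(0,T)$ denote the null set on which $g$ is undefined or infinite; the desired sequence will be chosen from $(0,T)\setminus N$.

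The strategy is a direct argument by contradiction. Suppose no sequence $\{t_k\}\subset(0,T)\setminus N$ with $t_k\to0$ satisfies $t_k g(t_k)\to0$. Then
\begin{equation*}
\liminf_{t\to0^{+},\,t\notin N} t\,g(t) \;>\; 0,
\end{equation*}
so there exist constants $\varepsilon>0$ and $\delta\in(0,T)$ such that $t\,g(t)\geq\varepsilon$ for every $t\in(0,\delta)\setminus N$. Equivalently, $g(t)\geq \varepsilon/t$ for almost every $t\in(0,\delta)$. Integrating this pointwise lower bound over $(0,\delta)$ yields
\begin{equation*}
\int_0^T g(t)\,\mathrm{d}t \;\geq\; \int_0^\delta g(t)\,\mathrm{d}t \;\geq\; \varepsilon\int_0^\delta \frac{\mathrm{d}t}{t} \;=\;+\infty,
\end{equation*}
which contradicts $g\in L^1(0,T)$. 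Hence a sequence $\{t_k\}$ of the required form must exist. By passing to a subsequence one may moreover arrange that $t_k$ is strictly decreasing to $0$.

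\textbf{Main point of care.} There is no serious analytic obstacle here: the statement is essentially an almost-everywhere consequence of $g\in L^1(0,T)$ combined with the divergence of $\int_0^\delta t^{-1}\,\mathrm{d}t$. The only mild subtlety is to phrase the contradiction hypothesis correctly as a \emph{positive lower bound of the essential liminf} of $tg(t)$ at $0$, rather than of its pointwise liminf, since $g$ is only defined modulo a null set; this is precisely what justifies the pointwise inequality $g(t)\geq \varepsilon/t$ holding almost everywhere on $(0,\delta)$, which is the input needed for the integral argument above.
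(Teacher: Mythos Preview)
Your proof is correct and follows essentially the same approach as the paper's: both set $g(t)=\|f(t)\|_{L^2(E)}^2\in L^1(0,T)$ and argue by contradiction that a uniform lower bound $t\,g(t)\geq\varepsilon$ near $t=0$ would force $\int_0^\delta g\geq \varepsilon\int_0^\delta t^{-1}\,\mathrm{d}t=+\infty$. Your version is slightly more careful about the null set where $g$ is undefined, but the argument is the same.
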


\begin{proof}
Let $F(t)=\|f(t)\|^2_{L^2(\mathcal {O})}$. Clearly, $0\leq F(t)\in L^1(0,T)$. 
Then it suffices to show that, for any $k\ge 1$, there exists $t_k\in\big(0,\frac{T}{1+k}\big)$ such that 
\begin{equation*}
t_kF(t_k)<\frac{1}{k}\to 0 \qquad \text{as $k\rightarrow\infty$}.
\end{equation*}
Assume by contradiction that there exists some $k_0\geq 1$ such that, for any $t\in\big(0,\frac{T}{1+k_0}\big)$, 
$tF(t)\ge \frac{1}{k_0}$.
Then 
\begin{equation*}
\int_0^T F(s)\,\mathrm{d}s\ge \frac{1}{k_0}\int_0^T \frac{1}{s }\,\mathrm{d}s=\infty.
\end{equation*}
This contradicts with the fact $F(t)\in L^1(0,T)$.
Therefore, the claim holds. 
\end{proof}

The seventh lemma shows that any spherical symmetric vector field must vanish at the symmetric center.
\begin{lem}\label{rmk31}
Let $\boldsymbol{f}(\boldsymbol{x})=f(|\boldsymbol{x}|)\frac{\boldsymbol{x}}{|\boldsymbol{x}|}$ be a continuous and spherically symmetric vector field defined on $\mathbb{R}^n$. Then $\boldsymbol{f}(\boldsymbol{0})=\boldsymbol{0}$.
\end{lem}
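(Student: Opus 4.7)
The plan is to exploit the fact that the ansatz $\boldsymbol{f}(\boldsymbol{x}) = f(|\boldsymbol{x}|)\frac{\boldsymbol{x}}{|\boldsymbol{x}|}$ is {\it a priori} only defined for $\boldsymbol{x}\neq \boldsymbol{0}$, so the value $\boldsymbol{f}(\boldsymbol{0})$ is forced by the hypothesis of continuity via
\begin{equation*}
\boldsymbol{f}(\boldsymbol{0}) = \lim_{\boldsymbol{x}\to \boldsymbol{0}} \boldsymbol{f}(\boldsymbol{x}).
\end{equation*}
The argument then reduces to extracting two sequences tending to the origin along opposite radial directions and observing that the corresponding values of $\boldsymbol{f}$ are negatives of each other.

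Concretely, I would fix any unit vector $\boldsymbol{e}\in \mathbb{S}^{n-1}$ and set $\boldsymbol{x}_k = \frac{1}{k}\boldsymbol{e}$ and $\boldsymbol{y}_k = -\frac{1}{k}\boldsymbol{e}$. Then $|\boldsymbol{x}_k| = |\boldsymbol{y}_k| = \frac{1}{k}$, while $\frac{\boldsymbol{x}_k}{|\boldsymbol{x}_k|} = \boldsymbol{e}$ and $\frac{\boldsymbol{y}_k}{|\boldsymbol{y}_k|} = -\boldsymbol{e}$, so by the spherically symmetric form,
\begin{equation*}
\boldsymbol{f}(\boldsymbol{x}_k) = f(1/k)\,\boldsymbol{e}, \qquad \boldsymbol{f}(\boldsymbol{y}_k) = -f(1/k)\,\boldsymbol{e} = -\boldsymbol{f}(\boldsymbol{x}_k).
\end{equation*}
Continuity of $\boldsymbol{f}$ at the origin forces both sequences $\{\boldsymbol{f}(\boldsymbol{x}_k)\}$ and $\{\boldsymbol{f}(\boldsymbol{y}_k)\}$ to converge to the common limit $\boldsymbol{f}(\boldsymbol{0})$. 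Passing to the limit in the identity $\boldsymbol{f}(\boldsymbol{y}_k) = -\boldsymbol{f}(\boldsymbol{x}_k)$ yields $\boldsymbol{f}(\boldsymbol{0}) = -\boldsymbol{f}(\boldsymbol{0})$, and hence $\boldsymbol{f}(\boldsymbol{0}) = \boldsymbol{0}$.

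There is no hard step here; the only subtlety is making clear that $\boldsymbol{f}(\boldsymbol{0})$ is well-defined only because of the continuity hypothesis, so that both limits $\boldsymbol{f}(\boldsymbol{x}_k)\to \boldsymbol{f}(\boldsymbol{0})$ and $\boldsymbol{f}(\boldsymbol{y}_k)\to \boldsymbol{f}(\boldsymbol{0})$ are legitimate to invoke. An alternative equivalent framing (should one prefer a more conceptual proof) is the rotational one: for any $\mathcal{O}\in \mathrm{SO}(n)$, the spherical symmetry relation $\boldsymbol{f}(\mathcal{O}\boldsymbol{x}) = \mathcal{O}\boldsymbol{f}(\boldsymbol{x})$ evaluated at $\boldsymbol{x} = \boldsymbol{0}$ gives $\boldsymbol{f}(\boldsymbol{0}) = \mathcal{O}\boldsymbol{f}(\boldsymbol{0})$, and the only vector in $\mathbb{R}^n$ fixed by every rotation is the zero vector. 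Either route yields the conclusion in essentially one line.
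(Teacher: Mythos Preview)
Your proof is correct. The paper uses precisely the rotation argument you mention as an alternative---evaluating $\boldsymbol{f}(\mathcal{O}\boldsymbol{x}) = \mathcal{O}\boldsymbol{f}(\boldsymbol{x})$ at $\boldsymbol{x}=\boldsymbol{0}$ with a $180^\circ$ rotation about an axis perpendicular to $\boldsymbol{f}(\boldsymbol{0})$ to obtain $\boldsymbol{f}(\boldsymbol{0}) = -\boldsymbol{f}(\boldsymbol{0})$---so your primary sequence argument is a minor (and equally valid) variation on the same one-line idea.
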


\begin{proof}
Indeed, since $\boldsymbol{f}(\boldsymbol{x})=f(|\boldsymbol{x}|)\frac{\boldsymbol{x}}{|\boldsymbol{x}|}$ is spherically symmetric, we see that
$\boldsymbol{f}(\mathcal{O}\boldsymbol{x})=(\mathcal{O}\boldsymbol{f})(\boldsymbol{x})$ for all $\boldsymbol{x}\in \mathbb{R}^n$ and  
$\mathcal{O}\in \mathrm{SO}(n)$.
Taking $\boldsymbol{x}=\boldsymbol{0}$ leads to 
\begin{equation}\label{s0}
\boldsymbol{f}(\boldsymbol{0})=(\mathcal{O}\boldsymbol{f})(\boldsymbol{0}) \qquad \text{for all $\mathcal{O}\in \mathrm{SO}(n)$}.
\end{equation}
Next, choosing a rotation $\mathcal{O}=\mathcal{O}_1$ by 180 degrees 
about an axis perpendicular to $\boldsymbol{f}(\boldsymbol{0})$, 
that is, $(\mathcal{O}_1\boldsymbol{f})(\boldsymbol{0})=-\boldsymbol{f}(\boldsymbol{0})$, 
we obtain from \eqref{s0} that $\boldsymbol{f}(\boldsymbol{0})=-\boldsymbol{f}(\boldsymbol{0})$, 
which implies that $\boldsymbol{f}(\boldsymbol{0})=\boldsymbol{0}$. 
\end{proof}

The next lemma is on the well-known Chebyshev inequality.
\begin{lem}[\cite{lg}]\label{cheby}
Let $p\in[1,\infty)$ and $f=f(\boldsymbol{x})\in L^p(\mathbb{R}^n)$. Let
\begin{equation*}
S^\sigma:=\{\boldsymbol{x}\in \mathbb{R}^n:\, |f(\boldsymbol{x})|\geq \sigma\},
\end{equation*}
and let $\chi_{S^\sigma}=\chi_{S^\sigma}(\boldsymbol{x})$ be the characteristic function with respect to $S^\sigma$. Then 
\begin{equation*}
\int_{\mathbb{R}^n} \chi_{S^\sigma}\,\mathrm{d}\boldsymbol{x} \leq \frac{\|f\chi_{S^\sigma}\|_{L^p(\mathbb{R}^n)}^p}{\sigma^p}  \qquad\,\, \text{for any $\sigma>0$}.
\end{equation*}
In particular, if $f(\boldsymbol{x})=f(r)$ is spherically symmetric and $S^\sigma=\{r\in I:\, |f(r)|\geq \sigma\}$, then
\begin{equation*}
\int_0^\infty  \chi_{S^\sigma}r^m\,\mathrm{d}r \leq \frac{\big|r^\frac{m}{p}f\chi_{S^\sigma}\big|_{p}^p}{\sigma^p}  \qquad\,\, \text{for any $\sigma>0$}.
\end{equation*}
\end{lem}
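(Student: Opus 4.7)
The plan is to prove this as the classical Markov--Chebyshev argument, which reduces to a single pointwise comparison on the set $S^\sigma$. On that set, the defining inequality $|f(\boldsymbol{x})|\geq \sigma$ can be rearranged as $1\leq \sigma^{-p}|f(\boldsymbol{x})|^p$ (valid since $\sigma>0$ and $p\geq 1$), and the passage from pointwise to integral bounds is monotone.

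Concretely, I would first write
\begin{equation*}
\chi_{S^\sigma}(\boldsymbol{x})=\chi_{S^\sigma}(\boldsymbol{x})\cdot 1\leq \chi_{S^\sigma}(\boldsymbol{x})\,\frac{|f(\boldsymbol{x})|^p}{\sigma^p}\qquad\text{for all }\boldsymbol{x}\in\mathbb{R}^n,
\end{equation*}
which holds on $S^\sigma$ by the defining bound $|f|\geq\sigma$ and trivially off $S^\sigma$ where both sides vanish. Then integrating over $\mathbb{R}^n$ and using $\chi_{S^\sigma}^p=\chi_{S^\sigma}$ yields
\begin{equation*}
\int_{\mathbb{R}^n}\chi_{S^\sigma}\,\mathrm{d}\boldsymbol{x}\leq \frac{1}{\sigma^p}\int_{\mathbb{R}^n}\chi_{S^\sigma}|f|^p\,\mathrm{d}\boldsymbol{x}=\frac{\|f\chi_{S^\sigma}\|_{L^p(\mathbb{R}^n)}^p}{\sigma^p},
\end{equation*}
which is the first assertion. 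Since $f\in L^p(\mathbb{R}^n)$ guarantees all integrals are finite (and, in particular, $S^\sigma$ has finite Lebesgue measure for every $\sigma>0$), no issues of integrability arise.

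For the spherically symmetric version, I would simply pass to spherical coordinates. With $\boldsymbol{x}=r\boldsymbol{\omega}$ for $r\in I=[0,\infty)$ and $\boldsymbol{\omega}\in\mathbb{S}^{n-1}$, the Jacobian is $r^{m}$ with $m=n-1$, and $f(\boldsymbol{x})=f(r)$, $\chi_{S^\sigma}(\boldsymbol{x})=\chi_{S^\sigma}(r)$ depend only on $r$. Therefore
\begin{equation*}
\int_{\mathbb{R}^n}\chi_{S^\sigma}\,\mathrm{d}\boldsymbol{x}=\omega_n\int_0^\infty\chi_{S^\sigma}(r)\,r^{m}\,\mathrm{d}r,\qquad \|f\chi_{S^\sigma}\|_{L^p(\mathbb{R}^n)}^p=\omega_n\int_0^\infty|f(r)|^p\chi_{S^\sigma}(r)\,r^{m}\,\mathrm{d}r,
\end{equation*}
where $\omega_n$ is the surface area of $\mathbb{S}^{n-1}$. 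Dividing out the common factor $\omega_n$ in the inequality already established gives the claimed one-dimensional weighted bound
\begin{equation*}
\int_0^\infty\chi_{S^\sigma}r^{m}\,\mathrm{d}r\leq \frac{\big|r^{m/p}f\chi_{S^\sigma}\big|_p^p}{\sigma^p},
\end{equation*}
using the convention $|g|_p=\|g\|_{L^p(I)}$ from the paper's notation.

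There is no substantive obstacle here: the statement is the standard Chebyshev inequality, and the only mildly nontrivial bookkeeping is matching the spherical coordinate weight $r^{m}$ with the weighted $L^p(I)$-norm $|r^{m/p}(\cdot)|_p$ used in the paper. Once the identity $\chi_{S^\sigma}^p=\chi_{S^\sigma}$ is exploited, both assertions follow in a single line each.
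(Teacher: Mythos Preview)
Your proof is correct and is the standard Markov--Chebyshev argument. The paper does not supply its own proof of this lemma; it simply cites it as a known result from \cite{lg}, so there is nothing further to compare.
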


The following lemma is on the evolution triple embedding.
\begin{lem}[\cite{evans}]\label{triple}
Let $T>0$, $n\in\mathbb{N}$, and $n\geq 2$. 
Suppose that $f\in L^2([0,T];H^1(\mathbb{R}^n))$ and  $f_t\in L^2([0,T];H^{-1}(\mathbb{R}^n))$. 
Then $f\in C([0,T];L^2(\mathbb{R}^n))$, and the mapping{\rm :} $t\mapsto |f(t)|_2^2$ is absolutely continuous 
with
\begin{equation*}
\frac{\mathrm{d}}{\mathrm{d}t} \|f(t)\|_{L^2(\mathbb{R}^n)}^2=2\left<f_t, f\right>_{H^{-1}(\mathbb{R}^n)\times H^1(\mathbb{R}^n)} \qquad \text{for {\it a.e.} $t\in (0,T)$}.
\end{equation*}
Moreover, if additionally $f\in L^\infty([0,T];H^1(\mathbb{R}^n))$, then $f\in C([0,T];L^q(\mathbb{R}^n))$ for $q\in [2,\infty)$ if $n=2$ and for $q\in [2,\frac{2n}{n-2})$ if $n\geq 3$.
\end{lem}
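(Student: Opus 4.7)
\smallskip

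The plan is to follow the classical Lions--Magenes scheme via time-regularization. First I would extend $f$ in time to all of $\mathbb{R}$ by reflection across $t=0$ and $t=T$, obtaining $\tilde f$ that lies in $L^2_{\mathrm{loc}}(\mathbb{R};H^1(\mathbb{R}^n))$ with $\tilde f_t \in L^2_{\mathrm{loc}}(\mathbb{R};H^{-1}(\mathbb{R}^n))$. Let $\omega_\epsilon(t)$ be a standard mollifier on $\mathbb{R}$, and set $f^\epsilon = \tilde f *_t \omega_\epsilon$. Then $f^\epsilon \in C^\infty(\mathbb{R};H^1(\mathbb{R}^n))$ with $(f^\epsilon)_t = \tilde f_t *_t \omega_\epsilon$, and as $\epsilon\to 0$, one has $f^\epsilon \to f$ in $L^2([0,T];H^1(\mathbb{R}^n))$ and $(f^\epsilon)_t \to f_t$ in $L^2([0,T];H^{-1}(\mathbb{R}^n))$.

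Next, since $f^\epsilon$ is smooth in $t$ with values in $H^1(\mathbb{R}^n)$, the classical chain rule yields, for $\varepsilon, \iota>0$ and any $0\le s\le t\le T$,
\begin{equation*}
\|f^\varepsilon(t)-f^\iota(t)\|_{L^2}^2 - \|f^\varepsilon(s)-f^\iota(s)\|_{L^2}^2 = 2\int_s^t \langle (f^\varepsilon-f^\iota)_\tau, f^\varepsilon-f^\iota\rangle_{H^{-1}\times H^1} \,\mathrm{d}\tau.
\end{equation*}
The right side is bounded by $2\|(f^\varepsilon-f^\iota)_t\|_{L^2([0,T];H^{-1})}\|f^\varepsilon-f^\iota\|_{L^2([0,T];H^1)}$, which tends to zero. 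Integrating the corresponding identity for the average $\tfrac{1}{T}\int_0^T$ in $s$ shows that $\{f^\epsilon\}$ is Cauchy in $C([0,T];L^2(\mathbb{R}^n))$; comparing with the known $L^2_tL^2_x$ limit identifies the limit as $f$, giving $f \in C([0,T];L^2(\mathbb{R}^n))$. Applying the same chain-rule identity to $f^\epsilon$ alone and passing to the limit produces
\begin{equation*}
\|f(t)\|_{L^2}^2 = \|f(s)\|_{L^2}^2 + 2\int_s^t \langle f_\tau, f\rangle_{H^{-1}\times H^1} \,\mathrm{d}\tau \qquad \text{for all } 0\le s\le t\le T,
\end{equation*}
so $t\mapsto\|f(t)\|_{L^2}^2$ is absolutely continuous and the a.e.\ differentiation formula follows from Lebesgue's differentiation theorem applied to the $L^1(0,T)$ function $\tau\mapsto 2\langle f_\tau,f\rangle$.

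For the second assertion, suppose further that $f \in L^\infty([0,T];H^1(\mathbb{R}^n))$, and set $M := \sup_{t\in[0,T]}\|f(t)\|_{H^1}<\infty$. Let $q^*$ denote the Sobolev exponent: $q^*$ is any finite exponent when $n=2$, and $q^*=2n/(n-2)$ when $n\ge 3$; by Lemma~\ref{ale1} we have $\|g\|_{L^{q^*}} \le C\|g\|_{H^1}$ on $\mathbb{R}^n$. For any $q\in[2,\infty)$ (resp.\ $q\in[2,q^*)$ when $n\ge 3$), pick $\theta\in(0,1]$ via $\tfrac{1}{q}=\tfrac{\theta}{2}+\tfrac{1-\theta}{q^*}$ (with $q^*$ chosen sufficiently large in the 2-D case). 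Then interpolation gives
\begin{equation*}
\|f(t)-f(s)\|_{L^q} \le \|f(t)-f(s)\|_{L^2}^{\theta}\,\|f(t)-f(s)\|_{L^{q^*}}^{1-\theta} \le (2CM)^{1-\theta}\|f(t)-f(s)\|_{L^2}^{\theta},
\end{equation*}
and the right side tends to $0$ as $s\to t$ by the $C([0,T];L^2)$-continuity already established.

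\smallskip

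The main obstacle is the rigorous justification of the energy identity via mollification, which requires the time-extension to preserve both regularity classes simultaneously and a careful passage to the limit in the pairing $\langle(f^\epsilon)_t,f^\epsilon\rangle$; everything else is interpolation and duality that go through routinely.
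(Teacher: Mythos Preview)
Your argument is correct and is exactly the standard Lions--Magenes scheme that the cited reference (Evans) uses; the paper itself does not give a proof of this lemma, simply invoking it as a known result.

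One small point in the second part is worth tightening: your interpolation step uses $\|f(t)\|_{H^1}\le M$ for \emph{every} $t\in[0,T]$, whereas $f\in L^\infty([0,T];H^1)$ only yields this for a.e.\ $t$. The standard fix is the observation $L^\infty([0,T];H^1)\cap C([0,T];L^2)\hookrightarrow C_w([0,T];H^1)$: if $t_n\to t$, then $f(t_n)\to f(t)$ in $L^2$ while $\|f(t_n)\|_{H^1}\le M$ along a full-measure sequence, so weak compactness in $H^1$ and uniqueness of limits give $f(t_n)\rightharpoonup f(t)$ in $H^1$, hence $\|f(t)\|_{H^1}\le M$ for every $t$. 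With this in hand, your interpolation bound $\|f(t)-f(s)\|_{L^q}\le(2CM)^{1-\theta}\|f(t)-f(s)\|_{L^2}^\theta$ is valid for all $s,t$ and the continuity follows.
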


The following auxiliary lemma is used to show some equivalent norms for a function $f$ 
satisfying $f\in L^1(\mathbb{R}^n)$ and $\nabla\log f\in L^\infty (\mathbb{R}^n)\cap D^1(\mathbb{R}^n)\cap D^2(\mathbb{R}^n)$. 

\begin{lem}\label{initial3}
Let $n=2,3$, and let $f$ be a spherically symmetric scalar function defined on $\mathbb{R}^n$. If $f\in L^1(\mathbb{R}^n)$, $\nabla\log f\in D^1(\mathbb{R}^n)$ and, additionally, $\nabla\log f \in L^\infty(\mathbb{R}^3)$ if $n=3$, then  
\begin{enumerate}
\item[$\mathrm{(i)}$] $f\in L^p(\mathbb{R}^n)$ for all $p\in (1, \infty]$, and $\nabla\log f\in L^\infty(\mathbb{R}^2)$ if $n=2${\rm ;}

\smallskip
\item[$\mathrm{(ii)}$] $\nabla(f^\nu)\in H^1(\mathbb{R}^n)$ for all $\nu\in [\frac{1}{2},\infty)$.
\end{enumerate}
If $f\in L^1(\mathbb{R}^n)$ and  $\nabla\log f\in D^1(\mathbb{R}^n)\cap D^2(\mathbb{R}^n)$, then {\rm(i)--(ii)} hold and
\begin{enumerate}
\item[$\mathrm{(iii)}$] $\nabla\log f\in  L^\infty(\mathbb{R}^n)$, and $\nabla(f^\nu)\in  H^2(\mathbb{R}^n)$  for all $\nu\in[\frac{1}{2},\infty)$.
\end{enumerate}
\end{lem}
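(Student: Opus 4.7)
The plan is to proceed in three stages: (a) first establish that $\nabla\log f\in L^\infty(\mathbb{R}^n)$ under each set of hypotheses; (b) combine this Lipschitz bound for $\log f$ with the $L^1$-integrability and spherical symmetry to force $f\in L^\infty$, which by interpolation yields $f\in L^p$ for all $p\in[1,\infty]$; (c) unwind the chain and product rules $\nabla(f^\nu)=\nu f^\nu\,\nabla\log f$ and $\nabla^2(f^\nu)=\nu^2 f^\nu\,\nabla\log f\otimes\nabla\log f+\nu f^\nu\,\nabla^2\log f$ (and analogously for $\nabla^3(f^\nu)$) to get the Sobolev estimates on $f^\nu$.

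For stage (a), in the two-dimensional case I will invoke the critical Sobolev embedding for spherically symmetric vector fields, $D^1(\mathbb{R}^2)\hookrightarrow L^\infty(\mathbb{R}^2)$ from Lemma \ref{Hk-Ck-vector}, applied directly to $\nabla\log f$; this immediately gives $\nabla\log f\in L^\infty(\mathbb{R}^2)$ and also proves the last clause of (i). In the three-dimensional case, the hypothesis $\nabla\log f\in L^\infty(\mathbb{R}^3)$ is present in the statement for case (i)--(ii); for (iii), I would combine the radial embedding $D^1(\mathbb{R}^3)\hookrightarrow L^6(\mathbb{R}^3)$ from Lemma \ref{lemma-L6} with the classical Gagliardo--Nirenberg interpolation $L^6(\mathbb{R}^3)\cap D^2(\mathbb{R}^3)\hookrightarrow L^\infty(\mathbb{R}^3)$ (Lemma \ref{ale1}) applied to $\nabla\log f$, which recovers the $L^\infty$ bound from the assumption $\nabla\log f\in D^1\cap D^2$.

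Stage (b) is where the main obstacle lies. Passing to spherical coordinates, $\nabla\log f\in L^\infty(\mathbb{R}^n)$ translates to $(\log f)'(r)$ being bounded, so $\log f$ is Lipschitz in $r$ and hence $f$ is continuous and positive. Suppose for contradiction that $f$ is unbounded. Continuity plus spherical symmetry produces a sequence $r_k$ with $f(r_k)\to\infty$, and this sequence must tend to infinity (otherwise a convergent subsequence violates continuity). Writing $L:=\|\nabla\log f\|_{L^\infty}$ and $\delta:=(\log 2)/L$, the Lipschitz bound for $\log f$ forces $f(r)\geq\tfrac12 f(r_k)$ on $[r_k-\delta,r_k+\delta]$, whence
\[
\int_{\mathbb{R}^n}f\,\mathrm{d}\boldsymbol{x}\geq c_n\int_{r_k-\delta}^{r_k+\delta}f(r)\,r^{n-1}\,\mathrm{d}r\geq c_n\delta\,f(r_k)\,(r_k-\delta)^{n-1}\longrightarrow\infty,
\]
contradicting $f\in L^1(\mathbb{R}^n)$. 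Hence $f\in L^\infty(\mathbb{R}^n)$, and interpolation finishes (i). This step is delicate because it exploits the non-trivial growth of the spherical volume element $r^{n-1}$; without spherical symmetry, $L^1+$Lipschitz $\log$ would not imply $L^\infty$.

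Stage (c) is a bookkeeping computation. For (ii), I would bound
\[
\|\nabla(f^\nu)\|_{L^2}\leq\nu\|f^\nu\|_{L^2}\|\nabla\log f\|_{L^\infty},\qquad \|f^\nu|\nabla\log f|^2\|_{L^2}\leq \|\nabla\log f\|_{L^\infty}\|f^\nu\nabla\log f\|_{L^2},
\]
and $\|f^\nu\nabla^2\log f\|_{L^2}\leq\|f^\nu\|_{L^\infty}\|\nabla^2\log f\|_{L^2}$, all of which are finite by (i) and the hypothesis $\nabla\log f\in D^1$. For (iii), expanding $\nabla^3(f^\nu)$ gives schematic terms $f^\nu(\nabla\log f)^{\otimes 3}$, $f^\nu\,\nabla^2\log f\cdot\nabla\log f$, and $f^\nu\,\nabla^3\log f$; each is controlled in $L^2$ by the $L^\infty$ bound on $\nabla\log f$ and the hypothesis $\nabla\log f\in D^2$ combined with the already-established estimates on $f^\nu\nabla\log f$. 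The only notable subtlety here is that Lemma \ref{Hk-Ck-vector} (2D) or the Gagliardo--Nirenberg step (3D) is essential to absorb one copy of $\nabla\log f$ in sup-norm before the final $L^2$ pairing.
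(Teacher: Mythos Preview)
Your proof is correct, and stages (a) and (c) essentially match the paper's argument. The genuine difference is in stage (b), where you obtain $f\in L^\infty$ by a direct contradiction argument using the Lipschitz bound on $\log f$ together with the growth of the spherical volume element $r^{n-1}$, whereas the paper instead invokes the Gagliardo--Nirenberg interpolation
\[
\|f\|_{L^\infty}\le C\|f\|_{L^1}^{(4-n)/(n+4)}\|\nabla^2 f\|_{L^2}^{2n/(n+4)},
\]
expands $\nabla^2 f=f|\nabla\log f|^2+f\,\nabla^2\log f$, and closes a self-bounding inequality for $\|f\|_{L^\infty}$ via Young's inequality. Your route is more elementary and avoids the interpolation machinery entirely, but yields only qualitative boundedness; the paper's approach produces the explicit estimate
\[
\|f\|_{L^\infty}\le C\|f\|_{L^1}\|\nabla\log f\|_{L^\infty}^n+C\|\nabla^2\log f\|_{L^2}^{2n/(4-n)},
\]
which, while not claimed in the lemma statement, is the kind of quantitative control with constants depending only on the data that the paper tracks systematically elsewhere.
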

\begin{proof}
Note that, since $f$ is a spherically symmetric scalar function, $\nabla\log f$ is a spherically symmetric vector function.
\smallskip
For (i), we obtain from Lemma \ref{Hk-Ck-vector} in Appendix \ref{improve-sobolev} that
\begin{equation}\label{A13}
\|\nabla\log f\|_{L^\infty(\mathbb{R}^2)}\leq C\|\nabla\log f\|_{D^1(\mathbb{R}^2)}\leq C.
\end{equation}
Then it follows from \eqref{A13}, Lemma \ref{GN-ineq}, and the H\"older inequality that
\begin{align*}
\|f\|_{L^\infty}&\leq C\|f\|_{L^1}^\frac{4-n}{n+4}\|\nabla^2 f\|_{L^2}^\frac{2n}{n+4}\notag\\
&\leq C\|f\|_{L^1}^\frac{4-n}{n+4}\big(\|f|\nabla \log f|^2\|_{L^2}+\|f\nabla^2 \log f\|_{L^2}\big)^\frac{2n}{n+4}\\
&\leq C\|f\|_{L^1}^\frac{4-n}{n+4}\|f\|_{L^2}^\frac{2n}{n+4}\|\nabla \log f\|_{L^\infty}^\frac{4n}{n+4}+C\|f\|_{L^\infty}^\frac{2n}{n+4}\|\nabla^2 \log f\|_{L^2}^\frac{2n}{n+4}\notag\\
&\leq C\|f\|_{L^1}^\frac{4}{n+4}\|f\|_{L^\infty}^\frac{n}{n+4}\|\nabla \log f\|_{L^\infty}^\frac{4n}{n+4}+C\|f\|_{L^\infty}^\frac{2n}{n+4}\|\nabla^2 \log f\|_{L^2}^\frac{2n}{n+4},
\end{align*}
which, along with the Young inequality that
\begin{equation}\label{lemb2-1}
\|f\|_{L^\infty}\leq C\|f\|_{L^1}\|\nabla \log f\|_{L^\infty}^n+C\|\nabla^2 \log f\|_{L^2}^\frac{2n}{4-n}.
\end{equation}
Thus, $f\in L^\infty(\mathbb{R}^n)$, 
and (i) follows easily from this estimate and $f\in L^1(\mathbb{R}^n)$.

\smallskip
For (ii), we need the following identities:
\begin{equation*}
(f^\nu)_{x_i} = \nu f^\nu (\log f)_{x_i},\qquad (f^\nu)_{x_ix_j}=\nu^2 f^\nu (\log f)_{x_i} (\log f)_{x_j}+\nu f^{\nu}(\log f)_{x_ix_j}. 
\end{equation*}
Then these, together with $\nu\in[\frac{1}{2},\infty)$ and the  H\"older inequality, yield that
\begin{align*}
\|\nabla (f^\nu)\|_{L^2}&\leq C\|f^\nu \nabla\log f\|_{L^2} \leq C \|f\|^\nu_{L^{2\nu}} \|\nabla\log f\|_{L^{\infty}}\leq C,\\
\|\nabla^2 (f^\nu)\|_{L^2}&\leq C\|f^\nu |\nabla\log f|^2\|_{L^2}+C\|f^\nu \nabla^2\log f \|_{L^2} \\
&\leq C \|f\|^\nu_{L^{2\nu}}\|\nabla\log f\|_{L^{\infty}}^2  +C \|f\|^\nu_{L^{\infty}} \|\nabla^2\log f\|_{L^{2}} \leq C.
\end{align*}

Finally, for $\mathrm{(iii)}$, we first obtain from Lemmas \ref{GN-ineq} and \ref{lemma-L6} that
\begin{equation}\label{A15}
\begin{aligned}
\|\nabla\log\rho\|_{L^\infty(\mathbb{R}^3)}&\leq C\|\nabla\log\rho\|_{L^6(\mathbb{R}^3)}^\frac{1}{2}\|\nabla\log\rho\|_{D^2(\mathbb{R}^3)}^\frac{1}{2}\\
&\leq C\|\nabla\log\rho\|_{D^1(\mathbb{R}^3)}^\frac{1}{2}\|\nabla\log\rho\|_{D^2(\mathbb{R}^3)}^\frac{1}{2}\leq C.
\end{aligned}
\end{equation}
Since 
\begin{equation*}
\begin{aligned}
(f^\nu)_{x_ix_jx_k}&=\nu^3 f^\nu (\log f)_{x_i} (\log f)_{x_j}(\log f)_{x_k}+\nu^2 f^\nu (\log f)_{x_ix_k} (\log f)_{x_j}\\
&\quad+\!\nu^2 f^\nu (\log f)_{x_i} (\log f)_{x_jx_k}\!+\!\nu^2 f^\nu (\log f)_{x_k} (\log f)_{x_ix_j}\!+\!\nu f^{\nu} (\log f)_{x_ix_jx_k}, 
\end{aligned}
\end{equation*}
it follows from $\nu\in[\frac{1}{2},\infty)$, \eqref{A13}, \eqref{A15}, 
and  the H\"older inequality that 
\begin{align*}
\|\nabla^3 (f^\nu)\|_{L^2}&\leq C\|f^\nu |\nabla\log f|^3\|_{L^2}+C\|f^\nu \nabla^3\log f \|_{L^2} +C\|f^\nu |\nabla\log f||\nabla^2\log f| \|_{L^2}\\
&\leq C \|f\|^\nu_{L^{2\nu}}\|\nabla\log f\|_{L^{\infty}}^3 +C \|f\|^\nu_{L^{\infty}} \|\nabla^3\log f\|_{L^{2}} \\
&\quad +C \|f\|^\nu_{L^{\infty}}\|\nabla\log f\|_{L^{\infty}} \|\nabla^2\log f\|_{L^{2}}\leq C.
\end{align*} 
\end{proof}

Besides, to establish the $L^p(\mathbb{R}^n)$-estimates for transport equations, it also 
requires the following commutator estimates to justify the process of integration by parts. Specifically, consider the following general transport equation:
\begin{equation}\label{diff}
g_t+\diver(\boldsymbol{w} g)=f.
\end{equation}
\begin{lem}[\cite{lions1}]\label{lemma-lions}
Let $n\in\mathbb{N}^*$ and $T>0$. Assume that
\begin{equation*}
f\in L^1([0,T];L^p(\mathbb{R}^n)),\quad \boldsymbol{w}\in L^1([0,T];W^{1,\infty}(\mathbb{R}^n)),\quad g\in L^\infty([0,T];L^p(\mathbb{R}^n)),
\end{equation*}
for some $p\in[1,\infty]$, satisfying \eqref{diff} in the sense of distributions. Denote by $\{\omega_\epsilon\}_{\epsilon>0}$ the standard mollifies on $\mathbb{R}^n$ and define the commutator $\mathcal{C}_\epsilon(\boldsymbol{w},g)$ by 
\begin{equation*}
\mathcal{C}_\epsilon(\boldsymbol{w},g):=\diver(\boldsymbol{w}g)*\omega_\epsilon-\diver(\boldsymbol{w}(g*\omega_\epsilon)).    
\end{equation*} 
Then there exists a constant $C>0$ independent of $(\epsilon,\boldsymbol{w},g)$ such that
\begin{equation}\label{integral-trans}
\big\|\mathcal{C}_\epsilon(\boldsymbol{w},g)\big\|_{L^1([0,T];L^p(\mathbb{R}^n))}\leq C \|\boldsymbol{w}\|_{L^1([0,T];W^{1,\infty}(\mathbb{R}^n))}\|g\|_{L^\infty([0,T];L^p(\mathbb{R}^n))}.
\end{equation}
In addition, $\mathcal{C}_\epsilon(\boldsymbol{w},g)\to 0$ in $L^1([0,T];L^p(\mathbb{R}^n))$ as $\epsilon\to 0$ if $p<\infty$. In particular, for any $p\in[2,\infty)$  and for {\it a.e.} $t\in (0,T)$,
\begin{equation}\label{integral-trans2}
\frac{\mathrm{d}}{\mathrm{d}t}\|g\|^p_{L^p}=-(p-1)\int_{\mathbb{R}^n}|g|^p\diver \boldsymbol{w}\,\mathrm{d}\boldsymbol{x}+p\int_{\mathbb{R}^n}|g|^{p-2}gf\,\mathrm{d}\boldsymbol{x}.
\end{equation}
\end{lem}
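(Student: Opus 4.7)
\medskip
\noindent\textbf{Proof Proposal for Lemma \ref{lemma-lions}.} The plan is to follow the classical DiPerna--Lions commutator argument. First, I would recast $\mathcal{C}_\epsilon(\boldsymbol{w},g)$ in a form that exposes only one derivative on $\boldsymbol{w}$. Writing out the convolution and using integration by parts in the variable $\boldsymbol{y}$, a direct computation gives
\begin{equation*}
\mathcal{C}_\epsilon(\boldsymbol{w},g)(\boldsymbol{x})=\int_{\mathbb{R}^n} g(\boldsymbol{y})\bigl(\boldsymbol{w}(\boldsymbol{y})-\boldsymbol{w}(\boldsymbol{x})\bigr)\cdot\nabla\omega_\epsilon(\boldsymbol{x}-\boldsymbol{y})\,\mathrm{d}\boldsymbol{y}\,-\,(\diver\boldsymbol{w})(\boldsymbol{x})(g*\omega_\epsilon)(\boldsymbol{x}),
\end{equation*}
so that the differential loss of regularity in $g$ is transferred to a difference quotient of $\boldsymbol{w}$.

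\medskip
Once this representation is in hand, the uniform bound \eqref{integral-trans} follows from two ingredients. For the second term I would use $\|(g*\omega_\epsilon)\diver\boldsymbol{w}\|_{L^p}\leq\|\diver\boldsymbol{w}\|_{L^\infty}\|g\|_{L^p}$. For the first term I would apply the mean value theorem, $|\boldsymbol{w}(\boldsymbol{y})-\boldsymbol{w}(\boldsymbol{x})|\leq\|\nabla\boldsymbol{w}\|_{L^\infty}|\boldsymbol{x}-\boldsymbol{y}|$, together with the pointwise bound $|\boldsymbol{z}||\nabla\omega_\epsilon(\boldsymbol{z})|\leq C\,\epsilon^{-n}\chi_{\{|\boldsymbol{z}|\leq\epsilon\}}(\boldsymbol{z})$, which reduces the first term to a convolution of $|g|$ against an $L^1$-normalized kernel. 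Young's inequality then yields the desired $L^p$ bound by $C\|\nabla\boldsymbol{w}\|_{L^\infty}\|g\|_{L^p}$, and integration in time closes the estimate.

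\medskip
The main obstacle is the convergence $\mathcal{C}_\epsilon\to 0$ in $L^1([0,T];L^p(\mathbb{R}^n))$ when $p<\infty$. Here the uniform bound alone is not enough; one has to exploit the continuity of translations in $L^p$. For $\boldsymbol{w}\in C_{\rm c}^\infty$, a direct estimate gives $\mathcal{C}_\epsilon\to g\,\diver\boldsymbol{w}-g\,\diver\boldsymbol{w}=0$ pointwise a.e.\ and in $L^p$ on bounded time intervals, since the first integrand converges to $-\nabla\boldsymbol{w}(\boldsymbol{x}):(\boldsymbol{z}\otimes\nabla\omega_\epsilon)$ whose $\boldsymbol{y}$-integral is $(\diver\boldsymbol{w})(\boldsymbol{x})g(\boldsymbol{x})$. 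To handle general $\boldsymbol{w}\in L^1([0,T];W^{1,\infty})$, I would approximate $\boldsymbol{w}$ by smooth $\boldsymbol{w}^\delta$, use the linearity of $\mathcal{C}_\epsilon$ in $\boldsymbol{w}$ and the uniform bound just established to write
\begin{equation*}
\|\mathcal{C}_\epsilon(\boldsymbol{w},g)\|_{L^1_tL^p_x}\leq\|\mathcal{C}_\epsilon(\boldsymbol{w}-\boldsymbol{w}^\delta,g)\|_{L^1_tL^p_x}+\|\mathcal{C}_\epsilon(\boldsymbol{w}^\delta,g)\|_{L^1_tL^p_x},
\end{equation*}
and send $\epsilon\to 0$ first (so the smooth piece vanishes) and then $\delta\to 0$. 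The control of the difference term by $C\|\boldsymbol{w}-\boldsymbol{w}^\delta\|_{L^1_tW^{1,\infty}}\|g\|_{L^\infty_tL^p_x}$ makes the double limit work.

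\medskip
Finally, to obtain the renormalized identity \eqref{integral-trans2} for $p\in[2,\infty)$, I would convolve the distributional equation with $\omega_\epsilon$ to get $(g*\omega_\epsilon)_t+\diver\bigl(\boldsymbol{w}(g*\omega_\epsilon)\bigr)=f*\omega_\epsilon-\mathcal{C}_\epsilon(\boldsymbol{w},g)$, which is a pointwise identity in smooth functions. Multiplying by $p|g*\omega_\epsilon|^{p-2}(g*\omega_\epsilon)$, integrating over $\mathbb{R}^n$, using the divergence-form identity $\diver(\boldsymbol{w}|h|^p)=p|h|^{p-2}h\,\boldsymbol{w}\cdot\nabla h+|h|^p\diver\boldsymbol{w}$, and noting that the divergence term integrates to zero, yields
\begin{equation*}
\frac{\mathrm{d}}{\mathrm{d}t}\|g*\omega_\epsilon\|_{L^p}^p=-(p-1)\int_{\mathbb{R}^n}|g*\omega_\epsilon|^p\diver\boldsymbol{w}\,\mathrm{d}\boldsymbol{x}+p\!\int_{\mathbb{R}^n}\!|g*\omega_\epsilon|^{p-2}(g*\omega_\epsilon)(f*\omega_\epsilon-\mathcal{C}_\epsilon)\,\mathrm{d}\boldsymbol{x}.
\end{equation*}
Letting $\epsilon\to 0$, the commutator term vanishes by the previous step, $g*\omega_\epsilon\to g$ strongly in $L^\infty_tL^p_x$ and $f*\omega_\epsilon\to f$ in $L^1_tL^p_x$, which yields \eqref{integral-trans2} for a.e.\ $t\in(0,T)$.
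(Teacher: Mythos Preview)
Your argument for the commutator bound \eqref{integral-trans} via the explicit DiPerna--Lions representation is correct and standard; the paper simply cites \cite{lions1} for this part and does not reproduce it.

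There is, however, a genuine gap in your convergence step. You propose to approximate $\boldsymbol{w}\in L^1([0,T];W^{1,\infty}(\mathbb{R}^n))$ by smooth $\boldsymbol{w}^\delta$ so that $\|\boldsymbol{w}-\boldsymbol{w}^\delta\|_{L^1_tW^{1,\infty}_x}\to 0$. This fails: $C^\infty_{\rm c}(\mathbb{R}^n)$ is not dense in $W^{1,\infty}(\mathbb{R}^n)$ (take $\boldsymbol{w}(x)=|x|$ in one dimension; any smooth approximant has continuous derivative, so $\|\nabla\boldsymbol{w}^\delta-\nabla\boldsymbol{w}\|_{L^\infty}\geq 1$). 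The standard fix is to run the density argument in the $g$-variable instead: since $p<\infty$, smooth functions are dense in $L^p(\mathbb{R}^n)$, and for fixed $t$ your pointwise bound gives $\|\mathcal{C}_\epsilon(\boldsymbol{w}(t),g(t)-g^\delta(t))\|_{L^p}\leq C\|\boldsymbol{w}(t)\|_{W^{1,\infty}}\|g(t)-g^\delta(t)\|_{L^p}$, after which dominated convergence in $t$ closes the argument. Alternatively, one can avoid density entirely by writing the difference quotient of $\boldsymbol{w}$ via the integral mean-value formula and using pointwise a.e.\ convergence of $\nabla\boldsymbol{w}(\cdot-s\epsilon\boldsymbol{\zeta})\to\nabla\boldsymbol{w}(\cdot)$ together with dominated convergence against $|g|^p$.

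For the renormalized identity \eqref{integral-trans2}, your mollification scheme is exactly what the paper does, but your final limit is taken directly in the \emph{differentiated} identity, which is delicate: convergence of $\tfrac{\mathrm{d}}{\mathrm{d}t}\|g*\omega_\epsilon\|_{L^p}^p$ is not immediate. The paper instead integrates the mollified identity over $[\tau,t]\times\mathbb{R}^n$, passes to the limit in the resulting \emph{integral} identity (where all convergences are routine), concludes that $t\mapsto\|g(t)\|_{L^p}^p$ is absolutely continuous, and then differentiates. This extra step is what rigorously justifies the a.e.\ statement.
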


\begin{proof}
The proof of \eqref{integral-trans} can be found in \cite[Chapter 2]{lions1}. 
We only give the proof for \eqref{integral-trans2}. First, in view of \eqref{diff}, we see that, for all $\zeta=\zeta(t,\boldsymbol{x})\in C^\infty_{\rm c}((0,T)\times\mathbb{R}^n)$,
\begin{equation}
\int_0^T\int_{\mathbb{R}^n} \zeta_t g^\epsilon \,\mathrm{d}\boldsymbol{x}\mathrm{d}t=\int_0^T\int_{\mathbb{R}^n}\zeta\Big(\underline{\diver(\boldsymbol{w}g^\epsilon)-f^\epsilon+\mathcal{C}_\epsilon(\boldsymbol{w},g)}_{:=\mathcal{R}_\epsilon}\Big)\,\mathrm{d}\boldsymbol{x}\mathrm{d}t.
\end{equation}
where $F^\epsilon:=F*\omega_\epsilon$ for any function $F$. 
Clearly, $g^\epsilon\in L^\infty([0,T];W^{1,p}(\mathbb{R}^n))$, 
and $g^\epsilon$ is smooth in spatial coordinates for each $\epsilon>0$.
Then it follows from the above, \eqref{integral-trans}, and the H\"older inequality that $\mathcal{R}_\epsilon\in L^1([0,T];L^p(\mathbb{R}^n))$, which, along with the definition of weak derivatives, implies that $g^\epsilon$ admits the weak derivative $g^\epsilon_t\in L^1([0,T];L^p(\mathbb{R}^n))$, and 
\begin{equation*}
g^\epsilon_t=-\mathcal{R}_\epsilon=-\diver(\boldsymbol{w}g^\epsilon)+f^\epsilon-\mathcal{C}_\epsilon(\boldsymbol{w},g) \qquad \text{for {\it a.e.} $(t,\boldsymbol{x})\in (0,T)\times \mathbb{R}^n$}.
\end{equation*}

Now multiplying above by $p|g^\epsilon|^{p-2}g^\epsilon$ with $p\in[2,\infty)$ and integrating the resulting equality over $[\tau,t]\times \mathbb{R}^n$ with $0\leq \tau< t\leq T$ yield 
\begin{align*}
&\|g^\epsilon(t)\|^p_{L^p}+(p-1)\int_\tau^t\int_{\mathbb{R}^n}|g^\epsilon|^p\diver \boldsymbol{w}\,\mathrm{d}\boldsymbol{x}\mathrm{d}t'\\
&=\|g^\epsilon(\tau)\|^p_{L^p}+p\int_\tau^t\int_{\mathbb{R}^n}|g^\epsilon|^{p-2}g^\epsilon\big(f^\epsilon-\mathcal{C}_\epsilon(\boldsymbol{w},g) \big)\,\mathrm{d}\boldsymbol{x}\mathrm{d}t'.
\end{align*}
Taking the limit $\epsilon\to 0$, together with $\mathcal{C}_\epsilon(\boldsymbol{w},g)\to 0$ in $L^1([0,T];L^p(\mathbb{R}^n))$, yields that 
\begin{align*}
&\|g(t)\|^p_{L^p}+(p-1)\int_\tau^t\int_{\mathbb{R}^n}|g|^p\diver \boldsymbol{w}\,\mathrm{d}\boldsymbol{x}\mathrm{d}t'=\|g(\tau)\|^p_{L^p}+p\int_\tau^t\int_{\mathbb{R}^n}|g|^{p-2}gf\,\mathrm{d}\boldsymbol{x}\mathrm{d}t'.
\end{align*}
This implies that $\|g(t)\|^p_{L^p}$ is absolutely continuous on $[0,T]$ and thus differentiable for {\it a.e.} $t\in (0,T)$. 
Finally, differentiating the above with respect to $t$ leads to \eqref{integral-trans2}.
\end{proof}

Finally, consider the following equations for the Lam\'e operator $L$:
\begin{equation}\label{aue}
L\boldsymbol{f}=-\alpha\Delta\boldsymbol{f} -\alpha\nabla\diver\boldsymbol{f} =\boldsymbol{g} \qquad\text{in $\mathbb{R}^n$}.
\end{equation}
We focus on the regularity theory of the solution $\boldsymbol{f}$ satisfying 
the asymptotic condition:  
\begin{equation}\label{aue''}
\boldsymbol{f}\to \boldsymbol{0} \qquad \text{as $|\boldsymbol{x}|\to\infty$}.
\end{equation}
The results are stated as follows:
\begin{lem}[\cite{zhangzhifei}]\label{df3}
Let $q\in (1,\infty)$, and let $\boldsymbol{f}\in D^{1,q}(\mathbb{R}^n)$ 
be a weak solution to \eqref{aue} with \eqref{aue''}. 
Then, if $\boldsymbol{g}\in L^q(\mathbb{R}^n)$, there exists a constant $C>0$, depending only on $(n,\alpha,k,q)$ and independent of $(\boldsymbol{f},\boldsymbol{g})$, such that
\begin{equation*}
\|\nabla^{2}\boldsymbol{f}\|_{L^q(\mathbb{R}^n)} \leq C\|\boldsymbol{g}\|_{L^q(\mathbb{R}^n)}.
\end{equation*} 
\end{lem}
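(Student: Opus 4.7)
The strategy is to treat \eqref{aue} in Fourier variables and realize $\nabla^2\boldsymbol{f}$ as a vector-valued Fourier multiplier acting on $\boldsymbol{g}$, then invoke the Mihlin--H\"ormander multiplier theorem (Lemma~\ref{lemma-chengzi}) to obtain the desired $L^q$ bound. Taking the formal Fourier transform of \eqref{aue} yields the symbolic equation
\begin{equation*}
M(\boldsymbol{\xi})\hat{\boldsymbol{f}}(\boldsymbol{\xi})=\hat{\boldsymbol{g}}(\boldsymbol{\xi}),\qquad M(\boldsymbol{\xi}):=\alpha|\boldsymbol{\xi}|^2 I+\alpha\,\boldsymbol{\xi}\otimes\boldsymbol{\xi},
\end{equation*}
which, since $M(\boldsymbol{\xi})$ is a rank-one perturbation of $\alpha|\boldsymbol{\xi}|^2 I$, can be inverted explicitly by the Sherman--Morrison formula as
\begin{equation*}
M(\boldsymbol{\xi})^{-1}=\frac{1}{\alpha|\boldsymbol{\xi}|^2}\Big(I-\frac{\boldsymbol{\xi}\otimes\boldsymbol{\xi}}{2|\boldsymbol{\xi}|^2}\Big)\qquad\mbox{for $\boldsymbol{\xi}\neq\boldsymbol{0}$}.
\end{equation*}
Consequently, the multipliers corresponding to the second derivatives $\partial_l\partial_m f_j$ acting on components of $\boldsymbol{g}$ take the form
\begin{equation*}
\varphi^{lm}_{jk}(\boldsymbol{\xi})=-\xi_l\xi_m\,[M(\boldsymbol{\xi})^{-1}]_{jk}=-\frac{1}{\alpha}\Big(\frac{\xi_l\xi_m}{|\boldsymbol{\xi}|^2}\delta_{jk}-\frac{\xi_l\xi_m\xi_j\xi_k}{2|\boldsymbol{\xi}|^4}\Big),
\end{equation*}
each of which is homogeneous of degree zero, bounded on $\mathbb{R}^n\setminus\{\boldsymbol{0}\}$, and smooth there with $|\partial^\varsigma_{\boldsymbol{\xi}}\varphi^{lm}_{jk}(\boldsymbol{\xi})|\leq C_{\varsigma}|\boldsymbol{\xi}|^{-|\varsigma|}$.

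\textbf{Application of the multiplier theorem.} A direct integration in the spherical shell $\{R<|\boldsymbol{\xi}|<2R\}$ then shows that each $\varphi^{lm}_{jk}$ satisfies the hypotheses of Lemma~\ref{lemma-chengzi}: for every $R>0$ and every multi-index with $0\leq|\varsigma|\leq\lfloor n/2\rfloor+1$, one has the bound $\|\partial^\varsigma_{\boldsymbol{\xi}}\varphi^{lm}_{jk}\|_{L^2(R<|\boldsymbol{\xi}|<2R)}\leq C R^{n/2-|\varsigma|}$. Therefore, defining $\tilde{\boldsymbol{f}}$ by $\widehat{\partial_l\partial_m \tilde{f}_j}:=\varphi^{lm}_{jk}\hat{g}_k$ (first on a dense class such as the Schwartz class, then extended by density), Lemma~\ref{lemma-chengzi} yields
\begin{equation*}
\|\nabla^2\tilde{\boldsymbol{f}}\|_{L^q(\mathbb{R}^n)}\leq C\|\boldsymbol{g}\|_{L^q(\mathbb{R}^n)}.
\end{equation*}
An analogous argument with the multiplier $-\mathrm{i}\xi_l[M(\boldsymbol{\xi})^{-1}]_{jk}$, which satisfies the same Mihlin estimates after being reorganised using a Riesz-type symbol $\xi_m/|\boldsymbol{\xi}|$, produces $\nabla\tilde{\boldsymbol{f}}\in L^q(\mathbb{R}^n)$ so that $\tilde{\boldsymbol{f}}\in D^{1,q}(\mathbb{R}^n)$; by construction $\tilde{\boldsymbol{f}}$ is a distributional solution of $L\tilde{\boldsymbol{f}}=\boldsymbol{g}$, and the explicit representation shows that $\tilde{\boldsymbol{f}}(\boldsymbol{x})\to\boldsymbol{0}$ as $|\boldsymbol{x}|\to\infty$.

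\textbf{Uniqueness/identification.} To conclude, I would compare the given $\boldsymbol{f}$ with the constructed $\tilde{\boldsymbol{f}}$ by setting $\boldsymbol{h}:=\boldsymbol{f}-\tilde{\boldsymbol{f}}$. Then $L\boldsymbol{h}=\boldsymbol{0}$ in $\mathbb{R}^n$, $\nabla\boldsymbol{h}\in L^q(\mathbb{R}^n)$, and $\boldsymbol{h}\to\boldsymbol{0}$ at infinity. Taking the divergence of $L\boldsymbol{h}=\boldsymbol{0}$ gives $-2\alpha\Delta\diver\boldsymbol{h}=0$, so $\diver\boldsymbol{h}$ is harmonic; substituting back into the equation, each component $h_i$ is harmonic as well. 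A Liouville-type argument---exploiting that a harmonic function whose gradient lies in $L^q(\mathbb{R}^n)$ with $q$ finite must be constant, together with the far-field condition $\boldsymbol{h}\to\boldsymbol{0}$---then forces $\boldsymbol{h}\equiv\boldsymbol{0}$. Hence $\boldsymbol{f}=\tilde{\boldsymbol{f}}$, and the estimate on $\|\nabla^2\boldsymbol{f}\|_{L^q}$ transfers from $\tilde{\boldsymbol{f}}$ to $\boldsymbol{f}$.

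\textbf{Main obstacle.} The principal technical difficulty lies in the identification step: since $\boldsymbol{f}$ is only assumed to belong to $D^{1,q}(\mathbb{R}^n)$, it need not admit a classical Fourier transform, and one must work with tempered distributions modulo polynomials, then use the far-field decay to pin down the solution uniquely up to a harmonic remainder and finally eliminate that remainder via the Liouville argument. An equivalent and perhaps cleaner route is a duality argument: test the equation $L\boldsymbol{f}=\boldsymbol{g}$ against $L^{q'}$ data and combine with the $L^{q'}$ estimate for the explicit solution constructed above, thereby bypassing the distributional Fourier analysis entirely.
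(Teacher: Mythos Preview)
The paper does not supply its own proof of this lemma; it is simply quoted from \cite{zhangzhifei} as a known elliptic regularity estimate for the Lam\'e system. Your Fourier-multiplier argument is the standard route and is essentially correct: the symbol $M(\boldsymbol{\xi})=\alpha|\boldsymbol{\xi}|^2 I+\alpha\,\boldsymbol{\xi}\otimes\boldsymbol{\xi}$ and its inverse are computed correctly, and the resulting second-derivative multipliers $\varphi^{lm}_{jk}$ are homogeneous of degree zero and smooth on $\mathbb{R}^n\setminus\{\boldsymbol{0}\}$, so Lemma~\ref{lemma-chengzi} applies directly to give the $L^q$ bound on $\nabla^2\tilde{\boldsymbol{f}}$.

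Two points in the proposal deserve tightening. First, your claim that $\nabla\tilde{\boldsymbol{f}}\in L^q$ follows from Mihlin applied to $-\mathrm{i}\xi_l[M(\boldsymbol{\xi})^{-1}]_{jk}$ is not right as stated: this symbol is homogeneous of degree $-1$, not $0$, and blows up at the origin, so it is not a Mihlin multiplier. Fortunately you do not need this step; it suffices to work with $\boldsymbol{f}$ as a tempered distribution and identify $\nabla^2\boldsymbol{f}$ directly. Second, in the Liouville step you jump from $\Delta(\diver\boldsymbol{h})=0$ to ``each $h_i$ is harmonic''; the intermediate observation is that $\diver\boldsymbol{h}$ is harmonic \emph{and} lies in $L^q(\mathbb{R}^n)$ (since $\nabla\boldsymbol{h}\in L^q$), hence vanishes identically, after which $\Delta\boldsymbol{h}=\boldsymbol{0}$ follows and the argument proceeds as you indicate. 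With these two repairs the proof is complete.
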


\medskip
\section{Conversion of Sobolev Spaces for Spherically Symmetric Functions}\label{appb}

This appendix is devoted to showing the conversion of some Sobolev spaces between the M-D Eulerian coordinate $\boldsymbol{x}$ and the spherical coordinate $r$ for spherically symmetric functions. 
Let $n$ be the spatial dimensions and $m=n-1$.

\begin{lem}\label{lemma-initial}
Let $q\in [1,\infty]$, $0\leq a<b\leq \infty$, $\Omega:=\{\boldsymbol{x}\in \mathbb{R}^n:\, a\leq |\boldsymbol{x}|< b\}$, and $r\in J:=[a,b)$ with $r=|\boldsymbol{x}|$. 
Then
\begin{enumerate}
\item[$\mathrm{(i)}$] for spherically symmetric function $f\in W^{3,q}(\Omega)$ with $f(\boldsymbol{x})=f(r)$,  
\begin{alignat*}{2}
\|f\|_{L^q(\Omega)}&\sim 
\|r^\frac{m}{q}f\|_{L^q(J)},\qquad\qquad \ \,\, \|\nabla f\|_{L^q(\Omega)}&&\sim 
\|r^\frac{m}{q}f_r\|_{L^q(J)},\\
\|\nabla^2 f\|_{L^q(\Omega)}&\sim  \Big\|r^\frac{m}{q}\big(f_{rr},\frac{f_r}{r}\big)\Big\|_{L^q(J)},\quad \|\nabla^3 f\|_{L^q(\Omega)}&&\sim \Big\|r^\frac{m}{q}\Big(f_{rrr},\big(\frac{f_r}{r}\big)_r\Big)\Big\|_{L^q(J)};
\end{alignat*}
\item[$\mathrm{(ii)}$] for spherically symmetric vector function $\boldsymbol{f}\in W^{4,q}(\Omega)$ with $\boldsymbol{f}(\boldsymbol{x})=\frac{\boldsymbol{x}}{r}f(r)$,
\begin{align*}
\|\boldsymbol{f}\|_{L^q(\Omega)}&\sim \|r^\frac{m}{q}f\|_{L^q(J)},\quad \|\nabla \boldsymbol{f}\|_{L^q(\Omega)}\sim \Big\|r^\frac{m}{q}\big(f_r,\frac{f}{r}\big)\Big\|_{L^q(J)},\\
\|\nabla^2 \boldsymbol{f}\|_{L^q(\Omega)}&\sim \Big\|r^\frac{m}{q}\Big(f_{rr},\big(\frac{f}{r}\big)_r\Big)\Big\|_{L^q(J)},\\
\|\nabla^3 \boldsymbol{f}\|_{L^q(\Omega)}&\sim \Big\|r^\frac{m}{q}\Big(f_{rrr},\frac{f_{rr}}{r},\big(\frac{f}{r}\big)_{rr},\frac{1}{r}\big(\frac{f}{r}\big)_r\Big)\Big\|_{L^q(J)},\\
\|\nabla^4 \boldsymbol{f}\|_{L^q(\Omega)}&\sim \Big\|r^\frac{m}{q}\Big(f_{rrrr},\big(\frac{f_{rr}}{r}\big)_r,\big(\frac{f}{r}\big)_{rrr},\Big(\frac{1}{r}(\frac{f}{r}\big)_r\Big)_r\Big)\Big\|_{L^q(J)}.
\end{align*}
\end{enumerate}
Here, $E\sim F$ denotes $C^{-1}E\leq F\leq CE$ for some constant $C\geq 1$ 
depending only on $n$, 
and we have used the following notation for any function space $X$ and functions $(h,g_1,\cdots\!,g_k)$,
\begin{equation*}
\|h(g_1,\cdots\!,g_k)\|_{X}:=\sum_{i=1}^k\|hg_i\|_X.
\end{equation*}
\end{lem}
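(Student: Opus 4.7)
\textbf{Proof proposal for Lemma \ref{lemma-initial}.} The strategy is to convert the $L^q(\Omega)$-norms into radial weighted $L^q(J)$-norms via spherical coordinates, after first using the chain rule to express each Cartesian derivative tensor pointwise in terms of a finite number of radial derivative quantities. The standard identity
\begin{equation*}
\int_{\Omega}|g(\boldsymbol{x})|^q\,\mathrm{d}\boldsymbol{x}=\omega_n\int_a^b r^m|g(r)|^q\,\mathrm{d}r
\end{equation*}
for spherically symmetric $g$ (where $\omega_n$ is the surface area of the unit $(n-1)$-sphere) reduces every bulk norm to a weighted radial one, so the whole argument boils down to computing $|\nabla^k f|$ and $|\nabla^k\boldsymbol{f}|$ pointwise and showing that, as functions of $r$, they are two-sided equivalent to the indicated combinations of radial derivatives.

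For part (i), I would differentiate iteratively using $\partial_{x_i}r=x_i/r$. A direct computation gives $\partial_i f=f_r\tfrac{x_i}{r}$, so $|\nabla f|=|f_r|$. Next,
\begin{equation*}
\partial_i\partial_j f=f_{rr}\tfrac{x_ix_j}{r^2}+\tfrac{f_r}{r}\bigl(\delta_{ij}-\tfrac{x_ix_j}{r^2}\bigr),
\end{equation*}
and the Frobenius norm simplifies (using $\sum_{ij}\delta_{ij}x_ix_j/r^2=1$ and $\sum_{ij}x_i^2x_j^2/r^4=1$) to the clean identity
\begin{equation*}
|\nabla^2f|^2=f_{rr}^2+m\,\bigl(\tfrac{f_r}{r}\bigr)^2.
\end{equation*}
For $\nabla^3 f$, a similar (longer) expansion yields $|\nabla^3 f|^2$ as a positive-coefficient linear combination of $f_{rrr}^2$ and $((f_r/r)_r)^2$, from which the two-sided bound against $\|r^{m/q}(f_{rrr},(f_r/r)_r)\|_{L^q(J)}$ is immediate.

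For part (ii), writing $f_i=x_i\cdot\tfrac{f(r)}{r}$ and applying the product rule gives
\begin{equation*}
\partial_j f_i=\delta_{ij}\tfrac{f}{r}+\tfrac{x_ix_j}{r}\bigl(\tfrac{f}{r}\bigr)_r,
\end{equation*}
whose Frobenius norm, after using $r(f/r)_r=f_r-f/r$, collapses to
\begin{equation*}
|\nabla\boldsymbol{f}|^2=f_r^2+m\,\bigl(\tfrac{f}{r}\bigr)^2,
\end{equation*}
yielding the first-order equivalence. For $\nabla^2\boldsymbol{f},\nabla^3\boldsymbol{f},\nabla^4\boldsymbol{f}$ the same mechanism produces tensorial expressions whose Frobenius norms become positive linear combinations of squares of exactly the radial quantities listed on the right-hand side of the lemma; the upper bound then drops out immediately, and the lower bound follows by observing that the radial quantities can be recovered from specific entries or traces of the derivative tensor (e.g.\ the full contraction $\sum_i\partial_i^2 f_i$ picks out $(\diver\boldsymbol{f})_r$-type terms, while directional derivatives along $\boldsymbol{x}/r$ pick out the pure radial derivatives).

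The main obstacle, in my view, is purely bookkeeping for the higher-order vector case: $\nabla^3\boldsymbol{f}$ and $\nabla^4\boldsymbol{f}$ each generate many tensor components, and one must argue cleanly that the span of these entries, viewed as a module over the radial variable, coincides with the span of $\{f_{rrr},f_{rr}/r,(f/r)_{rr},r^{-1}(f/r)_r\}$ (respectively the fourth-order analogue). The cleanest route is to observe inductively that each radial derivative of order $k$ produces terms of the form $P(x_i,x_j,\cdots)\cdot f^{(\ell)}$ or $P\cdot(f/r)^{(\ell)}$ with homogeneous polynomial prefactors, and that Frobenius contraction of these blocks only ever couples like-type terms, with all cross terms vanishing by the same orthogonality ($\sum\delta_{ij}x_ix_j/r^2=1$, $\sum x_i^2x_j^2/r^4=1$, etc.) exploited at second order. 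Once this structural observation is in place, both inequalities $\lesssim$ and $\gtrsim$ are direct, and the spherical change of variables finishes the proof. One should also verify that all pointwise manipulations extend from $C^\infty$ to $W^{k,q}$ by the usual mollification-and-density argument, which is standard in view of $W^{k,q}(\Omega)$-smooth approximation of spherically symmetric functions on $\Omega$.
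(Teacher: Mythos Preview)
Your proposal is correct and follows essentially the same route as the paper: explicit Cartesian differentiation of $f_k=x_k f(r)/r$, computation of the Frobenius norm, and then the spherical change of variables. The one point worth noting is that the paper's direct computation shows the Frobenius norms are \emph{exactly} diagonal quadratic forms in the radial quantities (e.g.\ $|\nabla^2\boldsymbol{f}|^2=f_{rr}^2+3m\,((f/r)_r)^2$, $|\nabla^3\boldsymbol{f}|^2=f_{rrr}^2+6m\,((f/r)_{rr})^2+(3m^2+6m)\,(r^{-1}(f/r)_r)^2$, and similarly at fourth order), with all cross terms cancelling; hence both the upper and lower bounds are immediate and your proposed trace/entry-recovery argument for the lower bound is unnecessary. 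The four-term form of the $\nabla^3$ and $\nabla^4$ equivalences in the statement then follows from the three-term exact identities via the algebraic relation $f_{rr}/r=(f/r)_{rr}+2r^{-1}(f/r)_r$.
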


\begin{proof}
It suffices to prove (ii), since $\nabla f=f_r \frac{\boldsymbol{x}}{r}$ can be regarded as a vector function $\boldsymbol{h}=h\frac{\boldsymbol{x}}{r}$ with $h=f_r$. 
Let $\boldsymbol{f}=(f_1,\cdots\!,f_n)^\top\in\mathbb{R}^n$ with $\boldsymbol{f}(\boldsymbol{x})=f(r)\frac{\boldsymbol{x}}{r}$, and $\boldsymbol{x}=(x_1,\cdots\!,x_n)^\top\in \Omega$. First, it follows from direct calculations that
\begin{align*}
(f_k)_{x_i}&=\frac{x_i x_k}{r^2}f_r+  \frac{\delta_{ik}r^2-x_i x_k}{r^3}f,\\ 
(f_k)_{x_ix_j}&=\frac{x_i x_j x_k}{r^3}f_{rr}+ \Big(\frac{\delta_{ij}x_k+\delta_{ik}x_j+\delta_{jk}x_i}{r}-\frac{3x_i x_j x_k}{r^3}\Big)\big(\frac{f}{r}\big)_r,\\ 
(f_k)_{x_ix_jx_\ell}
&=\frac{x_i x_j x_kx_\ell}{r^4}f_{rrr}\\
&\quad +\Big(\frac{\delta_{i\ell}x_jx_k+\delta_{j\ell}x_ix_k+\delta_{k\ell}x_ix_j}{r^2}\\
&\quad\quad \ \ +\frac{\delta_{ij}x_kx_\ell+\delta_{ik}x_jx_\ell+\delta_{jk}x_ix_\ell}{r^2}-\frac{6x_i x_j x_kx_\ell}{r^4}\Big)\big(\frac{f}{r}\big)_{rr}\\
&\quad + \Big(\delta_{ij}\delta_{k\ell}+\delta_{ik}\delta_{j\ell}+\delta_{jk}\delta_{i\ell}-\frac{\delta_{i\ell}x_jx_k+\delta_{j\ell}x_ix_k+\delta_{k\ell}x_ix_j}{r^2}\\
&\quad\quad \ \ -\frac{\delta_{ij}x_kx_\ell+\delta_{ik}x_jx_\ell+\delta_{jk}x_ix_\ell}{r^2}+\frac{3x_i x_j x_kx_\ell}{r^4}\Big)\Big(\frac{1}{r}\big(\frac{f}{r}\big)_r\Big),\\ 
(f_k)_{x_ix_jx_\ell x_p}
&=\frac{x_i x_j x_kx_\ell x_p}{r^5}f_{rrrr}\\
&\quad +\Big(\frac{\delta_{ip} x_j x_kx_\ell+\delta_{jp} x_i x_kx_\ell+\delta_{kp} x_i x_jx_\ell+\delta_{\ell p} x_i x_jx_k}{r^3}\\ 
&\quad\quad \ \ + \frac{\delta_{i\ell}x_jx_kx_p+\delta_{j\ell}x_ix_kx_p+\delta_{k\ell}x_ix_jx_p}{r^3}\\
&\quad\quad \ \ +\frac{\delta_{ij}x_kx_\ell x_p+\delta_{ik}x_jx_\ell x_p+\delta_{jk}x_ix_\ell x_p}{r^3}-\frac{10x_i x_j x_kx_\ell x_p}{r^5}\Big)\big(\frac{f}{r}\big)_{rrr}\\ 
&\quad +\Big(\frac{\delta_{i\ell}\delta_{jp}x_k+\delta_{i\ell}\delta_{kp}x_j+\delta_{j\ell}\delta_{ip}x_k+\delta_{j\ell}\delta_{kp}x_i+\delta_{k\ell}\delta_{ip}x_j+\delta_{k\ell}\delta_{jp}x_i}{r}\\
&\quad\quad \ \ +\frac{\delta_{ij}\delta_{kp}x_\ell+\delta_{ij}\delta_{\ell p}x_k+\delta_{ik}\delta_{jp}x_\ell+\delta_{ik}\delta_{\ell p} x_j+\delta_{jk}\delta_{ip}x_\ell+\delta_{jk}\delta_{\ell p}x_i}{r}\\
&\quad\quad \ \ +\frac{\delta_{ij}\delta_{k\ell}x_p+\delta_{ik}\delta_{j\ell}x_p+\delta_{jk}\delta_{i\ell}x_p}{r}\\
&\quad\quad \ \ -\frac{3(\delta_{ip} x_j x_kx_\ell+\delta_{jp} x_i x_kx_\ell+\delta_{kp} x_i x_jx_\ell+\delta_{\ell p}x_i x_j x_k)}{r^3}\\ 
&\quad\quad \ \ -\frac{3(\delta_{i\ell}x_jx_kx_p+\delta_{j\ell}x_ix_kx_p+\delta_{k\ell}x_ix_jx_p)}{r^3}\\
&\quad\quad \ \ -\frac{3(\delta_{ij}x_kx_\ell x_p+\delta_{ik}x_jx_\ell x_p+\delta_{jk}x_ix_\ell x_p)}{r^3}+\frac{15x_i x_j x_kx_\ell x_p}{r^5}\Big)\Big(\frac{1}{r}\big(\frac{f}{r}\big)_{r}\Big)_r. 
\end{align*}
Then the above expressions yield
\begin{align*}
|\boldsymbol{f}|^2&=\sum_{i=k}^n |f_k|^2= |f|^2,\quad |\nabla \boldsymbol{f}|^2=\sum_{i,k=1}^n |(f_k)_{x_i}|^2= |f_r|^2+ m\Big|\frac{f}{r}\Big|^2,\notag\\
|\nabla^2 \boldsymbol{f}|^2&=\sum_{i,j,k=1}^n |(f_k)_{x_ix_j}|^2=|f_{rr}|^2+3m\Big|\big(\frac{f}{r}\big)_r\Big|^2,\notag\\
|\nabla^3 \boldsymbol{f}|^2&=\sum_{i,j,k,\ell=1}^n \big|(f_k)_{x_ix_jx_\ell}\big|^2=|f_{rrr}|^2+6m\Big|\big(\frac{f}{r}\big)_{rr}\Big|^2+ (3m^2+6m)  \Big|\frac{1}{r}\big(\frac{f}{r}\big)_r\Big|^2, \\
|\nabla^4 \boldsymbol{f}|^2&=\!\sum_{i,j,k,\ell,p=1}^n\! \big|(f_k)_{x_ix_jx_\ell x_p}\big|^2=|f_{rrrr}|^2\!+10m\Big|\big(\frac{f}{r}\big)_{rrr}\Big|^2\!+ (15m^2\!+\!30m) \Big|\Big(\frac{1}{r}\big(\frac{f}{r}\big)_r\Big)_r\Big|^2,\notag
\end{align*}
which imply that
\begin{equation}\label{BB}
\begin{aligned}
&|\boldsymbol{f}|\sim |f|,\quad |\nabla \boldsymbol{f}|\sim |f_r|+\Big|\frac{f}{r}\Big|,\quad |\nabla^2 \boldsymbol{f}|\sim |f_{rr}|+\Big|\big(\frac{f}{r}\big)_r\Big|,\\
&|\nabla^3 \boldsymbol{f}|\sim |f_{rrr}|+\Big|\big(\frac{f}{r}\big)_{rr}\Big|+\Big|\frac{1}{r}\big(\frac{f}{r}\big)_{r}\Big|,\\
&|\nabla^4 \boldsymbol{f}|\sim |f_{rrrr}|+\Big|\big(\frac{f}{r}\big)_{rrr}\Big|+\Big|\Big(\frac{1}{r}\big(\frac{f}{r}\big)_{r}\Big)_r\Big|.
\end{aligned}
\end{equation}

In addition, since
$\frac{f_{rr}}{r}=\big(\frac{f}{r}\big)_{rr}+\frac{2}{r}\big(\frac{f}{r}\big)_r$, $\eqref{BB}_4$--$\eqref{BB}_5$ can be also written as 
\begin{equation}\label{x-r-2}
\begin{aligned}
&|\nabla^3 \boldsymbol{f}|\sim |f_{rrr}|+\Big|\frac{f_{rr}}{r}\Big|+\Big|\big(\frac{f}{r}\big)_{rr}\Big|+\Big|\frac{1}{r}\big(\frac{f}{r}\big)_{r}\Big|,\\
&|\nabla^4 \boldsymbol{f}|\sim |f_{rrrr}|+\Big|\big(\frac{f_{rr}}{r}\big)_{r}\Big|+\Big|\big(\frac{f}{r}\big)_{rrr}\Big|+\Big|\Big(\frac{1}{r}\big(\frac{f}{r}\big)_{r}\Big)_r\Big|.
\end{aligned}
\end{equation}

Finally, for any spherically symmetric function $g(\boldsymbol{x}) =g(r)$, the following spherical coordinate transformation in $\Omega$ holds:  
\begin{equation*}
\int_\Omega g(\boldsymbol{x})\,\mathrm{d}\boldsymbol{x}= \omega_n\int_J g(r)r^m\,\mathrm{d}r,
\end{equation*}
where $\omega_n$ denotes the surface area of the $n$-sphere. 
Therefore, this, together with \eqref{BB}--\eqref{x-r-2}, yields the desired conclusions. 
\end{proof}

\section{Sobolev Embeddings for Spherically Symmetric Functions}\label{improve-sobolev}

In this appendix, we give the improved Sobolev embeddings for spherically symmetric functions in M-D. 

First, we study the Sobolev embeddings of the type: $D^{1,p}(\mathbb{R}^n)\hookrightarrow L^{q}(\mathbb{R}^n)$ for spherically symmetric vector functions when $p\in [1,n)$. The following auxiliary lemma is used in our analysis, which can be found in Lemma II.6.1 on
\cite[page 81]{galdi}.
\begin{lem}[\cite{galdi}]\label{lemma-galdi}
Let $p\in [1,\infty)$, $n\in \mathbb{N}$, and $n\geq 2$, and let $f\in D^{1,p}(\mathbb{R}^n)$ be a scalar or vector function defined on $\mathbb{R}^n$. Then $f\in W^{1,p}_{\mathrm{loc}}(\mathbb{R}^n)$, that is, $\nabla^j f\in L^p(K)$ $(j=0,1)$ for any bounded domain $K\subset \mathbb{R}^n$.
\end{lem}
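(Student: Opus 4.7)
The plan is to reduce the claim to the scalar case (the vector case follows componentwise) and then to derive local $L^p$-integrability of $f$ from the Poincaré-Wirtinger inequality applied on balls. Concretely, fix any bounded domain $K\subset\mathbb{R}^n$ and choose $R>0$ such that $K\subset B_R:=\{\boldsymbol{x}\in\mathbb{R}^n:|\boldsymbol{x}|<R\}$. By the definition of $D^{1,p}(\mathbb{R}^n)$ recalled in \eqref{eulerspace}, I have $f\in L^1_{\mathrm{loc}}(\mathbb{R}^n)$ and $\nabla f\in L^p(\mathbb{R}^n)$, so in particular $f\in L^1(B_R)$ and $\nabla f\in L^p(B_R)$. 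The mean value $\bar{f}_R:=|B_R|^{-1}\int_{B_R}f\,\mathrm{d}\boldsymbol{x}$ is therefore a finite real number.

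The key step is to invoke the Poincaré-Wirtinger inequality on $B_R$ in the form
\[
\|f-\bar{f}_R\|_{L^p(B_R)}\leq C(n,p,R)\,\|\nabla f\|_{L^p(B_R)},
\]
which is valid under the sole regularity $f\in L^1(B_R)$ together with $\nabla f\in L^p(B_R)$ (the distributional gradient lying in $L^p$). Since $\bar{f}_R$ is a constant, it automatically satisfies $\|\bar{f}_R\|_{L^p(B_R)}=|\bar{f}_R|\,|B_R|^{1/p}<\infty$, and therefore the decomposition $f=(f-\bar{f}_R)+\bar{f}_R$ yields $f\in L^p(B_R)\subset L^p(K)$. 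Combined with the already-available bound $\|\nabla f\|_{L^p(K)}\leq \|\nabla f\|_{L^p(\mathbb{R}^n)}<\infty$, this produces the desired conclusion $f\in W^{1,p}(K)$ for every bounded $K$.

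The principal technical point—and the one I expect to require the most care—is justifying the Poincaré-Wirtinger inequality in the generality above, since the standard textbook statement is usually phrased for $f\in W^{1,p}(B_R)$, which is precisely what we are trying to conclude. The way around this circularity is a standard mollification argument: consider the convolution $f^\varepsilon=f\ast\omega_\varepsilon$ with a radial mollifier $\omega_\varepsilon$, so that $f^\varepsilon\in C^\infty(B_{R-\varepsilon})$ and $\nabla f^\varepsilon=(\nabla f)\ast\omega_\varepsilon$. The classical Poincaré-Wirtinger inequality applies to $f^\varepsilon$ on any slightly smaller ball $B_{R'}\Subset B_R$, producing the bound with a constant independent of $\varepsilon$. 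Since $f^\varepsilon\to f$ in $L^1(B_{R'})$ and $\nabla f^\varepsilon\to\nabla f$ in $L^p(B_{R'})$ as $\varepsilon\to 0$, the mean values $\bar{f}^\varepsilon_{R'}$ converge to $\bar{f}_{R'}$, so a lower semi-continuity argument (via Fatou, applied to a subsequence converging {\it a.e.}) upgrades the inequality to $f$ itself. A diagonal argument or a slight enlargement of $B_{R'}$ covers the original ball $B_R$, completing the proof. For vector-valued $f=(f_1,\dots,f_n)^\top$, applying the scalar result to each component $f_k$ and summing the resulting inequalities yields the full statement.
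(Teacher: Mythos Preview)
Your argument is correct. The Poincar\'e--Wirtinger inequality combined with mollification is a standard and complete route to the conclusion, and your handling of the circularity issue (applying the inequality first to $f^\varepsilon$ on a slightly smaller ball, then passing to the limit via Fatou) is sound. One minor simplification: rather than a diagonal argument at the end, you can simply start with a larger ball $B_{R_0}$ containing $\overline{B_R}$ and run the mollification on $B_R\Subset B_{R_0}$ directly.

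As for comparison with the paper: the paper does not supply its own proof of this lemma. It is stated as an auxiliary fact with a citation to Lemma~II.6.1 in Galdi's monograph \cite{galdi}, so there is no in-paper argument to compare against. Your self-contained proof is therefore more than what the paper itself provides.
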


Then we have the Sobolev embeddings of the type: $D^{1,p}(\mathbb{R}^n)\hookrightarrow L^{q}(\mathbb{R}^n)$ for spherically symmetric vector functions when $p\in [1,n)$.

\begin{lem}\label{lemma-L6}
Let $\boldsymbol{f}(\boldsymbol{x})=f(r)\frac{\boldsymbol{x}}{r}$ be any spherically symmetric vector function defined on $\mathbb{R}^n$ $(n\geq 2)$. If $\boldsymbol{f}\in D^{1,p}(\mathbb{R}^n)$ for some $p\in [1,n)$, then $\boldsymbol{f}\in L^\frac{np}{n-p}(\mathbb{R}^n)$, and there exists a constant $C(n,p)>0$ depending only on $(n,p)$ such that 
\begin{equation}\label{ineq-L6}
\|\boldsymbol{f}\|_{L^\frac{np}{n-p}(\mathbb{R}^n)}\leq C(n,p)\|\nabla \boldsymbol{f}\|_{L^p(\mathbb{R}^n)}.
\end{equation}
\end{lem}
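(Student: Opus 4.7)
The plan is to reduce to the compactly supported case, where the classical Sobolev inequality applies directly, and to control the error introduced by the cutoff through the crucial pointwise bound $|\boldsymbol{f}|/|\boldsymbol{x}|\le C|\nabla\boldsymbol{f}|$ that comes from the spherical vector structure. To begin, I would invoke Lemma \ref{lemma-galdi} to conclude that $\boldsymbol{f}\in W^{1,p}_{\mathrm{loc}}(\mathbb{R}^n)$, so that $\boldsymbol{f}$ is locally in $L^{p^*}(\mathbb{R}^n)$ by the classical Sobolev embedding on bounded domains, with $p^*=\frac{np}{n-p}$.

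The decisive ingredient is the pointwise identity $|\nabla\boldsymbol{f}|^2=|f_r|^2+(n-1)|f/r|^2$ established in Lemma \ref{lemma-initial}(ii) via the formula $(f_k)_{x_i}=\frac{x_ix_k}{r^2}f_r+\frac{\delta_{ik}r^2-x_ix_k}{r^3}f$. From this, and the fact that $|\boldsymbol{f}|=|f|$, one has pointwise a.e.
\begin{equation*}
\frac{|\boldsymbol{f}(\boldsymbol{x})|}{|\boldsymbol{x}|}=\frac{|f(r)|}{r}\le \frac{1}{\sqrt{n-1}}|\nabla\boldsymbol{f}(\boldsymbol{x})|,
\end{equation*}
and therefore $\|\boldsymbol{f}/|\boldsymbol{x}|\|_{L^p(\mathbb{R}^n)}\le C(n)\|\nabla\boldsymbol{f}\|_{L^p(\mathbb{R}^n)}$. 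This replaces the usual hypothesis $\boldsymbol{f}\in L^{p^*}$ (or $L^q$ for some $q<\infty$) that is needed for a generic $D^{1,p}$ function, and is the content that is false without spherical symmetry.

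Next I would carry out a cutoff approximation: let $\zeta\in C_{\rm c}^\infty(\mathbb{R}^n)$ be a radial cutoff with $\zeta\equiv 1$ on $B_1$, $\zeta\equiv 0$ outside $B_2$, set $\zeta_R(\boldsymbol{x})=\zeta(\boldsymbol{x}/R)$, and define $\boldsymbol{f}_R=\zeta_R\boldsymbol{f}$. Then $\boldsymbol{f}_R$ is spherically symmetric and compactly supported, $\boldsymbol{f}_R\in W^{1,p}(\mathbb{R}^n)\cap L^{p^*}(\mathbb{R}^n)$ by the local Sobolev embedding, and the product rule gives $\nabla\boldsymbol{f}_R=\zeta_R\nabla\boldsymbol{f}+\nabla\zeta_R\otimes\boldsymbol{f}$. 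Using $|\nabla\zeta_R|\le C/R\le C/|\boldsymbol{x}|$ on the annulus $R\le|\boldsymbol{x}|\le 2R$ together with the pointwise bound above, I get
\begin{equation*}
\|\nabla\zeta_R\otimes\boldsymbol{f}\|_{L^p}^p\le C\int_{R\le|\boldsymbol{x}|\le 2R}\Big(\frac{|\boldsymbol{f}|}{|\boldsymbol{x}|}\Big)^p\,\mathrm{d}\boldsymbol{x}\le C(n)\|\nabla\boldsymbol{f}\|_{L^p}^p,
\end{equation*}
and hence $\|\nabla\boldsymbol{f}_R\|_{L^p}\le C(n)\|\nabla\boldsymbol{f}\|_{L^p}$ uniformly in $R$. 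The classical Sobolev inequality applied to the compactly supported $\boldsymbol{f}_R\in W^{1,p}(\mathbb{R}^n)$ yields $\|\boldsymbol{f}_R\|_{L^{p^*}}\le C(n,p)\|\nabla\boldsymbol{f}\|_{L^p}$. Since $\boldsymbol{f}_R(\boldsymbol{x})\to\boldsymbol{f}(\boldsymbol{x})$ pointwise as $R\to\infty$ (indeed $\boldsymbol{f}_R(\boldsymbol{x})=\boldsymbol{f}(\boldsymbol{x})$ once $R>|\boldsymbol{x}|$), Lemma \ref{Fatou} applied to $|\boldsymbol{f}_R|^{p^*}$ gives \eqref{ineq-L6}.

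The main technical hurdle is exactly the a priori absence of any global integrability on $\boldsymbol{f}$, which would normally make the cutoff term $\nabla\zeta_R\otimes\boldsymbol{f}$ unbounded as $R\to\infty$; the radial vector structure supplies the Hardy-type control $\|\boldsymbol{f}/|\boldsymbol{x}|\|_{L^p}\le C\|\nabla\boldsymbol{f}\|_{L^p}$ that closes this gap, and everything else is a routine application of standard Sobolev theory combined with Fatou.
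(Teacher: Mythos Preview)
Your proof is correct and follows essentially the same route as the paper: cutoff by $\zeta_R$, use Lemma~\ref{lemma-galdi} to get $\zeta_R\boldsymbol{f}\in W^{1,p}\cap L^{p^*}$, apply the classical Sobolev inequality, control the commutator term via the Hardy-type bound coming from the radial vector structure, and conclude by Fatou. The only cosmetic difference is that you invoke the pointwise inequality $|f|/r\le (n-1)^{-1/2}|\nabla\boldsymbol{f}|$ directly, whereas the paper passes through the radial-coordinate norm equivalence of Lemma~\ref{lemma-initial} to bound $|r^{(n-1)/p-1}f|_p$; these are the same estimate.
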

\begin{proof}
Let $\zeta=\zeta(\boldsymbol{x})\in C^\infty_{\rm c}(\mathbb{R}^n)$ be a spherically symmetric cut-off function such that 
\begin{equation*}
\zeta=\begin{cases}
1& \text{for }|\boldsymbol{x}|\leq 1,\\
\text{smooth}& \text{otherwise},\\
0& \text{for }|\boldsymbol{x}|\geq 2,
\end{cases}\qquad  \zeta\in [0,1],\qquad |\nabla\zeta|\leq C \ \ \text{for some }C>0,
\end{equation*}
and let $\zeta_R=\zeta_R(\boldsymbol{x})=\zeta(\frac{\boldsymbol{x}}{R})$ for $R>0$.

First, since $\boldsymbol{f}\in D^{1,p}(\mathbb{R}^n)$, it follows from Lemma \ref{lemma-galdi} that $\zeta_R \boldsymbol{f}\in W^{1,p}(\mathbb{R}^n)$ for each $R>0$, which, along with Lemma \ref{ale1}, yields $\zeta_R \boldsymbol{f}\in L^{\frac{np}{n-p}}(\mathbb{R}^n)$. Consequently, it follows from \eqref{class-L6} in Lemma \ref{ale1}(i) that
\begin{equation}\label{c1}
\|\zeta_R\boldsymbol{f}\|_{L^\frac{np}{n-p}(\mathbb{R}^n)}\leq C(n,p)\|\nabla(\zeta_R\boldsymbol{f})\|_{L^p(\mathbb{R}^n)}
\end{equation}
for some constant $C(n,p)>0$ depending only on $(n,p)$. Now, based on \eqref{c1} and Lemma \ref{lemma-initial}, we can further derive that
\begin{align*}
\|\zeta_R\boldsymbol{f}\|_{L^\frac{np}{n-p}(\mathbb{R}^n)}&\leq \frac{C(n,p)}{R}\Big(\int_{R\leq |\boldsymbol{x}|\leq 2R} |\boldsymbol{f}|^p\,\mathrm{d}\boldsymbol{x}\Big)^\frac{1}{p}+C(n,p)\|\nabla\boldsymbol{f}\|_{L^p(\mathbb{R}^n)}\\
&\leq \frac{C(n,p)}{R}\Big(\int_{R}^{2R} r^{n-1}|f|^p\,\mathrm{d}r\Big)^\frac{1}{p}+C(n,p)\|\nabla\boldsymbol{f}\|_{L^p(\mathbb{R}^n)}\\
&\leq C(n,p)\Big(\int_{R}^{2R} r^{n-1-p}|f|^p\,\mathrm{d}r\Big)^\frac{1}{p}+C(n,p)\|\nabla\boldsymbol{f}\|_{L^p(\mathbb{R}^n)}\\
&\leq C(n,p)\big(\big|r^{\frac{n-1}{p}-1}f\big|_p+ \|\nabla\boldsymbol{f}\|_{L^p(\mathbb{R}^n)}\big)\leq C(n,p)\|\nabla\boldsymbol{f}\|_{L^p(\mathbb{R}^n)}.
\end{align*}
Note that the above inequality holds uniformly for $R>0$, and $\zeta_{R}|\boldsymbol{f}|^\frac{np}{n-p}\to |\boldsymbol{f}|^\frac{np}{n-p}$ for {\it a.e.} $\boldsymbol{x}\in \mathbb{R}^n$ as $R\to\infty$. Thus, it follow from Lemma \ref{Fatou} that
\begin{align*}
\|\boldsymbol{f}\|_{L^\frac{np}{n-p}(\mathbb{R}^n)}\leq \liminf_{R\to\infty}\|\zeta_{R}\boldsymbol{f}\|_{L^\frac{np}{n-p}(\mathbb{R}^n)} \leq C(n,p)\|\nabla\boldsymbol{f}\|_{L^p(\mathbb{R}^n)}.
\end{align*}
\end{proof}

\begin{rk}\label{remc1}
In contrast to \eqref{class-L6} {\rm(}taking $k=1${\rm)} in {\rm Lemma \ref{ale1}} $\mathrm{(i)}$, 
inequality \eqref{ineq-L6} holds for all spherically symmetric vector functions 
$\boldsymbol{f}$ with $\boldsymbol{f}$ merely in $D^{1,p}(\mathbb{R}^n)$ for some $p\in [1,n)$ 
and without requiring $\boldsymbol{f}\in L^\frac{np}{n-p}(\mathbb{R}^n)$, 
which do not holds for general vector functions $\boldsymbol{f}$ or scalar functions $f$ $($even spherically symmetric ones$)$  belonging to $D^{1,p}(\mathbb{R}^n)$. Such examples inlcude 
$\boldsymbol{f}=(1,\cdots\!,1)^\top$ or $f=1$. 
This distinction lies in the spherical symmetry assumption, since any spherically symmetric constant vector field must vanish. To some extent, the spherical symmetry assumption on $\boldsymbol{f}$ serves as a substitute for $\boldsymbol{f}\in L^\frac{np}{n-p}(\mathbb{R}^n)$ in {\rm Lemma \ref{ale1}} $\mathrm{(i)}$ when $k=1$ and $p<n$.
\end{rk}

\begin{rk}\label{remc2}
When $p=n$, the embeddings of the type{\rm :} 
$D^{1,n}(\mathbb{R}^n)\hookrightarrow L^q(\mathbb{R}^n)$ with $q\in [1,\infty)$ fail to hold for scalar functions or vector functions, even for spherically symmetric ones. 
As a counterexample, consider the vector function $\boldsymbol{f}(\boldsymbol{x})=f(r)\frac{\boldsymbol{x}}{r}$ with $f(r)\in C^\infty([0,\infty))$, $f(r)=0$ if $r\in [0,1]$ and $f(r)=\frac{1}{\log r}$ if $r\in [2,\infty)$. One may verify that both $\boldsymbol{f}(\boldsymbol{x})$ and the scalar function $f(\boldsymbol{x})=f(|\boldsymbol{x}|)$ belong to $D^{1,n}(\mathbb{R}^n)$, yet neither lies in $L^q(\mathbb{R}^n)$ for any $q\in [1,\infty)$.
\end{rk}

Next, we study the Sobolev embeddings of the type: 
$W^{k+1,n}(\mathbb{R}^n)\hookrightarrow W^{k,\infty}(\mathbb{R}^n)$ ($k\in\mathbb{N}$) for spherically symmetric vector functions. For this, we first need to prove the following auxiliary lemma, which indicates that any spherically symmetric vector function in $W^{k,p}(\mathbb{R}^n)$ can be approximated by a sequence of smooth, spherically symmetric vector functions.

\begin{lem}\label{sphere-ruglar}
Suppose that $\boldsymbol{f}\in W^{k,p}(\mathbb{R}^n)$ $(k\in \mathbb{N}^*,n\geq 2, \text{ and }p\in (1,\infty))$ is a spherically symmetric vector function. Then there exists a sequence of spherically symmetric vector functions $\{\boldsymbol{f}^\epsilon\}_{\epsilon>0}\subset C_{\rm c}^\infty(\mathbb{R}^n)$ such that $\|\boldsymbol{f}^\epsilon-\boldsymbol{f}\|_{W^{k,p}(\mathbb{R}^n)}\to 0$ as $\epsilon\to 0$.
\end{lem}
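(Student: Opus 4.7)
The overall strategy is the classical "mollify-and-truncate" approximation, but performed with radial building blocks so that each approximant inherits the spherical symmetry of $\boldsymbol{f}$. The plan is to construct $\boldsymbol{f}^\epsilon$ in two stages: first smooth $\boldsymbol{f}$ by convolution against a radial mollifier to obtain a smooth spherically symmetric vector field, then truncate by multiplication against a radial cutoff to obtain compact support. The key observation, which drives the whole argument, is that when the kernels are radial \emph{scalars} (not vectors), the vector-field transformation rule $\boldsymbol{f}(\mathcal{O}\boldsymbol{x})=\mathcal{O}\boldsymbol{f}(\boldsymbol{x})$ passes through the convolution and the pointwise product.

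First, I would fix a standard mollifier $\omega_\epsilon(\boldsymbol{x})=\epsilon^{-n}\omega(|\boldsymbol{x}|/\epsilon)$ with $\omega\in C_{\rm c}^\infty([0,\infty))$, nonnegative, radial, and of unit mass, and define the smooth approximant $\tilde{\boldsymbol{f}}^\epsilon:=\boldsymbol{f}*\omega_\epsilon\in C^\infty(\mathbb{R}^n)$. Spherical symmetry of $\tilde{\boldsymbol{f}}^\epsilon$ follows from the substitution $\boldsymbol{y}=\mathcal{O}\boldsymbol{z}$: for any $\mathcal{O}\in \mathrm{SO}(n)$,
\begin{equation*}
\tilde{\boldsymbol{f}}^\epsilon(\mathcal{O}\boldsymbol{x})=\int_{\mathbb{R}^n}\boldsymbol{f}(\mathcal{O}\boldsymbol{x}-\boldsymbol{y})\omega_\epsilon(\boldsymbol{y})\,\mathrm{d}\boldsymbol{y}=\int_{\mathbb{R}^n}\mathcal{O}\boldsymbol{f}(\boldsymbol{x}-\boldsymbol{z})\omega_\epsilon(\mathcal{O}\boldsymbol{z})\,\mathrm{d}\boldsymbol{z}=\mathcal{O}\tilde{\boldsymbol{f}}^\epsilon(\boldsymbol{x}),
\end{equation*}
where the spherical symmetry of $\boldsymbol{f}$ and the radiality of $\omega_\epsilon$ are used. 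Standard approximation theory for Sobolev functions gives $\tilde{\boldsymbol{f}}^\epsilon\to\boldsymbol{f}$ in $W^{k,p}(\mathbb{R}^n)$ as $\epsilon\to 0$ (this is where the hypothesis $p\in(1,\infty)$ is harmless; the result actually holds for all $p\in[1,\infty)$).

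Second, I would choose a spherically symmetric cutoff $\zeta\in C_{\rm c}^\infty(\mathbb{R}^n)$ with $\zeta=1$ on $|\boldsymbol{x}|\le 1$, $\zeta=0$ on $|\boldsymbol{x}|\ge 2$, set $\zeta_R(\boldsymbol{x}):=\zeta(\boldsymbol{x}/R)$, and define
\begin{equation*}
\boldsymbol{f}^\epsilon(\boldsymbol{x}):=\zeta_{R(\epsilon)}(\boldsymbol{x})\tilde{\boldsymbol{f}}^\epsilon(\boldsymbol{x}),
\end{equation*}
where $R(\epsilon)\to\infty$ is chosen slowly enough (say $R(\epsilon)=\epsilon^{-1}$) for the convergence below to close. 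Since $\zeta_{R}$ is a radial scalar and $\tilde{\boldsymbol{f}}^\epsilon$ is spherically symmetric, $\zeta_R(\mathcal{O}\boldsymbol{x})\tilde{\boldsymbol{f}}^\epsilon(\mathcal{O}\boldsymbol{x})=\zeta_R(\boldsymbol{x})\mathcal{O}\tilde{\boldsymbol{f}}^\epsilon(\boldsymbol{x})=\mathcal{O}(\zeta_R\tilde{\boldsymbol{f}}^\epsilon)(\boldsymbol{x})$, so $\boldsymbol{f}^\epsilon$ is spherically symmetric, smooth, and compactly supported.

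Finally, writing $\boldsymbol{f}^\epsilon-\boldsymbol{f}=\zeta_{R(\epsilon)}(\tilde{\boldsymbol{f}}^\epsilon-\boldsymbol{f})+(\zeta_{R(\epsilon)}-1)\boldsymbol{f}$, the triangle inequality together with the Leibniz formula reduces the proof to two pieces: the mollification error $\|\zeta_{R(\epsilon)}(\tilde{\boldsymbol{f}}^\epsilon-\boldsymbol{f})\|_{W^{k,p}}$, controlled by $\|\tilde{\boldsymbol{f}}^\epsilon-\boldsymbol{f}\|_{W^{k,p}}$ up to a multiplicative constant depending on the $C^k$-norm of $\zeta$ (bounded uniformly in $R$ since derivatives of $\zeta_R$ acquire factors $R^{-j}\le 1$ for $R\ge 1$), and the truncation error $\|(\zeta_{R(\epsilon)}-1)\boldsymbol{f}\|_{W^{k,p}}$, which vanishes as $R\to\infty$ by dominated convergence on each derivative, since $\boldsymbol{f}\in W^{k,p}(\mathbb{R}^n)$ and the derivatives of $\zeta_{R}-1$ are supported in $\{|\boldsymbol{x}|\ge R\}$. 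Both terms tend to zero, giving the required convergence. The argument is mostly routine; the only point to be careful about is the preservation of the vector transformation law under each of the two operations, and this is exactly what the above change-of-variables and pointwise identity secure.
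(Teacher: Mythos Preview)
Your proposal is correct and essentially identical to the paper's own proof: the paper likewise defines $\boldsymbol{f}^\epsilon(\boldsymbol{x})=(\boldsymbol{f}*\omega_\epsilon)(\boldsymbol{x})\varphi(\epsilon\boldsymbol{x})$ with a radial mollifier $\omega_\epsilon$ and a radial cutoff $\varphi$, verifies spherical symmetry by the same substitution $\boldsymbol{y}=\mathcal{O}\boldsymbol{z}$, and dismisses the convergence as classical. Your writeup is in fact more detailed on the convergence step than the paper's.
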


\begin{proof}
Let $\{\omega_\epsilon(\boldsymbol{x})\}_{\varepsilon>0}$ be the standard spherically symmetric mollifier defined on $\mathbb{R}^n$. 
Then we can show that $\boldsymbol{f}^\varepsilon(\boldsymbol{x}):=(\boldsymbol{f}*\omega_\epsilon)(\boldsymbol{x})\varphi(\epsilon\boldsymbol{x})$ satisfies all the requirements given 
in Lemma \ref{sphere-ruglar}, where $\varphi\in C_{\rm c}^\infty(\mathbb{R}^n)$ is a spherically symmetric function such that $\varphi\geq 0$, and  $\varphi=1$ on $|\boldsymbol{x}|\leq 1$ and $\varphi=0$ on $|\boldsymbol{x}|\geq 2$. Here, since the proof of convergence is rather classical, we only check that $\boldsymbol{f}^\varepsilon(\boldsymbol{x})$ is spherically symmetric for brevity, which is equivalent to showing that $\boldsymbol{f}^\varepsilon(\mathcal{O}\boldsymbol{x})=(\mathcal{O}\boldsymbol{f}^\varepsilon)(\boldsymbol{x})$ for any matrix $\mathcal{O}\in \mathrm{SO}(n)$. In fact, we have
\begin{equation*}
\boldsymbol{f}^\varepsilon(\mathcal{O}\boldsymbol{x})=\varphi(\epsilon\mathcal{O}\boldsymbol{x})\int_{\mathbb{R}^n} \boldsymbol{f}(\mathcal{O}\boldsymbol{x}-\boldsymbol{y})\omega_\epsilon(\boldsymbol{y})\,\mathrm{d}\boldsymbol{y}.
\end{equation*}
Changing the coordinates $\boldsymbol{y}=\mathcal{O}\boldsymbol{z}$, along with $|\mathcal{O}\boldsymbol{x}|=|\boldsymbol{x}|$ and $\det \mathcal{O}=1$, gives
\begin{align*}
\boldsymbol{f}^\varepsilon(\mathcal{O}\boldsymbol{x})&=\varphi(\epsilon\mathcal{O}\boldsymbol{x})\int_{\mathbb{R}^n} \boldsymbol{f}(\mathcal{O}\boldsymbol{x}-\mathcal{O}\boldsymbol{z})\omega_\epsilon(\mathcal{O}\boldsymbol{z})(\det \mathcal{O})\,\mathrm{d}\boldsymbol{z}\\
&=\varphi(\epsilon\boldsymbol{x})\int_{\mathbb{R}^n} \boldsymbol{f}(\mathcal{O}(\boldsymbol{x}-\boldsymbol{z}))\omega_\epsilon(\boldsymbol{z}) \,\mathrm{d}\boldsymbol{z}\\
&                                                                                 =\varphi(\epsilon\boldsymbol{x})\int_{\mathbb{R}^n} (\mathcal{O} \boldsymbol{f})(\boldsymbol{x}-\boldsymbol{z})\omega_\epsilon(\boldsymbol{z}) \,\mathrm{d}\boldsymbol{z}\\
&=\Big(\mathcal{O}\big(\varphi(\epsilon\,\cdot)\int_{\mathbb{R}^n} \boldsymbol{f}(\cdot-\boldsymbol{z})\omega_\epsilon(\boldsymbol{z}) \,\mathrm{d}\boldsymbol{z}\big)\Big)(\boldsymbol{x})=(\mathcal{O}\boldsymbol{f}^\varepsilon)(\boldsymbol{x}).
\end{align*}
This completes the proof. 
\end{proof}

We now have the following result, which shows that the Sobolev embedding: $D^{1,n}(\mathbb{R}^n)\hookrightarrow L^\infty(\mathbb{R}^n)$ holds for all spherically symmetric vector functions.
\begin{lem}\label{Hk-Ck-vector}
Let $\boldsymbol{f}(\boldsymbol{x})=f(r)\frac{\boldsymbol{x}}{r}$ be any spherically symmetric vector function defined in $\mathbb{R}^n$ $(n\geq 2)$. If $\boldsymbol{f}\in D^{1,n}(\mathbb{R}^n)$, then $\boldsymbol{f}\in C(\overline{\mathbb{R}^n})$, and there exists a uniform constant $C(n)>0$ depending only on $n$ such that
\begin{equation}\label{vectorC3}
\|\boldsymbol{f}\|_{L^\infty(\mathbb{R}^n)}\leq C(n)\|\nabla\boldsymbol{f}\|_{L^n(\mathbb{R}^n)}.
\end{equation}
\end{lem}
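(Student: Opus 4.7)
\textbf{Proof proposal for Lemma \ref{Hk-Ck-vector}.} The key point, which distinguishes the spherically symmetric case from the general one (where $W^{1,n}\not\hookrightarrow L^{\infty}$), is that spherical symmetry forces $\boldsymbol{f}(\boldsymbol{0})=\boldsymbol{0}$ whenever $\boldsymbol{f}$ is continuous (Lemma \ref{rmk31}). This anchors the radial profile $f(r)$ at $r=0$ and lets us bound it pointwise via the fundamental theorem of calculus combined with an integrated Young-type inequality. Under Lemma \ref{lemma-initial}, the finiteness of $\|\nabla\boldsymbol{f}\|_{L^{n}(\mathbb{R}^{n})}^{n}$ translates into
\begin{equation*}
\int_{0}^{\infty}r^{n-1}|f_{r}|^{n}\,\mathrm{d}r+\int_{0}^{\infty}\frac{|f|^{n}}{r}\,\mathrm{d}r\leq C(n)\,\|\nabla\boldsymbol{f}\|_{L^{n}}^{n}.
\end{equation*}
The hidden logarithmic weight $r^{-1}$ on $|f|^{n}$, absent from the standard Sobolev framework, is exactly what makes this estimate possible.

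First I will regularize. Let $\omega_{\epsilon}$ be a standard spherically symmetric mollifier on $\mathbb{R}^{n}$ and set $\boldsymbol{f}^{\epsilon}:=\boldsymbol{f}\ast\omega_{\epsilon}$. As in the proof of Lemma \ref{sphere-ruglar}, the convolution of two spherically symmetric objects is spherically symmetric, so $\boldsymbol{f}^{\epsilon}(\boldsymbol{x})=f^{\epsilon}(r)\tfrac{\boldsymbol{x}}{r}$ with $f^{\epsilon}\in C^{\infty}([0,\infty))$. Lemma \ref{lemma-galdi} gives $\boldsymbol{f}\in W^{1,n}_{\mathrm{loc}}(\mathbb{R}^{n})$, so that $\boldsymbol{f}^{\epsilon}\to\boldsymbol{f}$ in $L^{n}_{\mathrm{loc}}(\mathbb{R}^{n})$ and $\nabla\boldsymbol{f}^{\epsilon}\to\nabla\boldsymbol{f}$ in $L^{n}(\mathbb{R}^{n})$, and by Lemma \ref{rmk31} applied to $\boldsymbol{f}^{\epsilon}$ we have $f^{\epsilon}(0)=0$.

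Next I will establish the uniform pointwise estimate for the smooth approximants. For $0<a<b$, the fundamental theorem of calculus gives
\begin{equation*}
|f^{\epsilon}(b)|^{n}-|f^{\epsilon}(a)|^{n}=n\int_{a}^{b}|f^{\epsilon}|^{n-2}f^{\epsilon}f^{\epsilon}_{s}\,\mathrm{d}s.
\end{equation*}
Factorizing the integrand as $\frac{|f^{\epsilon}|^{n-1}}{s^{(n-1)/n}}\cdot s^{(n-1)/n}|f^{\epsilon}_{s}|$ and applying the Young inequality with conjugate exponents $\tfrac{n}{n-1}$ and $n$ yields
\begin{equation*}
|f^{\epsilon}(b)|^{n}-|f^{\epsilon}(a)|^{n}\leq (n-1)\int_{a}^{b}\frac{|f^{\epsilon}|^{n}}{s}\,\mathrm{d}s+\int_{a}^{b}s^{n-1}|f^{\epsilon}_{s}|^{n}\,\mathrm{d}s\leq C(n)\|\nabla\boldsymbol{f}^{\epsilon}\|_{L^{n}}^{n}.
\end{equation*}
Because $f^{\epsilon}(0)=0$ and $f^{\epsilon}\in C^{\infty}([0,\infty))$, letting $a\to 0^{+}$ yields $|f^{\epsilon}(b)|^{n}\leq C(n)\|\nabla\boldsymbol{f}^{\epsilon}\|_{L^{n}}^{n}$ uniformly in $b$ and $\epsilon$, hence $\|\boldsymbol{f}^{\epsilon}\|_{L^{\infty}}\leq C(n)\|\nabla\boldsymbol{f}^{\epsilon}\|_{L^{n}}$.

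Finally I will pass to the limit $\epsilon\to 0$ and upgrade to genuine continuity. The $L^{n}_{\mathrm{loc}}$-convergence of $\boldsymbol{f}^{\epsilon}$ to $\boldsymbol{f}$ together with the uniform $L^{\infty}$ bound yields $\|\boldsymbol{f}\|_{L^{\infty}}\leq C(n)\|\nabla\boldsymbol{f}\|_{L^{n}}$ after extracting a subsequence converging a.e. For continuity on $\mathbb{R}^{n}$, I will argue that the same Young-type calculation applied to $f^{\epsilon}-f^{\epsilon'}$, combined with the absolute continuity of $s\mapsto\int_{0}^{s}(r^{n-1}|f_{r}|^{n}+|f|^{n}/r)\,\mathrm{d}r$, shows $\{f^{\epsilon}\}$ is Cauchy in $C([0,R])$ for every $R>0$, so the limit $f$ is continuous on $[0,\infty)$, with $f(0)=0$. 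Together with the identity $\boldsymbol{f}(\boldsymbol{x})=f(|\boldsymbol{x}|)\tfrac{\boldsymbol{x}}{|\boldsymbol{x}|}$ for $\boldsymbol{x}\neq\boldsymbol{0}$ and $\boldsymbol{f}(\boldsymbol{0})=\boldsymbol{0}$, this gives $\boldsymbol{f}\in C(\overline{\mathbb{R}^{n}})$.

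\textbf{Main obstacle.} The step I expect to require the most care is the uniform-Cauchy argument near the origin needed to promote the a.e. bound to continuity on all of $\mathbb{R}^{n}$. Standard $W^{1,n}_{\mathrm{loc}}$ theory gives continuity on any annulus $\{0<r_{1}\leq|\boldsymbol{x}|\leq r_{2}\}$, but the only control available near $\boldsymbol{x}=\boldsymbol{0}$ is through the weighted integral $\int_{0}^{r}|f|^{n}/s\,\mathrm{d}s$, whose equi-smallness as $r\to 0^{+}$ must be established uniformly in $\epsilon$ from the $L^{n}$-convergence of $\nabla\boldsymbol{f}^{\epsilon}$; this is where the spherical symmetry plays its most delicate role.
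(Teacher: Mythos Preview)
Your proposal is correct and rests on the same core mechanism as the paper: apply the fundamental theorem of calculus to $|f|^{n}$ and split the integrand via H\"older/Young so that both pieces $r^{-1}|f|^{n}$ and $r^{n-1}|f_{r}|^{n}$ are controlled by $\|\nabla\boldsymbol{f}\|_{L^{n}(\mathbb{R}^{n})}^{n}$ through Lemma~\ref{lemma-initial}. The differences are purely technical: the paper integrates from $r_{0}$ to $\infty$ and anchors at infinity (via compact support of the approximants), then passes from $W^{1,n}$ to $D^{1,n}$ by a spatial cutoff $\zeta_{R}$; you integrate from $0$ to $b$ and anchor at the origin (via $f^{\epsilon}(0)=0$, Lemma~\ref{rmk31}), handling $D^{1,n}$ directly by mollification. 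Your route is slightly more economical in that it avoids the two-step $W^{1,n}\to D^{1,n}$ reduction; the paper's route makes uniform continuity automatic since every approximant is compactly supported.

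Two remarks. First, your stated ``main obstacle'' dissolves: applying your own inequality to $f^{\epsilon}-f^{\epsilon'}$ (which also vanishes at $r=0$) gives $\|f^{\epsilon}-f^{\epsilon'}\|_{L^{\infty}(I)}\leq C(n)\|\nabla\boldsymbol{f}^{\epsilon}-\nabla\boldsymbol{f}^{\epsilon'}\|_{L^{n}}\to 0$, so the sequence is Cauchy in $L^{\infty}$ globally, not just on compacts, and no separate equi-smallness argument near $r=0$ is needed. Second, the one point you do not address is uniform continuity at infinity (required for $C(\overline{\mathbb{R}^{n}})$ in the paper's convention): since your approximants $\boldsymbol{f}^{\epsilon}$ are not compactly supported, you should note that $\int_{0}^{\infty}|f^{\epsilon}|^{n}/s\,\mathrm{d}s<\infty$ forces $f^{\epsilon}(r)\to 0$ as $r\to\infty$, whence each $\boldsymbol{f}^{\epsilon}$ is uniformly continuous and so is the uniform limit.
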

\begin{proof}

We divide the proof into two steps.

\smallskip
\textbf{1. Proof for $\boldsymbol{f}(\boldsymbol{x})\in W^{1,n}(\mathbb{R}^n)$.} We first consider the spherically symmetric 
vector function $\boldsymbol{f}(\boldsymbol{x})\in C_{\rm c}^\infty(\mathbb{R}^n)$ ($n\geq 2$), with $\boldsymbol{f}(\boldsymbol{x})=f(r)\frac{\boldsymbol{x}}{r}$. Certainly, $f\in C_{\rm c}^\infty([0,\infty))$. Then it follows from Lemma \ref{calculus} that, 
for any $r_0\in [0,\infty)$,
\begin{equation}\label{c2-12}
|f(r_0)|^n=-\int_{r_0}^{\infty} \big(|f|^n\big)_r\,\mathrm{d}r \implies |f|_\infty^n\leq \int_0^\infty \big|\big(|f|^n\big)_r\big|\,\mathrm{d}r\leq n\big||f|^{n-1}|f_r|\big|_1,
\end{equation}
which, along with Lemma \ref{lemma-initial} and the H\"older inequality, gives that
\begin{equation}\label{c2-1}
\|\boldsymbol{f}\|_{L^\infty(\mathbb{R}^n)}^n=|f|_\infty^n \leq  n\big||f|^{n-1}|f_r|\big|_1\leq n\big|r^{-\frac{1}{n}}f\big|_n^{n-1}\big|r^\frac{n-1}{n}f_r\big|_n \leq C(n)\|\nabla\boldsymbol{f}\|_{L^n(\mathbb{R}^n)}^n.
\end{equation}

Next, for $\boldsymbol{f}\in W^{1,n}(\mathbb{R}^n)$, thanks to Lemma \ref{sphere-ruglar}, 
there exists a sequence of spherically symmetric vector functions $\{\boldsymbol{f}^\epsilon\}_{\epsilon>0}\subset C^\infty_{\rm c}({\mathbb{R}^n})$ satisfying $\|\boldsymbol{f}^{\epsilon}-\boldsymbol{f}\|_{W^{1,n}(\mathbb{R}^n)}$ as $\epsilon\to 0$. Thus, it follows from \eqref{c2-1} that
\begin{equation*}
\|\boldsymbol{f}^{\epsilon_1}-\boldsymbol{f}^{\epsilon_2}\|_{L^\infty(\mathbb{R}^n)}\leq C(n)\|\nabla\boldsymbol{f}^{\epsilon_1}-\nabla\boldsymbol{f}^{\epsilon_2}\|_{L^n(\mathbb{R}^n)}\to 0 \qquad \text{as $(\epsilon_1,\epsilon_2)\to(0,0)$},   
\end{equation*}
which implies that $\boldsymbol{f}^{\epsilon}$ converges to some limit $\boldsymbol{g}$ as $\epsilon\to 0$. Since $\boldsymbol{f}^\epsilon\in C^\infty_{\rm c}({\mathbb{R}^n})$, $\boldsymbol{g}$ is bounded and uniformly continuous on $\mathbb{R}^n$, and the uniqueness of the limit yields $\boldsymbol{f}=\boldsymbol{g} \in C(\overline{\mathbb{R}^n})$. Therefore, it follows from \eqref{c2-1} that \eqref{vectorC3} holds for $\boldsymbol{f}\in W^{1,n}(\mathbb{R}^n)$, {\it i.e.},
\begin{equation*}
\|\boldsymbol{f}\|_{L^\infty(\mathbb{R}^n)}=\lim_{\epsilon\to0}\|\boldsymbol{f}^\epsilon\|_{L^\infty(\mathbb{R}^n)} \leq C(n)\lim_{\epsilon\to 0}\|\nabla\boldsymbol{f}^\epsilon\|_{L^n(\mathbb{R}^n)}=C(n)\|\nabla\boldsymbol{f}\|_{L^n(\mathbb{R}^n)}.
\end{equation*}

\smallskip
\textbf{2. Proof for $\boldsymbol{f}(\boldsymbol{x})\in D^{1,n}(\mathbb{R}^n)$.} Let $\boldsymbol{f}(\boldsymbol{x})\in D^{1,n}(\mathbb{R}^n)$, and let $(\zeta,\zeta_R)$ be the spherically symmetric cut-off functions defined in Lemma \ref{lemma-L6}. First, using Lemma \ref{lemma-galdi} and $\boldsymbol{f}\in D^{1,n}(\mathbb{R}^n)$ yields that
$\zeta_R \boldsymbol{f}\in W^{1,n}(\mathbb{R}^n)$ for each $R>0$.  

Next, we show that 
\begin{equation}\label{c5}
\lim_{R\to\infty}\big\|\nabla(\zeta_R \boldsymbol{f})-\nabla \boldsymbol{f}\big\|_{L^n(\mathbb{R}^n)}=0.
\end{equation}
It follows from Lemma \ref{lemma-initial} and the absolute continuity of the integral
that, as $R\to\infty$,
\begin{align*}
\big\|\nabla(\zeta_R \boldsymbol{f})-\nabla \boldsymbol{f}\big\|_{L^n(\mathbb{R}^n)}&\leq \big\|\nabla(\zeta_R \boldsymbol{f})-\zeta_R\nabla \boldsymbol{f}\big\|_{L^n(\mathbb{R}^n)}+\big\|\nabla\boldsymbol{f}-\zeta_R\nabla\boldsymbol{f}\big\|_{L^n(\mathbb{R}^n)}\\
&\leq \big\||\nabla\zeta_R| |\boldsymbol{f}|\big\|_{L^n(\mathbb{R}^n)}+\big\|\nabla\boldsymbol{f}-\zeta_R\nabla\boldsymbol{f}\big\|_{L^n(\mathbb{R}^n)}\\
&\leq \frac{C}{R}\Big(\int_{R\leq |\boldsymbol{x}|\leq 2R} |\boldsymbol{f}|^n\,\mathrm{d}\boldsymbol{x}\Big)^\frac{1}{n}+\big\|\nabla\boldsymbol{f}-\zeta_R\nabla\boldsymbol{f}\big\|_{L^n(\mathbb{R}^n)}\\
&\leq \frac{C(n)}{R}\Big(\int_{R}^{2R} r^{n-1}|f|^n\,\mathrm{d}r\Big)^\frac{1}{n}+\big\|\nabla\boldsymbol{f}-\zeta_R\nabla\boldsymbol{f}\big\|_{L^n(\mathbb{R}^n)}\\
&\leq C(n)\Big(\int_{R}^{2R} r^{n-1}\Big|\frac{f}{r}\Big|^n\,\mathrm{d}r\Big)^\frac{1}{n}+\big\|\nabla\boldsymbol{f}-\zeta_R\nabla\boldsymbol{f}\big\|_{L^n(\mathbb{R}^n)}\\
&\leq C(n)\Big(\int_{R\leq |\boldsymbol{x}|\leq 2R} |\nabla\boldsymbol{f}|^n\,\mathrm{d}\boldsymbol{x}\Big)^\frac{1}{n}+ \Big(\int_{|\boldsymbol{x}|\geq R} |\nabla\boldsymbol{f}|^n\,\mathrm{d}\boldsymbol{x}\Big)^\frac{1}{n}\to0,
\end{align*}
which yields claim \eqref{c5}. Clearly, we can also obtain from \eqref{c5} 
that there exists a sufficiently large $R_0>0$ such that 
\begin{equation}\label{c6}
\|\nabla(\zeta_R \boldsymbol{f})\|_{L^n(\mathbb{R}^n)}\leq 2\|\nabla\boldsymbol{f}\|_{L^n(\mathbb{R}^n)}\qquad \text{for all $R\geq R_0$}.   
\end{equation}
Therefore, it follows from $\zeta_R\boldsymbol{f}\in W^{1,n}(\mathbb{R}^n)$, \eqref{c6}, 
and Step 1 above that $\zeta_R\boldsymbol{f}\in C(\overline{\mathbb{R}^n})$ for each $R>0$ and  
\begin{equation}\label{c7}
\|\zeta_R\boldsymbol{f}\|_{L^\infty(\mathbb{R}^n)}\leq C(n)\|\nabla(\zeta_R\boldsymbol{f})\|_{L^n(\mathbb{R}^n)}\leq C(n)\|\nabla \boldsymbol{f}\|_{L^n(\mathbb{R}^n)} \qquad \text{for all $R\geq R_0$}.
\end{equation}
Combining \eqref{c5} and \eqref{c7} yields that, for $R_1,R_2\geq R_0$, when  $(R_1,R_2)\to (\infty,\infty)$,
\begin{equation*}
\|\zeta_{R_1}\boldsymbol{f}-\zeta_{R_2}\boldsymbol{f}\|_{L^\infty(\mathbb{R}^n)}\leq C(n)\|\nabla(\zeta_{R_1}\boldsymbol{f})-\nabla(\zeta_{R_2}\boldsymbol{f})\|_{L^n(\mathbb{R}^n)}\to 0,
\end{equation*}
which implies that  $\{\zeta_R\boldsymbol{f}\}_{R\geq R_0}\subset C(\overline{\mathbb{R}^n})$ is a Cauchy sequence in $L^\infty(\mathbb{R}^n)$, converging to $\boldsymbol{f}\in C(\overline{\mathbb{R}^n})$ as $R\to\infty$. Finally, we obtain from \eqref{c7} that \eqref{vectorC3} holds 
for $\boldsymbol{f}\in D^{1,n}(\mathbb{R}^n)$:
\begin{equation*}
\|\boldsymbol{f}\|_{L^\infty(\mathbb{R}^n)}=\lim_{R\to\infty}\|\zeta_R\boldsymbol{f}\|_{L^\infty(\mathbb{R}^n)}\leq C(n)\|\nabla\boldsymbol{f}\|_{L^2(\mathbb{R}^n)}.
\end{equation*}
The proof of Lemma \ref{Hk-Ck-vector} is completed.
\end{proof}

The following lemma is on the Sobolev embedding of the type: $W^{3,n}(\mathbb{R}^n)\hookrightarrow W^{2,\infty}(\mathbb{R}^n)$.

\begin{lem}\label{Hk-Ck-vector-3}
Let $\boldsymbol{f}(\boldsymbol{x})=f(r)\frac{\boldsymbol{x}}{r}$ be any spherically symmetric vector function defined on $\mathbb{R}^n$ $(n\geq 2)$. If $\boldsymbol{f}\in W^{3,n}(\mathbb{R}^n)$, then $\boldsymbol{f}\in C^2(\overline{\mathbb{R}^n})$, and there exists a uniform constant $C(n)>0$ depending only on $n$ such that
\begin{equation}\label{vectorC3-3}
\|\nabla^2\boldsymbol{f}\|_{L^\infty(\mathbb{R}^n)}\leq C(n)\|\nabla^3\boldsymbol{f}\|_{L^n(\mathbb{R}^n)}.
\end{equation}
\end{lem}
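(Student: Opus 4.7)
The strategy is to reduce Lemma \ref{Hk-Ck-vector-3} to Lemma \ref{Hk-Ck-vector} applied to two auxiliary spherically symmetric vector fields whose components are exactly the radial second derivatives appearing in Lemma \ref{lemma-initial}. From the proof of Lemma \ref{lemma-initial} (equation \eqref{BB}), one has the pointwise equivalence
\[
|\nabla^2 \boldsymbol{f}(\boldsymbol{x})| \sim |f_{rr}(r)|+\Big|\big(\tfrac{f}{r}\big)_r(r)\Big| \qquad \text{at } r=|\boldsymbol{x}|,
\]
so the task is to bound $\|f_{rr}\|_{L^\infty(I)}$ and $\|(\tfrac{f}{r})_r\|_{L^\infty(I)}$ by $\|\nabla^3\boldsymbol{f}\|_{L^n}$. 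The plan is to introduce
\[
\boldsymbol{h}_1(\boldsymbol{x}) := f_{rr}(r)\frac{\boldsymbol{x}}{r}, \qquad \boldsymbol{h}_2(\boldsymbol{x}) := \big(\tfrac{f}{r}\big)_r(r)\frac{\boldsymbol{x}}{r},
\]
both of which are spherically symmetric, and apply Lemma \ref{Hk-Ck-vector} to each.

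The first task is to verify $\boldsymbol{h}_i \in D^{1,n}(\mathbb{R}^n)$. Since $\boldsymbol{f}\in W^{3,n}(\mathbb{R}^n)$, Lemma \ref{lemma-initial}(ii) converts this into the radial integrability
\[
\Big\|r^{m/n}\Big(f_{rrr},\tfrac{f_{rr}}{r},\big(\tfrac{f}{r}\big)_{rr},\tfrac{1}{r}\big(\tfrac{f}{r}\big)_r\Big)\Big\|_{L^n(I)}<\infty.
\]
Applying Lemma \ref{lemma-initial}(ii) in the reverse direction to each $\boldsymbol{h}_i$, the norm $\|\nabla\boldsymbol{h}_i\|_{L^n(\mathbb{R}^n)}$ is equivalent to $\|r^{m/n}((h_i)_r,h_i/r)\|_{L^n(I)}$; for $i=1$ these radial terms are $f_{rrr}$ and $f_{rr}/r$, and for $i=2$ they are $(f/r)_{rr}$ and $\tfrac{1}{r}(f/r)_r$. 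All four are controlled by $\|\nabla^3\boldsymbol{f}\|_{L^n(\mathbb{R}^n)}$, so $\boldsymbol{h}_i\in D^{1,n}(\mathbb{R}^n)$ with $\|\nabla\boldsymbol{h}_i\|_{L^n} \leq C(n)\|\nabla^3\boldsymbol{f}\|_{L^n}$. Lemma \ref{Hk-Ck-vector} then yields $\boldsymbol{h}_i\in C(\overline{\mathbb{R}^n})$ and
\[
\|\boldsymbol{h}_i\|_{L^\infty(\mathbb{R}^n)}\leq C(n)\|\nabla\boldsymbol{h}_i\|_{L^n(\mathbb{R}^n)}\leq C(n)\|\nabla^3\boldsymbol{f}\|_{L^n(\mathbb{R}^n)}.
\]
The pointwise equivalence above immediately gives the estimate \eqref{vectorC3-3}.

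For the regularity claim $\boldsymbol{f}\in C^2(\overline{\mathbb{R}^n})$, I would use Lemma \ref{sphere-ruglar} to obtain a sequence of spherically symmetric $\{\boldsymbol{f}^\epsilon\}\subset C^\infty_{\mathrm c}(\mathbb{R}^n)$ with $\boldsymbol{f}^\epsilon\to\boldsymbol{f}$ in $W^{3,n}(\mathbb{R}^n)$. Applying \eqref{vectorC3-3} to the difference $\boldsymbol{f}^{\epsilon_1}-\boldsymbol{f}^{\epsilon_2}$ shows that $\{\nabla^2\boldsymbol{f}^\epsilon\}\subset C(\overline{\mathbb{R}^n})$ is Cauchy in $L^\infty(\mathbb{R}^n)$; its uniform limit is continuous and, by the uniqueness of limits against the $L^n$-convergence of $\nabla^2\boldsymbol{f}^\epsilon$, must equal $\nabla^2\boldsymbol{f}$. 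Combined with the analogous (easier) arguments for $\boldsymbol{f}$ and $\nabla\boldsymbol{f}$ via Lemmas \ref{ale1} and \ref{Hk-Ck-vector}, this gives $\boldsymbol{f}\in C^2(\overline{\mathbb{R}^n})$.

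The principal obstacle is the rigorous verification that $\boldsymbol{h}_2$, whose radial profile involves $f/r$, genuinely defines an $L^n$ vector field on $\mathbb{R}^n$ without a spurious singularity at the origin. This requires checking that the distributional identities between $\boldsymbol{h}_2$ and the components of $\nabla^2\boldsymbol{f}$ hold across the origin; the cleanest route is to establish the estimate first on the smooth spherically symmetric approximations $\boldsymbol{f}^\epsilon$ supplied by Lemma \ref{sphere-ruglar}, where all quantities are classical and the calculations in \eqref{BB} are unambiguous, and then pass to the limit using the bounds provided by Lemma \ref{lemma-initial}(ii) in both directions.
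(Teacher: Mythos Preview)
Your proposal is correct and takes essentially the same approach as the paper: both reduce to the radial inequality underlying Lemma~\ref{Hk-Ck-vector} applied with $f$ replaced by $f_{rr}$ and $(\tfrac{f}{r})_r$, first for smooth compactly supported $\boldsymbol{f}$ and then via the approximation from Lemma~\ref{sphere-ruglar}. The only cosmetic difference is that the paper re-runs the one-line radial calculation \eqref{c2-12}--\eqref{c2-1} directly on $f_{rr}$ and $(\tfrac{f}{r})_r$, whereas you package these as the auxiliary vector fields $\boldsymbol{h}_1,\boldsymbol{h}_2$ and invoke Lemma~\ref{Hk-Ck-vector} as a black box; the content is identical.
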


\begin{proof}
We give the proof of \eqref{vectorC3-3} for $\boldsymbol{f}\in C_{\rm c}^\infty(\mathbb{R}^n)$, 
since the rest of Lemma \ref{Hk-Ck-vector-3} can be derived by using an argument similar 
to Step 1 in the proof of Lemma \ref{Hk-Ck-vector}. 

Let $\boldsymbol{f}\in C^\infty_{\rm c}(\mathbb{R}^n)$ be a spherically symmetric vector function with form: $\boldsymbol{f}(\boldsymbol{x})=f(r)\frac{\boldsymbol{x}}{r}$. 
Clearly, it follows from Lemma \ref{lemma-initial} that $\big(f_{rr},(\frac{f}{r})_r\big)\in C_{\rm c}^1([0,\infty))$.  Thus, following the same calculations \eqref{c2-12}--\eqref{c2-1} in Step 1 of 
the proof of Lemma \ref{Hk-Ck-vector}, 
with $f$ replaced by $f_{rr}$ and $(\frac{f}{r})_r$, respectively, we obtain
\begin{align*}
|f_{rr}|_\infty^n&\leq n\big||f_{rr}|^{n-1}|f_{rrr}|\big|_1\leq n\big|r^{-\frac{1}{n}}f_{rr}\big|_n^{n-1}\big|r^\frac{n-1}{n}f_{rrr}\big|_n\leq C(n)\|\nabla^3\boldsymbol{f}\|_{L^n(\mathbb{R}^n)}^n,\\[6pt]
\Big|\big(\frac{f}{r}\big)_r\Big|_\infty^n&\leq n\Big|\big|\big(\frac{f}{r}\big)_r\big|^{n-1}\big|\big(\frac{f}{r}\big)_{rr}\big|\Big|_1\leq n\Big|r^{-\frac{1}{n}}\big(\frac{f}{r}\big)_r\Big|_n^{n-1}\Big|r^\frac{n-1}{n}\big(\frac{f}{r}\big)_{rr}\Big|_n\leq C(n)\|\nabla^3\boldsymbol{f}\|_{L^n(\mathbb{R}^n)}^n,
\end{align*}
which, together with Lemma \ref{lemma-initial}, gives that, for all $\boldsymbol{f}\in C^\infty_{\rm c}(\mathbb{R}^n)$,
\begin{equation*}
\|\nabla^2\boldsymbol{f}\|_{L^{\infty}(\mathbb{R}^n)}\leq C(n)\|\nabla^3\boldsymbol{f}\|_{L^n(\mathbb{R}^n)}. 
\end{equation*}
\end{proof}

\begin{rk}
Since the embeddings{\rm :} $W^{k+1,n}(\mathbb{R}^n) \hookrightarrow W^{k,\infty}(\mathbb{R}^n)$ $(k=0,2)$ 
do not hold in the general case, {\rm Lemmas \ref{Hk-Ck-vector}--\ref{Hk-Ck-vector-3}} amount 
to an extension of the classical Sobolev embedding theorem for the spherically symmetric vector functions. However, {\rm Lemmas \ref{Hk-Ck-vector}--\ref{Hk-Ck-vector-3}} cannot be extended to scalar functions or 
general vector functions, and the embedding $W^{2,n}(\mathbb{R}^n) \hookrightarrow W^{1,\infty}(\mathbb{R}^n)$ cannot hold even for spherically symmetric vector functions. We will provide several counterexamples in {\rm Remarks \ref{k=1rk}--\ref{remarkc4}}.
\end{rk}

Based on Lemmas \ref{Hk-Ck-vector}--\ref{Hk-Ck-vector-3}, we can obtain the Sobolev embeddings for scalar functions.
\begin{lem}\label{Hk-Ck-scalar}
Let $f(\boldsymbol{x})$ be any spherically symmetric function defined on $\mathbb{R}^n$ $(n\geq 2)$. If $f\in W^{k+2,n}(\mathbb{R}^n)$ $(k=0,2)$, then $f\in C^{k+1}(\overline{\mathbb{R}^n})$, and there exists a uniform constant $C(k,n)>0$ depending only on $(k,n)$ such that
\begin{equation}
\|\nabla^{k+1}f\|_{L^\infty(\mathbb{R}^n)}\leq C(k,n)\|\nabla^{k+2}f\|_{L^n(\mathbb{R}^n)}.
\end{equation}
\end{lem}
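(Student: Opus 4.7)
The key observation is that the gradient of a spherically symmetric scalar function is a spherically symmetric vector field: if $f(\boldsymbol{x})=f(r)$ then $\nabla f(\boldsymbol{x}) = f_r(r)\,\frac{\boldsymbol{x}}{r}$, which is precisely the form to which Lemmas \ref{Hk-Ck-vector} and \ref{Hk-Ck-vector-3} apply. So the plan is simply to set $\boldsymbol{g}:=\nabla f$ and transfer the estimates from the vector-valued results to $f$.

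For the case $k=0$, I would argue as follows. Since $f\in W^{2,n}(\mathbb{R}^n)$ and $f$ is spherically symmetric, the vector field $\boldsymbol{g}=\nabla f$ lies in $W^{1,n}(\mathbb{R}^n)\subset D^{1,n}(\mathbb{R}^n)$ and is spherically symmetric. Applying Lemma \ref{Hk-Ck-vector} to $\boldsymbol{g}$ yields $\boldsymbol{g}\in C(\overline{\mathbb{R}^n})$ together with
\begin{equation*}
\|\nabla f\|_{L^\infty(\mathbb{R}^n)} = \|\boldsymbol{g}\|_{L^\infty(\mathbb{R}^n)}\leq C(n)\,\|\nabla \boldsymbol{g}\|_{L^n(\mathbb{R}^n)}=C(n)\,\|\nabla^2 f\|_{L^n(\mathbb{R}^n)}.
\end{equation*}
To upgrade this to $f\in C^1(\overline{\mathbb{R}^n})$, I need continuity of $f$ itself; this is supplied by the classical embedding in Lemma \ref{ale1}(iii) applied with $(k,p)=(2,n)$, giving $W^{2,n}(\mathbb{R}^n)\hookrightarrow C(\overline{\mathbb{R}^n})$ since $2-n/n=1>0$.

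For the case $k=2$, I would repeat the same argument using Lemma \ref{Hk-Ck-vector-3} in place of Lemma \ref{Hk-Ck-vector}. Namely, $\boldsymbol{g}=\nabla f\in W^{3,n}(\mathbb{R}^n)$ is spherically symmetric, so Lemma \ref{Hk-Ck-vector-3} produces $\boldsymbol{g}\in C^2(\overline{\mathbb{R}^n})$ and
\begin{equation*}
\|\nabla^3 f\|_{L^\infty(\mathbb{R}^n)}=\|\nabla^2\boldsymbol{g}\|_{L^\infty(\mathbb{R}^n)}\leq C(n)\,\|\nabla^3\boldsymbol{g}\|_{L^n(\mathbb{R}^n)}=C(n)\,\|\nabla^4 f\|_{L^n(\mathbb{R}^n)}.
\end{equation*}
Continuity of $f,\nabla f,\nabla^2 f$ then follows from Lemma \ref{ale1}(iii) with $(k,p)=(4,n)$, which gives $W^{4,n}(\mathbb{R}^n)\hookrightarrow C^\ell(\overline{\mathbb{R}^n})$ for every $\ell<4-1=3$; combined with the just-established continuity of $\nabla^3 f$, this yields $f\in C^3(\overline{\mathbb{R}^n})$.

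There is essentially no obstacle here: the statement is a direct corollary of Lemmas \ref{Hk-Ck-vector}--\ref{Hk-Ck-vector-3} once one recognizes that $\nabla f$ inherits the spherical symmetry of $f$ in exactly the radial-vector form required by those lemmas. The only point requiring a small verification is that the classical Sobolev embedding in $\mathbb{R}^n$ supplies continuity of the \emph{lower-order} derivatives of $f$, so that pairing it with the new bound on the top derivative gives full membership in $C^{k+1}(\overline{\mathbb{R}^n})$; this is immediate from Lemma \ref{ale1}(iii).
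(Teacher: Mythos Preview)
Your proposal is correct and matches the paper's approach exactly: set $\boldsymbol{h}=\nabla f$, note it is a spherically symmetric vector field of the required radial form, and apply Lemmas~\ref{Hk-Ck-vector}--\ref{Hk-Ck-vector-3}. Your additional remark that the continuity of the lower-order derivatives follows from the classical embedding in Lemma~\ref{ale1}(iii) is a detail the paper leaves implicit.
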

\begin{proof}
Note that, for any spherically symmetric function $f$ defined on $\mathbb{R}^n$, $\nabla f=f_r \frac{\boldsymbol{x}}{r}$ can be regarded as a new vector function $\boldsymbol{h}=h\frac{\boldsymbol{x}}{r}$ with $h=f_r$. 

Let $f\in W^{k+2,n}(\mathbb{R}^n)$ $(k=0,2)$. Then $\boldsymbol{h}\in W^{k+1,n}(\mathbb{R}^n)$, which, along with Lemmas \ref{Hk-Ck-vector}--\ref{Hk-Ck-vector-3}, leads to $\nabla^{k}\boldsymbol{h}\in C(\overline{\mathbb{R}^n})$, that is, $\nabla^{k+1} f\in C(\overline{\mathbb{R}^n})$. 
\end{proof}

We first give a counterexample to indicate that the embedding:  $W^{2,n}(\mathbb{R}^n)\hookrightarrow W^{1,\infty}(\mathbb{R}^n)$ cannot hold for spherically symmetric vector functions.

\begin{rk}\label{k=1rk}
It is important to note that the embedding $W^{2,n}(\mathbb{R}^n)\hookrightarrow W^{1,\infty}(\mathbb{R}^n)$ does not hold for spherically symmetric vector functions.
For example, let $\boldsymbol{f}(\boldsymbol{x})=f(r)\frac{\boldsymbol{x}}{r}$ be a spherically symmetric vector function with
\begin{equation}\label{form-f}
f(r)\in C^\infty[0,\infty) ,\qquad f(r)=r|\log r|^\nu \text{ on $[0,e^{-1})$}, \qquad f(r)=0 \text{ on $[1,\infty)$}.
\end{equation}
We aim to show that $\boldsymbol{f}\in W^{2,n}(\mathbb{R}^n)$ and $\boldsymbol{f}\notin W^{1,\infty}(\mathbb{R}^n)$ for $\nu\in (0,\frac{n-1}{n})$. Clearly, since $\boldsymbol{f}$ is compactly supported, it suffices to consider the integrability and boundedness of $\boldsymbol{f}$ and its derivatives on $B_*=\{\boldsymbol{x}:\,|\boldsymbol{x}|<e^{-1}\}$.  First, a direct calculation gives that, for $r\in [0,e^{-1})$,
\begin{equation}\label{cal-dd}
\begin{aligned}
f_r&=|\log r|^\nu-\nu|\log r|^{\nu-1},\quad \frac{f}{r}=|\log r|^{\nu},\\
f_{rr}&=-\nu\frac{|\log r|^{\nu-1}}{r}+\nu(\nu-1)\frac{|\log r|^{\nu-2}}{r},\quad \big(\frac{f}{r}\big)_r=-\nu\frac{|\log r|^{\nu-1}}{r}.
\end{aligned}
\end{equation}
Then, since $\nu>0$, we can directly obtain from {\rm Lemma B.1} that
\begin{equation}\label{cl03}
\|\nabla\boldsymbol{f}\|_{L^\infty(B_*)}\geq C^{-1}\Big\|\big(f_r,\frac{f}{r}\big)\Big\|_{L^\infty(0,e^{-1})}=\infty\implies \boldsymbol{f}\notin W^{1,\infty}(\mathbb{R}^n).
\end{equation}
Next, we show that $\boldsymbol{f}\in W^{2,n}(\mathbb{R}^n)$. By \eqref{form-f}{\rm--}\eqref{cal-dd} and {\rm Lemma \ref{lemma-initial}}, it suffices to prove that $\nabla^2\boldsymbol{f}\in L^n(B_*)$. Indeed, since $\nu\in (0,\frac{n-1}{n})$, it follows from {\rm Lemma \ref{lemma-initial}} and the coordinate transformation $z=|\log r|$ that
\begin{align*}
\|\nabla^2\boldsymbol{f}\|_{L^n(B_*)}^n&\leq C\Big\|r^\frac{n-1}{n}\Big(f_{rr},\big(\frac{f}{r}\big)_r\Big)\Big\|_{L^n(0,e^{-1})}^n 
\leq  C\int_0^{e^{-1}}\Big(\frac{|\log r|^{n\nu-n}}{r}+ \frac{|\log r|^{n\nu-2n}}{r}\Big)\,\mathrm{d}r\\
&= C\int_{1}^\infty (z^{n\nu-n} +z^{n\nu-2n} )\,\mathrm{d}z<\infty.
\end{align*}
This implies $\nabla^2\boldsymbol{f}\in L^n(B_*)$. Therefore, we construct a spherically symmetric vector function $\boldsymbol{f}\in W^{2,n}(\mathbb{R}^n)$ such that $\boldsymbol{f}\notin W^{1,\infty}(\mathbb{R}^n)$. 

As indicated by  {\rm Lemma \ref{lemma-initial}}, for a general spherically symmetric function $\boldsymbol{f}$, the $L^n(\mathbb{R}^n)$-boundedness of $\nabla^2\boldsymbol{f}$ does not imply any decay behavior of $\nabla \boldsymbol{f}$ at $\boldsymbol{x}=\boldsymbol{0}$. This suggests that $\nabla \boldsymbol{f}$ may behave singular at $\boldsymbol{x}=\boldsymbol{0}$ $(\text{see \eqref{cl03}})$, even when $\nabla^2\boldsymbol{f}\in L^n(\mathbb{R}^n)$.
\end{rk}

Next, we give a counterexample to Lemma \ref{Hk-Ck-vector} for spherically symmetric scalar functions and general vector functions.
\begin{rk}\label{remarkc4}
It is important to note that the embedding{\rm :} $D^{1,n}(\mathbb{R}^n) \hookrightarrow C(\overline{\mathbb{R}^n})$ fails to hold for spherically symmetric scalar functions or general vector functions. For example, consider the function $f=f(z)$ defined on $[0,\infty)$ satisfying
\begin{equation}\label{log-f}
f(z)=\begin{cases}
0&\,\,\mathrm{on} \  \{z=0\},\\[4pt]
\big|\log z\big|^\frac{1}{3}&\,\,\mathrm{on} \   (0,e^{-1}],\\[4pt]
(ez)^{-2}&\,\,\mathrm{on} \   (e^{-1},\infty).
\end{cases}
\end{equation}
Define $g(\boldsymbol{x})\!=\!f(|\boldsymbol{x}|)$ and $\boldsymbol{h}(\boldsymbol{x})$ with $h_i(\boldsymbol{x})=f(|x_i|)$ $(1\leq i\leq n)$. 
It can be checked that $(g,\boldsymbol{h})$ admits the weak derivatives $(\partial_jg,\partial_j\boldsymbol{h})\in L^n(\mathbb{R}^n)$ $(1\leq j\leq n)$, while $(g,\boldsymbol{h})\notin L^\infty(\mathbb{R}^n)$.

As revealed by {\rm Lemma \ref{lemma-initial}}, these differences arise because the $L^n(\mathbb{R}^n)$-integrability for the gradient of spherically symmetric vector functions provides additional constraints near the symmetry center. Specifically, let $\boldsymbol{\tilde{f}}(\boldsymbol{x}) =\tilde{f}(r) \frac{\boldsymbol{x}}{r}$. Then it follows from {\rm Lemma \ref{lemma-initial}} that 
\begin{equation*}
\begin{aligned}
\nabla \boldsymbol{\tilde{f}} \in L^n(\mathbb{R}^n) &\iff \underline{r^{-\frac{1}{n}}\tilde f \in L^n(I)}_{(\star)} \quad\,\, \text{and} \quad\,\, r^\frac{n-1}{n}\tilde f_r \in L^n(I).
\end{aligned}
\end{equation*}
Remarkably, to some extent, $(\star)$ above implicitly governs the decay behavior of $\boldsymbol{\tilde{f}}$ near the symmetry center since $r^{-\frac{1}{n}}\notin L^n(0,\epsilon)$ for any $\epsilon>0$, thereby preventing the possible concentration of $\boldsymbol{\tilde{f}}$ at the origin, such as the logarithmic growth of $f$ as $z\to 0$ in \eqref{log-f}.
\end{rk}

\bigskip
\noindent{\bf Acknowledgments:}  The authors are grateful to Prof. Didier Bresch  for helpful discussions on the effective velocity and the BD entropy of the degenerate compressible Navier-Stokes equations.
This research is partially supported by National Key R$\&$D Program of China (No. 2022YFA1007300). The research of Gui-Qiang G. Chen was also supported in part by the UK Engineering and Physical Sciences Research
Council Award EP/L015811/1, EP/V008854, and EP/V051121/1.
The research of Shengguo Zhu was also supported in part by 
the National Natural Science Foundation of China under the Grant  12471212, 
and the Royal Society (UK)-Newton International 
Fellowships NF170015.

\bigskip
\noindent{\bf Conflict of Interest:} The authors declare  that they have no conflict of
interest.

\bigskip
\noindent{\bf Data availability:} Data sharing is not applicable to this article as no datasets were generated or analysed during the current study.

\bigskip

\end{document}